\documentclass[10pt]{report}

\usepackage{thesistemplate}

\begin{document}

\addtocontents{toc}{~\hfill Page\par}


\begin{titlepage}
\thispagestyle{empty}\enlargethispage{\the\footskip}%
\begin{center}
	{\setstretch{1.66} {Complexity and Avoidance}\par }%
	\vskip.4in
	By
	\vskip .3in
	{Hayden Robert Jananthan}
	\vskip .3in
	
	Dissertation\\
		Submitted to the Faculty of the \\
		Graduate School of Vanderbilt University \\
		in partial fulfillment of the requirements \\
		for the degree of \vspace{.2in}
	
DOCTOR OF PHILOSOPHY \vspace{.1in}
	
	in \vspace{.1in}
	
	{Mathematics} \vspace{.2in}
	
	June 30th, 2021 \vspace{.2in}
	
	Nashville, Tennessee
\end{center}

\vfill


\begin{center}
Approved,

Douglas H. Fisher, Ph.D. \vspace{.2in}

Alexander Y. Olshanskii, Ph.D. \vspace{.2in}

Denis V. Osin, Ph.D. \vspace{.2in}

Stephen G. Simpson, Ph.D. \vspace{.2in}

Constantine Tsinakis, Ph.D. \vspace{.2in}

\end{center}

\end{titlepage}

\newpage


\pagenumbering{roman} 
\setcounter{page}{2}

~\vfill

\begin{center}
Copyright \copyright\ 2021 by Hayden Robert Jananthan \\ All Rights Reserved
\end{center}

\vfill~

\newpage


\addcontentsline{toc}{chapter}{DEDICATION}

~\vfill

\begin{center}
Dedicated to Lekha
\end{center}

\vfill~

\newpage


\chapter*{ACKNOWLEDGEMENTS}
\addcontentsline{toc}{chapter}{ACKNOWLEDGEMENTS}

\begin{singlespace}
My utmost thanks is to my advisor, Steve Simpson, who has guided me through this nearly five-year journey, starting all the way back to the our first independent studies course in my second semester. Steve's support, knowledge, and direction has been invaluable in my growth as a mathematician, and I know I've been lucky to have him as my advisor.

I would also like to thank my fellow graduate students and my instructors during my time here at Vanderbilt. Overall, the Vanderbilt Mathematics Department has proven to be a welcoming and tight-knit community. Between breath-taking hikes in the mountains, costume parties and soirees, and the simple satisfaction of drinks at KayBob's or food at McDougal's (among many other things), I know that I've made wonderful friends and wonderful memories here. Special thanks should be given to my roommate, Dumindu De Silva, who has never complained about me regularly mulling about our apartment at absurd hours of the night and morning.

Thanks is due to some of my colleagues, mentors, and instructors at MIT. My first forays into research, publishing, and supervising are thanks to Dr.\ Jeremy Kepner and Dr.\ Vijay Gadepally, and I continue to learn countless things from them and the Lincoln Laboratory Supercomputing Center as a whole. I must also thank Prof.\ Henry Cohn, who's brilliant instruction of 18.510 secured my interest in logic early in my academic journey, and Prof.\ Michael Sipser, who gave me my first taste of computability theory in 18.404. 

I would like to thank my family, all of whom have been supportive throughout the process and have put up with many a mathematical jargon-filled phone call. Finally, I want to give my deepest thanks to my wife Lekha, a beautiful person inside and out who has been a rock that has anchored me over these years. I'm not sure where I would be without her support, and I look forward to a bright future. 

\end{singlespace}

\newpage


\setcounter{tocdepth}{2}
\tableofcontents

\newpage

%
%


\listoffigures

\newpage


\addtocontents{toc}{Chapter \hfill~\par}

\normalsize
\doublespacing
\pagenumbering{arabic}
\setcounter{page}{1}

\chapter{Introduction}
\label{introduction chapter}

A subset $P$ of $\baire$ may be considered a `problem' whose `solutions' are its elements, as in the problems ``Find a completion of $\mathsf{PA}$'' or ``Find a $1$-random infinite binary sequence'', corresponding to the subsets $\cpa$ and $\mlr$, respectively. In this context, we call $P$ a \emph{mass problem}. To compare the `degree of unsolvability' of two mass problems $P$ and $Q$, one approach is to use weak reducibility, where $P \weakleq Q$ if and only if every member of $Q$ computes a member of $P$. 

Two well-studied hierarchies of mass problems are the complexity and diagonally non-recursive hierarchies. The former consists of the sets
\begin{equation*}
\complex(f) \coloneq \{ X \in \cantor \mid \text{$\pfc(X \restrict n) \geq f(n) - O(1)$ for all $n$}\}
\end{equation*}
where $f \colon \mathbb{N} \to [0,\infty)$ is an unbounded, nondecreasing, computable function (an \emph{order function}) and $\pfc$ is prefix-free Kolmogorov complexity. In other words, $\complex(f)$ consists of all infinite binary sequences whose first $n$ bits cannot be described with less than $f(n)$ bits of information, up to addition of a constant. The latter hierarchy consists of the sets
\begin{equation*}
\dnr(p) \coloneq \{ X \in \baire \mid \text{$X(n) \nsimeq \varphi_n(n)$ and $X(n) < p(n)$ for all $n$}\}
\end{equation*}
where $p \colon \mathbb{N} \to (1,\infty)$ is a nondecreasing, computable function and $\varphi_n$ is the $n$-th $1$-place partial recursive function. In other words, $\dnr(p)$ consists of all $p$-bounded infinite sequences which avoid the diagonal of a fixed enumeration of the $1$-place partial recursive functions. 

Although the two hierarchies are quite different in presentation, the connections between them have been widely studied. Among them is a result of Kjos-Hanssen, Merkle, \& Stephan \cite[Theorem 2.3]{kjoshanssen2006kolmogorov} which shows that the complexity and diagonally non-recursive hierarchies are tightly coupled when going downward:

\begin{thm*}
\textnormal{\cite[Theorem 2.3]{kjoshanssen2006kolmogorov}}
Suppose $X \in \cantor$. Then the following are equivalent.
\begin{enumerate}[(i)]
\item $X \in \complex(f)$ for some order function $f\colon \mathbb{N} \to \co{0,\infty}$.
\item There exists an order function $p\colon \mathbb{N} \to (1,\infty)$ and a $Y \in \dnr(p)$ such that $Y$ is computable from $X$.
\end{enumerate}
\end{thm*}

Two other connections were proven by Greenberg \& Miller \cite{greenberg2011diagonally}, relating the diagonally non-recursive hierarchy to the upper levels of the complexity hierarchy. The former says that regardless of how slow-growing an order function $p$ is, there is an $X \in \dnr(p)$ which cannot compute a maximally complex infinite binary sequence (an element of $\complex(\id_\mathbb{N})$), while the latter gives an upshot that if $p$ is sufficiently slow-growing then any $X \in \dnr(p)$ computes highly complex infinite binary sequences.

\begin{thm*}
\textnormal{\cite[Theorem 5.11]{greenberg2011diagonally}}
If $p\colon \mathbb{N} \to (1,\infty)$ is an order function, then there exists $X \in \dnr(p)$ such that $X$ computes no member of $\complex(\lambda n.n)$.
\end{thm*}

\begin{thm*}
\textnormal{\cite[Theorem 4.9]{greenberg2011diagonally}}
For all sufficiently slow-growing order functions $p\colon \mathbb{N} \to (1,\infty)$, every $X \in \dnr(p)$ computes a member of $\bigcap_{0 \leq \delta < 1}{\complex(\lambda n. \delta n)}$.
\end{thm*}

In \cite{simpson2017turing}, Simpson introduced a variation of $\dnr$, $\ldnr$ (\textbf{L}inearly \textbf{U}niversal \textbf{A}voidance)\footnote{The notation used by Simpson in \cite{simpson2017turing} was $\mathrm{LDNR}$, standing for \textbf{L}inearly \textbf{D}iagonally \textbf{N}on-\textbf{R}ecursive.}, to remove its dependence on any specific choice of enumeration of the partial recursive functions as well as to more closely tie the growth rate of $p$ to the degree of unsolvability of the class $\ldnr(p)$. For any order function $p$ there are order functions $p^+$ and $p^-$ such that any $X \in \dnr(p^+)$ computes a member of $\ldnr(p)$ and any $Y \in \ldnr(p)$ computes a member of $\dnr(p^-)$, so all of the aforementioned results linking the complexity and diagonally non-recursive hierarchies translate to the $\ldnr$ hierarchy. 

An observation made by Bienvenu \& Porter \cite{bienvenu2016deep}, Greenberg, Miller \cite{miller2020assorted}, and Slaman is that the behavior of $\dnr(p)$ (for specific types of enumerations of the partial recursive functions) changes significantly depending on whether the series $\sum_{n=0}^\infty{p(n)^{-1}}$ converges (in which case $p$ is called \emph{fast-growing}) or diverges (in which case $p$ is called \emph{slow-growing}), and this observation applies to $\ldnr$ as well \cite[Theorem 5.4]{simpson2017turing}. Thus, we may consider the $\ldnr$ hierarchy as being made up of two sub-hierarchies, the \emph{fast-growing $\ldnr$ hierarchy} (consisting of $\ldnr(p)$ for fast-growing $p$) and the \emph{slow-growing $\ldnr$ hierarchy} (consisting of $\ldnr(p)$ for slow-growing $p$).

Within $\mathcal{E}_\weak$ (where our degrees of interest lie) there is a subregion in its upper reaches consisting of so-called `deep degrees'. The slow-growing $\ldnr$ hierarchy lies in this subregion, while both the fast-growing $\ldnr$ and complexity hierarchies lies in its complement. The notion of `shift complexity' provides a randomness notion lying in that deep region, providing another way to study the connections between the slow-growing $\ldnr$ hierarchy and randomness/complexity notions. 

Our goal is to explore the relationships between the complexity, fast-growing $\ldnr$, shift complexity, and slow-growing $\ldnr$ hierarchies, expanding existing relationships and providing explicit bounds on the growth rates of the corresponding order functions.

\section{Summary of Chapters}

Each chapter is summarized below. Additionally, Figure \ref{summary of general reductions figure} summarizes the main general reductions proven, Figure \ref{summary of specific example reductions figure} summarizes specific examples of reductions, and Figures \ref{summary of result references figure} and \ref{summary of question references figure} collect references to the results, sections, and questions pertaining to each of the explored relationships between the hierarchies of interest. Figure \ref{tikz map of Ew} gives a visual representation of $\mathcal{E}_\weak$ and how the hierarchies of interest sit within it.

\subsubsection*{\cref{introduction chapter}} The remainder of this chapter covers notation, conventions, and terminology. \cref{basic conventions and notation section} covers basic notions, such as notation and terminology for number systems, set theoretic functions \& relations, strings over a set, the Cantor \& Baire spaces, and various encoding functions. \cref{computability definitions notation and conventions section} gives a brief overview of the relevant notation and terminology from computability theory. Finally, \cref{reducibility notions section} briefly reviews the Turing, weak, \& strong reducibility notions and the classes of mass problems we will be principally interested in. 

\subsubsection*{\cref{background chapter}} This chapter serves to introduce many of the main notions discussed within the remainder of the document. We start by giving a brief overview of partial randomness, reviewing the notation, terminology, and some basic results. Following that, we discuss $\dnr$ and its dependence on a choice of an enumeration of the partial recursive functions, using its definition to motivate the definition of the class $\avoid^\psi(p)$ for a recursive $p \colon \mathbb{N} \to (1,\infty)$ and a partial recursive $\psi \colonsub \mathbb{N} \to \mathbb{N}$. After defining the family of linearly universal partial recursive functions, we define $\ldnr(p)$, covering some of the basic reducibility results between those classes. The fast-growing, slow-growing dichotomy is examined, where we state and prove several technical results used later. Finally, we define depth and discuss the weak degrees of deep $\Pi^0_1$ classes, the basic structure of the region of deep degrees in $\mathcal{E}_\weak$ and its relation to the fast-growing $\ldnr$ and slow-growing $\ldnr$ hierarchies.

\subsubsection*{\cref{complexity and avoidance downward relationships chapter}} This chapter is centered around the relationships between the complexity and fast-growing $\ldnr$ hierarchies. One way in which we do this is by strengthening \cite[Theorem 2.3]{kjoshanssen2006kolmogorov}, addressing the problems ``given $f$, find $q$ such that $\ldnr(q) \weakleq \complex(f)$'' and ``given $p$, find $g$ such that $\complex(g) \weakleq \ldnr(p)$'' and giving explicit bounds for each. In particular, one of our main theorems is the following.

\begin{repthm}{main downward theorem}
To each sub-identical order function $f\colon \mathbb{N} \to \co{0,\infty}$ there is a fast-growing order function $q\colon \mathbb{N} \to (1,\infty)$ such that $\ldnr(q) \strongleq \complex(f)$, and to each fast-growing order function $p\colon \mathbb{N} \to (1,\infty)$ there is a sub-identical order function $g\colon \mathbb{N} \to \co{0,\infty}$ such that $\complex(g) \strongleq \ldnr(p)$.
\end{repthm}

We also address the `upward' problem ``given $p$ fast-growing, find sub-identical $g$ such that $\ldnr(p) \weakleq \complex(g)$'', giving a partial answer.

\begin{repthm}{complex hierarchy outpaces fast nice ldnr hierarchy}
If $p\colon \mathbb{N} \to (1,\infty)$ is a fast-growing order function such that $\sum_{n=0}^\infty{p(n)^{-1}}$ is a recursive real, then there exists a convex sub-identical order function $g$ such that $\ldnr(p) \strongleq \complex(g) \neq \mlr$.
\end{repthm}

%

\subsubsection*{\cref{complexity and avoidance upward relationships chapter}} In this chapter we address the problem ``given $f$, find $q$ such that $\complex(f) \weakleq \ldnr(q)$'', giving a partial answer and providing explicit bounds for those cases. Our main results are the following.

%
%

\begin{repthm}{sqrt complex from ldnr}
Given an order function $\Delta \colon \mathbb{N} \to [0,\infty)$ such that $\lim_{n \to \infty}{\Delta(n)/\sqrt{n}} = 0$ and any rational $\epsilon \in (0,1)$, 
\begin{equation*}
\complex\bigl(\lambda n. n-\sqrt{n}\cdot \Delta(n)\bigr) \weakleq \ldnr\bigl(\lambda n. \exp_2\bigl((1-\epsilon)\Delta(\log_2\log_2 n)\bigr)\bigr).
\end{equation*}
More generally, $\complex(\lambda n. n - \sqrt{n}\cdot \Delta(n)) \weakleq \ldnr(q)$ for any order function $q$ satisfying
\begin{equation*}
q\left( \exp_2((1-\epsilon)^{-1} \cdot [(n+1)^2 - (n+1) \cdot \Delta((n+1)^2)] \cdot \ell(n)) \right) \leq \ell(n)
\end{equation*}
for almost all $n \in \mathbb{N}$, where $\ell(n) = \exp_2\left((1-\epsilon)[(n+1) \cdot \Delta((n+1)^2) - n \cdot \Delta(n^2)]\right)$. 
\end{repthm}

%
%

\subsubsection*{\cref{generalized shift complexity chapter}} In this chapter we examine classes of `shift complex' sequences -- in which the prefix-free complexity of all segments of a sequence $X$ are quantified rather than only the initial segments -- with respect to (non)negligibility and depth, as well as relationships with the complexity and $\ldnr$ hierarchies. Our main results are the following.

\begin{repthm}{quantified Khan shift complex theorem explicit example bound}
\textnormal{(Corollary of \cref{quantified Khan shift complex theorem})} 
Fix a rational $\epsilon > 0$. For all rational $\delta \in (0,1)$ we have 
\begin{equation*}
\shiftcomplex(\delta) \weakleq \ldnr\bigl(\lambda n. (\log_2 n)^{1-\epsilon}\bigr).
\end{equation*}
\end{repthm}

\begin{repthm}{sub-identical complex strongly computes shift complex}
Suppose $f$ is a sub-identical order function such that $\sum_{m=0}^\infty{f(2^m)/2^m}$ converges to a recursive real. Then there is an order function $g$ such that $\shiftcomplex(f) \strongleq \complex(g)$ and for which $\lim_{n \to \infty}{g(n)/n}=0$.
\end{repthm}

\subsubsection*{\cref{bushy tree chapter}} This chapter focuses on the relationships between the fast and slow-growing $\ldnr$ subhierarchies and their structures, with applications to the depth properties of the boundaries of the slow-growing $\ldnr$ hierarchy and relationships with the shift complexity hierarchy. Our main results are the following.

\begin{repthm}{ldnr incomparable}
For all order functions $p_1\colon \mathbb{N} \to (1,\infty)$ and $p_2\colon \mathbb{N} \to (1,\infty)$, there exists a slow-growing order function $q\colon \mathbb{N} \to (1,\infty)$ such that $\ldnr(p_1) \nweakleq \ldnr(q) \nweakleq \ldnr(p_2)$. \\ In particular, for any order function $p\colon \mathbb{N} \to (1,\infty)$, there exists a slow-growing order function $q\colon \mathbb{N} \to (1,\infty)$ such that $\ldnr(p)$ and $\ldnr(q)$ are weakly incomparable.
\end{repthm}

\begin{repthm}{ldnr-slow not deep}
$\ldnr_\slow$ is not of deep degree.
\end{repthm}

\begin{repthm}{slow-growing hierarchy has no minimum}
There is no order function $q\colon \mathbb{N} \to (1,\infty)$ such that $\ldnr_\slow \weakeq \ldnr(q)$.
\end{repthm}

\begin{repthm}{SC not weakly below ldnr_slow}
$\shiftcomplex \weaknleq \ldnr_\slow$.
\end{repthm}

\subsubsection*{\cref{structure of filter of deep degrees}} Using results of the previous chapters, we further explore the structure of the region of deep degrees in $\mathcal{E}_\weak$ and a larger region consisting of `pseudo-deep' degrees in $\mathcal{E}_\weak$. Our main result is the following.

\begin{repthm}{proper nesting of deep-related filters}
Define
\begin{align*}
\mathscr{F}_\mathrm{deep} & \coloneq \{ \mathbf{p} \in \mathcal{E}_\weak \mid \text{$\mathbf{p}$ a deep degree}\}. \\
\mathscr{F}_\mathrm{pseudo} & \coloneq \{ \mathbf{p} \in \mathcal{E}_\weak \mid \text{$\mathbf{p} = \inf \mathcal{C}$ for some $\mathcal{C} \subseteq \mathscr{F}_\mathrm{deep}$}\}. \\
\mathscr{F}_\mathrm{diff} & \coloneq \{ \mathbf{p} \in \mathcal{E}_\weak \mid \forall P \in \mathbf{p} \forall X \in \mlr \qspace ( \exists Y \in P \qspace (Y \turingleq X) \to (0' \turingleq X))\}.
\end{align*}
Then $\mathscr{F}_\mathrm{pseudo}$ is a principal filter while $\mathscr{F}_\mathrm{deep}$ and $\mathscr{F}_\mathrm{diff}$ are nonprincipal filters. Consequently, $\mathscr{F}_\mathrm{deep} \subsetneq \mathscr{F}_\mathrm{pseudo} \subsetneq \mathscr{F}_\mathrm{diff}$.
\end{repthm}

%



\begin{figure}[p]
\caption{Summary of general reductions between the hierarchies of interest. Within each row, the conditions listed in the right-most column are assumed for the given reduction. In addition, all functions are assumed to be order functions, $f$ sub-identical, and $p$ fast-growing. $\mathbb{R}_\rec$ is the set of recursive reals.}
\label{summary of general reductions figure}

\begin{center}
\renewcommand{\arraystretch}{3}
\begin{tabular}{l C | c}

\ref{complex reals compute ldnr functions} \vspace{0cm}
& 
$\displaystyle \ldnr\bigl(\lambda n. \exp_2((f^\inverse \circ h)(n)+1)\bigr) \strongleq \complex(f)$
& 
$\scalebox{.8}{$\displaystyle\sum_{n=0}^\infty$} {\frac{1}{2^{h(n)}}} \in \mathbb{R}_\rec $
\\ \hline


\ref{ldnr functions computes complex reals} \vspace{0cm}
&
$\displaystyle \complex\Bigl( \bigl(\lambda n. \raisebox{.1em}{\scalebox{.9}{$\displaystyle\sum$}}_{i < r(n)}{\lfloor \log_2 p(i) \rfloor} \bigr)^\inverse\Bigr) \strongleq \ldnr(p)$
&
$\parbox{2.5cm}{\centering $\displaystyle \lim_{n \to \infty} \textstyle {\frac{r(n)}{2^n}} = \infty$}$
\\ \hline

\ref{complex hierarchy outpaces fast nice ldnr hierarchy quantified} \vspace{0cm}
&
$\displaystyle \ldnr(p) \strongleq \complex\bigl(\lambda n. \log_2 \tilde{p}\bigl(p^\inverse(2^{n+1})-1\bigr)\bigr)$
&
$\parbox{2.5cm}{\centering $\scalebox{0.8}{$\displaystyle \sum_{n=0}^\infty$} {\frac{1}{\tilde{p}(n)}} \in \mathbb{R}_\rec$, \\ $\displaystyle \lim_{n \to \infty} \textstyle {\frac{p(n)}{\tilde{p}(n+3)}} = \infty$}$
\\ \hline

\ref{sqrt complex from ldnr} \vspace{0cm}
&
$\displaystyle \complex\bigl(\lambda n. n-\sqrt{n}\cdot \Delta(n)\bigr) \weakleq \ldnr\bigl(\lambda n. \exp_2\bigl((1-\epsilon)\Delta(\log_2\log_2 n)\bigr)\bigr)$
& 
$\parbox{2.5cm}{\centering $0 < \epsilon < 1$, \\ $\displaystyle \lim_{n \to \infty} \textstyle {\frac{\Delta(n)}{\sqrt{n}}} = 0$}$
\\ \hline

\ref{quantified Khan shift complex theorem explicit example bound} \vspace{0cm}
&
$\displaystyle \shiftcomplex(f) \weakleq \ldnr\bigl(\lambda n. (\log_2 n)^{1-\epsilon}\bigr)$
&
$\parbox{2.5cm}{\centering $\!\!\!\limsup_n {\frac{f(n)}{n}} < 1$, \\ $0 < \epsilon < 1$}$
\\ \hline

\ref{partial randomness and shift complexity} \vspace{0cm}
&
$\displaystyle \shiftcomplex(f) \strongleq \complex(\delta)$
&
$\parbox{2.5cm}{\centering $\scalebox{0.8}{$\displaystyle \sum_{m=0}^\infty$} {\frac{f(2^m)}{2^m}} < \infty$, \\ $0 < \delta \leq 1$}$

\end{tabular}
\renewcommand{\arraystretch}{1}
\end{center}
\end{figure}

\begin{figure}[p]
\caption{Summary of specific examples of reductions between the hierarchies of interest. Within each row, the conditions listed in the right-most column are assumed for the given reduction.}
\label{summary of specific example reductions figure}

\begin{center}
\renewcommand{\arraystretch}{3}
\begin{tabular}{l C | c}

\ref{reduction from delta-complex sequences}~ \vspace{0cm}
&
$\displaystyle \ldnr\Bigl(\lambda n. 4\sqrt[\delta]{n \cdot \log_2 n \mdots \log_2^{k-1} n \cdot (\log_2^k n)^{(1+\epsilon)}}\Bigr) \strongleq \complex(\delta)$
&
$\parbox{2.5cm}{\centering $0 < \delta \leq 1$, \\ $k \in \mathbb{N}$, $0 < \epsilon$}$
\\ \hline

\ref{reduction from power-complex sequences} \vspace{0cm}
&
$\displaystyle \ldnr\bigl(\lambda n. 4 \exp_2\bigl(\sqrt[\alpha]{(1+\epsilon)\log_2 n}\bigr)\bigr) \strongleq \complex(\lambda n. n^\alpha)$
&
$\parbox{2.5cm}{\centering $0 < \alpha \leq 1$, \\ $0 < \epsilon$}$
\\ \hline

\ref{logarithmic complex example} \vspace{0cm}
&
$\ldnr\bigl(\lambda n. 4 \exp_2(n^{(1+\epsilon)/\beta})\bigr) \strongleq \complex(\lambda n. \beta \log_2 n)$
&
$\parbox{2.5cm}{\centering $0 < \beta$, \\ $0 < \epsilon$}$
\\ \hline

\ref{exponential ldnr example} \vspace{0cm}
&
$ \complex(\lambda n. (1/2 - \epsilon)\log_2 n) \strongleq \ldnr(\lambda n. 2^n)$
&
$\parbox{2.5cm}{\centering $0 < \epsilon < 1/2$}$
\\ \hline

\ref{greenberg miller theorem 4.9 improved} \vspace{0cm}
&
$\displaystyle \complex(\lambda n. n-(1+\epsilon)\sqrt{n} \log_2 n) \weakleq \ldnr\bigl(\lambda n. (\log_2 n)^{1/2 - \epsilon}\bigl)$
&
$\parbox{2.5cm}{\centering $0 < \epsilon < 1/2$}$
\\ \hline

\ref{quantified Khan shift complex theorem explicit example bound} \vspace{0cm}
&
$\displaystyle \shiftcomplex(\delta) \weakleq \ldnr(\lambda n. \bigl(\log_2 n)^{1-\epsilon}\bigr)$
&
$\parbox{2.5cm}{\centering $0 < \delta < 1$, \\ $0 < \epsilon < 1$}$
\\ \hline

\ref{complex and shift-complex example 1} \vspace{0cm}
&
$\displaystyle \shiftcomplex(\lambda n. n^\alpha) \strongleq \complex(\lambda n. n^{\alpha+\epsilon})$
&
$\parbox{2.5cm}{\centering $0 < \alpha < 1$, \\ $0 < \epsilon \leq 1-\alpha$}$
\\ \hline

\ref{complex and shift-complex example 2} \vspace{0cm}
&
$\displaystyle \shiftcomplex\bigl(\lambda n. n/(\log_2 n)^{\alpha+1+\epsilon}\bigr) \strongleq \complex\bigl(\lambda n. n/(\log_2 n)^\alpha\bigr)$
&
$\parbox{2.5cm}{\centering $0 < \alpha$, \\ $0 < \epsilon$}$

\end{tabular}
\renewcommand{\arraystretch}{1}
\end{center}
\end{figure}

\begin{figure}[p]
\caption{Collected result and section references related to reductions of the form $P \weakleq Q$ or $P \nweakleq Q$, where $P$ is a member of the hierarchy corresponding to the row and $Q$ is a member of the hierarchy corresponding to the column. The $\complex$ row also contains references related to (non)negligibility and depth.}
\label{summary of result references figure}
\begin{center}
\renewcommand{\arraystretch}{3}
\begin{tabular}{c || P | P | P | P}
~ & $\complex$ & $\ldnr_\fast$ & $\ldnr_\slow$ & $\shiftcomplex$ \\ \hline\hline
$\complex$ & & \ref{ldnr and mlr}(a), \ref{complex weakly equivalent to ldnrrec}, \ref{main downward theorem}, \S \ref{complex below ldnr section} & \ref{ldnr and mlr}(b), \ref{ldnr p deep degree if p slow-growing}, \ref{improved greenberg and miller's randomness conclusion}, \S \ref{quantifying greenberg miller proof}, \S \ref{quantifying greenberg miller proof general}, \ref{ldnr-slow not deep} & \S\ref{shift complexity and depth subsection}, \S\ref{strong shift complexity and depth subsection} \\ \hline
$\ldnr_\fast$ & \ref{complex weakly equivalent to ldnrrec}, \ref{main downward theorem}, \S \ref{ldnr below complex section}, \S \ref{complexity to avoidance upwards} & \ref{ldnr basic facts} & \ref{ldnr basic facts}, \S\ref{fast and slow-growing ldnr hierarchies subsection}, \ref{ldnr incomparable} & \\ \hline
$\ldnr_\slow$ & & \ref{separation of fast-growing and slow-growing ldnr hierarchies} & \ref{ldnr basic facts}, \ref{separation of fast-growing and slow-growing ldnr hierarchies}, \ref{ldnr incomparable}, \ref{slow-growing hierarchy has no minimum} & \\ \hline
$\shiftcomplex$ & \ref{relating generalized shift complexity and complexity subsection}, \ref{extracting generalized shift complexity from sublinear complexity subsection} & & \ref{SC not weakly below ldnr_slow}, \ref{SC not weakly below ldnr_slow corollary} & ~\newline 
\end{tabular}
\renewcommand{\arraystretch}{1}
\end{center}
\end{figure}

\begin{figure}[p]
\caption{Collected open question references related to reductions of the form $P \weakleq Q$ or $P \nweakleq Q$, where $P$ is a member of the hierarchy corresponding to the row and $Q$ is a member of the hierarchy corresponding to the column. The $\complex$ row also contains references related to (non)negligibility and depth.}
\label{summary of question references figure}
\begin{center}
\renewcommand{\arraystretch}{3}
\begin{tabular}{c || P | P | P | P}
~ & $\complex$ & $\ldnr_\fast$ & $\ldnr_\slow$ & $\shiftcomplex$ \\ \hline\hline
$\complex$ & & \ref{quantify upper bound on q fast-growing}, \ref{fast-growing ldnr above half-random} \newline & \ref{what does complex below ldnr apply to}, \ref{quantify upper bound on q} & \ref{are unions of delta-shift complex classes deep}, \ref{depth of generalized shift complex classes} \\ \hline
$\ldnr_\fast$ & \ref{recursive sum necessary for complex above ldnr question} & ~\newline & \ref{better understanding incomparable order functions} & \\ \hline
$\ldnr_\slow$ & & & \ref{better understanding incomparable order functions}, \ref{minimal slow-growing ldnr question}, \ref{slow-growing ldnr downwards-directed question} & \ref{comparing delta-shift complex to slow-growing ldnr} \\ \hline
$\shiftcomplex$ & \ref{comparing generalized shift complex to complex} & & \ref{shift complexity and avoidance section}, \ref{quantify which slow-growing order functions do not compute shift complexity} & \ref{separating delta-shift complex degrees}, \ref{separating strongly delta-shift complex degrees}, \ref{separating shift complexity from strong shift complexity}, \ref{relationship between shift complex and strongly shift complex}, \ref{separating generalized shift complexity}
\end{tabular}
\renewcommand{\arraystretch}{1}
\end{center}
\end{figure}



\begin{figure}[p]
\caption{Visual representation of $\mathcal{E}_\weak$ and the relationships between the hierarchies of interest within. $p$ denotes a slow-growing order function, $q$ denotes a slow-growing order function, $f$ denotes a sub-identical order function, $g$ denotes an order function satisfying $\sum_{m=0}^\infty{g(2^m)/2^m} < \infty$, and $\delta$ denotes a rational number in $(0,1)$.}
\label{tikz map of Ew}

\makebox[\textwidth][c]{
\begin{tikzpicture}[
dot/.style = {circle, fill, minimum size=5pt,
              inner sep=0pt, outer sep=0pt},
decoration={snake,amplitude=1pt,segment length=2mm,pre=lineto,pre length=3pt,post=lineto,post length=3pt},
>=stealth',
]

\coordinate (0) at (0,0);
\coordinate (upperleft) at (-7,9);
\coordinate (topleft) at (-7.07,10);
\coordinate (upperright) at (7,9);
\coordinate (topright) at (7.07,10);
\coordinate (DeepBottom) at (2,4.25);

\fill[gray!10] (0,4) circle (4);
\draw[dotted] (0,4) circle (4);

\begin{scope}
	\clip (0,4) circle (4);
	\draw[fill=gray!20,thick,dotted] (1,4.5) parabola (3,10) -- (1,10) -- cycle;
	\draw[fill=gray!20,thick,dotted] (1,4.5) parabola (-2,10) -- (1,10) -- cycle;
	\draw[very thick,draw=gray!20] (1,4.5) -- (1,10);
\end{scope}	

\draw[thick, dashed] (0) to[out=180, in=275] (upperleft);
\draw[thick, dotted] (upperleft) to[out=95, in=275] (topleft);
\draw[thick, dashed] (0) to[out=0, in=265] (upperright);
\draw[thick, dotted] (upperright) to[out=85, in=265] (topright);
                    
\node (Dw) at (-6,8.75) {\huge$\mathcal{D}_\weak$};
\node (Ew) at (-3.75,7) {\huge$\mathcal{E}_\weak$};
                    
\node[dot, label={[xshift=.65cm]$\mathbf{1} \mbox{=} \sup\mathcal{E}_\weak$}] (cpa) at (0,8) {};
\node[dot, label=right:$\mathbf{d}_q$] (luaq) at (.7,6.7) {};
\node[dot, label=right:$\mathbf{s}_\delta$] (shiftcomplexdelta) at (-.4,5.7) {};
\node[dot, label=right:$\mathbf{d}_\slow$] (luaslow) at (1.6,5) {};
\node[dot, label=below right:$\weakdeg(L)$] (pdeep) at (1,4.5) {};
\node[dot, label=above:$\mathbf{r}_1$] (mlr) at (-1.5,4.5) {};
\node[dot, label=left:$\mathbf{d}_p$] (luap) at (-2.5,3) {};
\node[dot, label=above right:$\mathbf{r}_f$] (complexf) at (-1.5,3) {};
\node[dot, label=right:$\mathbf{s}_g$] (shiftcomplexg) at (-.5,3) {};
\node[dot, label=below:$\mathbf{d}_\rec$] (drec) at (-1.5,2) {};
\node[dot, label={[xshift=.67cm]$\mathbf{0}=\inf\mathcal{D}_\weak$}] (rec) at (0,0) {};

\node (dslowpdeep) at (1.27,4.8) {$\rotatebox{40}{=}$};
\node (dslowpdeepq) at (1.2,5.1) {?};

\draw[<->,decorate] ([xshift=.05cm]luap.east) to ([xshift=-.05cm]complexf.west);
\draw[<->,decorate] ([xshift=.05cm]complexf.east) to ([xshift=-.05cm]shiftcomplexg.west);
\draw[->,decorate,bend right] ([xshift=-.05cm,yshift=-.1cm]luaq.south west) to ([xshift=.05cm,yshift=.05cm]shiftcomplexdelta.north);
\draw[->,decorate,bend right=70,looseness=1.25] ([xshift=-.1cm]luaq.north west) to ([xshift=-.1cm,yshift=.1cm]luap.north);

\draw[->,bend right] ([yshift=-.05cm,xshift=-.1cm]mlr.west) to ([yshift=.1cm]luap.north);
\draw[->] ([yshift=-.125cm]mlr.south) to ([yshift=.1cm]complexf.north);
\draw[->,bend left] ([yshift=-.05cm,xshift=.1cm]mlr.east) to ([yshift=.1cm]shiftcomplexg.north);
\draw[->,bend right] ([yshift=-.1cm]luap.south) to ([yshift=.05cm,xshift=-.1cm]drec.west);
\draw[->] ([yshift=-.1cm]complexf.south) to ([yshift=.1cm]drec.north);
\draw[->,bend left] ([yshift=-.1cm]shiftcomplexg.south) to ([yshift=0.05cm,xshift=.1cm]drec.east);
\draw[->] ([yshift=-.1cm,xshift=.05cm]luaq.south) to ([yshift=0.05cm,xshift=-.05cm]luaslow.north);
\draw[->] ([yshift=-.1cm,xshift=.1cm]cpa.south) to ([yshift=0.1cm,xshift=-.1cm]luaq.north);
\draw[->,bend left=50] ([yshift=-.08cm]pdeep.south) to ([yshift=-.1cm,xshift=.1cm]drec.east);

\fill[draw=black!0, fill=gray!10] (-2.06,4.225) circle (4pt);
\draw[->, decorate, bend right=60, looseness=1.45] ([xshift=-.1cm]luaq.south west) to ([xshift=-.1cm,yshift=.1cm]complexf.north);

\draw[bend right, looseness=.5] ([xshift=.1cm,yshift=.01cm]mlr.east) to node[midway,description,fill=gray!10]{\textsf{x}} ([xshift=-.05cm,yshift=-.1cm]pdeep.west);
\draw ([xshift=-.1cm,yshift=-.1cm]shiftcomplexdelta.south west) to node[midway,description,fill=gray!10]{\textsf{x}} ([xshift=.1cm,yshift=.1cm]mlr.north east);
\draw[bend left=40] ([yshift=-.1cm,xshift=-.05cm]luaq.south) to node[pos=.7,description,fill=gray!10]{\textsf{x}} ([xshift=.1cm,yshift=.05cm]mlr.east);

\matrix [draw,fill=white] at (6.75,7) {
  \node [label=right:$\mathcal{E}_\weak$] {\testclr{gray!10}~:}; \\
  \node [label=right:deep region of $\mathcal{E}_\weak$] {\testclr{gray!20}~:}; \\
  \node [label=right:$\weakdeg\bigl(\ldnr(r)\bigr)$] {$\mathbf{d}_r$:}; \\
  \node [label=right:$\weakdeg\bigl(\ldnr_\rec\bigr){=}\weakdeg(\complex)$] {\hspace{-7pt}$\mathbf{d}_\rec$:}; \\
  \node [label=right:\hspace{-7pt}$\weakdeg\bigl(\ldnr_\slow\bigr)$] {\hspace{-5pt}$\mathbf{d}_\slow$:~~~}; \\
  \node [label=right:$\weakdeg\bigl(\shiftcomplex(h)\bigr)$] {$\mathbf{s}_h$:}; \\
  \node [label=right:$\weakdeg\bigl(\complex(h)\bigr)$] {$\mathbf{r}_h$:}; \\
  \node [label=right:$\bigcup\{\text{deep $\Pi^0_1$ $P\subseteq \cantor$}\}$] {~$L$:}; \\
  \node [label=right:${-} \weakge {-}$] {$\to$\,:}; \\
  \node [label=right:$\exists$ $\weakgeq$-relations] {{\hspace{-4.5pt}\xrsquigarrow{~~~}}:}; \\
  \node [label=right:$\weakleq$-incomparable] {{\hspace{-4.5pt}\xxarrow{~~~}}:}; \\
};

\end{tikzpicture}
}
\end{figure}
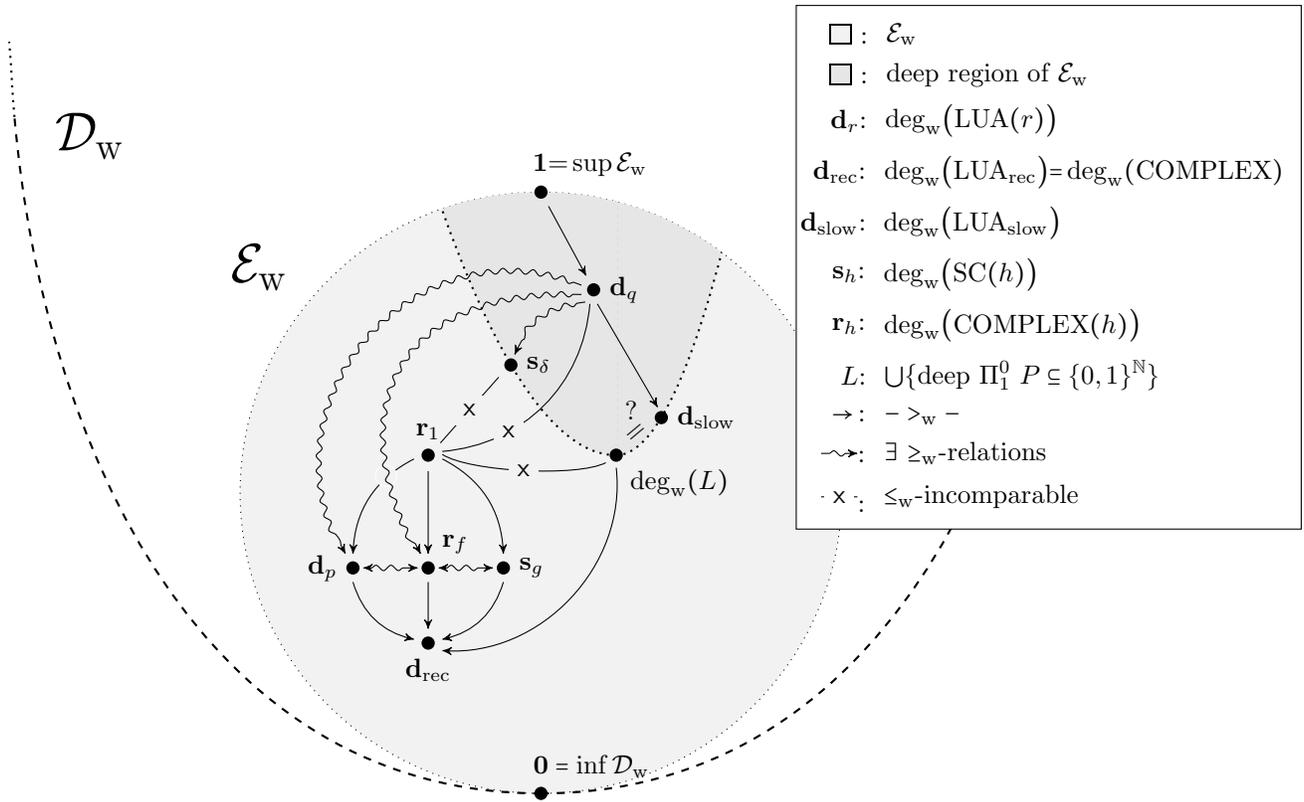

\section{Basic Conventions and Notation}
\label{basic conventions and notation section}

$\mathbb{N}$ is the set of natural numbers (including $0$). 
$\mathbb{Z}$ is the set of integers. 
$\mathbb{Q}$ is the set of rational numbers. 
$\mathbb{R}$ is the set of real numbers. 
Open, closed, and half-open intervals in $\mathbb{R}$ are written $(a,b)$, $[a,b]$, and $\co{a,b},\oc{a,b}$, respectively, for any $-\infty \leq a \leq b \leq \infty$. 
If $\mathbb{F}$ is any one of $\mathbb{N}$, $\mathbb{Z}$, $\mathbb{Q}$, or $\mathbb{R}$ and $a \in \mathbb{F}$, then we write $\mathbb{F}_{\geq a} \coloneq \{ x \in \mathbb{F} \mid a \leq x\}$ and $\mathbb{F}_{> a} \coloneq \{ x \in \mathbb{F} \mid a < x\}$. 

$n \bmod m$ is the remainder after dividing $n$ by $m$. ${}^mn$ denotes the $m$-th tetration of $n$. $\log_2^k$ denotes the composition of $k$-many base-$2$ logarithms. $\lceil {-} \rceil$ and $\lfloor {-} \rfloor$ denote the ceiling and floor functions, respectively. When it would increase readibility, we write $\exp_a(n) = a^n$ for $a > 0$. 

Set containment is denoted by $\subseteq$ and proper containment is denoted by $\subset$ or $\subsetneq$. $\mathcal{P}(S)$ denotes the power set of $S$, while $\mathcal{P}_\fin(S)$ denotes the set of finite subsets of $S$. $B^A$ denotes the set of all functions with domain $A$ and codomain $B$. 

Ordered $n$-tuples (or just `$n$-tuples') are denoted using angled brackets, as in $\langle x_0,x_1,x_2,\ldots,x_{n-1} \rangle$. We assume that if $\langle x_0,x_1,x_2,\ldots,x_{n-1}\rangle = \langle y_0,y_1,y_2,\ldots,y_{m-1}\rangle$, then $n=m$. We sometimes identify an $n$-tuple $\langle x_0,x_1,x_2,\ldots,x_{n-1} \rangle$ with the set $\{ \langle 0,x_0\rangle, \langle 1,x_1\rangle, \langle 2,x_2\rangle, \ldots, \langle n-1,x_{n-1}\rangle\}$ and with the function $f \colon \{0,1,2,\ldots,n-1\} \to \{x_0,x_1,x_2,\ldots,x_{n-1}\}$ defined by $f(i) \coloneq x_i$ for each $i \in \{0,1,2,\ldots,n-1\}$. More generally, we identify a function $f \colon A \to B$ with its graph $\{\langle a,b \rangle \in A \times B \mid b = f(a)\}$. 

Given a function $f \colon A \to B$, we write $\dom f = A$ and $\cod f = B$. Given subsets $A_0 \subseteq A$ and $B_0 \subseteq B$, we write $f[A_0] \coloneq \{ f(a) \mid a \in A_0\}$ and $f^{-1}[B_0] \coloneq \{ a \in A \mid f(a) \in B_0\}$ for the preimage of $B_0$. $\im f$ is defined to be equal to $f[A]$. $f \restrict A_0$ denotes the restriction of $f$ to $A_0$. If $\dom f = \mathbb{N}$, then we write $f \restrict n$ for the restriction $f \restrict \{0,1,2,\ldots,n-1\}$.

If $f \colon \mathbb{N} \to \mathbb{R}$ is nondecreasing, we define $f^\inverse \colon \mathbb{R} \to \mathbb{N}$ by $f^\inverse(x) \coloneq \text{least $m$ such that $f(m) \geq x$}$. 

Given functions $f,g \colon \mathbb{N} \to \mathbb{R}$, we write $f \domleq g$, read ``$g$ \textdef{dominates} $f$'' or ``$f$ is \textdef{dominated by} $g$'' to mean that $f(n) \leq g(n)$ for almost all $n \in \mathbb{N}$.

A \textdef{partial function} $f \colonsub A \to B$ is a function $f \colon A_0 \to B$ for some $A_0 \subseteq A$. Given $a \in A$, then $f(a)$ is said to be \textdef{defined} or to \textdef{converge}, written $f(a) \converge$, if $a \in \dom f$, otherwise $f(a)$ is said to be \textdef{undefined} or \textdef{diverge}, written $f(a) \diverge$. If $f \colon a \mapsto b$, we sometimes write $f(a) \converge = b$. If $f$ and $g$ are partial functions ${\subseteq} A \to B$ and $a \in A$, then we write $f(a) \simeq g(a)$ to mean that either $a \in \dom f \cap \dom g$ and $f(a) = g(a)$ (i.e., $f(a)$ and $g(a)$ both converge and are equal) or that $a \notin \dom f \cup \dom g$ (i.e., $f(a)$ and $g(a)$ both diverge). $f$ is \textdef{total} if $\dom f = A$. 

Given a set $S$, a \textdef{string over $S$} -- or simply a \textdef{string} if $S$ is understood -- is any element of $S^n$ for some $n \in \mathbb{N}$. $S^\ast$ is the set of all strings over $S$, i.e., $S^\ast = \bigcup_{n \in \mathbb{N}}{S^n}$. 
Given a string $\sigma \in S^\ast$, its \textdef{length} $|\sigma|$ is the unique $n \in \mathbb{N}$ for which $\sigma \in S^n$. 
Given $k < |\sigma|$, $\sigma(k)$ is the $k$-th coordinate of $\sigma$, so that $\langle \sigma(0), \sigma(1),\ldots,\sigma(|\sigma|-1)\rangle = \sigma$. 
If $\sigma = \langle s_0,s_1,\ldots,s_n\rangle$ and $\tau = \langle t_0,t_1,\ldots,t_m\rangle$ are strings over $S$, their \textdef{concatenation} $\sigma \concat \tau$ is given by
\begin{equation*}
\sigma \concat \tau = \langle s_0,s_1,\ldots,s_n,t_0,t_1,\ldots,t_m\rangle.
\end{equation*}
Given $\sigma \in S^\ast$ and $n \leq |\sigma|$, $\sigma \restrict n$ denotes the string $\langle \sigma(0), \sigma(1), \ldots, \sigma(n-1)\rangle$. 
Given $\sigma,\tau \in S^\ast$, then $\sigma$ is an \textdef{initial segment} of $\tau$ (equivalently, $\tau$ is an \textdef{extension} of $\sigma$) if $\sigma = \tau \restrict |\sigma|$, written $\sigma \subseteq \tau$. $\sigma$ is a \textdef{proper} initial segment of $\tau$ (equivalently, $\tau$ is a \textdef{proper} extension of $\sigma$) if $\sigma \subseteq \tau$ and $\sigma \neq \tau$, written $\sigma \subset \tau$. $\sigma$ and $\tau$ are \textdef{compatible} if either $\sigma \subseteq \tau$ and $\tau \subseteq \sigma$, otherwise they are \textdef{incompatible}. 
A set of strings $A \subseteq S^\ast$ is \textdef{prefix-free} if $\sigma \nsubseteq \tau$ for all distinct elements $\sigma,\tau$ in $A$.
If $\leq$ is a partial order on $S$, then the \textdef{lexicographical ordering} $\leq_\lex$ on $S^\ast$ is defined by setting $\sigma \leq_\lex \tau$ if $\sigma \subseteq \tau$ or $\sigma(k) < \tau(k)$ for the least index $k$ at which $\sigma(k) \neq \tau(k)$. The \textdef{shortlex ordering} $\leq_\shortlex$ on $S^\ast$ is defined by setting $\sigma \leq_\shortlex \tau$ if $|\sigma| < |\tau|$ or if $|\sigma| = |\tau|$ and $\sigma \leq_\lex \tau$, i.e., we order by length first, then lexicographically. 

Suppose $\sigma \in S^\ast$ and $f \colon \mathbb{N} \to S$ are given. $\sigma$ is an \textdef{initial segment} of $f$ (equivalently, $f$ is an \textdef{extension} of $\sigma$) if $f \restrict |\sigma| = \sigma$. $\sigma$ and $f$ are \textdef{incompatible} if $\sigma \nsubset f$. Finally, we define $\sigma \concat f \colon \mathbb{N} \to S$ by
\begin{equation*}
(\sigma \concat f)(n) \coloneq \begin{cases} \sigma(n) & \text{if $n < |\sigma|$,} \\ f(n-|\sigma|) & \text{otherwise.} \end{cases} 
\end{equation*}

The \textdef{Baire space} $\baire$ is endowed with the topology with basic open sets 
\begin{equation*}
\bbracket{\sigma} \coloneq \{ f \in \baire \mid \sigma \subset f \}
\end{equation*}
for $\sigma \in \mathbb{N}^\ast$. $\baire$, with this topology, is a non-empty zero-dimensional perfect polish space whose compact subsets have empty interior (with these properties characterizing $\baire$ up to homeomorphism). 

The \textdef{Cantor space} $\cantor$ is endowed with the subspace topology coming from $\baire$. Alternatively, it has the topology with basic open sets
\begin{equation*}
\bbracket{\sigma}_2 \coloneq \{ X \in \cantor \mid \sigma \subset X\}
\end{equation*}
for $\sigma \in \{0,1\}^\ast$. $\cantor$, with this topology, is a non-empty zero-dimensional compact perfect Polish space (with these properties characterizing $\cantor$ up to homeomorphism). We make the usual identification between elements of $\cantor$ and subsets of $\mathbb{N}$. The \textdef{fair coin measure} $\lambda$ is the outer measure on $\cantor$ induced by the assignments $\lambda(\bbracket{A}_2) \coloneq \sum_{i=1}^n{2^{-|\sigma_i|}}$ where $A = \{ \sigma_1,\sigma_2,\ldots,\sigma_n\} \subseteq \{0,1\}^\ast$ is prefix-free.

Given $f_0,f_1,\ldots,f_{n-1} \in \baire$, we define $f_0 \oplus f_1 \oplus \cdots \oplus f_{n-1} \in \baire$ by 
\begin{equation*}
(f_0 \oplus f_1 \oplus \cdots \oplus f_{n-1})(x) \coloneq f_{x \bmod n}(\lfloor x/n \rfloor).
\end{equation*}
E.g., $(f_0 \oplus f_1)(2x) = f_0(x)$ and $(f_0 \oplus f_1)(2x+1) = f_1(x)$. The assignment $\langle f_0,f_1,\ldots,f_{n-1}\rangle \mapsto f_0 \oplus f_1 \oplus \cdots \oplus f_{n-1}$ defines a homeomorphism $(\baire)^n \to \baire$, and restricting to $(\cantor)^n$ also yields a homeomorphism $(\cantor)^n \to \cantor$. 

For $k \geq 2$, the functions $\pi^{(k)} \colon \mathbb{N}^k \to \mathbb{N}$ denote the bijections defined recursively by
\begin{align*}
\pi^{(2)}(x,y) & \coloneq 2^x(2y+1)-1, \\
\pi^{(k+1)}(x_1,x_2,\ldots,x_k,x_{k+1}) & \coloneq \pi^{(2)}(\pi^{(k)}(x_1,x_2,\ldots,x_k),x_{k+1}).
\end{align*}
We additionally define $\pi^{(1)} \colon \mathbb{N} \to \mathbb{N}$ and $\pi^{(0)} \colon \{\langle\rangle\} \to \mathbb{N}$ by setting $\pi^{(1)} \coloneq \id_\mathbb{N}$ and $\pi^{(0)}(\langle\rangle) \coloneq 0$.

We define a bijection $\str \colon \mathbb{N} \to \{0,1\}^\ast$ by 
\begin{equation*}
\str(n) = \sigma \iff n+1 = \sum_{i=0}^{k-1}{\sigma(i) \cdot 2^i}+2^k.
\end{equation*}
Note that $n \leq m$ if and only if $\str(n) \leq_\shortlex \str(m)$.

We define a bijection $\#_\infty \colon \mathbb{N}^\ast \to \mathbb{N}$ by setting
\begin{equation*}
\#_\infty(\sigma) \coloneq \sum_{i=0}^{|\sigma|-1}{\exp_2(\sigma(0) + \sigma(1) + \cdots + \sigma(i) + i)}.
\end{equation*}
for each $\sigma \in \mathbb{N}^\ast$.
Note that if $\sigma \subseteq \tau$, then $\#_\infty(\sigma) \leq \#_\infty(\tau)$.

If $\mathbb{D}$ is an understood domain of discourse and $S \subseteq \mathbb{D}$, then the \textdef{characteristic function for $S$} is the function $\chi_S \colon \mathbb{D} \to \{0,1\}$ defined by
\begin{equation*}
\chi_S(x) \coloneq \begin{cases} 1 & \text{if $x \in S$,} \\ 0 & \text{otherwise.} \end{cases}
\end{equation*}

\section{Computability - Definitions, Notation, and Conventions}
\label{computability definitions notation and conventions section}

Here we briefly review the definitions of recursiveness/computability for various objects. With the possible exception of notation given in \cref{basic conventions and notation section} and through the remaining chapters, we have attempted to adhere to standard notation and terminology whenever possible, so the reader is encouraged to consult any of the standard references (e.g., \cite{rogers1967theory}, \cite{soare2016turing}, \cite{simpson2009degrees}, etc.) for additional background. 

\subsection{(Partial) Recursive Functions and Sets}

We define the collections of elementary, primitive, or partial recursive functions as the smallest collections of partial functions ${\subseteq} \mathbb{N}^k \to \mathbb{N}$ closed under particular operations. 

\begin{definition}
\qspace
\begin{itemize}
\item The \textdef{initial functions} consist of the zero function $Z \colon \mathbb{N} \to \mathbb{N}$ ($\forall x \qspace (Z(x) \coloneq 0)$), the successor function $S \colon \mathbb{N} \to \mathbb{N}$ ($\forall x \qspace (S(x) \coloneq x+1)$), and for each $k \in \mathbb{N}_{>0}$ and $j \in \{0,1,\ldots,k-1\}$ the projection $\pi_j^k \colon \mathbb{N}^k \to \mathbb{N}$ ($\forall x_0,x_1,\ldots,x_{k-1} \qspace (\pi_j^k(x_0,x_1,\ldots,x_{k-1}) \coloneq x_j)$).

\item Given $f$ $k$-ary and $g_1,g_2,\ldots,g_k$ each $n$-ary, their \textdef{generalized composition} is the $n$-ary function $h$ where for $\mathbf{x} \in \mathbb{N}^n$ we have
\begin{equation*}
h(\mathbf{x}) \coloneq f(g_1(\mathbf{x}),g_2(\mathbf{x}),\ldots,g_k(\mathbf{x})).
\end{equation*}

\item Given $f$ $(k+1)$-ary, its \textdef{bounded sum} and \textdef{bounded product} are the $k$-ary functions $g$ and $h$, respectively, where for $n \in \mathbb{N}$ and $\mathbf{x} \in \mathbb{N}^k$ we have
\begin{equation*}
g(n,\mathbf{x}) \coloneq \sum_{i=0}^n{f(i,\mathbf{x})} \quad \text{and} \quad h(n,\mathbf{x}) \coloneq \prod_{i=0}^n{f(i,\mathbf{x})}.
\end{equation*}

\item Given $f$ $(k+2)$-ary and $g$ $k$-ary, the result of \textdef{primitive recursion} applied to $f$ and $g$ is the $(k+1)$-ary function $h$ defined recursively for $n \in \mathbb{N}$ and $\mathbf{x} \in \mathbb{N}^k$ by
\begin{align*}
h(0,\mathbf{x}) & \coloneq g(\mathbf{x}) \\
h(n+1,\mathbf{x}) & \coloneq f(n,h(n,\mathbf{x}),\mathbf{x}).
\end{align*}

\item Given $f$ $(k+1)$-ary, its \textdef{minimization} is the $k$-ary function $g$ where for $\mathbf{x} \in \mathbb{N}^k$ we have
\begin{equation*}
g(\mathbf{x}) \coloneq \text{least $y$ such that $f(y,\mathbf{x}) = 1$}.
\end{equation*}

\end{itemize}
\end{definition}

\begin{definition}
The collection of \ldots
\begin{description}
\item[$\ldots$] \textdef{elementary recursive functions} is the smallest collection $\mathcal{C}$ of total functions of the form $\mathbb{N}^k \to \mathbb{N}$ containing the initial functions and the function $\dotminus \colon \mathbb{N}^2 \to \mathbb{N}$ defined by $x \dotminus y \coloneq \max\{x - y, 0\}$ and closed under generalized composition and taking bounded sums and products.

\item[$\ldots$] \textdef{primitive recursive functions} is the smallest collection $\mathcal{C}$ of total functions of the form $\mathbb{N}^k \to \mathbb{N}$ containing the initial functions and closed under generalized composition and primitive recursion.

\item[$\ldots$] \textdef{partial recursive functions} is the smallest collection $\mathcal{C}$ of partial functions of the form ${\subseteq} \mathbb{N}^k \to \mathbb{N}$ containing the initial functions and closed under generalized composition, primitive recursion, and minimization of its total members.


\item[$\ldots$] \textdef{total recursive functions} is the collection of total partial recursive functions.
\end{description}
\end{definition}

\begin{remark}
There are many other characterizations of the above classes. Regarding the partial recursive functions (which are exactly the partial functions computed by Turing machine programs or by register machine programs) Church's Thesis claims that any reasonable characterization of the `effectively computable' partial functions is equivalent to being partial recursive.
\end{remark}

The notion of recursiveness is extended to subsets of $\mathbb{N}^k$:

\begin{definition}[recursive predicate]
A predicate $S \subseteq \mathbb{N}^k$ is \textdef{recursive} if its characteristic function $\chi_S \colon \mathbb{N}^k \to \{0,1\}$ is recursive.
\end{definition}

To extend the notion of partial recursiveness to partial functions whose domains or codomains are not $\mathbb{N}^k$ for some $k$, we make use of \Godel\ numbers. 

\begin{definition}
Suppose $S$ and $T$ are countable sets $S$ and $T$ and then fix injections $\#_S \colon S \to \mathbb{N}$ and $\#_T \colon T \to \mathbb{N}$ for which $\im \#_S$ and $\im \#_T$ are both recursive subsets of $\mathbb{N}$, which we informally call \textdef{\Godel\ numberings} of $S$ and $T$, respectively. Then a partial function $f \colonsub S \to T$ is \textdef{partial recursive} (with respect to $\#_S$ and $\#_T$) if the partial function $\#_T \circ f \circ \#_S^{-1} \colonsub \mathbb{N} \to \mathbb{N}$ is partial recursive.
\end{definition}

\begin{convention} \label{godel numbering convention}
Unless stated otherwise, we assume the following \Godel\ numberings:
\begin{itemize}
\item $\mathbb{N}^k$ is \Godel\ numbered by $\pi^{(k)}$. 
\item $\{0,1\}^\ast$ is \Godel\ numbered by $\#_2 \coloneq \str^{-1}$.
\item $\mathbb{N}^\ast$ is \Godel\ numbered by $\#_\infty$.
\item If $\# \colon S \to \mathbb{N}$ is a \Godel\ numbering of $S$, then $S^\ast$ is \Godel\ numbered by setting $\#(\langle s_0,s_1,\ldots,s_{k-1}\rangle) \coloneq \#_\infty(\langle \#(s_0), \#(s_1),\ldots, \#(s_{k-1})\rangle)$.
\item If $\# \colon S \to \mathbb{N}$ is a \Godel\ numbering of $S$, then $\mathcal{P}_\fin(S)$ is \Godel\ numbered by setting $\#(T) \coloneq \\ \#(\langle s_0,s_1,\ldots,s_{k-1}\rangle)$, where $T = \{s_0,s_1,\ldots,s_{k-1}\}$ and $\#(s_0) < \#(s_1) < \cdots < \#(s_{k-1})$.
\item $\mathbb{Z}$ is \Godel\ numbered by setting $\#_\mathbb{Z}(n) \coloneq 2n$ if $n \geq 0$ and $\#_\mathbb{Z}(n) \coloneq 2n+1$ if $n < 0$.
\item $\mathbb{Q}$ is \Godel\ numbered by setting $\#_\mathbb{Q}(r) \coloneq \pi^{(2)}(n,m)$, where if $r = p/q$ with $\gcd(p,q) = 1$ and $q \geq 1$ then $n = \#_\mathbb{Z}(p)$ and $q$ is the $m$-th positive integer coprime with $p$.
\end{itemize}
\end{convention}

\begin{remark}
Under any reasonable \Godel\ numberings, such as in \cref{godel numbering convention}, relevant operations and relations on the \Godel\ numbered objects yield recursive operations and relations on their \Godel\ numbers. E.g., for strings (either in $\mathbb{N}^\ast$ or $\{0,1\}^\ast$), this includes for operations the length and concatenation functions and for relations the the lexicographical ordering, the shortlex ordering, and $\subseteq$; for integers and rationals, this includes virtually all number theoretic operations, the standard total orderings, and the divisibility relation.

In some cases our choices of \godel\ numberings are purposeful, as in the cases of $\pi^{(k)}$ and $\#_2$. Unless otherwise stated, we have chosen particular \godel\ numberings for the remaining cases solely for exactness, and in principle the reader may substitute them with their preferred encodings. 
\end{remark}

\subsection{Enumerations of the Partial Recursive Functions}

Thanks to their connection to effective algorithms, a partial recursive function $\theta \colonsub \mathbb{N}^k \to \mathbb{N}$ can be described by a single natural number $e$ which we may think of as encoding an algorithm computing $\theta$. 

\begin{definition}[effective enumeration]
An \textdef{enumeration} $\varphi_0,\varphi_1,\varphi_2,\ldots$ of the $k$-place partial recursive functions is any surjection from $\mathbb{N}$ onto the set of all $k$-place partial recursive functions $\theta \colonsub \mathbb{N}^k \to \mathbb{N}$. Such an enumeration is \textdef{effective} if the partial function $\Phi(e,x_1,x_2,\ldots,x_k) \simeq \varphi_e(x_1,x_2,\ldots,x_k)$ is partial recursive. 
\end{definition}

\begin{convention}
Unless otherwise specified, $\varphi_0,\varphi_1,\varphi_2,\ldots$ will be an enumeration of the $1$-place partial recursive functions. 
For concision, $\varphi_\bullet$ will denote an enumeration $\varphi_0,\varphi_1,\varphi_2,\ldots$.

Given an enumeration $\varphi_\bullet$ and $k \in \mathbb{N}_{\geq 1}$, we denote by $\varphi_\bullet^{(k)}$ the enumeration of the $k$-place partial recursive functions defined by $\varphi_e^{(k)}(x_1,\ldots,x_k) \simeq \varphi_e(\pi^{(k)}(x_1,\ldots,x_k))$ for all $e,x_1,\ldots,x_k \in \mathbb{N}$.
\end{convention}

For many purposes, an enumeration $\varphi_\bullet$ satisfying stronger properties than simply being effective is required. An example of such a property is that the $S^m_n$ Theorem holds:

\begin{property}[$S^m_n$ Theorem] 
For all $m,n \in \mathbb{N}$ there exists a primitive\footnote{It is essential that $\pi^{(k)} \colon \mathbb{N}^k \to \mathbb{N}$ be primitive recursive for each $k$ so that the fulfillment of the $S^m_n$ Theorem (particularly, the primitive recursiveness of $S^m_n$) does not depend on the choice of $\pi^{(k)}$ for $k \in \mathbb{N}$.}
 recursive function $S^m_n\colon \mathbb{N}^{m+1} \to \mathbb{N}$ such that $\varphi_{S^m_n(e,x_1,\ldots,x_m)}^{(n)}(y_1,\ldots,y_n) \simeq \varphi_e^{(m+n)}(x_1,\ldots,x_m,y_1,\ldots,y_n)$ for all $e,x_1,\ldots,x_m,y_1,\ldots,y_n \in \mathbb{N}$.
\end{property}

A weaker version that often suffices for application (and does for our uses) is the following.

\begin{property}[Parametrization Theorem] \label{parametrization theorem}
For any partial recursive function $\theta \colonsub \mathbb{N}^2 \to \mathbb{N}$, there exists a total recursive function $f \colon \mathbb{N} \to \mathbb{N}$ such that $\varphi_{f(e)}(x) \simeq \theta(e,x)$ for all $e,x \in \mathbb{N}$.
\end{property}

The Parametrization Theorem has further implications.

\begin{prop} \label{implications of parametrization theorem}
\textnormal{(well-known)}
Suppose $\varphi_\bullet$ is an effective enumeration for which the Parametrization Theorem holds.
\begin{enumerate}[(a)]
\item For all $m,n\in\mathbb{N}$ there exists a total recursive function $S^m_n \colon \mathbb{N}^{m+1} \to \mathbb{N}$ such that
\begin{equation*}
\varphi_{S^m_n(e,x_1,\ldots,x_m)}^{(n)}(y_1,\ldots,y_n) \simeq \varphi_e^{(m+n)}(x_1,\ldots,x_m,y_1,\ldots,y_n)
\end{equation*}
for all $e, x_1, \ldots, x_m, y_1, \ldots, y_n \in \mathbb{N}$.

\item \textnormal{Recursion Theorem:} For any partial recursive function $\theta \colonsub \mathbb{N}^{n+1} \to \mathbb{N}$ there exists an $e \in \mathbb{N}$ such that $\varphi_e^{(n)}(x_1,\ldots,x_n) \simeq \theta(e,x_1,\ldots,x_n)$ for all $x_1,\ldots,x_n \in \mathbb{N}$.

\end{enumerate}
\end{prop}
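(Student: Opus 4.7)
The plan is to prove (a) by reducing multi-variable parametrization to the 2-variable version via the tupling functions $\pi^{(k)}$, and then to prove (b) by Kleene's classical diagonal argument applied to the case $m=1$ of (a). Both arguments are standard, so the main task is bookkeeping the encodings correctly.

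\textbf{Proof of (a).} Fix $m,n \in \mathbb{N}$. Define the partial function $\theta' \colonsub \mathbb{N}^2 \to \mathbb{N}$ by decoding each argument through the bijections $\pi^{(m+1)}$ and $\pi^{(n)}$: given $z,w \in \mathbb{N}$, let $(e,x_1,\ldots,x_m) = (\pi^{(m+1)})^{-1}(z)$ and $(y_1,\ldots,y_n) = (\pi^{(n)})^{-1}(w)$, and set
\begin{equation*}
\theta'(z,w) \simeq \varphi_e^{(m+n)}(x_1,\ldots,x_m,y_1,\ldots,y_n) \simeq \Phi(e, \pi^{(m+n)}(x_1,\ldots,x_m,y_1,\ldots,y_n)),
\end{equation*}
where $\Phi$ is the universal partial recursive function witnessing effectiveness of $\varphi_\bullet$. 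Since the component projections of $(\pi^{(k)})^{-1}$ are primitive recursive and $\Phi$ is partial recursive, $\theta'$ is partial recursive. By the Parametrization Theorem, there is a total recursive $f \colon \mathbb{N} \to \mathbb{N}$ with $\varphi_{f(z)}(w) \simeq \theta'(z,w)$ for all $z,w$. Define
\begin{equation*}
S^m_n(e,x_1,\ldots,x_m) \coloneq f\bigl(\pi^{(m+1)}(e,x_1,\ldots,x_m)\bigr),
\end{equation*}
which is total recursive as the composition of a total recursive function with a primitive recursive one. Unfolding definitions, $\varphi_{S^m_n(e,x_1,\ldots,x_m)}^{(n)}(y_1,\ldots,y_n) = \varphi_{S^m_n(e,x_1,\ldots,x_m)}(\pi^{(n)}(y_1,\ldots,y_n))$ equals $\theta'(\pi^{(m+1)}(e,x_1,\ldots,x_m), \pi^{(n)}(y_1,\ldots,y_n))$, which simplifies (by construction of $\theta'$) to $\varphi_e^{(m+n)}(x_1,\ldots,x_m,y_1,\ldots,y_n)$ as required.

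\textbf{Proof of (b).} Given partial recursive $\theta \colonsub \mathbb{N}^{n+1} \to \mathbb{N}$, define
\begin{equation*}
\sigma(d,x_1,\ldots,x_n) \simeq \theta\bigl(S^1_n(d,d),x_1,\ldots,x_n\bigr),
\end{equation*}
which is partial recursive in $d, x_1,\ldots,x_n$ because $S^1_n$ is total recursive by part (a) and $\theta$ is partial recursive. Since $\varphi_\bullet^{(n+1)}$ enumerates all $(n+1)$-place partial recursive functions, fix $d_0 \in \mathbb{N}$ with $\varphi_{d_0}^{(n+1)} = \sigma$, and set $e \coloneq S^1_n(d_0,d_0)$. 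Then for all $x_1,\ldots,x_n \in \mathbb{N}$,
\begin{equation*}
\varphi_e^{(n)}(x_1,\ldots,x_n) = \varphi_{S^1_n(d_0,d_0)}^{(n)}(x_1,\ldots,x_n) \simeq \varphi_{d_0}^{(n+1)}(d_0,x_1,\ldots,x_n) \simeq \sigma(d_0,x_1,\ldots,x_n) \simeq \theta(e,x_1,\ldots,x_n),
\end{equation*}
where the second equivalence is the defining property of $S^1_n$ from (a) applied with $m=1$.

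There is no real obstacle: the only subtlety is ensuring that the encoding functions $\pi^{(k)}$ used in the definition of $\varphi_\bullet^{(k)}$ (per the stated convention) are primitive recursive with primitive recursive inverse projections, so that packing and unpacking arguments preserves partial recursiveness. Once that is in hand, (a) is a direct application of the 2-variable Parametrization Theorem and (b) is the standard Kleene diagonalization using $S^1_n$.
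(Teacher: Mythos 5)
Your proof is correct and is the standard argument: reduce multi-variable parametrization to the two-variable Parametrization Theorem via the pairing bijections $\pi^{(k)}$ (whose projections are indeed primitive recursive for the paper's choice $\pi^{(2)}(x,y)=2^x(2y+1)-1$), then obtain the Recursion Theorem by Kleene's diagonalization through $S^1_n$. The paper states this proposition as well-known and supplies no proof, so there is nothing to compare against; your write-up fills that gap correctly.
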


The Parametrization \emph{Theorem} is stated as a property of a enumeration rather than a feature of all effective enumerations due to the following observation.

\begin{definition}[admissible enumeration]
An enumeration $\varphi_\bullet$ is \textdef{admissible} if it is effective and for which the Parametrization Theorem holds.
\end{definition}

\begin{prop} \label{effective but not admissible}
\textnormal{\cite[following Definition 3]{rogers1958godel}}
There exists an effective enumeration $\varphi_\bullet$ which is not admissible.
\end{prop}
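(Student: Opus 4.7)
The plan is to exhibit a \emph{Friedberg numbering}---an effective enumeration of the $1$-place partial recursive functions in which every such function has a unique index---and then show that no such enumeration is admissible. The existence of Friedberg numberings is a classical result of Friedberg, and once it is invoked the failure of the Parametrization Theorem for the resulting $\varphi_\bullet$ is essentially immediate.

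First I would invoke Friedberg's theorem to obtain an effective enumeration $\varphi_\bullet$ of the $1$-place partial recursive functions such that $e \mapsto \varphi_e$ is injective when $\varphi_e$ is regarded as a partial function $\colonsub \mathbb{N} \to \mathbb{N}$. Second, suppose toward contradiction that this $\varphi_\bullet$ is admissible. Fix any admissible enumeration $\psi_\bullet$ (for instance a standard Turing-machine-based one) and consider the partial recursive function $\theta \colonsub \mathbb{N}^2 \to \mathbb{N}$ defined by $\theta(e,x) \simeq \psi_e(x)$. Applying the Parametrization Theorem to $\theta$ (with respect to $\varphi_\bullet$) yields a total recursive function $f\colon \mathbb{N} \to \mathbb{N}$ satisfying $\varphi_{f(e)}(x) \simeq \psi_e(x)$ for all $e, x \in \mathbb{N}$.

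Third, I would extract the contradiction from the injectivity of $\varphi_\bullet$. For all $e, e' \in \mathbb{N}$ we have $f(e) = f(e')$ iff $\varphi_{f(e)}$ and $\varphi_{f(e')}$ coincide as partial functions, iff $\psi_e = \psi_{e'}$ as partial functions. Since $f$ is total recursive and equality of natural numbers is decidable, this would render $\{\langle e, e'\rangle \in \mathbb{N}^2 \mid \psi_e = \psi_{e'}\}$ a recursive subset of $\mathbb{N}^2$. But fixing any $e_0$, Rice's theorem applied to the nontrivial index set $\{e \mid \psi_e = \psi_{e_0}\}$ shows that already the single section is non-recursive, and hence so is the equality relation. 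The contradiction shows that the Friedberg numbering $\varphi_\bullet$ is effective but not admissible.

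The main obstacle is the first step, namely Friedberg's theorem itself, which requires a finite-injury priority argument to simultaneously enumerate all partial recursive functions while preventing duplicate indices from ever appearing. This is standard in the recursion-theoretic literature and can simply be cited. A self-contained alternative would be to construct $\varphi_\bullet$ by direct diagonalization against each potential parametrization function $\psi_e$ of the specific $\theta$ above, stage by stage sabotaging the candidate by arranging that $\varphi_{\psi_e(\widehat n_e)}$ disagrees with $\psi_{\widehat n_e}$ on some fresh witness $\widehat n_e$; but organizing such a diagonalization while preserving both effectiveness and surjectivity of $\varphi_\bullet$ is essentially as delicate as Friedberg's construction itself, so citation is preferable.
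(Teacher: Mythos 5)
Your proposal is correct, and it is essentially the approach behind the paper's citation: the paper gives no proof of its own, deferring to Rogers (1958), where the example "following Definition 3" is precisely Friedberg's one-one enumeration. Your derivation of the contradiction (injectivity plus parametrization would make the index set $\{e \mid \psi_e = \psi_{e_0}\}$ recursive, contradicting Rice's theorem) is sound; an equivalent shortcut is to note that any admissible enumeration satisfies the padding lemma, so no function can have a unique index.
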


In \cite{rogers1958godel}, Rogers examines the relationships between different enumerations of the partial recursive functions\footnote{Within \cite{rogers1958godel}, Rogers uses the term `numbering' where we use `enumeration' and `semi-effective' where we use `effective'.}. There, an enumeration is defined as a surjection $\rho$ from a recursive subset $D_\rho$ of $\mathbb{N}$ onto the set of all $1$-place partial recursive functions. `Effectiveness' is defined as above, and the only modification necessary for `admissibility' is that the total recursive function $f$ in the statement of the Parametrization Theorem take values in $D_\rho$.

\begin{definition}
Suppose $\rho$ and $\tau$ are enumerations. We write $\rho \preceq \tau$ if there is a recursive function $g \colon D_\rho \to D_\tau$ such that $\rho = \tau \circ g$. 
\end{definition}

\begin{lem}
$\preceq$ is a preorder on the set of all effective enumerations.
\end{lem}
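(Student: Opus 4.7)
The plan is to verify the two defining properties of a preorder, namely reflexivity and transitivity, directly from the definition of $\preceq$. Neither property actually requires the enumerations to be effective, so the proof will go through for arbitrary enumerations; the restriction in the statement just reflects the context where $\preceq$ will be used.

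For reflexivity, given an effective enumeration $\rho$, I would exhibit the witnessing function $g \coloneq \id_{D_\rho}$, the identity on $D_\rho$. Since $D_\rho \subseteq \mathbb{N}$ is recursive (being the domain of an enumeration in Rogers' sense), the identity restricted to $D_\rho$ is a recursive function from $D_\rho$ to itself, and clearly $\rho = \rho \circ \id_{D_\rho}$, so $\rho \preceq \rho$.

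For transitivity, suppose $\rho \preceq \tau$ and $\tau \preceq \sigma$. Fix witnesses, i.e., recursive functions $g_1 \colon D_\rho \to D_\tau$ and $g_2 \colon D_\tau \to D_\sigma$ with $\rho = \tau \circ g_1$ and $\tau = \sigma \circ g_2$. I would then set $g \coloneq g_2 \circ g_1 \colon D_\rho \to D_\sigma$. The composition of two recursive functions is recursive (since $D_\tau$ is recursive, any recursive extensions of $g_1$ and $g_2$ to total recursive functions on $\mathbb{N}$ can be composed and the result restricted back to $D_\rho$ lands in $D_\sigma$), and a routine computation gives $\sigma \circ g = \sigma \circ g_2 \circ g_1 = \tau \circ g_1 = \rho$, so $\rho \preceq \sigma$.

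There is no real obstacle here: the whole proof is a bookkeeping exercise about recursive functions between recursive subsets of $\mathbb{N}$. The only subtlety worth mentioning — if one wants to be meticulous — is what ``recursive function $g \colon D_\rho \to D_\tau$'' means precisely, but under the standard convention (that such a $g$ is the restriction of a total recursive function $\widehat{g}\colon \mathbb{N} \to \mathbb{N}$ whose output on $D_\rho$ lies in $D_\tau$), both reflexivity and closure under composition are immediate, since $\id_\mathbb{N}$ is total recursive and compositions of total recursive functions are total recursive.
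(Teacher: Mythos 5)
Your proof is correct, and it is the standard argument: reflexivity via the identity on $D_\rho$ and transitivity via composition of the witnessing recursive functions, with the only care needed being the convention for recursive functions between recursive subsets of $\mathbb{N}$, which you handle appropriately. The paper states this lemma without proof, treating it as immediate, and your write-up is exactly the routine verification it has in mind.
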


\begin{prop} \label{admissible equals fully-effective}
\textnormal{\cite[Exercise 5.10]{soare1987recursively}}
Suppose $\tau$ is an admissible enumeration. Then $\tau$ is admissible if and only if $\rho \preceq \tau$ for any effective enumeration $\rho$.
\end{prop}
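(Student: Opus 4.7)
The plan is to prove the biconditional in its intended reading (taking the hypothesis to be that $\tau$ is an effective enumeration, since as literally stated the left side of the biconditional is trivially true). The key tool is the existence of at least one admissible enumeration $\varphi_\bullet$ (e.g.\ from a standard Turing-machine encoding), together with the fact that the domains $D_\rho$, $D_\tau$ of effective enumerations are recursive.

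For the $(\Rightarrow)$ direction, assume $\tau$ is admissible and let $\rho$ be any effective enumeration. Using that $D_\rho$ is recursive and that the partial function $\langle e,x\rangle \mapsto \rho_e(x)$ is partial recursive on $D_\rho \times \mathbb{N}$, I would define a partial recursive $\theta \colonsub \mathbb{N}^2 \to \mathbb{N}$ by
\begin{equation*}
\theta(e,x) \simeq \begin{cases} \rho_e(x) & \text{if } e \in D_\rho, \\ \text{undefined} & \text{otherwise.} \end{cases}
\end{equation*}
Applying the Parametrization Theorem for $\tau$ to $\theta$ yields a total recursive $f \colon \mathbb{N} \to D_\tau$ with $\tau_{f(e)}(x) \simeq \theta(e,x)$ for all $e,x$. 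Setting $g \coloneq f \restrict D_\rho$, I get $\tau_{g(e)} = \rho_e$ for every $e \in D_\rho$, i.e.\ $\rho = \tau \circ g$, which is exactly $\rho \preceq \tau$.

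For the $(\Leftarrow)$ direction, assume $\rho \preceq \tau$ for every effective $\rho$, and fix a concrete admissible enumeration $\varphi_\bullet$ (which exists and whose admissibility is part of the standard development). By hypothesis applied to $\varphi$, there is a recursive $h \colon D_\varphi \to D_\tau$ such that $\varphi = \tau \circ h$. To verify the Parametrization Theorem for $\tau$, let $\theta \colonsub \mathbb{N}^2 \to \mathbb{N}$ be partial recursive; by the Parametrization Theorem for $\varphi$ there is a total recursive $f_0 \colon \mathbb{N} \to D_\varphi$ with $\varphi_{f_0(e)}(x) \simeq \theta(e,x)$. Then $f \coloneq h \circ f_0 \colon \mathbb{N} \to D_\tau$ is total recursive and
\begin{equation*}
\tau_{f(e)}(x) \simeq \varphi_{f_0(e)}(x) \simeq \theta(e,x),
\end{equation*}
so $\tau$ is admissible.

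I do not anticipate a substantive obstacle: the argument is a standard ``transfer'' using one admissible enumeration as a pivot in the $(\Leftarrow)$ direction and the universal partial function $\theta$ encoding $\rho$ in the $(\Rightarrow)$ direction. The only care needed is the domain bookkeeping — namely ensuring $\theta$ is genuinely partial recursive on all of $\mathbb{N}^2$ (using recursiveness of $D_\rho$) and that the output of the Parametrization Theorem for $\tau$ always lands in $D_\tau$, which is built into the Rogers formulation of admissibility being used here.
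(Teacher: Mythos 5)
Your proof is correct and follows essentially the same route as the paper's: the forward direction applies the Parametrization Theorem for $\tau$ to the universal partial function of $\rho$, and the backward direction transfers admissibility along $\preceq$ by composing the translation map with the parametrizing function of a known admissible enumeration (the paper states this transfer as ``if $\tau$ is admissible and $\tau \preceq \rho$ then $\rho$ is admissible,'' which is your argument with the roles relabeled). Your explicit restriction of $f$ to $D_\rho$ in the forward direction is slightly more careful domain bookkeeping than the paper's, but the substance is identical.
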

\begin{proof} 
Suppose $\rho$ is an effective enumeration. It suffices to show that if $\tau$ is admissible, then \begin{enumerate*}[(i)] \item $\rho \preceq \tau$, and \item if $\tau \preceq \rho$, then $\rho$ is admissible. \end{enumerate*}

Suppose $\tau$ is admissible. $\rho$ is an effective enumeration, so the partial function $\Phi(e,x) \simeq \rho(e)(x)$ is recursive. Because $\tau$ is admissible, there exists a total recursive function $f \colon \mathbb{N} \to D_\tau$ such that $\tau(f(e))(x) \simeq \Phi(e,x)$ for all $e,x \in \mathbb{N}$, or equivalently that $\tau \circ f = \rho$. Thus, $\rho \preceq \tau$.

Now suppose $\tau \preceq \rho$, so that there is a total recursive function $g \colon D_\tau \to D_\rho$ such that $\tau = \rho \circ g$. Suppose $\theta \colonsub \mathbb{N}^2 \to \mathbb{N}$ is any partial recursive function. Because $\tau$ is admissible, there exists a total recursive function $f \colon \mathbb{N} \to \mathbb{N}$ such that $\tau(f(e))(x) \simeq \theta(e,x)$ for all $e,x \in \mathbb{N}$. But then $\rho((g\circ f)(e))(x) \simeq \theta(e,x)$ for all $e,x \in \mathbb{N}$, showing $\rho$ is admissible.
\end{proof}

\begin{cor}
Let $E$ denote the set of effective enumerations, let $\cong$ be the equivalence relation induced by $\preceq$ (i.e., $\rho \cong \tau$ if and only if $\rho \preceq \tau$ and $\tau \preceq \rho$), and let $\leq$ be the partial order on $E/{\cong}$ induced by $\preceq$ (i.e., $\rho/{\cong} \leq \tau/{\cong}$ if and only if $\rho \preceq \tau$). Then the poset $(E/{\cong},\leq)$ has a maximum, and that maximum is equal to the set of all admissible enumerations.
\end{cor}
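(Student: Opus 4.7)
The plan is to fix any admissible enumeration $\tau_0$ (a standard universal partial recursive function supplies one) and verify two things: that $\tau_0/{\cong}$ is a maximum in $(E/{\cong},\leq)$, and that $\tau_0/{\cong}$ is exactly the set of admissible enumerations. Everything reduces to repeated applications of \cref{admissible equals fully-effective}, so no new construction is required---only careful tracking of which of its two implications is being invoked.

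For the maximum claim, given any effective $\rho \in E$, the first half of the proof of \cref{admissible equals fully-effective} (``$\tau$ admissible implies $\rho \preceq \tau$'') yields $\rho \preceq \tau_0$ immediately, so $\rho/{\cong} \leq \tau_0/{\cong}$ in the quotient. Hence $\tau_0/{\cong}$ is a maximum element.

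For the characterization as the set of admissible enumerations, I would prove both inclusions. If $\rho$ is admissible, then interchanging the roles of $\tau_0$ and $\rho$ in the same implication gives $\tau_0 \preceq \rho$; combined with the $\rho \preceq \tau_0$ already in hand, this yields $\rho \cong \tau_0$, so every admissible enumeration lies in $\tau_0/{\cong}$. Conversely, if $\rho \cong \tau_0$ then in particular $\tau_0 \preceq \rho$, and the second half of \cref{admissible equals fully-effective} (``$\tau$ admissible and $\tau \preceq \rho$ imply $\rho$ admissible'') forces $\rho$ itself to be admissible. The only obstacle worth flagging is bookkeeping: \cref{admissible equals fully-effective} bundles two non-symmetric implications between $\preceq$ and admissibility, and each half of the argument above appeals to a different one, with the roles of $\tau$ and $\rho$ swapped in the inclusion going from admissibility to membership in $\tau_0/{\cong}$.
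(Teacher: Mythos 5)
Your argument is correct and is essentially the paper's intended one: the corollary is stated without proof precisely because it follows by applying \cref{admissible equals fully-effective} (more precisely, the two implications established in its proof) exactly as you do — part (i) to get $\rho \preceq \tau_0$ for every effective $\rho$ and hence maximality, and parts (i) and (ii) with roles swapped to identify $\tau_0/{\cong}$ with the set of admissible enumerations. Your explicit remark that an admissible enumeration exists, and that the two non-symmetric implications must be tracked separately, is exactly the right bookkeeping.
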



\begin{remark}
All standard enumerations of the $1$-place partial recursive functions are admissible.
\end{remark}

\begin{convention}
Unless otherwise specified, any effective enumeration $\varphi_\bullet$ used is assumed to be admissible. Given a partial recursive $\theta$, an \textdef{index for $\theta$} is any $e \in \mathbb{N}$ for which $\theta = \varphi_e$.
\end{convention}

Many admissible enumerations have a natural way to interpret the statement, ``$\varphi_e(x)$ converges to $y$ within $s$ steps.''

\begin{notation}
Given an admissible enumeration $\varphi_\bullet$ of the $k$-place partial recursive functions, the notation $\varphi_{e,s}(x_1,x_2,\ldots,x_k)$ is used to denote the output of a partial recursive function $\langle e, x_1, x_2, \ldots, x_k, s\rangle \mapsto \varphi_{e,s}(x_1,x_2,\ldots,x_k)$ satisfying the following properties:
\begin{enumerate}[(a)]
\item $\varphi_e(x_1,x_2,\ldots,x_k) \converge$ if and only if $\varphi_{e,s}(x_1,x_2,\ldots,x_k) \converge$ for some $s \in \mathbb{N}$, in which case $\varphi_e(x_1,x_2,\ldots,x_k) = \varphi_{e,s}(x_1,x_2,\ldots,x_k)$. 
\item If $s < t$ and $\varphi_{e,s}(x_1,x_2,\ldots,x_k) \converge$, then $\varphi_{e,t}(x_1,x_2,\ldots,x_k) \converge = \varphi_{e,s}(x_1,x_2,\ldots,x_k)$.
\item The set $\{ \langle e, x_1, x_2, \ldots, x_k, s\rangle \mid \varphi_{e,s}(x_1,x_2,\ldots,x_k) \converge \}$ is recursive.\footnote{In contrast, $\{ \langle e,x_1,x_2,\ldots,x_k\rangle \mid \varphi_e(x_1,x_2,\ldots,x_k) \converge \}$ is nonrecursive and in fact $\{ \pi^{(k+1)}(e,x_1,x_2,\ldots,x_k) \mid \varphi_e(x_1,x_2,\ldots,x_k) \converge \}$ is many-one equivalent to the Halting Problem.}
\end{enumerate}
\end{notation}

\subsection{Partial Recursive Functionals}

Although $\cantor$ and $\baire$ cannot be \godel\ numbered, reasoning about initial segments of sequences in $\cantor$ and $\baire$ allows us to talk about partial recursiveness for partial functions involving these spaces.

\begin{prop} \label{characterizations of partial recursive functional}
\textnormal{(well-known)}
Suppose $\Psi \colonsub \baire \to \baire$ is given. The following are equivalent.
\begin{enumerate}[(i)]
\item There exists a partial recursive function $\Gamma_1 \colonsub \mathbb{N}^\ast \times \mathbb{N} \to \mathbb{N}$ such that 
\begin{equation*}
\forall \sigma \forall \sigma' \forall x \forall y \qspace \bigl( (\langle\sigma,x,y\rangle \in \Gamma_1 \wedge \sigma \subseteq \sigma') \implies \langle\sigma',x,y\rangle \in \Gamma_1 \bigr)
\end{equation*}
for which $\Psi(X) \simeq Y$ if and only if $\forall x \exists n \qspace \langle X \restrict n, x, Y(x) \rangle \in \Gamma_1$.

\item There exists a partial recursive function $\Gamma_2 \colonsub \mathbb{N}^\ast \times \mathbb{N} \to \mathbb{N}$ such that 
\begin{equation*}
\forall \sigma \forall \sigma' \forall x \qspace \bigl( (\langle\sigma,x\rangle \in \dom \Gamma_2 \wedge \langle\sigma',x\rangle \in \dom \Gamma_2 \wedge \sigma \subseteq \sigma') \implies \sigma = \sigma' \bigr)
\end{equation*}
for which $\Psi(X) \simeq Y$ if and only if $\forall x \exists x \qspace \langle X \restrict n, x, Y(x) \rangle \in \Gamma_2$.

\item There exists a partial recursive function $\Gamma_3 \colonsub \mathbb{N}^\ast \to \mathbb{N}^\ast$ such that 
\begin{equation*}
\forall \sigma \forall \sigma' \forall \tau \forall \tau' \qspace \bigl( (\langle\sigma,\tau\rangle \in \Gamma_3 \wedge \langle\sigma',\tau'\rangle \in \Gamma_3 \wedge \sigma \subseteq \sigma') \implies (\tau \subseteq \tau' \vee \tau' \subseteq \tau) \bigr)
\end{equation*}
for which $\Psi(X) \simeq Y$ if and only if $Y \simeq \bigcup\{ \tau \mid \exists \sigma \subset X \qspace (\langle \sigma,\tau\rangle \in \Gamma_3)\}$. 
\end{enumerate}
\end{prop}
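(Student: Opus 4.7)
The plan is a cyclic proof, $(i) \Rightarrow (ii) \Rightarrow (iii) \Rightarrow (i)$, constructing each $\Gamma_j$ from the preceding one.

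For $(i) \Rightarrow (ii)$, I would thin the graph of $\Gamma_1$ to a prefix-free one: enumerate the graph of $\Gamma_1$ stage by stage, and at each step add the new triple $\langle \sigma, x, y\rangle$ to the graph of $\Gamma_2$ iff no previously added triple $\langle \sigma', x, y'\rangle$ has $\sigma' \subseteq \sigma$. This is uniformly partial recursive, and prefix-freeness of $\{\sigma : \Gamma_2(\sigma, x) \converge\}$ is built in. For the $\Psi$-characterization, any $\tau \in \dom \Gamma_1(\cdot, x)$ has a shortest prefix $\tau^*$ in $\dom \Gamma_1(\cdot, x)$; when $\tau^*$ first appears in the enumeration, no proper prefix has ever been enumerated (else $\tau^*$ would not be shortest), so $\tau^*$ is added to the $\Gamma_2$-graph. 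Since the $\Gamma_2$-graph is contained in that of $\Gamma_1$ and the monotonicity of $\Gamma_1$ pins the value $Y(x)$, the $\Psi$-characterization transfers.

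For $(ii) \Rightarrow (iii)$, I would define $\Gamma_3(\sigma) \simeq \tau$ by simulating $\Gamma_2$ for $|\sigma|$ stages on all pairs $\langle \sigma^*, x\rangle$ with $\sigma^* \subseteq \sigma$ and $x \leq |\sigma|$: let $k$ be the largest integer such that for every $x < k$ some $\sigma^* \subseteq \sigma$ has $\Gamma_{2, |\sigma|}(\sigma^*, x) \converge$ (the witness and value are unique by prefix-freeness), and take $\tau$ as the length-$k$ string of these values. Longer $\sigma$ affords more time and more candidate prefixes, forcing $\Gamma_3(\sigma_1) \subseteq \Gamma_3(\sigma_2)$ whenever $\sigma_1 \subseteq \sigma_2$; the $\Psi$-equivalence follows since every bit of $Y$ witnessed at some stage of $\Gamma_2$ on some prefix of $X$ eventually enters $\Gamma_3(X \restrict n)$ for large $n$. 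For $(iii) \Rightarrow (i)$, declare $\langle \sigma, x, y\rangle \in \Gamma_1$ iff there exists $\sigma^* \subseteq \sigma$ with $\Gamma_3(\sigma^*) \converge$, $|\Gamma_3(\sigma^*)| > x$, and $\Gamma_3(\sigma^*)(x) = y$; this is a $\Sigma_1$ condition, single-valued by the compatibility of $\Gamma_3$-outputs, and upward-closed in $\sigma$ by inspection.

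The hardest step will be $(i) \Rightarrow (ii)$: a direct definition of $\Gamma_2$ as ``$\Gamma_1(\sigma, x)$ at the shortest witnessing $\sigma$'' is $\Pi_1$ rather than $\Sigma_1$ and so is not computable, and one must instead enumerate the graph dynamically, checking only \emph{previously} enumerated triples. The other two directions are direct consequences of the given monotonicity and compatibility conditions.
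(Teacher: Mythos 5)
The paper offers no proof of this proposition (it is cited as well-known), so your argument stands on its own; the cyclic plan $(i)\Rightarrow(ii)\Rightarrow(iii)\Rightarrow(i)$ is reasonable and your last two legs are essentially correct. The gap is in $(i)\Rightarrow(ii)$, which you rightly flag as the delicate step but do not actually repair. With the check exactly as you state it --- reject $\langle\sigma,x,y\rangle$ only when some \emph{previously added} $\langle\sigma',x,y'\rangle$ has $\sigma'\subseteq\sigma$ --- prefix-freeness is not ``built in'': if $\langle\sigma\concat\langle 0\rangle,x,y\rangle$ is enumerated and added before $\langle\sigma,x,y\rangle$ appears, the latter also passes your test (the earlier string is an extension, not a prefix, of the new one), and $\dom\Gamma_2({-},x)$ then contains two comparable strings. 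If instead you symmetrize the check and reject whenever a previously added $\sigma'$ is \emph{comparable} with $\sigma$, prefix-freeness holds but the forward direction of the $\Psi$-characterization can fail: take $\Gamma_1(\sigma,x)\converge=0$ for all $\sigma$ and an enumeration listing $\langle\langle 0\rangle^n\concat\langle 1\rangle,x,0\rangle$ for every $n$ before any $\langle\langle 0\rangle^n,x,0\rangle$; the strings $\langle 0\rangle^n\concat\langle 1\rangle$ are pairwise incomparable and all get added, after which every initial segment of $X=\langle 0,0,0,\ldots\rangle$ is blocked, so no $n$ has $\langle X\restrict n,x,0\rangle\in\Gamma_2$ even though $\Psi(X)(x)=0$. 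Your minimality argument (``the shortest prefix $\tau^*$ is never blocked'') only rules out blocking by shorter strings, which is precisely the direction that was never the threat.

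The standard repair localizes by computation time rather than by enumeration order: put $\langle\sigma,x\rangle\in\dom\Gamma_2$ iff $|\sigma|$ equals the least $s$ such that $\Gamma_1(\sigma_0,x)$ converges within $s$ steps for some $\sigma_0\subseteq\sigma$ with $|\sigma_0|\leq s$, and in that case set $\Gamma_2(\sigma,x)$ equal to $\Gamma_1(\sigma_0,x)$ for such a $\sigma_0$ (well defined, since any two such $\sigma_0$ are comparable and $\Gamma_1$ is monotone and single-valued). Along any fixed $X$ this least $s$ is a single number, so $\dom\Gamma_2({-},x)$ meets each path at most once and is an antichain; and if $\Gamma_1(X\restrict n,x)\converge=Y(x)$, then for $s$ large enough the condition fires at $X\restrict s$ with value $Y(x)$. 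With this replacement your remaining two implications go through essentially as written.
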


\begin{definition}[partial recursive functional] \label{partial recursive functional definition}
A partial functional $\Psi \colonsub \baire \to \baire$ is \textdef{partial recursive} if any (equivalently, all) of the conditions in \cref{characterizations of partial recursive functional} hold.
\end{definition}

\begin{notation}
We will implicitly identify a partial recursive functional $\Psi$ with a partial recursive function $\Gamma_\Psi \colonsub \mathbb{N}^\ast \to \mathbb{N}^\ast$ as in \cref{characterizations of partial recursive functional}(iii). We define:
\begin{align*}
\Psi^X & \coloneq \bigcup\{ \tau \in \mathbb{N}^\ast \mid \exists \sigma \subset X \qspace (\langle \sigma,\tau\rangle \in \Gamma_\Psi)\} & \text{for $X \in \baire$,} \\
\Psi^{-1}(\sigma) & \coloneq \{ X \in \baire \mid \Psi^X \supseteq \sigma\} = \bbracket{\{ \tau \in \mathbb{N}^\ast 
\mid \exists \sigma' \supseteq \sigma \qspace (\langle \tau,\sigma'\rangle \in \Gamma_\Psi)\}} & \text{for $\sigma \in \mathbb{N}^\ast$, and} \\
\Psi^{-1}(S) & \coloneq \bigcup_{\sigma \in S}{\Psi^{-1}(\sigma)} & \text{for $S \subseteq \mathbb{N}^\ast$.}
\end{align*}
Note that $\Psi^X$ is a member of $\mathbb{N}^\ast \cup \baire$, and that $\Psi^X \in \baire$ if and only if $X \in \dom \Psi$, in which case $\Psi(X) = \Psi^X$.
\end{notation}

For partial functions of the form $\Psi \colonsub (\baire)^k \times \mathbb{N}^\ell \to (\baire)^m \times \mathbb{N}^n$ ($k,m \geq 1$) to be partial recursive, we may either make appropriate adjustments to the above definitions, or reduce to the case ${\subseteq} \baire \to \baire$ by associating a tuple $\langle X_0, X_1, \ldots, X_{k-1}, x_0,x_1,\ldots,x_{\ell-1}\rangle$ with $\langle x_0,x_1,\ldots,x_{\ell-1}\rangle \concat (X_0 \oplus X_1 \oplus \cdots \oplus X_{k-1})$.

\subsection{The Arithmetical Hierarchy}


The arithmetical hierarchy provides one stratification of the complexity of subsets of $(\baire)^k \times \mathbb{N}^\ell$, and although we will only be interested in very low levels of that hierarchy, it is more convenient to define the general notion than those individual levels separately.

\begin{definition}[arithmetical hierarchy]
A subset $S \subseteq (\baire)^k \times \mathbb{N}^\ell$ is $\Sigma^0_0$ and $\Pi^0_0$ if $S$ is recursive. Given $\Sigma^0_n$ and $\Pi^0_n$ have been defined, we say that $S$ is $\Sigma^0_{n+1}$ if there exists a $\Pi^0_n$ subset $R \subseteq (\baire)^k \times \mathbb{N}^{\ell+1}$ such that 
\begin{equation*}
S(X_1,\ldots,X_k,y_1,\ldots,y_\ell) \equiv \exists m \qspace R(X_1,\ldots,X_k,y_1,\ldots,y_\ell,m).
\end{equation*}
for all $X_1,\ldots,X_k \in \baire$ and $y_1,\ldots,y_\ell \in \mathbb{N}$. Likewise, $S$ is $\Pi^0_{n+1}$ if there exists a $\Sigma^0_n$ subset $R \subseteq (\baire)^k \times \mathbb{N}^{\ell+1}$ such that
\begin{equation*}
S(X_1,\ldots,X_k,y_1,\ldots,y_\ell) \equiv \forall m \qspace R(X_1,\ldots,X_k,y_1,\ldots,y_\ell,m)
\end{equation*}
for all $X_1,\ldots,X_k \in \baire$ and $y_1,\ldots,y_\ell \in \mathbb{N}$.
\end{definition}

Other names are given to the lowest nontrivial level of the arithmetical hierarchy for subsets of $\mathbb{N}^k$.

\begin{definition}[(co-)recursively enumerable]
A subset $X \subseteq \mathbb{N}^k$ is \textdef{recursively enumerable}, or \textdef{r.e.}, if $X$ is $\Sigma^0_1$, and \textdef{co-recursively enumerable}, or \textdef{co-r.e.}, if $X$ is $\Pi^0_1$.
\end{definition}

\begin{remark}
More generally, we can extend the use of `r.e.' and `co-r.e.' to sets of objects which are \godel\ numbered. In particular, it makes sense to speak of a set of strings $S \subseteq \mathbb{N}^\ast$ being r.e.\ or co-r.e.
\end{remark}

%
%

Often we are interested in sequences of sets each of which is at the same level of the arithmetical hierarchy in a uniform way. 

\begin{definition}[uniformly $\Sigma^0_n$/$\Pi^0_n$]
A sequence $\langle S_i \rangle_{i \in I}$ of subsets $S_i \subseteq (\baire)^k \times \mathbb{N}^\ell$ is \textdef{uniformly $\Sigma^0_n$} (respectively, \textdef{uniformly $\Pi^0_n$}) if it is $\Sigma^0_n$ (resp., $\Pi^0_n$) when considered as a subset of $(\baire)^k \times \mathbb{N}^{\ell+1}$.
\end{definition}

\subsection{Recursive Reals and Real-Valued Functions}

Like $\cantor$ and $\baire$, although $\mathbb{R}$ cannot be \godel\ numbered, we can reason about real-valued functions by approximating real outputs by recursive sequences of rational numbers.

\begin{definition}[recursive/computable real]
$\alpha \in \mathbb{R}$ is \textdef{left recursively enumerable}, or \textdef{left r.e.}, if there is a recursive sequence $\langle p_n \rangle_{n \in \mathbb{N}}$ of rational numbers converging monotonically to $\alpha$ from below. 

$\alpha$ is \textdef{right recursively enumerable}, or \textdef{right r.e.}, if there is a recursive sequence $\langle q_n \rangle_{n \in \mathbb{N}}$ of rational numbers converging monotonically to $\alpha$ from above. 

If $\alpha$ is both left r.e.\ and right r.e., then $\alpha$ is \textdef{recursive} or \textdef{computable}.
\end{definition}

\begin{lem} \label{equivalent characterizations of recursive reals}
\textnormal{(well-known)}
Suppose $\alpha$ is a real number. 
\begin{enumerate}[(a)]
\item $\alpha$ is recursive if and only if there exists a monotone recursive sequence $\langle \alpha_k \rangle_{k \in \mathbb{N}}$ of rational numbers such that $|\alpha - \alpha_k| \leq 2^{-k}$ for each $k \in \mathbb{N}$. 
\item $\alpha$ is left r.e.\ if and only if there exists a sequence $\langle \alpha_k \rangle_{k \in \mathbb{N}}$ of uniformly recursive reals $\alpha_k \leq \alpha$ converging to $r$.
\item $\alpha$ is right r.e.\ if and only if there exists a sequence $\langle \alpha_k \rangle_{k \in \mathbb{N}}$ of uniformly recursive reals $\alpha_k \geq \alpha$ converging to $\alpha$.
\end{enumerate}
\end{lem}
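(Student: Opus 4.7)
The plan is to establish all three bi-implications by producing the required approximating sequences from the given ones, with the forward directions essentially immediate from the definitions and the reverse directions requiring a routine effectivization plus a ``running max/min'' trick to enforce monotonicity. I would treat (a) first, then (b), and then observe that (c) is the mirror of (b).

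For (a), suppose $\alpha$ is recursive, so fix recursive sequences $\langle p_n \rangle$ and $\langle q_n \rangle$ of rationals with $p_n \nearrow \alpha$ and $q_n \searrow \alpha$. Given $k$, search recursively for the first $n$ with $q_n - p_n \leq 2^{-k}$; such $n$ exists since $q_n - p_n \to 0$. Setting $\alpha_k \coloneq p_n$ produces a recursive rational sequence with $|\alpha - \alpha_k| \leq 2^{-k}$. Monotonicity is then cheap: replace $\alpha_k$ by $\max_{j \le k} \alpha_j$ (still recursive, still within $2^{-k}$ of $\alpha$, now nondecreasing). Conversely, given a monotone recursive rational sequence $\langle \alpha_k \rangle$ with $|\alpha - \alpha_k| \leq 2^{-k}$, define
\begin{equation*}
p_k \coloneq \max_{j \leq k}(\alpha_j - 2^{-j}) \qquad \text{and} \qquad q_k \coloneq \min_{j \leq k}(\alpha_j + 2^{-j}),
\end{equation*}
each of which is a recursive rational sequence, monotonically approaching $\alpha$ from below and above respectively, so $\alpha$ is both left and right r.e.

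For (b), the forward direction is trivial: if $\langle p_n \rangle$ is a recursive rational sequence with $p_n \nearrow \alpha$, take $\alpha_k \coloneq p_k$; rationals are uniformly recursive reals via constant approximations. For the reverse direction, suppose $\langle \alpha_k \rangle$ is a uniformly recursive sequence of reals with $\alpha_k \le \alpha$ and $\alpha_k \to \alpha$. By uniform recursiveness, we may recursively produce, from each $k$, a rational $r_k$ with $|r_k - \alpha_k| \le 2^{-k}$, using part (a) applied uniformly in the index $k$. Then $r_k - 2^{-k} \le \alpha_k \le \alpha$ and $r_k - 2^{-k} \to \alpha$, so the recursive rational sequence $s_k \coloneq \max_{j \le k}(r_j - 2^{-j})$ converges monotonically up to $\alpha$, witnessing $\alpha \in $ left r.e. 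Part (c) follows by the obvious sign-flipped version of the same argument, replacing $\max$ by $\min$ and $r_k - 2^{-k}$ by $r_k + 2^{-k}$.

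The only mildly subtle point is applying (a) \emph{uniformly in $k$} in the reverse direction of (b): one must check that the usual proof of (a) can be carried out effectively in an index for the real being approximated, which it can since the search for $n$ with $q_n - p_n \le 2^{-k}$ is uniform. Beyond that, the argument is bookkeeping, and the $\max/\min$ running-envelope trick handles monotonicity without requiring the input sequences themselves to be monotone. I do not expect any substantive obstacle.
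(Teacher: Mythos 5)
Your proof is correct; the paper itself gives no argument here beyond "straight-forward," and your effectivization of the search for close upper/lower approximants together with the running $\max$/$\min$ trick is exactly the standard argument being alluded to. Nothing further is needed.
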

\begin{proof}
Straight-forward.
\end{proof}

\begin{definition}[recursive/computable real-valued function of a discrete variable]
$f \colon \mathbb{N} \to \mathbb{R}$ is \textdef{left recursively enumerable}, or \textdef{left r.e.}, if there is a recursive function $p_{-,-} \colon \mathbb{N}^2 \to \mathbb{Q}$ such that for each $x \in \mathbb{N}$, the sequence $\langle p_{n,x} \rangle_{n \in \mathbb{N}}$ converges monotonically to $f(x)$ from below. 

$f$ is \textdef{right recursively enumerable}, or \textdef{right r.e.}, if there is a recursive funciton $q_{-,-} \colon \mathbb{N}^2 \to \mathbb{Q}$ such that for each $x \in \mathbb{N}$, the sequence $\langle q_{n,x} \rangle_{n \in \mathbb{N}}$ converges monotonically to $f(x)$ from above. 

If $f$ is both left r.e.\ and right r.e., then $f$ is \textdef{recursive} or \textdef{computable}.
\end{definition}

\begin{remark}
In the definitions of left r.e., right r.e., and recursive functions $f \colon \mathbb{N} \to \mathbb{R}$, we may replace $\mathbb{N}$ with any set $S$ which can be \godel\ numbered, associating a function $g \colon S \to \mathbb{R}$ with the function $g \circ \#^{-1} \colon \mathbb{N} \to \mathbb{R}$, where $\# \colon S \to \mathbb{N}$ is a \godel\ numbering.
\end{remark}

One of the principal ways in which recursive real-valued functions appear is to quantify growth rate. 

\begin{definition}[order function]
An \textdef{order function} is a function $f \colon \mathbb{N} \to \mathbb{R}$ which is nondecreasing, unbounded, and computable.
\end{definition}

Now we describe what it means for a real-valued function of a real variable to be computable. 

\begin{definition}[recursive/computable real-valued function of a real variable]
A function $\chi\colon \mathbb{N} \to \mathbb{Q}$ \textdef{represents} $x \in \mathbb{R}$ if $|\chi(n) - x| \leq 2^{-n}$ for all $n \in \mathbb{N}$.

Suppose $I \subseteq \mathbb{R}$ is an interval. A function $f \colon I \to \mathbb{R}$ is \textdef{recursive} or \textdef{computable} if there exists a partial recursive functional $\Psi \colonsub \mathbb{Q}^\mathbb{N} \to \mathbb{Q}^\mathbb{N}$ such that whenever $\chi\colon \mathbb{N} \to \mathbb{Q}$ represents $x \in I$ and $\im \chi \subseteq I$, $\Psi(\chi) \converge$ and $\Psi(\chi)$ represents $f(x)$.
\end{definition}

\begin{remark}
By `interval' we include both the bounded intervals $\oo{a,b}$, $\co{a,b}$, $\oc{a,b}$, $[a,b]$, and the unbounded intervals $(-\infty,\infty)$, $\co{a,\infty}$, $\oc{-\infty,b}$, $(a,\infty)$, $(-\infty,b)$.
\end{remark}

We will assume the following basic facts concerning computable real-valued functions of a real variable.

\begin{prop}
\textnormal{(well-known)}
\begin{enumerate}[(a)]
\item The functions $x \mapsto \alpha$, $x \mapsto x^\beta$, $x \mapsto \log_2 x$, $x \mapsto 2^x$, and $x \mapsto \lfloor x \rfloor$ are computable, where each function has its natural domain and $\alpha \in \mathbb{R}$ and $\beta \in \mathbb{R}_{\geq 0}$ are computable.

\item If $f \colon \mathbb{N} \to \mathbb{R}$ is computable, then the piecewise-linear extension $\overline{f} \colon \co{0,\infty} \to \mathbb{R}$ defined by $\overline{f}(x) \coloneq (f(\lfloor x \rfloor+1) - f(\lfloor x \rfloor))(x-\lfloor x \rfloor) + f(\lfloor x \rfloor)$ for each $x \in \co{0,\infty}$ is computable.

\item If $f \colon \co{0,\infty} \to \mathbb{R}$ is computable, then $f \restrict \mathbb{N}$ is computable.

\item If $f \colon I \to \mathbb{R}$ is computable and $J$ is an interval whose endpoints are either infinite or finite, computable reals, then $f \restrict (I \cap J)$ is computable.

\item If $f,g \colon I \to \mathbb{R}$ are computable, then $f+g$ and $f\cdot g$ are computable.

\item If $f \colon I \to \mathbb{R}$ and $g\colon J \to \mathbb{R}$ are computable and $\im f \subseteq J$, then their composition $g \circ f$ is computable.

\end{enumerate}
\end{prop}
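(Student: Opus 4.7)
The plan is to verify each item by producing, for the given function, the partial recursive functional $\Psi \colonsub \mathbb{Q}^\mathbb{N} \to \mathbb{Q}^\mathbb{N}$ required by the definition: on input a representation $\chi$ of $x$ (i.e., $|\chi(n) - x| \leq 2^{-n}$) and a precision index $k$, $\Psi$ should output a rational approximation within $2^{-k}$ of the target value. Rational arithmetic, comparisons, and bounded search are all available within any partial recursive function via the \Godel\ numberings of \cref{godel numbering convention}, so I may freely write such routines as partial recursive functions on $\mathbb{Q}$.

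For (a), I handle each function individually. The constant function $x \mapsto \alpha$ just returns a fixed representation of $\alpha$, which exists by the computability of $\alpha$. For $x \mapsto x^\beta$, $x \mapsto \log_2 x$, and $x \mapsto 2^x$, I first localize $x$ in a bounded rational interval $[a,b]$ by reading $\chi$ to modest precision (in the log case, shrinking the interval if needed to stay bounded away from $0$), and then extract an effective Lipschitz-type modulus of continuity on $[a,b]$ from elementary real-analytic bounds together with the computability of $\beta$. Once such a modulus $\delta(k)$ is computable, I read $\chi$ to precision $\delta(k)$, apply rational arithmetic to approximate $f$ at that rational, and output the result. For $x \mapsto \lfloor x \rfloor$, interpreted on its natural continuous domain $\mathbb{R}\setminus\mathbb{Z}$, the functional searches for $n$ with $[\chi(n)-2^{-n},\chi(n)+2^{-n}] \subseteq (k,k+1)$ for some $k \in \mathbb{Z}$ and returns the constant sequence $k$; this search terminates because $x \notin \mathbb{Z}$.

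For (b)--(f), the constructions are standard combinations of the same ideas. Part (c) is immediate via constant rational sequences, and (d) is a trivial restriction of $\Psi_f$. For (b), observe that the piecewise-linear $\overline{f}$ is continuous globally (even at integers), so given $\chi$ one locates $x$ in some $[k{-}1,k{+}1]$, computes $f(k{-}1), f(k), f(k{+}1)$ to sufficient precision via the hypothesis on $f$, and plugs into the defining formula; any ambiguity in determining $\lfloor x \rfloor$ near an integer is absorbed by the continuity. For (e), combine $\Psi_f$ and $\Psi_g$ using the standard bounds $|a+b - a'-b'| \leq |a-a'| + |b-b'|$ and $|ab - a'b'| \leq |a|\,|b-b'| + |b'|\,|a-a'|$ together with a priori bounds on $|f(x)|$ and $|g(x)|$ read off from $\Psi_f(\chi)(0)$ and $\Psi_g(\chi)(0)$, so that a target precision $2^{-k}$ on the output translates into an effectively computable precision demand on each factor. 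For (f), compose the functionals as $\Psi_g \circ \Psi_f$; since $\im f \subseteq J$ and $\Psi_f(\chi)$ represents $f(x) \in J$, $\Psi_g$ converges on $\Psi_f(\chi)$ and its output represents $g(f(x))$.

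The only step requiring any real care is the extraction of effective moduli of continuity for the nontrivial functions in (a) and for the outer function $g$ in (f); both follow directly from the computability hypothesis by effectively bounding the variation of the function on any given rational subinterval. Once these moduli are in hand, the remainder is routine rational-arithmetic bookkeeping.
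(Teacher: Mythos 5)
Your proposal is correct in outline and is the standard argument; note that the paper itself offers no proof at all here beyond the words ``Cumbersome but routine,'' so there is nothing to compare against except the folklore you are reconstructing. Your sketch supplies exactly the right ingredients: localize the input, extract an effective modulus of continuity on the localized interval, and reduce everything to rational arithmetic. You also correctly spot the one genuine trap in (a)/(b), namely that $\lfloor\cdot\rfloor$ is not computable at integer points (a representation-computable function must be continuous), and your fix in (b) --- compute the piecewise-linear formula with a possibly-wrong candidate for $\lfloor x\rfloor$ and let continuity at integers absorb the ambiguity --- is the right one. The statement of (a) as written is slightly too generous about the floor function's ``natural domain,'' and your restriction is the honest reading.

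One point deserves more care than you give it: the composition step (f). The paper's definition only guarantees that $\Psi_g(\chi')$ converges and represents $g(y)$ when $\chi'$ represents $y \in J$ \emph{and} $\im \chi' \subseteq J$. The sequence $\Psi_f(\chi)$ represents $f(x) \in J$, but its terms are merely rationals within $2^{-n}$ of $f(x)$ and need not lie in $J$ (e.g.\ when $f(x)$ sits at or near a non-computable endpoint of $J$). So $\Psi_g \circ \Psi_f$ is not literally licensed by the definition. This is repairable --- for instance, one can first argue that a functional witnessing computability on representations with image in $J$ can be upgraded to one working on arbitrary representations of points of $J$ when $J$ is open or has computable endpoints, which covers every use of (f) in the paper --- but as stated your one-line composition silently assumes the stronger form of the hypothesis on $\Psi_g$. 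Everything else (the a priori bounds for the product in (e) read off from $\Psi_f(\chi)(0)$, the constant-sequence argument for (c), the trivial restriction for (d)) is fine.
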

\begin{proof}
Cumbersome but routine.
\end{proof}

%
%
%
%
%

\section{Reducibility Notions}
\label{reducibility notions section}

\subsection{Turing Reducibility}

The principal way to measure the `degree of unsolvability' of an infinite sequence $X \in \baire$ is Turing reducibility.

\begin{thm} \label{equivalents of turing reducibility}
\textnormal{(well-known)}
Suppose $X,Y \in \baire$. The following are equivalent.
\begin{enumerate}[(i)]
\item $X$ is a member of the smallest collection of partial functions ${\subseteq} \mathbb{N}^k \to \mathbb{N}$ containing the initial functions, containing $Y$, and closed under generalized composition, primitive recursion, and minimization.
\item There exists an oracle Turing machine which computes $X$ given an oracle for $Y$.
\item There exists an oracle register machine program which computes $X$ given an oracle for $Y$.
\item There exists a recursive functional $\Psi \colonsub \baire \to \baire$ such that $\Psi(Y)\converge = X$.
\end{enumerate}
\end{thm}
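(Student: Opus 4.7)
The plan is to establish the cycle of implications $\text{(i)} \Rightarrow \text{(iv)} \Rightarrow \text{(ii)} \Leftrightarrow \text{(iii)} \Rightarrow \text{(i)}$, each step being the relativized analogue of the classical theorem that identifies the three standard characterizations of the class of partial recursive functions. Throughout, the oracle $Y$ plays the role of an additional "basic" function that each model is allowed to query, and the bulk of the work is just a careful bookkeeping of how information about $Y$ is accessed in each formalism.

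For $\text{(i)} \Rightarrow \text{(iv)}$, I would argue by induction on the construction in (i). The initial functions are trivially realized by recursive functionals that ignore the oracle argument, and the oracle $Y$ itself is realized by the functional whose associated $\Gamma \colonsub \mathbb{N}^\ast \to \mathbb{N}^\ast$ sends $\sigma$ to $\sigma$ (so that $\Psi^Y = Y$). For the closure operations, given recursive functionals $\Phi$, $\Phi_1,\ldots,\Phi_k$ realizing the input partial functions, I would construct new $\Gamma$'s satisfying the consistency condition in \cref{characterizations of partial recursive functional}(iii) that implement generalized composition (by feeding outputs of the $\Phi_i^Y$ into $\Phi^Y$), primitive recursion (by iterating $\Phi^Y$), and minimization (by searching for the least $y$ with $\Phi^Y(y,\mathbf{x}) = 1$, using the standard dovetailed computation over longer and longer initial segments of $Y$). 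In each case the new $\Gamma$ is partial recursive because it is built by partial recursive operations from the given $\Gamma$'s.

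For $\text{(iv)} \Rightarrow \text{(ii)}$, given $\Psi$ realized by $\Gamma_\Psi$, I would describe an oracle Turing machine that on input $x$ enumerates pairs $\langle\sigma,\tau\rangle \in \Gamma_\Psi$ (possible since $\Gamma_\Psi$ is partial recursive), queries the oracle for $Y$ to verify that $\sigma \subset Y$, and then outputs $\tau(x)$ once $|\tau| > x$. The equivalence $\text{(ii)} \Leftrightarrow \text{(iii)}$ is the standard relativization of the equivalence between Turing-machine computability and register-machine computability: the mutual simulations in the non-oracle case extend verbatim once each model is given a distinguished oracle-query instruction, producing no new subtleties. For $\text{(iii)} \Rightarrow \text{(i)}$, I would code oracle register machine configurations by natural numbers, observe that the single-step transition function (which may read an oracle bit of $Y$) is expressible via generalized composition from $Y$ and primitive recursive functions, iterate it primitive-recursively to obtain the $s$-step function, and extract $X(x)$ by applying minimization to locate the first halting step; this places $X$ in the closure class of (i).

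The hard part is nothing conceptual — each arrow is the relativization of a classical theorem, and the author explicitly labels the statement "well-known" — but the obstacle is the sheer bookkeeping required to verify that the recursive-functional formalism of \cref{partial recursive functional definition} really does match the behavior of an oracle Turing/register machine, both in terms of which initial segments of $Y$ are queried and in terms of the consistency conditions on the associated $\Gamma$. In a thesis context the expected writeup is a brief reference to standard sources (e.g., Rogers or Soare) rather than a full rederivation, since each implication proceeds exactly as in the unrelativized case with $Y$ treated as an extra primitive.
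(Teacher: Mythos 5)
The paper gives no proof of this theorem at all — it is simply tagged ``(well-known)'' and used as the definition-licensing fact for Turing reducibility — so there is no in-paper argument to compare against. Your outline is the standard relativization of the classical equivalences and is correct; in particular your cycle $(i)\Rightarrow(iv)\Rightarrow(ii)\Leftrightarrow(iii)\Rightarrow(i)$, with $Y$ treated as an extra primitive and the arithmetization of oracle-machine configurations closing the loop, is exactly the argument one finds in Rogers or Soare, and your closing remark about the expected level of writeup matches what the thesis actually does.
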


\begin{definition}[Turing reducibility]
Given $X,Y \in \baire$, we say that $X$ is \textdef{Turing reducible} to $Y$, written $X \turingleq Y$, if any of the equivalent conditions in \cref{equivalents of turing reducibility} hold. We may also say that $X$ is \textdef{$Y$-computable}, \textdef{$Y$-recursive}, or that \textdef{$Y$ computes $X$}. 

If $X \turingleq Y$ and $Y \turingleq X$, then we say that $X$ and $Y$ are \textdef{Turing equivalent} and write $X \turingeq Y$. 
\end{definition}

$\turingleq$ is a preorder, and hence $\turingeq$ is an equivalence relation.

\begin{definition}[Turing degree]
The \textdef{Turing degree} of an infinite sequence $X \in \baire$ is the $\turingeq$-equivalence class containing $X$, written $\turingdeg(X)$. 

The collection of all Turing degrees is written $\mathcal{D}_\turing$. $\turingleq$ induces a partial order $\leq$ on $\mathcal{D}_\turing$ defined by setting $\turingdeg(X) \leq \turingdeg(Y)$ if and only if $X \turingleq Y$.
\end{definition}

\begin{example}
$\mathbf{0}$ is the Turing degree of any recursive $X \in \baire$. It is the minimum of $\mathcal{D}_\turing$.
\end{example}

\begin{example}
Consider the following subsets of $\mathbb{N}$, which we tacitly identify with their characteristic functions:
\begin{align*}
H_1 & \coloneq \{ e \in \mathbb{N} \mid \varphi_e(0) \converge\}, \\
H_2 & \coloneq \{ e \in \mathbb{N} \mid \varphi_e(e) \converge\}, \\
H_3 & \coloneq \{ e \in \mathbb{N} \mid \text{$\varphi_i(n) \converge$, where $\pi^{(2)}(i,n) = e$}\}.
\end{align*}
All three Turing equivalent to one another. Firstly, $H_1 \turingleq H_3$ because $e \in H_1$ if and only if $\pi^{(2)}(e,0) \in H_3$; likewise, $H_2 \turingleq H_3$ because $e \in H_1$ if and only if $\pi^{(2)}(e,e) \in H_3$. For the opposite direction, consider the partial recursive function $\theta(\pi^{(2)}(i,n),m) \simeq \varphi_i(n)$; the Parametrization Theorem yields a total recursive function $f \in \baire$ such that $\varphi_{f(e)}(m) \simeq \theta(e,m)$ for all $e,m\in\mathbb{N}$. Then
\begin{equation*}
e \in H_3 \iff f(e) \in H_1 \iff f(e) \in H_2,
\end{equation*}
showing $H_3 \turingleq H_1$ and $H_3 \turingleq H_2$. $H_3$ is the Halting problem, though thanks to the above equivalences we may also refer to $H_1$ or $H_2$ as the Halting problem. The Turing degree of $H_1 \turingeq H_2 \turingeq H_3$ is called $\mathbf{0}'$ and $X \in \baire$ is said to be \textdef{complete} if $\mathbf{0}' \leq \turingdeg(X)$.
\end{example}

Some simple facts about Turing degrees include the following:

\begin{lem}
Suppose $X,Y \in \baire$ are given.
\begin{enumerate}[(a)]
\item $\turingdeg(X \oplus Y) = \sup\{ \turingdeg(X), \turingdeg(Y)\}$. Consequently, $(\mathcal{D}_\turing,\leq)$ is a join semi-lattice. 
\item There exists $Z \in \cantor$ such that $X \turingeq Z$. 
\end{enumerate}
\end{lem}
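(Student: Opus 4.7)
For part (a), the plan is to verify both the upper-bound and the least-upper-bound conditions that characterize the supremum in a poset. The join operation $\oplus$ has already been defined in the basic conventions via $(f_0 \oplus f_1)(2k) = f_0(k)$ and $(f_0 \oplus f_1)(2k+1) = f_1(k)$, so showing $X, Y \turingleq X \oplus Y$ is immediate: the partial recursive functionals $\Psi_0, \Psi_1 \colonsub \baire \to \baire$ induced (via \cref{characterizations of partial recursive functional}) by the maps $\sigma \mapsto \langle \sigma(0), \sigma(2), \ldots, \sigma(2\lfloor(|\sigma|-1)/2\rfloor)\rangle$ and $\sigma \mapsto \langle \sigma(1), \sigma(3), \ldots\rangle$ satisfy $\Psi_0(X \oplus Y) = X$ and $\Psi_1(X \oplus Y) = Y$. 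For the least-upper-bound property, suppose $Z \in \baire$ with $X \turingleq Z$ and $Y \turingleq Z$ via recursive functionals $\Phi_X, \Phi_Y$. Then the functional $\Phi$ which, on input $\sigma$ and query point $n$, returns $\Phi_X(Z)(\lfloor n/2\rfloor)$ if $n$ is even and $\Phi_Y(Z)(\lfloor n/2 \rfloor)$ if $n$ is odd, is itself partial recursive (by closure of partial recursive functionals under this kind of case-split and composition), and satisfies $\Phi(Z) = X \oplus Y$. Passing to $\turingeq$-classes yields $\turingdeg(X \oplus Y) = \sup\{\turingdeg(X), \turingdeg(Y)\}$, which upgrades $(\mathcal{D}_\turing, \leq)$ from a poset to a join semi-lattice.

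For part (b), the approach is to encode $X \in \baire$ into a $\{0,1\}$-valued sequence by a recursive unary-block encoding and show the encoding is Turing-equivalent to $X$. Concretely, define $Z \in \cantor$ as the concatenation
\begin{equation*}
Z = 1^{X(0)} \concat \langle 0 \rangle \concat 1^{X(1)} \concat \langle 0 \rangle \concat 1^{X(2)} \concat \langle 0 \rangle \concat \cdots,
\end{equation*}
i.e., write $X(n)$ consecutive $1$s followed by a single $0$, for each $n \in \mathbb{N}$ in order. Formally, let $s(n) \coloneq n + \sum_{i < n}{X(i)}$ denote the position of the $n$-th zero; then $Z(k) = 0$ if $k = s(n)$ for some $n$ and $Z(k) = 1$ otherwise.

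To see $Z \turingleq X$, note that given an oracle for $X$ and a query $k$, one can recursively compute $s(0), s(1), \ldots$ until $s(n) \geq k$, and then decide whether $k$ equals the last $s(n)$ or not, returning $0$ or $1$ accordingly; this is a recursive functional applied to $X$. Conversely, to see $X \turingleq Z$, given a query $n$ one uses the $Z$-oracle to search linearly for the $n$-th and $(n+1)$-th positions of $0$ in $Z$; then $X(n)$ is recovered as the number of $1$s strictly between them. Both procedures are recursive functionals by \cref{characterizations of partial recursive functional}, so $X \turingeq Z$. The only subtlety worth flagging is that the second direction uses the fact that every $X(n) < \infty$, which guarantees that the $(n+1)$-th zero of $Z$ appears at a finite position, so the search terminates; no similar obstacle arises for $X$ uncomputable since we have oracle access to $Z$. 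I do not anticipate any conceptual obstacle in either part; both are standard verifications that exploit the two-way recursive coding between $\baire$ and $\cantor$.
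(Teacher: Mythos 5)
Your proof is correct, and it is exactly the standard argument the paper has in mind: the paper's own ``proof'' of this lemma is literally the single word ``Straight-forward,'' so you are simply supplying the details it omits. For (a), the projection functionals and the interleaving of $\Phi_X$ and $\Phi_Y$ are the canonical verification of the join; for (b), the unary-block coding $Z = 1^{X(0)} \concat \langle 0\rangle \concat 1^{X(1)} \concat \langle 0 \rangle \concat \cdots$ with the two mutually inverse search procedures is the usual way to realize $\baire$ inside $\cantor$ up to Turing equivalence, and you correctly identify the one point that needs care (termination of the search for the $(n+1)$-th zero, guaranteed by $X(n) < \infty$). The only blemish is the closed form for the position of the $n$-th zero: it should be $s(n) = n + \sum_{i \leq n}{X(i)}$ rather than $n + \sum_{i < n}{X(i)}$ (check $n=0$: the first zero sits at index $X(0)$), but since your reductions are described operationally by searching and counting rather than by invoking this formula, nothing in the argument depends on it.
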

\begin{proof}
Straight-forward.
\end{proof}

In analogy with admissible enumerations of the $k$-place partial recursive functions we may also consider admissible enumerations of the $k$-place partial recursive functionals ${\subseteq} \baire \times \mathbb{N}^k \to \mathbb{N}$. 

\begin{definition}[admissible enumeration of partial recursive functionals]
An \textdef{admissible enumeration} of the $k$-place partial recursive functionals ${\subseteq} \baire \times \mathbb{N}^k \to \mathbb{N}$ is a sequence $\Phi_0,\Phi_1, \Phi_2,\ldots$ of such functionals such that:
\begin{enumerate}[(i)]
\item For every partial recursive functional $\Psi \colonsub \baire \times \mathbb{N}^k \to \mathbb{N}$ there is $e \in \mathbb{N}$ such that $\Psi = \Phi_e$.
\item The partial functional $\Phi(e,X,x_1,x_2,\ldots,x_k) \simeq \Phi_e(X,x_1,x_2,\ldots,x_k)$ is partial recursive.
\item For any partial recursive functional $\Theta \colonsub \baire \times \mathbb{N}^{k+1} \to \mathbb{N}$ there exists a total recursive $g \colon \mathbb{N} \to \mathbb{N}$ such that, for all $f \in \baire$ and $e,x_1,x_2,\ldots,x_k \in \mathbb{N}$,
\begin{equation*}
\Phi_{g(e)}(f,x_1,x_2,\ldots,x_k) \simeq \Theta(f,e,x_1,x_2,\ldots,x_k).
\end{equation*}
\end{enumerate}
\end{definition}

\begin{notation}
The notation $\langle X,e,x_1,x_2,\ldots,x_k \rangle \mapsto \varphi_e^{X}(x_1,x_2,\ldots,x_k)$ will often be used to denote an (admissible) enumeration of the $k$-place partial recursive functionals ${\subseteq} \baire \times \mathbb{N}^k \to \mathbb{N}$. 

The notation $\varphi_{e,s}^{\tau}(x_1,x_2,\ldots,x_k)$ is used to denote the output of a partial recursive function \\ $\langle e, x_1, x_2, \ldots, x_k, s, \tau\rangle \mapsto \varphi_{e,s}^{\tau}(x_1,x_2,\ldots,x_k)$ satisfying the following properties:
\begin{enumerate}[(i)]
\item For every $X \in \baire$, $\varphi_e^{X}(x_1,x_2,\ldots,x_k) \converge$ if and only if $\varphi_{e,s}^{X\restrict s}(x_1,x_2,\ldots,x_k) \converge$ for some $s \in \mathbb{N}$, in which case $\varphi_e^{X}(x_1,x_2,\ldots,x_k) = \varphi_{e,s}^{X\restrict s}(x_1,x_2,\ldots,x_k)$.
\item If $\sigma \subseteq \tau$, $s \leq t$, and $\varphi_{e,s}^{\sigma}(x_1,x_2,\ldots,x_k) \converge$, then $\varphi_{e,t}^{\tau}(x_1,x_2,\ldots,x_k) \converge = \varphi_{e,s}^{\sigma}(x_1,x_2,\ldots,x_k)$.
\item The set $\{ \langle e, x_1, x_2, \ldots, x_k, s, \tau\rangle \mid \varphi_{e,s}^{\tau}(x_1,x_2,\ldots,x_k) \converge\}$ is recursive.
\end{enumerate}
\end{notation}

\subsection{Mass Problems and Weak, Strong Reducibility}

When measuring the `degree of unsolvability' of a \emph{subset} of $\baire$ there are two notions of reducibility we consider.

\begin{definition}[weak reducibility]
Given $P,Q \subseteq \baire$, $P$ is \textdef{weakly reducible}
to $Q$, written $P \weakleq Q$, if for every $Y \in Q$ there exists $X \in Q$ such that $X \turingleq Y$.

If $P \weakleq Q$ and $Q \weakleq P$, then $P$ and $Q$ are \textdef{weakly equivalent} 
, written $P \weakeq Q$.
\end{definition}

\begin{definition}[strong reducibility]
Given $P,Q \subseteq \baire$, $P$ is \textdef{strongly reducible} 
 to $Q$, written $P \strongleq Q$, if there exists a recursive functional $\Psi \colon Q \to P$.

If $P \strongleq Q$ and $Q \weakleq P$, then $P$ and $Q$ are \textdef{strongly equivalent} 
, written $P \strongeq Q$.
\end{definition}

\begin{remark}
Weak reducibility is also called \emph{Muchnik reducibility}, and strong reducibility is also called \emph{Medvedev reducibility}.
\end{remark}

An interpretation of a subset $P \subseteq \baire$ is as a \emph{problem} (and within this context subsets of $\baire$ are sometimes called \textdef{mass problems}) whose elements are its \emph{solutions}. If $P \weakleq Q$, then any solution to $Q$ computes a solution to $P$, though this is not necessarily a uniform procedure. If $P \strongleq Q$, then there is a uniform procedure to turn a solution to $Q$ into a solution to $P$. See \cite{simpson2007extension} and \cite{simpson2005mass} for additional information and motivation. 

As with Turing reducibility, both weak and strong reducibility are preorders on $\mathcal{P}(\baire)$ and $\weakeq$ and $\strongeq$ define equivalence relations on $\mathcal{P}(\baire)$.

\begin{definition}[weak and strong degrees]
Suppose $P \subseteq \baire$. The \textdef{weak degree} of $P$, $\weakdeg(P)$, is the $\weakeq$-equivalence class containing $P$, and the \textdef{strong degree} of $P$, $\strongdeg(P)$, is the $\strongeq$-equivalence class containing $P$.

The set of all weak degrees is denoted by $\mathcal{D}_\weak$, while the set of all strong degrees is denoted by $\mathcal{D}_\strong$.
\end{definition}

Here we collect some simple facts about $\mathcal{D}_\weak$ and $\mathcal{D}_\strong$ that we use repeatedly and often implicitly.

\begin{prop} \label{properties of weak and strong reducibility}
\textnormal{(well-known)}
Let $P$ and $Q$ be mass problems and let $\mathcal{Q}$ a family of mass problems.
\begin{enumerate}[(a)]
\item $P \strongleq Q$ implies $P \weakleq Q$.
\item $Q \subseteq P$ implies $P \strongleq Q$.
\item $P \weakeq P^{\turingleq}$, where $P^{\turingleq} \coloneq \{ Y \in \baire \mid \exists X \in P \qspace (X \turingleq Y)\}$ is the \textdef{Turing upward closure} of $P$.
\item $\inf\{\weakdeg(P) \mid P \in \mathcal{Q}\} = \weakdeg\left(\bigcup\mathcal{Q}\right)$.
\item $\sup\{\weakdeg(P) \mid P \in \mathcal{Q}\} = \weakdeg\left(\bigcap\{ P^{\turingleq} \mid P \in \mathcal{Q}\}\right)$.
\item $\langle \mathcal{D}_\weak,\leq\rangle$ is a completely distributive lattice.
\item $\inf\{\strongdeg(P),\strongdeg(Q)\} = \strongdeg(P \times Q)$, where $P \times Q \coloneq \{ X \oplus Y \mid X \in P \wedge Y \in Q\}$.
\item $\sup\{\strongdeg(P),\strongdeg(Q)\} = \strongdeg(\{\langle 0 \rangle \concat X \mid X \in P\} \cup \{\langle 1 \rangle \concat Y \mid Y \in Q\})$.
\item $\REC \strongleq P \strongleq \emptyset$, where $\REC = \{ X \in \baire \mid \text{$X$ is recursive}\}$.
\item $\langle \mathcal{D}_\strong,\leq \rangle$ is a bounded distributive lattice.
\item For all $X,Y \in \baire$, $X \turingleq Y$ if and only if $\{X\} \weakleq \{Y\}$, and if and only if $\{X\} \strongleq \{Y\}$. Consequently, $\turingdeg(X) \mapsto \weakdeg(\{X\})$ and $\turingdeg(X) \mapsto \strongdeg(\{X\})$ define embeddings of $\langle \mathcal{D}_\turing,\leq \rangle$ into $\langle \mathcal{D}_\weak,\leq\rangle$ and $\langle \mathcal{D}_\strong,\leq \rangle$, respectively.
\item $P \weakeq \REC$ if and only if $P \cap \REC \neq \emptyset$.
\end{enumerate}
\end{prop}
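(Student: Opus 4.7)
The statement bundles twelve well-known facts about Medvedev and Muchnik reducibility, and the plan is to dispatch the direct items first and then verify the lattice claims by checking universal properties.

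For (a), (b), and (k), given a recursive functional $\Psi\colon Q\to P$ witnessing $P\strongleq Q$, each $\Psi(Y)\in P$ is Turing-reducible to $Y$, yielding (a); the identity functional on $Q$ handles (b); and (k) holds because any weak or strong reduction between singletons is literally a Turing reduction. For (c), $P\weakleq P^{\turingleq}$ is immediate from the very definition of $P^{\turingleq}$, while $P^{\turingleq}\weakleq P$ follows from $P\subseteq P^{\turingleq}$ via (b) together with (a). For (i), $\REC\strongleq P$ is witnessed by any constant functional returning a fixed recursive sequence (vacuous when $P=\emptyset$), and $P\strongleq\emptyset$ is witnessed vacuously by the empty functional. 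For (l), if $X\in P\cap\REC$ then $P\weakleq\{X\}\weakeq\REC$ and $\REC\weakleq P$ is automatic, so $P\weakeq\REC$; conversely $P\weakeq\REC$ supplies some $X\in P$ with $X\turingleq Y$ for any recursive $Y$, forcing $X$ itself to be recursive.

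For the lattice structure on $\mathcal{D}_\weak$ (items (d)--(f)), the claim (d) amounts to observing that $\bigcup\mathcal{Q}$ is a lower bound (from $P\subseteq\bigcup\mathcal{Q}$ together with (b) and (a)) and that any other lower bound $R$ satisfies $R\weakleq\bigcup\mathcal{Q}$, since any $Y\in\bigcup\mathcal{Q}$ lies in some $P\in\mathcal{Q}$ through which $R\weakleq P$ supplies the needed witness. For (e), the set $\bigcap_P P^{\turingleq}$ is an upper bound by the definition of Turing upward closure, and is contained in every upper bound of $\{\weakdeg(P)\mid P\in\mathcal{Q}\}$ by unwinding what $\weakleq$ means. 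Completeness of $\mathcal{D}_\weak$ is then immediate from (d) and (e), and complete distributivity reduces to the set-theoretic identity $\bigcup_i\bigcap_j A_{i,j}=\bigcap_f \bigcup_i A_{i,f(i)}$ (ranging over choice functions $f$) together with the fact that Turing upward closure distributes over arbitrary unions.

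For the lattice structure on $\mathcal{D}_\strong$ (items (g), (h), (j)), each proposed lattice operation is verified by constructing the obvious projection or tagging functionals and then checking universal properties against arbitrary common bounds: parallel composition of two functionals handles one direction and branching on a leading tag handles the other. Item (j) is then immediate from (g), (h), and (i), with distributivity of $\mathcal{D}_\strong$ following from a direct strong-equivalence between products of tagged unions and tagged unions of products. I expect the main obstacle to be the complete distributivity in (f), since the upward closures inside the nested intersections and unions require care to interchange with the set-theoretic distributive identity; the rest are each essentially one-line constructions of appropriate recursive functionals.
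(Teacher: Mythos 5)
The paper offers no proof of this proposition at all — it is flagged \textnormal{(well-known)} and the text moves straight on to examples — so there is nothing to compare your argument against; it stands or falls on its own. Items (a)--(f), (i), (k), (l) and your reduction of complete distributivity of $\mathcal{D}_\weak$ to the lattice of Turing-upward-closed sets are all correct and standard. One small slip in (e): an upper bound $R$ of $\{\weakdeg(P) \mid P \in \mathcal{Q}\}$ satisfies $R \subseteq \bigcap\{P^{\turingleq} \mid P \in \mathcal{Q}\}$ (every $Y \in R$ computes a member of each $P$, hence lies in each $P^{\turingleq}$), which by (b) and (a) gives $\bigcap\{P^{\turingleq}\} \weakleq R$; you wrote that the intersection ``is contained in every upper bound,'' which is the reverse containment and would prove the wrong inequality.

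The genuine problem is in (g) and (h), and it is one your generic description papers over: as printed, the statement has $\inf$ and $\sup$ interchanged. With $\strongdeg(P) \leq \strongdeg(Q)$ iff $P \strongleq Q$, the projections $X \oplus Y \mapsto X$ and $X \oplus Y \mapsto Y$ witness $P \strongleq P \times Q$ and $Q \strongleq P \times Q$, so $P \times Q$ is an \emph{upper} bound, and parallel composition $Z \mapsto \Phi(Z) \oplus \Psi(Z)$ shows it is the least one; dually, tagging witnesses $\langle 0 \rangle \concat P \cup \langle 1 \rangle \concat Q \strongleq P$ and $\strongleq Q$, so the tagged union is the \emph{greatest lower} bound (the paper itself later uses exactly these facts, e.g.\ ``$P \strongleq P \times Q$'' and ``$\inf\{\mathbf{p},\mathbf{q}\} = \weakdeg(\langle 0 \rangle \concat P \cup \langle 1 \rangle \concat Q)$''). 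So ``the obvious projection or tagging functionals'' cannot verify (g) and (h) as literally stated — they verify them with the roles of $\inf$ and $\sup$ exchanged. Your write-up should say this explicitly rather than claim to verify ``each proposed lattice operation''; a referee reading your sketch against the printed statement would conclude either that you proved something false or that you did not check which bound each construction gives. The rest of (j) — boundedness from (i) and distributivity via the strong equivalence $P \times (\langle 0 \rangle \concat Q \cup \langle 1 \rangle \concat R) \strongeq \langle 0 \rangle \concat (P \times Q) \cup \langle 1 \rangle \concat (P \times R)$ — is fine once the meet and join are correctly identified.
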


Examples of mass problems, weak reductions, and strong reductions will be encountered throughout the remainder of this thesis, though we give one example presently which is neither empty nor weakly or strongly equivalent to $\{f\}$ for some $f \in \baire$. 

\begin{example}
Suppose $P \cap \REC = \emptyset$. Then $P \subseteq \REC^\mathsf{c}$, so $\REC^\mathsf{c} \strongleq P$. It follows that $\weakdeg(\REC^\mathsf{c})$ and $\strongdeg(\REC^\mathsf{c})$ are immediate successors of the minimum of $\mathcal{D}_\weak$ and $\mathcal{D}_\strong$, respectively.

That $\REC^\mathsf{c} \weakneq \{X\}$ (and consequently $\REC^\mathsf{c} \strongneq \{X\}$) for any $X \in \baire$ follows from the fact that there exist incomparable minimal nonrecursive Turing degrees.
\end{example}

\subsection{\texorpdfstring{$\mathcal{E}_\weak$}{E_w} and the Embedding Lemma}

Particular attention is given to the weak degrees of nonempty $\Pi^0_1$ subsets of $\cantor$, which we call \textdef{$\Pi^0_1$ classes}, serving a similar role in $\mathcal{D}_\weak$ as the collection $\mathcal{E}_\turing$ of r.e.\ Turing degrees (i.e., Turing degrees of r.e.\ sequences) in $\mathcal{D}_\turing$. 

\begin{definition}
$\mathcal{E}_\weak \coloneq \{ \weakdeg(P) \mid \text{$P \subseteq \cantor$ is $\Pi^0_1$}\}$.
\end{definition}

One motivation for $\mathcal{E}_\weak$ in comparison to $\mathcal{E}_\turing$ is that specific, natural examples of weak degrees in $\mathcal{E}_\weak$ can be given while there are no known specific, natural r.e.\ degrees aside from $\mathbf{0}$ and $\mathbf{0}'$. See \cite{simpson2007extension} and \cite{simpson2005mass} for additional details.

\begin{prop}
Suppose $P \subseteq \cantor$ is given. The following are equivalent.
\begin{enumerate}[(a)]
\item $P$ is $\Pi^0_1$.
\item There exists a recursive tree $T \subseteq \{0,1\}^\ast$ such that $P$ is the set of paths through $T$.
\item There exists $e \in \mathbb{N}$ such that $P = \{ X \in \cantor \mid \varphi_e^X(0) \diverge\}$.
\end{enumerate}
\end{prop}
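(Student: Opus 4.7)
The plan is to prove the proposition by the cycle (a) $\Rightarrow$ (b) $\Rightarrow$ (c) $\Rightarrow$ (a), where for a tree $T \subseteq \{0,1\}^\ast$, $[T]$ denotes the set of $X \in \cantor$ with $X \restrict n \in T$ for all $n$.

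For (a) $\Rightarrow$ (b), I would leverage the effective openness of $\cantor \setminus P$. Since $P$ is $\Pi^0_1$, its complement in $\cantor$ is $\Sigma^0_1$, hence of the form $\bigcup_{\tau \in U}\bbracket{\tau}_2$ for some r.e.\ set $U \subseteq \{0,1\}^\ast$. Fixing a recursive enumeration $\langle U_s \rangle_{s \in \mathbb{N}}$ of $U$, I would define
\begin{equation*}
T \coloneq \{\sigma \in \{0,1\}^\ast \mid \forall s \leq |\sigma| \forall \tau \in U_s \qspace \tau \nsubseteq \sigma\}.
\end{equation*}
Three checks are then needed: that $T$ is recursive (membership is decidable since the inner predicate quantifies over finitely many stages and finitely many strings); that $T$ is a tree (if $\rho \subseteq \sigma \in T$, then any $\tau$ that were a prefix of $\rho$ would also be a prefix of $\sigma$); and that $[T] = P$, for which the nontrivial direction is that if $\tau \in U$ is a prefix of some $X \in \cantor$, one picks a stage $s$ with $\tau \in U_s$ and sets $n = \max(s, |\tau|)$ to witness $X \restrict n \notin T$.

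For (b) $\Rightarrow$ (c), given a recursive tree $T$, I would define a partial recursive functional $\Theta$ which on input $X$ searches for the least $n$ with $X \restrict n \notin T$, outputting $0$ if one is found and diverging otherwise. An application of (the functional analogue of) the Parametrization Theorem yields an index $e$ with $\varphi_e^X(0) \simeq \Theta(X)$, so $\varphi_e^X(0) \diverge$ iff $X \restrict n \in T$ for all $n$, iff $X \in [T] = P$. For (c) $\Rightarrow$ (a), I would unfold divergence via the stage-indexed notation: $\varphi_e^X(0) \diverge$ iff $\forall s \qspace \neg(\varphi_{e,s}^{X \restrict s}(0) \converge)$, and since the inner predicate is recursive in $\langle X, s\rangle$, this exhibits $P$ in $\Pi^0_1$ form.

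The main obstacle is the step (a) $\Rightarrow$ (b), which demands a \emph{recursive} tree rather than a merely co-r.e.\ one. The bound $s \leq |\sigma|$ in the definition of $T$ is the essential device: it trades apparent loss of information (only enumeration stages up to the current length are inspected) for outright decidability of membership, while still capturing every element of $U$ in the limit and hence preserving $[T] = P$.
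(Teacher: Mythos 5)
Your proof is correct. Note that the paper states this proposition as a well-known fact and supplies no proof of its own, so there is nothing to compare against; your argument is the standard one, and the stage-bounded tree $T \coloneq \{\sigma \mid \forall s \leq |\sigma| \forall \tau \in U_s \qspace \tau \nsubseteq \sigma\}$ is exactly the usual device for upgrading a co-r.e.\ tree to a recursive one while preserving the set of paths.

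Two minor remarks. In (a) $\Rightarrow$ (b) the passage from ``$P$ is $\Pi^0_1$'' to ``$\cantor \setminus P = \bigcup_{\tau \in U}\bbracket{\tau}_2$ with $U$ r.e.'' deserves a sentence: writing $P(X) \equiv \forall m \qspace R(X,m)$ with $R$ recursive, one uses the use principle to replace each witness $\neg R(X,m)$ by a finite initial segment of $X$, which is what produces the r.e.\ set $U$. In (b) $\Rightarrow$ (c) you do not need the Parametrization Theorem at all: the functional $\Theta$ you describe is partial recursive (its graph is generated by the monotone recursive relation ``some $n \leq |\sigma|$ has $\sigma \restrict n \notin T$''), so condition (i) of the paper's definition of an admissible enumeration of partial recursive functionals -- surjectivity -- already hands you the index $e$.
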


Although our interests lie chiefly within $\mathcal{E}_\weak$, the weak degrees we consider are often most naturally represented by mass problems which are not $\Pi^0_1$ subsets of $\cantor$. The following result shows that this issue can be side-stepped as long as the mass problem is sufficiently low in the arithmetical hierarchy.

\begin{prop}[Embedding Lemma] \label{embedding lemma}
\textnormal{\cite[Lemma 17.1]{simpson2009degrees}}
Suppose $P$ is a nonempty $\Pi^0_1$ subset of $\cantor$ and $S$ is a $\Sigma^0_3$ subset of $\baire$. Then there exists a nonempty $\Pi^0_1$ subset $Q$ of $\cantor$ such that $Q \weakeq P \cup S$.
\end{prop}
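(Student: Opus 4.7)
My plan is to first reduce the $\Sigma^0_3$ set $S$ to a $\Pi^0_1$ ``witness'' class, then construct $Q$ as the set of infinite paths through a carefully engineered recursive binary tree that interleaves encoded witnesses with ``fallback'' $P$-paths. Unpacking $\Sigma^0_3$-ness yields a recursive predicate $R$ with $X \in S \iff \exists n\, \forall j\, \exists k\, R(X, n, j, k)$, so I would introduce the witness space
\[
S^{\ast} := \bigl\{\langle n, X, w\rangle \in \mathbb{N}\times\baire\times\baire : \forall j\, R(X, n, j, w(j))\bigr\},
\]
a $\Pi^0_1$ subset of a space homeomorphic to $\baire$. The standard projection and canonical-witness reductions (take $w(j) := \mu k\, R(X, n, j, k)$) give $S \weakeq S^{\ast}$, reducing the problem to finding a nonempty $\Pi^0_1$ class $Q \subseteq \cantor$ with $Q \weakeq P \cup S^{\ast}$.

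For the construction I would fix recursive trees $T_P \subseteq \{0,1\}^{\ast}$ with $[T_P] = P$ and $\tilde T \subseteq \mathbb{N}^{\ast}$ with $[\tilde T] = S^{\ast}$, and employ the self-delimiting binary encoding $c(n) := 1 0^{n+1} 1$, extended by concatenation to $\hat c : \mathbb{N}^{\ast} \to \{0,1\}^{\ast}$. The critical feature is that $c(n)$ never contains two consecutive $1$s, so the binary pattern ``$11$'' can serve as an unambiguous mode-switch marker. I would then set
\[
T_Q := \bigl\{\rho \in \{0,1\}^{\ast} : \rho \subseteq \hat c(\sigma)\ \text{for some } \sigma \in \tilde T\bigr\} \cup \bigl\{\hat c(\sigma) \concat 11 \concat \tau : \sigma \in \tilde T,\ \tau \in T_P\bigr\}
\]
and define $Q := [T_Q] \subseteq \cantor$. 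Informally, a path of $T_Q$ either indefinitely extends an encoding $\hat c(Y)$ of some $Y \in S^{\ast}$ (\emph{witness mode}), or at some finite stage crosses the unambiguous ``$11$''-delimiter and continues along a $T_P$-path (\emph{$P$-mode}).

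Verification of $Q \weakeq P \cup S^{\ast}$ should then be direct. Every infinite path of $Q$ is either of the form $\hat c(Y)$ for $Y \in S^{\ast}$ (Turing-computing $Y$), or of the form $\hat c(\sigma) \concat 11 \concat X$ for $\sigma \in \tilde T$ finite and $X \in P$ (Turing-computing $X$), so $P \cup S^{\ast} \weakleq Q$. Conversely, each $X \in P$ yields $11 \concat X \in Q$ (taking $\sigma = \varepsilon$) and each $Y \in S^{\ast}$ yields $\hat c(Y) \in Q$, both recursive in $X$ or $Y$, giving $Q \weakleq P \cup S^{\ast}$. Nonemptiness of $Q$ follows from nonemptiness of $P$, and combining with Step 1 gives $Q \weakeq P \cup S$.

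The hard part will be ensuring that $T_Q$ is genuinely \emph{recursive} (not merely $\Sigma^0_1$) and that no ``degenerate'' path of the form $\hat c(\sigma) \concat 1 0^{\infty}$ sneaks into $Q$, since such a path would be recursive and compute no element of $P \cup S^{\ast}$. Both issues stem from possible infinite branching in $\tilde T$: if some $\sigma \in \tilde T$ has immediate successors $\sigma \concat \langle a\rangle$ for arbitrarily large $a$, then $\hat c(\sigma) \concat 1 0^{k}$ is a valid prefix of $\hat c(\sigma \concat \langle a\rangle)$ for every $k$. I would therefore handle this at the outset by replacing $S^{\ast}$ with a weakly-equivalent $\Pi^0_1$ class whose defining tree is finitely branching --- obtained by further binary self-delimiting re-encoding of the unbounded coordinates $n$ and $w(j)$ of each witness triple, so that every node in the resulting tree has at most two immediate successors. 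With this preparatory step in place, membership in $T_Q$ becomes decidable, all $1 0^{\infty}$-tails are pruned, and the construction yields the desired nonempty $\Pi^0_1$ subset $Q$ of $\cantor$.
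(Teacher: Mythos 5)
Your overall architecture --- reduce the $\Sigma^0_3$ set to a $\Pi^0_1$ witness class $S^\ast$ in Baire space, then fold an encoding of $S^\ast$ together with fallback copies of $P$ into a single recursive binary tree --- is the right skeleton, and your first step ($S \weakeq S^\ast$ with $S^\ast$ a $\Pi^0_1$ subset of a copy of $\baire$) is correct. The fatal problem is exactly the one you flag at the end, and your proposed repair does not resolve it. As written, $T_Q$ already fails at the root: the first coordinate of a witness triple ranges over all of $\mathbb{N}$, so $10^k$ is a prefix of $\hat c(\langle n\rangle)$ for every $n \geq k-1$, hence $10^\infty$ is an infinite path of $T_Q$; it is recursive and not of the form $\hat c(\sigma)\concat 11 \concat X$, so $Q \weakeq \REC$, which is strictly below $\weakdeg(P \cup S)$ in every nontrivial application. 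The binary re-encoding fix cannot eliminate such points: the image of a non-compact closed subset of $\baire$ under such a coordinate-wise self-delimiting encoding is not closed in $\cantor$, and the path set of the (now finitely branching) prefix tree is its closure, which contains precisely the analogous junk --- the sequences obtained by beginning to encode a coordinate and never emitting the terminator, each of which is a limit of valid encodings whenever that coordinate is unbounded at that node. Making the tree $2$-branching changes the shape of the degenerate paths, not their existence or their (typically recursive) degree.

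The missing idea --- and the crux of the cited proof --- is to make the degenerate paths harmless rather than to prune them. Replace the unary padding $0^{n+1}$ inside $c(n)$ by an interleaving of a branch of $T_P$ with a continue/stop flag: code the value $n$ at a given node by a block $\langle \tau(0),0,\tau(1),0,\ldots,\tau(n),1\rangle$ with $\tau \in T_P$, where the flag bit $1$ terminates the block and the number of flag-$0$ steps determines $n$. Decoding remains unambiguous, a terminated block still reveals the coordinate, and --- crucially --- a path that enters such a block and never terminates it carries an infinite branch of $T_P$ in the interleaved coordinates, hence computes an element of $P \subseteq P \cup S$. Applying this to every unbounded coordinate (the index $n$ of the $\Pi^0_2$ component and each witness value $w(j)$), and checking the recursive matrix $R$ along the way so that $T_Q$ stays recursive, every infinite path of $Q$ either decodes an element of $S^\ast$ or computes an element of $P$, which is exactly what $P \cup S \weakleq Q$ requires; your explicit $11$-switch to $P$-mode then becomes unnecessary (though harmless). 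Without this device the construction cannot succeed, because $P \cup S$ generally has no recursive member while your $Q$ always does.
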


One particular case which is especially well-behaved is with \emph{recursively bounded $\Pi^0_1$ classes}. 

\begin{definition}
Suppose $h \colon \mathbb{N} \to (0,\infty)$ is a computable function. We write
\begin{align*}
h^n & \coloneq \{ \sigma \in \mathbb{N}^n \mid \forall i < n \qspace ( \sigma(i) < h(i) )\}, \\
h^\ast & \coloneq \{ \sigma \in \mathbb{N}^\ast \mid \forall i < |\sigma| \qspace ( \sigma(i) < h(i) )\} = \bigcup_{n\in\mathbb{N}}{h^n}, \\
h^\mathbb{N} & \coloneq \{ X \in \baire \mid \forall i \qspace ( X(i) < h(i) )\}.
\end{align*}
\end{definition}

In other words, $h^n$ is the set of $h$-bounded strings of length $n$, $h^\ast$ is the set of all $h$-bounded strings, and $h^\mathbb{N}$ is the set of $h$-bounded infinite sequences. 

\begin{lem}
The subspace topology on $h^\mathbb{N} \subseteq \baire$ has a basis $\{ \bbracket{\sigma}_h \mid \sigma \in h^\ast\}$, where for $\sigma \in h^\ast$ we define
\begin{equation*}
\bbracket{\sigma}_h \coloneq \{ X \in h^\mathbb{N} \mid \sigma \subset X\}.
\end{equation*}
\end{lem}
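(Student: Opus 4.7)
The plan is to verify the two standard requirements for $\mathcal{B} = \{\bbracket{\sigma}_h \mid \sigma \in h^\ast\}$ to serve as a basis for the subspace topology on $h^\mathbb{N} \subseteq \baire$: first, that every member of $\mathcal{B}$ is open in the subspace topology, and second, that every subspace-open subset of $h^\mathbb{N}$ is a union of members of $\mathcal{B}$.

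For the first requirement, I would note that for each $\sigma \in h^\ast$ we have the identity $\bbracket{\sigma}_h = \bbracket{\sigma} \cap h^\mathbb{N}$, simply because the predicate $\sigma \subset X$ appears in the defining condition on both sides and $X \in h^\mathbb{N}$ appears in the defining condition of the left-hand side. Since $\bbracket{\sigma}$ is by definition basic open in $\baire$, this exhibits $\bbracket{\sigma}_h$ as the intersection of $h^\mathbb{N}$ with an open subset of $\baire$, hence open in the subspace topology.

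For the second requirement, it suffices to show that every set of the form $\bbracket{\sigma} \cap h^\mathbb{N}$, for $\sigma \in \mathbb{N}^\ast$ arbitrary, is a union of members of $\mathcal{B}$, since such sets form a basis for the subspace topology by definition. I would split into cases on whether $\sigma \in h^\ast$. If $\sigma \in h^\ast$, then by the identity above $\bbracket{\sigma} \cap h^\mathbb{N} = \bbracket{\sigma}_h \in \mathcal{B}$, a (trivial) union of members of $\mathcal{B}$. If $\sigma \notin h^\ast$, then by definition of $h^\ast$ there is some $i < |\sigma|$ with $\sigma(i) \geq h(i)$; but any $X \in h^\mathbb{N}$ satisfies $X(i) < h(i) \leq \sigma(i)$, so no such $X$ can extend $\sigma$. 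Thus $\bbracket{\sigma} \cap h^\mathbb{N} = \emptyset$, which is the empty union of members of $\mathcal{B}$.

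There is no real obstacle here; the result is essentially a bookkeeping exercise verifying that the natural candidate basis is indeed a basis. The only minor point to confirm is that $\mathcal{B}$ covers $h^\mathbb{N}$, which follows immediately by taking $\sigma = \langle\rangle \in h^\ast$ and noting $\bbracket{\langle\rangle}_h = h^\mathbb{N}$.
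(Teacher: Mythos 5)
Your proof is correct and follows exactly the same approach as the paper's (which simply observes that $h^\mathbb{N} \cap \bbracket{\sigma} = \bbracket{\sigma}_h$ for $\sigma \in h^\ast$ and $h^\mathbb{N} \cap \bbracket{\sigma} = \emptyset$ for $\sigma \in \mathbb{N}^\ast \setminus h^\ast$); you have merely written out the details more explicitly.
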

\begin{proof}
For all $\sigma \in h^\ast$, $h^\mathbb{N} \cap \bbracket{\sigma} = \bbracket{\sigma}_h$. If $\sigma \in \mathbb{N}^\ast \setminus h^\ast$, then $h^\mathbb{N} \cap \bbracket{\sigma} = \emptyset$.
\end{proof}

\begin{prop} \label{hN is recursively homeomorphic to cantor space}
\textnormal{(well-known)}
$h^\mathbb{N}$ is recursively homeomorphic to $\cantor$.
\end{prop}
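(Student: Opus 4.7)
Since $h$ is computable, the function $k \colon \mathbb{N} \to \mathbb{N}_{\geq 1}$ defined by $k(i) \coloneq |\{n \in \mathbb{N} : n < h(i)\}|$ will also be computable (under the tacit decidability assumptions needed for $h(i) \in \mathbb{N}$ to be decided, which hold for all $h$ of interest), and $h^\mathbb{N}$ is canonically the product $\prod_{i \in \mathbb{N}} [k(i)]$ of finite discrete spaces. This product is compact by Tychonoff's theorem, zero-dimensional, Polish, and perfect provided $k(i) \geq 2$ for infinitely many $i$---so homeomorphic to $\cantor$ by Brouwer's characterization. My plan is to exhibit a recursive homeomorphism explicitly, via a recursive matching of the trees $h^\ast$ and $\{0,1\}^\ast$. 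By absorbing positions where $k(i) = 1$ (a recursive reindexing that contributes no information), I may assume $k(i) \geq 2$ for all $i$.

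The core construction will be a recursive, prefix-preserving map $\iota \colon h^\ast \to \{0,1\}^\ast$ such that, for every $n$, the image $\iota[h^n]$ is a maximal prefix-free antichain in $\{0,1\}^\ast$ (equivalently, $\cantor = \bigsqcup_{\sigma \in h^n} \bbracket{\iota(\sigma)}_2$). I will set $\iota(\langle\rangle) \coloneq \langle\rangle$ and, given $\iota(\sigma)$, partition $\bbracket{\iota(\sigma)}_2$ into $k(|\sigma|)$ basic clopen pieces via the ``comb'' $\tau_j \coloneq \iota(\sigma) \concat \langle 1 \rangle^j \concat \langle 0 \rangle$ for $j < k(|\sigma|) - 1$ and $\tau_{k(|\sigma|)-1} \coloneq \iota(\sigma) \concat \langle 1 \rangle^{k(|\sigma|)-1}$, then set $\iota(\sigma \concat \langle j \rangle) \coloneq \tau_j$. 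This is manifestly recursive (since $k$ is), prefix-preserving, and strictly extending (since $k(|\sigma|) \geq 2$ implies $|\iota(\sigma \concat \langle j \rangle)| \geq |\iota(\sigma)| + 1$); the maximal-partition property propagates by induction.

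Define $F \colon h^\mathbb{N} \to \cantor$ by $F(X) \coloneq \bigcup_{n \in \mathbb{N}} \iota(X \restrict n)$. It will be well-defined (lengths grow without bound), injective (distinct $X, X'$ first differ at some coordinate $n$, forcing $\iota(X \restrict (n+1))$ and $\iota(X' \restrict (n+1))$ to be incompatible), and surjective (each $Y \in \cantor$ has a unique prefix in $\iota[h^n]$ for every $n$, tracing out a unique $X \in h^\mathbb{N}$ with $F(X) = Y$). Recursiveness of $F$ as a partial recursive functional is immediate from the recursiveness of $\iota$: the associated prefix function $\sigma \mapsto \iota(\sigma)$ (defined on $h^\ast$) satisfies the condition of \cref{characterizations of partial recursive functional}(iii). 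For $F^{-1}$, on input $\tau \in \{0,1\}^\ast$, output the longest $\sigma \in h^\ast$ with $\iota(\sigma) \subseteq \tau$, found by finite search over the (finitely many) $\sigma$ with $|\iota(\sigma)| \leq |\tau|$; this is again recursive.

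The main obstacle will be confirming that the inverse functional yields arbitrarily long prefixes of $F^{-1}(Y)$ as its input prefix $\tau \subset Y$ lengthens---that is, that $F^{-1}$ is total on $\cantor$ as a recursive functional, not just defined on a thin tail. This reduces to showing: for every $n$ and every $Y \in \cantor$, some $\sigma \in h^n$ satisfies $\iota(\sigma) \subset Y$. This is exactly the maximal-partition property at level $n$, and since $h^n$ is finite, the length of $\tau$ needed to detect this $\sigma$ is at most $\max_{\sigma \in h^n} |\iota(\sigma)|$, which is finite. Combining the two directions gives a recursive homeomorphism $h^\mathbb{N} \to \cantor$, as required.
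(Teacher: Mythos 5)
Your proposal is correct and uses essentially the same construction as the paper: the identical "comb" encoding $\sigma \concat \langle i \rangle \mapsto \psi(\sigma) \concat \langle 1\rangle^i \concat \langle 0 \rangle$ (with the last branch $\langle 1\rangle^{h(|\sigma|)-1}$), the same induced map $X \mapsto \bigcup_n \psi(X \restrict n)$, and the same partition/prefix-preservation arguments for bijectivity and recursiveness of both directions. Your explicit handling of the branching count $k(i)$ and of positions with only one allowed value is slightly more careful than the paper's, but the argument is the same.
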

\begin{proof}
Define $\psi \colon h^\ast \to \{0,1\}^\ast$ recursively as follows: $\psi(\langle\rangle) = \langle\rangle$ and given $\psi(\sigma)$ has been defined, let $\psi(\sigma \concat \langle i \rangle) \coloneq \psi(\sigma) \concat \langle 1\rangle^i \concat \langle 0 \rangle$ for each $i < h(|\sigma|)-1$ and $\psi(\sigma \concat \langle h(|\sigma|)-1\rangle) \coloneq \psi(\sigma) \concat \langle 1 \rangle^{h(|\sigma|)-1}$. We make the following observations: \begin{enumerate*}[(i)] \item for all $\sigma, \sigma' \in h^\ast$, $\sigma \subseteq \sigma'$ if and only if $\psi(\sigma) \subseteq \psi(\sigma')$, and \item for all $\sigma \in h^\ast$, $\bbracket{\psi(\sigma)}_2 = \bigcup_{i < h(|\sigma|)}{\bbracket{\psi(\sigma\concat\langle i\rangle)}_2}$. \end{enumerate*}

Now define $\Psi \colon h^\mathbb{N} \to \cantor$ by $\Psi(X) \coloneq \bigcup_{n \in \mathbb{N}}{\psi(X \restrict n)}$. Observation (i) above implies $\Psi$ is well-defined and injective, while observation (ii) implies $\Psi$ is surjective. Given $\tau \in \{0,1\}^\ast$, $\Psi^{-1}[\bbracket{\tau}_2] = \bigcup\{ \bbracket{\sigma}_h \mid \psi(\sigma) \supseteq \tau\}$, showing $\Psi$ is continuous. Conversely, given $\sigma \in h^\ast$, $\Psi[\bbracket{\sigma}_h] = \bbracket{\psi(\sigma)}_2$ -- that $\Psi[\bbracket{\sigma}_h] \subseteq \bbracket{\psi(\sigma)}_2$ is immediate, while the reverse inclusion follows from observation (ii) above -- and hence $\Psi$ is an open map. Since $\Psi$ is clearly recursive, it is a recursive homeomorphism.
\end{proof}

\begin{definition}[recursively bounded]
$P \subseteq \baire$ is \textdef{recursively bounded}, or \textdef{r.b.}, if there exists an recursive function $h \colon \mathbb{N} \to (1,\infty)$ such that $P \subseteq h^\mathbb{N}$. 

In particular, a \textdef{recursively bounded $\Pi^0_1$ class}, or a \textdef{r.b.\ $\Pi^0_1$ class}, is a recursively bounded and $\Pi^0_1$ subset of $\baire$.
\end{definition}

\begin{prop} \label{extending recursive functionals on recursively bounded pi01 classes}
\textnormal{\cite[Theorem4.7]{simpson2005mass}}
Suppose $P$ is a r.b.\ $\Pi^0_1$ class and that $\Psi \colon P \to \baire$ is a recursive functional. 
\begin{enumerate}[(a)]
\item The image $\Psi[P]$ is recursively bounded and $\Pi^0_1$.
\item $\Psi$ extends to a total recursive functional $\tilde{\Psi} \colon \baire \to \baire$.
\end{enumerate}
\end{prop}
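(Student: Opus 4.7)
The plan is to extract a uniform, effective finite-cover property for $\Psi$ on $P$ and then derive both (a) and (b) from it. Fix a recursive tree $T \subseteq h^\ast$ with $P = [T]$ and $h \colon \mathbb{N} \to (1,\infty)$ recursive; by \cref{hN is recursively homeomorphic to cantor space} the space $h^\mathbb{N}$ is recursively homeomorphic to $\cantor$, and in particular compact. The central technical step I would prove is: for each $n$, one can recursively compute a finite set $A_n \subseteq h^\ast$ such that (i) $|\Psi^\sigma| \geq n$ for every $\sigma \in A_n$, and (ii) every $X \in P$ has an initial segment in $A_n$.

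To produce $A_n$, observe that the set $V_n := \{\sigma \in h^\ast : |\Psi^\sigma| \geq n\}$ is r.e., and since $\Psi$ is total on $P$ every $X \in P$ has an initial segment in $V_n$. Since $P$ is $\Pi^0_1$, there is also an r.e.\ $E \subseteq h^\ast$ with $h^\mathbb{N} \setminus P = \bigcup_{\rho \in E} \bbracket{\rho}_h$. Thus the family $\{\bbracket{\sigma}_h : \sigma \in V_n \cup E\}$ is an r.e.\ open cover of the compact space $h^\mathbb{N}$, and I would recursively find a finite subcover by enumerating $V_n \cup E$ in stages and, at each stage, checking whether the finite set $W$ enumerated so far covers $h^\mathbb{N}$. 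The latter check is decidable: $W$ covers $h^\mathbb{N}$ iff every $\tau \in h^M$ extends some element of $W$, where $M := \max\{|\sigma| : \sigma \in W\}$. Setting $A_n := W \cap V_n$ gives the desired set.

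With $A_n$ in hand, part (b) is proved by defining $\tilde{\Psi}^X(n) := \Psi^\sigma(n)$ for any $\sigma \in A_{n+1}$ with $\sigma \subset X$, and $\tilde{\Psi}^X(n) := 0$ otherwise; when $X \in P$ such $\sigma$ exists by (ii), and the monotonicity of the partial recursive function $\Gamma_\Psi$ underlying $\Psi$ (see \cref{characterizations of partial recursive functional}) makes the choice of $\sigma$ immaterial, so $\tilde{\Psi}^X(n) = \Psi^X(n)$. For part (a), the recursive function $g(n) := 2 + \max\{\Psi^\sigma(n) : \sigma \in A_{n+1}\}$ bounds every member of $\Psi[P]$ pointwise, showing $\Psi[P] \subseteq g^\mathbb{N}$. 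For the $\Pi^0_1$ property, I would show that the complement of $\Psi[P]$ in $g^\mathbb{N}$ is r.e.\ open: a basic open $\bbracket{\tau}_g$ is disjoint from $\Psi[P]$ iff $\tilde{\Psi}^{-1}[\bbracket{\tau}_g]$ is disjoint from $P$, iff the r.e.\ open sets $\tilde{\Psi}^{-1}[\bbracket{\tau}_g]$ and $h^\mathbb{N} \setminus P$ jointly cover $h^\mathbb{N}$, which can again be detected by searching for a finite subcover.

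The main obstacle is the effective-compactness step that yields $A_n$; everything else is bookkeeping once the uniform finite-cover property is in place. The argument rests on being able to run the r.e.\ enumerations and decide the finite-covering condition at each stage, which in turn reduces to $h^\mathbb{N}$ being effectively compact via its recursive homeomorphism with $\cantor$.
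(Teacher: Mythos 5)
The paper itself does not prove this proposition -- it is imported verbatim from Simpson's paper (the cited Theorem 4.7) -- so there is no in-text argument to measure you against; your proof has to stand on its own. Its overall architecture does: extracting, uniformly in $n$, a finite set $A_n \subseteq h^\ast$ of oracles strings on which $\Psi$ has converged to length $\geq n$ and which covers $P$, via an effective finite-subcover search over the r.e.\ cover $V_n \cup E$ of the compact space $h^\mathbb{N}$, is a standard and correct route (modulo the harmless convention that $h$ be replaced by an integer-valued recursive majorant so that $h^M$ can be listed and the covering test at each stage is decidable). Part (b) and the recursive bound $g$ in part (a) follow correctly from the $A_n$'s exactly as you say.

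The $\Pi^0_1$ half of part (a), however, has a genuine error as written. You claim that $\bbracket{\tau}_g$ is disjoint from $\Psi[P]$ iff $\tilde{\Psi}^{-1}[\bbracket{\tau}_g]$ and $h^\mathbb{N}\setminus P$ jointly cover $h^\mathbb{N}$. Covering by those two sets says that every $X \in P$ lies in $\tilde{\Psi}^{-1}[\bbracket{\tau}_g]$, i.e.\ $\Psi[P] \subseteq \bbracket{\tau}_g$ -- the opposite kind of condition. Disjointness says $P \subseteq h^\mathbb{N} \setminus \tilde{\Psi}^{-1}[\bbracket{\tau}_g]$, and that complement is closed, not r.e.\ open, so your finite-subcover search does not apply to it as stated. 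The repair stays entirely inside your framework and uses part (b): since $\tilde{\Psi}$ is total, the set $U_\tau$ of all $X \in \baire$ with $\tilde{\Psi}(X)\restrict|\tau| \neq \tau$, equivalently $\{ X \mid \exists n\, (|\tilde{\Psi}^{X\restrict n}| \geq |\tau| \wedge \tilde{\Psi}^{X\restrict n}\restrict|\tau| \neq \tau)\}$, is effectively open uniformly in $\tau$, and $\bbracket{\tau}_g \cap \Psi[P] = \emptyset$ holds iff $U_\tau$ together with $h^\mathbb{N}\setminus P$ covers $h^\mathbb{N}$; now both pieces are r.e.\ open and your covering test makes the set of such $\tau$ r.e. (Alternatively, the uniform use bound built into $\tilde{\Psi}$ lets you write disjointness as a finite conjunction, over $\rho \in h^{u(|\tau|)}$, of ``$\rho$ is non-extendible in a recursive tree for $P$, or $\tilde{\Psi}^\rho\restrict|\tau| \neq \tau$'', which is $\Sigma^0_1$ without a second covering search.) You should also record explicitly that $\Psi[P]$ is closed in $g^\mathbb{N}$ -- continuous image of a compact set -- so that the enumerated cylinders really exhaust the complement and $\Psi[P] = g^\mathbb{N} \setminus \bigcup\{\bbracket{\tau}_g \mid \tau \text{ enumerated}\}$ is $\Pi^0_1$.
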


\begin{cor} \label{recursively bounded pi01 class recursively homeomorphic to pi01 class in cantor space}
Suppose $P$ is a r.b.\ $\Pi^0_1$ class. Then there exists a $\Pi^0_1$ subset $Q$ of $\cantor$ which is recursively homeomorphic to $P$.
\end{cor}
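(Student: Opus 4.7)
The plan is to combine the previous two results directly. Since $P$ is r.b., fix a computable $h\colon \mathbb{N} \to (1,\infty)$ witnessing this, so that $P \subseteq h^\mathbb{N}$. By \cref{hN is recursively homeomorphic to cantor space}, there is a recursive homeomorphism $\Psi\colon h^\mathbb{N} \to \cantor$ (with recursive inverse), defined explicitly from the binary-tree encoding $\psi\colon h^\ast \to \{0,1\}^\ast$ in the proof of that proposition.

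Set $Q \coloneq \Psi[P]$. Since $\Psi$ is a homeomorphism $h^\mathbb{N} \to \cantor$, its restriction to $P$ is a homeomorphism onto $Q$; and because both $\Psi$ and $\Psi^{-1}$ are (total) recursive functionals, $\Psi\restrict P\colon P \to Q$ is a recursive homeomorphism. Since $Q \subseteq \cantor$, only the claim that $Q$ is $\Pi^0_1$ remains.

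For this I would appeal to \cref{extending recursive functionals on recursively bounded pi01 classes}(a). Viewing $\Psi$ (more precisely, its natural extension as described following \cref{hN is recursively homeomorphic to cantor space}) as a recursive functional defined on all of the r.b.\ $\Pi^0_1$ class $P$, that proposition says the image $\Psi[P]$ is itself a r.b.\ $\Pi^0_1$ class. Thus $Q = \Psi[P]$ is a $\Pi^0_1$ subset of $\baire$ contained in $\cantor$, and hence a $\Pi^0_1$ subset of $\cantor$.

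There is no real obstacle here: the work has been done in the preceding two results, and the only step that requires any care is checking that $\Psi$ (a priori defined only on $h^\mathbb{N}$) can be treated as the sort of recursive functional to which \cref{extending recursive functionals on recursively bounded pi01 classes}(a) applies. This is straightforward since the associated $\Gamma_\Psi\colonsub \mathbb{N}^\ast \to \mathbb{N}^\ast$ built from $\psi$ makes sense on all of $\mathbb{N}^\ast$, so $\Psi$ restricts to a recursive functional on the r.b.\ $\Pi^0_1$ class $P$, and \cref{extending recursive functionals on recursively bounded pi01 classes}(a) then delivers the $\Pi^0_1$-ness of the image.
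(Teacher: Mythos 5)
Your proposal is correct and follows essentially the same route as the paper: take the recursive homeomorphism $\Psi\colon h^\mathbb{N} \to \cantor$ from \cref{hN is recursively homeomorphic to cantor space}, set $Q = \Psi[P]$, and invoke \cref{extending recursive functionals on recursively bounded pi01 classes}(a) to conclude that $Q$ is a $\Pi^0_1$ subset of $\cantor$. The extra care you take in noting that $\Psi$ restricts to a recursive functional on $P$ is fine but not a departure from the paper's argument.
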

\begin{proof}
Let $\Psi$ be the recursive homeomorphism defined in the proof of \cref{hN is recursively homeomorphic to cantor space}. Then \cref{extending recursive functionals on recursively bounded pi01 classes}(a) shows that the image $Q$ of $P$ under $\Psi$ is another $\Pi^0_1$ class.
\end{proof}

\clearpage
\chapter{Complexity, Avoidance, and Depth}
\label{background chapter}

In \cref{introduction chapter} we gave brief definitions of $\complex(f)$ and $\dnr(p)$ for $f$ an order function and $p$ a nondecreasing computable function, as well as alluded to a variation on $\dnr$ which we termed $\ldnr$. 

In \cref{algorithmic randomness and complexity section}, we define Martin-\Lof\ randomness through three paradigms as a precursor to their generalizations which give rise to $f$-randomness and strong $f$-randomness for any computable function $f \colon \{0,1\}^\ast \to \mathbb{R}$, with Martin-\Lof\ randomness corresponding to $(\lambda \sigma.|\sigma|)$-randomness. The classes $\complex(f)$ are defined and shown to lie in $\mathcal{E}_\weak$. Finally, we list some of the properties of prefix-free and conditional prefix-free complexity that we will use later.

In \cref{dnr and avoidance section}, we show that the effect of the growth rate of $p$ on $\weakdeg\dnr(p)$ depends explicitly on the choice of admissible enumeration used, and motivate the definition of the class $\avoid^\psi(p)$ for any computable $p \colon \mathbb{N} \to (1,\infty)$ and partial recursive $\psi$. Linearly universal partial recursive functions are defined, followed by defining $\ldnr(p)$ as the union of the classes $\avoid^\psi(p)$ as $\psi$ ranges over those linearly universal partial recursive functions. Basic and technical results are covered for the linearly universal partial recursive functions, $\ldnr(p)$, and $\avoid^\psi(p)$ more generally.

In \cref{fast and slow-growing order functions section}, we formally define the notion of being fast-growing and slow-growing for an order function and address the problem ``Given a recursive sequence of fast-growing (resp., slow-growing) order functions $\langle p_k\rangle_{k \in \mathbb{N}}$, find fast-growing (resp., slow-growing) order functions $q^+$ and $q^-$ such that $p_k \domleq q^+$ and $q^- \domleq p_k$ for all $k \in \mathbb{N}$.'' Towards that end, for the slow-growing case we prove that such a $q^-$ always exists and that a $q^+$ exists with additional hypotheses on $\langle p_k \rangle_{k \in \mathbb{N}}$ (\cref{lower bound of sequence of divergent series}), but that a $q^+$ need not exist in general (\cref{max of slow-growing example}). On the other hand, for the fast-growing case we can prove that $q^+$ always exists and that $q^-$ exists with additional hypotheses on the $p_k$'s and the sequence $\langle p_k \rangle_{k \in \mathbb{N}}$ (\cref{lower and upper bounds of sequence of fast-growing functions}).

%
%

To better understand the extra hypothesis of requiring $\sum_{n=0}^\infty{p(n)^{-1}}$ not only be finite but also recursive, we prove the following equivalence:

\begin{repprop}{infinite sum recursive iff improper integral recursive}
Suppose $p \colon \mathbb{N} \to (0,\infty)$ is a fast-growing order function and let $\overline{p} \colon [0,\infty) \to (0,\infty)$ be any continuous nonincreasing extension of $p$. Then $\sum_{n=0}^\infty{p(n)^{-1}}$ is a recursive real if and only if $\int_0^\infty{\overline{p}(x)^{-1}\dd x}$ is a recursive real.
\end{repprop}

\cref{depth section} introduces the notion of depth for r.b.\ $\Pi^0_1$ classes, of which our interest is based on strong general properties of deep r.b.\ $\Pi^0_1$ classes and the fact that the classes $\ldnr_p$ are deep exactly when $p$ is slow-growing. Depth is shown to be well-behaved with respect to $\strongleq$ while slightly less well-behaved for $\weakeq$. We end the section with a discussion of the applicability of `depth' to subsets of $\baire$ which are not r.b.\ $\Pi^0_1$ classes.

\section{Algorithmic Randomness and Complexity}
\label{algorithmic randomness and complexity section}

Downey \& Hirschfeldt identify three paradigms through which one can attempt to make precise the idea of `algorithmic randomness' or `algorithmic complexity'. \cite[Chapter 6]{downey2010algorithmic}
\begin{description}
\item[The measure-theoretic paradigm:] If $X \in \cantor$ is `random', then it should pass all `statistical tests' (e.g., $X$ should obey the Law of Large Numbers, the Law of the Iterated Logarithm, etc.). Any `statistical test' should be such that the set of sequences failing that statistical test should be `effectively null' (so $X$ should not fall into any effectively null subset of $\cantor$).

\item[The computational paradigm:] If $X \in \cantor$ is `random', then the initial segments of $X$ should be `maximally difficult' to describe, in the sense that we should need to know roughly $n$ bits of information in order to describe $X \restrict n$. 

\item[The unpredictability paradigm:] If $X \in \cantor$ is `random' and we imagine that each bit of $X$ represents the result of a coin flip whose outcome we are betting on, then there shouldn't be any strategy by which we make arbitrarily high earnings.

\end{description}

There are several ways to make precise the notion of randomness from any of these three paradigms (some of them inequivalent), and our interest will not solely be on `randomness' but on notions of `partial randomness'. Quantifying `how random' a partially random sequence is yields the complexity hierarchy.

\subsection{Martin-\texorpdfstring{\Lof}{Lof}\ Randomness}

Martin-\Lof\ randomness is among the most standard ways to capture the notion of a sequence being algorithmically random, and our definitions of partial randomness will be generalizations of those for Martin-\Lof\ randomness. 

The first definition we give comes from the within the measure-theoretic paradigm.

\begin{definition}[Martin-\Lof\ randomness]
A \textdef{Martin-\Lof\ test}, or \textdef{ML test}, is a sequence $\langle U_i \rangle_{i \in \mathbb{N}}$ of uniformly $\Sigma^0_1$ subsets of $\cantor$ such that $\lambda(U_i) \leq 2^{-i}$ for each $i \in \mathbb{N}$. Such an ML test \textdef{covers} $X \in \cantor$ if $X \in \bigcap_{i \in \mathbb{N}}{U_i}$. $X \in \cantor$ is \textdef{Martin-\Lof\ random} (or \textdef{$1$-random}) if no ML test covers $X$. The set of all Martin-\Lof\ random sequences is denoted by $\mlr$.
\end{definition}

The computational paradigm involves measuring the `complexity' of initial segments of an $X \in \cantor$. There are two relevant notions of complexity, that of \emph{prefix-free complexity} and \emph{a priori complexity}.

First, prefix-free complexity:

\begin{definition}[prefix-free machine]
A \textdef{machine} is a partial recursive function $M \colonsub \{0,1\}^\ast \to \{0,1\}^\ast$. 
A machine $M$ is \textdef{prefix-free} if $\dom M$ is prefix-free. 
A prefix-free machine $U$ is said to be \textdef{universal} if whenever $M$ is another prefix-free machine there is a $\rho \in \{0,1\}^\ast$ such that $U(\rho \concat \tau) \simeq M(\tau)$ for all $\tau \in \{0,1\}^\ast$.
\end{definition}

\begin{lem} 
\textnormal{\cite[Proposition 3.5.1.(ii)]{downey2010algorithmic}}
There exists a universal prefix-free machine.
\end{lem}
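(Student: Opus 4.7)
The plan is to effectively enumerate all prefix-free machines and then splice them together using a prefix-free coding of indices.

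First I would construct a uniformly partial recursive sequence $\langle \hat{\varphi}_e \rangle_{e \in \mathbb{N}}$ of prefix-free machines such that every prefix-free machine appears as some $\hat{\varphi}_e$. Because ``$\varphi_e$ has prefix-free domain'' is not a decidable property of $e$, I cannot simply sift the standard enumeration $\varphi_\bullet$; instead I apply a prefix-free filter. Enumerate the convergent computations $\varphi_{e,s}(\sigma) \converge$ in some effective order (say shortlex on pairs $\langle s, \sigma \rangle$). At each such step declare $\hat{\varphi}_e(\sigma) \converge = \varphi_e(\sigma)$ provided $\sigma$ is incompatible with every input already placed into $\dom \hat{\varphi}_e$; otherwise ignore it. By construction $\dom \hat{\varphi}_e$ is prefix-free, so $\hat{\varphi}_e$ is a prefix-free machine. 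Moreover, if $\varphi_e$ already has prefix-free domain then no input is ever discarded, giving $\hat{\varphi}_e = \varphi_e$. Hence every prefix-free machine appears in the sequence.

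Next I would define
\begin{equation*}
U(\langle 0 \rangle^e \concat \langle 1 \rangle \concat \tau) \simeq \hat{\varphi}_e(\tau) \qquad (e \in \mathbb{N},\, \tau \in \{0,1\}^\ast),
\end{equation*}
with $U$ undefined on inputs not of this form. Uniformity of the construction of $\langle \hat{\varphi}_e \rangle$ makes $U$ partial recursive. To verify $\dom U$ is prefix-free, suppose $\langle 0 \rangle^{e_1} \concat \langle 1 \rangle \concat \tau_1 \subseteq \langle 0 \rangle^{e_2} \concat \langle 1 \rangle \concat \tau_2$ with both strings in $\dom U$: comparing the positions of the first $1$ in each string forces $e_1 = e_2$, and then $\tau_1 \subseteq \tau_2$ together with prefix-freeness of $\dom \hat{\varphi}_{e_1}$ forces $\tau_1 = \tau_2$. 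For universality, given any prefix-free machine $M$, pick $e$ with $M = \hat{\varphi}_e$ and take $\rho = \langle 0 \rangle^e \concat \langle 1 \rangle$; then $U(\rho \concat \tau) \simeq \hat{\varphi}_e(\tau) \simeq M(\tau)$ for all $\tau$.

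The main obstacle, and essentially the only delicate point, is the verification that the prefix-free filter loses nothing when applied to a machine that was already prefix-free; without that, the enumeration $\langle \hat{\varphi}_e \rangle$ might miss some prefix-free machines and the universality quantifier in the conclusion could fail. This reduces to observing that if $\dom \varphi_e$ is already prefix-free, any $\sigma \in \dom \varphi_e$ processed by the filter is automatically incompatible with every previously-processed input, so nothing is discarded—an immediate consequence of the definition of prefix-freeness. Everything else (partial recursiveness of $U$, prefix-freeness of $\dom U$, and the universality computation above) is bookkeeping.
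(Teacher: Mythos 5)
Your proof is correct, and it is essentially the standard argument: the paper does not reproduce a proof but cites Downey--Hirschfeldt, whose construction is exactly what you describe — effectively enumerate all prefix-free machines by filtering the standard enumeration so each domain stays an antichain (losing nothing when the domain was already prefix-free), then splice them via $U(\langle 0\rangle^e \concat \langle 1 \rangle \concat \tau) \simeq \hat{\varphi}_e(\tau)$. No gaps; the prefix-freeness of $\dom U$ and the universality verification are handled correctly.
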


\begin{definition}[prefix-free complexity]
Fix a universal prefix-free machine $U$. Given $\sigma \in \{0,1\}^\ast$, its \textdef{prefix-free complexity} (with respect to $U$) is defined by
\begin{equation*}
\pfc(\sigma) = \pfc_U(\sigma) \coloneq \min\{ |\tau| \mid U(\tau) \converge = \sigma\}.
\end{equation*}
\end{definition}

The second notion of complexity we use is that of a priori complexity, which has a more pronounced measure-theoretic leaning than that of prefix-free complexity:

\begin{definition}[continuous semimeasure]
A \textdef{continuous semimeasure} is a function $\nu \colon \{0,1\}^\ast \to [0,1]$ such that $\nu(\langle\rangle)=1$ and $\nu(\sigma) \geq \nu(\sigma \concat \langle 0 \rangle) + \nu(\sigma \concat \langle 1 \rangle)$ for all $\sigma \in \{0,1\}^\ast$. 
A continuous semimeasure $\nu$ is \textdef{left recursively enumerable}, or \textdef{left r.e.}, if it is left r.e.\ in the usual sense. 
A left r.e.\ continuous semimeasure $\mathbf{M}$ is \textdef{universal} if whenever $\nu$ is another left r.e.\ continuous semimeasure there is a $c \in \mathbb{N}$ such that $\nu(\sigma) \leq c \cdot \mathbf{M}(\sigma)$ for all $\sigma \in \{0,1\}^\ast$.
\end{definition}

\begin{lem} 
\textnormal{\cite[Theorem 3.16.2]{downey2010algorithmic}}
There exists a universal left r.e.\ continuous semimeasure.
\end{lem}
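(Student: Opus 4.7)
The plan is to build $\mathbf{M}$ as an effective weighted combination of all left r.e.\ continuous semimeasures. The first step is to produce an effective enumeration $\langle \nu_e \rangle_{e \in \mathbb{N}}$ such that every $\nu_e$ is a left r.e.\ continuous semimeasure, the approximations from below $p_{e,\sigma,s} \nearrow_s \nu_e(\sigma)$ are uniformly recursive in $e,\sigma,s$, and every left r.e.\ continuous semimeasure equals $\nu_e$ for some $e$. Since ``being a left r.e.\ continuous semimeasure'' is not a decidable property of indices, I cannot simply list them. The standard workaround is to begin with an effective enumeration $\langle \Phi_e \rangle_{e \in \mathbb{N}}$ of partial recursive functions $\Phi_e \colonsub \{0,1\}^\ast \times \mathbb{N} \to \mathbb{Q}$ that are nondecreasing in the stage argument (arranged by replacing $\Phi_e(\sigma,s)$ with the running maximum over $t \leq s$) and then \emph{repair} each into a legitimate semimeasure stage by stage: initialize $p_{e,\langle\rangle,0} \coloneq 1$ and $p_{e,\sigma,0} \coloneq 0$ for $\sigma \neq \langle\rangle$, and at stage $s$ incorporate the newly enumerated rational $\Phi_e(\sigma,s)$ as an updated lower bound for $\nu_e(\sigma)$ only if doing so does not violate $p_{e,\tau,s} \geq p_{e,\tau \concat \langle 0 \rangle,s} + p_{e,\tau \concat \langle 1 \rangle,s}$ at any node $\tau$ of length at most $s$; otherwise discard it. This yields a recursive array $p_{e,\sigma,s}$ with $p_{e,\sigma,s} \nearrow_s \nu_e(\sigma)$ for some left r.e.\ continuous semimeasure $\nu_e$.

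For the second step, define
\begin{equation*}
\mathbf{M}(\sigma) \coloneq \sum_{e=0}^\infty 2^{-e-1} \nu_e(\sigma).
\end{equation*}
The partial sums $\sum_{e \leq s} 2^{-e-1} p_{e,\sigma,s}$ give a uniformly recursive rational approximation to $\mathbf{M}(\sigma)$ from below with truncation error bounded by $2^{-s}$, so $\mathbf{M}$ is left r.e.\ as a function $\{0,1\}^\ast \to \mathbb{R}$ in the sense of \cref{equivalent characterizations of recursive reals}. The semimeasure inequality transfers term by term from the $\nu_e$, and $\mathbf{M}(\langle\rangle) = \sum_{e=0}^\infty 2^{-e-1} = 1$ since each $\nu_e(\langle\rangle) = 1$ by the initialization. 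Universality is then immediate: for any left r.e.\ continuous semimeasure $\nu$, pick an $e$ with $\nu = \nu_e$; then $\mathbf{M}(\sigma) \geq 2^{-e-1} \nu(\sigma)$ for all $\sigma$, so the constant $c \coloneq 2^{e+1}$ witnesses $\nu(\sigma) \leq c \cdot \mathbf{M}(\sigma)$.

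The main obstacle is verifying that the repair step produces every left r.e.\ continuous semimeasure, that is, when $\Phi_e$ already encodes one, the repair is a no-op. This requires checking that if $\Phi_e(\sigma,s)$ is nondecreasing in $s$ and converges to $\nu(\sigma)$ for a bona fide left r.e.\ continuous semimeasure $\nu$, then at no stage does incorporating $\Phi_e(\sigma,s)$ violate the inequality. This follows because $\Phi_e(\sigma,s) \leq \nu(\sigma)$ at every stage and node, and $\nu$ itself satisfies the semimeasure inequality, so the tentative updates always remain dominated by values for which the inequality holds; hence nothing is ever discarded and $\nu_e = \nu$. A small additional check is needed that forcing $p_{e,\langle\rangle,s} = 1$ from the start does no harm, which is automatic because any legitimate left r.e.\ continuous semimeasure already satisfies $\nu(\langle\rangle)=1$ by definition. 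Once this bookkeeping is in place, the remaining verifications are routine.
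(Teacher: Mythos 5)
Your overall architecture (effectively enumerate all left r.e.\ continuous semimeasures, then take $\mathbf{M} = \sum_e 2^{-e-1}\nu_e$) is the standard route behind the cited result, and the weighted-sum part — left r.e.-ness via partial sums, $\mathbf{M}(\langle\rangle)=1$, the inequality passing through the sum, and the constant $2^{e+1}$ — is routine and fine. The gap is in the repair step, which is the entire nontrivial content of the lemma (the paper itself offers no proof and defers exactly this point to Downey--Hirschfeldt). Your acceptance rule tests the candidate value $\Phi_e(\sigma,s)$ against the \emph{current repaired approximations} $p_{e,\tau,s}$, and your justification that ``nothing is ever discarded'' when $\Phi_e$ already approximates a genuine semimeasure $\nu$ is false: pointwise domination $\Phi_e(\sigma,s)\leq\nu(\sigma)$ does not make the test pass, because the parent's approximation can lag behind the children's. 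Concretely, take $\nu(\tau)=1/2$, $\nu(\tau\concat\langle 0\rangle)=\nu(\tau\concat\langle 1\rangle)=1/4$, with approximations $\Phi_e(\tau,s)=1/2-2^{-s}$ and $\Phi_e(\tau\concat\langle i\rangle,s)=1/4-4^{-s}$: accepting the newest value at one child forces the test for the other child to require $2\cdot 4^{-s}\geq 2^{-s}$, which fails for all $s\geq 2$, so under your all-or-nothing rule that child's value freezes at an early stage. Hence $\nu_e\neq\nu$; worse, by scaling this phenomenon at deeper and deeper nodes one can make $\nu(\sigma)/\nu_e(\sigma)$ unbounded, so even the weaker property you actually need for universality (some $\nu_e$ dominating $\nu$ up to a multiplicative constant) is not delivered by your argument.

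The construction can be repaired, but it needs a different acceptance discipline and a different verification. For instance, at each stage re-attempt \emph{all} previously enumerated values (equivalently, accept at $\sigma$ the largest value $\Phi_e(\sigma,t)$, $t\leq s$, consistent with the current state); then one proves by induction on $|\sigma|$ that $\lim_s p_{e,\sigma,s}=\nu(\sigma)$: the root is pinned at $1$, and for any fixed $v<\nu(\sigma)$ one has $v+\lim_s p_{e,\sigma',s}\leq v+\nu(\sigma')<\nu(\tau)=\lim_s p_{e,\tau,s}$, so the request for $v$ is eventually granted. Note this argument works only because the compared value $v$ is \emph{fixed} while the parent's approximation converges — precisely what your ``newest value'' rule destroys. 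Alternatively, you can avoid the repair argument altogether by using the correspondence between left r.e.\ continuous semimeasures and partial recursive (monotone) functionals as in \cref{levin and zvonkin}: take a universal functional $\Psi_U$ with $\Psi_U(\langle\text{code of }e\rangle\concat X)\simeq\Psi_e(X)$ and set $\mathbf{M}(\sigma)\coloneq\lambda(\Psi_U^{-1}(\sigma))$, which dominates each $\lambda(\Psi_e^{-1}(\sigma))$ with constant $2^{e+1}$; universality then follows without ever enumerating the semimeasures directly. As written, though, your proof of the enumeration step — and hence of the theorem — does not go through.
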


\begin{definition}[a priori complexity]
Fix a universal left r.e.\ continuous semimeasure $\mathbf{M}$. Given $\sigma \in \{0,1\}^\ast$, its \textdef{a priori complexity} (with respect to $\mathbf{M}$) is defined by 
\begin{equation*}
\apc(\sigma) = \apc_\mathbf{M}(\sigma) \coloneq - \log_2 \mathbf{M}(\sigma).
\end{equation*}
\end{definition}

The complexity of an $X \in \cantor$ can be quantified by the growth rate of the complexities of its initial segments.

\begin{definition}[(strong) $1$-complexity]
$X \in \cantor$ is \textdef{$1$-complex} if there exists $c \in \mathbb{N}$ such that $\pfc(X \restrict n) \geq n - c$ for all $n \in \mathbb{N}$, and is \textdef{strongly $1$-complex} if there exists $c \in \mathbb{N}$ such that $\apc(X \restrict n) \geq n - c$ for all $n \in \mathbb{N}$.
\end{definition}

The predictability paradigm includes supermartingales as one way to capture a notion of betting. 

\begin{definition}[supermartingale and success]
A \textdef{supermartingale} is a function $d \colon \{0,1\}^\ast \to [0,\infty)$ such that $2d(\sigma) \geq d(\sigma \concat \langle 0 \rangle) + d(\sigma \concat \langle 1 \rangle)$ for all $\sigma \in \{0,1\}^\ast$.
A supermartingale $d$ is \textdef{left recursively enumerable}, or \textdef{left r.e.}, if it left r.e.\ in the usual sense.
A left r.e.\ supermartingale $d$ \textdef{succeeds on $X \in \cantor$} if $\limsup_n{d(X \restrict n)} = \infty$.
\end{definition}

Each of these approaches (among several others) ultimately give the same notion of an $X \in \cantor$ being `algorithmically random'.

\begin{prop} 
\textnormal{\cite[Chapter 6]{downey2010algorithmic}}
Suppose $X \in \cantor$. The following are equivalent.
\begin{enumerate}[(i)]
\item $X$ is Martin-\Lof\ random.
\item $X$ is $1$-complex.
\item $X$ is strongly $1$-complex.
\item No left r.e.\ supermartingale succeeds on $X$.
\end{enumerate}
\end{prop}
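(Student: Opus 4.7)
The plan is to prove the four conditions equivalent by establishing (i) $\Leftrightarrow$ (iv), (iii) $\Leftrightarrow$ (iv), and (i) $\Leftrightarrow$ (ii), via the standard correspondences among Martin-\Lof\ tests, left r.e.\ continuous semimeasures, left r.e.\ supermartingales, and prefix-free machines.

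The cleanest pair is (iii) $\Leftrightarrow$ (iv). I would define $d_{\mathbf{M}}(\sigma) \coloneq 2^{|\sigma|}\mathbf{M}(\sigma)$; multiplying the semimeasure inequality by $2^{|\sigma|+1}$ yields $2 d_{\mathbf{M}}(\sigma) \geq d_{\mathbf{M}}(\sigma \concat \langle 0\rangle) + d_{\mathbf{M}}(\sigma \concat \langle 1\rangle)$, so $d_{\mathbf{M}}$ is a left r.e.\ supermartingale. Conversely, any left r.e.\ supermartingale $d$ with $d(\langle\rangle) > 0$ yields a left r.e.\ continuous semimeasure $\sigma \mapsto d(\sigma)\cdot 2^{-|\sigma|}/d(\langle\rangle)$. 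Universality of $\mathbf{M}$ then transfers to multiplicative dominance of $d_{\mathbf{M}}$ over every left r.e.\ supermartingale, so some left r.e.\ supermartingale succeeds on $X$ iff $d_{\mathbf{M}}$ does, which is $\limsup_n 2^{n}\mathbf{M}(X\restrict n) = \infty$, i.e., $\liminf_n (\apc(X\restrict n) - n) = -\infty$, i.e., failure of strong $1$-complexity.

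For (i) $\Leftrightarrow$ (iv) (Schnorr's martingale characterization), both directions are explicit. Given an ML test $\langle U_i\rangle_{i\in\mathbb{N}}$ covering $X$, the functions $h_i(\sigma) \coloneq 2^{|\sigma|}\lambda(U_i \cap \bbracket{\sigma}_2)$ are uniformly left r.e.\ martingales with $h_i(\langle\rangle) \leq 2^{-i}$, and $h_i(\sigma) = 1$ whenever $\bbracket{\sigma}_2 \subseteq U_i$; since $X \in U_i$ for every $i$, the sum $d \coloneq \sum_i h_i$ defines a left r.e.\ supermartingale with $d(X\restrict n) \to \infty$. Conversely, from a left r.e.\ supermartingale $d$ succeeding on $X$, the sets $U_n \coloneq \{Y \in \cantor \mid \exists k~d(Y\restrict k) > d(\langle\rangle)\cdot 2^n\}$ are uniformly $\Sigma^0_1$ with $\lambda(U_n) \leq 2^{-n}$ by the martingale maximal inequality, giving an ML test covering $X$. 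Finally, (i) $\Leftrightarrow$ (ii) is the Levin--Schnorr theorem for prefix-free complexity: Kraft's inequality gives $\lambda\bigl(\bigcup\{\bbracket{\sigma}_2 \mid \pfc(\sigma) < |\sigma|-c\}\bigr) \leq 2^{-c}$, so failure of $1$-complexity yields an ML test covering $X$; conversely, writing each $U_i$ as $\bigcup_{\sigma \in S_i}\bbracket{\sigma}_2$ for a prefix-free r.e.\ set $S_i$, the Kraft--Chaitin theorem applied to the weights $\sum_i\sum_{\sigma\in S_i} 2^{i-|\sigma|} = \sum_i 2^i\lambda(U_i) \leq 2$ produces a prefix-free machine $M$ with $\pfc_M(\sigma) \leq |\sigma|-i$ for $\sigma \in S_i$, and universality of the ambient universal prefix-free machine translates this into $\pfc(X\restrict n_i) \leq n_i - i + O(1)$ for prefixes $X\restrict n_i \in S_i$, contradicting $1$-complexity.

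The main technical obstacle is the Kraft--Chaitin construction used in the last equivalence, together with the verification of the martingale maximal inequality for \emph{left r.e.} supermartingales; both are standard but require some care with the monotone approximations to $d$ and $\mathbf{M}$. Everything else reduces to the semimeasure inequality, the Kraft inequality for prefix-free sets, and the universality properties packaged into the definitions of $\pfc$, $\apc$, and $\mathbf{M}$.
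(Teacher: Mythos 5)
Your overall architecture is the standard one — the paper itself gives no proof of this proposition (it is quoted from Downey--Hirschfeldt, Chapter 6), and your decomposition into (i)$\Leftrightarrow$(iv), (iii)$\Leftrightarrow$(iv), and (i)$\Leftrightarrow$(ii) via the semimeasure/supermartingale translation, Schnorr's martingale characterization, and Levin--Schnorr is exactly the textbook route. However, one step fails as written: in the direction ``ML test covering $X$ implies $X$ not $1$-complex,'' your weight estimate $\sum_i\sum_{\sigma\in S_i}2^{i-|\sigma|}=\sum_i 2^i\lambda(U_i)\leq 2$ is false. Since $S_i$ is prefix-free, $2^i\lambda(U_i)\leq 2^i\cdot 2^{-i}=1$ for each $i$, so the double sum is only bounded by $\sum_i 1=\infty$, and Kraft--Chaitin cannot be applied to that request set. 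The standard repair is to thin the test: issue requests of length $|\sigma|-i$ only for $\sigma\in S_{2i}$, so the total weight is $\sum_i 2^i\lambda(U_{2i})\leq\sum_i 2^{-i}=2$; since $X\in\bbracket{S_{2i}}$ for every $i$, you still get prefixes with $\pfc(X\restrict n_i)\leq n_i-i+O(1)$ for arbitrarily large $i$, which is all the contradiction with $1$-complexity needs.

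Two smaller points deserve a sentence of care because the quantities involved are only left r.e.\ reals. In Schnorr's direction (iv)$\Rightarrow$(i), the sets $U_n=\{Y\mid\exists k\; d(Y\restrict k)>d(\langle\rangle)\cdot 2^n\}$ are not obviously uniformly $\Sigma^0_1$, because the threshold $d(\langle\rangle)$ is itself only approximable from below; replace $d(\langle\rangle)$ by a fixed rational $q\geq d(\langle\rangle)$ (non-uniformly chosen), so that membership is witnessed by a stage of the monotone approximation exceeding the rational $q\cdot 2^n$, and the maximal inequality still gives $\lambda(U_n)\leq 2^{-n}$. Similarly, in (iii)$\Leftrightarrow$(iv), dividing by the left r.e.\ real $d(\langle\rangle)$ need not preserve left recursive enumerability; instead multiply $d(\sigma)2^{-|\sigma|}$ by a rational $q<1/d(\langle\rangle)$, which costs only a multiplicative constant and is harmless since both success and the universality comparison are invariant under positive constant factors. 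With these routine adjustments your proof goes through and matches the cited source.
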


\subsection{Partial Randomness}

Even if $X \notin \mlr$, $X$ may still exhibit some degree of randomness. \emph{Partial} randomness can be approached from the measure-theoretic, computational, and predictability paradigms just as in the case of Martin-\Lof\ randomness, though the resulting definitions need not be equivalent in general. Notions of `partial $f$-randomness' can be motivated by interpreting Martin-\Lof\ randomness as corresponding to the choice $f(\sigma) \coloneq |\sigma|$ for $\sigma \in \{0,1\}^\ast$.

\begin{notation}
Unless otherwise specified, $f$ denotes a computable function $f \colon \{0,1\}^\ast \to \mathbb{R}$. 
\end{notation}

Concerning the measure-theoretic paradigm, the map $\sigma \mapsto 2^{-f(\sigma)}$ no longer induces a pre-measure on the algebra of basic open subsets of $\cantor$; given a $\Sigma^0_1$ subset $U \subseteq \cantor$, it matters how $U$ is expressed as a union of basic open sets. For this reason, our emphasis is on r.e.\ subsets of $\{0,1\}^\ast$ instead of $\Sigma^0_1$ subsets of $\cantor$. Moreover, we consider two distinct ways to capture the idea of the `$f$-weight' of a subset of $\{0,1\}^\ast$.

\begin{definition}[direct and prefix-free $f$-weight]
The \textdef{direct $f$-weight} of a set of strings $S \subseteq \{0,1\}^\ast$ is defined by
\begin{equation*}
\dwt_f(S) \coloneq \sum_{\sigma \in S}{2^{-f(\sigma)}}.
\end{equation*}
Its \textdef{prefix-free $f$-weight} is defined by
\begin{equation*}
\pwt_f(S) \coloneq \sup\{ \dwt_f(A) \mid \text{prefix-free $A \subseteq S$}\}.
\end{equation*}
\end{definition}

\begin{definition}[(strong) $f$-randomness]
Suppose $\langle S_i \rangle_{i \in \mathbb{N}}$ is a sequence of uniformly r.e.\ subsets of $\{0,1\}^\ast$. $\langle S_i \rangle_{i \in \mathbb{N}}$ is a \textdef{$f$-ML test} if $\dwt_f(S_i) \leq 2^{-i}$ for each $i \in \mathbb{N}$ and a \textdef{weak $f$-ML test} if $\pwt_f(S_i) \leq 2^{-i}$ for each $i \in \mathbb{N}$. $\langle S_i \rangle_{i \in \mathbb{N}}$ \textdef{covers} $X \in \cantor$ if $X \in \bigcap_{i \in \mathbb{N}}{\bbracket{S_i}}$.
$X \in \cantor$ is \textdef{$f$-random} if no $f$-ML test covers $X$ and \textdef{strongly $f$-random} if no weak $f$-ML test covers $X$.
\end{definition}

Replacing the map $\sigma \mapsto |\sigma|$ with $f$ quickly generalizes the notion of $1$-complexity and strong $1$-complexity:

\begin{definition}[(strong) $f$-complexity]
$X \in \cantor$ is \textdef{$f$-complex} if there exists $c \in \mathbb{N}$ such that $\pfc(X \restrict n) \geq f(X \restrict n) - c$ for all $n \in \mathbb{N}$, and is \textdef{strongly $f$-complex} if there exists $c \in \mathbb{N}$ such that $\apc(X \restrict n) \geq f(X \restrict n) - c$ for all $n \in \mathbb{N}$.
\end{definition}

For supermartingales and success, we generalize the notion of success.

\begin{definition}[$f$-success]
A left r.e.\ supermartingale $d$ \textdef{$f$-succeeds on $X \in \cantor$} if 
\begin{equation*}
\limsup_n{\left(d(X \restrict n) \cdot 2^{n-f(X \restrict n)}\right)} = \infty.
\end{equation*}
\end{definition}

Unlike when $f(\sigma) \coloneq |\sigma|$, these notions are no longer all necessarily equivalent, instead forming two groups. In summary:

\begin{prop} 
\textnormal{\cite[Theorem 2.6, Theorem 2.8]{higuchi2014propagation} \cite[Theorem 4.1.6, Theorem 4.1.8, Theorem 4.2.3]{hudelson2013partial}}
Suppose $X \in \cantor$ and $f \colon \{0,1\}^\ast \to \mathbb{R}$ is computable. 
\begin{enumerate}[(a)]
\item $X$ is $f$-random if and only if it is $f$-complex.
\item $X$ is strongly $f$-random if and only if it is strongly $f$-complex, and if and only if no left r.e.\ supermartingale $f$-succeeds on $X$.
\end{enumerate}
\end{prop}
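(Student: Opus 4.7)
The plan is to adapt the classical Schnorr--Levin equivalence (between Martin-\Lof\ randomness, $1$-complexity, and non-success of left r.e.\ supermartingales) to the $f$-weighted setting. The key observation is that replacing the uniform weight $2^{-|\sigma|}$ by $2^{-f(\sigma)}$ preserves all the relevant inequalities, provided one consistently substitutes $\dwt_f$ (respectively $\pwt_f$) for fair-coin measure and the universal prefix-free machine $U$ (respectively universal continuous semimeasure $\mathbf{M}$) for their usual roles.

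For part (a), I would handle the two directions separately. For ``not $f$-complex implies not $f$-random'', the sets $S_c \coloneq \{\sigma \in \{0,1\}^\ast : \pfc(\sigma) < f(\sigma) - c\}$ are uniformly r.e., contain initial segments of any non-$f$-complex $X$, and satisfy $\dwt_f(S_c) \leq \sum_{\sigma \in S_c} 2^{-\pfc(\sigma) - c} \leq 2^{-c}$ by the Kraft inequality applied to $U$; hence $\langle S_c \rangle_{c \in \mathbb{N}}$ is an $f$-ML test covering $X$. Conversely, given an $f$-ML test $\langle S_i \rangle$ covering $X$, I would invoke the Kraft--Chaitin theorem on the request set
\begin{equation*}
\bigl\{ \bigl(\lceil f(\sigma) \rceil - i,\ \sigma\bigr) : i \in \mathbb{N},\ \sigma \in S_{2i} \bigr\},
\end{equation*}
whose total $2$-weighted mass is bounded (up to a multiplicative constant) by $\sum_i 2^i \cdot \dwt_f(S_{2i}) \leq \sum_i 2^{-i} < \infty$, producing a prefix-free machine $M$ witnessing $\pfc_M(\sigma) \leq f(\sigma) - i + O(1)$ for each $\sigma \in S_{2i}$. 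Universality of $U$ transfers this bound to $\pfc$, and since $X$ meets every $S_{2i}$ along an initial segment, this contradicts $f$-complexity.

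Part (b) splits into two equivalences. For strongly $f$-random $\Leftrightarrow$ strongly $f$-complex, the argument runs parallel to (a) after replacing $\pfc$ with $\apc$, $U$ with $\mathbf{M}$, and $\dwt_f$ with $\pwt_f$. The Kraft analogue is that $\sum_{\sigma \in A} \mathbf{M}(\sigma) \leq 1$ for prefix-free $A$ (so that $S_c \coloneq \{\sigma : \apc(\sigma) < f(\sigma) - c\}$ satisfies $\pwt_f(S_c) \leq 2^{-c}$ and gives a weak $f$-ML test), while the Kraft--Chaitin analogue is the direct construction of a left r.e.\ continuous semimeasure from requests. From a weak $f$-ML test $\langle S_i \rangle$ I would define $\nu(\tau) \coloneq \sum_i 2^i \cdot \sum\{2^{-f(\sigma)} : \sigma \in S_{2i},\ \sigma \supseteq \tau\}$ (after truncating to make $\nu(\langle\rangle) \leq 1$); verifying the semimeasure inequality reduces to the prefix-free weight condition on each $S_{2i}$, and the resulting $\nu$ witnesses $\apc(\sigma) \leq f(\sigma) - i + O(1)$ for $\sigma \in S_{2i}$ via universality of $\mathbf{M}$.

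For the supermartingale equivalence, I would invoke the standard bijective correspondence between left r.e.\ supermartingales $d$ and left r.e.\ continuous semimeasures $\nu$ given by $\nu(\sigma) = d(\sigma) \cdot 2^{-|\sigma|}$, under which $\mathbf{M}$ corresponds (up to a multiplicative constant) to a universal left r.e.\ supermartingale $d^*$. The task is then to verify that $d^*$ $f$-succeeds on $X$ exactly when $X$ is covered by some weak $f$-ML test, which by the previous paragraph is exactly when $X$ is not strongly $f$-complex. The main obstacle I anticipate is the careful bookkeeping in this translation --- specifically, ensuring that the $2^{|\sigma|}$ factor in the supermartingale/semimeasure correspondence composes correctly with the $2^{n - f(X\restrict n)}$ factor appearing in the definition of $f$-success, so that the $\limsup$ in the $f$-success condition tracks the failure of the bound $\apc(X \restrict n) \geq f(X \restrict n) - O(1)$ rather than a shifted variant.
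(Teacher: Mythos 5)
Your part (a) and the ``complex $\Rightarrow$ random'' direction of part (b) are fine and follow the same route as the paper's own argument (the paper proves exactly this, in the more general $h^\mathbb{N}$ setting, via Kraft's inequality for the tests $S_c$ and Kraft--Chaitin on the request set $\{(\lceil f(\sigma)\rceil - i,\sigma) : \sigma \in S_{2i}\}$). The genuine gap is in the other direction of the strong-randomness equivalence in (b), where you build a semimeasure from a weak $f$-ML test $\langle S_i\rangle$ by setting $\nu(\tau) \coloneq \sum_i 2^i \sum\{2^{-f(\sigma)} : \sigma \in S_{2i},\ \sigma \supseteq \tau\}$. This uses the \emph{direct} $f$-weight of the extensions of $\tau$, but a weak $f$-ML test only controls the \emph{prefix-free} weight $\pwt_f(S_{2i})$, and the direct weight of a set of small prefix-free weight can be infinite: already for $f(\sigma)=|\sigma|$, the set of all extensions of a fixed string $\tau_0$ has $\pwt_f \leq 2^{-|\tau_0|}$ but infinite $\dwt_f$. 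So your $\nu(\langle\rangle)$ need not be finite, and the semimeasure normalization fails at the root (not, as you suggest, at the verification of the branching inequality, which the direct-weight version actually satisfies). ``Truncating'' cannot repair this: any rescaling or cutoff that forces $\nu(\langle\rangle)\leq 1$ destroys the pointwise lower bound $\nu(\sigma) \geq 2^i\cdot 2^{-f(\sigma)}$ for $\sigma \in S_{2i}$, which is precisely what you need to feed into the universality of $\mathbf{M}$ to get $\apc(\sigma) \leq f(\sigma) - i + O(1)$. Note also that no trick converting weak tests into ordinary tests can work in general, since $f$-randomness does not imply strong $f$-randomness.

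The correct construction --- the one in the paper --- is to define, uniformly in $i$, $\nu_i(\tau) \coloneq \pwt_f(\{\sigma \in S_i \mid \sigma \supseteq \tau\})$, i.e.\ to use the prefix-free weight of the extensions. Then $\nu_i(\langle\rangle) = \pwt_f(S_i) \leq 2^{-i}$, the semimeasure inequality follows from additivity of $\pwt_f$ over families of pairwise incompatible strings, left r.e.-ness follows from $S_i$ being r.e.\ (a sup over enumerated finite prefix-free subsets), and one still has $\nu_i(\sigma) \geq 2^{-f(\sigma)}$ for $\sigma \in S_i$ because $\{\sigma\}$ itself is a prefix-free subset of the extensions; summing $\overline{\nu} = \sum_i 2^i\nu_{2i}$ and invoking universality of $\mathbf{M}$ then gives the failure of strong $f$-complexity. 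Your supermartingale step, by contrast, is only a matter of the bookkeeping you already flagged: with the universal supermartingale $d^*(\sigma) \asymp \mathbf{M}(\sigma)\cdot 2^{|\sigma|}$ and the success exponent taken as $2^{f(X\restrict n)-n}$ (this is the convention the paper's $h^\mathbb{N}$ chapter uses, where $\gamma(X\restrict n)^{n-f(X\restrict n)} = 2^{f(X\restrict n)-n}$ in the Cantor-space case), one gets $d^*(X\restrict n)\cdot 2^{f(X\restrict n)-n} \asymp 2^{f(X\restrict n)-\apc(X\restrict n)}$, so the $\limsup$ condition is literally the failure of strong $f$-complexity, and the general case reduces to $d^*$ by universality.
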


\begin{remark}
In \cite{hudelson2013partial}, Hudelson generalizes the supermartingale approach to partial randomness by modifying the definition of a supermartingale, defining a \emph{left r.e.\ $f$-supermartingale} to be a left r.e.\ function $d \colon \{0,1\}^\ast \to [0,\infty)$ such that 
\begin{equation*}
2^{-f(\sigma)}\cdot d(\sigma) \geq 2^{-f(\sigma\concat \langle 0 \rangle)}\cdot d(\sigma \concat \langle 0 \rangle) + 2^{-f(\sigma\concat \langle 1 \rangle)}\cdot d(\sigma \concat \langle 1 \rangle)
\end{equation*}
for all $\sigma \in \{0,1\}^\ast$, with $d$ \emph{succeeding on $X \in \cantor$} if $\limsup_n{d(X \restrict n)} = \infty$. For every $X \in \cantor$, there exists an $f$-supermartingale $d$ succeeding on $X$ if and only if there exists a supermartingale $\tilde{d}$ $f$-succeeding on $X$.  

Our choice to use ordinary supermartingales and $f$-success follows the approaches used for $f$ of the form $f(\sigma) \coloneq \delta \cdot |\sigma|$ for $\delta \in (0,1]$ in \cite{calude2006partial} and \cite{greenberg2011diagonally}.
\end{remark}

Although $f$-randomness does not in general imply strong $f$-randomness (see, e.g., \cite[Theorem 4.3.2]{hudelson2013partial}), if $g$ grows sufficiently faster than $f$, then $g$-randomness will imply strong $f$-randomness:

\begin{prop} 
\textnormal{\cite[Theorem 3.5]{higuchi2014propagation}}
Suppose $f,g \colon \{0,1\}^\ast \to \mathbb{R}$ are computable functions and $X \in \cantor$ is $g$-random. If there exists a nondecreasing $h \colon \mathbb{R} \to \mathbb{R}$ such that $\sum_{n=1}^\infty{2^{-h(n)}} < \infty$ and for which $g(\sigma) \geq f(\sigma) + h(f(\sigma))$ for all $\sigma \in \{0,1\}^\ast$, then $X$ is strongly $f$-random.
\end{prop}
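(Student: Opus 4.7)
The plan is to prove the contrapositive: assuming $X$ is not strongly $f$-random, I will construct a $g$-ML test covering $X$, hence $X$ is not $g$-random. So fix a weak $f$-ML test $\langle S_i \rangle_{i \in \mathbb{N}}$ with $\pwt_f(S_i) \leq 2^{-i}$ and $X \in \bigcap_i \bbracket{S_i}$. The central mechanism will be to ``spend'' the summable slack $\sum_n 2^{-h(n)} < \infty$ to upgrade the prefix-free weight bound on $S_i$ into a direct weight bound with respect to $g$, at the cost of a fixed multiplicative constant.

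The key step is a stratification by $f$-value. For each $i,k \in \mathbb{N}$, set
\begin{equation*}
S_i^{(k)} \coloneq \{ \sigma \in S_i \mid k \leq f(\sigma) < k+1\},
\end{equation*}
which is uniformly r.e.\ in $i$ and $k$. For each $S_i^{(k)}$, I will invoke the standard compactness-based construction (processing a stage-by-stage enumeration of $S_i^{(k)}$ and, for each newly appearing $\sigma$, adding to the output a finite prefix-free cover of $\bbracket{\sigma}$ minus what has already been covered) to produce a prefix-free r.e.\ set $A_i^{(k)} \subseteq \{\tau \mid \exists \sigma \in S_i^{(k)}, \sigma \subseteq \tau\}$ with $\bbracket{A_i^{(k)}} = \bbracket{S_i^{(k)}}$. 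Because $A_i^{(k)}$ is prefix-free and consists of extensions of members of $S_i^{(k)}$, and because $f$ is nondecreasing along extensions in the paper's setting ($f$ ultimately being controlled by length via an order function), $A_i^{(k)}$ is a prefix-free set whose members all satisfy $f(\tau) \geq k$, and $\dwt_f(A_i^{(k)}) \leq C_0 \cdot \pwt_f(S_i^{(k)}) \leq C_0 \cdot 2^{-i}$ for an absolute constant $C_0$.

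Now I exploit the hypothesis $g(\tau) \geq f(\tau) + h(f(\tau))$ together with $f(\tau) \geq k$ and monotonicity of $h$ to bound
\begin{equation*}
\dwt_g\bigl(A_i^{(k)}\bigr) \leq \sum_{\tau \in A_i^{(k)}} 2^{-f(\tau) - h(f(\tau))} \leq 2^{-h(k)} \dwt_f\bigl(A_i^{(k)}\bigr) \leq C_0 \cdot 2^{-h(k)} \cdot 2^{-i}.
\end{equation*}
Setting $T_i \coloneq \bigcup_{k \in \mathbb{N}} A_i^{(k)}$, each $T_i$ is uniformly r.e., $\bbracket{T_i} \supseteq \bbracket{S_i} \ni X$, and summing over $k$,
\begin{equation*}
\dwt_g(T_i) \leq C_0 \cdot 2^{-i} \sum_{k \in \mathbb{N}} 2^{-h(k)} = C \cdot 2^{-i}
\end{equation*}
with $C < \infty$ by the summability hypothesis on $h$. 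Re-indexing by $\lceil \log_2 C \rceil$ yields an honest $g$-ML test covering $X$.

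The main obstacle is the prefix-free conversion step producing $A_i^{(k)}$ while keeping $\dwt_f(A_i^{(k)})$ under control by $\pwt_f(S_i^{(k)})$. This requires that replacing members of $S_i^{(k)}$ by prefix-free extensions does not blow up the $f$-weight, which in turn requires $f$ to be essentially nondecreasing along extensions (or at least that the extensions used in the construction do not decrease $f$ by more than an absolute additive constant). This is automatic in the applications of interest, where $f$ is of the form $f(\sigma) = \tilde{f}(|\sigma|)$ for an order function $\tilde{f}$, and the sharp statement to aim at is that the conversion factor $C_0$ can be taken to be $1$ (or any fixed constant absorbed into the final re-indexing) in this setting.
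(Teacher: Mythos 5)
The paper does not give its own proof of this proposition (it is quoted from Higuchi et al.\ with a citation only), so I am judging your argument on its merits. There is a genuine gap at the prefix-free conversion step, and it is not the benign technicality your closing paragraph suggests. You need $\dwt_f\bigl(A_i^{(k)}\bigr) \leq C_0 \cdot \pwt_f\bigl(S_i^{(k)}\bigr)$ with a constant that survives the summation against $2^{-h(k)}$, and this fails even for length-invariant order functions that are monotone along extensions. Take $f(\sigma) = \sqrt{|\sigma|}$, so stratum $k$ consists of strings of length in $[k^2,(k+1)^2)$, and let $S_i^{(k)} = \{\sigma,\tau\}$ with $\sigma \subset \tau$, $|\sigma| = k^2$, $|\tau| = k^2+2k$, where $\tau$ is enumerated first. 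Then $\pwt_f\bigl(S_i^{(k)}\bigr) = 2^{-k}$ (every prefix-free subset is a singleton), but once your construction has committed to $\tau$, covering $\bbracket{\sigma} \setminus \bbracket{\tau}$ requires the $2k$ sibling cylinders along the path from $\sigma$ to $\tau$, each of length in $[k^2+1,k^2+2k]$ and hence of $f$-value in $[k,k+1)$, giving $\dwt_f\bigl(A_i^{(k)}\bigr) \geq 2k \cdot 2^{-k-1}$. The conversion factor is therefore of order $k$, not absolute, and your final bound becomes $\dwt_g(T_i) \leq 2^{-i}\sum_k O(k)\, 2^{-h(k)}$. The hypothesis only gives $\sum_k 2^{-h(k)} < \infty$; with the admissible choice $h(k) = \log_2 k + 2\log_2\log_2 k$ the sum $\sum_k k\, 2^{-h(k)}$ diverges, so no re-indexing produces a $g$-ML test. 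The root cause is that $\dwt_f$ is simply not preserved, even up to a constant, when an r.e.\ set is refined to an r.e.\ antichain of extensions: the enumeration order forces you to commit to long strings before their short prefixes appear, and only measure-like quantities survive that refinement.

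The standard repair is to stratify by the value of the induced semimeasure rather than by $f(\sigma)$. From the weak test form the left r.e.\ continuous semimeasure $\nu_i(\rho) \coloneq \pwt_f(\{\sigma \in S_i \mid \sigma \supseteq \rho\})$ -- exactly the device this thesis uses in the $h^\mathbb{N}$ setting to prove that strong $f$-randomness coincides with strong $f$-complexity -- and conclude from universality of $\mathbf{M}$ that a covered $X$ satisfies $\apc(X \restrict n) \leq f(X \restrict n) - i + O(1)$ for suitable $n$. Levin's coding theorem $\pfc(\rho) \leq \apc(\rho) + \pfc(\lceil \apc(\rho)\rceil) + O(1)$, combined with $\pfc(n) \leq h(n) + O(1)$ (\cref{computable bounds on prefix free complexity}, which is precisely where $\sum_n 2^{-h(n)} < \infty$ enters) and the monotonicity of $h$, then gives $\pfc(X \restrict n) \leq f(X \restrict n) + h(f(X \restrict n)) - i + O(1) \leq g(X \restrict n) - i + O(1)$, contradicting the $g$-complexity of $X$. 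The combinatorial content of the antichain conversion that your construction attempts by hand is packaged inside the proof of Levin's theorem, which discretizes by semimeasure value and applies Kraft--Chaitin; bookkeeping by $f$-value instead of by measure is where your version breaks.
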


\begin{cor} 
\textnormal{\cite[Theorem 3.6]{higuchi2014propagation}}
Suppose $k>0$ and $\epsilon>0$, and let $g = f+\log_2 f + \log_2 \log_2 f + \cdots + \log_2^{k-1} f + (1+\epsilon) \log_2^k f$. Then any $g$-random $X \in \cantor$ is strongly $f$-random.
\end{cor}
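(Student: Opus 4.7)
The plan is to apply the preceding proposition with the natural candidate
\begin{equation*}
h(x) \coloneq \log_2 x + \log_2^2 x + \cdots + \log_2^{k-1} x + (1+\epsilon)\log_2^k x,
\end{equation*}
defined for $x$ large enough that all the iterated logarithms are positive, and extended to a nondecreasing function on $\mathbb{R}$ by a constant value below that threshold. With this choice the identity $g(\sigma) - f(\sigma) = h(f(\sigma))$ is immediate wherever the iterated logarithms of $f(\sigma)$ are defined, and the extension is arranged so that $g(\sigma) \geq f(\sigma) + h(f(\sigma))$ continues to hold on the remaining (finitely many) exceptional strings, which are absorbed into the ambient additive constants in the definitions of $f$- and $g$-randomness.

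The substantive step is to verify the summability condition
\begin{equation*}
\sum_{n=N}^\infty 2^{-h(n)} = \sum_{n=N}^\infty{\frac{1}{n \cdot \log_2 n \cdot \log_2^2 n \cdots \log_2^{k-1} n \cdot (\log_2^k n)^{1+\epsilon}}} < \infty,
\end{equation*}
for a threshold $N$ ensuring $\log_2^k n > 0$. I would prove this by $k$ successive applications of the Cauchy condensation test: each round replaces $a_n$ by $2^n a_{2^n}$, cancelling the leading factor $n^{-1}$ and stripping off one layer of iterated logarithm, thereby reducing the convergence question to its $(k-1)$-fold analogue. After $k$ such reductions one is left with $\sum_m m^{-(1+\epsilon)}$, which converges precisely because $\epsilon > 0$.

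Once both ingredients are in place, namely that $h$ is nondecreasing with $\sum_n 2^{-h(n)}$ finite, and that $g(\sigma) \geq f(\sigma) + h(f(\sigma))$ holds pointwise on $\{0,1\}^\ast$, the preceding proposition applies directly to yield that every $g$-random $X \in \cantor$ is strongly $f$-random. The only real care needed is the bookkeeping around the natural domain of the iterated logarithms described in the first paragraph, which amounts to patching $h$ below a fixed constant; the genuine analytic content is the iterated-condensation convergence argument in the second paragraph, and I expect that to be the main (though routine) obstacle.
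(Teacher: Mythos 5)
Your proof is correct and is precisely the intended derivation: the corollary is meant as an immediate application of the preceding proposition with $h(x) \coloneq \log_2 x + \log_2^2 x + \cdots + (1+\epsilon)\log_2^k x$ (extended constantly below the domain of the iterated logarithms), and the convergence of $\sum_n 2^{-h(n)} = \sum_n \bigl( n \cdot \log_2 n \cdots \log_2^{k-1} n \cdot (\log_2^k n)^{1+\epsilon} \bigr)^{-1}$ is the standard Abel--Bertrand series fact, which your $k$-fold condensation argument establishes (the paper obtains the same series later via the integral test). The only loose point is your claim that the exceptional strings are finitely many -- for a general computable $f \colon \{0,1\}^\ast \to \mathbb{R}$ there could be infinitely many $\sigma$ with $f(\sigma)$ below the threshold -- but since $g$ itself is only defined where the iterated logarithms of $f$ make sense, this is a defect of the statement's bookkeeping rather than of your argument, and in the length-invariant unbounded cases the paper actually uses, the exceptional set is indeed finite.
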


\subsection{Randomness and Complexity as Mass Problems}

For each computable $f \colon \{0,1\}^\ast \to \mathbb{R}$ there is an associated mass problem consisting of all $X \in \cantor$ which are $f$-complex (equivalently, $f$-random). We use the following notation:

\begin{definition}
Suppose $f \colon \{0,1\}^\ast \to \mathbb{R}$ is computable and $c \in \mathbb{N}$. Then
\begin{align*}
\complex(f,c) & \coloneq \{ X \in \cantor \mid \forall n \qspace (\pfc(X \restrict n) \geq f(X \restrict n) - c)\}, \\
\complex(f) & \coloneq \{ X \in \cantor \mid \text{$X$ is $f$-complex}\} = \bigcup_{c=0}^\infty{\complex(f,c)}.
\end{align*}
\end{definition}

\begin{notation}
Given $\delta \in (0,1]$, define $f \colon \{0,1\}^\ast \to \mathbb{R}$ by $f(\sigma) \coloneq \delta |\sigma|$. We write $\complex(\delta,c)$ for $\complex(f,c)$ and $\complex(\delta)$ for $\complex(f)$.
\end{notation}

\begin{prop}
The sets $\complex(f,c)$ are $\Pi^0_1$, uniform in $c$. Thus, $\complex_f$ is $\Sigma^0_2$ and consequently $\weakdeg(\complex(f)) \in \mathcal{E}_\weak$ whenever $\complex(f) \neq \emptyset$.
\end{prop}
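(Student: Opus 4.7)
The plan is to establish the two claims in turn: first that each $\complex(f,c)$ is $\Pi^0_1$ uniformly in $c$, and then deduce $\complex(f)$ is $\Sigma^0_2$ and invoke the Embedding Lemma to land $\weakdeg(\complex(f))$ in $\mathcal{E}_\weak$.

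For the first part, the key fact is that prefix-free Kolmogorov complexity $\pfc \colon \{0,1\}^\ast \to \mathbb{N}$ is upper semicomputable. Explicitly, the predicate $\pfc(\sigma) \leq k$ is r.e.\ in $(\sigma,k)$, since it witnesses the existence of some $\tau$ with $|\tau|\leq k$ and $U(\tau)\converge=\sigma$ for the fixed universal prefix-free machine $U$. I would next argue that the predicate $\pfc(\sigma) < f(\sigma) - c$ is r.e.\ in $(\sigma,c)$. Since $\pfc(\sigma)+c$ is an integer, this is equivalent to the existence of a rational $q$ with $\pfc(\sigma)+c < q$ and $q < f(\sigma)$; the first conjunct is r.e.\ because $\pfc$ is right r.e., and the second is r.e.\ because $f$ is computable (hence left r.e.). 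Taking negations, $\pfc(\sigma)\geq f(\sigma)-c$ is $\Pi^0_1$ in $(\sigma,c)$, so
\begin{equation*}
\complex(f,c) = \{X \in \cantor \mid \forall n \qspace \pfc(X\restrict n) \geq f(X\restrict n) - c\}
\end{equation*}
is defined by a $\forall n$-quantified $\Pi^0_1$ matrix whose dependence on $X$ factors through $X\restrict n$, yielding a $\Pi^0_1$ subset of $\cantor$ uniformly in $c$.

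For the second part, since $\complex(f) = \bigcup_{c\in\mathbb{N}} \complex(f,c)$ is a countable union of uniformly $\Pi^0_1$ classes, it is $\Sigma^0_2$ (and a fortiori $\Sigma^0_3$) as a subset of $\cantor \subseteq \baire$. Assume $\complex(f)\neq \emptyset$, so that there is some $c_0$ with $\complex(f,c_0)\neq\emptyset$. Set $P \coloneq \complex(f,c_0)$, a nonempty $\Pi^0_1$ subset of $\cantor$, and $S \coloneq \complex(f)$. Applying the Embedding Lemma (\cref{embedding lemma}) to this $P$ and $S$ produces a nonempty $\Pi^0_1$ subset $Q$ of $\cantor$ with $Q \weakeq P \cup S = \complex(f)$. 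Hence $\weakdeg(\complex(f)) = \weakdeg(Q) \in \mathcal{E}_\weak$.

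The only mildly delicate step is the careful justification that comparing the right-r.e.\ quantity $\pfc(\sigma)$ with the computable real $f(\sigma)-c$ yields a $\Pi^0_1$ predicate; the rest reduces to bookkeeping and a direct appeal to the already-stated Embedding Lemma.
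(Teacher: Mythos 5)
Your proposal is correct and follows essentially the same route as the paper: establish that $\pfc(\sigma) \geq f(\sigma) - c$ is $\Pi^0_1$ uniformly in $(\sigma,c)$ (the paper does this by writing out the universal quantifiers over $\tau$, $s$, $n$ explicitly, while you package it as upper semicomputability of $\pfc$ versus the computable real $f(\sigma)$ — the same content), conclude that $\complex(f) = \bigcup_c \complex(f,c)$ is $\Sigma^0_2$, and apply the Embedding Lemma with a nonempty $\Pi^0_1$ piece $\complex(f,c_0)$. No gaps.
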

\begin{proof}
Suppose $X \in \cantor$ and $c \in \mathbb{N}$. Fix a universal prefix-free machine $U$ and let $e$ be such that $U(\tau) \simeq \sigma$ if and only if $\varphi_e(\str^{-1}(\tau)) \simeq \str^{-1}(\sigma)$. Then
\begin{equation*}
X \in \complex(f,c) \iff \forall n \forall s \forall \tau \qspace (\varphi_{e,s}(\str^{-1}(\tau)) \converge = \str^{-1}(X \restrict n) \to |\tau| \geq f(X \restrict n) - c).
\end{equation*}
The uniformity of the above predicate shows that $\complex(f)$ is $\Sigma^0_2$. 

If $\complex(f) \neq \emptyset$, then $\complex(f,c) \neq \emptyset$ for some $c \in \mathbb{N}$. In particular, $\complex(f)$ contains a nonempty $\Pi^0_1$ subset of $\cantor$. Thus, the \nameref{embedding lemma} implies $\weakdeg(\complex(f)) \in \mathcal{E}_\weak$.
\end{proof}

Often $f(\sigma)$ depends only on $|\sigma|$; such $f$ are said to be \textdef{length-invariant} and correspond exactly with computable functions of the form $\mathbb{N} \to \mathbb{R}$. We extend our notation for $\complex(f)$ to such functions.

\begin{notation}
If $f \colon \mathbb{N} \to \mathbb{R}$ is a computable function and $c \in \mathbb{N}$, then $\complex(f,c)$ stands for $\complex(\tilde{f},c)$, where $\tilde{f} \colon \{0,1\}^\ast \to \mathbb{R}$ is defined by $\tilde{f}(\sigma) \coloneq f(|\sigma|)$ for $\sigma \in \{0,1\}^\ast$.
\end{notation}

Because we are often interested in \emph{partial} randomness and in light of \cref{ldnr and mlr}, we often only consider $f$ satisfying the following condition:

\begin{definition}[sub-identical]
A function $f \colon \{0,1\}^\ast \to [0,\infty)$ is \textdef{sub-identical} if $\lim_{n \to \infty}{(n - f(X \restrict n))} = \infty$ for every $X \in \cantor$. 

Likewise, a function $f \colon \mathbb{N} \to [0,\infty)$ is \textdef{sub-identical} if $\lim_{n \to \infty}{(n-f(n))} = \infty$.
\end{definition}

Thanks to the presence of $c$ in the definition $\complex(f) = \bigcup_{c \in \mathbb{N}}{\complex(f,c)}$, if $f(n) = g(n)$ for almost all $n$, then $\complex_f = \complex_g$. 

\begin{convention}
Given $f \colonsub \mathbb{N} \to \mathbb{R}$, suppose $f \restrict \mathbb{N}_{\geq a}$ is a total, computable, nondecreasing, unbounded function for some $a \in \mathbb{N}$. Define $\tilde{f} \colon \mathbb{N} \to \mathbb{R}$ by $\tilde{f}(n) \coloneq f(n)$ for $n \geq a$ and $\tilde{f}(n) \coloneq f(a)$ otherwise. Then $\tilde{f}$ is an order function, and we let $\complex(f)$ denote the class $\complex(\tilde{f})$. This allows us to make sense of something like, e.g., $\complex(\log_2)$.
\end{convention}

\subsection{Properties of Prefix-Free Complexity}

Here we collect some of the results concerning prefix-free complexity we make use of later. Several of these results involve \emph{conditional} prefix-free complexity.

\begin{definition}[oracle prefix-free machine]
An \textdef{oracle prefix-free machine} is a partial recursive function $M \colonsub \{0,1\}^\ast \times \{0,1\}^\ast \to \{0,1\}^\ast$ such that
\begin{enumerate*}[(i)] 
\item if $M^\tau(\sigma) \converge$ and $\tau \subseteq \tau'$, then $M^{\tau'}(\sigma) \converge = M^\tau(\sigma)$ and 
\item $\{ \sigma \mid \langle \sigma,\tau \rangle \in \dom M\}$ is prefix-free for every $\tau \in \{0,1\}^\ast$.
\end{enumerate*}
An oracle prefix-free machine $U$ is \textdef{universal} if for every oracle prefix-free machine $M$ there exists $\rho \in \{0,1\}^\ast$ such that $U^\tau(\rho\concat\sigma) \simeq M^\tau(\sigma)$ for all $\sigma \in \{0,1\}^\ast$.
\end{definition}

\begin{lem} 
\textnormal{\cite[Section 3.2]{downey2010algorithmic}}
There exists a universal oracle prefix-free machine $U$.
\end{lem}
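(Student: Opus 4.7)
The plan is to adapt the standard construction of a universal prefix-free machine (cf.\ \cite[Proposition 3.5.1(ii)]{downey2010algorithmic}) so that an oracle is threaded uniformly through the construction. Fix an admissible enumeration $\Phi_0, \Phi_1, \Phi_2, \ldots$ of the partial recursive functionals $\colonsub \{0,1\}^\ast \times \{0,1\}^\ast \to \{0,1\}^\ast$, with finite-stage approximations $\Phi_{e,s}^\tau(\sigma)$ in the sense of the preceding subsection; each such functional is automatically monotone in its oracle argument by clause (i) of \cref{characterizations of partial recursive functional}. Uniformly in $e$, I will construct an oracle prefix-free machine $M_e$ that coincides with $\Phi_e$ whenever $\Phi_e$ already satisfies clauses (i) and (ii) in the definition of oracle prefix-free machine, and then assemble $U$ by tagging each $M_e$ with a self-delimiting code for $e$.

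For the construction of $M_e$, enumerate triples $(\sigma, \tau, \beta)$ with $\Phi_e^\tau(\sigma) \converge = \beta$ via the stage-bounded approximation, and maintain a growing list of ``accepted'' triples. When a new triple $(\sigma, \tau, \beta)$ is enumerated, accept it unless doing so would conflict with some previously accepted triple $(\sigma', \tau', \beta')$; a conflict is declared when $\tau$ and $\tau'$ are compatible as binary strings and yet $\{\sigma, \sigma'\}$ fails to be either an incompatible pair or the singleton $\{\sigma\}$ with $\beta = \beta'$. Set $M_e^\tau(\sigma) \simeq \beta$ iff some accepted triple $(\sigma, \tau_0, \beta)$ has $\tau_0 \subseteq \tau$; if $\Phi_e$ already satisfies (i) and (ii), no conflict can ever arise and $M_e = \Phi_e$. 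Now fix a computable prefix-free injection $c \colon \mathbb{N} \to \{0,1\}^\ast$ and define
\begin{equation*}
U^\tau(\nu) \simeq M_e^\tau(\sigma) \quad \text{whenever $\nu = c(e) \concat \sigma$.}
\end{equation*}
Prefix-freeness of $c$ makes the decomposition of $\nu$ unique when it exists, so $U$ is a well-defined partial recursive functional.

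Clause (i) for $U$ descends from clause (i) for each $M_e$, while the slice $\{\nu : (\nu, \tau) \in \dom U\}$ decomposes as the prefix-free (by $c$) union of the sets $\{c(e) \concat \alpha : \alpha \in \dom M_e^\tau\}$, each of which is prefix-free by clause (ii) for $M_e$; thus $U$ is itself an oracle prefix-free machine. Universality is then immediate: any oracle prefix-free machine $M$ equals $\Phi_e$ for some index $e$, hence equals $M_e$, so $\rho \coloneq c(e)$ witnesses $U^\tau(\rho \concat \sigma) \simeq M^\tau(\sigma)$ for all $\sigma, \tau$. The main obstacle is calibrating the rejection rule in the construction of $M_e$: unlike in the oracle-free case, where each accepted input constrains the machine only at itself, each accepted triple $(\sigma, \tau, \beta)$ here constrains $M_e^{\tau'}$ for every $\tau' \supseteq \tau$, so the conflict test has to quantify over pairs of oracle prefixes that are comparable in $\{0,1\}^\ast$. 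Getting this quantification right is exactly what ensures both that $M_e$ is always a legitimate oracle prefix-free machine and that the transformation $\Phi_e \mapsto M_e$ acts as the identity on those already satisfying (i) and (ii), the latter being what yields universality.
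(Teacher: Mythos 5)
Your construction is correct and is essentially the standard one: the conflict‑rejection repair produces, uniformly in $e$, an oracle prefix-free machine $M_e$ that coincides with $\Phi_e$ whenever $\Phi_e$ already satisfies clauses (i)–(ii), and the prefix-free tagging $\nu = c(e)\concat\sigma$ then yields universality — which is exactly the relativized Downey–Hirschfeldt construction the paper points to, since the paper itself gives no proof beyond the citation to Section 3.2. The one blemish is your aside that each $\Phi_e$ is ``automatically monotone in its oracle argument'': if $\Phi_\bullet$ enumerates all partial recursive functions of two string arguments this is false, but it is not load-bearing, because your definition of $M_e$ (defined at $(\sigma,\tau)$ iff some accepted triple $(\sigma,\tau_0,\beta)$ has $\tau_0 \subseteq \tau$) enforces monotonicity by fiat, and universality only needs $M_e = \Phi_e$ for those $\Phi_e$ that already are oracle prefix-free machines.
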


\begin{definition}[conditional prefix-free complexity] \cite[Section 3.2]{downey2010algorithmic}
Fix a universal oracle prefix-free machine $U$. Given $\sigma,\tau \in \{0,1\}^\ast$, the \textdef{conditional prefix-free complexity of $\sigma$ given $\tau$} (with respect to $U$) is defined by
\begin{equation*}
\pfc(\sigma \mid \tau) = \pfc_U(\sigma \mid \tau) \coloneq \min\{ |\rho| \mid U^{\overline{\tau}}(\rho) \converge = \sigma\},
\end{equation*}
where $\overline{\tau} = \langle \tau(0),\tau(0),\tau(1),\tau(1),\ldots,\tau(|\tau|-1),\tau(|\tau|-1),0,1\rangle$. 
\end{definition}

\begin{notation}

Temporarily write $\vec{\sigma} \coloneq \str\bigl( \pi^{(k)}(\str^{-1}(\sigma_1),\str^{-1}(\sigma_2),\ldots,\str^{-1}(\sigma_k)) \bigr)$ for $\sigma_1,\sigma_2,\ldots,\sigma_k \in \{0,1\}^\ast$. Then given $\sigma_1,\sigma_2,\ldots,\sigma_k, \tau_1,\tau_2,\ldots,\tau_m \in \{0,1\}^\ast$, we define
\begin{align*}
\pfc(\sigma_1,\sigma_2,\ldots,\sigma_k) & \coloneq \pfc(\vec{\sigma}), \\
\pfc(\sigma_1,\sigma_2,\ldots,\sigma_k \mid \tau_1,\tau_2,\ldots,\tau_m) & \coloneq \pfc(\vec{\sigma} \mid \vec{\tau}).
\end{align*}
\end{notation}

\begin{notation}
Given functions $f,g \colon S \to \mathbb{R}$, we write $f \leq^+ g$ to mean that there exists $c \in \mathbb{N}$ such that $f(x) \leq g(x) + c$ for all $x \in S$. We write $f =^+ g$ if $f \leq^+ g$ and $g \leq^+ f$. By an abuse of notation, we may write $f(x) \leq^+ g(x)$ or $f(x) =^+ g(x)$ where $x$ is an indeterminate to indicate that $f \leq^+ g$ or $f =^+ g$, respectively.
\end{notation}

\begin{prop} \label{prefix-free complexity facts}
\mbox{}
\begin{enumerate}[(a)]
\item \textnormal{\cite[Theorem 3.6.1 \& Corollary 3.6.2]{downey2010algorithmic}}
Kraft-Chaitin: Suppose $(d_i,\tau_i)_{i\in\mathbb{N}}$ is a recursive sequence of pairs $(d_i,\tau_i) \in \mathbb{N} \times \{0,1\}^\ast$ such that $\sum_{i=0}^\infty{2^{-d_i}} \leq 1$. Then there exists a prefix-free machine $M$ and strings $\sigma_i$ such that $|\sigma_i| = d_i$ and $M(\sigma_i) = \tau_i$ for all $i \in \mathbb{N}$ and $\dom M = \{ \sigma_i \mid i \in \mathbb{N}\}$. Consequently, $\pfc(\tau_i) \leq^+ d_i$.

\item \textnormal{\cite[Proposition 3.5.4]{downey2010algorithmic}} 
If $f\colon \{0,1\}^\ast \to \{0,1\}^\ast$ is computable, then $\pfc(f(\sigma)) \leq^+ \pfc(\sigma)$ for all $\sigma \in \{0,1\}^\ast$.

\item \textnormal{\cite[Corollary 3.7.5]{downey2010algorithmic}} 
For any $k$ and $\epsilon>0$, $\pfc(\sigma) \leq^+ |\sigma| + \log|\sigma| + \log\log|\sigma| + \cdots + (1+\epsilon)\log^k |\sigma|$.

\item \textnormal{\cite[Proposition 3.7.13]{downey2010algorithmic}} 
For all $\sigma,\tau \in \{0,1\}^\ast$, $\pfc(\sigma\concat \tau) \leq \pfc(\sigma,\tau) \leq^+ \pfc(\sigma) + \pfc(\tau)$.

\item \textnormal{\cite[Theorem 3.10.2]{downey2010algorithmic}} 
$\pfc(\sigma,\tau) =^+ \pfc(\sigma) + \pfc(\tau \mid \sigma, \pfc(\sigma)) =^+ \pfc(\sigma) + \pfc(\tau \mid \sigma^\ast)$, where $\sigma^\ast$ is the lexicographically least $\tau$ such that $U(\tau)\converge = \sigma$ in $s$ stages for the least possible $s$.

\end{enumerate}
\end{prop}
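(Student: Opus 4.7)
The plan is to establish (a) Kraft--Chaitin first as the fundamental tool, then derive (b)--(d) as relatively routine consequences, with (e) as the main obstacle. All parts are standard results (with citations in the statement), but they have a natural dependency order.

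For (a), the approach is an effective greedy ``interval allocation.'' Identify each $\sigma \in \{0,1\}^d$ with the dyadic interval $I_\sigma \coloneq [0.\sigma,\, 0.\sigma + 2^{-d}) \subseteq [0,1)$. Processing the pairs $(d_i,\tau_i)$ in order, maintain the invariant that the previously assigned strings $\{\sigma_j : j<i\}$ form a prefix-free set whose corresponding intervals are pairwise disjoint with total measure $\sum_{j<i} 2^{-d_j}$. The complement in $[0,1)$ admits a canonical decomposition into a finite disjoint union of dyadic intervals of distinct lengths, corresponding to the $1$'s in the binary expansion of the remaining measure. The Kraft bound $\sum_j 2^{-d_j} \leq 1$ guarantees that some available dyadic interval has length $\geq 2^{-d_i}$, which can then be subdivided to produce $\sigma_i$ of length exactly $d_i$. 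Setting $M(\sigma_i) \coloneq \tau_i$ defines a partial recursive prefix-free machine (its domain is r.e.\ and prefix-free by construction), and universality of $U$ then yields $\pfc(\tau_i) \leq^+ d_i$.

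Parts (b)--(d) follow quickly. For (b), the machine $M(\sigma) \simeq f(U(\sigma))$ is prefix-free with $\dom M \subseteq \dom U$, and universality gives $\pfc(f(\sigma)) \leq^+ \pfc(\sigma)$. For (c), apply (a) to the requests $(d_\sigma,\sigma)$ with $d_\sigma \coloneq \lceil |\sigma| + \log|\sigma| + \cdots + (1+\epsilon)\log^k|\sigma|\rceil$; the required bound $\sum_\sigma 2^{-d_\sigma} \leq^+ 1$ reduces, after summing over the $2^n$ strings of each length $n$, to convergence of $\sum_n \bigl(n (\log n) \cdots (\log^{k-1} n) (\log^k n)^{1+\epsilon}\bigr)^{-1}$, which holds by $k$ applications of the integral test. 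For (d), the first inequality is (b) applied to the computable map on codes $\langle \sigma,\tau\rangle \mapsto \sigma\concat\tau$; for the second, construct a prefix-free machine $M$ that reads a $\dom U$-initial segment $\rho$ (detectable because $\dom U$ is prefix-free), then a second such segment $\pi$, and outputs the coded pair of $U(\rho)$ and $U(\pi)$, then invoke universality.

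The main obstacle is (e), Chaitin's symmetry of information. The easy inequality $\pfc(\sigma,\tau) \leq^+ \pfc(\sigma) + \pfc(\tau \mid \sigma^*)$ follows by concatenating $\sigma^*$ with an optimal oracle-prefix-free description of $\tau$ relative to $\sigma^*$. The reverse inequality, which also subsumes the first displayed equality, is the subtle part. Define $m(\tau \mid \sigma^*) \coloneq 2^{\pfc(\sigma) - \pfc(\sigma,\tau)}$, noting that this is uniformly left r.e.\ in $\sigma^*$ because $\sigma^*$ encodes both $\sigma$ itself and the value $\pfc(\sigma) = |\sigma^*|$. The critical combinatorial input is the counting bound $\sum_\tau 2^{-\pfc(\sigma,\tau)} \leq^+ 2^{-\pfc(\sigma)}$, which one proves by converting the minimal pair-descriptions into a prefix-free machine producing $\sigma$; this yields $\sum_\tau m(\tau \mid \sigma^*) \leq^+ 1$, so $m(- \mid \sigma^*)$ is a bounded multiple of a left r.e.\ discrete semimeasure uniformly in $\sigma^*$. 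Universality of the oracle prefix-free machine then gives $\pfc(\tau \mid \sigma^*) \leq^+ -\log_2 m(\tau \mid \sigma^*) = \pfc(\sigma,\tau) - \pfc(\sigma)$, as required. The passage from $\sigma$ to $\sigma^*$ is essential throughout: conditioning on $\sigma$ alone leaves unrecoverable slack of order $\pfc(\pfc(\sigma) \mid \sigma)$ that cannot be absorbed into the additive constant.
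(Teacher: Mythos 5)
The paper offers no proof of this proposition — it is stated as a collection of known facts with citations to Downey \& Hirschfeldt, and your argument is precisely the standard one from that source (greedy dyadic-interval allocation for Kraft--Chaitin, the routine machine constructions for (b)--(d), and Chaitin's semimeasure argument via $\sigma^\ast$ for (e)), so it is correct and matches the intended route. The only point worth flagging is that in (e) the family $m(\cdot\mid\rho)$ must be defined (and kept summable) for \emph{all} oracles $\rho$, not just those of the form $\sigma^\ast$, which requires the usual truncation of the enumeration once the accumulated mass threatens to exceed a fixed bound — a standard technicality, not a gap in the idea.
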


\begin{cor}
Suppose $f\colon (\{0,1\}^\ast)^k \to \{0,1\}^\ast$ is computable. Then for all $\sigma_1,\sigma_2,\ldots,\sigma_k$,
\begin{equation*}
\pfc(f(\sigma_1,\sigma_2,\ldots,\sigma_k)) \leq^+ \pfc(\sigma_1) + \pfc(\sigma_2) + \cdots + \pfc(\sigma_k).
\end{equation*}
\end{cor}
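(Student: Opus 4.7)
The plan is to combine parts (b) and (d) of \cref{prefix-free complexity facts}, using an induction on $k$ to handle the multi-argument situation. The core observation is that the encoding $\pfc(\sigma_1,\ldots,\sigma_k)$ is, by definition, just $\pfc$ of a single string obtained from the $\sigma_i$ via computable encoding functions ($\#_2 = \str^{-1}$ and $\pi^{(k)}$), so both of the single-string lemmas (b) and (d) are directly applicable modulo some bookkeeping about these encodings.

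First I would separate the claim into two inequalities:
\begin{equation*}
\pfc\bigl(f(\sigma_1,\ldots,\sigma_k)\bigr) \;\leq^+\; \pfc(\sigma_1,\sigma_2,\ldots,\sigma_k) \;\leq^+\; \pfc(\sigma_1) + \pfc(\sigma_2) + \cdots + \pfc(\sigma_k).
\end{equation*}
For the left inequality, consider the map $g\colon \{0,1\}^\ast \to \{0,1\}^\ast$ that first tries to decode its input as a tuple $\langle\sigma_1,\ldots,\sigma_k\rangle$ using $\str$ and $\pi^{(k)}$, then outputs $f(\sigma_1,\ldots,\sigma_k)$ (and outputs, say, $\langle\rangle$ when the input is not a valid encoding). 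By the definition of $\pfc(\sigma_1,\ldots,\sigma_k)$ and the computability of $f,\str,\pi^{(k)}$ and their inverses, $g$ is computable and $g(\vec{\sigma}) = f(\sigma_1,\ldots,\sigma_k)$ when $\vec{\sigma}$ is the encoding of $\langle\sigma_1,\ldots,\sigma_k\rangle$. Applying part (b) yields $\pfc(f(\sigma_1,\ldots,\sigma_k)) = \pfc(g(\vec{\sigma})) \leq^+ \pfc(\vec{\sigma}) = \pfc(\sigma_1,\ldots,\sigma_k)$.

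For the right inequality I would induct on $k$. The base case $k=1$ is trivial (up to a constant from the computable map $\sigma \mapsto \str(\pi^{(1)}(\str^{-1}(\sigma)))$, handled by part (b)), and the case $k=2$ is exactly part (d). For the inductive step, use the identity $\pi^{(k)}(x_1,\ldots,x_k) = \pi^{(2)}(\pi^{(k-1)}(x_1,\ldots,x_{k-1}),x_k)$ to realize $\pfc(\sigma_1,\ldots,\sigma_k)$ as $\pfc(\tau,\sigma_k)$ where $\tau \coloneq \str\bigl(\pi^{(k-1)}(\str^{-1}(\sigma_1),\ldots,\str^{-1}(\sigma_{k-1}))\bigr)$; note $\pfc(\tau) = \pfc(\sigma_1,\ldots,\sigma_{k-1})$ by definition. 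Then part (d) gives $\pfc(\tau,\sigma_k) \leq^+ \pfc(\tau) + \pfc(\sigma_k)$, and the inductive hypothesis bounds $\pfc(\tau) = \pfc(\sigma_1,\ldots,\sigma_{k-1}) \leq^+ \pfc(\sigma_1)+\cdots+\pfc(\sigma_{k-1})$, completing the chain.

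There is no real obstacle here; the only thing to be careful about is that the additive constants absorbed by $\leq^+$ at each stage depend on $k$ (and on the computable maps $f$, $g$, $\str$, $\pi^{(j)}$), but since $k$ is a fixed parameter of the statement this is harmless. The proof is essentially a routine bookkeeping exercise in Kolmogorov complexity: pair up the inputs via the standard encoding, apply subadditivity (d) once per pairing, and apply monotonicity under computable maps (b) once at the end.
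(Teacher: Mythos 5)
Your proof is correct and follows exactly the route the paper intends: the corollary is stated without proof as an immediate consequence of Proposition (prefix-free complexity facts)(b) and (d), and your decomposition $\pfc(f(\sigma_1,\ldots,\sigma_k)) \leq^+ \pfc(\sigma_1,\ldots,\sigma_k) \leq^+ \sum_i \pfc(\sigma_i)$, with (b) handling the first inequality and an induction via the recursive definition of $\pi^{(k)}$ plus (d) handling the second, is the standard argument. Your remark that the absorbed constants depend only on the fixed $k$ and the fixed computable maps is the right bookkeeping point.
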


\section{\texorpdfstring{$\dnr$}{DNR} and Avoidance}
\label{dnr and avoidance section}

The diagonally non-recursive (or $\dnr$) hierarchy consists of the sets $\dnr(p)$ for $p \colon \mathbb{N} \to (1,\infty)$ computable, where `diagonally non-recursive' is with respect to a fixed admissible enumeration $\varphi_\bullet$.

\begin{definition}[diagonally non-recursive]
The set of \textdef{diagonally non-recursive} sequences, $\dnr \subseteq \baire$, is defined by $\dnr \coloneq \{ f \in \baire \mid \forall n \qspace (f(n) \nsimeq \varphi_n(n))\}$. Additionally, given a recursive $p \colon \mathbb{N} \to (1,\infty)$, we define:
\begin{equation*}
\dnr(p) \coloneq \{ X \in \baire \mid X \in \dnr \wedge \forall n \qspace (X(n) < p(n))\}.
\end{equation*}
\end{definition}

The weak degree of $\dnr(p)$ has a dependence on the growth rate of $p$. At one extreme, $\mathbf{1} \coloneq \weakdeg(\dnr(2))$ is the maximum weak degree in $\mathcal{E}_\weak$ \cite[Theorem 14.6]{simpson2009degrees}. As the growth rate of $p$ rises the weak degree of $\dnr(p)$ falls; \cite[Theorem 2.3]{kjoshanssen2006kolmogorov} shows that the class $\dnr_\rec \coloneq \bigcup\{ \dnr(p) \mid \text{$p$ recursive}\}$ is weakly equivalent to the class $\complex \coloneq \bigcup\{\complex(f) \mid \text{$f$ recursive}\}$. Several results (e.g., \cite{greenberg2011diagonally}, \cite{khan2013shift}) show that for certain properties of interest, every element of $\dnr(p)$ computes an infinite sequence with that desired property if $p$ is sufficiently slow-growing.

If we wish to replace `sufficiently slow-growing' with explicit bounds, we quickly run into problems -- the weak degree of $\dnr(p)$ depends not only on $p$, but also on the particular admissible enumeration $\varphi_\bullet$ used in the definition of $\dnr(p)$, a fact that has been observed in \cite[Remark 10.6]{simpson2005mass}, \cite[\S 7.3]{bienvenu2016deep}, and \cite{miller2020assorted}. In particular, we can show:

\begin{prop} \label{weak degree of dnr_p is ill-defined}
There exist admissible enumerations $\varphi_\bullet$ and $\tilde{\varphi}_\bullet$ such that the weak degree of $\dnr(\lambda n.n)$ differs when defined with respect to $\varphi_\bullet$ and $\tilde{\varphi}_\bullet$.
\end{prop}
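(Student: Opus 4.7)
The plan is to construct two admissible enumerations $\varphi_\bullet$ and $\tilde{\varphi}_\bullet$ whose diagonals $n \mapsto \varphi_n(n)$ and $n \mapsto \tilde{\varphi}_n(n)$ interact with the bound $\lambda n.n$ differently enough to give distinct weak degrees for the associated DNR classes. The key observation is that although Rogers' theorem guarantees that any two admissible enumerations are related by a recursive bijection on the level of indices, a modification that interleaves a ``trivial'' branch into a standard enumeration alters the diagonal function in a way that changes the effective avoidance condition at bound $\lambda n.n$.

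First I would fix any admissible enumeration $\varphi_\bullet$ and define $\tilde{\varphi}_\bullet$ by interleaving: set $\tilde{\varphi}_{2e+1}(x) \coloneq \varphi_e(x)$ and $\tilde{\varphi}_{2e}(x) \coloneq 0$ (the constant zero function) for all $e,x \in \mathbb{N}$. Admissibility of $\tilde{\varphi}$ is straightforward: the universal function $\langle e,x\rangle \mapsto \tilde{\varphi}_e(x)$ is partial recursive since both branches are; every partial recursive function $\theta$ appears as $\tilde{\varphi}_{2e+1}$ for some $e$ with $\varphi_e = \theta$; and the Parametrization Theorem transfers from $\varphi$ to $\tilde{\varphi}$ via $e \mapsto 2 f_0(e)+1$, where $f_0$ is the parametrizer given by admissibility of $\varphi$.

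Next I would analyze $\dnr_{\tilde{\varphi}}(\lambda n.n)$. At even indices $n = 2e \geq 2$, the condition $X(2e) < 2e$ together with $X(2e) \neq \tilde{\varphi}_{2e}(2e) = 0$ is satisfied by $X(2e) \coloneq 1$ (a uniformly recursive choice). At odd indices $n = 2e+1$, the condition becomes $X(2e+1) < 2e+1$ and $X(2e+1) \nsimeq \varphi_e(2e+1)$, which after reindexing is exactly the avoidance class $\{Y \colon Y(e) < 2e+1 \wedge Y(e) \nsimeq \varphi_e(2e+1)\}$. Thus $\dnr_{\tilde{\varphi}}(\lambda n.n)$ is weakly equivalent to this shifted avoidance class, where the effective bound has roughly doubled relative to the original $\lambda n.n$.

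Finally, the strict inequality of the two weak degrees reduces to showing that $\dnr_\varphi(\lambda n.n)$ and the shifted avoidance class above can lie at different levels of the weak-degree structure of $\mathcal{E}_\weak$. For a suitable $\varphi$, the original diagonal $\lambda n. \varphi_n(n)$ is linearly universal (in the sense that will be developed in Section \ref{dnr and avoidance section}), so $\dnr_\varphi(\lambda n.n)$ realizes the weak degree of $\ldnr(\lambda n.n)$; the modified diagonal of $\tilde{\varphi}$ is trivial on even indices and only encodes $\varphi$-information at odd positions with a doubled bound, producing a class whose weak degree lies strictly below this. The principal obstacle is making this strict inequality precise: it requires either invoking the separation results for bounded avoidance hierarchies (as will be proven in Chapters \ref{complexity and avoidance downward relationships chapter} and \ref{bushy tree chapter}) or performing a direct diagonal argument showing that the slower-bound class computes a function not computable from any element of the faster-bound class. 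The rest of the argument, including all admissibility verifications, is routine given the Parametrization Theorem and the S$^m_n$ Theorem.
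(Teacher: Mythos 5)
There is a genuine gap, and it sits exactly where you flag it in your last paragraph: the strict separation of the two weak degrees. Your interleaving construction changes the effective bound only by a linear factor — after reindexing, $\dnr_{\tilde{\varphi}}(\lambda n.n)$ becomes an avoidance class for the universal partial function $e \mapsto \varphi_e(2e+1)$ with bound roughly $2e+1$ instead of $e$. A linear change of bound does not cross any boundary that the available machinery can detect: both $\lambda n.n$ and $\lambda n.2n+1$ are slow-growing in the sense that $\sum_n p(n)^{-1}$ diverges, and the paper's own basic facts about linearly universal avoidance (\cref{ldnr basic facts}(a), \cref{slow-growing composed with linear is slow-growing}) show that for linearly universal diagonals a reparametrization $q(x)=p(ax+b)$ yields \emph{weakly equivalent} classes. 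So for a suitable $\varphi$ your two classes are weakly equivalent rather than separated, and in general there is no tool — neither the separation results of \cref{complexity and avoidance downward relationships chapter} and \cref{bushy tree chapter} nor a routine diagonalization — that would distinguish an avoidance class at bound $n$ from one at bound $2n+1$. The claim that the modified class ``lies strictly below'' is therefore unsupported and, for the natural choices of $\varphi$, false.

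The paper's proof avoids this by making the re-spacing exponential rather than linear: it sets $\tilde{\varphi}_e \simeq \varphi_n$ when $e=2^n$ and undefined otherwise, so that $\dnr_{\tilde{\varphi}}(\lambda n.n)$ becomes strongly equivalent to $\dnr_{\varphi}(\lambda n.2^n)$ — the bound jumps from $n$ to $2^n$, crossing the divide between slow-growing and fast-growing. The separation is then witnessed concretely by $\mlr$, using the (nontrivial, externally cited) fact that there exist admissible enumerations $\varphi_\bullet$ for which $\dnr_\varphi(p) \nweakleq \mlr$ exactly when $\sum_n p(n)^{-1}=\infty$: for such a $\varphi$ one gets $\dnr_\varphi(\lambda n.n) \nweakleq \mlr$ while $\dnr_{\tilde{\varphi}}(\lambda n.n) \weakeq \dnr_\varphi(\lambda n.2^n) \weakleq \mlr$. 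To repair your argument you would need both ingredients: a super-linear spacing that pushes the effective bound across the convergence boundary, and the choice of a base enumeration for which that boundary is known to be reflected in the weak degrees.
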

\begin{proof}
\cite{bienvenu2016deep} shows that there exist admissible enumerations $\varphi_\bullet$ for which $\dnr(p) \nweakleq \mlr$ (where $\dnr(p)$ is defined with respect to that particular choice of enumeration $\varphi_\bullet$) if and only if $\sum_{n=0}^\infty{p(n)^{-1}} = \infty$.\footnote{This fact has also been independently observed by Greenberg, Miller \cite{miller2020assorted}, and Slaman.} Let $\varphi_\bullet$ be such an admissible enumeration.

We define a new admissible enumeration $\tilde{\varphi}_\bullet$. Given $x \in \mathbb{N}$, define
\begin{equation*}
\tilde{\varphi}_e(x) \simeq \begin{cases} \varphi_n(x) & \text{if $e = 2^n$,} \\ \diverge & \text{otherwise.} \end{cases}
\end{equation*}
It is straight-forward to check that $\tilde{\varphi}_\bullet$ is an admissible enumeration. 

Write $\dnr^{(1)}$ to mean $\dnr$ with respect to the enumeration $\varphi_\bullet$ and $\dnr^{(2)}$ to mean $\dnr$ with respect to the enumeration $\tilde{\varphi}_\bullet$. Our assumption about $\varphi_\bullet$ implies $\dnr^{(1)}(\lambda n. n) \nweakleq \mlr$ while $\dnr^{(1)}(\lambda n. 2^n) \weakleq \mlr$. However, $\dnr^{(2)}(\lambda n. n) \strongeq \dnr^{(1)}(\lambda n. 2^n)$ and hence $\dnr^{(2)}(\lambda n. n) \weakleq \mlr$ and so $\dnr^{(1)}(\lambda n. n)$ and $\dnr^{(2)}(\lambda n. n)$ are not weakly equivalent. 
\end{proof}

With this dependence in mind, if we wish to discuss the weak degrees of the classes $\dnr_p$, then the notation `$\dnr$' should be replaced with one explicitly acknowledging the choice of admissible enumeration implicit in `$\dnr$'. By definition $X \in \dnr$ if and only if $X$ avoids the diagonal $\psi(e) \simeq \varphi_e(e)$, i.e., $X \cap \psi = \emptyset$. In particular, a more precise observation is that $\dnr$ depends only on the choice of $\psi$, where $\psi$ is chosen among the diagonals of admissible enumerations. The constraint that $\psi$ be the diagonal of an admissible enumeration can be lifted to give a more general notion.

\begin{definition}[avoidance]
Suppose $\psi \colonsub \mathbb{N} \to \mathbb{N}$ is a partial recursive function. The class $\avoid^\psi$ is defined by $\avoid^\psi \coloneq \{ X \in \baire \mid X \cap \psi = \emptyset\}$. Additionally, given any recursive $p \colon \mathbb{N} \to (1,\infty)$, we define:
\begin{equation*}
\avoid^\psi(p) \coloneq \{ X \in \baire \mid X \cap \psi = \emptyset \wedge \forall n\qspace (X(n) < p(n))\}.
\end{equation*}
\end{definition}

In this more general framework, the analogs of the diagonals of admissible enumerations are the universal partial recursive funcitons. 

\begin{definition}[universal partial recursive function]
A partial recursive function $\psi \colonsub \mathbb{N} \to \mathbb{N}$ is \textdef{universal} if for every partial recursive function $\theta \colonsub \mathbb{N} \to \mathbb{N}$ there is a total recursive function $f$ such that $\psi \circ f = \theta$.
\end{definition}

This is based on the fact that the diagonal of any admissible enumeration $\varphi_\bullet$ is universal.

\begin{prop} \label{diagonal of admissible numbering is universal}
The diagonal of any admissible enumeration is universal. 
\end{prop}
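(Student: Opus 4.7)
The plan is to reduce universality of the diagonal $\psi(e) \simeq \varphi_e(e)$ directly to the Parametrization Theorem, which is baked into the definition of admissibility. Given an arbitrary partial recursive $\theta \colonsub \mathbb{N} \to \mathbb{N}$, I want to produce a total recursive $f$ such that $\psi(f(n)) \simeq \theta(n)$ for all $n \in \mathbb{N}$. Unpacking, this means I need $\varphi_{f(n)}(f(n)) \simeq \theta(n)$ for all $n$. The cleanest way to arrange this is to build a two-place partial recursive function whose first coordinate is the index that will be selected by Parametrization, and whose second coordinate is the one $\psi$ will plug in diagonally.

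To that end, first define the auxiliary two-place partial recursive function $\tilde\theta \colonsub \mathbb{N}^2 \to \mathbb{N}$ by $\tilde\theta(n,x) \simeq \theta(n)$, i.e., ignore the second argument. This is partial recursive because it is the generalized composition of $\theta$ with the projection $\pi_0^2$. Next, apply the Parametrization Theorem (which holds because $\varphi_\bullet$ is admissible) to $\tilde\theta$ to obtain a total recursive function $f \colon \mathbb{N} \to \mathbb{N}$ such that
\begin{equation*}
\varphi_{f(n)}(x) \simeq \tilde\theta(n,x) \simeq \theta(n) \qquad \text{for all } n, x \in \mathbb{N}.
\end{equation*}
Specializing the free variable $x$ to $f(n)$ gives $\varphi_{f(n)}(f(n)) \simeq \theta(n)$, that is, $\psi(f(n)) \simeq \theta(n)$ for all $n$. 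Hence $\psi \circ f = \theta$, witnessing the universality of $\psi$.

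There isn't a genuine obstacle here: the whole argument is a single, uniform invocation of Parametrization after padding $\theta$ with a dummy second coordinate. The only conceptual point worth highlighting in the write-up is why one bothers padding at all, namely so that the index $f(n)$ produced by Parametrization is free to be used as \emph{both} the program index \emph{and} the input, which is exactly what the diagonal $\psi$ requires.
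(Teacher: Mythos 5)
Your proof is correct and uses essentially the same idea as the paper: pad $\theta$ with a dummy argument so that $\varphi_{f(n)}$ is constant where defined, then invoke the Parametrization Theorem so the diagonal input $f(n)$ is harmless. The paper merely routes this through a uniform two-parameter version $\chi(e,x,y)\simeq\varphi_e(x)$ before specializing to an index for $\theta$, which is a cosmetic difference.
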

\begin{proof}
Let $\varphi_\bullet$ be an admissible enumeration and $\psi$ its diagonal, and let $\theta \colonsub \mathbb{N} \to \mathbb{N}$ be any partial recursive function. Let $\chi \colonsub \mathbb{N}^3 \to \mathbb{N}$ be defined by $\chi(e,x,y) \simeq \varphi_e(x)$ for $e,x,y \in \mathbb{N}$. By the Parametrization Theorem, there exists a total recursive function $g \colon \mathbb{N}^2 \to \mathbb{N}$ such that $\varphi_{g(e,x)}(y) \simeq \chi(e,x,y) \simeq \varphi_e(x)$ for all $e,x,y \in \mathbb{N}$. Let $e \in \mathbb{N}$ satisfy $\varphi_e(x) \simeq \theta(x)$ for all $x \in \mathbb{N}$, and let $f\colon \mathbb{N} \to \mathbb{N}$ be defined by $f(x) \coloneq g(e,x)$. Then
\begin{equation*}
(\psi \circ f)(x) \simeq \varphi_{g(e,x)}(g(e,x)) \simeq \varphi_e(x) \simeq \theta(x).
\end{equation*}
As $\theta$ was arbitrary, it follows that $\psi$ is universal.
\end{proof}

\subsection{Linearly Universal Avoidance}

As with $\dnr(p)$, $\avoid^\psi(p)$ still depends on the choice of universal partial recursive function $\psi$. To remove the dependence on a \emph{single} choice of universal partial recursive function we instead choose a \emph{collection} of universal partial recursive functions. 

\begin{definition}
Suppose $\mathcal{C}$ is a collection of partial recursive functions and $p \colon \mathbb{N} \to (1,\infty)$ is recursive. We define
\begin{equation*}
\avoid^\mathcal{C}(p) \coloneq \bigcup_{\psi \in \mathcal{C}}{\avoid^\psi(p)}.
\end{equation*}
\end{definition}

The choice of $\mathcal{C}$ we make follows the convention introduced by \cite{simpson2017turing}, motivated by \cite{bienvenu2016deep} and \cite{miller2020assorted}.

\begin{definition}[linearly universal partial recursive function]
A \textdef{linearly universal partial recursive function} is a partial recursive function $\psi \colonsub \mathbb{N} \to \mathbb{N}$ such that for any partial recursive $\theta \colonsub \mathbb{N} \to \mathbb{N}$ there exist $a,b \in \mathbb{N}$ such that $\psi(ax+b) \simeq \theta(x)$ for all $x \in \mathbb{N}$.
\end{definition}

\begin{definition}
Given a recursive $p \colon \mathbb{N} \to (1,\infty)$ we define $\ldnr(p) \coloneq \avoid^\mathcal{LU}(p)$, where $\mathcal{LU}$ is the family of linearly universal partial recursive functions. 
\end{definition}


\begin{remark}
The approach used in \cite{bienvenu2016deep} and \cite{miller2020assorted} is to use $\avoid^\psi(p)$, where $\psi$ is the diagonal of a `linear enumeration', i.e., an admissible enumeration $\varphi_\bullet$ for which there is a total recursive $\ell \colon \mathbb{N}^2 \to \mathbb{N}$ satisfying the following conditions: 
\begin{enumerate*}[(i)]
\item For all $e,x \in \mathbb{N}$, $\psi(\ell(e,x)) \simeq \varphi_e(x)$ and
\item for each $e \in \mathbb{N}$ there exist $a,b \in \mathbb{N}$ such that $\ell(e,x) \leq ax+b$ for all $x \in \mathbb{N}$.
\end{enumerate*}
\end{remark}

\subsection{Properties of Linearly Universal Partial Recursive Functions}
\label{properties of linearly univeral partial recursive functions section}

Every linearly universal partial recursive function $\psi$ produces an effective enumeration $\varphi_\bullet$ of the partial recursive functions by setting
\begin{equation*}
\varphi_e(x) \simeq \psi((e)_0 x + (e)_1)
\end{equation*}
for $e,x \in \mathbb{N}$, where $(-)_i \coloneq \pi_i \circ (\pi^{(2)})^{-1}$ for $i \in \{0,1\}$ (so $\pi^{(2)}((n)_0,(n)_1) = n$ for all $n \in \mathbb{N}$). This enumeration $\varphi_\bullet$ is admissible.

\begin{prop}[Parametrization Theorem for Linearly Universal Partial Recursive Functions] \label{parametrization theorem for linearly universal partial recursive functions} 
Suppose $\psi$ is a linearly universal partial recursive function and $\theta \colonsub \mathbb{N}^{k+1} \to \mathbb{N}$ is a partial recursive function. There exist elementary recursive functions $A,B \colon \mathbb{N}^k \to \mathbb{N}$ such that 
\begin{equation*}
\psi(A(\mathbf{x})y + B(\mathbf{x})) \simeq \theta(\mathbf{x},y)
\end{equation*}
for all $\mathbf{x} \in \mathbb{N}^k$ and $y \in \mathbb{N}$.
\end{prop}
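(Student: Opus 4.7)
The plan is to reduce everything to a single one-place partial recursive function, apply linear universality once, and then exploit the explicit form of $\pi^{(2)}$ to separate the $\mathbf{x}$- and $y$-dependencies into elementary recursive multiplicative and additive factors.

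First I would define a one-place partial recursive function $\Theta \colonsub \mathbb{N} \to \mathbb{N}$ by $\Theta(z) \simeq \theta\bigl((\pi^{(k+1)})^{-1}(z)\bigr)$. Since $\pi^{(k+1)}$ is an elementary recursive bijection (built by iterated composition from $\pi^{(2)}(x,y) = 2^x(2y+1) - 1$) whose component inverses are also elementary, $\Theta$ is partial recursive. Applying linear universality to $\Theta$ yields constants $c, d \in \mathbb{N}$ with $\psi(cz + d) \simeq \Theta(z)$ for every $z \in \mathbb{N}$; specializing at $z = \pi^{(k+1)}(\mathbf{x}, y)$ gives
\[
\psi\bigl(c \cdot \pi^{(k+1)}(\mathbf{x}, y) + d\bigr) \simeq \theta(\mathbf{x}, y).
\]

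The key calculation is then to unfold the inner argument using $\pi^{(k+1)}(\mathbf{x}, y) = \pi^{(2)}(\pi^{(k)}(\mathbf{x}), y) = 2^{\pi^{(k)}(\mathbf{x})}(2y+1) - 1$ to obtain
\[
c \cdot \pi^{(k+1)}(\mathbf{x}, y) + d = \bigl(2c \cdot 2^{\pi^{(k)}(\mathbf{x})}\bigr)\, y + \bigl(c \cdot (2^{\pi^{(k)}(\mathbf{x})} - 1) + d\bigr),
\]
which is affine in $y$ with coefficients depending only on $\mathbf{x}$. Setting
\[
A(\mathbf{x}) \coloneq 2c \cdot 2^{\pi^{(k)}(\mathbf{x})} \quad \text{and} \quad B(\mathbf{x}) \coloneq c \cdot (2^{\pi^{(k)}(\mathbf{x})} - 1) + d
\]
thus gives the desired identity. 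Both $A$ and $B$ are elementary recursive since $\pi^{(k)}$ and $x \mapsto 2^x$ are elementary and the class of elementary recursive functions is closed under composition and the arithmetic operations $+$, $\cdot$, and $\dotminus$.

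There is no real obstacle here; the only substantive observation is that a \emph{single} application of linear universality suffices (rather than one per value of $\mathbf{x}$), which is made possible by packing $\mathbf{x}$ and $y$ into a single argument via $\pi^{(k+1)}$ and then leveraging the explicit base-$2$ exponential form of $\pi^{(2)}$ to extract $\mathbf{x}$-dependence as elementary multiplicative and additive factors of an expression linear in $y$. The case $k = 0$ is trivial, since $\theta$ is already one-place and linear universality directly supplies constants $A, B$.
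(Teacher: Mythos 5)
Your proof is correct and follows essentially the same route as the paper: a single application of linear universality to the one-place function obtained by packing $(\mathbf{x},y)$ via $\pi^{(k+1)}$, followed by unfolding $\pi^{(2)}(x,y)=2^x(2y+1)-1$ to read off $A(\mathbf{x})=2c\cdot 2^{\pi^{(k)}(\mathbf{x})}$ and $B(\mathbf{x})=c(2^{\pi^{(k)}(\mathbf{x})}-1)+d$, which are exactly the paper's choices up to renaming the constants. The only difference is that you make explicit the intermediate step of defining $\Theta$ via the inverse of $\pi^{(k+1)}$, which the paper leaves implicit.
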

\begin{proof}
Because $\psi$ is linearly universal, there exists $a,b \in \mathbb{N}$ such that $\psi(a\pi^{(k+1)}(\mathbf{x},y) + b) \simeq \theta(\mathbf{x},y)$ for all $\mathbf{x} \in \mathbb{N}^k$ and $y \in \mathbb{N}$. Thus, defining $A,B\colon \mathbb{N}^k \to \mathbb{N}$ by
\begin{equation*}
A(\mathbf{x}) \coloneq a2^{\pi^{(k)}(\mathbf{x}) + 1} \quad \text{and} \quad B(\mathbf{x}) \coloneq a(2^{\pi^{(k)}(\mathbf{x})} - 1) + b
\end{equation*}
gives
\begin{align*}
\psi(A(\mathbf{x})y + B(\mathbf{x})) & \simeq \psi(a2^{\pi^{(k)}(\mathbf{x})+1}y + a(2^{\pi^{(k)}(\mathbf{x})}-1)+b) \\
& \simeq \psi(a(2^{\pi^{(k)}(\mathbf{x})}(2y+1)-1)+b) \\
& \simeq \psi(a \pi^{(k+1)}(\mathbf{x}, y) + b) \\
& \simeq \theta(\mathbf{x},y)
\end{align*}
for all $\mathbf{x} \in \mathbb{N}^k$ and $y \in \mathbb{N}$.
\end{proof}

\begin{cor} \label{linearly universal partial recursive function enumeration is admissible}
Suppose $\psi$ is a linearly universal partial recursive function. Then the effective enumeration $\varphi_\bullet$ defined by $\varphi_e(n) \coloneq \psi((e)_0 n + (e)_1)$ is admissible.
\end{cor}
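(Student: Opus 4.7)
The plan is to verify the two defining conditions of an admissible enumeration directly, with the heavy lifting already done by \cref{parametrization theorem for linearly universal partial recursive functions}. First I would check effectiveness: the partial function $\Phi(e,x) \simeq \varphi_e(x) \simeq \psi((e)_0 x + (e)_1)$ is partial recursive because it is the composition of $\psi$ with the (elementary) recursive operations $e \mapsto (e)_0$, $e \mapsto (e)_1$, and $(a,b,x) \mapsto ax+b$. This is essentially immediate from the definition of $\varphi_\bullet$ and requires no real argument.

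Next I would verify the Parametrization Theorem for $\varphi_\bullet$. Given any partial recursive $\theta \colonsub \mathbb{N}^2 \to \mathbb{N}$, I would apply \cref{parametrization theorem for linearly universal partial recursive functions} with $k=1$ to obtain elementary recursive functions $A, B \colon \mathbb{N} \to \mathbb{N}$ such that $\psi(A(e)x + B(e)) \simeq \theta(e,x)$ for all $e, x \in \mathbb{N}$. Then the total recursive function $f \colon \mathbb{N} \to \mathbb{N}$ defined by $f(e) \coloneq \pi^{(2)}(A(e), B(e))$ has the property that $(f(e))_0 = A(e)$ and $(f(e))_1 = B(e)$, so
\begin{equation*}
\varphi_{f(e)}(x) \simeq \psi((f(e))_0 x + (f(e))_1) \simeq \psi(A(e)x + B(e)) \simeq \theta(e,x)
\end{equation*}
for all $e, x \in \mathbb{N}$, as required.

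There is no real obstacle here; the whole argument is bookkeeping once \cref{parametrization theorem for linearly universal partial recursive functions} is in hand. The only subtlety worth mentioning is that the Parametrization Theorem produced $A$ and $B$ separately rather than packaged as a single index, so one must invoke $\pi^{(2)}$ (and use that it is a bijection with inverse components $(-)_0, (-)_1$) to convert the pair $\langle A(e), B(e)\rangle$ into an index $f(e)$ in the enumeration $\varphi_\bullet$.
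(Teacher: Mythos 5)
Your proof is correct and is exactly the argument the paper intends (the paper states this as an unproved corollary of \cref{parametrization theorem for linearly universal partial recursive functions}, and your bookkeeping with $f(e) \coloneq \pi^{(2)}(A(e),B(e))$ and the identities $(f(e))_0 = A(e)$, $(f(e))_1 = B(e)$ is the canonical way to fill it in). The only point you leave tacit is that $\varphi_\bullet$ is genuinely an enumeration, i.e.\ surjective onto the $1$-place partial recursive functions, but this is immediate from linear universality of $\psi$ (or from your parametrization argument applied to $\theta(e,x) \simeq \theta'(x)$), so nothing is missing in substance.
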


\begin{cor}[Recursion Theorem for Linearly Universal Partial Recursive Functions] \label{recursion theorem for linearly universal partial recursive functions}
Suppose $\psi$ is a linearly universal partial recursive function and $\theta \colonsub \mathbb{N}^{k+2} \to \mathbb{N}$ is a partial recursive function. Then there exist $a,b \in \mathbb{N}$ such that
\begin{equation*}
\psi(a\pi^{(k)}(\mathbf{x})+b) \simeq \theta(a,b,\mathbf{x})
\end{equation*}
for all $\mathbf{x} \in \mathbb{N}^k$.
\end{cor}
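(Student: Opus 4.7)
The plan is to reduce this to the classical Recursion Theorem (\cref{implications of parametrization theorem}(b)) applied to the admissible enumeration $\varphi_\bullet$ induced by $\psi$ via \cref{linearly universal partial recursive function enumeration is admissible}. Recall that this enumeration satisfies $\varphi_e(n) \simeq \psi((e)_0 n + (e)_1)$, so under the convention $\varphi_e^{(k)}(\mathbf{x}) \simeq \varphi_e(\pi^{(k)}(\mathbf{x}))$ we obtain
\begin{equation*}
\varphi_e^{(k)}(\mathbf{x}) \simeq \psi\bigl((e)_0 \pi^{(k)}(\mathbf{x}) + (e)_1\bigr).
\end{equation*}
Thus the desired conclusion is equivalent to exhibiting a single index $e$ whose decomposition $e = \pi^{(2)}(a,b)$ satisfies $\varphi_e^{(k)}(\mathbf{x}) \simeq \theta(a,b,\mathbf{x})$ for all $\mathbf{x} \in \mathbb{N}^k$.

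With this reformulation in hand, I would next define the partial recursive function $\tilde{\theta} \colonsub \mathbb{N}^{k+1} \to \mathbb{N}$ by
\begin{equation*}
\tilde{\theta}(e, \mathbf{x}) \simeq \theta\bigl((e)_0, (e)_1, \mathbf{x}\bigr).
\end{equation*}
Because $(-)_0$ and $(-)_1$ are primitive recursive, $\tilde{\theta}$ is indeed partial recursive. Applying \cref{implications of parametrization theorem}(b) to $\tilde{\theta}$ with respect to the admissible enumeration $\varphi_\bullet$ produces an index $e \in \mathbb{N}$ such that $\varphi_e^{(k)}(\mathbf{x}) \simeq \tilde{\theta}(e, \mathbf{x})$ for all $\mathbf{x} \in \mathbb{N}^k$.

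Setting $a \coloneq (e)_0$ and $b \coloneq (e)_1$, so that $e = \pi^{(2)}(a,b)$, and chaining the three defining equivalences yields
\begin{equation*}
\psi(a\pi^{(k)}(\mathbf{x}) + b) \simeq \varphi_e^{(k)}(\mathbf{x}) \simeq \tilde{\theta}(e, \mathbf{x}) \simeq \theta(a, b, \mathbf{x})
\end{equation*}
for all $\mathbf{x} \in \mathbb{N}^k$, as required. There is no genuine obstacle here beyond routine bookkeeping; the substantive content was already packaged into \cref{linearly universal partial recursive function enumeration is admissible}. The role of linear universality is precisely to ensure that the enumeration $e \mapsto \psi((e)_0 \cdot {-} + (e)_1)$ is admissible, after which the Kleene-style fixed-point construction transfers verbatim and the linear form of the input to $\psi$ is preserved automatically by how this enumeration was defined.
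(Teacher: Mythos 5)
Your proof is correct and follows essentially the same route as the paper: define $\tilde{\theta}(e,\mathbf{x}) \simeq \theta((e)_0,(e)_1,\mathbf{x})$, invoke the Recursion Theorem for the admissible enumeration $\varphi_e(n) \simeq \psi((e)_0 n + (e)_1)$ from \cref{linearly universal partial recursive function enumeration is admissible} together with \cref{implications of parametrization theorem}, and set $a = (e)_0$, $b = (e)_1$. No substantive differences to report.
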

\begin{proof}
Let $\tilde{\theta} \colonsub \mathbb{N}^{k+1} \to \mathbb{N}$ be defined by 
\begin{equation*}
\tilde{\theta}(c,\mathbf{x}) \simeq \theta((c)_0,(c)_1,\mathbf{x})
\end{equation*}
for all $c \in \mathbb{N}$ and $\mathbf{x} \in \mathbb{N}^k$. \cref{linearly universal partial recursive function enumeration is admissible,,implications of parametrization theorem} show that that there exists an $e \in \mathbb{N}$ such that 
\begin{equation*}
\psi((e)_0\pi^{(k)}(\mathbf{x})+(e)_1) \simeq \varphi_e^{(k)}(\mathbf{x}) \simeq \tilde{\theta}(e,\mathbf{x}) \simeq \theta((e)_0,(e)_1,\mathbf{x})
\end{equation*}
for all $\mathbf{x} \in \mathbb{N}$. Thus, we may let $a = (e)_0$ and $b = (e)_1$.
\end{proof}

Moreover, the diagonal of $\varphi_\bullet$ is linearly universal. 

\begin{prop} \label{diagonal of linearly universal is linearly universal}
If $\psi_0$ is a linearly universal partial recursive function, then the partial function $\psi$ defined by $\psi(e) \simeq \psi_0((e)_0 e+(e)_1)$ for $e\in\mathbb{N}$ is also linearly universal partial recursive.
\end{prop}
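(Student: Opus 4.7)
The plan is to reduce the problem directly to the linear universality of $\psi_0$ by forcing the self-referential term $(e)_0 \cdot e$ in the definition of $\psi$ to vanish. Given any partial recursive $\theta \colonsub \mathbb{N} \to \mathbb{N}$, I will first apply linear universality of $\psi_0$ to obtain $a_0, b_0 \in \mathbb{N}$ satisfying $\psi_0(a_0 x + b_0) \simeq \theta(x)$ for all $x \in \mathbb{N}$. It then suffices to pick $a, b \in \mathbb{N}$ so that the unfolding of $\psi(ax+b)$ collapses back to $\psi_0(a_0 x + b_0)$.

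The key observation is that the formula $\pi^{(2)}(u,v) = 2^u(2v+1)-1$ forces every even natural number $e$ to satisfy $(e)_0 = 0$ and $(e)_1 = e/2$, since then $2^u(2v+1) = e+1$ is odd. Consequently, taking $a = 2a_0$ and $b = 2b_0$ makes $ax+b = 2(a_0 x + b_0)$ even for every $x \in \mathbb{N}$, so that
\begin{equation*}
\psi(ax+b) \simeq \psi_0\bigl( (ax+b)_0 \cdot (ax+b) + (ax+b)_1 \bigr) \simeq \psi_0\bigl( 0 \cdot (ax+b) + (a_0 x + b_0) \bigr) \simeq \theta(x).
\end{equation*}
This exhibits the required linear-universality witnesses $a, b$ for $\theta$ with respect to $\psi$.

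Partial recursiveness of $\psi$ itself is immediate, since $\psi$ is obtained by composing the total recursive map $e \mapsto (e)_0 \cdot e + (e)_1$ with $\psi_0$. I expect no real obstacle: the whole argument hinges on the single structural observation that even arguments trivialize the coefficient $(e)_0$, reducing the diagonal $\psi$ to a pure linear substitution into $\psi_0$ along the sublattice of even numbers, where linear universality of $\psi_0$ already supplies everything we need.
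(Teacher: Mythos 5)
Your proof is correct and follows essentially the same route as the paper: both arguments rest on the observation that even numbers $e = 2x = \pi^{(2)}(0,x)$ satisfy $(e)_0 = 0$ and $(e)_1 = x$, so that $\psi(2x) \simeq \psi_0(x)$, and then compose this with the witnesses $a_0, b_0$ supplied by the linear universality of $\psi_0$ to get $a = 2a_0$, $b = 2b_0$.
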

\begin{proof}
Suppose $\theta \colonsub \mathbb{N} \to \mathbb{N}$ is a partial recursive function. There are $a$ and $b$ such that $\forall x \qspace (\psi_0(ax+b) \simeq \theta(x))$. For any $x \in \mathbb{N}$, $\pi^{(2)}(0,x) = 2^0(2x+1)-1 = 2x$, so 
\begin{equation*}
\psi(2x) \simeq \psi(\pi^{(2)}(0,x)) \simeq \psi_0(0\cdot x + x) = \psi_0(x).
\end{equation*}
It follows that $\psi$ is linearly universal.
\end{proof}

Together, \cref{linearly universal partial recursive function enumeration is admissible} and \cref{diagonal of linearly universal is linearly universal} allow us to enjoy the benefits of linearly universal partial recursive functions and of admissible enumerations simultaneously, e.g., having access to the particularly nice versions of the Parametrization and Recursion Theorems in the forms of \cref{parametrization theorem for linearly universal partial recursive functions,,recursion theorem for linearly universal partial recursive functions}, respectively.

Another convenient property of linearly universal partial recursive functions is that we may we edit any finite number of values without affecting the linear universality.

\begin{prop} \label{finitely many changes preserves linear universality}
Suppose $\psi,\chi$ are partial recursive functions such that $\psi(n) \simeq \chi(n)$ for almost all $n \in \mathbb{N}$. Then $\psi$ is linearly universal if and only if $\chi$ is linearly universal.
\end{prop}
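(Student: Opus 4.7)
The plan is to establish the nontrivial direction: if $\psi$ is linearly universal and $\chi$ agrees with $\psi$ on all $n \geq N$ for some fixed $N$, then $\chi$ is linearly universal; the reverse implication then follows by symmetry of the hypothesis. Given an arbitrary partial recursive $\theta \colonsub \mathbb{N} \to \mathbb{N}$, I would first introduce the shifted auxiliary partial recursive function $\tilde{\theta}$ defined to diverge on $\{0,1,\ldots,N-1\}$ and to satisfy $\tilde{\theta}(y) \simeq \theta(y-N)$ for $y \geq N$. Applying the linear universality of $\psi$ to $\tilde{\theta}$ produces coefficients $a',b' \in \mathbb{N}$ with $\psi(a'y+b') \simeq \tilde{\theta}(y)$ for every $y$.

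The main case is $a' \geq 1$. Substituting $y = x + N$ gives $\psi(a'x + (a'N+b')) \simeq \tilde{\theta}(x+N) \simeq \theta(x)$ for every $x \in \mathbb{N}$, and since the new intercept satisfies $a'N + b' \geq N$, every input of $\psi$ appearing on the left-hand side lies in the region where $\psi$ and $\chi$ coincide. Consequently $\chi(a'x + (a'N+b')) \simeq \theta(x)$ for all $x$, so $a = a'$ and $b = a'N + b'$ are the coefficients witnessing that $\chi$ is linearly universal for $\theta$.

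The remaining obstacle — and this is the step I expect to require the most care — is the degenerate case $a' = 0$. Here $\psi(b') \simeq \tilde{\theta}(y)$ for every $y$, which forces $\psi(b') \diverge$ (since $\tilde{\theta}(0) \diverge$) and thereby forces $\theta$ itself to be nowhere defined. To handle this, I would first establish as a lemma that every linearly universal partial recursive function has infinitely many divergent inputs: apply linear universality of $\psi$ to the partial recursive function $\theta_0$ with $\theta_0(1) \simeq 0$ and $\theta_0(x) \diverge$ for $x \neq 1$, note that the slope produced must be at least $1$ (else $\psi(b_0)$ would need to simultaneously diverge and equal $0$), and conclude that $\psi$ diverges on an infinite arithmetic progression. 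Transferring this conclusion to $\chi$ through the almost-everywhere agreement yields some $b \geq N$ with $\chi(b) \diverge$; taking $a = 0$ then gives $\chi(ax+b) = \chi(b) \diverge \simeq \theta(x)$ for all $x$, completing the proof.
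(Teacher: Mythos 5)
Your proof is correct, and it rests on the same core trick as the paper's: shift by $N$ so that a positive slope forces every relevant input of $\psi$ past the threshold where $\psi$ and $\chi$ agree. The decomposition is different, though. The paper applies linear universality of $\psi$ exactly once, to the shifted copy of $\psi$ \emph{itself} (i.e.\ $\theta(x) \simeq \psi(x-N)$ for $x \geq N$, divergent below $N$); since $\psi$ is universal it is nonconstant, so the resulting slope $a$ cannot be $0$, and one obtains $\chi(ax+(aN+b)) \simeq \psi(x)$ for all $x$ — that is, $\chi$ linearly simulates $\psi$ — after which linear universality of $\chi$ follows by composing linear maps. Your version instead shifts each target $\theta$ individually, which saves the composition step but forces you to confront the degenerate slope $a'=0$ separately, since an arbitrary $\theta$ (unlike $\psi$) may be everywhere divergent. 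Your auxiliary lemma that a linearly universal function diverges on an infinite arithmetic progression (minus a point) is correct and handles this cleanly; the paper's choice of target makes that lemma unnecessary. One trivial point of care in your write-up: you should take $N \geq 1$ (always possible), since your argument that $a'=0$ forces $\psi(b') \diverge$ uses $\tilde{\theta}(0) \diverge$, which requires $0 < N$. Net assessment: both arguments are valid; the paper's is marginally more economical, yours is marginally more self-contained per target function and yields the reusable fact about divergent inputs as a byproduct.
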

\begin{proof}
Suppose $\psi$ is linearly universal. Let $N \in \mathbb{N}$ be such that $\psi(x) \simeq \chi(x)$ for all $x \geq N$. Consider the partial recursive function $\theta \colonsub \mathbb{N} \to \mathbb{N}$ defined by
\begin{equation*}
\theta(x) \simeq \begin{cases} \psi(x-N) & \text{if $x \geq N$,} \\ \diverge & \text{otherwise.} \end{cases}
\end{equation*}
Because $\psi$ is linearly universal, there are $a,b \in \mathbb{N}$ such that $\psi(ax+b) \simeq \theta(x)$ for all $x \in \mathbb{N}$. $\theta$ is nonconstant, so $a \neq 0$ and hence $ax+b \geq x$ for all $x \in \mathbb{N}$. Thus, for all $x \in \mathbb{N}$,
\begin{equation*}
\chi(ax+(aN+b)) \simeq \psi(ax+(aN+b)) \simeq \psi(a(x+N)+b) \simeq \theta(x+N) \simeq \psi(x),
\end{equation*}
showing $\chi$ is linearly universal.
\end{proof}

\begin{convention}
Given $p \colonsub \mathbb{N} \to \mathbb{R}$, suppose $p \restrict \mathbb{N}_{\geq a}$ is a total, computable, nondecreasing, unbounded function with image in $(1,\infty)$ for some $a \in \mathbb{N}$. Define $\tilde{p} \colon \mathbb{N} \to (1,\infty)$ by $\tilde{p}(x) \coloneq p(x)$ for $x \geq a$ and $\tilde{p}(x) \coloneq p(a)$ otherwise. Then $\tilde{p}$ is an order function, and we let $\ldnr(p)$ denote the class $\ldnr(\tilde{p})$. This allows us to make sense of something like, e.g., $\ldnr(\log_2)$.
\end{convention}

\subsubsection{Basic Properties of the \texorpdfstring{$\ldnr$}{LUA} Hierarchy}

The regularity of the manner in which linearly universal partial recursive functions express their universality allows us to prove some simple but useful strong and weak reductions.

\begin{prop} \label{ldnr basic facts}
Let $p,q \colon \mathbb{N} \to (1,\infty)$ be recursive functions.
\begin{enumerate}[(a)]
\item If $q(x) = p(ax+b)$ for some $a \in \mathbb{N}_{>0}$ and $b \in \mathbb{N}$ for all $x \in \mathbb{N}$, then $\ldnr(p) \weakeq \ldnr(q)$.
\item If $X \in \ldnr(p)$, $Y \in p^\mathbb{N}$, and $X(x) = Y(x)$ for almost all $x \in \mathbb{N}$, then $Y \in \ldnr(p)$. 
\item If $p \domleq q$, then $\ldnr(q) \strongleq \ldnr(p)$.
\item If for all $a,b \in \mathbb{N}$ we have $q(ax+b) \leq p(x)$ for almost all $x \in \mathbb{N}$ and $\psi_0$ is a partial recursive function, then $\avoid^{\psi_0}(p) \weakleq \ldnr(q)$.
\end{enumerate}
\end{prop}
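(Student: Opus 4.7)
For part (a), I would prove both directions. The direction $\ldnr(p) \weakleq \ldnr(q)$ is the easier one: given $Y \in \avoid^\chi(q)$ with $\chi \in \mathcal{LU}$, define $X(n) \coloneq Y((n-b)/a)$ when $n \geq b$ and $a \mid n-b$, and $X(n) \coloneq 0$ otherwise, paired with the partial recursive function $\psi(n) \simeq \chi((n-b)/a)$ on the same conditional domain (divergent elsewhere). Linear universality of $\psi$ is immediate from that of $\chi$: if $\chi(a_1 x + b_1) \simeq \theta(x)$, then $\psi(a a_1 x + (a b_1 + b)) \simeq \chi(a_1 x + b_1) \simeq \theta(x)$. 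Verifying $X \in \avoid^\psi(p)$ reduces to routine case analysis on whether $n$ lies in $\{ak + b : k \geq 0\}$, using $p(n) > 1$ in the complementary case.

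The harder reverse direction $\ldnr(q) \weakleq \ldnr(p)$ uses the construction $Y(x) \coloneq X(ax+b)$ for $X \in \avoid^\psi(p)$, giving $Y \in q^\mathbb{N}$ which avoids the candidate $\chi(x) \simeq \psi(ax+b)$. The crux is showing $\chi \in \mathcal{LU}$. For this, given partial recursive $\theta$, I would apply linear universality of $\psi$ to the partial recursive $\tilde{\theta}(y) \simeq \theta((y-b)/a)$ (defined when $y \geq b$ and $a \mid y-b$; divergent otherwise), obtain $(c', d')$ with $\psi(c'y + d') \simeq \tilde{\theta}(y)$, and specialize to $y = ak+b$ to get $\psi(c'ak + c'b + d') \simeq \theta(k)$. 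Arranging the coefficient pair $(c'a, c'b+d')$ into the required form $(ac, ad+b)$ amounts to the divisibility condition $a \mid (c'-1)b + d'$, and I expect this divisibility to be the main obstacle. My plan is to handle it by refining $\tilde{\theta}$ (e.g., using a total extension with value $\theta(\lfloor (y-b)/a \rfloor)$ to get multiple shifts $r \in \{0, 1, \ldots, a-1\}$ to range over, one of which should hit the required residue), possibly combined with \cref{finitely many changes preserves linear universality} for a final adjustment.

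The remaining parts are simpler. For (b), if $X \in \avoid^\psi(p)$ with $\psi \in \mathcal{LU}$ and $Y$ agrees with $X$ for $n \geq N$, I would replace $\psi$ by the $\tilde{\psi}$ diverging on $\{0, 1, \ldots, N-1\}$ and equaling $\psi$ elsewhere; $\tilde{\psi}$ remains linearly universal by \cref{finitely many changes preserves linear universality}, and $Y \in \avoid^{\tilde{\psi}}(p) \subseteq \ldnr(p)$. For (c), dominance $p \domleq q$ yields $N$ with $p(n) \leq q(n)$ for $n \geq N$, so the recursive functional zeroing out the first $N$ coordinates maps $\ldnr(p)$ uniformly into $\ldnr(q)$ (pairing with a $\tilde{\psi}$ as above). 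For (d), given $Y \in \avoid^\chi(q)$ with $\chi \in \mathcal{LU}$, I would use linear universality of $\chi$ to obtain $a_0, b_0$ with $\chi(a_0 x + b_0) \simeq \psi_0(x)$; the hypothesis provides $N$ with $q(a_0 n + b_0) \leq p(n)$ for $n \geq N$, so defining $X(n) \coloneq Y(a_0 n + b_0)$ for $n \geq N$ and hardcoding permissible values on the finite prefix $\{0, \ldots, N-1\}$ (using $p(n) > 1$ to dodge $\psi_0(n)$) yields $X \in \avoid^{\psi_0}(p)$ with $X \turingleq Y$.
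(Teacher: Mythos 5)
Parts (b), (c), and (d) are correct and essentially identical to the paper's arguments (your patching of the finitely many exceptional coordinates in (d) is, if anything, slightly more careful than the paper's write-up). In part (a), your first direction, $\ldnr(p) \weakleq \ldnr(q)$ via the spread-out sequence $X(ak+b) = Y(k)$ paired with $\psi(ak+b) \simeq \chi(k)$ (divergent off the progression), is exactly the paper's nontrivial direction and is fine.

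The gap is in your treatment of $\ldnr(q) \weakleq \ldnr(p)$. Your route requires that $\chi(x) \simeq \psi(ax+b)$ be linearly universal whenever $\psi$ is, and this is false in general. For a counterexample, fix a linearly universal $\psi_0$ and define $\psi(2y) \simeq \psi_0(y)$ with $\psi(2y+1) \diverge$ for all $y$; then $\psi$ is linearly universal (via $\psi(2a_0x+2b_0) \simeq \psi_0(a_0x+b_0)$), but with $a=2$, $b=1$ the function $\chi(x) \simeq \psi(2x+1)$ is everywhere divergent, hence not linearly universal. Your proposed repair does not escape this: ranging over shifts $r \in \{0,\dots,a-1\}$ reduces the problem to solving $c'r \equiv b - c'b - d' \pmod{a}$, which is solvable only when $\gcd(c',a)$ divides the right-hand side, and you have no control over $c'$ (in the counterexample every admissible $c'$ is even while $a=2$, so no shift works). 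So the divisibility obstruction you flagged is not a technicality but a genuine failure of the approach.

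The paper dispatches this direction without any construction at all: since $a \geq 1$ and $b \geq 0$ give $ax+b \geq x$, and $p$ is nondecreasing (the standing convention for the functions in play), we have $p(x) \leq p(ax+b) = q(x)$ pointwise, so every $X \in \avoid^\psi(p)$ is already $q$-bounded and lies in $\avoid^\psi(q)$; the identity functional therefore witnesses $\ldnr(q) \strongleq \ldnr(p)$. You should replace your second construction with this observation.
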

\begin{proof} \mbox{}
\begin{enumerate}[(a)]
\item Let $\psi$ be a linearly universal partial recursive function. If $X \in \avoid^\psi(p)$, then $X \in \avoid^\psi(q) \subseteq \ldnr(q)$ since $p(x) \leq q(x)$ for all $x \in \mathbb{N}$. Thus, the recursive functional $X \mapsto X$ shows $\ldnr(q) \strongleq \ldnr(p)$. 

Conversely, if $X \in \avoid^\psi(q)$, define $\tilde{\psi}$ by
\begin{equation*}
\tilde{\psi}(y) \simeq \begin{cases} \psi(x) & \text{if $y=ax+b$,} \\ \diverge & \text{otherwise.} \end{cases}
\end{equation*}
Because $\tilde{\psi}(ax+b) \simeq \psi(x)$, it follows that $\tilde{\psi}$ is linearly universal partial recursive. Similarly define $\tilde{X} \in \baire$ by
\begin{equation*}
\tilde{X}(y) \simeq \begin{cases} X(x) & \text{if $y=ax+b$,} \\ 0 & \text{otherwise.} \end{cases}
\end{equation*}
If $X \cap \psi = \emptyset$ and $X$ is $q$-bounded, then $\tilde{X} \cap \tilde{\psi} = \emptyset$ and $\tilde{X}$ is $p$-bounded. Thus, the recursive functional $X \mapsto \tilde{X}$ shows $\ldnr(p) \strongleq \ldnr(q)$. 

\item Let $\psi$ be a linearly universal partial recursive function and suppose $X \in \avoid_p^\psi$. Let $N \in \mathbb{N}$ be such that $X(x) = Y(x)$ for all $x \geq N$ and define $\tilde{\psi}$ by
\begin{equation*}
\tilde{\psi}(x) \simeq \begin{cases} \psi(x) & \text{if $x \geq N$,} \\ \diverge & \text{otherwise,} \end{cases}
\end{equation*}
so $Y \in \avoid^{\tilde{\psi}}(p)$. By \cref{finitely many changes preserves linear universality}, $\tilde{\psi}$ is linearly universal, so $Y \in \ldnr(p)$. 

\item Suppose $p(x) \leq q(x)$ for all $x \geq N$. Given $X \in \baire$, let $\tilde{X}$ be defined by 
\begin{equation*}
\tilde{X}(x) \coloneq \begin{cases} X(x) & \text{if $x \geq N$} \\ c & \text{otherwise,} \end{cases}
\end{equation*}
where $c$ is a rational number such that $1 < c < q(0)$. If $X$ is $p$-bounded, then $\tilde{X}$ is $q$-bounded. (b) above then shows that $\tilde{X} \in \ldnr(q)$. This process defines a total recursive functional $\Psi$, so $\ldnr(q) \strongleq \ldnr(p)$.

\item Let $\psi$ be a linearly universal partial recursive function and suppose $X \in \avoid^\psi(q)$. Let $a,b \in \mathbb{N}$ be such that $\psi(ax+b) \simeq \psi_0(x)$ for all $x \in \mathbb{N}$ and let $\tilde{X}$ be defined by $\tilde{X}(x) \coloneq X(ax+b)$. Then $\tilde{X} \cap \psi_0 = \emptyset$ and $\tilde{X}(x) = X(ax+b) < q(ax+b) \leq p(x)$ shows that $\tilde{X} \in \avoid^{\psi_0}(p)$. 
\end{enumerate}
\end{proof}

Some other general basic reductions we make use of are given below.

\begin{prop} \label{avoidance basic reductions}
Let $\psi \colonsub \mathbb{N} \to \mathbb{N}$ and $p \colon \mathbb{N} \to (1,\infty)$ be given. 
\begin{enumerate}[(a)]
\item Suppose $q \colon \mathbb{N} \to (1,\infty)$ is an order function dominating $p$. Then $\avoid^\psi(q) \strongleq \avoid^\psi(p)$.

\item Suppose $u \colon \mathbb{N} \to \mathbb{N}$ is recursive. Then $\avoid^{\psi \circ u}(p \circ u) \strongleq \avoid^\psi(p)$.
\end{enumerate}
\end{prop}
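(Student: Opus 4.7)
My plan is to prove (b) by a direct substitution and (a) by hardcoding a suitable initial segment into the reduction functional.

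For (b), I would take the recursive functional $\Phi(X) \coloneq X \circ u$. If $X \in \avoid^\psi(p)$, then $\Phi(X)(n) = X(u(n)) < p(u(n)) = (p \circ u)(n)$, and whenever $(\psi \circ u)(n) \converge$ we have $\psi(u(n)) \converge$, so $X(u(n)) \neq \psi(u(n)) = (\psi \circ u)(n)$ using $X \cap \psi = \emptyset$. Hence $\Phi(X) \in \avoid^{\psi \circ u}(p \circ u)$, witnessing $\avoid^{\psi \circ u}(p \circ u) \strongleq \avoid^\psi(p)$.

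For (a), fix $N$ with $p(n) \leq q(n)$ for all $n \geq N$. I plan to define $\Phi(X)(n) \coloneq \tau(n)$ for $n < N$ and $\Phi(X)(n) \coloneq X(n)$ for $n \geq N$, where $\tau \in \mathbb{N}^N$ is a fixed string with $\tau(i) < q(i)$ and (when $\psi(i)$ converges) $\tau(i) \neq \psi(i)$ for each $i < N$. The main obstacle is the existence of such $\tau$, and the key observation is that $q(i) > 1$ places both $0$ and $1$ below $q(i)$; since $\psi(i)$ can equal at most one of them, at least one of $\{0,1\}$ is always a valid choice. Hence every valid prefix extends one coordinate at a time, and induction on length produces a $\tau \in \mathbb{N}^N$ with the required properties.

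Crucially, one does not need to computably locate $\tau$: its mere existence allows it to be fixed in the definition of $\Phi$, which then remains a recursive functional (a finite table of outputs followed by the identity on the tail). The verification $\Phi(X) \in \avoid^\psi(q)$ for $X \in \avoid^\psi(p)$ is routine: the boundedness on the tail follows from $X(n) < p(n) \leq q(n)$ for $n \geq N$ and on the initial segment from the choice of $\tau$; the avoidance condition transfers from $X \cap \psi = \emptyset$ on the tail and from the choice of $\tau$ on the initial segment. This gives the desired strong reduction $\avoid^\psi(q) \strongleq \avoid^\psi(p)$.
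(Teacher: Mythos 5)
Your proof is correct and follows essentially the same route as the paper: part (b) is the substitution functional $X \mapsto X \circ u$, and part (a) prepends a fixed finite string $\tau$ avoiding $\psi$ on $[0,N)$ (the paper likewise takes $\tau \in \{0,1\}^N$, using exactly your observation that $q > 1$ leaves at least one of $0,1$ available at each coordinate). Your remark that $\tau$ need only exist, not be computed, is a fine extra detail but does not change the argument.
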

\begin{proof} \mbox{}
\begin{enumerate}[(a)]
\item Suppose $p(n) \leq q(n)$ for all $n \geq N$. Let $\tau \in \{0,1\}^N$ be any string such that $\tau(n) \nsimeq \psi(n)$ for all $n < N$. Then the recursive functional $X \mapsto \tau \concat \left(X \restrict [N,\infty) \right)$ gives a strong reduction from $\avoid^\psi(q)$ to $\avoid^\psi(p)$.

\item The recursive functional $X \mapsto X \circ u$ gives a strong reduction from $\avoid^\psi(p)$ to $\avoid^{\psi \circ u}(p \circ u)$.

\end{enumerate}
\end{proof}

As our interest lies in $\mathcal{E}_\weak$, we must show that $\weakdeg(\ldnr(p)) \in \mathcal{E}_\weak$ for any recursive $p$. 

\begin{lem} \label{effective enumeration of the linearly universal partial recursive functions}
There is an effective enumeration of the linearly universal partial recursive functions.
\end{lem}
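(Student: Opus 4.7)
The plan is to build an effective enumeration by combining an arbitrary partial recursive function with a fixed linearly universal one via a linear embedding, parameterized by the choice of arithmetic progression along which the embedding is performed. The key observation is that linear universality of $\chi$ is equivalent to $\psi_0$ being linearly embedded in $\chi$, where $\psi_0$ is any fixed linearly universal partial recursive function (one direction is the definition applied to $\theta = \psi_0$; the converse follows by composing the embedding with the linear universality of $\psi_0$, as in the proof of \cref{diagonal of linearly universal is linearly universal}). So each linearly universal partial recursive function is witnessed by some triple $\langle a, b\rangle$ together with its underlying index.

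Concretely, I would fix a linearly universal partial recursive $\psi_0$ (which exists, e.g., as the diagonal of a linear enumeration) and for each triple $\langle e, a, b\rangle \in \mathbb{N} \times \mathbb{N}_{\geq 1} \times \mathbb{N}$ (re-indexed by $\mathbb{N}$ via $\pi^{(3)}$ with a shift $a \mapsto a+1$) define
\begin{equation*}
\psi_{\langle e, a, b\rangle}(n) \simeq \begin{cases} \psi_0\!\left(\tfrac{n-b}{a}\right) & \text{if $n \geq b$ and $a \mid (n-b)$,} \\ \varphi_e(n) & \text{otherwise.} \end{cases}
\end{equation*}
The case analysis is decidable from $\langle e, a, b, n\rangle$, and the right-hand sides come from partial recursive functions, so the two-place partial function $\langle \langle e,a,b\rangle, n\rangle \mapsto \psi_{\langle e,a,b\rangle}(n)$ is partial recursive, giving effectiveness.

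Next I would verify the two substantive properties. First, each $\psi_{\langle e,a,b\rangle}$ is linearly universal: since $\psi_{\langle e,a,b\rangle}(am+b) \simeq \psi_0(m)$ for all $m$, the argument of \cref{diagonal of linearly universal is linearly universal} shows that for any partial recursive $\theta$, if $\psi_0(cm+d) \simeq \theta(m)$ then $\psi_{\langle e,a,b\rangle}(ac \cdot m + (ad+b)) \simeq \theta(m)$. Second, every linearly universal partial recursive $\chi$ appears in the enumeration: pick any $e$ with $\varphi_e = \chi$, and apply the linear universality of $\chi$ to $\theta = \psi_0$ to get $a,b$ with $\chi(am+b) \simeq \psi_0(m)$ for all $m$ (here $a \geq 1$ because $\psi_0$ is not constant, being itself linearly universal and hence having infinite range); then $\psi_{\langle e,a,b\rangle}$ agrees with $\chi$ on the arithmetic progression $\{am+b\}$ by construction, and with $\varphi_e = \chi$ off of it, so $\psi_{\langle e,a,b\rangle} = \chi$. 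Surjectivity onto the set of linearly universal partial recursive functions follows.

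I expect the only mild obstacle to be the bookkeeping around the indexing set $\mathbb{N}_{\geq 1}$ for $a$ (handled by the shift above) and the verification that $a \geq 1$ may always be achieved in the surjectivity step, which is why I single out the non-constancy of $\psi_0$ as the crucial auxiliary remark; the rest is a direct unfolding of definitions using the previously established characterization of linear universality in terms of a linear embedding of $\psi_0$.
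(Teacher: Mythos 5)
Your proposal is correct and takes essentially the same route as the paper: fix a linearly universal $\psi_0$, patch it onto the arithmetic progression $\{am+b\}$ over an arbitrary $\varphi_e$, and use the characterization that linear universality is equivalent to linearly embedding $\psi_0$. Your version is slightly more careful than the paper's (explicitly excluding $a=0$ and verifying that each function in the enumeration is indeed linearly universal), but the construction and the two verification steps are the same.
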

\begin{proof}
Let $\varphi_\bullet$ be an admissible enumeration of the partial recursive functions, let $\psi_0$ be a fixed linearly universal partial recursive function, and let $e_0$ be such that $\psi_0 = \varphi_{e_0}$.

The central observation we make is that for any partial recursive $\psi$, $\psi$ is linearly universal if and only if there are $a,b \in \mathbb{N}$ such that $\psi(ax+b) \simeq \psi_0(x)$ for all $x \in \mathbb{N}$. With this in mind, we modify $\varphi_\bullet$ to produce an enumeration of the linearly universal partial recursive functions as follows: given $a,b,e \in \mathbb{N}$, define $\psi_{\pi^{(3)}(a,b,e)} \colonsub \mathbb{N} \to \mathbb{N}$ by
\begin{equation*}
\psi_{\pi^{(3)}(a,b,e)}(x) \simeq \begin{cases} \psi_0(y) & \text{if $ay+b = x$,} \\ \varphi_e(x) & \text{otherwise.} \end{cases}
\end{equation*}
Then $\psi$ is linearly universal if and only if there exist $a,b,e \in \mathbb{N}$ such that $\psi = \psi_{\pi^{(3)}(a,b,e)}$, so $\psi_\bullet$ gives an effective enumeration of the linearly universal partial recursive functions.
\end{proof}

\begin{prop} \label{ldnr_p is Sigma02}
Suppose $p$ is a recursive function. Then $\ldnr(p)$ is $\Sigma^0_2$. Consequently, $\weakdeg(\ldnr(p)) \in \mathcal{E}_\weak$.
\end{prop}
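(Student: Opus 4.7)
The plan is to combine the effective enumeration of linearly universal partial recursive functions from \cref{effective enumeration of the linearly universal partial recursive functions} with the Embedding Lemma.

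First I would fix an effective enumeration $\psi_0,\psi_1,\psi_2,\ldots$ of the linearly universal partial recursive functions as produced in \cref{effective enumeration of the linearly universal partial recursive functions}, so that $\ldnr(p) = \bigcup_{e \in \mathbb{N}} \avoid^{\psi_e}(p)$ by definition. The key observation is that, since the sequence is effective, the approximation $\langle e,n,s\rangle \mapsto \psi_{e,s}(n)$ is partial recursive with a recursive convergence predicate, so for each $e$ the membership condition
\begin{equation*}
X \in \avoid^{\psi_e}(p) \iff \forall n \qspace \bigl( X(n) < p(n) \wedge \forall s \qspace (\psi_{e,s}(n)\converge \to X(n) \neq \psi_{e,s}(n)) \bigr)
\end{equation*}
is $\Pi^0_1$ in $X$ uniformly in $e$. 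Hence $\langle \avoid^{\psi_e}(p) \rangle_{e \in \mathbb{N}}$ is a uniformly $\Pi^0_1$ sequence of subsets of $\baire$, and the countable union $\ldnr(p)$ is $\Sigma^0_2$.

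For the conclusion that $\weakdeg(\ldnr(p)) \in \mathcal{E}_\weak$, I would proceed on the assumption that $\ldnr(p) \neq \emptyset$ (otherwise the claim is vacuous/nonsensical for $\mathcal{E}_\weak$). Pick any $e$ with $\avoid^{\psi_e}(p) \neq \emptyset$. Since $\avoid^{\psi_e}(p) \subseteq p^\mathbb{N}$, this set is a nonempty recursively bounded $\Pi^0_1$ subset of $\baire$, so \cref{recursively bounded pi01 class recursively homeomorphic to pi01 class in cantor space} produces a nonempty $\Pi^0_1$ subset $P \subseteq \cantor$ that is recursively homeomorphic (and in particular strongly equivalent) to $\avoid^{\psi_e}(p)$.

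Finally, because $\avoid^{\psi_e}(p) \subseteq \ldnr(p)$ we have $\ldnr(p) \strongleq \avoid^{\psi_e}(p) \strongeq P$, so in particular $\weakdeg(\ldnr(p)) \leq \weakdeg(P)$. Using \cref{properties of weak and strong reducibility}(d), this yields $P \cup \ldnr(p) \weakeq \ldnr(p)$. Since $\ldnr(p)$ is $\Sigma^0_2 \subseteq \Sigma^0_3$ and $P$ is a nonempty $\Pi^0_1$ subset of $\cantor$, the \nameref{embedding lemma} (\cref{embedding lemma}) provides a nonempty $\Pi^0_1$ subset $Q \subseteq \cantor$ with $Q \weakeq P \cup \ldnr(p) \weakeq \ldnr(p)$, so $\weakdeg(\ldnr(p)) = \weakdeg(Q) \in \mathcal{E}_\weak$. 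The only step requiring more than bookkeeping is the invocation of the Embedding Lemma, and even that is almost immediate once the $\Sigma^0_2$ bound and the recursively bounded witness $P$ are in hand.
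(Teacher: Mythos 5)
Your proof is correct and follows essentially the same route as the paper: use the effective enumeration of linearly universal partial recursive functions to express $\ldnr(p)$ as a union of uniformly $\Pi^0_1$ classes (hence $\Sigma^0_2$), then invoke the Embedding Lemma. The only difference is that you spell out the nonempty r.b.\ $\Pi^0_1$ witness $\avoid^{\psi_e}(p)$ and its image in $\cantor$ needed to apply the Embedding Lemma, a step the paper leaves implicit.
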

\begin{proof}
By \cref{effective enumeration of the linearly universal partial recursive functions}, there exists an effective enumeration $\psi_\bullet$ of the linearly universal partial recursive functions. Let $\varphi_\bullet$ be any admissible enumeration; the Parametrization Theorem implies there is a total recursive function $f \colon \mathbb{N} \to \mathbb{N}$ such that $\varphi_{f(e)} = \psi_e$ for all $e \in \mathbb{N}$. Then
\begin{equation*}
X \in \ldnr(p) \equiv \exists e \forall n \forall s \forall m \qspace (\varphi_{f(e),s}(n) \converge = m \to m \neq X(n)) \wedge \forall n (X(n) < p(n))
\end{equation*}
shows that $\ldnr(p)$ is $\Sigma^0_2$. The \nameref{embedding lemma} then implies $\weakdeg(\ldnr(p)) \in \mathcal{E}_\weak$.
\end{proof}

\section{Fast \& Slow-Growing Order Functions}
\label{fast and slow-growing order functions section}

An important dividing line concerning how the growth rate of $p$ determines where in $\mathcal{E}_\weak$ the weak degree of $\ldnr(p)$ falls is whether the series $\sum_{n=0}^\infty{p(n)^{-1}}$ converges or not. 

\begin{definition}[fast \& slow-growing order functions]
Suppose $p \colon \mathbb{N} \to (0,\infty)$ is nondecreasing and computable. $p$ is \textdef{fast-growing} if $\sum_{n=0}^\infty{p(n)^{-1}} < \infty$ and \textdef{slow-growing} otherwise.
\end{definition}

\subsection{Bounding Sequences of Fast \& Slow-Growing Order Functions}

Given a recursive sequence of slow-growing order functions, we can effectively find a slow-growing lower bound (in the sense of $\domleq$). Under some additional conditions on the sequence, we can also effectively find a slow-growing upper bound.

\begin{prop} \label{lower bound of sequence of divergent series}
Suppose $\langle p_k\rangle_{k\in\mathbb{N}}$ is a recursive sequence of slow-growing order functions. 
\begin{enumerate}[(a)]
\item There is a slow-growing order function $q^-$ such that $q^- \domleq p_k$ for all $k \in \mathbb{N}$.
\item Suppose, additionally, that $p_k \domleq p_{k+1}$ for all $k \in \mathbb{N}$. Then there is a slow-growing order funciton $q^+$ such that $p_k \domleq q^+$ for all $k \in \mathbb{N}$.
\end{enumerate}
\end{prop}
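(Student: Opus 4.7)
For part (a), my plan is to define $q^-$ as a computable monotone integer surrogate for the pointwise minimum of the $p_k$'s. Concretely, I would set
\[
q^-(n) \;=\; 1 + \max\bigl\{ k \leq n : p_j(n) \geq k+1 \text{ for all } j \leq k\bigr\},
\]
with the convention that the max is $0$ (so $q^-(n) = 1$) when no such $k$ witnesses the condition. This function is clearly computable from the recursive sequence $\langle p_k\rangle_k$. Monotonicity of each $p_j$ ensures that any witness $k$ at stage $n$ remains a witness at stage $n+1$, forcing $q^-$ nondecreasing; unboundedness follows because, for any $K$, any sufficiently large $n$ with $\min_{j \leq K} p_j(n) \geq K+1$ satisfies $q^-(n) \geq K+1$, and such $n$ exist since each $p_j$ is unbounded. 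For each $k$, once $q^-(n)-1 \geq k$ (which is cofinite), the defining condition delivers $p_k(n) \geq q^-(n)$, whence $q^- \domleq p_k$. Specializing to $k=0$ yields $q^-(n) \leq p_0(n)$ cofinitely, so $\sum_n q^-(n)^{-1} \geq \sum_n p_0(n)^{-1} = \infty$ and $q^-$ is slow-growing.

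For part (b), the monotonicity hypothesis $p_k \domleq p_{k+1}$ invites a block construction. I would recursively define a computable sequence $0 = a_0 < a_1 < a_2 < \cdots$ by letting $a_{k+1}$ be the least $n > a_k$ for which
\[
\sum_{m=a_k}^{n-1} \frac{1}{\max_{j \leq k} p_j(m)} \;\geq\; 1,
\]
and set $q^+(n) = \max_{j \leq k} p_j(n)$ for $n \in [a_k, a_{k+1})$. Such $a_{k+1}$ exist because transitivity of $\domleq$ gives $p_j \domleq p_k$ for $j \leq k$, so $\max_{j \leq k} p_j(m) = p_k(m)$ cofinitely and hence $\sum_m (\max_{j \leq k} p_j(m))^{-1}$ diverges. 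Within each block $q^+$ is a max of nondecreasing functions and hence nondecreasing, and at a block boundary the index set strictly grows, so $q^+(a_{k+1}-1) \leq \max_{j \leq k+1} p_j(a_{k+1}) = q^+(a_{k+1})$; unboundedness follows from $q^+ \geq p_0$ beyond $a_0$. The identity $q^+(n) \geq p_j(n)$ for $n \geq a_j$ immediately gives $p_j \domleq q^+$, and the block design forces $\sum_n q^+(n)^{-1} \geq \sum_k 1 = \infty$, so $q^+$ is slow-growing.

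The main obstacle I anticipate is the interplay between effectiveness and the non-effective hypothesis $p_k \domleq p_{k+1}$: one never has a computable handle on exactly when the individual $\domleq$-inequalities begin to hold. Happily, this is never needed in (a), where $\domleq$ requires only cofinite agreement and the definition of $q^-$ produces exactly such cofinite agreement for each $k$ automatically. In (b) the hypothesis is invoked only in the abstract existence argument for $a_{k+1}$, while the search for $a_{k+1}$ itself is a purely computable comparison of finite partial sums of rationals. The tempting alternative $q^+(n) = \max_{k \leq n} p_k(n)$ need not be slow-growing even under the monotonicity hypothesis, so the slowly growing block index $k$ on $[a_k, a_{k+1})$ is the essential device keeping the relevant maxima in check.
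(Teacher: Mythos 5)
Your proof is correct and follows essentially the same strategy as the paper's: for (a), a diagonal minimum over an initial segment of the $p_k$'s whose length is allowed to grow only as fast as the functions themselves permit, and for (b), a block construction in which the $k$-th block uses the first $k+1$ functions and is prolonged until it contributes at least $1$ to the reciprocal sum. The only real divergence is in (b), where the paper puts $p_k$ alone on the $k$-th block and must therefore build an extra boundary condition ($p_k(N_{k+1}-1) \leq p_{k+1}(N_{k+1})$) into the choice of block endpoints to keep $q^+$ nondecreasing, whereas your use of $\max_{j\leq k} p_j$ makes monotonicity across block boundaries automatic.
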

\begin{proof} \mbox{}
\begin{enumerate}[(a)]
\item We simultaneously define the values $q^-(n)$ and natural numbers $M_n$ by recursion. We start by setting $q^-(0) \coloneq p_0(0)$ and $M_0 \coloneq 0$.

Given $q^-(0), q^-(1), \ldots, q^-(n)$ and $M_0, M_1, \ldots, M_n$ have been defined, let $M_{n+1}$ equal $M_n+1$ if $M_n+1 \leq \min_{0\leq k \leq M_n+1}{p_k(n+1)}$ and otherwise equal to $M_n$. Then define
\begin{equation*}
q^-(n+1) \coloneq \min\{p_0(n+1), p_1(n+1), \ldots, p_{M_{n+1}}(n+1), M_{n+1}+1\}.
\end{equation*}

We now claim that $q^-$ is a slow-growing order function dominated by each $p_k$.
\begin{description}
\item[Nondecreasing.] Given $n \in \mathbb{N}$, 
\begin{align*}
q^-(n) & = \min\{p_0(n), p_1(n), \ldots, p_{M_n}(n), M_n+1\} \\
& \leq \min\{p_0(n+1), p_1(n+1), \ldots, p_{M_{n+1}}(n+1), M_n+1\} \\
& \leq \min\{p_0(n+1), p_1(n+1), \ldots, p_{M_{n+1}}(n+1), M_{n+1}+1\} \\
& = q^-(n+1)
\end{align*}
as $p_k$ is nondecreasing for each $k$. 

\item[Unbounded.] It suffices to show that $\lim_{n \to \infty}{M_n} = \infty$. Suppose for the sake of a contradiction that $\lim_{n \to \infty}{M_n} < \infty$, so that $M_n$ is eventually constant, say to $M$. For $M_n$ to be eventually constant, it must be the case that $M > \min_{0 \leq k \leq M}{p_k(n+1)}$ for all $n \in \mathbb{N}$. But $p_0$ is unbounded, so $\min_{0 \leq k \leq M}{p_k(n+1)}$ is unbounded as a function of $n$, yielding a contradiction.

\item[Dominated by $p_k$.] Given $k$, there exists $n \in \mathbb{N}$ such that $M_n \geq k$. Then $q^-(n) \leq p_k(n)$ for all $n \geq M_n$. 

\item[Slow-Growing.] As $\sum_{n=0}^\infty{p_0(n)^{-1}} = \infty$ and $q^-$ is dominated by $p_0$, it follows by Direct Comparison that $\sum_{n=0}^\infty{q^-(n)^{-1}} = \infty$.

\item[Recursive.] The uniform recursiveness of the $p_k$'s implies that the simultaneous construction of $q^-$ and the sequence $\langle M_n \rangle_{n\in\mathbb{N}}$ is recursive.

\end{description}

\item We start by recursively defining natural numbers $N_m \in \mathbb{N}$. Let $N_0 = 0$, and given $N_m$ has been defined, define $N_{m+1}$ to be the least natural number greater than $N_m$ such that 
\begin{equation*}
\sum_{n=N_m}^{N_{m+1}-1}{p_m(n)^{-1}} \geq 1
\end{equation*}
and for which $p_m(N_{m+1}-1) \leq p_{m+1}(N_{m+1})$. This is possible because $p_m$ is slow-growing and $p_m \domleq p_{m+1}$.
We define $q^+\colon \mathbb{N} \to (0,\infty)$ as follows: given $n \in \mathbb{N}$, let $m$ be the unique natural number for which $N_m \leq n < N_{m+1}$, and define 
\begin{equation*}
q^+(n) \coloneq p_m(n).
\end{equation*}
We claim that $q^+$ is a slow-growing order function dominating each $p_k$.

\begin{description}
\item[Nondecreasing.] By definition, $q^+$ is nondecreasing on the interval $N_m \leq n < N_{m+1}$ since it agrees with the nondecreasing function $p_m$ on that interval. Thus, to show that $q^+$ is nondecreasing, it suffices to show that $q^+(N_{m+1}-1) \leq q^+(N_{m+1})$ for each $m \in \mathbb{N}$. But by the definition of $N_{m+1}$ and $q^+$, we have $q^+(N_{m+1}-1) = p_m(N_{m+1}-1) \leq p_{m+1}(N_{m+1}) = q^+(N_{m+1})$.

\item[Unbounded.] By definition, $p_0(n) \leq q^+(n)$ for all $n \in \mathbb{N}$. Since $p_0$ is unbounded, it follows that $q^+$ is unbounded.

\item[Slow-Growing.] For each $m \in \mathbb{N}$, by the definition of $\langle N_m\rangle_{m \in \mathbb{N}}$ and $q^+$ we have
\begin{equation*}
\sum_{n=0}^{N_{m}-1}{q^+(n)^{-1}} = \sum_{n=0}^{N_1-1}{p_0(n)^{-1}} + \sum_{n=N_1}^{N_2-1}{p_1(n)^{-1}} + \cdots + \sum_{n=N_{m-1}}^{N_{m}-1}{p_m(n)^{-1}} \geq m.
\end{equation*}
Thus, $\sum_{n=0}^\infty{q^+(n)^{-1}} = \lim_{m \to \infty}{\sum_{n=0}^{N_{m}-1}{q^+(N)^{-1}}} = \lim_{m \to \infty}{m} = \infty$, so $q^+$ is slow-growing.

\item[Dominates $p_k$.] By the definition of $\langle N_m\rangle_{m \in \mathbb{N}}$ and $q^+$, for each $k \in \mathbb{N}$ we have $q^+(n) \geq p_k(n)$ for all $n \geq N_k$, so $q^+ \domgeq p_k$.

\item[Recursive.] The uniform recursiveness of the $p_k$'s implies that the sequence $\langle N_m\rangle_{m \in \mathbb{N}}$ is recursive, and subsequently that the function $q^+$ is recursive.
\end{description}

\end{enumerate}
\end{proof}

Without the additional hypotheses in \cref{lower bound of sequence of divergent series}(b), an upper bound may not exist, however:

\begin{example} \label{max of slow-growing example}
We simultaneously define two slow-growing order functions $p_1,p_2 \colon \mathbb{N} \to (0,\infty)$ and a strictly increasing sequence $\langle N_m \rangle_{m \in \mathbb{N}}$. The role of $\langle N_m \rangle_{m \in \mathbb{N}}$ will be that the behaviors of $p_1$ or $p_2$ will be consistent between $N_m$ and $N_{m+1}-1$, with those behaviors switching upon incrementing $m$. We start by defining $p_1(0) = p_2(0) \coloneq 1$, $N_0 \coloneq 0$, and $N_1 \coloneq 1$. Suppose $N_m$ has been defined and that $p_1(n)$ and $p_2(n)$ have been defined for all $n < N_m$. We split into two cases, depending on whether $m$ is even or not.
\begin{description}
\item[Case 1: $m$ even.] Let $N_{m+1}$ be the least natural number greater than $N_m$ such that $\sum_{n=0}^{N_m-1}{p_1(n)^{-1}} + (N_{m+1}-N_m) \cdot p_1(N_m-1)^{-1} \geq m+1$, then define
\begin{align*}
p_1(n) & \coloneq p_1(N_m-1), \\
p_2(n) & \coloneq n^2,
\end{align*}
for $N_m \leq n < N_{m+1}$.

\item[Case 2: $m$ odd.] Identical to the case where $m$ is even, but with $p_1$ and $p_2$ switched.

\end{description}

In other words, we continually switch between being constant and being equal to the square function, with $p_1$ and $p_2$ having the opposite behavior of the other.
By construction, both $p_1$ and $p_2$ are slow-growing order functions, but $\max\{p_1(n),p_2(n)\} = n^2$ for every $n \in \mathbb{N}_{>0}$, so there is no slow-growing order function $q \colon \mathbb{N} \to (0,\infty)$ which dominates both $p_1$ and $p_2$ since $\sum_{n=1}^\infty{\frac{1}{n^2}} < \infty$.
\end{example}

For the fast-growing case, an upper bound always exists, but the existence of a lower bound requires an additional hypothesis. Unlike in the slow-growing case, this additional hypothesis is not only on the form of the sequence $\langle p_k \rangle_{k \in \mathbb{N}}$, but on the constituent $p_k$'s themselves.

\begin{lem} \label{sum of recursive series is left re}
Suppose $p \colon \mathbb{N} \to (0,\infty)$ is a fast-growing order function. Then $\sum_{n=0}^\infty{p(n)^{-1}}$ is a left r.e.\ real.
\end{lem}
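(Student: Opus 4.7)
The plan is to apply \cref{equivalent characterizations of recursive reals}(b), using the partial sums of the series as the required approximating sequence.

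First, I would establish that the function $n \mapsto p(n)^{-1}$ is computable as a function $\mathbb{N} \to \mathbb{R}$. Since $p$ is computable with image in $(0,\infty)$ and the reciprocal function $x \mapsto x^{-1}$ is a computable real-valued function on $(0,\infty)$ (by the basic facts listed for computable real-valued functions), their composition is computable. Consequently, for any $N \in \mathbb{N}$, the partial sum $S_N \coloneq \sum_{n=0}^{N-1} p(n)^{-1}$ is a computable real (as a finite sum of computable reals), and the sequence $\langle S_N \rangle_{N \in \mathbb{N}}$ is uniformly computable, since from $N$ we can effectively produce rational approximations of $S_N$ to any prescribed precision $2^{-k}$ (e.g., by approximating each $p(n)^{-1}$ to within $2^{-k}/N$ and summing).

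Next, I would observe that since $p(n) > 0$, the sequence $\langle S_N \rangle_{N \in \mathbb{N}}$ is monotonically nondecreasing, and since $p$ is fast-growing, $S_N$ converges to $S \coloneq \sum_{n=0}^\infty p(n)^{-1}$ as $N \to \infty$. In particular, $S_N \leq S$ for all $N$.

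Thus $\langle S_N \rangle_{N \in \mathbb{N}}$ is a sequence of uniformly recursive reals with $S_N \leq S$ converging to $S$, and \cref{equivalent characterizations of recursive reals}(b) yields that $S$ is left r.e. There is no genuine obstacle here; the only mild subtlety is ensuring the computability of $n \mapsto p(n)^{-1}$ from the assumed computability of $p$, which is handled by invoking the standard closure properties of computable real-valued functions.
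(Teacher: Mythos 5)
Your proposal is correct and is essentially the paper's proof: the paper's one-line argument likewise observes that the partial sums $\bigl\langle \sum_{n=0}^k{p(n)^{-1}} \bigr\rangle_{k\in\mathbb{N}}$ form a sequence of uniformly recursive reals converging monotonically to the sum from below, then invokes \cref{equivalent characterizations of recursive reals}. You have merely spelled out the uniform computability of the partial sums in more detail.
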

\begin{proof}
$\left\langle \sum_{n=0}^k{p(n)^{-1}} \right\rangle_{k\in\mathbb{N}}$ is a sequence of uniformly recursive reals converging monotonically to $\sum_{n=0}^\infty{p(n)^{-1}}$ from below.
\end{proof}

\begin{prop} \label{lower and upper bounds of sequence of fast-growing functions}
Suppose $\langle p_k\rangle_{k\in\mathbb{N}}$ is a recursive sequence of fast-growing order functions $p_k\colon\mathbb{N} \to (0,\infty)$. 
\begin{enumerate}[(a)]
\item There is a fast-growing order funciton $q^+$ such that $p_k \domleq q^+$ for all $k \in \mathbb{N}$.
\item Suppose, additionally, that $\langle \sum_{n=0}^\infty{p_k(n)^{-1}} \rangle_{k \in \mathbb{N}}$ is a sequence of uniformly recursive reals. Then there is a fast-growing order function $q^-$ such that $q^- \domleq p_k$ for all $k \in \mathbb{N}$ and for which $\sum_{n=0}^\infty{q^-(n)^{-1}}$ is a recursive real.
\end{enumerate}
\end{prop}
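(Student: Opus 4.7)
First I would handle part (a), which is straightforward: define $q^{+}(n) \coloneq \max\{2^n, p_0(n), p_1(n), \ldots, p_n(n)\}$ for each $n \in \mathbb{N}$. This is computable uniformly from $\langle p_k\rangle_{k \in \mathbb{N}}$, nondecreasing (since each $p_k$ is nondecreasing and the max is taken over more terms as $n$ grows), and unbounded (because $q^{+}(n) \geq 2^n$), hence an order function. Fast-growth is immediate, as $\sum_n q^{+}(n)^{-1} \leq \sum_n 2^{-n} < \infty$, and for each fixed $k$ we have $q^{+}(n) \geq p_k(n)$ whenever $n \geq k$, so $p_k \domleq q^{+}$ as required.

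For part (b) I would build $q^{-}$ by a stage construction. The idea is that on a stage-$k$ interval $[N_k, N_{k+1})$, $q^{-}$ should track the pointwise minimum $\min_{j \leq k} p_j$, so that the domination requirement $q^{-} \leq p_k$ eventually holds for each $k$. The construction of the stage markers $N_k$ relies crucially on the hypothesis that $S_k \coloneq \sum_n p_k(n)^{-1}$ is uniformly recursive in $k$: given a rational $\epsilon > 0$ and $k$, one can effectively find an $N$ with $\sum_{n \geq N} p_k(n)^{-1} < \epsilon$ by approximating $S_k$ and the partial sums to within $\epsilon/4$. Set $N_0 \coloneq 0$ and, inductively, let $N_{k+1}$ be the least $N > N_k$ such that (i) $\sum_{n \geq N} p_j(n)^{-1} < 2^{-(k+1)-j-3}$ for all $j \leq k+1$ and (ii) $\min_{j \leq k} p_j(N-1) \geq k+1$. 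Both conditions are effectively achievable because the tail sums vanish and each $p_j$ is unbounded, so $\langle N_k\rangle_{k \in \mathbb{N}}$ is recursive.

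With these markers in hand, define $q_0(n) \coloneq \min_{j \leq k} p_j(n)$ for $n \in [N_k, N_{k+1})$, and then set $q^{-}(n) \coloneq \max_{m \leq n} q_0(m)$ to enforce monotonicity. Then $q^{-}$ is nondecreasing by construction, and unboundedness follows from clause (ii), since $q^{-}(N_{k+1}-1) \geq q_0(N_{k+1}-1) \geq k+1$. Using $1/\min(a_0,\ldots,a_k) \leq \sum_{j=0}^k 1/a_j$ together with clause (i),
\begin{equation*}
\sum_n q^{-}(n)^{-1} \leq \sum_n q_0(n)^{-1} \leq \sum_k \sum_{j \leq k} 2^{-k-j-3} < \infty,
\end{equation*}
so $q^{-}$ is fast-growing. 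For domination by a fixed $p_k$: once $n \geq N_k$, $q_0(n) \leq p_k(n)$; meanwhile the contribution to the running max from indices $m < N_k$ is bounded by the constant $M_k \coloneq \max_{m < N_k} q_0(m)$, so $q^{-}(n) \leq \max(p_k(n), M_k) = p_k(n)$ whenever $p_k(n) \geq M_k$, which happens eventually because $p_k$ is unbounded. Recursiveness of the total sum follows from the effective tail bound $\sum_{n \geq N_k} q^{-}(n)^{-1} \leq \sum_{m \geq k} 2^{-m-2} \leq 2^{-k-1}$, since the recursive partial sum up to $N_k - 1$ then approximates $\sum_n q^{-}(n)^{-1}$ to within $2^{-k-1}$.

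The main obstacle is keeping all requirements on $q^{-}$ mutually compatible: the running-max trick resolves the tension between using pointwise minima (needed for the sum bounds and for domination) and monotonicity, while the two clauses on $N_k$ ensure simultaneously that stages contribute geometrically bounded portions of the sum and that the minima $\min_{j \leq k} p_j(N_{k+1}-1)$ do not stall below any bound. Passing from mere convergence of $\sum q^{-}(n)^{-1}$ to recursiveness is precisely where the uniform recursiveness hypothesis on $\langle S_k\rangle_{k \in \mathbb{N}}$ is essential: without it, clause (i) could only be achieved in a left-r.e.\ fashion, yielding left-r.e.\ rather than recursive tail bounds.
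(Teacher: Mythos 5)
Your proof is correct and follows essentially the same route as the paper's: part (a) is the paper's $q^+(n) = \max_{k \leq n} p_k(n)$ (the extra $2^n$ is an inessential convenience, since $q^+ \geq p_0$ already yields unboundedness and fast growth), and part (b) uses the paper's scheme of recursive cut points $N_k$ chosen by effective tail bounds, with the candidate function tracking $\min_{j \leq k} p_j$ on the $k$-th block and the geometric block estimates giving fast growth, domination by each $p_k$, and recursiveness of the sum. The one genuine difference is your running maximum $q^-(n) = \max_{m \leq n} q_0(m)$. The paper instead claims that the blockwise minimum is itself nondecreasing, using the inequality $\min_{k \leq m+1} p_k(n+1) \geq \min_{k \leq m} p_k(n+1)$ at block boundaries, which points the wrong way: when a new $p_{m+1}$ that is still small enters the minimum, the raw blockwise minimum can drop. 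Your monotonization --- together with the check that the constant $M_k$ is eventually absorbed by the unbounded $p_k$, so domination is preserved --- handles exactly this point, so it is a worthwhile repair rather than a detour. Minor remarks: your clause (ii) is redundant, since clause (i) in the definition of $N_k$ already forces $\min_{j \leq k} p_j(n) > 2^{k+3}$ throughout the $k$-th block; your displayed bound on $\sum_n q_0(n)^{-1}$ omits the $k=0$ block, whose contribution is merely finite since $p_0$ is fast-growing (harmless for both finiteness and recursiveness); and, as in the paper, ``the least $N$'' satisfying a strict inequality between recursive reals is not literally decidable, so one should instead take the first $N$ found together with a rational certificate of the inequality, which changes nothing downstream.
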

\begin{proof} \mbox{}
\begin{enumerate}[(a)]
\item For $n \in \mathbb{N}$ we define
\begin{equation*}
q^+(n) \coloneq \max_{k \leq n}{p_k(n)}.
\end{equation*}
We claim that $q^+$ is a fast-growing order function dominating each $p_k$.

\begin{description}
\item[Nondecreasing.] For all $n \in \mathbb{N}$, that each $p_k$ is nondecreasing implies 
\begin{equation*}
q^+(n) = \max_{k \leq n}{p_k(n)} \leq \max_{k \leq n}{p_k(n+1)} \leq \max_{k \leq n+1}{p_k(n+1)} = q^+(n+1).
\end{equation*}

\item[Unbounded.] By construction, $q^+(n) \geq p_0(n)$ for all $n \in \mathbb{N}$. Because $p_0$ is unbounded, $q^+$ is as well.

\item[Fast-Growing.] By construction, $q^+(n) \geq p_0(n)$ for all $n$, so Direct Comparison shows $\sum_{n=0}^\infty{q^+(n)^{-1}} < \infty$ since $p_0$ is fast-growing.

\item[Dominates $p_k$.] Given $k \in \mathbb{N}$, for all $n \geq k$ we have $q^+(n) = \max_{m \leq n}{p_m(n)} \geq p_k(n)$. Thus, $p_k \domleq q^+$.

\item[Recursive.] The uniform recursiveness of the $p_k$'s immediately shows that $q^+$ is recursive.
\end{description}

\item We start by recursively defining natural numbesr $N_m \in \mathbb{N}$. Let $N_0 \coloneq 0$, and given $N_m$ has been defined, define $N_{m+1}$ to be the least natural number greater than $N_m$ such that
\begin{equation*}
\sum_{k=0}^{m+1}{\sum_{n=N_{m+1}}^\infty{\frac{1}{p_k(n)}}} \leq \frac{1}{2^{m+1}}
\end{equation*}
which exists since $p_k$ is fast-growing for each $k$. 

Now define $q^-$ as follows. Given $n \in \mathbb{N}$, let $m$ be the unique natural number for which $N_m \leq n < N_{m+1}$. Then define
\begin{equation*}
q^-(n) \coloneq \min_{k \leq m}{p_k(n)}.
\end{equation*}
We claim that $q^-$ is a nondecreasing, unbounded, fast-growing function which is dominated by each $p_k$. 

\begin{description}
\item[Nondecreasing.] For $n \in \mathbb{N}$, let $m$ be such that $N_m \leq n < N_{m+1}$. Then
\begin{equation*}
q^-(n+1) = \begin{cases} \min_{k \leq m}{p_k(n+1)} & \text{if $N_m \leq n+1 < N_{m+1}$,} \\ \min_{k \leq m+1}{p_k(n+1)} & \text{if $N_{m+1} \leq n+1$,} \end{cases} 
 \geq \min_{k \leq m}{p_k(n+1)} 
 \geq \min_{k \leq m}{p_k(n)} 
 = q^-(n).
\end{equation*}

\item[Unbounded.] Observe that if $N_m \leq n$, then $p_k(n) \geq m$ for all $k \leq m+1$. Thus, if $N_m \leq n < N_{m+1}$, we have
\begin{equation*}
q^-(n) = \min_{k \leq m}{p_k(n)} \geq m.
\end{equation*}
It follows that $q^-$ is unbounded.

\item[Fast-Growing.] By the definition of $N_m$ for $m \geq 1$,
\begin{equation*}
\sum_{k=0}^m{\sum_{n=N_m}^{N_{m+1}-1}{p_k(n)^{-1}}} \leq \sum_{k=0}^m{\sum_{n=N_m}^\infty{p_k(n)^{-1}}} \leq 2^{-m}.
\end{equation*}
Thus, 
\begin{align*}
\sum_{n=0}^\infty{q^-(n)^{-1}} 
& = \sum_{n=0}^{N_1-1}{q^-(n)^{-1}} + \sum_{m=1}^\infty{\sum_{n=N_m}^{N_{m+1}-1}{\max_{k \leq m}{p_k(n)^{-1}}}} \\
& \leq \sum_{n=0}^{N_1-1}{q^-(n)^{-1}} + \sum_{m=1}^\infty{\sum_{k=0}^m{\sum_{n=N_m}^{N_{m+1}-1}{p_k(n)^{-1}}}} \\
& \leq \sum_{n=0}^{N_1-1}{q^-(n)^{-1}} + \sum_{m=1}^\infty{2^{-m}} \\
& = \sum_{n=0}^{N_1-1}{q^-(n)^{-1}} + 1 \\
& < \infty.
\end{align*}

\item[Dominated by $p_k$.] By construction, for each $k$, $q^-(n) \leq p_k(n)$ for all $n > N_k$.

\end{description}

If the both the functions $p_k$ and the reals $\alpha_k \coloneq \sum_{n=0}^\infty{p_k(n)^{-1}}$ are uniformly recursive, then the function $m \mapsto N_m$ is recursive since $N_{m+1}$ is the least natural number greater than $N_m$ such that
\begin{equation*}
\sum_{k=0}^{m+1}{\left( \alpha_k - \sum_{n=N_m}^{N_{m+1}-1}{p_k(n)^{-1}}\right)} \leq 2^{-m}.
\end{equation*}
The recursiveness of the map $m \mapsto N_m$ then implies that $q^-$ is recursive.

Finally, we show that $\beta \coloneq \sum_{n=0}^\infty{q^-(n)^{-1}}$ is a recursive real. Define, for $i \geq 1$,
\begin{equation*}
\beta_i \coloneq \sum_{n=0}^{N_i-1}{q^-(n)^{-1}} + 2^{-(i-1)}.
\end{equation*}
We claim that $\langle \beta_i\rangle_{i\in\mathbb{N}_{\geq 1}}$ is a recursive sequence of uniformly recursive reals converging monotonically to $\beta$ from above. Since $\lim_{i \to \infty}{\sum_{n=0}^{N_i-1}{q^-(n)^{-1}}} = \beta$ and $\lim_{i \to \infty}{2^{-(i-1)}} = 0$, an argument analogous to the proof that $q^-$ was fast-growing shows $\lim_{i \to \infty}{\beta_i} = \beta$. Additionally, $\langle \beta_i\rangle_{i\in\mathbb{N}_{\geq 1}}$ is nonincreasing:
\begin{equation*}
\beta_{i+1} = \sum_{n=0}^{N_i-1}{q^-(n)^{-1}} + \sum_{n=N_i}^{N_{i+1}-1}{q^-(n)^{-1}} + 2^{-(i+1)} \leq \sum_{n=0}^{N_i-1}{q^-(n)^{-1}} + 2^{-(i+1)} + 2^{-(i+1)} = \sum_{n=0}^{N_i-1}{q^-(n)^{-1}} + 2^{-i} = \beta_i.
\end{equation*}
Thus, $\beta$ is right r.e.\ and hence recursive. 

\end{enumerate}
\end{proof}

\begin{cor} \label{fast-growing multiplicatively infinite lower bound}
Suppose $p\colon \mathbb{N} \to (0,\infty)$ is a fast-growing order function such that $\sum_{n=0}^\infty{p(n)^{-1}}$ is recursive. Then there exists a fast-growing order function $q$ such that $p(n)/q(n) \nearrow \infty$ as $n \to \infty$ and for which $\sum_{n=0}^\infty{q(n)^{-1}}$ is recursive.
\end{cor}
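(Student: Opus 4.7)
The plan is to derive this as a direct corollary of \cref{lower and upper bounds of sequence of fast-growing functions}(b) applied to the scaled family $p_k(n) \coloneq p(n)/(k+1)$ for $k \in \mathbb{N}$. First I would verify the hypotheses: each $p_k$ is a fast-growing order function (scaling by a positive constant preserves being computable, nondecreasing, unbounded, and positive-valued, and $\sum_n p_k(n)^{-1} = (k+1)\sum_n p(n)^{-1} < \infty$); the sequence $\langle p_k \rangle_{k\in\mathbb{N}}$ is recursive uniformly in $k$; and $\langle \sum_n p_k(n)^{-1}\rangle_{k\in\mathbb{N}}$ is a uniformly recursive sequence of reals because each term equals $(k+1)\alpha$, where $\alpha = \sum_n p(n)^{-1}$ is recursive by hypothesis.

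Next I would invoke \cref{lower and upper bounds of sequence of fast-growing functions}(b) to obtain a fast-growing order function $q$ with $q \domleq p_k$ for every $k \in \mathbb{N}$ and with $\sum_n q(n)^{-1}$ a recursive real. For the divergence claim, fix an arbitrary $M \in \mathbb{N}_{>0}$ and set $k = M-1$; then $q(n) \leq p_k(n) = p(n)/M$ for almost all $n$, which yields $p(n)/q(n) \geq M$ for almost all $n$. Since $M$ was arbitrary, $p(n)/q(n) \to \infty$.

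To upgrade convergence to the monotone form $\nearrow \infty$, I would inspect the construction used in the proof of \cref{lower and upper bounds of sequence of fast-growing functions}(b). For our choice $p_k(n) = p(n)/(k+1)$, the quantity $\min_{k \leq m}{p_k(n)}$ is achieved uniquely at $k = m$, so the constructed $q$ takes the explicit form $q(n) = p(n)/(m+1)$ on each interval $N_m \leq n < N_{m+1}$ of the recursive partition built there. Consequently $p(n)/q(n) = m+1$ on that interval, which is a nondecreasing step function of $n$ tending to infinity, as required.

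The only real obstacle is the monotonicity of $p/q$ rather than mere divergence, but as noted this is handled by unwinding the specific form of $q^{-}$ from the cited proposition; alternatively, one could replace $q$ by $\tilde{q}(n) \coloneq p(n)/\lfloor p(n)/q(n)\rfloor$ (truncated to stay bounded below by $q$) and argue directly, but using the already-constructed step-function form is cleaner and avoids any fresh verification of recursiveness of $\sum_n \tilde{q}(n)^{-1}$.
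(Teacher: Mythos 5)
Your proposal is correct and follows essentially the same route as the paper: the paper applies \cref{lower and upper bounds of sequence of fast-growing functions}(b) to the scaled family $p_k(n) \coloneq p(n)/2^k$ (you use $p(n)/(k+1)$, an inessential difference) and likewise obtains the monotone divergence of $p/q$ by inspecting the explicit step-function form of $q^-$ from that proposition's proof. Your unwinding of $\min_{k\le m} p_k(n) = p(n)/(m+1)$ on each interval $[N_m, N_{m+1})$ is exactly the verification the paper leaves implicit.
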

\begin{proof}
Let $\alpha \coloneq \sum_{n=0}^\infty{p(n)^{-1}}$ and let $p_m$ denote the function defined by $p_k(n) \coloneq p(n)/2^k$ for $k,n \in \mathbb{N}$. Note that $\sum_{n=0}^\infty{p_k(n)^{-1}} = 2^k\alpha$ is recursive, and the sequences $\langle p_k\rangle_{k \in \mathbb{N}}$ and $\langle 2^k\alpha \rangle_{k \in \mathbb{N}}$ are uniformly recursive. 

By \cref{lower and upper bounds of sequence of fast-growing functions}(b), there exists a fast-growing order function $q$ such that $q \domleq p_k$ for all $k \in \mathbb{N}$ and where $\sum_{n=0}^\infty{q(n)^{-1}}$ is a recursive real. Moreover, the proof of \cref{lower and upper bounds of sequence of fast-growing functions}(b) shows that there is such a $q$ for which $p(n)/q(n) \nearrow \infty$ as $n \to \infty$.
\end{proof}

\subsection{More about Recursive Sums}

The extra hypothesis that $\sum_{n=0}^\infty{p(n)^{-1}}$ be not only finite but additionally recursive does not hold for all fast-growing order functions $p$, so it is a strictly stronger hypothesis than just being fast-growing:

\begin{example}
Let $\alpha$ be any real in $(0,1)$ which is left r.e.\ but not recursive (e.g., $\alpha = \sum_{\varphi_e(0) \converge}{2^{-e}}$) and let $\langle \alpha_k \rangle_{k \in \mathbb{N}}$ be a recursive sequence of rational numbers converging monotonically to $\alpha$ from below. Without loss of generality, we may assume $\langle \alpha_k \rangle_{k \in \mathbb{N}}$ is strictly increasing. We simultaneously define a strictly increasing recursive function $k \mapsto N_k$ and an order function $p \colon \mathbb{N} \to \mathbb{N}$ with the following additional properties:
\begin{enumerate}[(i)]
\item $N_0 = 0$.
\item For all $k \in \mathbb{N}$, $p$ is constant on $\{N_k,N_k+1,\ldots,N_{k+1}-1\}$.
\item For all $k \in \mathbb{N}$, $\alpha_k \leq \sum_{n < N_{k+1}}{p(n)^{-1}} < \alpha_{k+1}$.
\end{enumerate}

Define $N_0 \coloneq 0$, let $m_0$ be the least positive integer such that $\frac{1}{m_0} < \alpha_1-\alpha_0$, and let $N_1$ be the least natural number for which $\alpha_0 \leq \frac{N_1}{m_0}$. The minimality of $N_1$ and $m$ imply $\frac{N_1}{m_0} < \alpha_1$.

Now suppose $N_{k+1}$ and $p(n)$ for $n < N_{k+1}$ have been defined such that $\alpha_k \leq \sum_{n < N_{k+1}}{p(n)^{-1}} < \alpha_{k+1}$. Let $m_k$ be the least natural number greater than $p(N_{k+1}-1)$ such that $\frac{1}{m_k} < \alpha_{k+2}-\alpha_{k+1}$, define $N_{k+2}$ to be the least natural number for which $\alpha_{k+1} \leq \sum_{n < N_{k+1}}{p(n)^{-1}} + (N_{k+2}-N_{k+1})\cdot \frac{1}{m_k}$, and finally let $p(n) = m_k$ for all $n \in \{N_{k+1},N_{k+1}+1,\ldots,N_{k+2}-1\}$. By definition, $\alpha_{k+1} \leq \sum_{n < N_{k+2}}{p(n)^{-1}}$. To see that $\sum_{n < N_{k+2}}{p(n)^{-1}} < \alpha_{k+2}$, observe that the minimality of $N_{k+2}$ would implies $\sum_{n < N_{k+2}-1}{p(n)^{-1}} < \alpha_{k+1}$, so if $\sum_{n < N_{k+2}}{p(n)^{-1}} \geq \alpha_{k+2}$ we would have $m^{-1} \geq \alpha_{k+2}-\alpha_{k+1}$, contradicting the choice of $m_k$. 

By construction, $p$ is an order function for which $\alpha_k \leq \sum_{n=0}^{N_{k+1}-1}{p(n)^{-1}} < \alpha$ for all $k \in \mathbb{N}$, so $\sum_{n=0}^\infty{p(n)^{-1}} = \alpha$ is left r.e.\ but not recursive.
\end{example}

We cover two results that assist in finding examples of fast-growing order functions $p$ for which $\sum_{n=0}^\infty{p(n)^{-1}}$ is a recursive real. The first shows that if a series converges at least as quickly as a series converging to a recursive real, then the first series converges to a recursive real.

\begin{prop} \label{recursive sum implies lower bounds have recursive sums}
Suppose $f,g \colon \mathbb{N} \to (0,\infty)$ are recursive functions such that $f \domleq g$ and $\sum_{n=0}^\infty{g(n)}$ is a recursive real. Then $\sum_{n=0}^\infty{f(n)}$ is a recursive real.
\end{prop}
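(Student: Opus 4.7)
The plan is to show that $\sum_{n=0}^\infty f(n)$ is both left r.e.\ and right r.e., hence recursive. Convergence of the series follows immediately by Direct Comparison with $\sum g(n)$, using that $f(n) \leq g(n)$ for almost all $n$.

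For left r.e., the partial sums $S_k \coloneq \sum_{n=0}^{k} f(n)$ form a recursive sequence of uniformly recursive reals (being finite sums of uniformly recursive reals), and they increase monotonically to $\sum_{n=0}^\infty f(n)$. By the characterization of left r.e.\ reals as limits of increasing sequences of uniformly recursive reals, $\sum f(n)$ is left r.e.

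The main work is showing right r.e. Fix (non-uniformly) a constant $M \in \mathbb{N}$ such that $f(n) \leq g(n)$ for all $n \geq M$; such an $M$ exists by definition of $f \domleq g$. Set $\alpha \coloneq \sum_{n=0}^\infty g(n)$, which is a recursive real by hypothesis. For each $k \geq M$, define
\begin{equation*}
T_k \coloneq \sum_{n=0}^{k} f(n) + \left( \alpha - \sum_{n=0}^{k} g(n) \right).
\end{equation*}
Each $T_k$ is a recursive real, uniformly in $k$ (using $M$ as a built-in constant), since it is a computable combination of finitely many values of $f$, finitely many values of $g$, and the recursive real $\alpha$. The tail estimate $\sum_{n=k+1}^\infty f(n) \leq \sum_{n=k+1}^\infty g(n) = \alpha - \sum_{n=0}^{k} g(n)$ (valid for $k \geq M$) gives $T_k \geq \sum_{n=0}^\infty f(n)$. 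Moreover, $T_{k+1} - T_k = f(k+1) - g(k+1) \leq 0$ for $k+1 > M$, so the sequence is nonincreasing, and $\lim_{k \to \infty} T_k = \sum_{n=0}^\infty f(n) + (\alpha - \alpha) = \sum_{n=0}^\infty f(n)$.

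By \cref{equivalent characterizations of recursive reals}(c), the existence of such a recursive sequence of uniformly recursive upper bounds converging to $\sum f(n)$ shows that $\sum f(n)$ is right r.e. Combined with the left r.e.\ conclusion above, $\sum_{n=0}^\infty f(n)$ is recursive. I do not anticipate a significant obstacle here: the only subtle point is that $M$ need not be computable from the hypothesis $f \domleq g$, but $M$ is merely a single natural number that we fix once and for all, so non-uniformity in $M$ is harmless — it enters only as a constant in the definition of the approximating sequence $T_k$.
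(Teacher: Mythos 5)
Your proof is correct and follows essentially the same route as the paper: both establish right r.e.\ by forming the nonincreasing upper approximations $\sum_{n=0}^{k}f(n) + \bigl(\alpha - \sum_{n=0}^{k}g(n)\bigr)$ and appealing to the characterization of recursive reals. The only cosmetic difference is that the paper assumes $f(n)\leq g(n)$ for all $n$ without loss of generality, whereas you carry the non-uniform constant $M$ explicitly; both are fine.
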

\begin{proof}
We assume without loss of generality that $f(n) \leq g(n)$ for all $n \in \mathbb{N}$.

By the Direct Comparison Test, $\sum_{n=0}^\infty{f(n)}$ converges. Let $\alpha_i = \sum_{n=0}^i{f(n)}$, $\alpha = \sum_{n=0}^\infty{f(n)}$, $\beta_i = \sum_{n=0}^i{g(n)}$, and $\beta = \sum_{n=0}^\infty{g(n)}$. Because $f(n) \leq g(n)$ for all $n$, 
\begin{equation*}
\alpha - \alpha_i = \sum_{n=i+1}^\infty{f(n)} \leq \sum_{n=i+1}^\infty{g(n)} = \beta - \beta_i
\end{equation*}
and hence $\alpha \leq \alpha_i + (\beta - \beta_i)$. Then the sequence $\langle \alpha_i + (\beta - \beta_i) \rangle_{i\in\mathbb{N}}$ converges monotonically to $\alpha$ from above, showing that $\alpha$ is right r.e.\ and hence recursive by \cref{sum of recursive series is left re}.
\end{proof}

\begin{cor} \label{faster growing than recursive sum has recursive sum}
If $p$ and $q$ are order functions, $p \domleq q$, and $\sum_{n=0}^\infty{p(n)^{-1}}$ is a recursive real, then $\sum_{n=0}^\infty{q(n)^{-1}}$ is a recursive real.
\end{cor}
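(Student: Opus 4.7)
The plan is to reduce this immediately to Proposition \ref{recursive sum implies lower bounds have recursive sums} after massaging the ``almost all'' clause in the definition of $\domleq$ into an honest pointwise inequality. The key move is that the series for $q^{-1}$ is termwise eventually below the series for $p^{-1}$, so the finite correction is a single concrete computable real and thus harmless.

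Concretely, I would set $g(n) \coloneq p(n)^{-1}$ and $f(n) \coloneq \max\bigl(p(n),q(n)\bigr)^{-1}$. Both $f$ and $g$ are recursive (as $p$ and $q$ are), and $f(n) \leq g(n)$ for every $n \in \mathbb{N}$ by construction. Since $p \domleq q$, there exists $N \in \mathbb{N}$ (whose specific value I need only know to exist) such that $p(n) \leq q(n)$, and hence $f(n) = q(n)^{-1}$, for all $n \geq N$. The hypothesis that $\sum_{n=0}^\infty g(n) = \sum_{n=0}^\infty p(n)^{-1}$ is recursive, combined with $f \leq g$ pointwise, is exactly the input for Proposition \ref{recursive sum implies lower bounds have recursive sums}, which delivers that $\sum_{n=0}^\infty f(n)$ is a recursive real.

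Finally, I would write
\[
\sum_{n=0}^\infty q(n)^{-1} \;=\; \sum_{n=0}^\infty f(n) \;+\; \sum_{n<N}\bigl(q(n)^{-1} - f(n)\bigr),
\]
which exhibits the target as the sum of a recursive real and a finite sum of specific computable reals --- itself a single fixed computable real. Since the recursive reals are closed under addition, $\sum_{n=0}^\infty q(n)^{-1}$ is recursive.

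The only mild subtlety --- and the point I would be careful about in the write-up --- is that $N$ is not effectively computable from $p$ and $q$. This is a non-issue because recursiveness of an \emph{individual} real number (as opposed to uniform recursiveness across a sequence) does not require any effective knowledge of $N$: the correction term is a specific real number, independent of how we came to know of it, and every such number inherited from finitely many computable values is trivially recursive. No other obstacle is present; the argument really is just Proposition \ref{recursive sum implies lower bounds have recursive sums} applied to the slightly adjusted pair $(f,g)$.
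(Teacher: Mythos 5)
Your proof is correct and is essentially the paper's argument: the corollary is deduced directly from \cref{recursive sum implies lower bounds have recursive sums} applied to the reciprocals. Note that the proposition is already stated with the hypothesis $f \domleq g$ (almost-all domination), so your detour through $\max(p,q)^{-1}$ and the finite correction term is unnecessary --- you may take $f = q^{-1}$ and $g = p^{-1}$ directly, since $p \domleq q$ immediately gives $f \domleq g$.
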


Our second tool is a reduction of the recursiveness of $\sum_{n=0}^\infty{p(n)^{-1}}$ to that of an improper integral $\int_0^\infty{\overline{p}(x)^{-1}\dd x}$ for a continuous recursive nondecreasing extension $\overline{p}$ of $p$.

\begin{prop} \label{infinite sum recursive iff improper integral recursive}
Suppose $p \colon \mathbb{N} \to (0,\infty)$ is a fast-growing order function and let $\overline{p} \colon [0,\infty) \to (0,\infty)$ be any continuous recursive nondecreasing extension of $p$. Then $\sum_{n=0}^\infty{p(n)^{-1}}$ is a recursive real if and only if $\int_0^\infty{\overline{p}(x)^{-1}\dd x}$ is a recursive real.
\end{prop}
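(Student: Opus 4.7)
The plan is to deploy the classical integral comparison test in an effective form. Since $\overline{p}$ is nondecreasing and positive, the function $x \mapsto \overline{p}(x)^{-1}$ is nonincreasing on $[0,\infty)$, so for each integer $n \geq 0$ we have
\[
\frac{1}{p(n+1)} = \frac{1}{\overline{p}(n+1)} \leq \int_n^{n+1}{\frac{dx}{\overline{p}(x)}} \leq \frac{1}{\overline{p}(n)} = \frac{1}{p(n)}.
\]
Summing from $n = N$ onward and subtracting from the total, this yields the two-sided sandwich
\[
I_N - S_N \leq I - S \leq (I_N - S_N) + \frac{1}{p(N)},
\]
where $S_N := \sum_{n=0}^{N}{p(n)^{-1}}$, $S := \sum_{n=0}^{\infty}{p(n)^{-1}}$, $I_N := \int_0^N{\overline{p}(x)^{-1}\,dx}$, and $I := \int_0^\infty{\overline{p}(x)^{-1}\,dx}$. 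The convergence of $I$ is automatic from the right-hand comparison with $N=0$ together with $S<\infty$.

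Next I would verify that $\langle I_N - S_N \rangle_{N \in \mathbb{N}}$ is a uniformly recursive sequence of reals. The partial sums $S_N$ are rational and uniformly recursive in $N$. For $I_N$, the function $\overline{p}$ is continuous, computable, and bounded below by $\overline{p}(0) > 0$ on any compact subinterval of $[0,\infty)$, so its reciprocal is continuous and computable; by standard computable analysis (the Riemann integral of a continuous computable function on a compact interval with computable endpoints is uniformly computable), each $I_N$ is a recursive real, uniformly in $N$. Hence $D_N := I_N - S_N$ is a uniformly recursive sequence of reals.

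Finally, since $p$ is a recursive unbounded function, for every $k \in \mathbb{N}$ we can effectively locate some $N$ with $p(N)^{-1} < 2^{-k}$; by the sandwich this yields $|(I - S) - D_N| < 2^{-k}$. Thus $I - S$ is a recursive real by \cref{equivalent characterizations of recursive reals}(a). The equivalence now follows: because $S = I - (I - S)$ and $I = S + (I - S)$, and sums and differences of recursive reals are recursive, $S$ is recursive iff $I$ is. The only step requiring any real care is the second one --- asserting uniform computability of the partial integrals of a computable continuous function --- which is the standard (though nontrivial) fact about computable integration of computable functions on compact intervals.
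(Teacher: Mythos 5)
Your proof is correct, and it rests on the same two ingredients as the paper's: the integral-test comparison between $\sum_{n=0}^\infty{p(n)^{-1}}$ and $\int_0^\infty{\overline{p}(x)^{-1}\,dx}$, and the uniform computability in $N$ of the partial integrals $\int_0^N{\overline{p}(x)^{-1}\,dx}$ (which the paper likewise takes as a standard fact of computable analysis). The organization is genuinely different, though, and arguably cleaner: you prove the single unconditional statement that the difference $I-S$ is a recursive real, after which both directions of the equivalence drop out from closure of the recursive reals under addition and subtraction. The paper instead treats the two directions separately: it first observes that both $S$ and $I$ are automatically left r.e.\ (being limits of increasing uniformly recursive sequences), and then, assuming one of the two is recursive, it exhibits a uniformly recursive sequence converging to the other from above --- e.g.\ $\sum_{n=0}^k{p(n)^{-1}} + \int_k^\infty{\overline{p}(x)^{-1}\,dx}$ squeezes down onto $S$ within $p(k)^{-1}$ --- concluding right r.e.-ness and hence recursiveness. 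Your version buys the slightly stronger fact that $I-S$ is recursive even when $S$ and $I$ individually are not, at no extra cost. One small point of hygiene: to conclude recursiveness of $I-S$ from the sandwich $D_N \leq I-S \leq D_N + p(N)^{-1}$, it is cleanest to cite \cref{equivalent characterizations of recursive reals}(b) and (c), since $\langle D_N \rangle_{N\in\mathbb{N}}$ and $\langle D_N + p(N)^{-1} \rangle_{N\in\mathbb{N}}$ are uniformly recursive sequences of reals converging to $I-S$ from below and above respectively; part (a) as stated asks for monotone \emph{rational} approximants, so invoking it directly requires first replacing each $D_N$ by a close rational. This is a cosmetic adjustment, not a gap.
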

%

\begin{proof}
We make two initial observations: 
\begin{enumerate}[(1)] 
\item Just as $\sum_{n=0}^\infty{p(n)^{-1}}$ is always left r.e., so is $\int_0^\infty{\overline{p}(x)^{-1}\dd x}$. Thus, for either quantity to be recursive it suffices to show that that quantity is right r.e.
\item The reals $\int_k^{k+1}{\overline{p}(x)^{-1}\dd x}$ are uniformly recursive in $k \in \mathbb{N}$.
\end{enumerate}

Suppose $\int_0^\infty{\overline{p}(x)^{-1}\dd x}$ is a recursive real. By the Integral Test, for each $k \in \mathbb{N}$ we have $\sum_{n=k+1}^\infty{p(n)^{-1}} \leq \int_k^\infty{\overline{p}(x)^{-1}\dd x} \leq \sum_{n=k}^\infty{p(n)^{-1}}$ and hence 
\begin{equation*}
0 \leq \left( \sum_{n=0}^k{p(n)^{-1}} + \int_k^\infty{\overline{p}(x)^{-1}\dd x}\right) - \sum_{n=0}^\infty{p(n)^{-1}} \leq p(k)^{-1}.
\end{equation*}
Thus, 
\begin{equation*}
\left\langle \sum_{n=0}^k{p(n)^{-1}} + \int_0^\infty{\overline{p}(x)^{-1}\dd x} - \int_0^k{\overline{p}(x)^{-1}\dd x} \right\rangle_{k \in \mathbb{N}}
\end{equation*}
is a sequence of uniformly recursive reals converging to $\sum_{n=0}^\infty{p(n)^{-1}}$ from above, hence right r.e.\ by \cref{equivalent characterizations of recursive reals}.

Now suppose $\sum_{n=0}^\infty{p(n)^{-1}}$ is a recursive real. By the Integral Test, for each $k \in \mathbb{N}$ we have $\int_{k+1}^\infty{\overline{p}(x)^{-1}\dd x} \leq \sum_{n=k+1}^\infty{p(n)^{-1}} \leq \int_k^\infty{\overline{p}(x)^{-1}\dd x}$ and hence
\begin{equation*}
0 \leq \left(\sum_{n=k+1}^\infty{p(n)^{-1}} + \int_0^{k+1}{\overline{p}(x)^{-1}\dd x}\right) - \int_0^\infty{\overline{p}(x)^{-1}\dd x} \leq \int_k^{k+1}{\overline{p}(n)^{-1}\dd x} \leq \overline{p}(k)^{-1}.
\end{equation*}
Thus, 
\begin{equation*}
\left\langle \sum_{n=0}^\infty{p(n)^{-1}} + \int_0^{k+1}{\overline{p}(x)^{-1}\dd x} - \sum_{n=0}^k{p(n)^{-1}} \right\rangle_{k \in \mathbb{N}}
\end{equation*}
is a sequence of uniformly recursive reals converging to $\int_0^\infty{\overline{p}(x)^{-1}\dd x}$ from above, hence right r.e.\ by \cref{equivalent characterizations of recursive reals}.
\end{proof}

Using \cref{infinite sum recursive iff improper integral recursive} allows us to show that many of the usual convergent series give recursive sums.

\begin{cor} \label{examples of recursive sums}
Given $k \in \mathbb{N}$ and a recursive real $\alpha \in (1,\infty)$, then 
\begin{equation*}
\sum_{n={}^k2}^\infty{\left( n \cdot \log_2 n \cdot \log_2^2 n \mdots \log_2^{k-1} n \cdot (\log_2^k n)^\alpha\right)^{-1}}
\end{equation*}
is a recursive real.
\end{cor}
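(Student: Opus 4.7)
The plan is to invoke \cref{infinite sum recursive iff improper integral recursive} to convert the sum into an improper integral, and then evaluate that integral in closed form.

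First I would set
\[
p(n) \coloneq \begin{cases} n \cdot \log_2 n \mdots \log_2^{k-1} n \cdot (\log_2^k n)^\alpha & \text{if $n \geq {}^k 2$,} \\ 1 & \text{otherwise.} \end{cases}
\]
Since $\alpha > 1$, the standard iterated-logarithm test gives that $p$ is fast-growing. Moreover, $p$ is computable as a real-valued function because each $\log_2^i$ is recursive on $\mathbb{N}_{\geq {}^k 2}$ and $x \mapsto x^\alpha$ is computable via $x^\alpha = \exp_2(\alpha \log_2 x)$ using the recursiveness of $\alpha$. The sum in the statement differs from $\sum_{n=0}^\infty p(n)^{-1}$ by a rational, so the two are simultaneously recursive.

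Next I would fix any continuous, nondecreasing, recursive extension $\overline{p} \colon [0,\infty) \to (0,\infty)$ of $p$. By \cref{infinite sum recursive iff improper integral recursive}, it suffices to show that $\int_0^\infty \overline{p}(x)^{-1}\,\dd x$ is a recursive real. Splitting as $\int_0^{{}^k 2} + \int_{{}^k 2}^\infty$, the first piece is a proper Riemann integral of a computable function on a bounded interval with recursive endpoints and is thus a recursive real. For the tail, since $\log_2^k({}^k 2) = 1$, the substitution $u = \log_2^k x$, which satisfies $\dd u = \dd x/(x \cdot \log_2 x \mdots \log_2^{k-1} x \cdot (\ln 2)^k)$, collapses the integrand to
\[
\int_{{}^k 2}^\infty \frac{\dd x}{x \cdot \log_2 x \mdots \log_2^{k-1} x \cdot (\log_2^k x)^\alpha} \;=\; (\ln 2)^k \int_1^\infty u^{-\alpha}\,\dd u \;=\; \frac{(\ln 2)^k}{\alpha - 1}.
\]
This is the product of the recursive real $(\ln 2)^k$ with the reciprocal of the positive recursive real $\alpha - 1$, and is therefore itself recursive.

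The only mildly delicate step is confirming the iterated substitution pattern, but that is a routine induction on $k$ anchored in the base case $\int \dd x/(x(\log_2 x)^\alpha) = (\ln 2)/((1-\alpha)(\log_2 x)^{\alpha-1})$ via $u = \log_2 x$. There is no genuine obstacle: every real produced along the way is obtained from $\alpha$, $\ln 2$, and recursive rationals via finitely many arithmetic operations, and \cref{infinite sum recursive iff improper integral recursive} does the heavy lifting of transferring recursivity back from the integral to the series.
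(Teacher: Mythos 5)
Your proposal is correct and follows essentially the same route as the paper: both reduce the recursiveness of the sum to that of the improper integral via \cref{infinite sum recursive iff improper integral recursive} and then evaluate the tail integral in closed form as $(\ln 2)^k/(\alpha-1)$, the paper peeling off one logarithm at a time by induction where you perform the single substitution $u = \log_2^k x$. The only differences are cosmetic (your extension of $p$ below ${}^k 2$ and the extra bounded integral piece).
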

\begin{proof}
Define $p \colon \mathbb{N}_{\geq {}^k2} \to \mathbb{R}$ and $\overline{p}\colon \co{{}^k2,\infty} \to \mathbb{R}$ by 
\begin{align*}
p(n) & \coloneq n \cdot \log_2 n \cdot \log_2^2 n \mdots \log_2^{k-1} n \cdot (\log_2^k n)^\alpha, \\
\overline{p}(x) & \coloneq x \cdot \log_2 x \cdot \log_2^2 x \mdots \log_2^{k-1} x \cdot (\log_2^k x)^\alpha,
\end{align*}
for $n \in \mathbb{N}$ and $x \in \co{{}^k2,\infty}$.

For each $k \geq 1$,
\begin{equation*}
\int_{{}^k2}^\infty{\frac{1}{x \cdot \log_2 x \cdot \log_2^2 x \mdots \log_2^{k-1} x \cdot (\log_2^k x)^\alpha}\mathrm{d}x} = (\ln 2)\int_{{}^{k-1}2}^\infty{\frac{1}{u \cdot \log_2 u \mdots \log_2^{k-2} u \cdot (\log_2^{k-1} u)^\alpha}\mathrm{d}u}.
\end{equation*}
so by induction on $k$ we may show
\begin{equation*}
\int_{{}^k2}^\infty{\frac{1}{x \cdot \log_2 x \cdot \log_2^2 x \mdots \log_2^{k-1} x \cdot (\log_2^k x)^\alpha}\mathrm{d}x} = \frac{(\ln 2)^k}{\alpha - 1},
\end{equation*}
so $\int_{{}^k2}^\infty{\overline{p}(x)^{-1}\dd x}$ is a recursive real. Then \cref{infinite sum recursive iff improper integral recursive} shows $\sum_{n={}^k2}^\infty{p(n)^{-1}}$ is a recursive real.
\end{proof}

\begin{cor}
If $p\colon \mathbb{N} \to (0,\infty)$ is an order function such that there is a $k \in \mathbb{N}$ and a recursive real $\alpha \in (1,\infty)$ for which $n \cdot \log_2 n \cdot \log_2^2 n \mdots \log_2^{k-1} n \cdot (\log_2^k n)^\alpha \leq p(n)$ for almost all $n \in \mathbb{N}_{\geq {}^k 2}$, then $\sum_{n=0}^\infty{p(n)^{-1}}$ is a recursive real.
\end{cor}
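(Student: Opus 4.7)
The plan is to chain together the immediately preceding Corollary \ref{examples of recursive sums} with Proposition \ref{recursive sum implies lower bounds have recursive sums}, so essentially no new ideas are required beyond handling bookkeeping around the finite initial segment and the ``almost all'' qualifier.

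First I would set
\begin{equation*}
g(n) \coloneq n \cdot \log_2 n \cdot \log_2^2 n \mdots \log_2^{k-1} n \cdot (\log_2^k n)^\alpha
\end{equation*}
for $n \geq {}^k 2$, and extend $g$ to a recursive positive function on all of $\mathbb{N}$ by declaring $g(n) \coloneq 1$ for $n < {}^k 2$. Next I would invoke Corollary \ref{examples of recursive sums} to conclude that $\sum_{n = {}^k 2}^\infty g(n)^{-1}$ is a recursive real; adding the finite rational tail $\sum_{n < {}^k 2} g(n)^{-1}$ preserves recursiveness, so $\sum_{n=0}^\infty g(n)^{-1}$ is a recursive real as well.

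By hypothesis, $g(n) \leq p(n)$ holds for almost all $n \in \mathbb{N}_{\geq {}^k 2}$ and hence for almost all $n \in \mathbb{N}$, which is exactly the statement that $\lambda n. p(n)^{-1} \domleq \lambda n. g(n)^{-1}$. Then Proposition \ref{recursive sum implies lower bounds have recursive sums}, applied to $f \coloneq \lambda n. p(n)^{-1}$ and $g \coloneq \lambda n. g(n)^{-1}$, yields that $\sum_{n=0}^\infty p(n)^{-1}$ is a recursive real.

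There is no substantive obstacle here; the only potential snag is the purely cosmetic fact that $g(n)$ is only naturally defined on $\mathbb{N}_{\geq {}^k 2}$ (so that $\log_2^k$ takes nonnegative values) and the domination hypothesis is stated only on that tail, both of which are handled by the trivial extension above. Note also that Proposition \ref{recursive sum implies lower bounds have recursive sums} as stated assumes $f(n) \leq g(n)$ pointwise rather than eventually, but since modifying $p$ on a finite set changes $\sum_{n=0}^\infty p(n)^{-1}$ by a rational it does not affect recursiveness, so one may without loss of generality assume $p(n)^{-1} \leq g(n)^{-1}$ for all $n$ before applying the proposition.
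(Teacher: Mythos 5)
Your argument is correct and is essentially the paper's proof: the paper simply cites \cref{examples of recursive sums} together with \cref{faster growing than recursive sum has recursive sum}, and the latter is exactly the consequence of \cref{recursive sum implies lower bounds have recursive sums} that you invoke directly (note also that \cref{recursive sum implies lower bounds have recursive sums} is already stated with $\domleq$, i.e.\ eventual domination, so your final without-loss-of-generality remark is unnecessary but harmless).
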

\begin{proof}
This follows from \cref{examples of recursive sums} and \cref{faster growing than recursive sum has recursive sum}.
\end{proof}

%
%

\subsection{The Fast and Slow-Growing \texorpdfstring{$\ldnr$}{LUA} Hierarchies}
\label{fast and slow-growing ldnr hierarchies subsection}

The dichotomy between fast-growing and slow-growing allows us to split the $\ldnr$ hierarchy into two sub-hierarchies. 

\begin{definition}[fast-growing and slow-growing $\ldnr$ hierarchies]
The \textdef{fast-growing $\ldnr$ hierarchy} is the collection of the classes $\ldnr(p)$ where $p$ is a fast-growing order function, and the \textdef{slow-growing $\ldnr$ hierarchy} is the collection of the classes $\ldnr(q)$ where $q$ is a slow-growing order function.
\end{definition}

The fast-growing $\ldnr$ hierarchy is downwards closed, while the slow-growing $\ldnr$ hierarchy is upwards closed, so the two hierarchies are separated:

\begin{prop} \label{separation of fast-growing and slow-growing ldnr hierarchies}
Suppose $p$ and $q$ are order functions such that $\ldnr(p) \weakleq \ldnr(q)$. 
\begin{enumerate}[(a)]
\item If $p$ is slow-growing, then $q$ is slow-growing.
\item If $q$ is fast-growing, then $p$ is fast-growing.
\end{enumerate}
\end{prop}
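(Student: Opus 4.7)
The plan is to reduce both parts to the dichotomy of \cite[Theorem 5.4]{simpson2017turing} (alluded to in the introduction and presumably recorded in the earlier discussion of $\ldnr$), which asserts that for any order function $r \colon \mathbb{N} \to (1,\infty)$,
\[
\ldnr(r) \weakleq \mlr \iff r \text{ is fast-growing}.
\]
The $(\Leftarrow)$ direction is a Ku\v{c}era--G\'acs style extraction of an $\ldnr$-function from any Martin-L\"of random when the bound has a convergent reciprocal series; the $(\Rightarrow)$ direction (contrapositively) expresses the depth phenomenon for slow-growing $\ldnr(r)$.

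First I would observe that (a) and (b) are in fact logically contrapositive given their shared hypothesis $\ldnr(p) \weakleq \ldnr(q)$. Since an order function is fast-growing precisely when it fails to be slow-growing, the statement ``$p$ slow-growing $\Rightarrow$ $q$ slow-growing'' is the contrapositive of ``$q$ fast-growing $\Rightarrow$ $p$ fast-growing''. So it suffices to prove one of the two, and I would pick (b).

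For (b), assume $q$ is fast-growing and $\ldnr(p) \weakleq \ldnr(q)$. The $(\Leftarrow)$ direction of the dichotomy applied to $q$ yields $\ldnr(q) \weakleq \mlr$. Combining this with the hypothesis via transitivity of $\weakleq$ (from \cref{properties of weak and strong reducibility}) gives $\ldnr(p) \weakleq \mlr$. Applying the $(\Rightarrow)$ direction of the dichotomy contrapositively to $p$ rules out $p$ being slow-growing, so $p$ must be fast-growing.

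The main obstacle lies not in this short derivation but in ensuring the dichotomy is available in exactly the form stated above. The original results of Bienvenu--Porter \cite{bienvenu2016deep}, Greenberg--Miller \cite{miller2020assorted}, and Slaman are formulated for $\dnr(r)$ with respect to specific ``linear'' admissible enumerations, whereas we work with $\ldnr(r)$ built from the full family of linearly universal partial recursive functions. Bridging these two formulations is exactly the content of \cite[Theorem 5.4]{simpson2017turing}, so at this point in the exposition the dichotomy may be invoked as a black box and the proof reduces to the three-line chase above.
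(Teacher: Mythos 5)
Your proof is correct and follows essentially the same route as the paper: the paper likewise observes that (a) and (b) are contrapositives, invokes the dichotomy of \cite[Theorem 5.4]{simpson2017turing} (stated in the paper as \cref{ldnr and mlr}), and chains $\ldnr(p) \weakleq \ldnr(q) \weakleq \mlr$ to conclude $p$ is fast-growing.
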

\begin{proof}
\cite[Theorem 5.4]{simpson2017turing} (see also \cref{ldnr and mlr}) shows that $\ldnr(q) \weakleq \mlr$ if and only if $q$ is fast-growing. If $q$ is fast-growing, then $\ldnr(p) \weakleq \ldnr(q) \weakleq \mlr$, showing $p$ is fast-growing. 
\end{proof}

The infimum of the slow-growing $\ldnr$ hierarchy lies in $\mathcal{E}_\weak$. 

\begin{prop}
Define
\begin{equation*}
\ldnr_\slow \coloneq \bigcup\{ \ldnr(q) \mid \text{$q$ a slow-growing order function}\}.
\end{equation*}
$\ldnr_\slow$ is $\Sigma^0_3$ and hence $\weakdeg(\ldnr_\slow) \in \mathcal{E}_\weak$.
\end{prop}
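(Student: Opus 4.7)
My plan is to verify that $\ldnr_\slow$ is $\Sigma^0_3$ and then apply the \nameref{embedding lemma} to conclude that $\weakdeg(\ldnr_\slow) \in \mathcal{E}_\weak$, mirroring the strategy used in \cref{ldnr_p is Sigma02}.

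First I would reduce to $\mathbb{N}$-valued slow-growing order functions. For any real-valued slow-growing order function $p$, a straightforward construction yields an $\mathbb{N}$-valued slow-growing order function $q$ with $p(n) \leq q(n) \leq p(n) + C$ for some constant $C$ (effectively obtain $q$ by computing a rational $1/2$-approximation to each $p(n)$, rounding up, adding a small buffer, and then taking a running maximum). Since $X(n) < p(n) \leq q(n)$ implies $X(n) < q(n)$, we have $\ldnr(p) \subseteq \ldnr(q)$, so $\ldnr_\slow = \bigcup_q \ldnr(q)$ where $q$ now ranges over $\mathbb{N}$-valued slow-growing order functions taking values at least $2$.

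Next I would set up the arithmetical description. Fix an admissible enumeration $\varphi_\bullet$ of the $1$-place partial recursive functions and the effective enumeration $\psi_\bullet$ of linearly universal partial recursive functions from \cref{effective enumeration of the linearly universal partial recursive functions}. Let $\operatorname{SG}(e)$ denote the predicate asserting that $\varphi_e$ is total, nondecreasing, unbounded, everywhere $\geq 2$, and satisfies $\sum_{n=0}^\infty \varphi_e(n)^{-1} = \infty$. Then
\begin{equation*}
X \in \ldnr_\slow \iff \exists e\, \exists i\, \Bigl[\operatorname{SG}(e) \wedge \forall n\, \bigl(X(n) < \varphi_e(n) \wedge \forall s\, \forall m\, (\psi_{i,s}(n)\converge = m \to m \neq X(n))\bigr)\Bigr].
\end{equation*}
Using Kleene's $T$-predicate, each of the four conjuncts of $\operatorname{SG}(e)$ (totality; nondecreasing and $\geq 2$; unboundedness; divergence of the series, phrased as ``for every $M$, some partial sum of reciprocals exceeds $M$'') is at most $\Pi^0_2$, so $\operatorname{SG}(e)$ is $\Pi^0_2$. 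The inner bound $X(n) < \varphi_e(n)$ requires convergence of $\varphi_e(n)$ and so is $\Pi^0_2$ in $(X, e)$; the avoidance clause is $\Pi^0_1$ in $(X, i)$. A finite conjunction of $\Pi^0_2$ formulas is $\Pi^0_2$, so the bracketed statement is $\Pi^0_2$ in $(X, e, i)$, and prepending $\exists e\, \exists i$ gives a $\Sigma^0_3$ description of $\ldnr_\slow$.

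Finally I would invoke the \nameref{embedding lemma}. Fix a specific slow-growing order function $q_0$ (e.g., $q_0(n) = n + 2$) and a fixed linearly universal partial recursive function $\psi_0$. Then $\avoid^{\psi_0}(q_0)$ is a nonempty r.b.\ $\Pi^0_1$ class, so by \cref{recursively bounded pi01 class recursively homeomorphic to pi01 class in cantor space} it is recursively homeomorphic to some nonempty $\Pi^0_1$ subset $P$ of $\cantor$. Since $\avoid^{\psi_0}(q_0) \subseteq \ldnr(q_0) \subseteq \ldnr_\slow$, we have $\weakdeg(P) = \weakdeg(\avoid^{\psi_0}(q_0)) \geq \weakdeg(\ldnr_\slow)$, and hence $P \cup \ldnr_\slow \weakeq \ldnr_\slow$. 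Applying the \nameref{embedding lemma} to the pair $(P, \ldnr_\slow)$ yields a nonempty $\Pi^0_1$ class $Q \subseteq \cantor$ with $Q \weakeq P \cup \ldnr_\slow \weakeq \ldnr_\slow$, giving $\weakdeg(\ldnr_\slow) \in \mathcal{E}_\weak$. The main obstacle is verifying that the divergent-series clause of $\operatorname{SG}(e)$ stays in $\Pi^0_2$ uniformly in $e$ (using bounded sums of rational reciprocals combined with totality witnesses); once this is in hand, the remaining quantifier counting is routine.
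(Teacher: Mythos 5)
Your proof is correct and follows the same route as the paper: exhibit a $\Sigma^0_3$ definition of $\ldnr_\slow$ and then invoke the \nameref{embedding lemma} via a nonempty $\Pi^0_1$ class contained in $\ldnr_\slow$. You are in fact more careful than the paper's own proof, whose displayed formula only demands that the bounding function $\varphi_i$ be total (not that it be a nondecreasing, unbounded function with divergent reciprocal sum); your reduction to $\mathbb{N}$-valued order functions together with the $\Pi^0_2$ predicate $\mathrm{SG}(e)$ supplies exactly the missing clauses, and your quantifier count is right.
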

\begin{proof}
Let $f$ be as in the proof of \cref{ldnr_p is Sigma02}, so that $f$ is a total recursive function and $\varphi_{f(\bullet)}$ is an enumeration of the linearly universal partial recursive functions. Then
\begin{align*}
X \in \ldnr_\slow & \equiv \exists i \qspace\Bigl( \exists e \forall n \forall s \forall m \forall k \qspace \bigl( \bigl(\varphi_{f(e),s}(n) \converge = m \wedge \varphi_{i,s}(n) \converge = k\bigr) \to \bigl( m\neq X(n) \wedge X(n) < k\bigr)\bigr) \\
& \qquad \wedge \forall n \exists s \qspace \bigl( \varphi_{i,s}(n) \converge \bigr)\Bigr)
\end{align*}
shows that $\ldnr_\slow$ is $\Sigma^0_3$. The \nameref{embedding lemma} then implies $\weakdeg(\ldnr_\slow) \in \mathcal{E}_\weak$. 
\end{proof}

In \cref{bushy tree chapter} we will prove the following result:

\begin{repthm}{ldnr incomparable}
For all order functions $p_1$ and $p_2$, there exists a slow-growing order function $q$ such that $\ldnr(p_1) \nweakleq \ldnr(q) \nweakleq \ldnr(p_2)$. In particular, for any order function $p$, there exists a slow-growing order function $q$ such that $\ldnr(p)$ and $\ldnr(q)$ are weakly incomparable.
\end{repthm}

In particular, it is not the case that for every fast-growing $p$ and slow-growing $q$ that $q \domleq p$. It is interesting to note, however, that there exist slow-growing order functions $q$ such that $\ldnr(p) \weakleq \ldnr(q)$ for all fast-growing order functions $p$ -- in fact, we may take $q = \id_\mathbb{N}$.

\begin{lem} \label{fast-growing dominates identity}
Suppose $p$ is a fast-growing order function. Then $\id_\mathbb{N} \domleq p$.
\end{lem}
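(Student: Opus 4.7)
My plan is to prove the contrapositive by a direct contradiction: if $p(n) < n$ for infinitely many $n$, then $\sum_{n=0}^\infty p(n)^{-1}$ diverges, contradicting the hypothesis that $p$ is fast-growing.

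First I would suppose, toward contradiction, that the set $A = \{ n \in \mathbb{N} \mid p(n) < n\}$ is infinite. From $A$ I would extract a strictly increasing sequence $n_0 < n_1 < n_2 < \cdots$ of elements of $A$ with $n_{k+1} > 2n_k$ for every $k$; this is clearly possible since $A$ is infinite. For each such $n_k$, let $I_k \coloneq \{ m \in \mathbb{N} \mid \lceil n_k/2 \rceil \leq m \leq n_k\}$, and note that the intervals $I_k$ are pairwise disjoint because $n_{k+1} > 2n_k \geq 2\lceil n_k/2 \rceil > n_k$, so $\lceil n_{k+1}/2 \rceil > n_k$.

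Next I would use the monotonicity of $p$: for every $m \in I_k$ we have $p(m) \leq p(n_k) < n_k$, hence $p(m)^{-1} > n_k^{-1}$. Summing over $I_k$ gives
\begin{equation*}
\sum_{m \in I_k}{p(m)^{-1}} > |I_k| \cdot n_k^{-1} \geq (n_k/2) \cdot n_k^{-1} = 1/2.
\end{equation*}
Since the $I_k$ are disjoint subsets of $\mathbb{N}$, summing these contributions yields $\sum_{n=0}^\infty p(n)^{-1} \geq \sum_{k=0}^\infty 1/2 = \infty$, contradicting the fact that $p$ is fast-growing. Therefore $p(n) \geq n$ for almost all $n \in \mathbb{N}$, i.e., $\id_\mathbb{N} \domleq p$.

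There is no real obstacle here; the only minor bookkeeping is ensuring the intervals $I_k$ are disjoint, which is the reason for the geometric spacing $n_{k+1} > 2n_k$. (One could alternatively invoke Cauchy condensation to conclude $p(2^k)/2^k \to \infty$ and then deduce $p(n)/n \to \infty$ by monotonicity, which is stronger than needed, but the direct block argument above is more self-contained.)
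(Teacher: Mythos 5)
Your proof is correct and is essentially the paper's argument: the paper notes that if $p(n) \leq n$ infinitely often then infinitely many terms of the condensed series $\sum_m 2^m p(2^m)^{-1}$ are at least $1/2$, and invokes the Cauchy Condensation Test to contradict fast-growth, which is exactly your disjoint dyadic-block computation packaged as a named test (as you yourself observe in your closing remark). No gaps.
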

\begin{proof}
Suppose $n>0$ satisfies $p(n) \leq n$, and let $m \in \mathbb{N}$ be the largest natural number such that $2^m \leq n$. Then $p(2^m) \leq p(n) \leq n \leq 2^{m+1}$, so 
\begin{equation*}
\frac{1}{2} = \frac{2^m}{2^{m+1}} \leq \frac{2^m}{p(2^m)}.
\end{equation*}
Thus, if $p(n) \leq n$ for infinitely many $n \in \mathbb{N}$, then infinitely many of the terms of the series $\sum_{m=0}^\infty{2^m p(2^m)^{-1}}$ are bounded below by $1/2$, implying the series diverges. But an application of the Cauchy Condensation Test shows that the convergence of the series $\sum_{n=0}^\infty{p(n)^{-1}}$ (since $p$ is fast-growing) implies the convergence of $\sum_{m=0}^\infty{2^m p(2^m)^{-1}}$, a contradiction.
\end{proof}

\begin{prop} \label{fast-growing lies under identity}
$\ldnr(p) \weakleq \ldnr(\id_\mathbb{N})$ for every fast-growing order function $p$. 
\end{prop}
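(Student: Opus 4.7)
The plan is to observe that this is an immediate corollary of the two preceding results: \cref{fast-growing dominates identity} tells us that any fast-growing order function $p$ satisfies $\id_\mathbb{N} \domleq p$, and \cref{ldnr basic facts}(c) converts that domination statement into a strong reduction going in the opposite direction. So I would simply cite these two results and note that strong reducibility implies weak reducibility (\cref{properties of weak and strong reducibility}(a)).

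In more detail: first, apply \cref{fast-growing dominates identity} to conclude that $n \leq p(n)$ for almost all $n$. Then invoke \cref{ldnr basic facts}(c), instantiated with the dominating pair $\id_\mathbb{N} \domleq p$, to obtain the stronger statement $\ldnr(p) \strongleq \ldnr(\id_\mathbb{N})$. The recursive functional witnessing the strong reduction is essentially the identity on $\ldnr(\id_\mathbb{N})$: any $X$ avoiding a linearly universal $\psi$ with $X(n) < n$ automatically satisfies $X(n) < p(n)$ from some point on, and the argument in the proof of \cref{ldnr basic facts}(c) handles the finitely many exceptional values by prepending a fixed admissible initial segment. Finally, weakening strong reducibility to weak reducibility via \cref{properties of weak and strong reducibility}(a) yields the conclusion.

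There is no real obstacle here; the statement is the intended synthesis of \cref{fast-growing dominates identity} with the monotonicity principle of \cref{ldnr basic facts}(c), and the point of the proposition is conceptual rather than technical: it singles out $\ldnr(\id_\mathbb{N})$ as a single slow-growing class whose degree lies above the entire fast-growing $\ldnr$ subhierarchy, setting the stage for the incomparability result \cref{ldnr incomparable} to be proven in \cref{bushy tree chapter}.
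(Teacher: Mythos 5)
Your proposal is correct and matches the paper's proof, which simply cites \cref{fast-growing dominates identity} and leaves the application of \cref{ldnr basic facts}(c) implicit. You have just spelled out the same one-line argument in slightly more detail.
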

\begin{proof}
This follows immediately from \cref{fast-growing dominates identity}.
\end{proof}

Although there is no fast-growing order function $p$ such that $\ldnr(p) \weakleq \ldnr(q)$ for all slow-growing order functions $q$, to every slow-growing $q$ there is a fast-growing $p$ for which $\ldnr(p) \weakleq \ldnr(q)$.

\begin{prop} \label{fast-growing under each slow-growing}
For every slow-growing order function $q$ there exists a fast-growing order function $p$ such that $\ldnr(p) \weakleq \ldnr(q)$.
\end{prop}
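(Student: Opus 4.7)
The plan is to dominate all linear reparametrizations of $q$ by a single slow-growing order function using \cref{lower bound of sequence of divergent series}(b), then to add a fast-growing padding term so that the result $p$ is fast-growing yet still dominates those reparametrizations, after which \cref{ldnr basic facts}(d) delivers the desired weak reduction.

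First I would set $q_k(x) \coloneq q(kx+k)$ for each $k \in \mathbb{N}_{\geq 1}$. The sequence $\langle q_k \rangle_{k \geq 1}$ is uniformly recursive, each $q_k$ is nondecreasing, unbounded, and computable, and $q_k \domleq q_{k+1}$ since $q$ is nondecreasing. Each $q_k$ is slow-growing: for any $k \geq 1$, splitting $\{n \in \mathbb{N} \mid n \geq k\}$ into length-$k$ blocks and using $q(km) \leq q(n)$ for $km \leq n < k(m+1)$ gives $\sum_{n=k}^\infty q(n)^{-1} \leq k \sum_{m=1}^\infty q(km)^{-1}$, so $\sum_{m=1}^\infty q_k(m-1)^{-1} = \sum_{m=1}^\infty q(km)^{-1} = \infty$. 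Applying \cref{lower bound of sequence of divergent series}(b) then produces a slow-growing order function $q^+$ with $q_k \domleq q^+$ for every $k \geq 1$.

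Next I would define $p(x) \coloneq q^+(x) + (x+1)(\log_2(x+2))^2$. This is an order function taking values in $(1,\infty)$, it dominates $q^+$ pointwise, and it is fast-growing since $p(x)^{-1} \leq ((x+1)(\log_2(x+2))^2)^{-1}$ and the latter series converges (e.g., by Cauchy condensation). For any $a,b \in \mathbb{N}$, the inequality $ax+b \leq (a+b+1)x + (a+b+1)$ yields
\begin{equation*}
q(ax+b) \leq q_{a+b+1}(x) \leq q^+(x) \leq p(x)
\end{equation*}
for almost all $x$. Hence \cref{ldnr basic facts}(d) gives $\avoid^{\psi_0}(p) \weakleq \ldnr(q)$ for any partial recursive $\psi_0$; taking $\psi_0$ linearly universal yields $\avoid^{\psi_0}(p) \subseteq \ldnr(p)$ and so $\ldnr(p) \strongleq \avoid^{\psi_0}(p)$ by \cref{properties of weak and strong reducibility}(b). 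Concatenating gives $\ldnr(p) \weakleq \ldnr(q)$.

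The one delicate point is verifying that each $q_k$ remains slow-growing; for a general divergent series, restriction to an arithmetic progression can destroy divergence, but monotonicity of the order function $q$ makes the block-partition estimate above work. Everything else is bookkeeping designed to feed \cref{lower bound of sequence of divergent series}(b) and \cref{ldnr basic facts}(d) exactly the hypotheses they require.
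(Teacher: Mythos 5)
Your proof is correct, but it takes a considerably longer route than the paper's. The paper simply sets $p(n) \coloneq q(n) + 2^n$: this $p$ is fast-growing because $p(n) \geq 2^n$, and it dominates $q$ pointwise, so \cref{ldnr basic facts}(c) immediately gives $\ldnr(p) \strongleq \ldnr(q)$, hence the weak reduction. You instead dominate all linear reparametrizations $q(ax+b)$ at once (via \cref{lower bound of sequence of divergent series}(b), essentially the same device the paper uses in the proof of \cref{ldnr p deep degree if p slow-growing}) so that you can invoke \cref{ldnr basic facts}(d) rather than (c). What your extra work buys is twofold: your $p$ grows like $n(\log_2 n)^2$ rather than $2^n$, so it sits much closer to the fast-growing/slow-growing boundary, and your argument yields the stronger conclusion $\avoid^{\psi_0}(p) \weakleq \ldnr(q)$ for \emph{every} partial recursive $\psi_0$, not merely the linearly universal ones. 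Neither strengthening is needed for the stated proposition, but both are legitimate. Your individual steps check out: each $q_k$ is slow-growing (your block-partition estimate is a correct reproof of \cref{slow-growing composed with linear is slow-growing}), the monotone domination $q_k \domleq q_{k+1}$ holds, and the inequality $ax+b \leq (a+b+1)x+(a+b+1)$ correctly reduces arbitrary linear reparametrizations to your indexed family.
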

\begin{proof}
Given $q$, define $p$ by setting $p(n) \coloneq q(n) + 2^n$ for each $n \in \mathbb{N}$.
\end{proof}

\section{Depth}
\label{depth section}

The notion of depth is a strengthening of the condition of being negligible.

\begin{definition}[negligibility]
$P \subseteq \cantor$ is \textdef{negligible} if $\lambda(P^{\turingleq}) = 0$. Equivalently, $\lambda(\Psi^{-1}[P])=0$ for every partial recursive functional $\Psi \colonsub \cantor \to \cantor$.
\end{definition}

Depth strengthens this by requiring additional uniformity and was first defined in \cite{bienvenu2016deep} for subsets of $\cantor$, as with negligibility. However, because certain weak degrees in $\mathcal{E}_\weak$ are best represented with subsets of $\baire$, it is more convenient to generalize the definition given by Bienvenu \& Porter. 

\begin{definition}[continuous semimeasure on $\mathbb{N}^\ast$]
A \textdef{continuous semimeasure on $\mathbb{N}^\ast$} is a map $\nu \colon \mathbb{N}^\ast \to [0,1]$ such that $\nu(\langle\rangle) = 1$ and $\sum_{i=0}^\infty{\nu(\sigma \concat \langle i \rangle)} \leq \nu(\sigma)$ for all $\sigma \in T$.

$\nu$ is \textdef{left recursively enumerable}, or \textdef{left r.e.}, if it is left r.e.\ in the usual sense. A left r.e.\ continuous semimeasure $\mathbf{M}$ on $\mathbb{N}^\ast$ is \textdef{universal} if for every left r.e.\ continuous semimeasure $\nu$ on $\mathbb{N}^\ast$ there exists a $c \in \mathbb{N}$ such that $\nu(\sigma) \leq c \cdot \mathbf{M}(\sigma)$ for all $\sigma \in \mathbb{N}^\ast$.
\end{definition}

The left r.e. continuous semimeasures on $\mathbb{N}^\ast$ can be characterized in terms of partial recursive functionals $\Psi \colonsub \cantor \to \baire$.

\begin{prop} \label{levin and zvonkin}
\textnormal{\cite[Theorem 3.1, essentially]{zvonkin1970complexity}}
\mbox{}
\begin{enumerate}[(a)]
\item If $\Psi \colonsub \cantor \to \baire$ is a partial recursive functional, then the map $\nu \colon \mathbb{N}^\ast \to [0,1]$ defined by $\nu(\sigma) \coloneq \lambda(\Psi^{-1}(\sigma)) = \lambda(\{ X \in \cantor \mid \Psi^X \supseteq \sigma\})$ is a left r.e.\ continuous semimeasure on $\mathbb{N}^\ast$.
\item If $\nu$ is a left r.e.\ continuous semimeasure on $\mathbb{N}^\ast$, then there is a partial recursive functional $\Psi \colonsub \cantor \to \baire$ such that $\nu(\sigma) = \lambda(\Psi^{-1}(\sigma)) = \lambda(\{ X \in \cantor \mid \Psi^X \supseteq \sigma\})$ for all $\sigma \in \mathbb{N}^\ast$.
\end{enumerate}
\end{prop}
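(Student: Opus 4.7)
The plan is to handle the two directions separately: part (a) reduces to direct bookkeeping from the definitions of a partial recursive functional, while part (b) requires an effective staged allocation of clopen subsets of $\cantor$ to strings in $\mathbb{N}^\ast$.

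For (a), I would observe three things. First, $\Psi^X \supseteq \langle\rangle$ vacuously for every $X$, so $\Psi^{-1}(\langle\rangle) = \cantor$ and $\nu(\langle\rangle) = 1$. Second, for each $X \in \cantor$, $\Psi^X$ is a single element of $\mathbb{N}^\ast \cup \baire$ and thus extends at most one string $\sigma \concat \langle i\rangle$; hence the sets $\Psi^{-1}(\sigma\concat\langle i\rangle)$ ($i \in \mathbb{N}$) are pairwise disjoint subsets of $\Psi^{-1}(\sigma)$, and countable additivity of $\lambda$ yields $\sum_{i \in \mathbb{N}}{\nu(\sigma \concat \langle i\rangle)} \leq \nu(\sigma)$. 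Third, the identification $\Psi^{-1}(\sigma) = \bbracket{\{\tau \in \{0,1\}^\ast \mid \exists \sigma' \supseteq \sigma \qspace (\langle\tau,\sigma'\rangle \in \Gamma_\Psi)\}}_2$ exhibits $\Psi^{-1}(\sigma)$ as uniformly $\Sigma^0_1$ in $\sigma$, and the $\lambda$-measure of a uniformly $\Sigma^0_1$ subset of $\cantor$ is uniformly left r.e.

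For (b), I would first replace the given approximations to $\nu$ with a recursive sequence of \emph{finitely supported, dyadic-rational-valued semimeasures} $\nu_s$ satisfying $\nu_s(\sigma) \nearrow \nu(\sigma)$ pointwise. This is arranged by truncating a recursive left r.e.\ approximation to finite support with dyadic-rational values and then, bottom-up over the current finite support, replacing each value by the maximum of itself and the sum of its children's values to enforce the semimeasure inequality; induction on height, using the semimeasure property of $\nu$, shows these adjusted values remain bounded above by $\nu$. Next I would build clopen $A_s(\sigma) \subseteq \cantor$ with $\lambda(A_s(\sigma)) = \nu_s(\sigma)$ preserving the invariants (i) $A_s(\sigma) \subseteq A_{s+1}(\sigma)$, (ii) $A_s(\sigma \concat \langle i\rangle) \subseteq A_s(\sigma)$, and (iii) distinct siblings have disjoint allocations. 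Each transition $\nu_s \to \nu_{s+1}$ is processed top-down from the root: when an extension of $A_s(\sigma)$ by a dyadic measure $\delta$ is required, the semimeasure inequality for $\nu_{s+1}$ combined with the top-down ordering guarantees at least $\delta$ of unallocated measure within the parent's (already extended) clopen allocation, from which I select a dyadic clopen subset of measure exactly $\delta$. Finally, defining $\Gamma_\Psi \coloneq \{\langle \tau, \sigma\rangle \mid \bbracket{\tau}_2 \subseteq A_s(\sigma) \text{ for some } s\}$ yields a partial recursive functional per \cref{characterizations of partial recursive functional}(iii), with $\Psi^{-1}(\sigma) = \bigcup_s A_s(\sigma)$ and hence $\lambda(\Psi^{-1}(\sigma)) = \lim_s \nu_s(\sigma) = \nu(\sigma)$ by continuity of $\lambda$ on ascending unions.

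The main obstacle lies in reconciling three simultaneous demands on the approximations $\nu_s$ in (b): they must increase monotonically to $\nu$, satisfy the semimeasure inequality at each stage (so that parental slack exists to accommodate each extension), and take dyadic-rational values (so that the $A_s(\sigma)$ can be realized as finite unions of basic clopen cylinders, keeping $\Gamma_\Psi$ recursively enumerable). The preprocessing step addresses all three simultaneously, but verifying that the resulting allocation procedure is uniformly recursive in $s$ and genuinely maintains invariants (i)--(iii) across concurrent growth at parents, children, and siblings requires a careful top-down scheduling together with a recursive routine that, given the parent's current allocation and its other children's allocations as explicit finite unions of basic cylinders, outputs a fresh clopen subset of the required dyadic measure inside the free region.
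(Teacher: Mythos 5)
The paper does not prove this proposition; it is stated with only a citation to Zvonkin--Levin, so there is no in-paper argument to compare against. Your proof is essentially the standard argument for that correspondence, transplanted to functionals $\cantor \to \baire$, and it is correct: part (a) is exactly the right three observations, and in part (b) the key quantitative point --- that processing a stage top-down guarantees free measure at least $\nu_{s+1}(\sigma\concat\langle i\rangle)-\nu_s(\sigma\concat\langle i\rangle)$ inside the parent's extended allocation, by the semimeasure inequality for $\nu_{s+1}$ --- is stated and is what makes the allocation go through. Two routine points deserve a sentence each in a written-out version: first, your intermediate $\nu_s$ will generally have $\nu_s(\langle\rangle)<1$, so they are subsemimeasures rather than semimeasures in the paper's sense (harmless, but the bottom-up repair argument should be phrased accordingly); second, your $\Gamma_\Psi$ is a relation that may pair one input string $\tau$ with several compatible output strings, whereas \cref{characterizations of partial recursive functional}(iii) asks for a partial \emph{function} $\mathbb{N}^\ast \to \mathbb{N}^\ast$ --- this is fixed by the usual device of enumerating $\Gamma_\Psi$ and, for each new pair, attaching the output to a fresh proper extension of $\tau$ not yet in the domain, which preserves $\Psi^{-1}(\sigma)=\bigcup_s A_s(\sigma)$. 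Neither issue affects the soundness of the approach.
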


\begin{remark}
The existence of a universal left r.e.\ continuous semimeasure on $\mathbb{N}^\ast$ can be shown by appropriately modifying proofs for the case of $\{0,1\}^\ast$ (e.g., in \cite[Theorem 3.16.2]{downey2010algorithmic} consider monotone machines of the form $M\colonsub \{0,1\}^\ast \to \mathbb{N}^\ast$).
\end{remark}

\begin{notation}
Given $P \subseteq \baire$ and $n \in \mathbb{N}$, $P \restrict n$ denotes the set $\{ X \restrict n \mid X \in P\}$.
\end{notation}

\begin{definition}[depth]
Let $\mathbf{M}$ be a universal left r.e.\ continuous semimeasure on $\mathbb{N}^\ast$. A mass problem $P \subseteq \baire$ is \textdef{deep} (with respect to $\mathbf{M}$) if it is a r.b. $\Pi^0_1$ class and there exists an order function $r \colon \mathbb{N} \to \mathbb{N}$ such that $\mathbf{M}(P \restrict r(n)) \leq 2^{-n}$ for all $n \in \mathbb{N}$. Such an $r$ is a \textdef{modulus of depth} for $P$.
\end{definition}

Despite the added generality, this more general notion of depth gives the same notion as that of Bienvenu \& Porter.

\begin{lem} \label{depth independent of superset}
Let $\mathbf{M}_{\{0,1\}}$ be a universal left r.e.\ continuous semimeasure on $\{0,1\}^\ast$, and $\mathbf{M}_\mathbb{N}$ is a universal left r.e.\ continuous semimeasure on $\mathbb{N}^\ast$. Given $P \subseteq \{0,1\}^\ast$, $P$ is deep with respect $\mathbf{M}_{\{0,1\}}$ if and only if $P$ is deep with respect to $\mathbf{M}_\mathbb{N}$.
\end{lem}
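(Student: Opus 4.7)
The plan is to show that $\mathbf{M}_{\{0,1\}}$ and $\mathbf{M}_\mathbb{N}$ agree up to a multiplicative constant on $\{0,1\}^\ast$, which immediately translates a modulus of depth for one into a modulus of depth for the other after absorbing the constant into a finite shift of the argument. The two directions of this agreement each come from converting one of the two universal semimeasures into a left r.e.\ continuous semimeasure of the \emph{other} type and applying universality.

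For the first direction, I would define $\nu \colon \mathbb{N}^\ast \to [0,1]$ by setting $\nu(\sigma) \coloneq \mathbf{M}_{\{0,1\}}(\sigma)$ if $\sigma \in \{0,1\}^\ast$ and $\nu(\sigma) \coloneq 0$ otherwise. One checks that $\nu$ is a left r.e.\ continuous semimeasure on $\mathbb{N}^\ast$: the semimeasure inequality is vacuous above any non-binary $\sigma$ (since extensions remain non-binary), and above a binary $\sigma$ only the $i \in \{0,1\}$ coordinates contribute, reducing to the semimeasure inequality for $\mathbf{M}_{\{0,1\}}$. Universality of $\mathbf{M}_\mathbb{N}$ then yields a constant $c_1 \in \mathbb{N}$ with $\mathbf{M}_{\{0,1\}}(\sigma) \leq c_1 \cdot \mathbf{M}_\mathbb{N}(\sigma)$ for all $\sigma \in \{0,1\}^\ast$. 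For the reverse direction, define $\mu \colon \{0,1\}^\ast \to [0,1]$ by $\mu(\sigma) \coloneq \mathbf{M}_\mathbb{N}(\sigma)$. Then $\mu(\sigma\concat\langle 0 \rangle) + \mu(\sigma \concat \langle 1 \rangle) \leq \sum_{i=0}^\infty \mathbf{M}_\mathbb{N}(\sigma \concat \langle i \rangle) \leq \mathbf{M}_\mathbb{N}(\sigma)$, so $\mu$ is a left r.e.\ continuous semimeasure on $\{0,1\}^\ast$, and universality of $\mathbf{M}_{\{0,1\}}$ supplies a constant $c_2 \in \mathbb{N}$ with $\mathbf{M}_\mathbb{N}(\sigma) \leq c_2 \cdot \mathbf{M}_{\{0,1\}}(\sigma)$ on $\{0,1\}^\ast$.

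To finish, let $P \subseteq \cantor$ be a r.b.\ $\Pi^0_1$ class, so that $P \restrict n \subseteq \{0,1\}^n$ for every $n$. If $r$ is a modulus of depth for $P$ with respect to $\mathbf{M}_\mathbb{N}$, then setting $k \coloneq \lceil \log_2 c_1 \rceil$ and $r'(n) \coloneq r(n+k)$ produces an order function for which $\mathbf{M}_{\{0,1\}}(P \restrict r'(n)) \leq c_1 \cdot \mathbf{M}_\mathbb{N}(P \restrict r(n+k)) \leq c_1 \cdot 2^{-(n+k)} \leq 2^{-n}$, witnessing depth of $P$ with respect to $\mathbf{M}_{\{0,1\}}$. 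The reverse implication is symmetric, using $c_2$ in place of $c_1$. No step here looks genuinely obstructed; the only conceptual care needed is remembering that universality of $\mathbf{M}_\mathbb{N}$ requires comparison with \emph{every} left r.e.\ continuous semimeasure on $\mathbb{N}^\ast$, which is precisely why the extension-by-zero construction of $\nu$ is needed rather than a naive identification of $\mathbf{M}_{\{0,1\}}$ with a function on $\mathbb{N}^\ast$.
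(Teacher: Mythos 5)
Your proposal is correct and follows essentially the same route as the paper: both directions come from restricting $\mathbf{M}_\mathbb{N}$ to $\{0,1\}^\ast$ and extending $\mathbf{M}_{\{0,1\}}$ by zero to $\mathbb{N}^\ast$, applying universality to get the two multiplicative constants, and then shifting the modulus of depth by a constant to absorb them. No gaps.
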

\begin{proof}
First suppose $P$ is deep with respect to $\mathbf{M}_{\{0,1\}}$, so that there is an order function $r_{\{0,1\}} \colon \mathbb{N} \to \mathbb{N}$ such that $\mathbf{M}_{\{0,1\}}(P \restrict r_{\{0,1\}}(n)) \leq 2^{-n}$ for all $n \in \mathbb{N}$. Let $\nu$ be the restriction of $\mathbf{M}_\mathbb{N}$ to $\{0,1\}^\ast$ and observe that $\nu$ is a left r.e.\ continuous semimeasure on $\{0,1\}^\ast$. By the universality of $\mathbf{M}_{\{0,1\}}$, there exists a $c \in \mathbb{N}$ such that $\mathbf{M}_\mathbb{N}(\sigma) = \nu(\sigma) \leq c \cdot \mathbf{M}_{\{0,1\}}(\sigma)$ for all $\sigma \in \{0,1\}^\ast$. Let $m \in \mathbb{N}$ be such that $c \leq 2^m$ and define $r_\mathbb{N} \colon \mathbb{N} \to \mathbb{N}$ by $r_\mathbb{N}(n) \coloneq r_{\{0,1\}}(n+m)$. Then for each $n \in \mathbb{N}$,
\begin{equation*}
\mathbf{M}_\mathbb{N}(P \restrict r_\mathbb{N}(n)) \leq c \cdot \mathbf{M}_{\{0,1\}}(P \restrict r_{\{0,1\}}(n+m)) \leq c \cdot 2^{-(n+m)} \leq 2^{-n}.
\end{equation*}
Thus, $P$ is deep with respect to $\mathbf{M}_\mathbb{N}$.

Now suppose $P$ is deep with respect to $\mathbf{M}_\mathbb{N}$, so that there is an order function $r_\mathbb{N} \colon \mathbb{N} \to \mathbb{N}$ such that $\mathbf{M}_\mathbb{N}(P \restrict r_\mathbb{N}(n)) \leq 2^{-n}$ for all $n \in \mathbb{N}$. Let $\nu$ be the extension of $\mathbf{M}_{\{0,1\}}$ to $\mathbb{N}^\ast$ by setting $\nu(\sigma) \coloneq 0$ for any $\sigma \notin \{0,1\}^\ast$ and $\nu(\sigma) = \mathbf{M}_{\{0,1\}}(\sigma)$ otherwise. The universality of $\mathbf{M}_\mathbb{N}$ implies there is a $c \in \mathbb{N}$ such that $\nu(\sigma) \leq c \cdot \mathbf{M}_\mathbb{N}(\sigma)$ for all $\sigma \in \mathbb{N}^\ast$. Let $m \in \mathbb{N}$ be such that $c \leq 2^m$ and define $r_{\{0,1\}} \colon \mathbb{N} \to \mathbb{N}$ by $r_{\{0,1\}}(n) \coloneq r_\mathbb{N}(n+m)$. Then for each $n \in \mathbb{N}$,
\begin{equation*}
\mathbf{M}_{\{0,1\}}(P \restrict r_{\{0,1\}}(n)) = \nu(P \restrict r_{\{0,1\}}(n)) \leq c \cdot \mathbf{M}_\mathbb{N}(P \restrict r_\mathbb{N}(n)) \leq c \cdot 2^{-(n+m)} \leq 2^{-n}.
\end{equation*}
Thus, $P$ is deep with respect to $\mathbf{M}_{\{0,1\}}$.
\end{proof}

\begin{remark}
\cref{depth independent of superset} additionally shows that the definition of depth does not depend on the choice of universal left r.e.\ semimeasure on $\mathbb{N}^\ast$.
\end{remark}

The classes $\ldnr(p)$ for $p$ a slow-growing order function provide a plethora of examples of deep r.b.\ $\Pi^0_1$ classes.

\begin{thm} \label{depth of dnr_p^psi}
\textnormal{\cite[Theorem 7.6]{bienvenu2016deep}, \cite[Theorem 2.2, Theorem 4.4]{simpson2017turing}}
Suppose $p \colon \mathbb{N} \to (1,\infty)$ is an order function and $\psi$ is a linearly universal partial recursive function.
\begin{enumerate}[(a)]
\item If $p$ is fast-growing, then $\avoid^\psi(p) \weakleq \mlr$. In particular, $\avoid^\psi(p)$ is non-negligible.
\item If $p$ is slow-growing, then $\avoid^\psi(p)$ is deep.
\end{enumerate}
\end{thm}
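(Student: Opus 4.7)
The two parts of the theorem call for quite different strategies.

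For (a), my plan is to establish a weak reduction $\avoid^\psi(p) \weakleq \mlr$ via a Solovay-test argument. Given $X \in \cantor$, I would partition $X$ into consecutive blocks of lengths $k_n \coloneq \lceil \log_2 \lfloor p(n) \rfloor \rceil$ and interpret the $n$-th block as a candidate value $Y_X(n) < p(n)$. For each $n$ with $\psi(n) \converge$, the set $U_n \coloneq \{X \in \cantor : Y_X(n) = \psi(n)\}$ is uniformly $\Sigma^0_1$ with $\lambda(U_n) \leq 2/p(n)$, and $\sum_n \lambda(U_n) < \infty$ since $p$ is fast-growing. Hence $\langle U_n \rangle_{n \in \mathbb{N}}$ is (up to a constant) a Solovay test, so every $X \in \mlr$ belongs to only finitely many $U_n$. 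Standard techniques (splitting $X$ into a bound-oracle prefix and a data suffix, in the style of \cite{kjoshanssen2006kolmogorov}) then allow one to patch the finitely many bad positions uniformly in $X$, producing an element of $\avoid^\psi(p)$. Non-negligibility of $\avoid^\psi(p)$ is then immediate since $\mlr$ is non-negligible.

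For (b), the plan is to combine a universal left r.e.\ continuous semimeasure $\mathbf{M}$ on $\mathbb{N}^\ast$ with the linear universality of $\psi$. By \cref{levin and zvonkin}, fix a partial recursive functional $\Psi \colonsub \cantor \to \baire$ with $\mathbf{M}(\sigma) = \lambda(\Psi^{-1}(\sigma))$. For each $n$ set $A_n \coloneq \{X \in \cantor : \Psi^X \restrict n \in \avoid^\psi(p) \restrict n\}$, so that $\lambda(A_n) \geq \mathbf{M}(\avoid^\psi(p) \restrict n)$. Invoking the Recursion Theorem for linearly universal partial recursive functions (\cref{recursion theorem for linearly universal partial recursive functions}), I would produce $a, b \in \mathbb{N}$ and a partial recursive $\theta$ satisfying $\psi(an + b) \simeq \theta(n)$, where, given $a$ and $b$ as parameters, $\theta(n)$ searches through left r.e.\ approximations to the measures $\lambda(A_{an+b}^v) \coloneq \lambda(\{X \in A_{an+b} : \Psi^X(an+b) \converge = v\})$ for $v < p(an+b)$ and returns the first $v$ whose approximation exceeds the threshold $\lambda(A_{an+b})/p(an+b)$. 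By pigeonhole such a $v$ exists whenever $A_{an+b}$ has positive measure. Since $\theta(n) = \psi(an+b)$, no $X \in A_{an+b+1}$ can have $\Psi^X(an+b) = \theta(n)$, yielding
\begin{equation*}
\lambda(A_{an+b+1}) \;\leq\; \lambda(A_{an+b})\left(1 - \frac{1}{p(an+b)}\right).
\end{equation*}
Iterating, $\lambda(A_{aN+b}) \leq \exp\bigl(-\sum_{n < N} p(an+b)^{-1}\bigr) \to 0$ since $p$ is slow-growing, which provides an order function $r$ with $\mathbf{M}(\avoid^\psi(p) \restrict r(n)) \leq 2^{-n}$.

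The principal obstacle is the self-referential construction in part (b): the function $\theta$ must ``know'' the indices $a, b$ at which its outputs will be substituted into $\psi$ before it can even describe the class $\avoid^\psi(p)$ whose $\mathbf{M}$-mass it seeks to shrink. The strengthened Recursion Theorem for linearly universal partial recursive functions is precisely what dissolves this circularity. A secondary technical point is verifying that $\theta$'s threshold search terminates and that its approximations to $\lambda(A_n^v)$ are sufficiently uniform in the relevant parameters; both reduce to the observation that $\lambda$-measure on $\Sigma^0_1$ sets is left r.e.\ uniformly, and that $A_n$ can be described $\Sigma^0_1$-ly from the $\Pi^0_1$ description of $\avoid^\psi(p)$ composed with $\Psi$.
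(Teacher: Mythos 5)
The paper itself does not prove this theorem; it cites Bienvenu--Porter and Simpson, so your proposal should be measured against those standard arguments. Your part (a) is essentially that argument and is fine: decode a Martin-L\"of random $X$ blockwise (reducing each block modulo a computable integer approximation of $p(n)$, a harmless detail), observe that the sets $U_n$ form a Solovay test because $\sum_n 1/p(n) < \infty$, and patch the finitely many bad coordinates; note that the patching need not be uniform, since weak reducibility only asks that each $X \in \mlr$ compute \emph{some} member of $\avoid^\psi(p)$, and the finite correction can simply be hard-coded.

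Part (b) has a genuine effectivity gap. Your sets $A_n = \{X : \Psi^X \restrict n \in \avoid^\psi(p) \restrict n\}$ are \emph{not} $\Sigma^0_1$: membership of a string in $\avoid^\psi(p) \restrict n$ is a $\Pi^0_1$ (co-r.e.) condition, so $A_n$ is a union of $\Sigma^0_1$ cylinders over a shrinking index set, and $\lambda(A_n)$ is in general only a $\Delta^0_2$ real --- neither left r.e.\ nor right r.e. Consequently the search you assign to $\theta$ is not partial recursive: to certify that an approximation of $\lambda(A_{an+b}^v)$ exceeds the threshold $\lambda(A_{an+b})/p(an+b)$ you would need the numerator approximable from below (same problem) and the threshold approximable from above, and neither is available. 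The Recursion Theorem removes the self-reference in the indices, but it cannot make $\theta$ consult the class $\avoid^\psi(p)$ itself; the effective content has to come from constraints the construction has already committed to. (Your pigeonhole claim is also slightly off: if no $X \in A_{an+b}$ has $\Psi^X(an+b)$ convergent below $p(an+b)$, no value meets the threshold even though $\lambda(A_{an+b})>0$; the count must be taken over the mass surviving to length $an+b+1$.)

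The standard repair, which is how the cited proofs go, is to work with the \emph{computable} ``alive'' tree of $p$-bounded strings avoiding only the finitely many values already committed at controlled coordinates; its $\mathbf{M}$-mass is left r.e., and $\avoid^\psi(p)$ is always contained in it. For each target $2^{-k}$ one designates computably many controlled coordinates $i$ with $\sum 1/p(i) > 2^k$, and at each one waits until the approximated alive mass at that level exceeds $2^{-k}$, then commits the plurality value, removing at least $2^{-k}/p(i)$ of true mass; since the removed string sets are pairwise incompatible, the total removable mass is at most $1$, so either some wait never terminates (directly certifying $\mathbf{M}(\avoid^\psi(p) \restrict r(k)) \leq 2^{-k}$) or the budget is exceeded, a contradiction. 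This accounting also supplies the computable modulus $r$ required by the definition of depth, which your multiplicative-decay estimate $\lambda(A_{aN+b}) \leq \exp(-\sum_{n<N} p(an+b)^{-1})$ does not, since each decay step is contingent on a search that may never act.
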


\subsection{Depth and Difference Randoms}

One of the principal properties of deep $\Pi^0_1$ classes relates deep $\Pi^0_1$ classes to \emph{difference random} sequences.

\begin{definition}[difference random]
$X \in \cantor$ is \textdef{difference random} if it is Martin-\Lof\ random but incomplete (i.e., $0' \turingnleq X$).
\end{definition}

A theorem of Sacks \cite[Corollary 8.12.2]{downey2010algorithmic} shows that $\lambda(\{0'\}^{\turingleq}) = 0$, so almost all members of $\mlr$ are difference random in the measure-theoretic sense.

\begin{thm} \label{difference randoms cannot compute members of deep pi01 classes}
\textnormal{\cite[Theorem 5.3]{bienvenu2016deep}}
Suppose $P \subseteq \cantor$ is a deep $\Pi^0_1$ class. If $X \in \cantor$ is difference random, then $X$ computes no member of $P$.
\end{thm}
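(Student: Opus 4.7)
The plan is to prove the contrapositive: if $X \in \cantor$ Turing-computes some $Y \in P$, then $X$ is not difference random. I would fix a partial recursive functional $\Phi$ with $\Phi^X = Y$, a recursive tree $T \subseteq \{0,1\}^\ast$ with $P = [T]$, and a modulus of depth $r$ for $P$, so that $\mathbf{M}(P \restrict r(n)) \leq 2^{-n}$ for all $n$. The goal is to produce a \emph{difference test} covering $X$ --- a $\Pi^0_1$ class $C \ni X$ together with uniformly $\Sigma^0_1$ sets $U_n \ni X$ satisfying $\lambda(U_n \cap C) \leq 2^{-n}$ --- since by the Franklin--Ng characterization of difference randomness this would contradict the assumption that $X$ is difference random.

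The bridge between $\Phi$ and the universal semimeasure $\mathbf{M}$ is \cref{levin and zvonkin}: the assignment $\nu(\sigma) \mathrel{:=} \lambda\bigl(\{Z \in \cantor : \sigma \subseteq \Phi^Z\}\bigr)$ is a left r.e.\ continuous semimeasure on $\{0,1\}^\ast$, and universality yields a constant $c$ with $\nu \leq c \cdot \mathbf{M}$. Choosing $k$ with $2^k \geq c$ and invoking depth gives $\sum_{\sigma \in P \restrict r(n+k)} \nu(\sigma) \leq c \cdot \mathbf{M}(P \restrict r(n+k)) \leq 2^{-n}$. For the $\Pi^0_1$ component I would take $C \mathrel{:=} \{Z \in \cantor : \forall \sigma \subset Z\, (\Phi(\sigma) \converge \to \Phi(\sigma) \in T)\}$, which contains $X$ because every prefix of $Y$ lies in $T$; for the $\Sigma^0_1$ components I would take $U_n \mathrel{:=} \{Z \in \cantor : \exists \sigma \subset Z,\ \Phi(\sigma) \converge \text{ with } |\Phi(\sigma)| \geq r(n+k)\}$, which contain $X$ since $\Phi^X = Y$ is total.

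The main obstacle is that the straightforward inclusion $U_n \cap C \subseteq \bigcup_{\tau \in T_{r(n+k)}} \Phi^{-1}(\tau)$ yields only $\lambda(U_n \cap C) \leq c \cdot \mathbf{M}(T_{r(n+k)})$, and $T_{r(n+k)}$ can be substantially larger than the $\Pi^0_1$ set $P \restrict r(n+k)$ of nodes extending to infinite paths in $T$; the depth estimate controls only the latter. The remedy is a pruning/Borel--Cantelli argument: approximate $\bigcup_m (P \restrict m)$ from above by a nested sequence of recursive subtrees of $T$, and absorb the $\nu$-mass contributed by the non-extendable nodes at level $r(n+k)$ into the tail mass at deeper levels $r(m)$ for $m > n$, using the depth estimate at each such $m$ to dominate the tail by a geometric series. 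This correction contributes only a further bounded shift in the index, after which $\lambda(U_n \cap C) \leq 2^{-n}$. Thus $\langle U_n \cap C \rangle_{n \in \mathbb{N}}$ is a difference test covering $X$, and the Franklin--Ng characterization delivers the contradiction.
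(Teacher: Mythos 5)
Your overall skeleton --- the Franklin--Ng characterization via difference tests, the Levin--Zvonkin semimeasure $\nu(\sigma)=\lambda(\{Z\in\cantor : \Phi^Z\supseteq\sigma\})$, universality of $\mathbf{M}$, and the modulus of depth --- is the right one (the paper itself only cites Bienvenu--Porter for this theorem, and their proof has exactly this shape). But the step you flag as the ``main obstacle'' is where the argument genuinely breaks, and the remedy you sketch does not repair it. With your choice $C=\{Z : \text{every defined prefix of }\Phi^Z\text{ lies in }T\}$, the bound $\lambda(U_n\cap C)\leq 2^{-n}$ is false in general: $\Phi$ may send a set of oracles of measure $1/2$ onto a single non-extendable node $\tau\in T$ of length $r(n+k)$ and never extend the output further. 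Every such oracle lies in $U_n\cap C$, and no depth estimate --- at level $r(n+k)$ or at any deeper level $r(m)$ --- constrains this mass, because those oracles never produce outputs of length $\geq r(m)$. That is why the ``absorb the dead-node mass into the tail at deeper levels and sum a geometric series'' idea cannot work: the problematic oracles are invisible at every deeper level, so there is nothing to absorb them into.

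The correct repair is not a pruning/Borel--Cantelli argument but a stronger $\Pi^0_1$ component. Let $C$ be the set of $Z$ such that every defined prefix $\rho$ of $\Phi^Z$ is \emph{extendable} in $P$, i.e.\ $\bbracket{\rho}_2\cap P\neq\emptyset$. By compactness, extendability is a $\Pi^0_1$ property of $\rho$, uniformly ($\forall m\,\exists\tau\in T\cap\{0,1\}^m$ with $\tau\supseteq\rho$, the existential being bounded), so this $C$ is still a $\Pi^0_1$ class; and $X\in C$ because every prefix of $\Phi^X=Y\in P$ is extendable. Now $U_n\cap C\subseteq\bigcup_{\sigma\in P\restrict r(n+k)}\{Z:\Phi^Z\supseteq\sigma\}$, a disjoint union over strings of one fixed length, so $\lambda(U_n\cap C)\leq\sum_{\sigma\in P\restrict r(n+k)}\nu(\sigma)\leq 2^k\cdot\mathbf{M}(P\restrict r(n+k))\leq 2^{-n}$, with no series and no further index shift. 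With that single change your proof goes through and is essentially the Bienvenu--Porter argument the paper cites; as written, however, the central measure estimate has a genuine gap.
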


A direct application is the following:

\begin{thm} \label{ldnr and mlr}
\textnormal{\cite[Theorem 5.4]{simpson2017turing}}
Suppose $p \colon \mathbb{N} \to (1,\infty)$ is an order function. 
\begin{enumerate}[(a)]
\item If $p$ is fast-growing, then $\ldnr(p) \weakle \mlr$.
\item If $p$ is slow-growing, then $\ldnr(p)$ and $\mlr$ are weakly incomparable.
\end{enumerate}
\end{thm}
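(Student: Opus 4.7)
The plan is to treat each inequality separately, reducing to \cref{depth of dnr_p^psi} together with two classical facts: the Jockusch--Soare hyperimmune-free basis theorem, and the theorem that every Martin-\Lof\ random sequence has hyperimmune Turing degree. For the reduction $\ldnr(p) \weakleq \mlr$ in part (a), I would fix any linearly universal partial recursive $\psi$ and invoke \cref{depth of dnr_p^psi}(a) to obtain $\avoid^\psi(p) \weakleq \mlr$; since the containment $\avoid^\psi(p) \subseteq \ldnr(p)$ forces $\ldnr(p) \strongleq \avoid^\psi(p)$ by \cref{properties of weak and strong reducibility}(b), chaining gives $\ldnr(p) \weakleq \mlr$.

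For $\mlr \weaknleq \ldnr(p)$ -- which handles the strict half of (a) and one half of the incomparability in (b) simultaneously -- the strategy is to apply the hyperimmune-free basis theorem. Since $\avoid^\psi(p)$ is a r.b.~$\Pi^0_1$ class, \cref{recursively bounded pi01 class recursively homeomorphic to pi01 class in cantor space} produces a nonempty $\Pi^0_1$ subset of $\cantor$ recursively homeomorphic to it, and the basis theorem yields an element of hyperimmune-free Turing degree there; pulling back gives some $X \in \avoid^\psi(p) \subseteq \ldnr(p)$ of hyperimmune-free degree. Because every Martin-\Lof\ random has hyperimmune degree, $X$ cannot compute any member of $\mlr$, so $X$ witnesses $\mlr \weaknleq \ldnr(p)$. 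This argument is uniform in whether $p$ is fast- or slow-growing.

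The remaining direction $\ldnr(p) \weaknleq \mlr$ in (b) I would establish by contradiction, leveraging depth. Assuming $\ldnr(p) \weakleq \mlr$, Sacks's theorem $\lambda(\{0'\}^{\turingleq}) = 0$ provides a difference random $Z$; by the assumption, $Z$ computes some $Y \in \ldnr(p) = \bigcup_\psi \avoid^\psi(p)$, so $Y \in \avoid^\psi(p)$ for some linearly universal $\psi$. But \cref{depth of dnr_p^psi}(b) says $\avoid^\psi(p)$ is deep, and \cref{difference randoms cannot compute members of deep pi01 classes}, transported to the setting of $\baire$ via \cref{recursively bounded pi01 class recursively homeomorphic to pi01 class in cantor space}, then precludes $Z$ from computing any member of $\avoid^\psi(p)$, contradicting $Y \turingleq Z$. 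The main overhead is this routine transfer of $\cantor$-statements to the $\baire$-setting of $\avoid^\psi(p)$: one needs to check that the recursive homeomorphism of \cref{hN is recursively homeomorphic to cantor space} preserves Turing-degree invariants such as hyperimmune-freeness and carries depth through the induced map on left r.e.\ continuous semimeasures -- but no genuinely new computability-theoretic work is required beyond \cref{depth of dnr_p^psi} and the two classical facts cited above.
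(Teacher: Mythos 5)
The reduction $\ldnr(p) \weakleq \mlr$ in (a) and the direction $\ldnr(p) \weaknleq \mlr$ in (b) are essentially the paper's argument: containment plus \cref{depth of dnr_p^psi}(a) for the former, and depth plus \cref{difference randoms cannot compute members of deep pi01 classes} (with Sacks's theorem supplying a difference random) for the latter; the transfer from $\cantor$ to the $\baire$ setting that you flag is exactly what \cref{difference randoms cannot compute member of sets of deep degree} packages.

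The gap is in the direction $\mlr \weaknleq \ldnr(p)$, which also carries the strictness claim in (a). Your argument rests on the ``classical fact'' that every Martin-\Lof\ random sequence has hyperimmune Turing degree, and that is false: applying the very same Jockusch--Soare hyperimmune-free basis theorem to the nonempty $\Pi^0_1$ class $\complex(\id_\mathbb{N},c)$ (for $c$ large enough) yields a Martin-\Lof\ random of hyperimmune-free degree. The true statements in this vicinity --- every noncomputable $\Delta^0_2$ set has hyperimmune degree, and every $2$-random has hyperimmune degree --- do not apply to an arbitrary member of $\mlr$. Consequently, producing an $X \in \avoid^\psi(p)$ of hyperimmune-free degree does not show that $X$ computes no member of $\mlr$; a hyperimmune-free $X$ can perfectly well compute (or be) a random. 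The paper instead cites \cite[Theorem 5.11]{greenberg2011diagonally}, which asserts that for \emph{every} order function $p$ there is an $X \in \dnr(p)$ computing no member of $\complex(\lambda n. n) = \mlr$; this is a genuinely nontrivial forcing construction and is not recoverable from basis-theorem considerations. You would need to invoke that result (or reprove it) to obtain both the strictness in (a) and this half of the incomparability in (b).
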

\begin{proof} \mbox{}
\begin{enumerate}[(a)]
\item Suppose $p$ is fast-growing. \cref{depth of dnr_p^psi}(a) shows that for any linearly universal partial recursive $\psi$ that $\avoid^\psi(p) \weakleq \mlr$. As $\avoid^\psi(p) \subseteq \ldnr(p)$, it follows then that $\ldnr(p) \weakleq \mlr$. That this is strict follows from \cite[Theorem 5.11]{greenberg2011diagonally}.

\item Supose $p$ is slow-growing. If $\ldnr(p) \weakleq \mlr$, then in particular there is an $X \in \ldnr(p)$ and a difference random $Y$ such that $X \turingleq Y$. But $X$ being a member of $\ldnr(p)$ means that $X \in \avoid^\psi(p)$ for some linearly universal partial recursive function $\psi$, meaning that $Y$ computes a member of the deep (\cref{depth of dnr_p^psi}(b)) $\Pi^0_1$ class $\avoid^\psi(p)$, contradicting \cref{difference randoms cannot compute members of deep pi01 classes}.

The opposite direction, i.e., that $\mlr \nweakleq \ldnr(p)$, follows from \cite[Theorem 5.11]{greenberg2011diagonally}.

\end{enumerate}
\end{proof}

\subsection{Depth and Strong Reducibility}

We used \cref{recursively bounded pi01 class recursively homeomorphic to pi01 class in cantor space} to show that despite our interests laying in $\cantor$, considering r.b.\ $\Pi^0_1$ classes was safe. We show depth behaves similarly by showing that depth is preserved under recursive homeomorphisms.

\begin{lem} \label{use function defined for recursive functionals from recursively bounded pi01 sets}
Suppose $P$ is a r.b.\ $\Pi^0_1$ class and that $\Psi \colon P \to \baire$ is a recursive functional. Then there exist nondecreasing recursive functions $\psi \colon \mathbb{N}^\ast \to \mathbb{N}^\ast$ and $j \colon \mathbb{N} \to \mathbb{N}$ such that $\Psi(X) \restrict n = \psi(X \restrict j(n))$ for all $X \in P$.
\end{lem}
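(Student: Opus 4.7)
The plan is to exploit the characterization of partial recursive functionals via an enumerator $\Gamma_\Psi$ (\cref{characterizations of partial recursive functional}(iii)), combined with the compactness of $P$ afforded by recursive boundedness. First I would extract a total recursive, time-bounded version of $\Gamma_\Psi$: for $\sigma \in \mathbb{N}^\ast$, set
\begin{equation*}
\psi_0(\sigma) \coloneq \bigcup \{ \tau \in \mathbb{N}^\ast \mid \exists \sigma' \subseteq \sigma \;(\langle \sigma',\tau\rangle \in \Gamma_\Psi \text{ witnessed within } |\sigma| \text{ stages})\}.
\end{equation*}
The compatibility condition in \cref{characterizations of partial recursive functional}(iii) guarantees the $\tau$'s in this collection are pairwise comparable, so the union is a well-defined string. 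By construction $\psi_0$ is total recursive, prefix-monotone (i.e.\ $\sigma\subseteq\sigma'$ implies $\psi_0(\sigma)\subseteq\psi_0(\sigma')$), and for every $X \in \dom\Psi$ one has $\Psi(X)=\bigcup_k\psi_0(X\restrict k)$.

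Next, using recursive boundedness, fix a recursive $h$ with $P\subseteq h^\mathbb{N}$ and a recursive tree $T\subseteq h^\ast$ with $P=[T]$ (such a tree exists by the standard trick of keeping in $T$ those $\sigma \in h^\ast$ for which no prefix has been enumerated into the list of forbidden strings by stage $|\sigma|$). Define $j$ recursively by letting $j(n)$ be the least $k>j(n-1)$ such that $|\psi_0(\sigma)|\geq n$ for every $\sigma\in T\cap h^k$; since $T\cap h^k$ is finite and recursive and $\psi_0$ is total recursive, this search is recursive.

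The main obstacle is showing the search for $j(n)$ always terminates, and here I would use compactness. Because $\Psi$ is defined on all of $P$, for each $X\in P$ the sequence $|\psi_0(X\restrict k)|$ tends to infinity. The sets $V_k\coloneq\{X\in P : |\psi_0(X\restrict k)|\geq n\}$ are clopen in $P$, form an increasing chain, and cover $P$; since $P$ is a closed subset of $h^\mathbb{N}$, which is compact by \cref{hN is recursively homeomorphic to cantor space}, $P$ is itself compact, forcing some $V_k$ to equal $P$. This $k$ is exactly what the search finds.

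Finally, define $\psi\colon\mathbb{N}^\ast\to\mathbb{N}^\ast$ by $\psi(\sigma)\coloneq\psi_0(\sigma)\restrict n(\sigma)$, where $n(\sigma)$ is the largest $n$ with $j(n)\leq|\sigma|$ (and $\psi(\sigma)\coloneq\langle\rangle$ if no such $n$ exists). The verification is then routine: $\psi$ and $j$ are recursive by construction, $j$ is strictly increasing (hence nondecreasing), and $\psi$ is prefix-monotone because $\psi_0$ is and $n(\sigma)$ is nondecreasing in $|\sigma|$. For $X\in P$ and $n\in\mathbb{N}$, strict monotonicity of $j$ yields $n(X\restrict j(n))=n$, while $X\restrict j(n)\in T$ ensures $|\psi_0(X\restrict j(n))|\geq n$; thus $\psi(X\restrict j(n))=\psi_0(X\restrict j(n))\restrict n = \Psi(X)\restrict n$, the last equality because $\psi_0(X\restrict j(n))$ is a prefix of $\Psi(X)=\bigcup_k\psi_0(X\restrict k)$.
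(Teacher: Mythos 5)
There is a gap in your termination argument for the search defining $j(n)$ — the very step you flag as the main obstacle. Your search requires $|\psi_0(\sigma)| \geq n$ for \emph{every} $\sigma \in T \cap h^k$, but your compactness argument over $P$ only controls the strings of the form $X \restrict k$ with $X \in P$, i.e., the \emph{extendible} nodes of $T$ at level $k$. A recursive tree $T$ with $[T]=P$ will in general contain dead-end nodes (strings in $T$ with no infinite extension in $P$), new ones can appear at every level, and the set of extendible nodes is $\Pi^0_1$ but not recursive, so they cannot simply be pruned away. On such a node $\sigma$ the enumerator $\Gamma_\Psi$ need never produce a long output, so $|\psi_0(\sigma)|$ can stay below $n$; hence the level $k$ at which $V_k = P$ is not in general a level at which your search succeeds, and "this $k$ is exactly what the search finds" is false as stated.

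The construction is repairable, in two ways. One is to run the compactness argument over all of $h^\mathbb{N}$ rather than over $P$: the clopen sets $W_k \coloneq \{ X \in h^\mathbb{N} : X \restrict k \notin T \text{ or } |\psi_0(X \restrict k)| \geq n\}$ are increasing (since $T$ is closed under initial segments and $\psi_0$ is prefix-monotone) and cover $h^\mathbb{N}$ (for $X \notin P$ some initial segment leaves $T$; for $X \in P$ the outputs grow), so some $W_K$ equals $h^\mathbb{N}$, and at that level every $\sigma \in T \cap h^K$ does satisfy $|\psi_0(\sigma)| \geq n$, which is exactly what your search tests. The other is the route the paper takes: first extend $\Psi$ to a total recursive functional $\tilde{\Psi} \colon \baire \to \baire$ via \cref{extending recursive functionals on recursively bounded pi01 classes}, so that the computation converges on all of $h^\mathbb{N}$ and compactness of $h^\mathbb{N}$ immediately yields a uniform stage $s_n$; this sidesteps trees and dead ends altogether. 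Apart from this one step, your construction of $\psi_0$, $j$, and $\psi$ and the final verification are sound.
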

\begin{proof}
By \cref{extending recursive functionals on recursively bounded pi01 classes}, $\Psi$ extends to a total recursive functional $\tilde{\Psi} \colon \baire \to \baire$. Let $e \in \mathbb{N}$ be an index for $\tilde{\Psi}$ (i.e., so that $\varphi_e^{X}(n) \simeq \tilde{\Psi}(X)(n)$ for all $X \in \baire$ and $n \in \mathbb{N}$). Let $h \colon \mathbb{N} \to (1,\infty)$ be a nondecreasing recursive function for which $P \subseteq h^\mathbb{N}$. 

By the compactness of $h^\mathbb{N}$, for each $n \in \mathbb{N}$ there is a $s_n$ such that $\varphi_{e,s_n}^{X \restrict s_n}(n) \converge$ for all $X \in h^\mathbb{N}$, and such an $s_n$ can be found effectively as a function of $n$, so define $j(n) \coloneq \max\{s_0,s_1,s_2,\ldots,s_n\}$. 

Given $\sigma \in h^\ast$, let $n$ be the largest natural number for which $j(n) \leq |\sigma|$ and define 
\begin{equation*}
\psi(\sigma) \coloneq \left\langle \varphi_{e,j(n)}^{\sigma}(0),\varphi_{e,j(n)}^{\sigma}(1),\ldots,\varphi_{e,j(n)}^{\sigma}(n)\right\rangle.
\end{equation*}
For $\sigma \in \mathbb{N}^\ast \setminus h^\ast$, let $\psi(\sigma) = \sigma$. Then $\psi(X \restrict j(n)) = \tilde{\Psi}(X) \restrict n$ for all $X \in h^\mathbb{N}$, and in particular $\psi(X \restrict j(n)) = \Psi(X) \restrict n$ for all $X \in P$.
\end{proof}

\begin{prop} \label{depth and strong reducibility}
Suppose $P$ is a r.b.\ $\Pi^0_1$ class and that $\Psi \colon P \to \baire$ is a recursive functional. If $\Psi[P]$ is deep, then $P$ is deep.
\end{prop}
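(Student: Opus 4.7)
The plan is to push forward a universal semimeasure through $\Psi$ and exploit universality to pull the depth modulus for $\Psi[P]$ back to one for $P$. First I would invoke \cref{use function defined for recursive functionals from recursively bounded pi01 sets} to fix nondecreasing recursive $\psi \colon \mathbb{N}^\ast \to \mathbb{N}^\ast$ and $j \colon \mathbb{N} \to \mathbb{N}$ such that $\Psi(X) \restrict n = \psi(X \restrict j(n))$ for every $X \in P$. In particular, $\Psi[P] \restrict n = \psi[P \restrict j(n)]$ as sets of strings.

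Next, fix a universal left r.e.\ continuous semimeasure $\mathbf{M}$ on $\mathbb{N}^\ast$ and, via \cref{levin and zvonkin}(b), a partial recursive functional $\Theta \colonsub \cantor \to \baire$ with $\mathbf{M}(\sigma) = \lambda(\Theta^{-1}(\sigma))$. Define the partial recursive functional $\Lambda \colonsub \cantor \to \baire$ by taking $\Lambda^X$ to be the monotone union of the strings $\psi(\Theta^X \restrict m)$ as $m$ grows; this is well-defined because $\psi$ is monotone in the prefix order, and it agrees with $\Psi \circ \Theta$ on $\{X : \Theta^X \in P\}$. By \cref{levin and zvonkin}(a), the map $\mu(\tau) \coloneq \lambda(\Lambda^{-1}(\tau))$ is a left r.e.\ continuous semimeasure on $\mathbb{N}^\ast$, so universality of $\mathbf{M}$ gives a constant $c \in \mathbb{N}$ with $\mu(\tau) \leq c \cdot \mathbf{M}(\tau)$ for every $\tau \in \mathbb{N}^\ast$.

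The main computation is then the inequality $\mathbf{M}(P \restrict j(n)) \leq c \cdot \mathbf{M}(\Psi[P] \restrict n)$. For any $\sigma \in P \restrict j(n)$ and any $X \in \cantor$ with $\Theta^X \supseteq \sigma$, monotonicity of $\psi$ forces $\Lambda^X \supseteq \psi(\sigma)$, so $\Theta^{-1}(\sigma) \subseteq \Lambda^{-1}(\psi(\sigma))$. Because $P \restrict j(n)$ is an antichain in $\mathbb{N}^\ast$, the cylinders $\Theta^{-1}(\sigma)$ are pairwise disjoint, hence
\begin{equation*}
\mathbf{M}(P \restrict j(n)) = \sum_{\sigma \in P \restrict j(n)} \mathbf{M}(\sigma) \leq \lambda\Bigl( \bigcup_{\tau \in \Psi[P] \restrict n} \Lambda^{-1}(\tau) \Bigr) = \sum_{\tau \in \Psi[P] \restrict n} \mu(\tau) \leq c \cdot \mathbf{M}(\Psi[P] \restrict n),
\end{equation*}
where the equality on the right uses that $\Psi[P] \restrict n$ is an antichain (all strings of length $n$), so the cylinders $\Lambda^{-1}(\tau)$ are pairwise disjoint.

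Finally, let $r$ be a modulus of depth for $\Psi[P]$ and let $k \in \mathbb{N}$ satisfy $c \leq 2^k$. Define $r'(n) \coloneq j(r(n+k))$; this is an order function since $j$ and $r$ are. Then
\begin{equation*}
\mathbf{M}(P \restrict r'(n)) \leq c \cdot \mathbf{M}(\Psi[P] \restrict r(n+k)) \leq c \cdot 2^{-(n+k)} \leq 2^{-n},
\end{equation*}
and $P$ is a r.b.\ $\Pi^0_1$ class by hypothesis, so $P$ is deep with modulus $r'$. The only delicate point is verifying in step three that $\Lambda$ is genuinely a partial recursive functional and that the computation of $\mu$ unfolds as claimed; this reduces to a routine unpacking of \cref{partial recursive functional definition} using the recursiveness and monotonicity of $\psi$.
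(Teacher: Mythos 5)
Your argument is correct, and it reaches the paper's conclusion by the same overall skeleton — compare $\mathbf{M}(P \restrict j(\cdot))$ with $\mathbf{M}(\Psi[P] \restrict \cdot)$ through an auxiliary left r.e.\ continuous semimeasure, invoke universality of $\mathbf{M}$, and absorb the constant by shifting the modulus to $n \mapsto j(r(n+k))$ — but the auxiliary semimeasure is produced in a genuinely different way. The paper works entirely combinatorially on strings: it defines $\nu(\tau) \coloneq \mathbf{M}(\psi^{-1}[\{\tau\}]) = \sum_{\psi(\sigma)=\tau}\mathbf{M}(\sigma)$ and then verifies by hand that $\nu$ is a continuous semimeasure (using that the preimages of the immediate extensions of $\tau$ are pairwise disjoint sets of pairwise incompatible strings extending preimages of $\tau$) and that it is left r.e.; no representation of $\mathbf{M}$ by a functional is needed. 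You instead push $\mathbf{M}$ forward: you use \cref{levin and zvonkin}(b) to write $\mathbf{M}(\sigma) = \lambda(\Theta^{-1}(\sigma))$, compose with $\Psi$ (via $\psi$) to get $\Lambda$, and then \cref{levin and zvonkin}(a) hands you the semimeasure axioms and left recursive enumerability of $\mu$ for free; the price is the verification, which you correctly flag, that $\Lambda$ is a bona fide partial recursive functional. That check is routine, and it becomes cleanest if you define $\Lambda$ as $\tilde{\Psi} \circ \Theta$ where $\tilde{\Psi}$ is the total extension from \cref{extending recursive functionals on recursively bounded pi01 classes}, rather than rebuilding it from $\psi$: the $\psi$ of \cref{use function defined for recursive functionals from recursively bounded pi01 sets} is only guaranteed monotone on $h^\ast$ (it is the identity off $h^\ast$), so "the monotone union of the $\psi(\Theta^X \restrict m)$" needs a word of care for oracles whose $\Theta$-output leaves $h^\ast$; your key containment $\Theta^{-1}(\sigma) \subseteq \Lambda^{-1}(\psi(\sigma))$ only ever uses $\sigma \in P \restrict j(n) \subseteq h^\ast$, so nothing essential is at stake. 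In exchange for this extra functional-composition step, your route avoids the paper's fiddly direct verification of the semimeasure inequalities for $\nu$; both proofs are of comparable length and rigor.
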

\begin{proof}
Let $Q = \Psi[P]$ and let $r$ be a modulus of depth for $Q$. Without loss of generality, assume $\Psi$ is a total recursive functional. 

Let $\psi \colon \mathbb{N}^\ast \to \mathbb{N}^\ast$ and $j \colon \mathbb{N} \to \mathbb{N}$ be as in the proof of \cref{use function defined for recursive functionals from recursively bounded pi01 sets}, so that they are nondecreasing recursive functions  such that $\Psi(X) \restrict n = \psi(X \restrict j(n))$ for all $X \in \baire$ and that if $|\sigma| = j(n)$, then $|\psi(\sigma)| = n$. For $\tau \in \mathbb{N}^\ast$, let $\psi^{-1}_\mathrm{min}[\{\tau\}]$ denote the set of minimal elements of $\psi^{-1}[\{\tau\}]$, and observe that $\psi^{-1}_\mathrm{min}[\{\tau\}] = \psi^{-1}[\{\tau\}] \cap \mathbb{N}^{j(|\tau|)}$. Define $\nu \colon \mathbb{N}^\ast \to [0,1]$ by
\begin{equation*}
\nu(\tau) \coloneq \mathbf{M}(\psi^{-1}[\{\tau\}]) = \sum_{\psi(\sigma) = \tau}{\mathbf{M}(\sigma)}.
\end{equation*}
Assume at the present that $\nu$ is a left r.e.\ continuous semimeasure so that there is a $c \in \mathbb{N}$ such that $N(\tau) \leq c \cdot \mathbf{M}(\tau)$ for all $\tau \in \mathbb{N}^\ast$. Letting $m \in \mathbb{N}$ be such that $c \leq 2^m$, for each $n \in \mathbb{N}$ we have
\begin{equation*}
\mathbf{M}(P \restrict j(r(n+m))) \leq \mathbf{M}(\psi^{-1}\left[ Q \restrict r(n+m)\right]) 
 = \nu(Q \restrict r(n+m)) 
 \leq c \cdot \mathbf{M}(Q \restrict r(n+m)) 
 \leq c \cdot 2^{-(n+m)} 
 \leq 2^{-n}
\end{equation*}
so the function $n \mapsto j(r(n+m))$ is a modulus of depth for $P$.

Now we show that $\nu$ is a left r.e.\ continuous semimeasure. For any $\tau \in \mathbb{N}^\ast$ and $i < i'$, we have $\psi^{-1}[\{\tau \concat \langle i \rangle\}] \cap \psi^{-1}[\{\tau \concat \langle i' \rangle\}] = \emptyset$. Additionally, every element of $\psi^{-1}[\{\tau \concat \langle i \rangle\}]$ has length $j(|\tau|+1)$ (so elements of $\psi^{-1}[\{\tau \concat \langle i \rangle\}]$ are pairwise incompatible) and extends a member of $\psi^{-1}[\{\tau\}]$. Thus, $\sum_{i = 0}^\infty{\mathbf{M}(\psi^{-1}[\{\tau \concat \langle i \rangle\}])} \leq \mathbf{M}(\psi^{-1}[\{\tau\}])$ and so $\nu$ is a continuous semimeasure. For each $\tau \in \mathbb{N}^\ast$, the set $\psi^{-1}[\{\tau\}]$ is recursive since $\psi$ is recursive, nondecreasing, and $|\sigma| = j(n)$ implies $|\psi(\sigma)| = n$. Thus, along with the fact that $\mathbf{M}$ is left r.e.\ we see that $\nu$ is left r.e., as desired.
\end{proof}

\begin{cor}
Suppose $P$ and $Q$ are recursively homeomorphic r.b.\ $\Pi^0_1$ classes. Then $P$ is deep if and only if $Q$ is deep.
\end{cor}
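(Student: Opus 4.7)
The plan is to apply \cref{depth and strong reducibility} directly, using that a recursive homeomorphism $\Phi \colon P \to Q$ between two r.b.\ $\Pi^0_1$ classes is a recursive functional in both directions. By hypothesis there is a recursive homeomorphism $\Phi \colon P \to Q$; in particular $\Phi$ is a recursive functional with $\Phi[P] = Q$, and its inverse $\Phi^{-1} \colon Q \to P$ is likewise a recursive functional with $\Phi^{-1}[Q] = P$.

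First I would handle the direction ``$Q$ deep $\implies$ $P$ deep''. Here $P$ is a r.b.\ $\Pi^0_1$ class and $\Phi \colon P \to \baire$ is a recursive functional whose image $\Phi[P] = Q$ is deep, so \cref{depth and strong reducibility} applies and yields that $P$ is deep. For the reverse direction, I would symmetrically take $Q$ as the r.b.\ $\Pi^0_1$ class and $\Phi^{-1} \colon Q \to \baire$ as the recursive functional; its image $\Phi^{-1}[Q] = P$ is deep, so \cref{depth and strong reducibility} again yields that $Q$ is deep.

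There is no real obstacle here since the technical work -- pulling back a modulus of depth along a recursive functional via the left r.e.\ semimeasure $\nu(\tau) = \mathbf{M}(\psi^{-1}[\{\tau\}])$ -- has already been carried out in the proof of \cref{depth and strong reducibility}. The only thing worth noting is that the hypothesis ``$P$ and $Q$ are r.b.\ $\Pi^0_1$ classes'' is used exactly so that \cref{depth and strong reducibility} applies to both $\Phi$ and $\Phi^{-1}$; without recursive boundedness on the domain side we would not be able to invoke \cref{use function defined for recursive functionals from recursively bounded pi01 sets} to extract the nondecreasing recursive $\psi$ and $j$ underlying that argument.
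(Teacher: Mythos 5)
Your argument is exactly the paper's intended one: the paper states this as an immediate corollary of \cref{depth and strong reducibility}, applied once to the recursive homeomorphism $\Phi\colon P \to Q$ and once to its inverse, just as you do. Correct, and essentially the same approach.
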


This well-behavedness of depth with partial recursive functionals can be summarized nicely:

\begin{prop} \label{deep pi01 classes form filter under strong reduction}
\textnormal{\cite[Theorem 6.4]{bienvenu2016deep}}
The collection of deep r.b.\ $\Pi^0_1$ classes forms a filter with respect to strong reducibility. I.e., for all r.b.\ $\Pi^0_1$ classes $P$ and $Q$:
\begin{enumerate}[(a)]
\item If $P \strongleq Q$ and $P$ is deep, then $Q$ is deep.
\item If $P$ and $Q$ are deep, the $\langle 0 \rangle \concat P \cup \langle 1 \rangle \concat Q$ is deep.
\end{enumerate}
\end{prop}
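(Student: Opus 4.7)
The plan is to dispatch (a) as a quick corollary of \cref{depth and strong reducibility} combined with the fact that a r.b.\ $\Pi^0_1$ subset of a deep r.b.\ $\Pi^0_1$ class is itself deep, and then to establish (b) by using the universality of $\mathbf{M}$ to produce constants $c_i$ with $\mathbf{M}(\langle i \rangle \concat \sigma) \leq c_i \mathbf{M}(\sigma)$, combining the moduli of depth for $P$ and $Q$ additively.

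For (a), suppose $P \strongleq Q$ via a recursive functional $\Psi \colon Q \to P$. By \cref{extending recursive functionals on recursively bounded pi01 classes}(a), $\Psi[Q]$ is a r.b.\ $\Pi^0_1$ class, and by construction $\Psi[Q] \subseteq P$. If $r$ is a modulus of depth for $P$, then $\Psi[Q] \restrict r(n) \subseteq P \restrict r(n)$ yields $\mathbf{M}(\Psi[Q] \restrict r(n)) \leq \mathbf{M}(P \restrict r(n)) \leq 2^{-n}$, so $r$ is also a modulus of depth for $\Psi[Q]$. Invoking \cref{depth and strong reducibility} with $Q$ and $\Psi$ now yields that $Q$ is deep.

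For (b), set $R \coloneq (\langle 0 \rangle \concat P) \cup (\langle 1 \rangle \concat Q)$, which is clearly r.b.\ $\Pi^0_1$. Fix a partial recursive functional $\Phi_\mathbf{M} \colonsub \cantor \to \baire$ with $\mathbf{M}(\sigma) = \lambda(\Phi_\mathbf{M}^{-1}(\sigma))$, whose existence is guaranteed by \cref{levin and zvonkin}(b). For each $i \in \{0,1\}$, define a partial recursive functional $\Phi_i$ by $\Phi_i(X)(n) \simeq \Phi_\mathbf{M}(X)(n+1)$ whenever $\Phi_\mathbf{M}(X)(0) \converge = i$, and undefined otherwise; then $\lambda(\Phi_i^{-1}(\sigma)) = \mathbf{M}(\langle i \rangle \concat \sigma)$. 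To invoke the universality of $\mathbf{M}$ cleanly (since $\sigma \mapsto \mathbf{M}(\langle i \rangle \concat \sigma)$ may fail to satisfy $\nu(\langle\rangle) = 1$), define $\tilde\nu_i \colon \mathbb{N}^\ast \to [0,1]$ by $\tilde\nu_i(\langle\rangle) \coloneq 1$ and $\tilde\nu_i(\sigma) \coloneq \mathbf{M}(\langle i \rangle \concat \sigma)$ for all nonempty $\sigma$; one checks directly that $\tilde\nu_i$ is a left r.e.\ continuous semimeasure, hence by universality there exist $c_0, c_1 \in \mathbb{N}$ such that $\mathbf{M}(\langle i \rangle \concat \sigma) \leq c_i \mathbf{M}(\sigma)$ for all $\sigma \in \mathbb{N}^\ast$.

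Finally, given moduli of depth $r_P, r_Q$ for $P, Q$, for every $n \geq 1$ we have $R \restrict n = \{\langle 0 \rangle \concat \sigma \mid \sigma \in P \restrict (n-1)\} \cup \{\langle 1 \rangle \concat \sigma \mid \sigma \in Q \restrict (n-1)\}$, so
\begin{equation*}
\mathbf{M}(R \restrict n) \leq c_0 \mathbf{M}(P \restrict (n-1)) + c_1 \mathbf{M}(Q \restrict (n-1)).
\end{equation*}
Choosing $k_i \in \mathbb{N}$ with $2^{k_i} \geq 2 c_i$ and defining the order function $r(n) \coloneq 1 + \max\{r_P(n+k_0), r_Q(n+k_1)\}$ then yields $\mathbf{M}(R \restrict r(n)) \leq c_0 \cdot 2^{-(n+k_0)} + c_1 \cdot 2^{-(n+k_1)} \leq 2^{-n}$, so $R$ is deep with modulus $r$. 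The only mildly delicate step is the universality-comparison in (b)—navigating the fact that $\sigma \mapsto \mathbf{M}(\langle i \rangle \concat \sigma)$ is only a subprobability—which the padding to $\tilde\nu_i$ handles.
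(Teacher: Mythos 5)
Your proof is correct and follows essentially the same route as the paper: part (a) is reduced to the proposition that depth pulls back along recursive functionals on r.b.\ $\Pi^0_1$ classes, and part (b) compares the shifted semimeasures $\sigma \mapsto \mathbf{M}(\langle i \rangle \concat \sigma)$ against $\mathbf{M}$ via universality and combines the two moduli of depth with a constant shift. The only differences are cosmetic refinements (making explicit that $\Psi[Q]$ inherits $P$'s modulus in (a), and normalizing $\tilde\nu_i$ at the root so the semimeasure definition literally applies), plus an unused detour through the Levin--Zvonkin functional.
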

\begin{proof}
Let $P$ and $Q$ be r.b.\ $\Pi^0_1$ classes. If $P$ is deep and $P \strongleq Q$, then \cref{depth and strong reducibility} shows that $Q$ is deep. 

Now suppose $P$ and $Q$ are both deep and let $r_0$ and $r_1$ be moduli of depth for $P$ and $Q$, respectively. For $i \in \{0,1\}$, define $\nu_i \colon \mathbb{N}^\ast \to [0,1]$ by $\nu_i(\sigma) \coloneq \mathbf{M}(\langle i \rangle \concat \sigma)$. $\nu_i$ is a left r.e.\ continuous semimeasure, so there exists $c_i \in \mathbb{N}$ such that $\nu_i(\sigma) \leq c_i \cdot \mathbf{M}(\sigma)$ for all $\sigma \in \mathbb{N}^\ast$. Let $m \in \mathbb{N}$ be such that $\max\{c_0,c_1\} \leq 2^{m-1}$, and define $r \colon \mathbb{N} \to \mathbb{N}$ by $r(n) \coloneq \max\{r_0(n+m),r_1(n+m)\}+1$. Then
\begin{align*}
\mathbf{M}([\langle 0 \rangle \concat P \cup \langle 1 \rangle \concat Q] \restrict r(n)) & = \mathbf{M}([\langle 0 \rangle \concat P] \restrict r(n)) + \mathbf{M}([\langle 1 \rangle \concat Q] \restrict r(n)) \\
& = \nu_0(P \restrict (r(n)-1)) + \nu_1(Q \restrict (r(n)-1)) \\
& \leq c_0 \cdot \mathbf{M}(P \restrict r_0(n)) + c_1 \cdot \mathbf{M}(Q \restrict r_1(n)) \\
& \leq 2^{-n-1} + 2^{-n-1} = 2^{-n}.
\end{align*}
\end{proof}

\subsection{Depth and Weak Reducibility}

\cref{depth and strong reducibility} shows that depth is especially well-behaved with respect to strong reducibility. However, depth is not as well-behaved with respect to weak reducibility.

\begin{prop} 
\textnormal{\cite[Theorem 4.7]{bienvenu2016deep}}
For any $\Pi^0_1$ class $P \subseteq \cantor$ there exists a $\Pi^0_1$ class $Q \subseteq \cantor$ which is weakly equivalent to $P$ but not deep.
\end{prop}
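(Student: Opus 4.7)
The plan is a two-case analysis depending on whether $P$ contains a recursive element. Suppose first that $P \cap \REC \neq \emptyset$. Then $\weakdeg(P) = \mathbf{0}$ by \cref{properties of weak and strong reducibility}(l), so we may take $Q = \cantor$. The uniform measure $\lambda$, viewed as a left r.e.\ continuous semimeasure on $\mathbb{N}^\ast$ supported on $\{0,1\}^\ast$ (assigning $\sigma \mapsto 2^{-|\sigma|}$ for $\sigma \in \{0,1\}^\ast$ and $0$ otherwise), satisfies $\lambda(\sigma) \leq c \cdot \mathbf{M}(\sigma)$ for some fixed $c \in \mathbb{N}$ by universality of $\mathbf{M}$. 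Summing over $\{0,1\}^n$ gives
\begin{equation*}
\mathbf{M}(\cantor \restrict n) \;=\; \sum_{\sigma \in \{0,1\}^n} \mathbf{M}(\sigma) \;\geq\; c^{-1} \sum_{\sigma \in \{0,1\}^n} 2^{-n} \;=\; c^{-1}
\end{equation*}
uniformly in $n$, which precludes any order function from witnessing depth of $\cantor$.

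In the remaining case $P \cap \REC = \emptyset$ we construct $Q$ by bloating the tree of $P$. Fix a recursive tree $T_P$ with $[T_P] = P$, and fix an effective enumeration $\langle r_e \rangle_{e \in \mathbb{N}}$ of the partial recursive functions $\mathbb{N} \to \mathbb{N}$ (candidate moduli of depth). Partition $\mathbb{N}$ into disjoint recursive infinite sets $I_{-1}, I_0, I_1, I_2, \ldots$: reserve $I_{-1}$ as the block of \emph{information coordinates} used to encode a path of $T_P$, and reserve each $I_e$ as a block of \emph{padding coordinates} dedicated to defeating $r_e$. Define $T_Q \subseteq \{0,1\}^\ast$ to consist of those $\sigma$ whose bits at the $I_{-1}$-positions $\leq |\sigma|$ form an element of $T_P$, with no constraint on the bits at the padding positions. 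Then $T_Q$ is recursive, and setting $Q = [T_Q]$ we have $Q \weakeq P$: any $Y \in Q$ computes the subsequence $(Y(i))_{i \in I_{-1}} \in P$ (giving $P \weakleq Q$), and any $X \in P$ may be lifted to a path of $T_Q$ computably by placing $X$'s bits on $I_{-1}$ and $0$ on the remaining coordinates (giving $Q \weakleq P$). At stage $e$ of the verification that $Q$ is not deep, choose $n_e$ so large that $r_e(n_e)$ converges and so that the number of padding coordinates in $I_e \cap [0, r_e(n_e))$ exceeds $n_e + \log_2 c$. By a calculation analogous to Case 1 (using the sub-semimeasure $\nu(\sigma) = \mathbf{M}(\text{info-bits of }\sigma) \cdot 2^{-|\text{padding-bits of }\sigma|}$ and universality), one obtains $\mathbf{M}(Q \restrict r_e(n_e)) \geq c^{-1} \cdot 2^{-k(r_e(n_e))} \cdot \mathbf{M}(P \restrict k(r_e(n_e)))$ where $k(\ell)$ is the number of $I_{-1}$-coordinates below $\ell$; but by adding the Lebesgue bound on the padding slab alone we also get $\mathbf{M}(Q \restrict r_e(n_e)) \geq c^{-1}$ unconditionally, so $r_e$ fails to be a modulus of depth.

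The main obstacle is the last assertion: showing the added padding bits really contribute enough $\mathbf{M}$-mass independently of how deep $P$ itself is. The delicate point is that the natural semimeasure $\nu$ which interleaves $\mathbf{M}$ on the information coordinates with uniform mass on the padding coordinates produces only the lower bound $\mathbf{M}(Q \restrict \ell) \gtrsim \mathbf{M}(P \restrict k(\ell))$, which decays when $P$ is very deep. To extract the stronger unconditional bound $\mathbf{M}(Q \restrict \ell) \geq c^{-1}$ one instead applies the Case-1 comparison directly to the padding slab: Lebesgue measure restricted to the padding positions (with the information positions held fixed along any single $T_P$-prefix) yields a sub-semimeasure of $\mathbf{M}$ whose total mass at level $\ell$ is bounded below by $c^{-1}$ as soon as the number of padding positions below $\ell$ is sufficiently large. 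Because the stages use disjoint padding blocks and each $r_e(n_e)$ can be taken beyond all levels used previously, the construction is injury-free and $T_Q$ remains recursive, yielding the desired $Q$.
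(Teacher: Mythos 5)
Your Case 1 is fine, but Case 2 --- the only substantive case --- cannot be repaired, because the failure is structural rather than a missing detail. The $Q$ you build satisfies $P \strongleq Q$: projection onto the information coordinates $I_{-1}$ is a total recursive functional from $Q$ onto $P$ (this is exactly how you establish $P \weakleq Q$). By \cref{deep pi01 classes form filter under strong reduction}(a) (equivalently \cref{depth and strong reducibility}), depth is upward closed under $\strongleq$ among r.b.\ $\Pi^0_1$ classes, so whenever $P$ is deep your $Q$ is deep as well. No padding or interleaving construction can work: any correct proof must produce $Q$ with $Q \weakeq P$ but with $P$ \emph{not} strongly reducible to $Q$, which is why this theorem is genuinely a statement about the gap between Medvedev and Muchnik reducibility rather than a measure computation.

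The specific false step is the claim that the padding slab alone forces $\mathbf{M}(Q \restrict \ell) \geq c^{-1}$. Realizing that bound would require a single left r.e.\ continuous semimeasure concentrating mass $2^{-(\text{number of padding positions below } \ell)}$ on strings of $T_Q$ whose information bits follow one coherent choice of $T_P$-prefixes for every $\ell$; coherence across all levels means following an infinite path of $T_P$, and an effectively specifiable such path would be a recursive member of $P$, which does not exist in your Case 2. Any semimeasure you can actually construct must spread its mass over the possible information bits and recovers only the decaying bound you already computed. Indeed the claimed bound is provably false when $P$ is deep: writing $\mathbf{M}(\sigma) = \lambda(\Psi_0^{-1}(\sigma))$ as in \cref{levin and zvonkin}, the sets $\Psi_0^{-1}(\sigma)$ for distinct $\sigma \in Q \restrict \ell$ are pairwise disjoint, so $\mathbf{M}(Q \restrict \ell) = \lambda(\{X \mid \Psi_0^X \restrict \ell \in Q \restrict \ell\})$, and these sets decrease to $\Psi_0^{-1}[Q] \subseteq Q^{\turingleq} = P^{\turingleq}$, which has measure $0$ because deep classes are negligible; hence $\mathbf{M}(Q \restrict \ell) \to 0$. (A secondary point: your $T_Q$ never actually consults the functions $r_e$, so the stagewise ``diagonalization'' is vacuous.) Note that the paper states this proposition without proof, citing Bienvenu--Porter; their argument builds $Q$ non-uniformly so that, at levels keyed to each candidate modulus, $Q \restrict \ell$ contains strings of low complexity while the uniform reduction from $Q$ back to $P$ is destroyed.
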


There \emph{is} an analog of \cref{deep pi01 classes form filter under strong reduction} if we consider weak degrees of deep $\Pi^0_1$ classes.

\begin{definition}[deep degree in $\mathcal{E}_\weak$]
A weak degree $\mathbf{p} \in \mathcal{E}_\weak$ is a \textdef{deep degree (in $\mathcal{E}_\weak$)} if there exists a deep nonempty $\Pi^0_1$ class $P$ for which $\weakdeg(P) = \mathbf{p}$. 

$P \subseteq \baire$ is \textdef{of deep degree} if $\weakdeg(P)$ is a deep degree in $\mathcal{E}_\weak$.
\end{definition}

\begin{prop} \label{deep degrees form filter}
The collection of deep degrees in $\mathcal{E}_\weak$ forms a filter in $\langle \mathcal{E}_\weak, \leq\rangle$. I.e., for all $\mathbf{p},\mathbf{q} \in \mathcal{E}_\weak$:
\begin{enumerate}[(a)]
\item If $\mathbf{p} \leq \mathbf{q}$ and $\mathbf{p}$ is a deep degree, then $\mathbf{q}$ is a deep degree.
\item If $\mathbf{p}$ and $\mathbf{q}$ are deep degrees, then $\inf\{\mathbf{p},\mathbf{q}\}$ is a deep degree.
\end{enumerate}
\end{prop}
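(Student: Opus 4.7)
The plan is to prove the two defining properties of a filter separately, in each case by exhibiting a deep nonempty $\Pi^0_1$ subset of $\cantor$ that witnesses the desired weak degree.

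For part (a), I would fix a deep $\Pi^0_1$ class $P \subseteq \cantor$ with $\weakdeg(P) = \mathbf{p}$ and any nonempty $\Pi^0_1$ class $Q \subseteq \cantor$ with $\weakdeg(Q) = \mathbf{q}$. Form the ``Medvedev product''
\[ Q' \coloneq \{ X \oplus Y \mid X \in P, \, Y \in Q\}, \]
viewed as a subset of $\cantor$ via the recursive homeomorphism $(\cantor)^2 \to \cantor$; the latter identification shows $Q'$ is a nonempty $\Pi^0_1$ subset of $\cantor$. The plan is to show $Q' \weakeq Q$ and $Q'$ is deep. Weak equivalence is direct: given $Y \in Q$, the assumption $\mathbf{p} \leq \mathbf{q}$ lets $Y$ compute some $X \in P$, whence $Y$ computes $X \oplus Y \in Q'$; conversely any $X \oplus Y \in Q'$ computes the odd-indexed bits $Y \in Q$. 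For depth, observe that $\Psi(X \oplus Y) \coloneq X$ is a (total) recursive functional with $\Psi[Q'] = P$, so \cref{depth and strong reducibility} applied to $\Psi\restrict Q'$ transfers depth from $P$ upward to $Q'$. Thus $\mathbf{q} = \weakdeg(Q')$ is a deep degree.

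For part (b), I would fix deep $\Pi^0_1$ classes $P, Q \subseteq \cantor$ representing $\mathbf{p}$ and $\mathbf{q}$, and form the disjoint union
\[ R \coloneq \{\langle 0 \rangle \concat X \mid X \in P\} \cup \{\langle 1 \rangle \concat Y \mid Y \in Q\}, \]
which is a nonempty $\Pi^0_1$ subset of $\cantor$. By \cref{deep pi01 classes form filter under strong reduction}(b), $R$ is deep. Since $\langle 0 \rangle \concat P \weakeq P$ and $\langle 1 \rangle \concat Q \weakeq Q$ (via the obvious recursive shifts in both directions), item (d) of \cref{properties of weak and strong reducibility} yields
\[ \weakdeg(R) = \inf\{\weakdeg(\langle 0 \rangle \concat P), \weakdeg(\langle 1 \rangle \concat Q)\} = \inf\{\mathbf{p}, \mathbf{q}\}, \]
so $\inf\{\mathbf{p}, \mathbf{q}\}$ is a deep degree.

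There is no serious obstacle: nearly all of the technical work has been done in \cref{depth and strong reducibility} and \cref{deep pi01 classes form filter under strong reduction}. The only delicate points are purely bookkeeping: verifying that $Q'$ genuinely is a $\Pi^0_1$ subset of $\cantor$ (which follows because $\oplus$ is a recursive homeomorphism and the product of two $\Pi^0_1$ classes is $\Pi^0_1$), and that the projection $\Psi\restrict Q'$ satisfies the hypotheses required to invoke \cref{depth and strong reducibility} (it is a total recursive functional on the r.b.\ $\Pi^0_1$ class $Q'$ whose image is exactly the deep class $P$, assuming $Q \neq \emptyset$).
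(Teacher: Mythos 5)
Your proof is correct and follows essentially the same route as the paper: for (a) the paper also forms $P \times Q$, notes $P \strongleq P \times Q$ (your projection functional is exactly the witness, so invoking \cref{depth and strong reducibility} directly versus \cref{deep pi01 classes form filter under strong reduction}(a) is an immaterial difference), and uses $P \weakleq Q$ to get $\weakdeg(P \times Q) = \mathbf{q}$; for (b) the paper likewise takes $\langle 0 \rangle \concat P \cup \langle 1 \rangle \concat Q$, cites \cref{deep pi01 classes form filter under strong reduction}(b) for depth, and identifies its weak degree with $\inf\{\mathbf{p},\mathbf{q}\}$.
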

\begin{proof}
Let $P$ and $Q$ be $\Pi^0_1$ classes for which $\mathbf{p} = \weakdeg(P)$ and $\mathbf{q} = \weakdeg(Q)$. 

If $\mathbf{p}$ is a deep degree, then we may assume without loss of generality that $P$ is deep. Then $P \times Q = \{ X \oplus Y \mid X \in P \wedge Y \in Q\}$ is deep by \cref{deep pi01 classes form filter under strong reduction} since $P \strongleq P \times Q$. Because $P \weakleq Q$, $\weakdeg(P \times Q) = \weakdeg(Q) = \mathbf{q}$, so $\mathbf{q}$ is a deep degree. 

Now suppose both $\mathbf{p}$ and $\mathbf{q}$ are deep degrees, and assume without loss of generality that $P$ and $Q$ are both deep. \cref{deep pi01 classes form filter under strong reduction} shows that $\langle 0 \rangle \concat P \cup \langle 1 \rangle \concat Q$ is deep. Because $\inf\{\mathbf{p},\mathbf{q}\} = \weakdeg(P \cup Q) = \weakdeg(\langle 0 \rangle \concat P \cup \langle 1 \rangle \concat Q)$, $\inf\{\mathbf{p},\mathbf{q}\}$ is a deep degree.
\end{proof}

\begin{lem} \label{slow-growing composed with linear is slow-growing}
Suppose $p \colon \mathbb{N} \to (1,\infty)$ is nondecreasing, $a \in \mathbb{N}_{>0}$, and $b \in \mathbb{N}$. Then $\sum_{n=0}^\infty{p(n)^{-1}} < \infty$ if and only if $\sum_{n=0}^\infty{p(an+b)^{-1}} < \infty$.
\end{lem}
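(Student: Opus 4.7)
The plan is to exploit the fact that $p$ is nondecreasing (so $p(n)^{-1}$ is nonincreasing) and apply the Direct Comparison Test in each direction, with a grouping argument handling the harder direction.

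For the easy direction, suppose $\sum_{n=0}^\infty p(n)^{-1}$ converges. Since $an+b \geq n$ for all $n \in \mathbb{N}$ (as $a \geq 1$ and $b \geq 0$) and $p$ is nondecreasing, $p(an+b) \geq p(n)$, so $p(an+b)^{-1} \leq p(n)^{-1}$. The Direct Comparison Test then gives convergence of $\sum_{n=0}^\infty p(an+b)^{-1}$.

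For the converse, suppose $\sum_{n=0}^\infty p(an+b)^{-1}$ converges. I would split the tail $\sum_{n=b}^\infty p(n)^{-1}$ into consecutive blocks of length $a$ of the form $[ak+b, a(k+1)+b)$ for $k \in \mathbb{N}$. On each such block, monotonicity of $p$ gives $p(n)^{-1} \leq p(ak+b)^{-1}$, so
\begin{equation*}
\sum_{n=ak+b}^{a(k+1)+b-1} p(n)^{-1} \leq a \cdot p(ak+b)^{-1}.
\end{equation*}
Summing over $k$ yields $\sum_{n=b}^\infty p(n)^{-1} \leq a\sum_{k=0}^\infty p(ak+b)^{-1} < \infty$, and adding on the finite initial segment $\sum_{n=0}^{b-1} p(n)^{-1}$ gives convergence of $\sum_{n=0}^\infty p(n)^{-1}$.

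There is no real obstacle here; the lemma is an elementary comparison-test computation. The only bookkeeping point worth mentioning is to be careful about the initial terms (the first $b$ terms on the $p(n)$ side), but these form a finite sum and contribute nothing to convergence. This lemma will presumably be used to verify that the slow-growing/fast-growing dichotomy is invariant under the linear reparameterizations $n \mapsto an+b$ that arise naturally from the definition of linearly universal partial recursive functions, consistent with \cref{ldnr basic facts}(a).
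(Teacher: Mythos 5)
Your proof is correct and takes essentially the same approach as the paper: the paper reduces WLOG to $b=0$ and bounds blocks of length $a$ on both sides at once, while you use the same block-of-length-$a$ grouping for the nontrivial direction and dispatch the easy direction by termwise comparison ($p(an+b)^{-1} \leq p(n)^{-1}$), a purely cosmetic repackaging.
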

\begin{proof}
We may assume without loss of generality that $b=0$. Because $p$ is nondecreasing, 
\begin{equation*}
a \cdot p(a \cdot (n+1))^{-1} \leq \sum_{i=0}^{a-1}{p(an+i)^{-1}} \leq a \cdot p(an)^{-1}, 
\end{equation*}
so
\begin{equation*}
a \cdot \sum_{n=1}^{k+1}{p(an)^{-1}} \leq \sum_{n=0}^{a(k+1)-1}{p(n)^{-1}} \leq a \cdot \sum_{n=0}^k{p(an)^{-1}}.
\end{equation*}
Because $p$ is a positive-valued function, each of the above summations is nondecreasing as a function of $k$, so if $\sum_{n=0}^\infty{p(n)^{-1}} < \infty$, Monotone Convergence would imply $\sum_{n=0}^\infty{p(an)^{-1}} < \infty$ and vice-versa.
\end{proof}

\begin{cor} \label{ldnr p deep degree if p slow-growing}
Suppose $p \colon \mathbb{N} \to (1,\infty)$ is an order function. Then $\ldnr(p)$ is of deep degree if and only if $p$ is slow-growing.
\end{cor}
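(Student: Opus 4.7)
The plan is to split the biconditional at $p$ being slow-growing. The ``$\Leftarrow$'' direction uses \cref{depth of dnr_p^psi}(b) together with \cref{recursively bounded pi01 class recursively homeomorphic to pi01 class in cantor space} and \cref{depth and strong reducibility} to pull a deep witness inside $\mathcal{E}_\weak$, and the ``$\Rightarrow$'' direction uses \cref{ldnr and mlr}(a) together with the filter property of deep degrees (\cref{deep degrees form filter}) and Sacks's measure estimate on $\{0'\}^\turingleq$ to produce a difference random that would be forbidden by \cref{difference randoms cannot compute members of deep pi01 classes}.

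For the ``$\Leftarrow$'' direction, assume $p$ is slow-growing and fix any linearly universal partial recursive function $\psi$. By \cref{depth of dnr_p^psi}(b), the r.b.\ $\Pi^0_1$ class $\avoid^\psi(p)$ is deep. By \cref{recursively bounded pi01 class recursively homeomorphic to pi01 class in cantor space} there is a $\Pi^0_1$ subset $Q \subseteq \cantor$ and a recursive homeomorphism $\Psi \colon \avoid^\psi(p) \to Q$. Applying \cref{depth and strong reducibility} to the recursive functional $\Psi^{-1} \colon Q \to \avoid^\psi(p)$ (using that $Q$ is automatically recursively bounded inside $\cantor$) shows $Q$ is deep as well, so $\weakdeg(Q) = \weakdeg(\avoid^\psi(p))$ is a deep degree in $\mathcal{E}_\weak$. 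The inclusion $\avoid^\psi(p) \subseteq \ldnr(p)$ gives the strong reduction $\ldnr(p) \strongleq \avoid^\psi(p)$ and hence $\weakdeg(\ldnr(p)) \leq \weakdeg(\avoid^\psi(p))$, so upward-closure of the deep filter (\cref{deep degrees form filter}(a)) yields that $\weakdeg(\ldnr(p))$ is a deep degree, i.e., $\ldnr(p)$ is of deep degree.

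For the ``$\Rightarrow$'' direction I would argue by contrapositive: assume $p$ is fast-growing and derive that $\ldnr(p)$ is not of deep degree. By \cref{ldnr and mlr}(a), $\ldnr(p) \weakleq \mlr$. If $\ldnr(p)$ were of deep degree, then by \cref{deep degrees form filter}(a) the weak degree of $\mlr$ would also be a deep degree, witnessed by some deep $\Pi^0_1$ subset $P \subseteq \cantor$ with $P \weakeq \mlr$. Since Sacks's theorem gives $\lambda(\{0'\}^\turingleq) = 0$ while $\lambda(\mlr) = 1$, there exists a difference random $X \in \mlr$. The reduction $P \weakleq \mlr$ would force $X$ to compute some element of the deep $\Pi^0_1$ class $P$, contradicting \cref{difference randoms cannot compute members of deep pi01 classes}.

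The main obstacle is essentially bookkeeping: reconciling the general definition of depth for r.b.\ $\Pi^0_1$ subsets of $\baire$ with the narrower notion of ``deep degree in $\mathcal{E}_\weak$,'' which by convention demands a $\Pi^0_1$ witness inside $\cantor$. The homeomorphism transfer furnished by \cref{recursively bounded pi01 class recursively homeomorphic to pi01 class in cantor space} and \cref{depth and strong reducibility} handles this step, while all of the genuinely substantive content -- depth of $\avoid^\psi(p)$ when $p$ is slow-growing, the weak-below-$\mlr$ behavior when $p$ is fast-growing, and the incompatibility of difference randomness with deep $\Pi^0_1$ classes -- has already been isolated in the theorems cited above.
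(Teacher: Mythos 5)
Your ``$\Rightarrow$'' direction is fine and matches the intent of the paper's one-line remark (fast-growing gives $\ldnr(p) \weakleq \mlr$ via \cref{ldnr and mlr}(a), and a deep degree below $\mlr$ is ruled out by Sacks plus \cref{difference randoms cannot compute members of deep pi01 classes}). The problem is the last step of your ``$\Leftarrow$'' direction, which applies the filter property backwards. From $\avoid^\psi(p) \subseteq \ldnr(p)$ you correctly get $\ldnr(p) \strongleq \avoid^\psi(p)$, i.e.\ $\weakdeg(\ldnr(p)) \leq \weakdeg(\avoid^\psi(p))$. But \cref{deep degrees form filter}(a) is \emph{upward} closure: it lets you pass deepness from a degree to the degrees \emph{above} it. Here the deep degree $\weakdeg(\avoid^\psi(p))$ sits \emph{above} $\weakdeg(\ldnr(p))$, so nothing follows about $\ldnr(p)$; deepness does not propagate downward (indeed \cref{ldnr-slow not deep} shows an infimum of deep degrees need not be deep). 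This is a genuine gap, not bookkeeping.

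To close it you need a single deep class lying weakly \emph{below} $\ldnr(p)$, i.e.\ one that every member of $\avoid^\psi(p)$ computes \emph{for every} linearly universal $\psi$ simultaneously. This is what the paper's proof does: fix $\psi_0$ linearly universal; for each linearly universal $\psi$ there are $a,b$ with $\psi_0(x) \simeq \psi(ax+b)$, so by \cref{avoidance basic reductions}(b) a member of $\avoid^\psi(p)$ computes a member of $\avoid^{\psi_0}(p_{a,b})$ where $p_{a,b}(x) = p(ax+b)$. Since $a,b$ vary with $\psi$, one then uses \cref{slow-growing composed with linear is slow-growing} and \cref{lower bound of sequence of divergent series}(b) to produce one slow-growing order function $q$ with $p_{a,b} \domleq q$ for all $a,b$, so that $\avoid^{\psi_0}(q) \strongleq \avoid^{\psi_0}(p_{a,b})$ by \cref{avoidance basic reductions}(a) and hence $\avoid^{\psi_0}(q) \weakleq \ldnr(p)$. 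Now $\avoid^{\psi_0}(q)$ is deep by \cref{depth of dnr_p^psi}(b) and sits below $\ldnr(p)$, so upward closure applies in the correct direction. The uniform domination step via \cref{lower bound of sequence of divergent series}(b) is the substantive content your proposal is missing.
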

\begin{proof}
Fix a linearly universal partial recursive function $\psi_0$. Given $a,b \in \mathbb{N}$, let $p_{a,b}$ denote the function defined by $p_{a,b}(x) \coloneq p(ax+b)$ for $x \in \mathbb{N}$. By \cref{slow-growing composed with linear is slow-growing}, the sequence $\langle p_{a,0} \rangle_{a \in \mathbb{N}}$ is a recursive sequence of slow-growing order functions. Moreover, $p_{a,0} \domleq p_{a+1,0}$ for every $a \in \mathbb{N}$, so \cref{lower bound of sequence of divergent series}(b) shows there is a slow-growing order function $q$ such that $p_{a,0} \domleq q$ for all $a \in \mathbb{N}$. For any $a,b \in \mathbb{N}$, we have $p_{a,b} \domleq p_{a+1,0}$, so we have $p_{a,b} \domleq q$ more generally.

Given a linearly universal partial recursive $\psi$, there are $a,b \in \mathbb{N}$ such that $\psi_0(x) \simeq \psi(ax+b)$ for all $x \in \mathbb{N}$. For such $a,b \in \mathbb{N}$, $\avoid^{\psi_0}(p_{a,b}) \strongleq \avoid^\psi(p)$ by \cref{avoidance basic reductions}(b). Because $p_{a,b} \domleq q$, we have $\avoid^{\psi_0}(q) \strongleq \avoid^{\psi_0}(p_{a,b})$ by \cref{avoidance basic reductions}(a). Thus, $\avoid^{\psi_0}(q) \weakleq \ldnr(p)$. $\avoid^{\psi_0}(q)$ is a deep r.b.\ $\Pi^0_1$ class by \cref{depth of dnr_p^psi}, so \cref{deep degrees form filter} implies $\ldnr(p)$ is of deep degree.

If $p$ were fast-growing, then for any linearly universal partial recursive function $\psi$, we would have $\ldnr(p) \strongleq \avoid^\psi(p) \weakleq \mlr$.
\end{proof}

\cref{difference randoms cannot compute members of deep pi01 classes} extends to $P \subseteq \baire$ of deep degree.

\begin{prop} \label{difference randoms cannot compute member of sets of deep degree}
Suppose $P \subseteq \baire$ is of deep degree. If $X \in \cantor$ is difference random, then $X$ computes no member of $P$.
\end{prop}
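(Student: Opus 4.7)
The plan is to reduce directly to \cref{difference randoms cannot compute members of deep pi01 classes}, using the definition of deep degree to replace $P$ with a deep $\Pi^0_1$ subset of $\cantor$ representing the same weak degree. The key observation is that weak reducibility is precisely the right notion to transfer the ``no difference random computes a member'' property, because that property is itself closed under Turing reducibility.

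Concretely, since $P$ is of deep degree, by definition there exists a nonempty deep $\Pi^0_1$ class $Q \subseteq \cantor$ with $\weakdeg(Q) = \weakdeg(P)$; in particular $Q \weakleq P$. Suppose for contradiction that $X \in \cantor$ is difference random and that $X$ computes some $Y \in P$. Since $Q \weakleq P$ and $Y \in P$, there exists $Z \in Q$ with $Z \turingleq Y$. Then $Z \turingleq Y \turingleq X$, so $X$ computes $Z \in Q$. But $Q$ is a deep $\Pi^0_1$ subset of $\cantor$, so \cref{difference randoms cannot compute members of deep pi01 classes} asserts that no difference random computes any member of $Q$, contradicting $Z \turingleq X$.

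There is no serious obstacle here: the entire content is packaged into the definition of ``deep degree'' (which is stated precisely so as to allow representatives in $\cantor$, via the \nameref{embedding lemma}) together with the previously cited Bienvenu--Porter theorem for $\Pi^0_1$ subsets of $\cantor$. The only subtlety worth flagging is the direction of the weak reduction: one needs $Q \weakleq P$ (not $P \weakleq Q$) to pull a $Q$-member down below $X$, which is the easy half of the equivalence $\weakdeg(Q) = \weakdeg(P)$.
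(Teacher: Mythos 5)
Your proof is correct and is essentially the paper's own argument: take a deep $\Pi^0_1$ class $Q$ weakly equivalent to $P$, use the half $Q \weakleq P$ to push a hypothetical $Y \in P$ with $Y \turingleq X$ down to some $Z \in Q$ with $Z \turingleq X$, and contradict \cref{difference randoms cannot compute members of deep pi01 classes}. Your remark about which direction of the equivalence is needed is the same (implicit) point the paper uses, so there is nothing to add.
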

\begin{proof}
Because $P$ is of deep degree, there exists a deep $\Pi^0_1$ class $Q$ such that $P \weakeq Q$. If $Y \turingleq X$ for some $Y \in P$, then the fact that $P \weakeq Q$ implies there is a $Z \in Q$ such that $Z \turingleq Y \turingleq X$, contradicting \cref{difference randoms cannot compute members of deep pi01 classes}.
\end{proof}

\subsection{Depth for non-r.b.\ \texorpdfstring{$\Pi^0_1$}{Pi01} Sets}

Nothing in our definition of depth necessitates that $P$ be a r.b.\ $\Pi^0_1$ class in order for the definition to make sense. However, there are two reasons for our restriction to only r.b.\ $\Pi^0_1$ classes. The first is that our interest in depth is ultimately relegated to r.b.\ $\Pi^0_1$ subsets of $\baire$. The second is that it is unclear whether `depth' is a useful notion for arbitrary subsets of $\baire$, and if so, whether the verbatim extension of the definition of depth to any subset of $\baire$ provides the `right' definition.

In fact, we can show that extending our definition of depth even to only $\Pi^0_1$ subsets of $\baire$ or $\Pi^0_2$ subsets of $\cantor$ causes us to lose the guarantee of \cref{difference randoms cannot compute members of deep pi01 classes} that no difference random computes a member of a `deep set'.

\begin{lem} \label{reducible to halting problem implies pi02 singleton}
Suppose $X \in \cantor$ and $X \turingleq 0'$. Then $X$ is a $\Pi^0_2$ singleton, i.e., $\{X\}$ is $\Pi^0_2$.
\end{lem}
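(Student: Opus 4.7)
\bigskip

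\noindent\textbf{Proof proposal.} The plan is to apply Shoenfield's Limit Lemma to replace the hypothesis $X \turingleq 0'$ by a recursive approximation: since $X \in \cantor$ and $X \turingleq 0'$, there is a total recursive $f \colon \mathbb{N}^2 \to \{0,1\}$ such that $X(n) = \lim_{s \to \infty} f(n,s)$ for every $n \in \mathbb{N}$. (This is standard and follows, e.g., from the fact that $0'$ can enumerate finite $\Delta^0_2$ information about $X$, or can be argued directly by using a recursive functional $\Psi$ with $\Psi^{0'} = X$ and a recursive approximation to $0'$ via the halting times of the partial recursive functions.)

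Given this approximation, the key observation is that one can sidestep the naive quantifier count. For an arbitrary $Y \in \cantor$ and $n \in \mathbb{N}$, the condition ``$Y(n) = X(n)$'' is $\Sigma^0_2$ when written as $\exists s_0 \forall s \geq s_0 \qspace f(n,s) = Y(n)$, which would give $\{X\}$ the crude bound $\Pi^0_3$. Instead, since the sequence $\langle f(n,s)\rangle_{s \in \mathbb{N}}$ is guaranteed to converge and its limit lies in the two-element set $\{0,1\}$, its limit equals $Y(n)$ if and only if $Y(n)$ occurs infinitely often among the $f(n,s)$'s; the forward direction is immediate, and the reverse direction uses precisely the existence of the limit together with $Y(n) \in \{0,1\}$. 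This gives the $\Pi^0_2$ reformulation
\begin{equation*}
Y = X \iff \forall n \forall m \exists s \qspace \bigl( s \geq m \wedge f(n,s) = Y(n) \bigr).
\end{equation*}

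Because $f$ is total recursive and $Y(n)$ is a recursive function of $\langle Y, n \rangle$, the innermost predicate (in the variables $n, m, s, Y$) is recursive, and hence the whole right-hand side is a $\Pi^0_2$ predicate in $Y \in \cantor$. Thus $\{X\}$ is $\Pi^0_2$, as desired. The main obstacle is simply resisting the naive $\exists s_0 \forall s$ formulation of ``limit equals $Y(n)$'' and instead using the ``infinitely often'' characterization that is available precisely because $f(n,\cdot)$ is known to converge into $\{0,1\}$.
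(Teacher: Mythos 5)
Your proof is correct, and it takes a different (though closely related) route from the paper's. The paper never mentions the Limit Lemma: it instead invokes Post's theorem to say that $X \turingleq 0'$ makes $X$ a $\Delta^0_2$ subset of $\mathbb{N}$, fixes recursive $R,S$ with $x \in X \iff \forall n \exists m\, R(x,n,m) \iff \exists n \forall m\, S(x,n,m)$, and then writes $Y = X$ as $\forall x\,\bigl((x \in Y \to \forall n \exists m\, R(x,n,m)) \wedge (\exists n \forall m\, S(x,n,m) \to x \in Y)\bigr)$ — the $\Pi^0_2$ form of $X$ absorbs the inclusion $Y \subseteq X$ and the $\Sigma^0_2$ form, placed in the antecedent of an implication, absorbs $X \subseteq Y$, so the whole thing is $\Pi^0_2$. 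You instead use Shoenfield's Limit Lemma together with the ``infinitely often'' reformulation $Y(n) = \lim_s f(n,s) \iff \forall m \exists s\,(s \geq m \wedge f(n,s) = Y(n))$, which is valid exactly because the limit exists (the appeal to $Y(n) \in \{0,1\}$ is harmless but not actually needed: any value occurring infinitely often in a convergent sequence equals its limit). Both arguments correctly evade the naive $\Pi^0_3$ count; the paper's is a purely syntactic manipulation of the two $\Delta^0_2$ normal forms, while yours works with computable approximations and is arguably the more transparent dynamic picture. Either is acceptable.
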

\begin{proof}
$X\turingleq 0'$ implies $X$ (as a subset of $\mathbb{N}$) is $\Delta^0_2$, so there are recursive predicates $R$ and $S$ such that 
\begin{equation*}
x \in X \iff \forall n \exists m \qspace R(x,n,m) \iff \exists n \forall m \qspace S(x,n,m).
\end{equation*}
Then
\begin{equation*}
\{ X \} = \bigl\{ Y \in \cantor \mid \forall x \qspace \bigl(\bigl(x \in Y \to \forall n \exists m \qspace R(x,n,m)\bigr) \wedge \bigl(\exists n \forall m \qspace S(x,n,m) \to x \in Y\bigr)\bigr)\bigr\}
\end{equation*}
shows that $X$ is a $\Pi^0_2$ singleton.
\end{proof}

\begin{prop} \label{depth cannot be generalized without cost}
There exists a subset $P \subseteq \baire$ which is deep in the sense that there is an order function $r$ such that $\mathbf{M}(P \restrict r(n)) \leq 2^{-n}$ for all $n \in \mathbb{N}$, but for which there are difference random sequences that compute members of $P$. Moreover, $P$ may be taken to either be a $\Pi^0_2$ subset of $\cantor$ or a $\Pi^0_1$ subset of $\baire$.
\end{prop}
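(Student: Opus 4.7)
The plan is to construct a single Martin-L\"of random $Z \in \cantor$ with $Z \turingleq 0'$ and $0' \turingnleq Z$ (so that $Z$ is difference random), and then use $Z$ as the common ingredient in both examples. Applying the low basis theorem to a nonempty $\Pi^0_1$ subclass of $\mlr$ (for instance $\cantor \setminus U_1$ for a universal Martin-L\"of test $\{U_i\}$) produces a low $Z \in \mlr$, meaning $Z' \turingeq 0'$; combined with the always-true $Z \turingleq Z'$ this yields $Z \turingleq 0'$, and if $0' \turingleq Z$ held then $0'' \turingleq Z' \turingeq 0'$ would contradict the strict increase of the jump.

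For the $\Pi^0_2$ subset of $\cantor$, I would take $P \coloneq \{Z\}$. By \cref{reducible to halting problem implies pi02 singleton} this is a $\Pi^0_2$ singleton, and Martin-L\"of randomness of $Z$ supplies a constant $c$ with $\apc(Z \restrict n) \geq n - c$ for all $n$, so $\mathbf{M}(\{Z\} \restrict (n + c)) = \mathbf{M}(Z \restrict (n + c)) \leq 2^{-n}$ and $r(n) \coloneq n + c$ is a recursive order function witnessing the depth condition. The difference random $Z$ lies in $P$ and trivially computes itself.

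For the $\Pi^0_1$ subset of $\baire$, I would place $Z$ inside a null $\Pi^0_1$ subclass of $\cantor$ whose members all have uniformly near-maximal a priori complexity on initial segments, and then view that class as a $\Pi^0_1$ subset of $\baire$ via the $\Pi^0_1$ inclusion $\cantor \hookrightarrow \baire$. Since $Z \turingleq 0'$, the Turing degrees of $Z$ and $0'$ share the nonrecursive common lower bound $\turingdeg(Z)$, so they fail to form a minimal pair; by the standard characterization of weakly $2$-randomness (a Martin-L\"of random is weakly $2$-random iff it forms a minimal pair with $0'$, iff it lies in no null $\Pi^0_1$ class), $Z$ belongs to some null $\Pi^0_1$ class $Q_0 \subseteq \cantor$. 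Choose $j$ so large that $Z \in P_j \coloneq \cantor \setminus U_j$, where $\{U_i\}$ is arranged so that every $X \in P_j$ satisfies $\apc(X \restrict k) \geq k - c_j$ uniformly in $k$. Set $P \coloneq Q_0 \cap P_j$: this is $\Pi^0_1$ in $\cantor$ (hence in $\baire$), has $\lambda_2(P) = 0$, and contains $Z$.

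To finish I would bound $\mathbf{M}(P \restrict k)$. Every $\sigma \in P \restrict k$ is an initial segment of some $X \in P_j$, so $\apc(\sigma) = \apc(X \restrict k) \geq k - c_j$ and $\mathbf{M}(\sigma) \leq 2^{-k + c_j}$, giving
\[
\mathbf{M}(P \restrict k) \leq |P \restrict k| \cdot 2^{-k + c_j} \leq \frac{|T \cap \{0,1\}^k|}{2^k} \cdot 2^{c_j},
\]
where $T$ is a recursive tree defining $P$. The ratio $|T \cap \{0,1\}^k|/2^k$ is a recursive, monotonically decreasing sequence converging to $\lambda_2(P) = 0$, so I can effectively find a nondecreasing recursive order function $r$ with $|T \cap \{0,1\}^{r(n)}|/2^{r(n)} \leq 2^{-n - c_j}$; then $\mathbf{M}(P \restrict r(n)) \leq 2^{-n}$ for all $n$, and again $Z \in P$ is computed by the difference random $Z$ itself. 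The main obstacle will be packaging two standard but externally cited facts: the existence of a null $\Pi^0_1$ class containing a low Martin-L\"of random (via non-weakly-$2$-randomness) and the uniform a priori complexity bound on $P_j$.
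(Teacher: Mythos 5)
Your first half --- the $\Pi^0_2$ subset of $\cantor$ --- is correct and is essentially the paper's argument: apply the Low Basis Theorem to a nonempty $\Pi^0_1$ subclass of $\mlr$ to get an incomplete (hence difference random) $Z \turingle 0'$, observe $\{Z\}$ is a $\Pi^0_2$ singleton, and convert $\apc(Z \restrict n) \geq n - c$ into the depth bound $\mathbf{M}(\{Z\} \restrict (n+c)) \leq 2^{-n}$.

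The second half has a fatal gap. You want to place the Martin-L\"of random $Z$ inside a \emph{null $\Pi^0_1$} subclass $Q_0$ of $\cantor$, but no such class exists: every Martin-L\"of random sequence is Kurtz random, and Kurtz randomness is precisely avoidance of all null $\Pi^0_1$ subclasses of $\cantor$ (the paper itself records this in the remark following \cref{difference randoms do not compute shift complex sequences}). The characterization you invoke says that a Martin-L\"of random fails to be weakly $2$-random iff it lies in a null $\Pi^0_{\mathbf{2}}$ class --- not $\Pi^0_1$ --- so the minimal-pair argument only hands you a $\Pi^0_2$ class, which does not help. Worse, the obstruction is intrinsic to your strategy: any $\Pi^0_1$ subclass of $\cantor$ satisfying the extended depth condition must be null (since $\mathbf{M}$ dominates $\lambda$ up to a multiplicative constant), so it can never contain a Martin-L\"of random, and hence the $\Pi^0_1$ example cannot be obtained by intersecting subclasses of $\cantor$ and embedding via $\cantor \hookrightarrow \baire$. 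The paper escapes this by genuinely using $\baire$: from the $\Pi^0_2$ definition $\forall n \exists m\; R(X,n,m)$ of $\{A\}$ it packs the least witnesses $f(n)$ into the values of a new sequence $B(n) \coloneq \pi^{(2)}(A(n), f(n))$, so that $\{B\}$ becomes a $\Pi^0_1$ singleton in $\baire$ recursively homeomorphic to $\{A\}$; the depth bound for $\{B\}$ is then transferred from that of $\{A\}$ via the projection functional $X \mapsto (X)_0$ and the universality of $\mathbf{M}$, and the difference random $A$ computes $B$. You would need this (or some comparable encoding into $\baire$) to complete the proof.
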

\begin{proof}
Let $Q = \{ X \in \cantor \mid \forall n \qspace \apc(X \restrict n) \geq n-c\}$, where $c$ is sufficiently large so that $Q \neq \emptyset$. By the Low Basis Theorem \cite{jockusch1971classes} there is an $A \in Q$ such that $A \turingle 0'$. \cref{reducible to halting problem implies pi02 singleton} then implies $A$ is a $\Pi^0_2$ singleton, but being an incomplete Martin-\Lof\ random sequence means that it is also difference random. Since $A \in Q$, we also know that $\mathbf{M}(A \restrict (n+c)) \leq 2^c \cdot 2^{-(n+c)} = 2^{-n}$ for all $n \in \mathbb{N}$. Thus, $\{A\}$ is deep in the extended sense, but the difference random $A$ computes a member of $\{A\}$.

Being a $\Pi^0_2$ singleton, there is a recursive predicate $R$ such that $A$ is the only sequence $X$ satisfying $\forall n \exists m \qspace R(X,n,m)$. Define $f \colon \mathbb{N} \to \mathbb{N}$ by 
\begin{equation*}
f(n) \coloneq \text{least $m$ such that $\langle A,n,m\rangle \in R$}.
\end{equation*}
Then define $B \in \baire$ by $B(n) \coloneq \pi^{(2)}(A(n),f(n))$. Given $X \in \baire$ and $i \in \{0,1\}$, let $(X)_i$ be defined by $(X)_i(n) = (X(n))_i$ for $n \in \mathbb{N}$, where $(\pi^{(2)}(n_0,n_1))_i  = n_i$. Then $\{B\} = \{ X \mid \forall n \qspace R((X)_0,n,(X)_1(n))\}$ is $\Pi^0_1$ and recursively homeomorphic to $\{A\}$. 

Let $\Psi \colon \baire \to \baire$ be the total recursive functional defined by $\Psi(X) \coloneq (X)_0$ for $X \in \baire$. By \cref{levin and zvonkin}, there is a partial recursive functional $\Psi_0 \colonsub \cantor \to \baire$ such that $\mathbf{M}(\sigma) = \lambda(\Psi_0^{-1}(\sigma))$. Then define $\nu$ to be the left r.e.\ continuous semimeasure corresponding to the partial recursive functional $\Psi \circ \Psi_0$, i.e.,
\begin{equation*}
\nu(\sigma) = \lambda(\{ Z \in \cantor \mid (\Psi \circ \Psi_0)^Z \supseteq \sigma\}).
\end{equation*}
By the universality of $\mathbf{M}$, there is a $c \in \mathbb{N}$ such that $\nu(\sigma) \leq c' \cdot \mathbf{M}(\sigma)$ for all $\sigma \in \{0,1\}^\ast$. Let $m \in \mathbb{N}$ be such that $2^{c'} \leq m$. Then
\begin{align*}
\mathbf{M}(\{B\} \restrict (n+c+m)) & = \lambda(\{ Z \in \cantor \mid \Psi_0^Z \supseteq B \restrict (n+c+m)\}) \\
& \leq \lambda(\{ Z \in \cantor \mid (\Psi \circ \Psi_0)^Z \supseteq A \restrict (n+c+m)\}) \\
& = \nu(A \restrict (n+c+m)) \\
& \leq c' \cdot \mathbf{M}(A \restrict (n+c+m)) \\
& \leq 2^{-n}.
\end{align*}
Thus, $\{B\}$ is a $\Pi^0_1$ subset of $\baire$ which is deep in the extended sense, but the difference random $A$ computes a member of $\{B\}$.
\end{proof}

\clearpage
\chapter{Complexity and Fast-Growing Avoidance}
\label{complexity and avoidance downward relationships chapter}

Looking downward, the $\complex$ and $\ldnr$ hierarchies are closely coupled based on the following reseult of Kjos-Hanssen, Merkle, and Stephan:

\begin{thm*} \label{kjos-hanssen complexity result}
\textnormal{\cite[Theorem 2.3.2]{kjoshanssen2006kolmogorov}}
Suppose $X \in \cantor$. The following are equivalent.
\begin{enumerate}[(i)]
\item $X \in \complex$.
\item There is a total recursive functional $\Psi \colon \cantor \to \baire$ such that $\Psi(X) \in \dnr$.
\end{enumerate}
\end{thm*}

In terms of the mass problems $\complex$ and $\dnr_\rec \coloneq \bigcup\{\dnr(p) \mid \text{$p$ recursive}\}$, \cite[Theorem 2.3.2]{kjoshanssen2006kolmogorov} implies:

\begin{cor} \label{complex weakly equivalent to dnr rec}
$\complex \weakeq \dnr_\rec$.
\end{cor}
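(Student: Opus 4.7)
The plan is to prove the two weak reductions $\dnr_\rec \weakleq \complex$ and $\complex \weakleq \dnr_\rec$ separately, each as a direct application of one direction of the cited theorem together with a standard passage between $\baire$ and $\cantor$.

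For $\dnr_\rec \weakleq \complex$, I would begin with an arbitrary $X \in \complex$. Applying the implication (i) $\Rightarrow$ (ii) of the theorem produces a total recursive functional $\Psi \colon \cantor \to \baire$ with $\Psi(X) \in \dnr$. Because $\Psi$ is total on the compact space $\cantor$ (a r.b.\ $\Pi^0_1$ class), its image is recursively bounded by \cref{extending recursive functionals on recursively bounded pi01 classes}(a); concretely, $p(n) \coloneq 1 + \max\{\Psi(Z)(n) \mid Z \in \cantor\}$ is a recursive function with $\Psi(X) \in \dnr(p) \subseteq \dnr_\rec$. Since $\Psi(X) \turingleq X$, this witnesses the reduction.

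For $\complex \weakleq \dnr_\rec$, I would fix $Y \in \dnr(p)$ for some recursive $p \colon \mathbb{N} \to (1,\infty)$ and set $b(n) \coloneq \lceil \log_2 p(n) \rceil$. Form $X \in \cantor$ by concatenating the length-$b(n)$ binary expansions of $Y(0), Y(1), Y(2), \ldots$; because both $b$ and $p$ are recursive, $X \turingeq Y$. I would then define a total recursive functional $\Theta \colon \cantor \to \baire$ that reads an arbitrary $Z \in \cantor$ in successive blocks of lengths $b(0), b(1), b(2), \ldots$, interprets each block as a natural number, and (as a fall-back) sends any block coding a value $\geq p(n)$ to $0$, so that $\Theta$ is total and takes values in $p^\mathbb{N}$. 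By construction $\Theta(X) = Y \in \dnr$, so the implication (ii) $\Rightarrow$ (i) of the theorem yields $X \in \complex$, and $X \turingleq Y$ completes the reduction.

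The only point requiring any care is the totality of the decoding functional $\Theta$ on all of $\cantor$, which is handled by the fall-back rule for invalid blocks; otherwise the argument is bookkeeping, with the genuine content already carried by the theorem of Kjos-Hanssen, Merkle, and Stephan.
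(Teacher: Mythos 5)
Your proposal is correct and follows essentially the same route as the paper: each direction is a direct application of the corresponding implication of the Kjos-Hanssen--Merkle--Stephan theorem, with a recursive passage between $p$-bounded sequences and $\cantor$. The only cosmetic differences are that you code $Y \in \dnr(p)$ into $\cantor$ by fixed-length binary blocks where the paper invokes the recursive homeomorphism $\cantor \cong p^\mathbb{N}$ of \cref{hN is recursively homeomorphic to cantor space}, and you obtain the recursive bound on $\Psi(X)$ from \cref{extending recursive functionals on recursively bounded pi01 classes}(a) where the paper uses \cref{use function defined for recursive functionals from recursively bounded pi01 sets} -- both being the same compactness argument.
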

\begin{proof}
Suppose $X \in \dnr_\rec$, so that there is an order function $p$ such that $X \in \dnr_p$. Let $\Psi \colon \cantor \to p^\mathbb{N}$ be a recursive homeomorphism. Then $\Psi(\Psi^{-1}(X)) = X$ shows that $\Psi^{-1}(X) \in \complex$ by \cite[Theorem 2.3.2]{kjoshanssen2006kolmogorov}. This shows $\complex \weakleq \dnr_\rec$.

Now suppose $X \in \complex$. By \cite[Theorem 2.3.2]{kjoshanssen2006kolmogorov}, there is a total recursive functional $\Psi \colon \cantor \to \baire$ such that $Y \coloneq \Psi(X) \in \dnr$. \cref{use function defined for recursive functionals from recursively bounded pi01 sets} shows that there exist nondecreasing recursive functions $\psi \colon \{0,1\}^\ast \to \mathbb{N}^\ast$ and $j \colon \mathbb{N} \to \mathbb{N}$ such that $Y \restrict n = \psi(X \restrict j(n))$. Let $p \colon \mathbb{N} \to \mathbb{N}$ be defined by $p(n) \coloneq \max\{ \psi(\sigma)(n) \mid \sigma \in \{0,1\}^{j(n+1)}\}+1$, so that $Y(n) < p(n)$ for all $n \in \mathbb{N}$. $p$ is recursive, showing $Y$ is recursively bounded, i.e., $Y \in \dnr_\rec$. This shows $\dnr_\rec \weakleq \complex$.
\end{proof}

Let $\ldnr_\rec \coloneq \bigcup\{\ldnr(p) \mid \text{$p$ recursive}\}$.

\begin{cor} \label{complex weakly equivalent to ldnrrec}
$\complex \weakeq \ldnr_\rec$.
\end{cor}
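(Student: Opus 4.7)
The plan is to leverage the already-established \cref{complex weakly equivalent to dnr rec} and prove directly that $\dnr_\rec \weakeq \ldnr_\rec$; by transitivity this gives the desired equivalence $\complex \weakeq \ldnr_\rec$. So the work reduces to exchanging ordinary $\dnr$ (defined via a fixed admissible enumeration) for $\ldnr$ (defined via the family of linearly universal partial recursive functions), all while preserving recursive bounds.

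For the direction $\dnr_\rec \weakleq \ldnr_\rec$: given any $X \in \ldnr_\rec$, choose (non-uniformly in $X$) a recursive $p$ and a linearly universal partial recursive $\psi$ with $X \in \avoid^\psi(p)$. Apply linear universality of $\psi$ to the partial recursive diagonal function $\theta(n) \simeq \varphi_n(n)$ of our fixed admissible enumeration to obtain constants $a,b \in \mathbb{N}$ with $\psi(an+b) \simeq \varphi_n(n)$ for all $n$. Then $Y$ defined by $Y(n) \coloneq X(an+b)$ satisfies $Y(n) = X(an+b) \nsimeq \psi(an+b) \simeq \varphi_n(n)$, so $Y \in \dnr$; and $Y$ is bounded by the recursive function $n \mapsto p(an+b)$, placing $Y \in \dnr_\rec$. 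Crucially, $Y$ is Turing computable from $X$ (indeed, elementarily so, once $a,b$ are fixed), which suffices for weak reducibility even though the choice of $a,b$ is not uniform in $X$.

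For the direction $\ldnr_\rec \weakleq \dnr_\rec$: fix once and for all a linearly universal partial recursive $\psi$ (which exists, e.g., by the construction in \cref{diagonal of linearly universal is linearly universal}). Consider the partial recursive function $\chi(e,x) \simeq \psi(e)$, which depends only on its first argument. The Parametrization Theorem supplies a total recursive $f \colon \mathbb{N} \to \mathbb{N}$ with $\varphi_{f(e)}(x) \simeq \chi(e,x) \simeq \psi(e)$ for all $e,x$, whence specializing $x = f(e)$ yields $\varphi_{f(e)}(f(e)) \simeq \psi(e)$. Given any $X \in \dnr(p)$ with $p$ recursive, set $Y(e) \coloneq X(f(e))$. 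Then $Y(e) \nsimeq \varphi_{f(e)}(f(e)) \simeq \psi(e)$, so $Y \in \avoid^\psi$; and $Y$ is bounded by the recursive function $p \circ f$, so $Y \in \avoid^\psi(p \circ f) \subseteq \ldnr_\rec$. This direction is uniform and in fact gives a strong reduction.

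The main obstacle, such as it is, lies in the first direction, where the constants $a,b$ witnessing the reduction depend on the particular $\psi$ that captures $X$; consequently the reduction is not uniform and cannot be upgraded to a strong reduction by this argument. This non-uniformity is harmless for establishing weak equivalence, which is all that is being asserted.
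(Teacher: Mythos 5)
Your proof is correct, and its overall skeleton matches the paper's: both reduce the claim to $\dnr_\rec \weakeq \ldnr_\rec$ via \cref{complex weakly equivalent to dnr rec}, and your argument for $\dnr_\rec \weakleq \ldnr_\rec$ (pulling back along $n \mapsto an+b$ after applying linear universality to the diagonal) is essentially the paper's verbatim. Where you genuinely diverge is the direction $\ldnr_\rec \weakleq \dnr_\rec$. The paper handles it by observing that \cite[Theorem 2.3.2]{kjoshanssen2006kolmogorov} is insensitive to the choice of admissible enumeration, so one may assume without loss of generality that the diagonal of $\varphi_\bullet$ is itself linearly universal, giving the containment $\dnr_\rec \subseteq \ldnr_\rec$ outright. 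You instead keep the enumeration fixed and use the Parametrization Theorem to produce a total recursive $f$ with $\varphi_{f(e)}(f(e)) \simeq \psi(e)$, yielding a uniform strong reduction $\avoid^\psi(p\circ f) \strongleq \dnr(p)$ (this is the same factoring-through-the-diagonal device as in \cref{diagonal of admissible numbering is universal}). Your route is slightly more self-contained: it avoids invoking the enumeration-independence of the Kjos-Hanssen--Merkle--Stephan theorem and delivers a strong reduction in that direction rather than a mere containment after a WLOG; the paper's route is shorter on the page but leans on that external uniformity remark. Both are sound, and the non-uniformity you flag in the other direction is indeed harmless for weak equivalence.
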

\begin{proof}
By \cref{complex weakly equivalent to dnr rec}, it suffices to show that $\ldnr_\rec \weakeq \dnr_\rec$. 
Because \cite[Theorem 2.3.2]{kjoshanssen2006kolmogorov} (and by extension \cref{complex weakly equivalent to dnr rec}) holds with $\dnr$ defined with respect to any admissible enumeration $\varphi_\bullet$, we may assume without loss of generality that $\dnr$ is interpreted with respect to an admissible enumeration $\varphi_\bullet$ whose diagonal $\psi$ is linearly universal, so that $\dnr_\rec \subseteq \ldnr_\rec$ and hence $\ldnr_\rec \weakleq \dnr_\rec$. Conversely, given $X \in \ldnr_\rec$, there is a linearly universal partial recursive function $\tilde{\psi}$ and an order function $p$ such that $X \in \avoid^{\tilde{\psi}}(p)$. Because $\tilde{\psi}$ is linearly universal, there exist $a,b \in \mathbb{N}$ such that $\tilde{\psi}(an+b) \simeq \psi(n)$ for all $n \in \mathbb{N}$. Let $Y \in \baire$ be defined by $Y(n) \coloneq X(an+b)$ for $n \in \mathbb{N}$. Then $Y \in \avoid^\psi(\lambda n. p(an+b)) \subseteq \dnr_\rec$, showing $\dnr_\rec \weakleq \ldnr_\rec$.
\end{proof}

Knowing $\complex \weakeq \ldnr_\rec$ alone does not reveal how the complexity and fast-growing $\ldnr$ hierarchies intertwine (if at all) when going downward, prompting the following questions.

\begin{question} \label{downward ldnr below complex}
Given a sub-identical order function $f \colon \mathbb{N} \to \co{0,\infty}$, is there a fast-growing order function $q \colon \mathbb{N} \to (1,\infty)$ such that $\ldnr(q) \weakleq \complex(f)$?
\end{question}

\begin{question} \label{downward complex below ldnr}
Given a fast-growing order function $p \colon \mathbb{N} \to (1,\infty)$, is there a sub-identical order function $g \colon \mathbb{N} \to \co{0,\infty}$ such that $\complex(g) \weakleq \ldnr(p)$?
\end{question}

We will answer \cref{downward ldnr below complex,,downward complex below ldnr} in the affirmative, proving:

\begin{thm} \label{main downward theorem}
To each sub-identical order function $f\colon \mathbb{N} \to \co{0,\infty}$ there is a fast-growing order function $q\colon \mathbb{N} \to (1,\infty)$ such that $\ldnr(q) \strongleq \complex(f)$, and to each fast-growing order function $p\colon \mathbb{N} \to (1,\infty)$ there is a sub-identical order function $g\colon \mathbb{N} \to \co{0,\infty}$ such that $\complex(g) \strongleq \ldnr(p)$.
\end{thm}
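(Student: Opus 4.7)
The plan is to derive \cref{main downward theorem} as a direct corollary of two more refined general reductions proved later in this chapter: \cref{complex reals compute ldnr functions}, which states that $\ldnr\bigl(\lambda n. \exp_2((f^\inverse \circ h)(n)+1)\bigr) \strongleq \complex(f)$ whenever $\sum_n 2^{-h(n)}$ is a recursive real, and \cref{ldnr functions computes complex reals}, which states that $\complex\bigl((\lambda n.\sum_{i<r(n)}\lfloor \log_2 p(i) \rfloor)^\inverse\bigr) \strongleq \ldnr(p)$ whenever $r(n)/2^n \to \infty$. Granted those, the main theorem reduces to choosing the auxiliary functions $h$ and $r$ so that the resulting $q$ is fast-growing and the resulting $g$ is sub-identical.

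For the first half, given a sub-identical order function $f$, I would take $h(n) := n+2$, so that $\sum_n 2^{-h(n)} = 1/2$ is trivially a recursive real, and set $q(n) := \exp_2(f^\inverse(n+2)+1)$. Sub-identicality of $f$ forces $f(n) \leq n$ for all sufficiently large $n$, so $f^\inverse(n+2) \geq n+2$ eventually and $q(n) \geq 2^{n+3}$, whence $\sum_n q(n)^{-1} < \infty$ and $q$ is fast-growing. For the second half, given a fast-growing order function $p$, I would take $r(n) := n \cdot 2^n$ (so $r(n)/2^n = n \to \infty$) and let $g$ be the inverse of $n \mapsto \sum_{i<n \cdot 2^n}\lfloor \log_2 p(i) \rfloor$. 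Since $p$ is an unbounded order function valued in $(1,\infty)$, $\lfloor \log_2 p(i) \rfloor \geq 1$ for all sufficiently large $i$, so $g^\inverse(n) \geq n \cdot 2^n - O(1)$ eventually. Inverting, this forces $g(x) \leq \log_2 x + O(1)$ for large $x$, whence $x - g(x) \to \infty$ and $g$ is sub-identical.

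The real work therefore lies in establishing the two cited reductions, not in the corollary itself. For the forward direction, given $X \in \complex(f,c)$, the idea is that the map $e \mapsto \str^\inverse(X \restrict f^\inverse(h(e)))$ produces a string whose $\pfc$-value is at least $h(e) - c$; if this value equalled $\psi(e)$ for a linearly universal $\psi$, then $X \restrict n$ would admit a description of length roughly $\log_2 e$ via $\psi$, contradicting the complexity lower bound once $h$ grows fast enough (the recursive-sum hypothesis is what permits bundling the unknown constant $c$ into a uniform functional). For the reverse direction, given $Y \in \ldnr(p)$, one concatenates the values $Y(i)$ encoded in $\lfloor \log_2 p(i) \rfloor$ bits each to build $X$; any short prefix-free program producing $X \restrict m$ can be unpacked via \cref{parametrization theorem for linearly universal partial recursive functions} to compute some $Y(i)$, violating the $\ldnr$ condition unless $\pfc(X \restrict m) \geq g(m) - O(1)$, and the hypothesis $r(n)/2^n \to \infty$ is exactly what absorbs both the decoding overhead and the constant $c$ in $\complex(g,c)$, making the reduction strong rather than merely weak.
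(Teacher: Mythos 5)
Your proposal is correct and follows essentially the same route as the paper: \cref{main downward theorem} is obtained as a direct consequence of the two quantified reductions (\cref{complex reals compute ldnr functions recursive sum}/\cref{complex reals compute ldnr functions specific} for the first half and \cref{ldnr functions computes complex reals} for the second), with the remaining work being exactly the growth-rate checks you carry out, namely that $f$ sub-identical forces $f^\inverse(n+2)\geq n+2$ eventually (so your $q$ is fast-growing) and that $\lfloor\log_2 p(i)\rfloor\geq 1$ eventually forces $g(x)\leq\log_2 x+O(1)$ (so your $g$ is sub-identical). The only cosmetic difference is that you instantiate $h(n)=n+2$ rather than the paper's iterated-logarithm choice in \cref{complex reals compute ldnr functions specific}, which is harmless since both rest on the same recursive-sum corollary.
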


\cref{main downward theorem} will follow as a direct consequence of the following theorems, which additionally provide explicit bounds on $q$ and $g$ in terms of $f$ and $p$, respectively.


\begin{repthm}{complex reals compute ldnr functions specific}
Suppose $f\colon \mathbb{N} \to \co{0,\infty}$ is a sub-identical order function, $k$ is a nonzero natural number, and $\epsilon > 0$ is a rational number. 
Then 
\begin{equation*}
\ldnr\bigl(\lambda n. \exp_2\bigl(f^\inverse(\log_2 n + \log_2^2 n + \cdots + \log_2^{k-1} n + (1+\epsilon) \log_2^k n)+1\bigr)\bigr) \strongleq \complex(f).
\end{equation*}
\end{repthm}


\begin{repthm}{ldnr functions computes complex reals}
Suppose $p \colon \mathbb{N} \to (1,\infty)$ is an order function, and let $r \colon \mathbb{N} \to \mathbb{N}$ be any order function such that $\lim_{n \to \infty}{r(n)/2^n} = \infty$. Then
\begin{equation*}
\complex\Bigl( \bigl(\lambda n. \raisebox{.1em}{\scalebox{.9}{$\displaystyle\sum$}}_{i < r(n)}{\lfloor \log_2 p(i) \rfloor} \bigr)^\inverse\Bigr) \strongleq \ldnr(p).
\end{equation*}
\end{repthm}

Less is known in the opposite direction.

\begin{question} \label{upward ldnr above complex}
Given a sub-identical order function $f \colon \mathbb{N} \to \co{0,\infty}$, is there a fast-growing order function $q \colon \mathbb{N} \to (1,\infty)$ such that $\complex(f) \weakleq \ldnr(q)$?
\end{question}

\begin{question} \label{upward complex above ldnr}
Given a fast-growing order function $p \colon \mathbb{N} \to (1,\infty)$, is there a sub-identical order function $g \colon \mathbb{N} \to \co{0,\infty}$ such that $\ldnr(p) \weakleq \complex(g)$?
\end{question}

While we have no general answer to \cref{upward ldnr above complex}, we will give a partial affirmative answer to \cref{upward complex above ldnr}.

\begin{repthm}{complex hierarchy outpaces fast nice ldnr hierarchy}
If $p\colon \mathbb{N} \to (1,\infty)$ is a fast-growing order function such that $\sum_{n=0}^\infty{p(n)^{-1}}$ is a recursive real, then there exists a convex sub-identical order function $g$ such that $\ldnr(p) \strongleq \complex(g) \neq \mlr$.
\end{repthm}

\cref{complex hierarchy outpaces fast nice ldnr hierarchy} follows from a more general result:

\begin{repthm}{complex hierarchy outpaces fast nice ldnr hierarchy quantified}
Suppose $p\colon \mathbb{N} \to (1,\infty)$ is a fast-growing order function such that $\sum_{n=0}^\infty{p(n)^{-1}}$ is a recursive real. Then for any order function $\tilde{p}\colon \mathbb{N} \to (1,\infty)$ such that $p(n)/\tilde{p}(n) \nearrow \infty$ as $n \to \infty$ and for which $\sum_{n=0}^\infty{\tilde{p}(n)^{-1}}$ is a recursive real, 
\begin{equation*}
\ldnr(p) \strongleq \complex\bigl(\lambda n. \log_2 \tilde{p}\bigl(p^\inverse(2^{n+1})-1\bigr)\bigr).
\end{equation*}
Moreover, such a $\tilde{p}$ exists and for any such $\tilde{p}$ the function $\lambda n. \log_2 \tilde{p}(p^\inverse(2^{n+1})-1)$ is dominated by a convex sub-identical recursive function $g \colon \mathbb{N} \to \co{0,\infty}$.
\end{repthm}


\section{Extracting Fast-Growing Avoidance from Complexity} \label{ldnr below complex section}

In the direction $\ldnr(p) \weakleq \complex(f)$, we have the following:

\begin{thm} \label{complex reals compute ldnr functions specific}
Suppose $f\colon \mathbb{N} \to \co{0,\infty}$ is a sub-identical order function, $k$ is a nonzero natural number, and $\epsilon > 0$ is a rational number. 
Then 
\begin{equation*}
\ldnr\bigl(\lambda n. \exp_2\bigl(f^\inverse(\log_2 n + \log_2^2 n + \cdots + \log_2^{k-1} n + (1+\epsilon) \log_2^k n)+1\bigr)\bigr) \strongleq \complex(f).
\end{equation*}
\end{thm}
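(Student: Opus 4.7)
The plan is to exhibit a total recursive functional $\Psi\colon\cantor \to \baire$ sending every $X\in \complex(f)$ into $\ldnr(h)$, where $h(e) = \exp_2(m_e+1)$ and $m_e = f^\inverse(\log_2 e + \log_2^2 e + \cdots + (1+\epsilon)\log_2^k e)$. I would first fix a linearly universal partial recursive function $\psi$ arising as the diagonal $\psi(e) \simeq \varphi_e(e)$ of an admissible enumeration of the $1$-place partial recursive functions; such a $\psi$ exists by combining \cref{diagonal of linearly universal is linearly universal} with \cref{linearly universal partial recursive function enumeration is admissible}, and it has the crucial property that $\psi(e)$ is computed from $e$ alone by a single fixed partial recursive procedure. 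Then I would define $\Psi(X)(e)$ to be the natural number whose $m_e$-bit binary expansion is $X\restrict m_e$, using any convenient convention on the finitely many small $e$ for which $m_e$ is not positive. By construction $\Psi(X)(e) < 2^{m_e} \leq h(e)$, so $\Psi(X) \in h^\mathbb{N}$ automatically.

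The key calculation proceeds as follows. Fix $X\in \complex(f,c)$, let $Y = \Psi(X)$, and suppose toward contradiction that $Y(e) = \psi(e)$ at some $e$. The procedure ``run $\psi$ on $e$, then take the $m_e$-bit zero-padded binary expansion of the output'' recovers $X\restrict m_e$ from $e$ by a single partial recursive map, so \cref{prefix-free complexity facts}(b) (in its standard extension to partial recursive targets, obtainable directly from the machine construction) yields $\pfc(X\restrict m_e) \leq \pfc(e) + d$ for some constant $d$ depending only on $\psi$. Meanwhile, $X \in \complex(f,c)$ combined with $f(m_e) \geq \log_2 e + \log_2^2 e + \cdots + (1+\epsilon)\log_2^k e$ gives
\begin{equation*}
\pfc(X\restrict m_e) \geq \log_2 e + \log_2^2 e + \cdots + (1+\epsilon)\log_2^k e - c,
\end{equation*}
and \cref{prefix-free complexity facts}(c) (applied with $\epsilon/2$ in place of $\epsilon$) gives $\pfc(e) \leq \log_2 e + \log_2^2 e + \cdots + (1+\epsilon/2)\log_2^k e + d'$. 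Chaining these three bounds forces $(\epsilon/2)\log_2^k e \leq c+d+d'$, which fails for all $e$ beyond some effective bound $N(c)$.

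Thus $Y$ avoids $\psi$ on every $e > N(c)$ at which $\psi(e)\converge$; letting $\chi$ denote the restriction of $\psi$ to the co-finite set $\{e > N(c)\}$, \cref{finitely many changes preserves linear universality} ensures $\chi$ is still linearly universal, so $Y \in \avoid^\chi(h) \subseteq \ldnr(h)$, and since $\Psi$ itself is a single fixed total recursive functional this witnesses $\ldnr(h) \strongleq \complex(f)$. The main obstacle is that the cutoff $N(c)$ genuinely depends on the complexity constant $c$ certifying $X \in \complex(f,c)$ and is thus not uniform across $\complex(f)$, so there is no single linearly universal function that $\Psi(X)$ avoids simultaneously for every $X$. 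This is resolved precisely by the fact that $\ldnr(h) = \bigcup_\chi \avoid^\chi(h)$ coupled with \cref{finitely many changes preserves linear universality}, and by budgeting $(1+\epsilon)$ rather than $(1+\epsilon/2)$ in the defining expression for $m_e$: the resulting divergent slack $(\epsilon/2)\log_2^k e \to \infty$ is exactly what absorbs the unknown $c$.
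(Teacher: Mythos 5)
Your proposal is correct and follows essentially the same route as the paper: encode $X\restrict m_e$ as the value $Y(e)$, play the complexity lower bound from $\complex(f,c)$ against the counting upper bound on $\pfc(e)$ to force $Y$ to agree with any fixed partial recursive function only finitely often, and absorb the finitely many (non-uniform in $c$) exceptions via \cref{finitely many changes preserves linear universality}. The only difference is cosmetic: the paper routes the upper bound on $\pfc(e)$ through a general lemma with an auxiliary $\hat h$ satisfying $\sum 2^{-\hat h(n)}<\infty$ and $h-\hat h\to\infty$, whereas you instantiate it directly with \cref{prefix-free complexity facts}(c) at parameter $\epsilon/2$, which amounts to the same choice of $\hat h$.
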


We prove \cref{complex reals compute ldnr functions} by deducing it from a more general technical result:

\begin{thm} \label{complex reals compute ldnr functions}
Suppose $f \colon \mathbb{N} \to \co{0,\infty}$ is a sub-identical order function and $h,\hat{h}\colon \mathbb{N} \to \co{0,\infty}$ are order functions such that $\sum_{n=0}^\infty{2^{-h(n)}} < \infty$, $\sum_{n=0}^\infty{2^{-\hat{h}(n)}} < \infty$, and $\lim_{n \to \infty}{(h(n) - \hat{h}(n))} = \infty$. Then
\begin{equation*}
\ldnr\bigl(\lambda n. \exp_2((f^\inverse \circ h)(n)+1)\bigr) \strongleq \complex(f).
\end{equation*}
\end{thm}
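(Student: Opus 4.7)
The plan is to construct a total recursive functional $\Psi \colon \baire \to \baire$ witnessing $\ldnr(q) \strongleq \complex(f)$, where $q(n) \coloneq \exp_2((f^\inverse \circ h)(n)+1)$. Set $\ell(n) \coloneq f^\inverse(h(n))$, so that $\ell$ is an order function and $f(\ell(n)) \geq h(n)$ by definition of $f^\inverse$. Define $\Psi(X)(n) \coloneq \#_2(X \restrict \ell(n))$; this is total recursive, and since $|X \restrict \ell(n)| = \ell(n)$ we have $\#_2(X \restrict \ell(n)) < 2^{\ell(n)+1} = q(n)$, so $\Psi$ maps $\baire$ into $q^\mathbb{N}$. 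Fix any linearly universal partial recursive function $\psi$; the remaining task is to show that for every $X \in \complex(f)$, the sequence $\Psi(X)$ lies in $\avoid^{\tilde{\psi}}(q) \subseteq \ldnr(q)$ for some appropriate finite modification $\tilde{\psi}$ of $\psi$.

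The heart of the argument is a Kraft-Chaitin construction (\cref{prefix-free complexity facts}(a)). Using the stages $\psi_s$, I would recursively enumerate the pairs $(n,\sigma)$ with $\psi(n) \converge = \#_2(\sigma)$ and $|\sigma| = \ell(n)$; for each such pair I submit a request of length $\lceil \hat{h}(n) \rceil + c_0$ targeting $\sigma$, where $c_0$ is a fixed natural number chosen so that $\sum_n 2^{-\lceil \hat{h}(n) \rceil - c_0} \leq 1$, which exists since $\sum_n 2^{-\hat{h}(n)} < \infty$. At most one request is issued per $n$, so Kraft-Chaitin yields a prefix-free machine $M$ fulfilling all requests. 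By universality of the underlying prefix-free machine, there is a constant $d$ such that $\pfc(\sigma) \leq \hat{h}(n) + d$ whenever $\psi(n) \converge = \#_2(\sigma)$ and $|\sigma| = \ell(n)$.

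Now fix $X \in \complex(f, c)$ and suppose $\psi(n) = \Psi(X)(n) = \#_2(X \restrict \ell(n))$ for some $n$. The machine bound then combines with the complexity lower bound $\pfc(X \restrict \ell(n)) \geq f(\ell(n)) - c \geq h(n) - c$ to yield $h(n) - \hat{h}(n) \leq c + d$. Since $h(n) - \hat{h}(n) \to \infty$, this holds only for finitely many $n$, so there exists $N_X$ with $\Psi(X)(n) \neq \psi(n)$ for all $n \geq N_X$. Letting $\tilde{\psi}$ agree with $\psi$ on $[N_X, \infty)$ and diverge on $[0, N_X)$, \cref{finitely many changes preserves linear universality} ensures $\tilde{\psi}$ is linearly universal, and by construction $\Psi(X) \in \avoid^{\tilde{\psi}}(q) \subseteq \ldnr(q)$. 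The main obstacle is calibrating the three hypotheses so that the argument is tight: $\sum 2^{-\hat{h}(n)} < \infty$ is precisely what feeds Kraft-Chaitin, $h - \hat{h} \to \infty$ forces eventual avoidance uniformly past any fixed complexity constant $c$, and the hypothesis $\sum 2^{-h(n)} < \infty$ combined with the sub-identicality of $f$ ensures that $q$ is itself fast-growing so that $\ldnr(q)$ is a substantive target.
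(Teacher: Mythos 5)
Your proof is correct and follows essentially the same route as the paper: the same coding functional $X \mapsto \langle \str^{-1}(X \restrict (f^\inverse\circ h)(n))\rangle_{n}$, the same collision bound forcing $h(n)-\hat{h}(n) \leq c+d$ at any $n$ where the output agrees with $\psi$, and the same finite-modification argument (via \cref{finitely many changes preserves linear universality}) to conclude membership in $\ldnr(q)$. The only difference is cosmetic: where the paper obtains the upper bound by combining $\pfc(\str\psi(n)) \leq^+ \pfc(\str n)$ with \cref{computable bounds on prefix free complexity}, you inline the underlying Kraft--Chaitin construction directly on the target strings.
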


The role of the condition $\sum_{n=0}^\infty{2^{-h(n)}} < \infty$ comes from the following result:

\begin{lem} \label{computable bounds on prefix free complexity}
\textnormal{\cite[Lemma 3.12.2]{downey2010algorithmic}}
Suppose $h \colon \mathbb{N} \to \mathbb{R}$ is recursive. The following are equivalent.
\begin{enumerate}[(i)]
\item $\sum_{n=0}^\infty{2^{-h(n)}} < \infty$.
\item There exists a $c \in \mathbb{N}$ such that $\pfc(n) \leq h(n) + c$ for all $n \in \mathbb{N}$.
\end{enumerate}
\end{lem}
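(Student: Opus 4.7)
The plan is to derive both implications from two standard tools: the Kraft--Chaitin theorem (\cref{prefix-free complexity facts}(a)) and the Kraft inequality $\sum_{\sigma \in \dom U} 2^{-|\sigma|} \leq 1$ for the domain of a prefix-free machine $U$. Neither direction is difficult; the only small subtlety is passing from the real-valued $h$ to integer code lengths in the forward direction.

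For $(i) \Rightarrow (ii)$, suppose $\sum_{n=0}^\infty 2^{-h(n)}$ converges. First pick $k \in \mathbb{N}$ large enough that $\sum_{n=0}^\infty 2^{-h(n) - k} \leq 1$, and, using the recursiveness of $h$ to compute a rational approximation of $h(n)$ within $\tfrac{1}{2}$, define a recursive function $d \colon \mathbb{N} \to \mathbb{N}$ satisfying $h(n) + k \leq d(n) \leq h(n) + k + 2$ for every $n$. Then $\sum_{n=0}^\infty 2^{-d(n)} \leq 1$, and the sequence $\langle d(n), \str(n) \rangle_{n \in \mathbb{N}}$ is recursive, so Kraft--Chaitin yields a prefix-free machine $M$ with $\pfc_M(\str(n)) \leq d(n)$ for every $n$. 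Universality of the reference prefix-free machine then gives $\pfc(n) = \pfc(\str(n)) \leq^+ \pfc_M(\str(n)) \leq d(n) \leq^+ h(n)$, which is (ii).

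For $(ii) \Rightarrow (i)$, suppose $\pfc(n) \leq h(n) + c$ for all $n$, so that $2^{-h(n)} \leq 2^c \cdot 2^{-\pfc(n)}$. For each $n$, let $\sigma_n$ be a shortest $U$-description of $\str(n)$, so $|\sigma_n| = \pfc(n)$ and distinct $n$'s produce distinct $\sigma_n \in \dom U$. Since $\dom U$ is prefix-free, Kraft's inequality gives $\sum_{n=0}^\infty 2^{-\pfc(n)} = \sum_{n=0}^\infty 2^{-|\sigma_n|} \leq \sum_{\sigma \in \dom U} 2^{-|\sigma|} \leq 1$, hence $\sum_{n=0}^\infty 2^{-h(n)} \leq 2^c < \infty$. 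The only genuine decision point in the argument is the discretization of $h$ in the first step; everything else follows immediately from the cited facts.
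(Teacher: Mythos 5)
Your proof is correct. The paper gives no proof of this lemma — it is simply cited from Downey--Hirschfeldt — and your argument (Kraft--Chaitin via \cref{prefix-free complexity facts}(a) for the forward direction, with the integer discretization of $h$ handled properly, and Kraft's inequality on $\dom U$ for the converse) is exactly the standard one from the cited source.
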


\begin{proof}[Proof of \cref{complex reals compute ldnr functions}.]
Suppose $X \in \complex(f)$. Define $Y \in \baire$ by setting
\begin{equation*}
Y(n) \coloneq \str^{-1}(X \restrict (f^\inverse\circ h)(n))
\end{equation*}
for $n \in \mathbb{N}$. 
We claim that $Y \in \ldnr\bigl(\lambda n. \exp_2((f^\inverse\circ h)(n)+1)\bigr)$. Because $X$ is $f$-complex, there is $c _0\in \mathbb{N}$ such that for every $n \in \mathbb{N}$ we have
\begin{equation*}
\pfc(\str Y(n)) = \pfc(X \restrict (f^\inverse\circ h)(n)) \geq f\bigl((f^\inverse\circ h)(n)\bigr) - c_0 \geq h(n) - c_0.
\end{equation*}

Suppose $\psi$ is any partial recursive function. There is $c_1 \in \mathbb{N}$ such that $\pfc(\str \psi(n)) \leq \pfc(\str n) + c_1$ for all $n \in \dom \psi$. By \cref{computable bounds on prefix free complexity} there is an $c_2 \in \mathbb{N}$ such that $\pfc(n) \leq \hat{h}(n) + c_2$ for all $n \in \mathbb{N}$. Thus, for every $n \in \dom \psi$ we have
\begin{equation*}
\pfc(\str \psi(n)) \leq \hat{h}(n) + (c_1+c_2).
\end{equation*}

For every $n \in \dom \psi$ such that $\psi(n) = Y(n)$ we have
\begin{equation*}
h(n) - c_0 \leq \pfc(\str Y(n)) = \pfc(\str \psi(n)) \leq \hat{h}(n) + (c_1+c_2).
\end{equation*}
Because $\lim_{n \to \infty}{(h(n) - \hat{h}(n))} = \infty$, it follows that $|Y \cap \psi|$ is finite. When $\psi$ is linearly universal, \cref{ldnr basic facts}(b) shows that $Y \in \ldnr$.

It remains to put a uniform upper bound on $Y$. By definition, 
\begin{equation*}
Y(n) = X(0) + X(1)\cdot 2 + X(2) \cdot 2^2 + \cdots + X((f^\inverse\circ h)(n)-1) \cdot 2^{(f^\inverse\circ h)(n)-1} + 2^{(f^\inverse\circ h)(n)}
\end{equation*}
so $Y(n) < \exp_2((f^\inverse\circ h)(n)+1)$ and hence $Y \in \ldnr\bigl(\lambda n. \exp_2((f^\inverse\circ h)(n)+1)\bigr)$.
\end{proof}

\begin{cor} \label{complex reals compute ldnr functions recursive sum}
Suppose $f\colon \mathbb{N} \to \co{0,\infty}$ is a sub-identical order function and $h \colon \mathbb{N} \to \mathbb{R}$ is an order function such that $\sum_{n=0}^\infty{2^{-h(n)}}$ is a recursive real. Then
\begin{equation*}
\ldnr\bigl(\lambda n. \exp_2((f^\inverse\circ h)(n)+1)\bigr) \strongleq \complex(f).
\end{equation*}
\end{cor}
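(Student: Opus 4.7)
\medskip

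\noindent\textbf{Proof plan.} The goal is to produce, from the hypotheses on $h$, an auxiliary order function $\hat{h}$ to which \cref{complex reals compute ldnr functions} applies, i.e.\ such that $\sum_{n=0}^\infty 2^{-\hat{h}(n)}<\infty$ and $\lim_{n\to\infty}(h(n)-\hat{h}(n))=\infty$. Once this is achieved, the conclusion follows directly by invoking the theorem.

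\medskip

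\noindent First, because $\sum_{n=0}^\infty 2^{-h(n)}$ is a recursive real and the partial sums form a recursive sequence converging to it from below, the tail $T(N):=\sum_{n\geq N} 2^{-h(n)}$ is uniformly recursive in $N$. Thus I can effectively construct a recursive, strictly increasing sequence $N_0=0<N_1<N_2<\cdots$ with $T(N_k)\leq 2^{-2k}$. Note that if $n\geq N_k$ then $2^{-h(n)}\leq T(N_k)\leq 2^{-2k}$, so $h(n)\geq 2k$.

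\medskip

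\noindent Next, define an auxiliary (not yet monotone) function $v\colon\mathbb{N}\to\mathbb{N}$ by $v(n):=h(n)-k$ whenever $n\in[N_k,N_{k+1})$. By the bound above, $v(n)\geq k$ on $[N_k,N_{k+1})$, hence $v(n)\to\infty$, and
\begin{equation*}
\sum_{n=0}^\infty 2^{-v(n)} = \sum_{k=0}^\infty 2^{k}\sum_{n=N_k}^{N_{k+1}-1}2^{-h(n)} \leq \sum_{k=0}^\infty 2^k\cdot 2^{-2k} < \infty.
\end{equation*}
Now set $\hat{h}(n):=\min\{v(m)\mid m\geq n\}$. By construction $\hat{h}$ is nondecreasing, $\hat{h}\leq v$, and $\hat{h}$ is unbounded since $v(n)\to\infty$. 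Moreover, $\hat{h}$ is computable: for each $n$, setting $M_0=v(n)$ and letting $k$ be the least index with $k\geq M_0$, one only needs to examine the finitely many indices $m\in[n,N_k]$ to determine $\hat{h}(n)$, because for $m\geq N_k$ one has $v(m)\geq k\geq M_0\geq \hat{h}(n)$. Therefore $\hat{h}$ is an order function.

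\medskip

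\noindent Finally, $\sum_{n}2^{-\hat{h}(n)}\leq\sum_{n}2^{-v(n)}<\infty$, and since $h(n)-v(n)=k(n)\to\infty$ (where $k(n)$ denotes the unique $k$ with $n\in[N_k,N_{k+1})$) and $\hat{h}\leq v$, we get $h(n)-\hat{h}(n)\geq k(n)\to\infty$. Thus $h$ and $\hat{h}$ satisfy the hypotheses of \cref{complex reals compute ldnr functions}, yielding the desired strong reduction $\ldnr\bigl(\lambda n.\exp_2((f^\inverse\circ h)(n)+1)\bigr)\strongleq\complex(f)$. The only mildly subtle point is reconciling monotonicity of $\hat{h}$ with its computability, which the ``running minimum'' construction handles cleanly because $v$ diverges at a known effective rate.
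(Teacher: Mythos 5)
Your overall strategy---reducing to \cref{complex reals compute ldnr functions} by manufacturing an order function $\hat{h}$ with $\sum_{n} 2^{-\hat{h}(n)} < \infty$ and $h(n)-\hat{h}(n)\to\infty$---is the right one, and the block decomposition $v(n) = h(n)-k$ on $[N_k,N_{k+1})$ is a sound way to get $\sum_{n} 2^{-v(n)}<\infty$ together with $h-v\to\infty$. The gap is in the monotonization. Since $\hat{h}(n)=\min\{v(m)\mid m\ge n\}\le v(n)$, you have $2^{-\hat{h}(n)}\ge 2^{-v(n)}$, so the inequality $\sum_{n} 2^{-\hat{h}(n)}\le\sum_{n} 2^{-v(n)}$ in your final paragraph points the wrong way, and the convergence of $\sum_{n} 2^{-\hat{h}(n)}$---which is exactly what \cref{complex reals compute ldnr functions} needs, via \cref{computable bounds on prefix free complexity}---is not established.

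This is not a cosmetic slip: taking a running minimum over a tail can genuinely destroy summability of $2^{-(\cdot)}$. Abstractly, if $v(n)=n$ except $v(4^j)=j$, then $\sum_n 2^{-v(n)}<\infty$ while $\hat{h}(n)\le j$ for all $n\le 4^j$, so $\sum_n 2^{-\hat{h}(n)}=\infty$. Your $v$ is exposed to this phenomenon because $v=h-k$ \emph{decreases} by $1$ each time a boundary $N_k$ is crossed inside a stretch where $h$ is flat: if $h\equiv H$ on an interval containing $N_K,\dots,N_{K'}$ (of length roughly $2^{H-2K}$, so that its contribution to $\sum 2^{-h(n)}$ is about $2^{-2K}$), then $\hat{h}\le H-K'+1$ on essentially that whole interval, which contributes about $2^{K'-2K}$ to $\sum_n 2^{-\hat{h}(n)}$; arranging $K'>2K$ on infinitely many such stretches is compatible with $\sum 2^{-h(n)}$ being a finite recursive real and forces divergence. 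The repair is to monotonize in the other direction: set $\hat{h}(n)\coloneq\max\{v(m)\mid m\le n\}$. Then $\hat{h}\ge v$ gives $\sum 2^{-\hat{h}}\le\sum 2^{-v}<\infty$, computability and unboundedness are immediate, and $h(n)-\hat{h}(n)\to\infty$ still holds: for any $j$, the maximum is attained either at some $m$ with $k(m)\ge j$, whence $h(n)-\hat{h}(n)\ge j$ since $h(m)\le h(n)$, or at some $m<N_j$, whence $h(n)-\hat{h}(n)\ge h(n)-h(N_j)\to\infty$. For comparison, the paper avoids the hand construction entirely by applying \cref{fast-growing multiplicatively infinite lower bound} to $p=\lambda n.2^{h(n)}$ and then quoting \cref{complex reals compute ldnr functions}.
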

\begin{proof}
By \cref{fast-growing multiplicatively infinite lower bound}, there exists a fast-growing, recursive order function $\hat{h}\colon \mathbb{N} \to \mathbb{R}$ such that \\ $\lim_{n \to \infty}{2^{h(n)}/2^{\hat{h}(n)}} = \infty$, or equivalently that $\lim_{n \to \infty}{(h(n) - \hat{h}(n))} = \infty$. \cref{complex reals compute ldnr functions} now applies.
\end{proof}

Using \cref{complex reals compute ldnr functions} and \cref{complex reals compute ldnr functions recursive sum} we can prove the special case of \cref{complex reals compute ldnr functions specific}. 

\begin{proof}[Proof of \cref{complex reals compute ldnr functions specific}.]
Define $h \colon \mathbb{N} \to \co{0,\infty}$ by setting $h(n) \coloneq \log_2 n + \log_2^2 n + \cdots + \log_2^{k-1} n + (1+\epsilon) \log_2^k n$ for $n \geq {}^k 2$ and $h(n) \coloneq \log_2 (2 \cdot {}^22\cdot {}^32 \mdots {}^k2)$ for $n < {}^k 2$. Then $2^{h(n)} = n \cdot \log_2 n \cdot \log_2^2 n \mdots \log_2^{k-2} n \cdot (\log_2^{k-1} n)^{1+\epsilon}$ for all $n \geq {}^k 2$ and $2^{h(n)} = 2 \cdot {}^22\cdot {}^32 \mdots {}^k2$ for $n < {}^k 2$. By \cref{examples of recursive sums}, $\sum_{n=0}^\infty{2^{-h(n)}}$ is a recursive real, so \cref{complex reals compute ldnr functions recursive sum} applies.
\end{proof}

\begin{remark}
Let $\varphi_\bullet$ be an admissible enumeration and take $X \in \complex(f,c)$, where $f$ is a sub-identical order function and $c \in \mathbb{N}$. 
The element $Y \turingleq X$ of $\baire$ defined in the proof of \cref{complex reals compute ldnr functions} is shown to eventually avoid not just linearly universal partial recursive functions, but \emph{all} partial recursive functions. Moreover, at what point $Y$ begins avoiding $\varphi_e$ is predictable: the partial function $\psi(e,n) \simeq \varphi_e(n)$ is partial recursive, so there exists $d \in \mathbb{N}$ (depending on $e$) such that $\pfc(\varphi_e(n)) \leq h(n) + d$.
Then for all $n \in \mathbb{N}$ such that $h(n) - c > \hat{h}(n) + d$ we have $Y(n) \nsimeq \varphi_e(n)$. 
\end{remark}

\begin{example} \label{reduction from delta-complex sequences}
Fix a rational $\delta \in (0,1)$, $k \in \mathbb{N}$, and a rational $\epsilon > 0$. Let $f$ be defined by $f(n) \coloneq \delta n$ for each $n \in \mathbb{N}$, so $f^\inverse(n) = \lceil \frac{1}{\delta} n \rceil$ for each $n \in \mathbb{N}$. Observe that 
\begin{align*}
& \exp_2(\lceil \delta^{-1} \cdot (\log_2 n + \log_2^2 n + \cdots + \log_2^{k} n + (1+\epsilon) \log_2^{k+1} n) \rceil +1) \\
& \quad \leq \exp_2(\delta^{-1} \cdot (\log_2 n + \log_2^2 n + \cdots + \log_2^{k} n + (1+\epsilon) \log_2^{k+1} n) +2) \\
& \quad \leq 4\sqrt[\delta]{n \cdot \log_2 n \mdots \log_2^{k-1} n \cdot (\log_2^k n)^{(1+\epsilon)}}.
\end{align*}
Thus, \cref{complex reals compute ldnr functions specific} implies
$\displaystyle
\ldnr\Bigl(\lambda n. 4\sqrt[\delta]{n \cdot \log_2 n \mdots \log_2^{k-1} n \cdot (\log_2^k n)^{(1+\epsilon)}}\Bigr) \strongleq \complex(\delta).
$
\end{example}

\begin{example} \label{reduction from power-complex sequences}
Fix rational numbers $\alpha \in (0,1)$ and $\epsilon > 0$. Let $f$ be defined by $f(n) \coloneq n^\alpha$ for each $n \in \mathbb{N}$, so $f^\inverse(n) = \lceil n^{1/\alpha} \rceil$ for each $n \in \mathbb{N}$. Observe that
\begin{equation*}
\lceil ((1+\epsilon) \log_2 n)^{1/\alpha} \rceil + 1 \leq ((1+\epsilon) \log_2 n)^{1/\alpha} + 2 
\end{equation*}
so \cref{complex reals compute ldnr functions specific} implies
$\displaystyle
\ldnr\bigl(\lambda n. 4 \exp_2\bigl(\sqrt[\alpha]{(1+\epsilon)\log_2 n}\bigr)\bigr) \strongleq \complex(\lambda n. n^\alpha).
$
\end{example}

\begin{example} \label{logarithmic complex example}
Fix rationals $\beta \in (0,\infty)$ and $\epsilon > 0$. Let $f$ be defined by $f(n) \coloneq \beta \log_2 n$ for each $n \in \mathbb{N}$, so $f^\inverse(n) = \lceil 2^{n/\beta} \rceil$ for each $n \in \mathbb{N}$. Observe that 
\begin{equation*}
\lceil \exp_2((1+\epsilon) (\log_2 n) / \beta) \rceil +1 \leq n^{(1+\epsilon)/\beta} + 2
\end{equation*}
so \cref{complex reals compute ldnr functions specific} implies
$\displaystyle
\ldnr\bigl(\lambda n. 4 \exp_2(n^{(1+\epsilon)/\beta})\bigr) \strongleq \complex(\lambda n. \beta \log_2 n).
$
\end{example}

\section{Extracting Complexity from Fast-Growing Avoidance} \label{complex below ldnr section}

In the direction $\complex(g) \weakleq \ldnr(p)$, we have the following:

\begin{thm} \label{ldnr functions computes complex reals}
Suppose $p \colon \mathbb{N} \to (1,\infty)$ is an order function, and let $r \colon \mathbb{N} \to \mathbb{N}$ be any order function such that $\lim_{n \to \infty}{r(n)/2^n} = \infty$. Then
\begin{equation*}
\complex\Bigl( \bigl(\lambda n. \raisebox{.1em}{\scalebox{.9}{$\displaystyle\sum$}}_{i < r(n)}{\lfloor \log_2 p(i) \rfloor} \bigr)^\inverse\Bigr) \strongleq \ldnr(p).
\end{equation*}
\end{thm}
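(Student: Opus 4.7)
\medskip

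\noindent\textbf{Proof proposal.} The plan is to define a total recursive functional $\Psi\colon \ldnr(p) \to \cantor$ that concatenates the binary expansions of successive values of $Y$, and then argue via a counting/Recursion Theorem argument that the resulting $X$ has the required prefix-free complexity. Concretely, given $Y \in \avoid^{\psi}(p)$ with $\psi$ linearly universal, set $\ell_i \coloneqq \lfloor \log_2 p(i) \rfloor$. Since $Y(i) < p(i) \leq 2^{\ell_i + 1}$, we can record $Y(i)$ in a binary block of length exactly $\ell_i + 1$, but to match the sum appearing in the statement we will instead encode $Y(i)$ modulo $2^{\ell_i}$ in a block $b_i \in \{0,1\}^{\ell_i}$ together with a single ``overflow bit'' absorbed elsewhere, or more cleanly, encode the position of $Y(i)$ inside a canonical listing of $\{0,1,\dots,p(i)-1\}$ using exactly $\ell_i$ bits when $Y(i) < 2^{\ell_i}$ (together with a recursive bookkeeping trick for handling the ``spillover''). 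Defining $X \coloneqq b_0 \concat b_1 \concat b_2 \concat \cdots$ gives a total recursive map $Y \mapsto X$, so that $X \turingleq Y$, and after length $L_n \coloneqq \sum_{i<r(n)}\ell_i$ the string $X \restrict L_n$ determines $Y \restrict r(n)$ up to recursively reconstructable bookkeeping.

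The heart of the proof is the lower bound on prefix-free complexity. Write $g$ for the inverse of $n \mapsto L_n$, so we must exhibit $c \in \mathbb{N}$ with $\pfc(X \restrict m) \geq g(m)-c$ for all $m$. I will argue by contradiction: fix $c$ (to be chosen later using the Recursion Theorem for linearly universal partial recursive functions, \cref{recursion theorem for linearly universal partial recursive functions}) and suppose $\pfc(X \restrict m) < g(m)-c$ for some $m$. Setting $n = g(m)$, this means there is a code $\tau$ with $|\tau| < n-c$ from which the universal prefix-free machine outputs $X \restrict m$, and from which we can then recursively decode $Y \restrict r(n)$. Enumerating, at each stage $s$, all such short codes $\tau$ that have yet produced an output of length $\geq L_n$, we obtain a uniformly r.e.\ list of at most $2^{n-c}$ candidate initial segments $Y \restrict r(n)$.

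The counting step that drives the contradiction uses the hypothesis $r(n)/2^n \to \infty$. Using the parametric form of linear universality (\cref{parametrization theorem for linearly universal partial recursive functions}), I will construct a partial recursive function $\theta(a,b,k)$ that: given $k$ and a guess for the linear parameters $(a,b)$, stages searches $n$ and indices of short codes $\tau$, decodes the predicted $Y(ak+b)$, and outputs it. Applying the Recursion Theorem for linearly universal partial recursive functions produces $a,b$ with $\psi(ak+b) \simeq \theta(a,b,k)$ for all $k$. For any $k$ with $ak+b < r(n)$, this matches one of the $\leq 2^{n-c}$ candidates at position $ak+b$; with $r(n)/2^n$ unbounded there is slack of at least roughly $r(n)\cdot 2^{c-n}$ positions where the counting argument forces $\theta(a,b,k)$ to hit the true $Y(ak+b)$, contradicting $Y \cap \psi = \emptyset$. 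Choosing $c$ large enough (absorbing the constants from the Recursion Theorem and the decoding map) completes the proof.

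The main obstacle, and where I expect the bookkeeping to become finicky, is the encoding/decoding of $Y$ into $X$ using only $\ell_i = \lfloor \log_2 p(i)\rfloor$ bits per value rather than $\lceil \log_2 p(i) \rceil$: since $p(i)$ need not be a power of $2$, some values of $Y(i)$ require one extra bit. Handling this uniformly (e.g., by amortizing overflow bits into later blocks, or by replacing the naive encoding with a recursive bijection $\prod_{i<r(n)}\{0,\dots,p(i)-1\} \to \{0,1\}^{L_n}$ modulo a constant loss) is what pins down the additive constant $c$ and must be done carefully enough to preserve decodability from any sufficiently long initial segment of $X$.
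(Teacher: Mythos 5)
Your proposal is correct and follows essentially the same route as the paper: the same strong reduction obtained by concatenating $\lfloor\log_2 p(i)\rfloor$-bit blocks, and the same key mechanism of using the Recursion Theorem for linearly universal partial recursive functions (\cref{recursion theorem for linearly universal partial recursive functions}) to make $\psi$ on an arithmetic progression output the value decoded from the universal prefix-free machine's output at a short code, with $r(n)/2^n \to \infty$ absorbing the at most $2^{n+1}$ codes of length at most $n$. The paper implements your counting step more directly---the code $\str(n)$ is simply assigned position $an+b$, so no candidate enumeration, contradiction framing, or $c$-dependent bookkeeping is needed---and, like your sketch, it leaves the floor-versus-overflow encoding detail essentially unaddressed.
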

\begin{proof}
Suppose $X \in \ldnr(p)$, and let $\psi$ be a linearly universal partial recursive function for which $X \in \avoid^\psi(p)$. Define $q \colon \mathbb{N} \to \mathbb{N}$ by setting $q(n) \coloneq \sum_{i<n}{\lfloor \log_2 p(i)\rfloor}$ for each $n \in \mathbb{N}$. 

Define $Y \in \cantor$ to be the unique real for which $X(n) = \sum_{i < \lfloor \log_2 p(n) \rfloor}{Y(q(n)+i)\cdot 2^i}$. We claim that there is $c \in \mathbb{N}$ such that $Y \in \complex((\lambda n. q(c \cdot 2^n))^\inverse)$.

Let $U \colonsub \{0,1\}^\ast \to \{0,1\}^\ast$ be the universal prefix-free machine for which $\pfc = \pfc_U$. Then define $\theta \colonsub \mathbb{N}^3 \to \mathbb{N}$ by setting
\begin{equation*}
\theta(u,v,n) \simeq \sum_{i < \lfloor \log_2 p(u n + v) \rfloor}{U(\str n)(q(u n + v) + i) \cdot 2^i}
\end{equation*}
for $u,v,n \in \mathbb{N}$. 
$\theta$ is partial recursive, so \cref{recursion theorem for linearly universal partial recursive functions} implies there are $a,b \in \mathbb{N}$ such that 
\begin{equation*}
\psi(an+b) \simeq \sum_{i < \lfloor \log_2 p(a n + b) \rfloor}{U(\str n)(q(a n + b) + i) \cdot 2^i}
\end{equation*}
for all $n \in \mathbb{N}$. By hypothesis, $X \cap \psi = \emptyset$, so for all $n \in \mathbb{N}$ we have
\begin{equation*}
\sum_{i < \lfloor \log_2 p(an+b) \rfloor}{Y(q(an+b)+i) \cdot 2^i} \nsimeq \sum_{i < \lfloor \log_2 p(an+b) \rfloor}{U(\str n)(q(an+b)+i) \cdot 2^i}.
\end{equation*}
In other words, for all $n \in \mathbb{N}$ either $U(\str n) \diverge$, $|U(\str n)| < q(an+b+1)$, or $U(\str n)$ is incompatible with $Y \restrict q(an+b+1)$. Note that for any $\sigma \in \{0,1\}^\ast$ we have $\str^{-1} \sigma \leq 2^{|\sigma|+1}$. Thus, for $\sigma \in \{0,1\}^{\leq n}$ we find that either $U(\sigma) \diverge$, $|U(\sigma)| < q(an+b+1)$, or $U(\sigma)$ is incompatible with $Y \restrict q(a \cdot 2^{n+1}+b+1)$. Consequently, for all $n \in \mathbb{N}$ we have
\begin{equation*}
\pfc(Y \restrict q(a \cdot 2^{n+1}+b+1)) > n
\end{equation*}
so $Y \in \complex((\lambda n. q(a \cdot 2^{n+1}+b+1))^\inverse)$. If $\lim_{n \to \infty}{r(n)/2^n} = \infty$, then $\lambda n. q(a \cdot 2^{n+1}+b+1) \domleq \lambda n. (q \circ r)(n)$ and hence $(\lambda n. (q \circ r)(n))^\inverse \domleq (\lambda n. q(a\cdot 2^{n+1} + b+1))^\inverse$. Thus, $Y \in \complex((\lambda n. (q \circ r)(n))^\inverse)$.
\end{proof}

\begin{example} \label{exponential ldnr example}
Fix a rational $\epsilon > 0$, then let $p$ and $r$ be defined by $p(n) \coloneq 2^n$ and $r(n) \coloneq \lfloor 2^{(1+\epsilon)n}\rfloor$ for each $n \in \mathbb{N}$. Let $q$ be as in proof of \cref{ldnr functions computes complex reals}, so that $q(n) = \frac{n(n-1)}{2}$ for all $n \in \mathbb{N}$. Then $(q \circ r)(n) \leq 2^{2(1+\epsilon) n}$ for almost all $n \in \mathbb{N}$. Thus, \cref{ldnr functions computes complex reals} implies
$
\complex(\lambda n. \frac{1}{2+\epsilon}\log_2 n) \strongleq \ldnr(\lambda n. 2^n).
$
for any rational $\epsilon > 0$.
\end{example}

\begin{remark}
Combining \cref{exponential ldnr example,,logarithmic complex example} shows that for any rational $\epsilon > 0$ we have \\
$
\ldnr(\lambda n. \exp_2(n^{2+\epsilon})) \strongleq \complex(\lambda n. \frac{1}{2+\epsilon} \log_2 n) \strongleq \ldnr(\lambda n. 2^n).
$
\end{remark}

\section{Finding Complexity Above Fast-Growing Avoidance} \label{complexity to avoidance upwards}

In one of the upward directions, we can give a partial answer to \cref{upward complex above ldnr}. In addition to being sub-identical, a common additional hypothesis put on order functions $f \colon \mathbb{N} \to \co{0,\infty}$ is that $f$ be convex (see, e.g., \cite{hudelson2013partial} and \cite{hudelson2014mass}).

\begin{definition}[convex]
A nondecreasing function $f \colon \mathbb{N} \to \mathbb{R}$ is \textdef{convex} if $f(n+1) - f(n) \leq 1$ for all $n \in \mathbb{N}$.
\end{definition}

There is a simple characterization of convexity by putting functions into the form $\lambda n. n-j(n)$.

\begin{prop} \label{equivalent characterization of convexity}
Suppose $f \colon \mathbb{N} \to \mathbb{R}$ is a nondecreasing function, and define $j \colon \mathbb{N} \to \mathbb{R}$ by $j(n) \coloneq n - f(n)$ for each $n \in \mathbb{N}$. Then $f$ is convex if and only if $j$ is nondecreasing.
\end{prop}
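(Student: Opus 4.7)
The plan is to proceed by a direct computation of the forward difference of $j$. Recalling from the previous definition in the paper that $f \colon \mathbb{N} \to \mathbb{R}$ is convex precisely when $f(n+1) - f(n) \leq 1$ for all $n \in \mathbb{N}$, the whole equivalence reduces to identifying this condition with the condition that $j(n+1) \geq j(n)$ for all $n \in \mathbb{N}$, which is exactly what it means for a function on $\mathbb{N}$ to be nondecreasing.

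First I would substitute the definition $j(n) \coloneq n - f(n)$ into the difference $j(n+1) - j(n)$ to obtain
\begin{equation*}
j(n+1) - j(n) = \bigl((n+1) - f(n+1)\bigr) - \bigl(n - f(n)\bigr) = 1 - \bigl(f(n+1) - f(n)\bigr).
\end{equation*}
From this identity, the inequality $j(n+1) - j(n) \geq 0$ is equivalent to $f(n+1) - f(n) \leq 1$, and this equivalence holds for every fixed $n \in \mathbb{N}$. Quantifying universally over $n$ then yields that $j$ is nondecreasing if and only if $f$ is convex.

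There is no real obstacle here: since both sides of the claimed biconditional are stated as pointwise conditions over $n \in \mathbb{N}$, the single algebraic identity above handles both directions simultaneously, and no appeal to the nondecreasingness hypothesis on $f$ is needed beyond ensuring $j$ is well-defined as a real-valued function. The hypothesis that $f$ is nondecreasing is used only implicitly, in that it guarantees the lower bound $f(n+1) - f(n) \geq 0$, which together with convexity situates the forward differences in $[0,1]$; but the equivalence itself does not require this.
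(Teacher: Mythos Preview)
Your proof is correct and takes essentially the same approach as the paper: both compute the forward difference and reduce the equivalence to the algebraic identity $j(n+1)-j(n) = 1 - (f(n+1)-f(n))$. Your closing observation that the nondecreasingness hypothesis on $f$ is not actually needed for the equivalence is a valid side remark the paper does not make.
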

\begin{proof}
For each $n \in \mathbb{N}$,
\begin{align*}
f(n+1) - f(n) \leq 1 & \iff ((n+1) - j(n+1)) - (n - j(n)) \leq 1 \\
& \iff 1 + j(n) - j(n+1) \leq 1 \\
& \iff j(n) \leq j(n+1).
\end{align*}
\end{proof}

\begin{thm} \label{complex hierarchy outpaces fast nice ldnr hierarchy}
If $p\colon \mathbb{N} \to (1,\infty)$ is a fast-growing order function such that $\sum_{n=0}^\infty{p(n)^{-1}}$ is a recursive real, then there exists a convex sub-identical order function $g$ such that $\ldnr(p) \strongleq \complex(g) \neq \mlr$.
\end{thm}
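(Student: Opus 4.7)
The theorem follows immediately from the more refined \cref{complex hierarchy outpaces fast nice ldnr hierarchy quantified} stated earlier in the chapter introduction, so the plan is to unpack and assemble its conclusions rather than redo any heavy lifting.

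First I would apply \cref{complex hierarchy outpaces fast nice ldnr hierarchy quantified} to the given $p$. Its ``moreover'' clause supplies a recursive order function $\tilde{p}\colon \mathbb{N} \to (1,\infty)$ with $p(n)/\tilde{p}(n) \nearrow \infty$ and $\sum_{n=0}^\infty \tilde{p}(n)^{-1}$ a recursive real (its existence reduces to \cref{fast-growing multiplicatively infinite lower bound}). Writing $g_0(n) \coloneq \log_2 \tilde{p}(p^\inverse(2^{n+1})-1)$, the body of the quantified theorem yields the strong reduction $\ldnr(p) \strongleq \complex(g_0)$, and the remainder of its ``moreover'' clause provides a convex sub-identical recursive order function $g \colon \mathbb{N} \to \co{0,\infty}$ satisfying $g_0 \domleq g$.

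Next I would promote this reduction from $\complex(g_0)$ to $\complex(g)$. Because $\complex(g)$ is insensitive to modifying $g$ on any finite set, we may assume $g_0(n) \leq g(n)$ pointwise. Any $g$-complex sequence is then automatically $g_0$-complex, so $\complex(g) \subseteq \complex(g_0)$, whence $\complex(g_0) \strongleq \complex(g)$ via the identity functional. Composing with the reduction from the first step gives $\ldnr(p) \strongleq \complex(g_0) \strongleq \complex(g)$, so $\ldnr(p) \strongleq \complex(g)$ as required.

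It remains to verify $\complex(g) \neq \mlr$ as sets. Since $g$ is sub-identical, $n - g(n) \to \infty$, so $g$ is not asymptotic to the identity. By the standard construction—take a Martin-\Lof\ random $Y$ and thin it by inserting $0$s at positions determined by $g^\inverse$—one produces $X \in \cantor$ with $\pfc(X \restrict n) \geq g(n) - O(1)$ yet $n - \pfc(X \restrict n)$ unbounded, so $X \in \complex(g) \setminus \mlr$, giving $\complex(g) \supsetneq \mlr$. The main obstacle is not in this proof but in the proof of \cref{complex hierarchy outpaces fast nice ldnr hierarchy quantified}: there the hard parts are tracking constants carefully enough to adapt the encoding of \cref{complex reals compute ldnr functions} into a $p$-bounded element of $\ldnr(p)$ (rather than only an $\exp_2((g_0^\inverse \circ h)(n)+1)$-bounded one), and constructing an effective convex sub-identical majorant of $g_0$ using only the effective data made available by the recursiveness of $\sum \tilde{p}(n)^{-1}$.
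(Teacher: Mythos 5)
Your proposal is correct, and its first half --- extracting the convex sub-identical $g$ from \cref{complex hierarchy outpaces fast nice ldnr hierarchy quantified} and promoting $\ldnr(p) \strongleq \complex(g_0)$ to $\ldnr(p) \strongleq \complex(g)$ via $g_0 \domleq g$ and $\complex(g) \subseteq \complex(g_0)$ --- is exactly what the paper does (the paper leaves the domination step implicit). Where you genuinely diverge is in proving $\complex(g) \neq \mlr$. The paper cites a result of Hudelson (Corollary 4.3.5 of his thesis) producing a strongly $g$-complex $X$ with $\lim_{n}(\apc(X \restrict n) - g(n)) \neq \infty$; if $\complex(g)$ were equal to $\mlr$ then $\apc(X \restrict n) \geq n - c$, and sub-identicality of $g$ would force that limit to be $\infty$, a contradiction. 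You instead dilute a Martin-\Lof\ random $Y$, which is a legitimate, more elementary and self-contained route that exploits the convexity of $g$ (so that $n - g(n)$ is nondecreasing and the insertion positions are well defined). One caution about your phrasing ``yet $n - \pfc(X \restrict n)$ unbounded'': do not try to establish this by directly upper-bounding $\pfc(X \restrict n)$ from $\pfc(Y \restrict \lceil g(n) \rceil)$, since the coding overhead of order $2\log_2 n$ can swamp $n - g(n)$ when $g$ hugs the identity (nothing in the quantified theorem prevents, say, $n - g(n)$ growing like $\log_2 \log_2 n$). The clean argument is measure-theoretic: $X$ has $0$s on an infinite recursive set of positions, hence is covered by an explicit Martin-\Lof\ test, so $X \notin \mlr$; unboundedness of $n - \pfc(X \restrict n)$ then follows from the Levin--Schnorr characterization if you want it. With that repair your argument is complete. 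The trade-off is clear: the paper's route is a one-line citation resting on Hudelson's strong $f$-complexity machinery, while yours is hands-on and uses only properties of $g$ already guaranteed by \cref{complex hierarchy outpaces fast nice ldnr hierarchy quantified}.
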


\cref{complex hierarchy outpaces fast nice ldnr hierarchy} follows immediately from the following more general result, which makes use of the downward result \cref{ldnr functions computes complex reals}. 

\begin{thm} \label{complex hierarchy outpaces fast nice ldnr hierarchy quantified}
Suppose $p\colon \mathbb{N} \to (1,\infty)$ is a fast-growing order function such that $\sum_{n=0}^\infty{p(n)^{-1}}$ is a recursive real. Then for any order function $\tilde{p}\colon \mathbb{N} \to (1,\infty)$ such that $p(n)/\tilde{p}(n) \nearrow \infty$ as $n \to \infty$ and for which $\sum_{n=0}^\infty{\tilde{p}(n)^{-1}}$ is a recursive real, 
\begin{equation*}
\ldnr(p) \strongleq \complex\bigl(\lambda n. \log_2 \tilde{p}\bigl(p^\inverse(2^{n+1})-1\bigr)\bigr).
\end{equation*}
Moreover, such a $\tilde{p}$ exists and for any such $\tilde{p}$ the function $\lambda n. \log_2 \tilde{p}(p^\inverse(2^{n+1})-1)$ is dominated by a convex sub-identical recursive function $g \colon \mathbb{N} \to \co{0,\infty}$.
\end{thm}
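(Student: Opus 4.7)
The plan is to break the theorem into three parts. First, the existence of $\tilde p$ follows by applying \cref{fast-growing multiplicatively infinite lower bound} directly to $p$: this yields a fast-growing order function $\tilde p$ with $p(n)/\tilde p(n) \nearrow \infty$ and $\sum \tilde p(n)^{-1}$ a recursive real. For the strong reduction, set $\phi(n) \coloneq \log_2 \tilde p(p^\inverse(2^{n+1}) - 1)$ and $h(n) \coloneq \phi(\lfloor \log_2 p(n) \rfloor - 1)$. The function $\phi$ is a sub-identical order function: it is a staircase taking value $\log_2 \tilde p(m)$ on the interval $n \in [\lceil \log_2 p(m) \rceil, \lceil \log_2 p(m+1) \rceil - 1]$, so $n - \phi(n) \geq \log_2(p(m)/\tilde p(m)) \to \infty$. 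By the choice of $h$, $(\phi^\inverse \circ h)(n) \leq \lfloor \log_2 p(n) \rfloor - 1$, hence $q(n) \coloneq \exp_2((\phi^\inverse \circ h)(n) + 1) \leq 2^{\lfloor \log_2 p(n) \rfloor} \leq p(n)$. Applying \cref{complex reals compute ldnr functions recursive sum} with $f = \phi$ gives $\ldnr(q) \strongleq \complex(\phi)$, and \cref{ldnr basic facts}(c) together with $q \domleq p$ yields $\ldnr(p) \strongleq \ldnr(q)$, so composing gives the claimed strong reduction $\ldnr(p) \strongleq \complex(\phi)$.

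The main technical obstacle will be verifying that $\sum 2^{-h(n)}$ is a recursive real, as required to invoke \cref{complex reals compute ldnr functions recursive sum}. Grouping terms by $K = \lfloor \log_2 p(n) \rfloor$ rewrites the sum as $\sum_K (a_{K+1} - a_K)/\tilde p(a_K - 1)$ where $a_K \coloneq p^\inverse(2^K)$ (noting that only indices $K$ with $a_{K+1} > a_K$ contribute). The plan is to use that $p/\tilde p$ is nondecreasing, which yields the ratio bound $\tilde p(m)/\tilde p(m') \leq p(m)/p(m')$ whenever $m' \leq m$; combined with the defining inequalities $p(a_{K+1}-1) < 2^{K+1}$ and $p(a_K - 1) \geq 2^{K'}$ for the previous nontrivial jump point $K'$, this bounds consecutive-block contributions by a small multiple of $\int_{a_K}^{a_{K+1}} \tilde p(x)^{-1}\, dx$. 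Summing yields a bound by a constant multiple of $\int \tilde p(x)^{-1}\, dx$, which is a recursive real by \cref{infinite sum recursive iff improper integral recursive} (using that $\sum \tilde p(n)^{-1}$ is recursive), whence \cref{recursive sum implies lower bounds have recursive sums} concludes that $\sum 2^{-h(n)}$ is itself a recursive real.

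Finally, for the convex sub-identical order function $g$ dominating $\phi$, I would construct $g$ piecewise using the block structure of the staircase $\phi$: on the $m$-th block, $g$ remains at value $\log_2 \tilde p(m)$ until $\lceil \log_2(\tilde p(m+1)/\tilde p(m)) \rceil$ steps before the block ends, then rises with slope $1$ to reach $\log_2 \tilde p(m+1)$ at the start of block $m+1$. The monotonicity $p/\tilde p \nearrow$ (equivalently, $\tilde p(m+1)/\tilde p(m) \leq p(m+1)/p(m)$) ensures that the required linear rise of length $\lceil \log_2(\tilde p(m+1)/\tilde p(m)) \rceil$ fits within the block's length, which is $\lceil \log_2 p(m+1) \rceil - \lceil \log_2 p(m) \rceil$. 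Convexity (all slopes in $\{0,1\}$), recursiveness, and $g \geq \phi$ are immediate from the construction, while sub-identicalness follows from $n - g(n) \geq \log_2(p(m)/\tilde p(m)) \to \infty$ evaluated at the start of each ascent, using $p/\tilde p \to \infty$.
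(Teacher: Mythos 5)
Your overall architecture matches the paper's (reduce to \cref{complex reals compute ldnr functions recursive sum} with $f=\phi$, get existence of $\tilde{p}$ from \cref{fast-growing multiplicatively infinite lower bound}, handle convexity separately), but the choice of $h$ is where the proposal breaks. The paper takes $h(n) = \log_2\tilde{p}(n)$, so that $\sum_n 2^{-h(n)} = \sum_n\tilde{p}(n)^{-1}$ is recursive by hypothesis, and then does the real work on the other side, verifying $f^\inverse(h(n))\leq\log_2 p(n)$ via \cref{inverse function strict inequality equivalence}. You instead take $h(n) = \phi(\lfloor\log_2 p(n)\rfloor - 1)$, which makes $(\phi^\inverse\circ h)(n)\leq\lfloor\log_2 p(n)\rfloor - 1$ trivial but transfers all the difficulty into the claim that $\sum_n 2^{-h(n)}$ is a recursive real --- and that claim is false in general. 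Writing $a_K = p^\inverse(2^K)$, your sum is $\sum_K (a_{K+1}-a_K)/\tilde{p}(a_K-1)$, and the point $a_K-1$ sits just \emph{before} the place where $p$ first reaches $2^K$. Since $p$ may jump over many powers of $2$ in a single step, $\tilde{p}(a_K-1)$ can be exponentially smaller than $\tilde{p}(m)$ for every $m$ in the block $[a_K,a_{K+1})$; your integral comparison needs the block ratio $\tilde{p}(a_{K+1}-1)/\tilde{p}(a_K-1)$ to be uniformly bounded, and the bound $2^{K+1-K'}$ you extract from the previous nontrivial jump point $K'$ is unbounded exactly in that situation. Note also that $a_K - 1 < n$ always, so $2^{-h(n)}\geq\tilde{p}(n)^{-1}$: your sum dominates the convergent one rather than being dominated by it, so \cref{recursive sum implies lower bounds have recursive sums} cannot be applied in the direction you need.

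Concretely, let $n_{j+1} = n_j + \lceil 2^{2^j-j}\rceil$ and set $p\equiv 2^{2^j}$ and $\tilde{p}\equiv 2^{2^j}/(j+1)$ on $[n_j,n_{j+1})$. Then $\sum p(n)^{-1}\approx\sum_j 2^{-j}$ and $\sum\tilde{p}(n)^{-1}\approx\sum_j (j+1)2^{-j}$ are recursive reals and $p/\tilde{p}=j+1\nearrow\infty$, so all hypotheses hold; but $a_{2^j}=n_j$, so for $n$ in block $j$ one has $2^{-h(n)} = 1/\tilde{p}(n_j-1) = j\,2^{-2^{j-1}}$, and block $j$ contributes about $2^{2^j-j}\cdot j\,2^{-2^{j-1}} = j\,2^{2^{j-1}-j}\to\infty$, so $\sum_n 2^{-h(n)}$ diverges. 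The repair is exactly the paper's move: evaluate $\tilde{p}$ at $n$ itself, i.e.\ take $h(n)=\log_2\tilde{p}(n)$, and then prove the slightly less immediate (but true) inequality $f^\inverse(h(n))\leq\log_2 p(n)$. Your existence argument for $\tilde{p}$ and the sub-identicality of $\phi$ are fine, and the convexity construction is workable in spirit, though it has ceiling off-by-one issues that the paper's closed form $g(n) = n+1-\log_2(\overline{p}(\overline{p}^{-1}(2^{n+1}))/\overline{\tilde{p}}(\overline{p}^{-1}(2^{n+1})))$ together with \cref{equivalent characterization of convexity} avoids.
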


It will be convenient to assume that $p$ is strictly increasing. The following lemma shows that we may assume this without loss of generality.

\begin{lem} \label{strictly increasing lower bound with recursive sum}
If $p\colon \mathbb{N} \to (1,\infty)$ is a fast-growing order function such that $\sum_{n=0}^\infty{p(n)^{-1}}$ is a recursive real, then there exists a fast-growing order function $\hat{p}\colon \mathbb{N} \to (1,\infty)$ such that $\sum_{n=0}^\infty{\hat{p}(n)^{-1}}$ is a recursive real, $\hat{p}$ is strictly increasing, and $\hat{p} \domleq p$. 
\end{lem}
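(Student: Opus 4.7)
The plan is to obtain $\hat{p}$ from $p$ by a small multiplicative perturbation that forces strict monotonicity without disturbing the asymptotics. Specifically, I would choose a natural number $k$ large enough that $p(0) > 1/(1-2^{-k})$ (which exists because $p(0) > 1$, so $(p(0)-1)/p(0) > 0$) and then define
\begin{equation*}
\hat{p}(n) \coloneq p(n)\cdot\bigl(1 - 2^{-n-k}\bigr)
\end{equation*}
for each $n \in \mathbb{N}$. Since $k$ is just a single natural number whose existence is asserted non-effectively, the function $\hat{p}$ is computable once $k$ is chosen, and $\hat{p}(n) < p(n)$ for every $n$, so $\hat{p} \domleq p$ holds trivially.

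The easy verifications come next. For the codomain condition, note that $\hat{p}(n) > 1$ iff $p(n) > 1/(1-2^{-n-k})$; since $p$ is nondecreasing and $2^{-n-k} \leq 2^{-k}$ for all $n \geq 0$, the choice of $k$ guarantees $p(n) \geq p(0) > 1/(1-2^{-k}) \geq 1/(1-2^{-n-k})$. For strict monotonicity, using that $p$ is nondecreasing,
\begin{equation*}
\hat{p}(n+1) \geq p(n)\bigl(1-2^{-n-k-1}\bigr) > p(n)\bigl(1-2^{-n-k}\bigr) = \hat{p}(n),
\end{equation*}
so $\hat{p}$ is strictly increasing. Unboundedness follows from $\hat{p}(n) \geq p(n)/2$ (valid whenever $k \geq 1$, which we may assume) together with the unboundedness of $p$, so $\hat{p}$ is an order function.

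The last and only nontrivial step is the recursiveness of $\sum_{n=0}^\infty \hat{p}(n)^{-1}$. For this I would invoke \cref{faster growing than recursive sum has recursive sum} with the order function $p/2$ in the role of the lower bound: the inequality $\hat{p}(n) \geq p(n)/2$ gives $p/2 \domleq \hat{p}$, while $\sum_{n=0}^\infty (p(n)/2)^{-1} = 2\sum_{n=0}^\infty p(n)^{-1}$ is recursive because $\sum_{n=0}^\infty p(n)^{-1}$ is. The conclusion of that corollary then yields that $\sum_{n=0}^\infty \hat{p}(n)^{-1}$ is a recursive real (and in particular finite, so $\hat{p}$ is fast-growing). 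There is no real obstacle here — the main thing to get right is choosing the perturbation so that strict monotonicity comes for free from the nondecreasingness of $p$ without inflating the reciprocal sum by more than a bounded factor, which the multiplicative shrinkage by $(1-2^{-n-k})$ accomplishes cleanly.
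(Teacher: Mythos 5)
Your proof is correct, and while it shares the paper's basic strategy (perturb $p$ downward by a vanishing amount so that the nondecreasingness of $p$ upgrades to strict increase), it executes the two key steps differently. The paper uses an additive perturbation $\hat{p}(n) = p(n) - \epsilon_n$ for a strictly decreasing recursive sequence of rationals $\langle \epsilon_n\rangle$ with $1 < p(0) - \epsilon_0$, and then verifies recursiveness of $\sum_{n}\hat{p}(n)^{-1}$ by the explicit decomposition $\hat{p}(n)^{-1} = p(n)^{-1} + \bigl(p(n)(\epsilon_n^{-1}p(n)-1)\bigr)^{-1}$, bounding the error term by $p(n)^{-1}$ and appealing to \cref{recursive sum implies lower bounds have recursive sums} termwise before summing the two recursive reals. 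You instead use the multiplicative perturbation $p(n)(1-2^{-n-k})$, which buys you the uniform bound $\hat{p} \geq p/2$ for free and lets you dispatch the recursiveness of the sum in one stroke via \cref{faster growing than recursive sum has recursive sum}; this is cleaner and avoids the algebraic manipulation entirely. The trade-off is negligible here: both constructions converge to $p$ from below fast enough that the downstream use in \cref{complex hierarchy outpaces fast nice ldnr hierarchy quantified} (preserving $\lim_n p(n)/\tilde{p}(n+3) = \infty$) would go through either way, since your factor $1-2^{-n-k}$ tends to $1$. Your remaining verifications (codomain via the choice of $k$, strict monotonicity, unboundedness) are all sound.
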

\begin{proof}
Let $\langle \epsilon_n \rangle_{n\in\mathbb{N}}$ be any strictly decreasing, recursive sequence of positive rational numbers such that $1 < p(0)-\epsilon_0$. Define $\hat{p} \colon \mathbb{N} \to (1,\infty)$ by setting
$
\hat{p}(n) = p(n) - \epsilon_n
$
for each $n \in \mathbb{N}$.
\begin{description}
\item[$\hat{p}$ is recursive.] Immediate.

\item[$\hat{p}$ is strictly increasing.] Because $\langle \epsilon_n\rangle_{n\in\mathbb{N}}$ is \emph{strictly} decreasing, for every $n \in \mathbb{N}$ we have
\begin{equation*}
\hat{p}(n) = p(n) - \epsilon_n < p(n+1) - \epsilon_{n+1} = \hat{p}(n+1).
\end{equation*}

\item[$\sum_{n=0}^\infty{\hat{p}(n)^{-1}}$ is a recursive real.] We start by observing that
\begin{equation*}
\frac{1}{\hat{p}(n)} = \frac{1}{p(n)} + \frac{1}{p(n)(\epsilon_n^{-1}p(n) - 1)}
\end{equation*}
and that
\begin{equation*}
\frac{1}{p(n)(\epsilon_n^{-1}p(n) - 1)} \leq \frac{1}{p(n)}
\end{equation*}
for all sufficiently large $n$. By \cref{recursive sum implies lower bounds have recursive sums}, the recursiveness of $\sum_{n=0}^\infty{p(n)^{-1}}$ implies that \\ $\sum_{n=0}^\infty{\left(p(n)(\epsilon_n^{-1}p(n) - 1)\right)^{-1}}$ is recursive. Thus, $\sum_{n=0}^\infty{\hat{p}(n)^{-1}}$ converges and
\begin{equation*}
\sum_{n=0}^\infty{\hat{p}(n)^{-1}} = \sum_{n=0}^\infty{p(n)^{-1}} + \sum_{n=0}^\infty{\left(p(n)(\epsilon_n^{-1}p(n) - 1)\right)^{-1}}
\end{equation*}
is recursive, being the sum of two recursive reals.

\item[$\hat{p} \domleq p$.] Immediate since $\epsilon_n > 0$ for each $n \in \mathbb{N}$.

\end{description}

\end{proof}

\begin{lem} \label{inverse function strict inequality equivalence}
For any order function $p\colon \mathbb{N} \to (1,\infty)$ and $n,m \in \mathbb{N}$, $p^\inverse(n) > m$ if and only if $p(m) < n$. 
\end{lem}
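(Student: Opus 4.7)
The plan is to verify both directions directly from the definition $p^\inverse(n) = \text{least } k \text{ such that } p(k) \geq n$, using only that $p$ is nondecreasing. The argument is an unwinding of definitions; no deep ingredient is needed, and the only mild subtlety is in the reverse direction, where monotonicity of $p$ is invoked.

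For the forward direction, suppose $p^\inverse(n) > m$. Then $m$ is strictly less than the least $k$ with $p(k) \geq n$, so in particular $m$ itself fails this condition; that is, $p(m) < n$.

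For the reverse direction, suppose $p(m) < n$. Since $p$ is nondecreasing, for every $k \leq m$ we have $p(k) \leq p(m) < n$, so no $k \leq m$ witnesses $p(k) \geq n$. Hence the least $k$ for which $p(k) \geq n$ exceeds $m$, i.e., $p^\inverse(n) > m$.

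No obstacle is anticipated; the result is essentially a reformulation of the definition of $p^\inverse$, and the proof is a two-line observation. I would state it as a short lemma and give the equivalence in a single display.
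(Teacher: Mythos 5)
Your proof is correct and matches the paper, which dismisses this lemma as "straight-forward": both directions are exactly the unwinding of the definition $p^\inverse(n) = \text{least } k \text{ with } p(k) \geq n$, with monotonicity (and unboundedness, guaranteeing the least $k$ exists) used in the reverse direction just as you do.
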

\begin{proof}
Straight-forward.
\end{proof}

\begin{proof}[Proof of \cref{complex hierarchy outpaces fast nice ldnr hierarchy quantified}.]
By \cref{strictly increasing lower bound with recursive sum}, we may assume without loss of generality that $p$ is strictly increasing. Moreover, the proof of \cref{strictly increasing lower bound with recursive sum} shows that the property that $\lim_{n \to \infty}{\frac{p(n)}{\tilde{p}(n+3)}} = \infty$ is preserved. (Note that \cref{fast-growing multiplicatively infinite lower bound} implies there is such a $\tilde{p}$.)

Let $\overline{p} \colon \co{0,\infty} \to \co{0,\infty}$ be the continuous extension of $p$ which is defined linearly on the intervals $[n,n+1]$ for $n \in \mathbb{N}$, so its inverse $\overline{p}^{-1} \colon \co{p(0),\infty} \to \co{0,\infty}$ exists.

Define $h$ and $f$ by setting $h(n) \coloneq \log_2 \tilde{p}(n)$ and $f(n) \coloneq \log_2 \tilde{p}(p^\inverse(2^{n+1})-1)$ for each $n \in \mathbb{N}$. Since $\sum_{n=0}^\infty{2^{-h(n)}} = \sum_{n=0}^\infty{\tilde{p}(n)^{-1}}$ is a recursive real, \cref{complex reals compute ldnr functions recursive sum} implies $\ldnr(\lambda n. \exp_2(f^\inverse(h(n))+1)) \strongleq \complex(f)$.
For almost all $n \in \mathbb{N}$, applying \cref{inverse function strict inequality equivalence} shows
\begin{align*}
f^\inverse(h(n)) = f^\inverse(\log_2 \tilde{p}(n)) & = \text{least $m$ such that $f(m) \geq \log_2 \tilde{p}(n)$} \\
& = \text{least $m$ such that $\log_2 \tilde{p}(p^\inverse(2^{m+1})-1) \geq \log_2 \tilde{p}(n)$} \\
& \leq \text{least $m$ such that $p^\inverse(2^{m+1}) -1 \geq n$} \\
& = \text{least $m$ such that $p^\inverse(2^{m+1}) > n$} \\
& = \text{least $m$ such that $p(n) > 2^{m+1}$} \\
& = \text{least $m$ such that $\log_2 p(n) - 1 > m$} \\
& \leq \lfloor \log_2 p(n) - 1 \rfloor +1 \\
& \leq \log_2 p(n).
\end{align*}
Thus, 
\begin{equation*}
\ldnr(p) \strongleq \ldnr(\lambda n. \exp_2(f^\inverse(h(n))+1)) \strongleq \complex(f).
\end{equation*} 

Let $\overline{\tilde{p}}$ be the continuous extension of $\tilde{p}$ which is defined linearly on the intervals $[n,n+1]$ for $n \in \mathbb{N}$. Then, observing that $p^\inverse(n) = \lceil \overline{p}^{-1}(n) \rceil$, we have
\begin{align*}
f(n) & = \log_2 \tilde{p}(p^\inverse(2^{n+1})-1) \\
& = \log_2 \tilde{p}(\lceil \overline{p}^{-1}(2^{n+1}) \rceil - 1) \\
& \leq \log_2 \overline{\tilde{p}}(\overline{p}^{-1}(2^{n+1})) \\
& = \log_2 \left( \overline{p}(\overline{p}^{-1}(2^{n+1})) \frac{\overline{\tilde{p}}(\overline{p}^{-1}(2^{n+1}))}{\overline{p}(\overline{p}^{-1}(2^{n+1}))}\right) \\
& = n + 1 - \log_2 \left(\frac{\overline{p}(\overline{p}^{-1}(2^{n+1}))}{\overline{\tilde{p}}(\overline{p}^{-1}(2^{n+1}))}\right)
\end{align*}
for all $n \in \mathbb{N}$. Because $p(n)/\tilde{p}(n) \nearrow \infty$ as $n \to \infty$, we additionally have $\overline{p}(x)/\overline{\tilde{p}}(x) \nearrow \infty$ as $x \to \infty$, so $\log_2 \left(\frac{\overline{\tilde{p}}(\overline{p}^{-1}(2^{n+1}))}{\overline{p}(\overline{p}^{-1}(2^{n+1}))}\right)-1$ is a nondecreasing, unbounded function of $n$, and hence the function $g \colon \mathbb{N} \to \co{0,\infty}$ defined by 
\begin{equation*}
g(n) \coloneq n + 1 - \log_2 \left(\frac{\overline{p}(\overline{p}^{-1}(2^{n+1}))}{\overline{\tilde{p}}(\overline{p}^{-1}(2^{n+1}))}\right)
\end{equation*}
is convex by \cref{equivalent characterization of convexity}. Since $g$ is also an order function, this completes the proof.

\end{proof}

\begin{proof}[Proof of \cref{complex hierarchy outpaces fast nice ldnr hierarchy}.]
By \cref{complex hierarchy outpaces fast nice ldnr hierarchy quantified}, since such a $\tilde{p}$ exists there is a convex sub-identical order function $g$ such that $\ldnr(p) \strongleq \complex(g)$.

It remains to show that $\complex(g) \neq \mlr$. \cite[Corollary 4.3.5]{hudelson2013partial} shows that there is an $X$ which is strongly $g$-complex (hence $X \in \complex(g)$) such that $\lim_{n \to \infty}{(\apc(X \restrict n) - g(n))} \neq \infty$. Suppose for the sake of a contradiction that $\complex(g) = \mlr$, so that $X$ is Martin-\Lof\ random. Then there is a $c \in \mathbb{N}$ such that $\apc(X \restrict n) \geq n - c$ for all $n \in \mathbb{N}$. $g$ being sub-identical means that $\lim_{n \to \infty}{(n - g(n))} = \infty$, so
\begin{equation*}
\liminf_n{(\apc(X \restrict n) - g(n))} \geq \liminf_n{(n - g(n) - c)} = \infty.
\end{equation*}
This implies $\lim_{n \to \infty}{(\apc(X \restrict n) - g(n))} = \infty$, a contradiction.
\end{proof}

\cref{examples of recursive sums} allows us to answer \cref{upward complex above ldnr} for a nice collection of fast-growing order functions which, at least in an aesthetic sense, approach the boundary between fast-growing and slow-growing: 

\begin{example}
Given $k \in \mathbb{N}$ and a rational $\alpha \in (1,\infty)$, take any natural number $\ell > k$ and any rational $\beta \in (1,\infty)$. Define $p$, $\tilde{p}$ by setting
\begin{align*}
p(n) & \coloneq n \cdot \log_2 n \mdots \log_2^{k-1} n \cdot (\log_2^k n)^\alpha, \\
\tilde{p}(m) & \coloneq m \cdot \log_2 m \mdots \log_2^{\ell-1} m \cdot (\log_2^\ell m)^\beta
\end{align*}
for $n \geq {}^k 2$ (otherwise set $p(n) = 2 \cdot {}^22\cdot {}^32 \mdots {}^k2$) and $m \geq {}^\ell 2$ (otherwise set $\tilde{p}(m) = 2 \cdot {}^22\cdot {}^32 \mdots {}^\ell 2$), respectively. 

\cref{examples of recursive sums} shows both series $\sum_{n=0}^\infty{p(n)^{-1}}$ and $\sum_{n=0}^\infty{\tilde{p}(n)^{-1}}$ converge to recursive reals. Moreover,
\begin{equation*}
\lim_{n \to \infty}{\frac{p(n)}{\tilde{p}(n+3)}} = \lim_{n \to \infty}{\frac{(\log_2^k n)^{\alpha -1}}{\log_2^{k+1}(n+3)  \cdot \log_2^{k+2}(n+3) \mdots \log_2^{\ell-1}(n+3) \cdot (\log_2^\ell(n+3))^\beta}} = \infty.
\end{equation*}
Then \cref{complex hierarchy outpaces fast nice ldnr hierarchy quantified} implies
\begin{equation*}
\ldnr(p) \strongleq \complex(\lambda n. \log_2 \tilde{p}(p^\inverse(2^{n+1})-1)).
\end{equation*}
\end{example}

Although \cref{complex hierarchy outpaces fast nice ldnr hierarchy} answers \cref{upward complex above ldnr} as stated for order functions $p \colon \mathbb{N} \to (1,\infty)$ with recursive sum $\sum_{n=0}^\infty{p(n)^{-1}}$, $g$ being convex sub-identical does not immediately imply that $\complex(g) \weakle \mlr$, though it is necessarily the case that $\complex(g) \neq \mlr$. This prompts a refinement of \cref{upward complex above ldnr}:

\begin{question}  \label{upward complex above ldnr and below mlr}
Given a fast-growing order function $p \colon \mathbb{N} \to (1,\infty)$, is there a sub-identical order function $g \colon \mathbb{N} \to \co{0,\infty}$ such that $\ldnr(p) \weakleq \complex(g) \weakle \mlr$?
\end{question}

By examining the form of $g$ in the proof of \cref{complex hierarchy outpaces fast nice ldnr hierarchy quantified}, we can give a sufficient condition on $p$ for there to be an affirmative answer to \cref{upward complex above ldnr and below mlr}.

\begin{lem} \label{bounded jumps}
Suppose $\alpha > 1$ and $p \colon \mathbb{N} \to (1,\infty)$ is a fast-growing order function. Then there exists a fast-growing order function $\hat{p} \colon \mathbb{N} \to (1,\infty)$ such that $\hat{p} \domleq p$ and $\hat{p}(n+1)/\hat{p}(n) \leq \alpha$ for all $n \in \mathbb{N}$. Moreover, if $\sum_{n=0}^\infty{p(n)^{-1}}$ is a recursive real, then $\hat{p}$ can be chosen so that $\sum_{n=0}^\infty{\hat{p}(n)^{-1}}$ is a recursive real as well.
\end{lem}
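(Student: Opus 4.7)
The plan is to define $\hat{p}$ by the recursion $\hat{p}(0) \coloneq p(0)$, $\hat{p}(n+1) \coloneq \min\{p(n+1),\, \alpha \hat{p}(n)\}$, or equivalently (by a straight-forward induction) by the closed form $\hat{p}(n) = \min_{m \leq n} \alpha^{n-m} p(m)$. Both formulations make $\hat{p}$ manifestly computable, and the properties $\hat{p} \leq p$ (take $m = n$) and $\hat{p}(n+1)/\hat{p}(n) \leq \alpha$ are immediate from the construction. Monotonicity follows from $\hat{p}(n+1) \geq \min\{\hat{p}(n),\, \alpha \hat{p}(n)\} = \hat{p}(n)$, using $\hat{p}(n) \leq p(n) \leq p(n+1)$ and $\alpha > 1$.

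For fast-growingness, the closed form gives the pointwise bound $\hat{p}(n)^{-1} = \max_{m \leq n}{\alpha^{-(n-m)}p(m)^{-1}} \leq \sum_{m \leq n}{\alpha^{-(n-m)}p(m)^{-1}}$. Interchanging order of summation then yields
\begin{equation*}
\sum_{n=0}^\infty \hat{p}(n)^{-1} \leq \sum_{m=0}^\infty p(m)^{-1} \sum_{n \geq m}{\alpha^{-(n-m)}} = \frac{\alpha}{\alpha-1} \sum_{m=0}^\infty p(m)^{-1} < \infty,
\end{equation*}
so $\hat{p}$ is fast-growing. Since $\hat{p}$ is nondecreasing and its reciprocals sum to a finite value, $\hat{p}$ must be unbounded, hence an order function.

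The main obstacle will be the moreover clause: the sum $\hat{T} \coloneq \sum_{n=0}^\infty \hat{p}(n)^{-1}$ is automatically left r.e.\ via its partial sums $\hat{T}_N \coloneq \sum_{n \leq N} \hat{p}(n)^{-1}$, but showing it is right r.e.\ requires a uniformly recursive upper bound on the tail $\hat T - \hat T_N$ that vanishes with $N$. Writing $T_N \coloneq \sum_{n \leq N} p(n)^{-1}$, the plan is to refine the interchange argument by splitting the max at $N$:
\begin{equation*}
\sum_{n > N}\hat{p}(n)^{-1} \leq \sum_{n > N}\max_{m \leq N}{\alpha^{-(n-m)}p(m)^{-1}} + \sum_{n > N}\sum_{N < m \leq n}{\alpha^{-(n-m)}p(m)^{-1}}.
\end{equation*}
The first summand factors as $\alpha^{-n} \cdot \max_{m \leq N}{\alpha^m p(m)^{-1}}$, and using the identity $\hat{p}(N) = \alpha^N / \max_{m \leq N}{\alpha^m p(m)^{-1}}$ together with a geometric sum over $n > N$ reduces it to $\hat{p}(N)^{-1}/(\alpha-1)$. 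The second is at most $\frac{\alpha}{\alpha-1}(T - T_N)$ by the same interchange trick as before. Both terms are uniformly recursive in $N$ (the second uses recursiveness of $T$) and tend to $0$, so $\hat{T}_N + \hat{p}(N)^{-1}/(\alpha-1) + \frac{\alpha}{\alpha-1}(T - T_N)$ provides a recursive sequence of upper bounds on $\hat{T}$ converging to it from above, establishing right r.e.\ and hence recursiveness.
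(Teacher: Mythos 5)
Your construction of $\hat p$ is exactly the paper's (the same recursion $\hat p(n+1) = \min\{p(n+1),\alpha\hat p(n)\}$), but your verification goes by a genuinely different and arguably cleaner route. The paper analyzes the recursive set $I = \{n \mid \hat p(n) = p(n)\}$ and splits into cases: if $I$ is finite, $\hat p$ is eventually geometric and the sum is computed explicitly; if $I = \{n_0 < n_1 < \cdots\}$ is infinite, the sum is grouped into blocks $[n_k,n_{k+1})$, bounded by $\frac{\alpha}{\alpha-1}\sum_k p(n_k)^{-1}$, and a recursive modulus of convergence for $\sum_n \hat p(n)^{-1}$ is extracted from one for $\sum_n p(n)^{-1}$. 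You instead use the closed form $\hat p(n) = \min_{m\le n}\alpha^{n-m}p(m)$ and interchange the order of summation, which yields the same $\frac{\alpha}{\alpha-1}$ comparison with no case split, and for the ``moreover'' clause you get the explicit uniform tail bound $\sum_{n>N}\hat p(n)^{-1} \le \hat p(N)^{-1}/(\alpha-1) + \frac{\alpha}{\alpha-1}(T - T_N)$. Your estimates check out: the first term uses $\max_{m\le N}\alpha^{m}p(m)^{-1} = \alpha^{N}/\hat p(N)$ plus a geometric sum, the second is the same interchange restricted to $m>N$, both are uniformly recursive in $N$ and tend to $0$, and the paper's characterization of recursive reals (upper approximations by uniformly recursive reals suffice; monotonicity can be arranged by taking running minima) then gives right r.e.\ and hence recursiveness. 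The uniformity of this tail bound is an advantage over the paper's two-case argument.

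The one point you must add: the statement only assumes $\alpha > 1$ is a real number, and your claim that $\hat p$ is ``manifestly computable'' (and that $\hat p(N)^{-1}/(\alpha-1)$ is uniformly recursive) silently uses computability of $\alpha$; if $\alpha$ is not computable, $\alpha\hat p(n)$ need not be a computable real and $\hat p$ need not be an order function. The paper dispatches this at the outset with ``without loss of generality $\alpha$ is rational'': replace $\alpha$ by any rational $\alpha' \in (1,\alpha)$, prove the lemma for $\alpha'$, and note that $\hat p(n+1)/\hat p(n) \le \alpha' \le \alpha$ gives the stated conclusion. With that one sentence your argument is complete.
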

\begin{proof}
Without loss of generality we may assume $\alpha$ is rational. We define $\hat{p}$ recursively as follows:
\begin{align*}
\hat{p}(0) & \coloneq p(0), \\
\hat{p}(n+1) & \coloneq \min\{\alpha \hat{p}(n),p(n+1)\}.
\end{align*}
$\hat{p}$ is an order function dominated by $p$, so it just remains to show that $\hat{p}$ is fast-growing and that if $\sum_{n=0}^\infty{p(n)^{-1}}$ is a recursive real then $\sum_{n=0}^\infty{\hat{p}(n)^{-1}}$ is a recursive real.

Define $I \coloneq \{ n \in \mathbb{N} \mid \hat{p}(n) = p(n)\}$. Note that $I$ is a recursive nonempty subset of $\mathbb{N}$. We consider two cases:
\begin{description}
\item[Case 1: $I$ finite.] Let $n_0 = \max I$. By the definition of $\hat{p}$ we then have $\hat{p}(n) = \alpha^{n-n_0}p(n_0)$ for all $n \geq n_0$. Thus,
\begin{equation*}
\sum_{n=0}^\infty{\hat{p}(n)^{-1}} = \sum_{n=0}^{n_0-1}{\hat{p}(n)^{-1}} + \frac{1}{p(n_0)} \sum_{n=n_0}^\infty{\frac{1}{\alpha^{n-n_0}}} = \sum_{n=0}^{n_0-1}{\hat{p}(n)^{-1}} + \frac{1}{p(n_0)}\cdot \frac{\alpha}{\alpha-1} < \infty.
\end{equation*}
Moreover, we quickly see that $\sum_{n=0}^\infty{\hat{p}(n)^{-1}}$ is a recursive real. 

\item[Case 2: $I$ infinite.] Let $\langle n_k \rangle_{k \in \mathbb{N}}$ be the strictly increasing enumeration of $I$. Then
\begin{equation*}
\sum_{n=0}^\infty{\hat{p}(n)^{-1}} = \sum_{k=0}^\infty{\left(1+\frac{1}{\alpha} + \frac{1}{\alpha^2} + \cdots + \frac{1}{\alpha^{n_{k+1}-n_k-1}}\right)\frac{1}{p(n_k)}} \leq \frac{\alpha}{\alpha - 1}\sum_{k=0}^\infty{\frac{1}{p(n_k)}} \leq \frac{\alpha}{\alpha-1}\sum_{n=0}^\infty{p(n)^{-1}} < \infty.
\end{equation*}
Now suppose $\sum_{n=0}^\infty{p(n)^{-1}}$ is a recursive real, so that there is a nondecreasing recursive sequence $\langle N_m\rangle_{m \in \mathbb{N}}$ such that $\sum_{n=N_m}^\infty{p(n)^{-1}} \leq 2^{-m}$ for all $m \in \mathbb{N}$. Let $j$ be minimal such that $\frac{\alpha}{\alpha - 1} \leq 2^j$. We now define a recursive sequence $\langle k_m \rangle_{m \in \mathbb{N}}$ by setting $k_m$ to be the least $k$ such that $N_{m+j} \leq n_k$. Then
\begin{align*}
\sum_{n=n_{k_m}}^\infty{\hat{p}(n)^{-1}} & = \sum_{k=k_m}^\infty{\left(1+\frac{1}{\alpha} + \frac{1}{\alpha^2} + \cdots + \frac{1}{\alpha^{n_{k+1}-n_k-1}}\right)\frac{1}{p(n_k)}} \\
& \leq \frac{\alpha}{\alpha-1}\sum_{k=k_m}^\infty{\frac{1}{p(n_k)}} \\
& \leq \frac{\alpha}{\alpha-1}\sum_{n=n_{k_m}}^\infty{p(n)^{-1}} \\
& \leq \frac{\alpha}{\alpha-1}\sum_{n=N_{m+j}}^\infty{p(n)^{-1}} \\
& \leq \frac{\alpha}{\alpha-1} \frac{1}{2^{m+j}} \\
& \leq \frac{1}{2^m}.
\end{align*}
It follows that $\sum_{n=0}^\infty{\hat{p}(n)^{-1}}$ is recursive.
\end{description}
\end{proof}

\begin{prop} \label{upward complex above ldnr and below mlr partial answer}
Suppose $p \colon \mathbb{N} \to (1,\infty)$ is an order function. If there exists a computable, nondecreasing function $h \colon \co{1,\infty} \to (0,\infty)$ such that the series $\sum_{n=1}^\infty{\frac{1}{n h(n)}}$ and $\sum_{n=0}^\infty{\frac{h(p(n))}{p(n)}}$ converge to recursive reals and $\sup_{x \in \co{1,\infty}}{h(2x)/h(x)} < \infty$, then there exists a convex sub-identical order function $g \colon \mathbb{N} \to \co{0,\infty}$ such that $\ldnr(p) \strongleq \complex(g) \weakle \mlr$.
\end{prop}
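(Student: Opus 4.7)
The plan is to adapt the proof of \cref{complex hierarchy outpaces fast nice ldnr hierarchy quantified}, using the auxiliary function $h$ to supply the companion $\tilde{p}$, and then to argue the strict weak inequality $\complex(g) \weakle \mlr$ separately using the remaining hypothesis on $h$.

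First, I would apply \cref{bounded jumps} to replace $p$ by a fast-growing order function $\hat{p} \domleq p$ whose sum $\sum_{n=0}^\infty \hat{p}(n)^{-1}$ is still a recursive real, and which additionally satisfies $\hat{p}(n+1)/\hat{p}(n) \leq \alpha$ for some rational $\alpha > 1$ chosen small enough that $\alpha \cdot C \leq 2$, where $C \coloneq \sup_{x \geq 1}{h(2x)/h(x)} < \infty$. By \cref{ldnr basic facts}(c), $\ldnr(p) \strongleq \ldnr(\hat{p})$. Next I would build an order function $\tilde{p}$ from the raw formula $n \mapsto \hat{p}(n)/h(\hat{p}(n))$, smoothing if necessary (e.g.\ by running maximum) to force monotonicity while keeping $\hat{p}/\tilde{p}$ tending monotonically to infinity. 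The fact that $h$ is nondecreasing and $\hat{p} \leq p$ gives $\sum_{n=0}^\infty 1/\tilde{p}(n) \leq \sum_{n=0}^\infty h(p(n))/p(n)$, so by \cref{recursive sum implies lower bounds have recursive sums} and the hypothesis this sum is a recursive real. Applying \cref{complex hierarchy outpaces fast nice ldnr hierarchy quantified} to $\hat{p}$ and $\tilde{p}$ then produces a convex sub-identical order function $g$ with $\ldnr(\hat{p}) \strongleq \complex(g)$; composing yields $\ldnr(p) \strongleq \complex(g)$.

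For the strict separation $\complex(g) \weakle \mlr$, the easy direction is $\complex(g) \weakleq \mlr$, since $g$ sub-identical implies $\mlr = \complex(\lambda n. n) \subseteq \complex(g)$ after absorbing the complexity constant. For strictness I would unwind the explicit formula for $g$ produced in the proof of \cref{complex hierarchy outpaces fast nice ldnr hierarchy quantified}: up to bounded error, $g(n) = (n+1) - \log_2 h(2^{n+1})$. Hence $2^{g(n)-n}$ is comparable to $1/h(2^{n+1})$, and Cauchy condensation applied to the hypothesis $\sum_{n=1}^\infty 1/(nh(n)) \in \mathbb{R}_\rec$ yields that $\sum_{n=0}^\infty 2^{g(n)-n}$ converges to a recursive real. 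A standard Martin-\Lof\ test argument then shows $\complex(g,c)$ has positive Lebesgue measure for every sufficiently large $c$; inside this nonempty $\Pi^0_1$ class I would produce an $X \in \complex(g)$ of minimal Turing degree by a bushy-tree forcing adapted to respect the complexity constraint. By van Lambalgen's theorem every Martin-\Lof\ random sequence splits nontrivially into Turing-incomparable halves and hence has non-minimal Turing degree, so $X$ computes no Martin-\Lof\ random and $\mlr \weaknleq \complex(g)$.

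The principal obstacle will be the last step: producing a minimal-degree element of the $\Pi^0_1$ class $\complex(g,c)$. Minimal-degree forcing over a recursive tree is classical, but here it must be carried out inside a $\Pi^0_1$ subset of $\cantor$ while simultaneously preserving the prefix-free complexity lower bound. The recursive convergence of $\sum 2^{g(n)-n}$---a direct consequence of the hypothesis on $\sum 1/(nh(n))$---is exactly what makes this feasible: it keeps the Lebesgue measure of $\complex(g,c)$ bounded away from zero, leaving enough of the tree available at each stage of the forcing to satisfy both the minimality and the complexity requirements.
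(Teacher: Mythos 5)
The first half of your argument is essentially the paper's proof: set $\tilde{p} = \hat{p}/h(\hat{p})$, note $\hat{p}/\tilde{p} = h(\hat{p}) \nearrow \infty$, and feed the pair into \cref{complex hierarchy outpaces fast nice ldnr hierarchy quantified} to get a convex sub-identical $g$ with $\ldnr(p) \strongleq \complex(g)$; your condensation computation relating $\sum_n 2^{g(n)-n}$ to $\sum_n \frac{1}{n h(n)}$ is also the paper's computation. Two details there need repair, though. First, the requirement $\alpha \cdot C \leq 2$ is unsatisfiable whenever $C = \sup_{x}{h(2x)/h(x)} > 2$; it is also unnecessary — the paper just takes the ratio bound $2$ from \cref{bounded jumps} and absorbs the resulting constant $3C$ into the comparison. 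Second, the inequality $\sum_n 1/\tilde{p}(n) \leq \sum_n h(p(n))/p(n)$ does \emph{not} follow from ``$h$ nondecreasing and $\hat{p} \leq p$'': monotonicity gives $h(\hat{p}(n)) \leq h(p(n))$, but you then divide by the \emph{smaller} quantity $\hat{p}(n)$, and $p(n)/\hat{p}(n)$ is unbounded in general, so the termwise comparison goes the wrong way; preserving the hypothesis under the passage to $\hat{p}$ needs a separate argument (e.g.\ using the explicit geometric shape of $\hat{p}$ between its returns to $p$ together with the doubling bound on $h$), not the one-line monotonicity claim.

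The genuine gap is your final step. The paper gets $\complex(g) \weakle \mlr$ by citing \cite[Theorem 5.1]{hudelson2014mass}: it suffices that $\sum_n 2^{\lceil g(n) \rceil - n}$ be a recursive real, which is exactly what the condensation estimate delivers (upgraded from $\sum_n 2^{g(n)-n}$ via \cref{recursive sum implies lower bounds have recursive sums}). You instead propose to prove $\mlr \weaknleq \complex(g)$ from scratch by producing an element of $\complex(g,c)$ of minimal Turing degree. The van Lambalgen reduction from minimality is fine, but nothing in your sketch addresses the real difficulty of the construction: minimal-degree arguments force with perfect splitting trees, and you would have to keep \emph{every path of every tree used} inside $\complex(g,c)$; positive measure of $\complex(g,c)$ does not buy this, since splitting subtrees are exponentially sparse and their paths carry no complexity lower bound, and bushy tree forcing is a tool for DNR-style avoidance in $\baire$, not for minimality inside a $\Pi^0_1$ subclass of $\cantor$. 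Moreover the statement you would need — a minimal degree containing a $g$-complex real for $g$ within roughly $\log_2 h(2^{n+1})$ of the identity — is far stronger than the proposition, is not known to follow from the stated hypotheses, and is precisely what Hudelson's theorem lets one avoid: his construction yields a $g$-complex real computing no $1$-random without any minimality. As written, the separation $\mlr \weaknleq \complex(g)$ is unproved; replacing this step by the citation of \cite[Theorem 5.1]{hudelson2014mass} closes the gap and recovers the paper's proof.
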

\begin{proof}
Suppose there is such a computable, nondecreasing $h \colon \co{1,\infty} \to (0,\infty)$ and let $\tilde{p} \colon \mathbb{N} \to \mathbb{R}$ be defined by $\tilde{p}(n) \coloneq p(n)/h(p(n))$ for each $n \in \mathbb{N}$. By \cref{bounded jumps}, we may assume without loss of generality that $p(n+1)/p(n) \leq 2$ for all $n \in \mathbb{N}$. 

That $p$ and $h$ are nondecreasing and unbounded ($p$ by hypothesis, $h$ because $\sum_{n=1}^\infty{\frac{1}{n h(n)}} < \infty$) implies $p(n)/\tilde{p}(n) = h(p(n)) \nearrow \infty$ as $n \to \infty$. With this choice of $\tilde{p}$, let $g$ be as in the proof of \cref{complex hierarchy outpaces fast nice ldnr hierarchy quantified}, i.e.,
\begin{equation*}
g(n) \coloneq n + 1 - \log_2 \left(\frac{\overline{p}(\overline{p}^{-1}(2^{n+1}))}{\overline{\tilde{p}}(\overline{p}^{-1}(2^{n+1}))}\right)
\end{equation*}
for each $n \in \mathbb{N}$, so \cref{complex hierarchy outpaces fast nice ldnr hierarchy quantified} implies $\ldnr(p) \strongleq \complex(g)$.

It remains to show that $\complex(g) \weakle \mlr$. By \cite[Theorem 5.1]{hudelson2014mass}, it suffices to show that $\sum_{n=0}^\infty{2^{\lceil g(n) \rceil - n}}$ is a recursive real. Because $2^{\lceil g(n) \rceil - n} \leq 2^{g(n) - n + 1}$ for all $n \in \mathbb{N}$, \cref{recursive sum implies lower bounds have recursive sums} shows that it suffices to show that $\sum_{n=0}^\infty{2^{g(n) - n}}$ is a recursive real, or in other words that
\begin{equation*}
\sum_{n=0}^\infty{\frac{\overline{\tilde{p}}(\overline{p}^{-1}(2^{n+1}))}{\overline{p}(\overline{p}^{-1}(2^{n+1}))}} = \sum_{n=1}^\infty{2^n \frac{\overline{\tilde{p}}(\overline{p}^{-1}(2^n))}{(2^n)^2}}
\end{equation*}
is a recursive real. By the Cauchy Condensation Test, $\sum_{n=1}^\infty{2^n \frac{\overline{\tilde{p}}(\overline{p}^{-1}(2^n))}{(2^n)^2}}$ converges if and only if $\sum_{n=0}^\infty{\frac{\overline{\tilde{p}}(\overline{p}^{-1}(n))}{n^2}}$ converges; moreover, a simple analysis of the standard proof of the Cauchy Condesation Test and an appeal to \cref{recursive sum implies lower bounds have recursive sums} shows that we can replace both instances of `converges' with `converges to a recursive real' in the previous statement. Given $x \in \co{1,\infty}$, we compare $\overline{\tilde{p}}(x)$ to $\frac{\overline{p}(x)}{h(\overline{p}(x))}$. Let $n = \lfloor x \rfloor$; then
\begin{align*}
\frac{\overline{\tilde{p}}(x)}{\overline{p}(x)/h(\overline{p}(x))} & = h(\overline{p}(x)) \frac{\left(\frac{p(n+1)}{h(p(n+1))} - \frac{p(n)}{h(p(n))}\right)(x-n) + \frac{p(n)}{h(p(n))}}{(p(n+1) - p(n))(x-n) + p(n)} \\
& = \frac{h(\overline{p}(x))}{h(p(n))} \frac{\left( \frac{h(p(n))}{h(p(n+1))}p(n+1) - p(n)\right)(x-n) + p(n)}{(p(n+1) - p(n))(x-n) + p(n)} \\
& = \frac{h(\overline{p}(x))}{h(p(n))}\left( 1+ \frac{\left( \frac{h(p(n))}{h(p(n+1))}-1\right)p(n+1)(x-n)}{(p(n+1) - p(n))(x-n) + p(n)}\right) \\
& \leq \frac{h(p(n+1))}{h(p(n))}\left( 1 + \frac{p(n+1)}{p(n)}\right) \\
& \leq 3\frac{h(2p(n))}{h(p(n))} \\
& \leq 3 \sup_{y \in \co{1,\infty}}{\frac{h(2y)}{h(y)}} \\
& < \infty.
\end{align*}
Thus, for some positive rational $\alpha$ and almost all $n \in \mathbb{N}$, we have 
\begin{equation*}
\frac{\overline{\tilde{p}}(\overline{p}^{-1}(n))}{n^2} \leq \alpha \frac{\overline{p}(\overline{p}^{-1}(n))}{n^2 h(\overline{p}(\overline{p}^{-1}(n)))} = \alpha \frac{1}{n h(n)}.
\end{equation*}
Since $\sum_{n=1}^\infty{\frac{1}{n h(n)}}$ is a recursive real by hypothesis, \cref{complex hierarchy outpaces fast nice ldnr hierarchy quantified} implies $\sum_{n=1}^\infty{2^n \frac{\overline{\tilde{p}}(\overline{p}^{-1}(2^n))}{(2^n)^2}}$ is a recursive real, completing the proof.
\end{proof}

\begin{example}
Fix a rational $\epsilon > 0$. Then
\begin{equation*}
\ldnr(\lambda n. n (\log_2 n)^{2+\epsilon}) \weakleq \complex(g) \weakle \mlr
\end{equation*}
for some convex sub-identical order function $g \colon \mathbb{N} \to \co{0,\infty}$ as the hypotheses of \cref{upward complex above ldnr and below mlr partial answer} are satisfied with $p \colon \mathbb{N} \to (1,\infty)$ and $h \colon \co{1,\infty} \to (0,\infty)$ defined by
\begin{equation*}
p(n) \coloneq \begin{cases} 2 & \text{if $n=0$ or $n=1$,} \\ n (\log_2 n)^{2+\epsilon} & \text{otherwise,} \end{cases} \quad \text{and} \quad h(x) \coloneq \begin{cases} 1 & \text{if $x \in \co{1,2}$,} \\ (\log_2 x)^{1+\epsilon/2} & \text{otherwise.} \end{cases}
\end{equation*}
\end{example}

The extra hypothesis in \cref{complex hierarchy outpaces fast nice ldnr hierarchy} that $\sum_{n=0}^\infty{p(n)^{-1}}$ be a recursive real prompts the question as to whether a full affirmative answer to \cref{upward complex above ldnr} can be provided.

\begin{question} \label{recursive sum necessary for complex above ldnr question}
Does there exist a fast-growing order function $p$ such that $\sum_{n=0}^\infty{p(n)^{-1}}$ is nonrecursive and for which there is no sub-identical order function $g$ such that $\ldnr(p) \weakleq \complex(g)$?
\end{question}

\clearpage
\chapter{Complexity and Slow-Growing Avoidance}
\label{complexity and avoidance upward relationships chapter}

The results of \cref{complexity and avoidance downward relationships chapter} explore the relationships between the complexity and fast-growing $\ldnr$ hierarchies, giving full affirmative answers to \cref{downward ldnr below complex,,downward complex below ldnr} which examine the downward direction while in the upward direction we only gave a partial affirmative answer to \cref{upward complex above ldnr} and no answer to \cref{upward ldnr above complex}. In this chapter, we address the following generalization of \cref{upward ldnr above complex}.

\begin{question} \label{question ldnr above complex}
Given a sub-identical order function $f$, is there an order function $q$ such that $\complex(f) \weakleq \ldnr(q)$?
\end{question}

In particular, \cref{question ldnr above complex} drops the condition in \cref{upward ldnr above complex} that $q$ be fast-growing. Allowing $q$ to be slow-growing, we give a partial affirmative answer to \cref{question ldnr above complex} for $f$ of the form $\lambda n.n-\sqrt{n}\cdot\Delta(n)$ (and all sub-identical order functions dominated by a function of that form).

\begin{repthm}{sqrt complex from ldnr}
Given an order function $\Delta \colon \mathbb{N} \to [0,\infty)$ such that $\lim_{n \to \infty}{\Delta(n)/\sqrt{n}} = 0$ and any rational $\epsilon \in (0,1)$, 
\begin{equation*}
\complex\bigl(\lambda n. n-\sqrt{n}\cdot \Delta(n)\bigr) \weakleq \ldnr\bigl(\lambda n. \exp_2\bigl((1-\epsilon)\Delta(\log_2\log_2 n)\bigr)\bigr).
\end{equation*}
More generally, $\complex(\lambda n. n - \sqrt{n}\cdot \Delta(n)) \weakleq \ldnr(q)$ for any order function $q$ satisfying
\begin{equation*}
q\left( \exp_2((1-\epsilon)^{-1} \cdot [(n+1)^2 - (n+1) \cdot \Delta((n+1)^2)] \cdot \ell(n)) \right) \leq \ell(n)
\end{equation*}
for almost all $n \in \mathbb{N}$, where $\ell(n) = \exp_2\left((1-\epsilon)[(n+1) \cdot \Delta((n+1)^2) - n \cdot \Delta(n^2)]\right)$. 
\end{repthm}

Our approach in proving \cref{sqrt complex from ldnr} is motivated by techniques used by Greenberg \& Miller to prove a connection between the $\dnr$ hierarchy and effective Hausdorff dimension. The \textdef{effective Hausdorff dimension} of $Y \in \cantor$ is defined by $\dim(Y) \coloneq \sup \{ \delta \mid Y \in \complex(\delta)\}$. 

\begin{thm*} \label{greenberg and miller main theorem}
\textnormal{\cite[Theorem 4.9]{greenberg2011diagonally}}
For all sufficiently slow-growing order functions $q \colon \mathbb{N} \to (0,\infty)$ and all $Z \in \dnr_q$, there is $Y \in \cantor$ such that $Y \turingleq Z$ and $\dim(Y) = 1$.
\end{thm*}

The statement that $\dim(Y) = 1$ is equivalent to the statement that $Y \in \bigcap_{\delta \in (0,1) \cap \mathbb{Q}}{\complex(\delta)}$, so \cite[Theorem 4.9]{greenberg2011diagonally} gives a affirmative answer to \cref{question ldnr above complex} when $f(n) \leq \delta n$ for almost all $n$, where $\delta \in (0,1)$ is rational. 

\cref{sqrt complex from ldnr} improves \cite[Theorem 4.9]{greenberg2011diagonally} in two ways: first by strengthening $\dim(Y) = 1$ to $Y \in \complex(\lambda n. n-\sqrt{n}\cdot\Delta(n))$ and second by replacing `sufficiently slow-growing' with an explicit bound. 

One of the main ideas employed by Greenberg \& Miller is to consider partial randomness in the space $h^\mathbb{N} = \{ X \in \baire \mid \forall n \qspace (X(n) < h(n))\}$ for an order function $h \colon \mathbb{N} \to \mathbb{N}_{\geq 2}$, show that $Z \in \dnr_q$ computes an $X \in h^\mathbb{N}$ for which $\dim^h(X) = 1$ (with respect to effective Hausdorff dimenion in $h^\mathbb{N}$) and then show that $X$ computes a $Y \in \cantor$ with $\dim(Y) = 1$. The benefit of working with randomness in $h^\mathbb{N}$ instead of randomness in $\cantor$ is that when constructing $Y$ we may do so one entry at a time, whereas a direct construction of $X$ would likely require we construct it in segments whose lengths grow as the construction progresses. 

In \cref{partial randomness in hN}, we generalize our notions of partial randomness to $h^\mathbb{N}$. 
In \cref{randomness in hN to cantor space}, we give technical conditions under which partially random elements of $h^\mathbb{N}$ compute partially random elements of $\cantor$. 
In \cref{quantifying greenberg miller proof}, we examine \cite[Theorem 4.9]{greenberg2011diagonally}, using our generalizations and performing a careful analysis of the growth rates inherent to the construction to prove \cref{greenberg miller theorem 4.9 improved}:

\begin{repthm}{greenberg miller theorem 4.9 improved}
For rationals $\alpha \in (1,\infty)$ and $\beta \in (0,1/2)$, we have
\begin{equation*}
\complex(\lambda n. n - \alpha \sqrt{n}\log_2 n) \weakleq \ldnr(\lambda n. (\log_2 n)^\beta).
\end{equation*}
More generally, if $q \colon \mathbb{N} \to \mathbb{N}$ is an order function such that $q(2^{(3/2+\epsilon)n^2}) \leq n+1$ for almost all $n$ and some $\epsilon>0$, then $\complex(\lambda n. n - \alpha \sqrt{n}\log_2 n) \weakleq \ldnr(q)$. 
\end{repthm}

Finally, in \cref{quantifying greenberg miller proof general}, we prove a technical result (\cref{complex below ldnr}) which implies \cref{sqrt complex from ldnr}.

\section{Partial Randomness in \texorpdfstring{$h^\mathbb{N}$}{hN}} \label{partial randomness in hN}

The measure-theoretic structure on $h^\mathbb{N}$ can be defined similarly to that of the fair-coin measure $\lambda$ on $\cantor$:

\begin{definition}
Given a finite prefix-free $S \subseteq h^\ast$, define
\begin{equation*}
\mu_h(\bbracket{S}_h) \coloneq \sum_{\sigma \in S}{\frac{1}{|h^{|\sigma|}|}}.
\end{equation*}
The above assignment defines a premeasure on the collection of finite unions of basic open sets, so more generally we let $\mu_h$ be the outer measure induced by those assignments.
\end{definition}

\begin{convention}
When $h$ is understood, we write $\mu$ for $\mu_h$ and $\bbracket{-}$ for $\bbracket{-}_h$. Additionally, given $\sigma \in h^\ast$ or a finite prefix-free $S \subseteq h^\ast$, we write $\mu(\sigma)$ and $\mu(S)$ for $\mu(\bbracket{\sigma})$ and $\mu(\bbracket{S})$, respectively.
\end{convention}

Let $f \colon \{0,1\}^\ast \to \mathbb{R}$ be a computable function. Two versions of partial randomness in $h^\mathbb{N}$ will be relevant, \emph{$f$-randomness} and \emph{strong $f$-randomness}. 

A quantity that regularly appears is $\mu(\sigma)^{1/|\sigma|}$ for $\sigma \in h^\ast$. 

\begin{definition}
Define $\gamma \colon h^\ast \to [0,1]$ by setting $\gamma(\sigma) \coloneq \mu(\sigma)^{1/|\sigma|}$ when $\sigma \neq \langle\rangle$ and $\gamma(\langle\rangle) \coloneq 1$.
\end{definition}

\begin{remark}
One interpretation of $\gamma(\sigma)$ is as the geometric mean of the conditional probabilities \\ $\prob(X(k) = \sigma(k) \mid X \restrict k = \sigma \restrict k)$ for $k < |\sigma|$. 

Within $\{0,1\}^\ast$ we have $\gamma(\sigma) = 1/2$ for each $\sigma \in \{0,1\}^\ast \setminus \{\langle\rangle\}$.
\end{remark}

\subsection{$f$-randomness and $f$-complexity}

According to the measure-theoretic paradigm, $X \in \cantor$ is Martin-\Lof\ random if no uniformly r.e.\ sequence $\langle S_i \rangle_{i \in \mathbb{N}}$ of subsets of $\cantor$ for which $\lambda(S_i) \leq 2^{-i}$ for each $i \in \mathbb{N}$ covers $X$. A direct translation suggests defining $X \in h^\mathbb{N}$ to be Martin-\Lof\ random in $h^\mathbb{N}$ if no uniformly r.e.\ sequence $\langle S_i \rangle_{i \in \mathbb{N}}$ of subsets of $h^\mathbb{N}$ for which $\mu_h(S_i) \leq 2^{-i}$ for each $i \in \mathbb{N}$ covers $X$. 

More generally, given a recursive function $f \colon \{0,1\}^\ast \to \mathbb{R}$, $X \in \cantor$ is $f$-random if whenever $\langle S_i \rangle_{i \in \mathbb{N}}$ is a uniformly r.e.\ sequence of subsets of $\{0,1\}^\ast$ such that $\sum_{\sigma \in S_i}{2^{-f(\sigma)}} \leq 2^{-i}$ for each $i \in \mathbb{N}$, then $X \notin \bigcap_{i \in \mathbb{N}}{\bbracket{\sigma}_2}$. It is less obvious how to translate this to the realm of $h^\mathbb{N}$. One way would be to use it verbatim (though with $\dom f = h^\ast$ now), but a consequence would be that Martin-\Lof\ randomness in $h^\mathbb{N}$ would correspond to $f(\sigma) \coloneq \log_2 |h^{|\sigma|}|$. Another approach makes use of the observation that $\frac{1}{2} = \lambda(\sigma)^{1/|\sigma|}$ for any $\sigma \in \{0,1\}^\ast \setminus \{\langle\rangle\}$.

\begin{definition}[direct $f$-weight in $h^\ast$]
Suppose $S \subseteq h^\ast$. The \textdef{direct $f$-weight (in $h^\ast$)} $\dwt_f(S)$ of $S$ is defined by
\begin{equation*}
\dwt_f(S) = \dwt_f^h(S) \coloneq \sum_{\sigma \in S}{\gamma(\sigma)^{f(\sigma)}}.
\end{equation*}

\end{definition}

\begin{definition}[$f$-randomness in $h^\mathbb{N}$]
An \textdef{$f$-ML-test (in $h^\mathbb{N}$)} is a uniformly r.e.\ sequence $\langle S_k \rangle_{k \in \mathbb{N}}$ of subsets $S_k \subseteq h^\ast$ such that $\dwt_f(S_k) \leq 2^{-k}$ for all $k \in \mathbb{N}$.

An $f$-ML test $\langle S_k\rangle_{k \in \mathbb{N}}$ \textdef{covers} $X \in h^\mathbb{N}$ if $X \in \bigcap_{k \in \mathbb{N}}{\bbracket{S_k}}$. If $X$ is covered by an $f$-ML-test, then $X$ is said to be \textdef{$f$-null (in $h^\mathbb{N}$)}, and otherwise is \textdef{$f$-random (in $h^\mathbb{N}$)}. 
\end{definition}

Like in $\cantor$, there is an equivalent characterization of $f$-randomness in terms of complexity. The definition of a prefix-free machine $M \colonsub \{0,1\}^\ast \to h^\ast$ in $h^\ast$ is analogous to that of a prefix-free machine $M \colonsub \{0,1\}^\ast \to \{0,1\}^\ast$.

\begin{definition}[prefix-free machines in $h^\ast$]
A \textdef{prefix-free machine} is a partial recursive function $M \colonsub \{0,1\}^\ast \to h^\ast$ such that $\dom M$ is prefix-free. 

A prefix-free machine $U$ is \textdef{universal} if for any prefix-free machine $M$ there exists $\rho \in \{0,1\}^\ast$ such that $U(\rho\concat \tau) \simeq M(\tau)$ for all $\tau \in \{0,1\}^\ast$.
\end{definition}

\begin{prop} \label{universal prefix-free machine exists}
There exists a universal prefix-free machine $U \colonsub \{0,1\}^\ast \to h^\ast$. 
\end{prop}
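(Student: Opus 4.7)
The plan is to adapt the standard construction of a universal prefix-free machine from the $\{0,1\}^\ast$-valued setting to the $h^\ast$-valued setting; since $h$ is computable, $h^\ast$ admits the induced \godel\ numbering inherited from $\mathbb{N}^\ast$, so in effect I am just bookkeeping in a decidable subset of $\mathbb{N}^\ast$.

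First, I would effectively enumerate all prefix-free machines $M_0, M_1, M_2, \ldots$ with codomain $h^\ast$. Start from an admissible enumeration $\varphi_\bullet$ of partial recursive functions $\colonsub \{0,1\}^\ast \to h^\ast$ (obtained from an admissible enumeration of partial functions ${\subseteq}\mathbb{N}\to\mathbb{N}$ via the fixed \godel\ numberings, restricting outputs to indices lying in the recursive set $\im \#_{h^\ast}$). Convert each $\varphi_e$ into a prefix-free machine $M_e$ by the usual staging trick: simulate $\varphi_e$ stage by stage, and at stage $s$ commit $M_e(\sigma) \coloneq \varphi_{e,s}(\sigma)$ only if $\sigma$ is incomparable with every $\tau$ already committed to $\dom M_e$ at an earlier stage. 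The resulting $\langle M_e \rangle_{e \in \mathbb{N}}$ is a uniformly recursive sequence of prefix-free machines, and crucially every prefix-free machine $M$ already has prefix-free domain, so $M = M_e$ for any index $e$ of $M$ in $\varphi_\bullet$.

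Second, I would fix an effective prefix-free coding $e \mapsto \rho_e$ of $\mathbb{N}$ into $\{0,1\}^\ast$, e.g.\ $\rho_e \coloneq \langle 1 \rangle^e \concat \langle 0 \rangle$, and define
\begin{equation*}
U(\rho_e \concat \tau) \simeq M_e(\tau) \qquad \text{for all } e \in \mathbb{N},\ \tau \in \{0,1\}^\ast,
\end{equation*}
leaving $U$ undefined on inputs not of this form (which is a decidable condition). The verifications are routine: $U$ is partial recursive by the uniform recursiveness of $\langle M_e \rangle_{e \in \mathbb{N}}$ and of the coding; $\im U \subseteq h^\ast$ by construction; and $\dom U$ is prefix-free because $\{\rho_e \mid e \in \mathbb{N}\}$ is prefix-free and each $\dom M_e$ is prefix-free. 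For universality, given an arbitrary prefix-free machine $M$, pick $e$ with $M = M_e$; then $U(\rho_e \concat \tau) \simeq M_e(\tau) \simeq M(\tau)$ for all $\tau \in \{0,1\}^\ast$.

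I do not expect a genuine obstacle here: the construction is a direct transposition of the classical case. The only point requiring mild care is confirming that the staging modification does not destroy any genuinely prefix-free machine, which is immediate because if $\varphi_e$ already has prefix-free domain then every stage-$s$ commitment succeeds, so $M_e = \varphi_e$.
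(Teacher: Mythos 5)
Your construction is correct and is exactly the standard argument the paper invokes: the paper's proof simply states that the proof of \cite[Theorem 6.2.3]{simpson2009computability} generalizes to $h^\ast$, and what you have written out (effectively enumerating prefix-free machines via the staging trick, then prefixing with a self-delimiting code $\rho_e = \langle 1\rangle^e \concat \langle 0\rangle$) is that generalization. The only nitpick is terminological: the paper says ``incompatible'' where you say ``incomparable,'' and at a given stage one should process newly converged inputs one at a time, but these are routine details.
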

\begin{proof}
The proof given in \cite[Theorem 6.2.3]{simpson2009computability} easily generalizes to $h^\ast$.
\end{proof}

\begin{definition}[prefix-free complexity in $h^\ast$]
Fix a universal prefix-free machine $U$. Then the \\ \textdef{prefix-free complexity of $\sigma \in h^\ast$} is defined by
\begin{equation*}
\pfc(\sigma) = \pfc_U^h(\sigma) \coloneq \min \{ |\tau| \mid U(\tau) \simeq \sigma\}.
\end{equation*}
\end{definition}

Many of the standard properties or facts about prefix-free complexity in $\{0,1\}^\ast$ continue to hold in $h^\ast$.

\begin{prop} \label{kraft's inequality} \label{kc theorem}
\mbox{}
\begin{enumerate}[(a)]
\item If $U$ and $V$ are universal prefix-free machines, then there exists $c \in \mathbb{N}$ such that $|\pfc_U(\sigma) - \pfc_V(\sigma)| \leq c$ for all $\sigma \in h^\ast$.
\item Kraft's Inequality: $\sum_{\sigma \in h^\ast}{2^{-\pfc(\sigma)}} \leq 1$.
\item KC Theorem: Suppose $\langle d_k, \sigma_k \rangle_{k \in \mathbb{N}}$ is a recursive sequence of pairs $\langle d_k, \sigma_k\rangle \in \mathbb{N} \times h^\ast$ such that $\sum_{k=0}^\infty{2^{-d_k}} \leq 1$. Then there is a recursive sequence $\langle \tau_k \rangle_{k \in \mathbb{N}}$ of binary strings such that $|\tau_k| = d_k$. 

Consequently, there exists $c \in \mathbb{N}$ such that $\pfc(\sigma_k) \leq d_k + c$ for all $k \in \mathbb{N}$.
\end{enumerate}
\end{prop}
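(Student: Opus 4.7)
The plan is to mimic the classical proofs of these facts over $\{0,1\}^\ast$; what makes the generalization routine is that the codomain $h^\ast$ plays an essentially passive role — nothing in the standard arguments uses the binary nature of the codomain, only that the codewords (which live in $\{0,1\}^\ast$) form a prefix-free set.

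For (a), I would invoke universality of $U$ directly: there exists $\rho_V \in \{0,1\}^\ast$ such that $U(\rho_V \concat \tau) \simeq V(\tau)$ for all $\tau \in \{0,1\}^\ast$. Given $\sigma \in h^\ast$, pick $\tau$ witnessing $\pfc_V(\sigma)$; then $U(\rho_V \concat \tau) = \sigma$, so $\pfc_U(\sigma) \leq |\rho_V| + \pfc_V(\sigma)$. Swapping the roles of $U$ and $V$ yields the reverse inequality, and $c = \max\{|\rho_U|, |\rho_V|\}$ works uniformly in $\sigma$.

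For (b), the point is simply that $\dom U \subseteq \{0,1\}^\ast$ is by definition prefix-free, so the classical Kraft inequality for prefix-free subsets of $\{0,1\}^\ast$ applies: $\sum_{\tau \in \dom U}{2^{-|\tau|}} \leq 1$. For each $\sigma \in \im U$ choose a shortest witness $\tau_\sigma$, so that $|\tau_\sigma| = \pfc(\sigma)$ and distinct $\sigma$'s have distinct $\tau_\sigma$'s; then
\begin{equation*}
\sum_{\sigma \in h^\ast}{2^{-\pfc(\sigma)}} = \sum_{\sigma \in \im U}{2^{-|\tau_\sigma|}} \leq \sum_{\tau \in \dom U}{2^{-|\tau|}} \leq 1.
\end{equation*}
For strings $\sigma \notin \im U$ we have $\pfc(\sigma) = \infty$ by convention, contributing $0$ to the sum.

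For (c), I would carry out the standard Kraft-Chaitin online construction: using the hypothesis $\sum_{k=0}^\infty{2^{-d_k}} \leq 1$, process the requests $\langle d_k, \sigma_k\rangle$ one by one and, at stage $k$, recursively reserve a binary string $\tau_k \in \{0,1\}^{d_k}$ incompatible with all previously assigned $\tau_0,\ldots,\tau_{k-1}$ — e.g., by maintaining the current unused portion of $[0,1)$ as a finite union of dyadic intervals and picking the lexicographically least interval of length $2^{-d_k}$. The constraint $\sum 2^{-d_k} \leq 1$ guarantees that such an interval is always available, so $\tau_k \mapsto \sigma_k$ defines a prefix-free machine $M \colonsub \{0,1\}^\ast \to h^\ast$. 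Applying universality of $U$ to $M$ yields $\rho_M$ with $U(\rho_M \concat \tau) \simeq M(\tau)$, giving $\pfc(\sigma_k) \leq |\rho_M| + d_k$ for all $k$, so $c = |\rho_M|$ suffices.

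The only point that requires mild care — and really the main obstacle, though it is a routine bookkeeping one — is the recursiveness of the construction in (c): the sequence of reserved intervals must be updated recursively in $k$, and one must verify that the resulting $M$ has recursive graph. This is handled exactly as in the classical $\{0,1\}^\ast$ case (see e.g.\ \cite[Theorem 3.6.1]{downey2010algorithmic}), since the codomain $h^\ast$ enters only via the recursive labelling $\tau_k \mapsto \sigma_k$ and $h^\ast$ itself is recursive.
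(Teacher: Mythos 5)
Your proposal is correct and follows essentially the same route as the paper's proof: (a) via the universality constant in both directions, (b) by bounding the sum over shortest witnesses by the Kraft sum over the prefix-free set $\dom U$, and (c) by obtaining pairwise-incompatible $\tau_k$ with $|\tau_k| = d_k$ (the paper simply cites \cite[Theorem 3.6.1]{downey2010algorithmic} for this, where you sketch the interval construction), defining $M(\tau_k) = \sigma_k$, and applying universality of $U$ to get the constant.
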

\begin{proof} \mbox{}
\begin{enumerate}[(a)]
\item The universality of $U$ implies there is a $\rho \in \{0,1\}^\ast$ such that $U(\rho \concat \sigma) \simeq V(\sigma)$ for all $\sigma \in \{0,1\}^\ast$. Fix $\tau \in h^\ast$. If $\sigma \in \{0,1\}^\ast$ is such $V(\sigma) \simeq \tau$ and $|\sigma| = \pfc_V(\tau)$, then $U(\rho \concat \sigma) \simeq \tau$ shows $\pfc_U(\tau) \leq \pfc_V(\tau) + |\rho|$, or equivalently $\pfc_U(\tau) - \pfc_V(\tau) \leq |\rho|$, with this final inequality being independent of $\tau$. By symmetry, there is a $\rho' \in \{0,1\}^\ast$ such that $\pfc_V(\tau) - \pfc_U(\tau) \leq |\rho'|$ for all $\tau \in h^\ast$, so we may set $c = \max\{|\rho|,|\rho'|\}$.

\item To each $\tau \in h^\ast$ there is a (not necessarily unique) $\sigma \in \dom U$ such that $U(\sigma) = \tau$ and $|\sigma| = \pwt(\tau)$. This observation shows that $\sum_{\tau \in h^\ast}{2^{-\pfc(\tau)}} \leq \sum_{\sigma \in \dom U}{2^{-|\sigma|}}$. Because $\dom U$ is prefix-free, we have 
\begin{equation*}
\sum_{\tau \in h^\ast}{2^{-\pfc(\tau)}} \leq \sum_{\sigma \in \dom U}{2^{-|\sigma|}} \leq 1.
\end{equation*}

\item The proof of \cite[Theorem 3.6.1]{downey2010algorithmic} shows that there exists a recursive sequence $\langle \tau_k \rangle_{k \in \mathbb{N}}$ of pairwise-incompatible binary strings $\tau_k$ with $|\tau_k| = d_k$. 

To show the ``consequently'' statement holds, define $M \colonsub \{0,1\}^\ast \to h^\ast$ by setting $M(\tau_k) = \sigma_k$ for $k \in \mathbb{N}$ and $M(\tau) \diverge$ for all other $\tau$. Then $ M$ is a prefix-free machine, so there is $\rho \in \{0,1\}^\ast$ such that $U(\rho \concat \tau) \simeq M(\tau)$ for all $\tau \in \{0,1\}^\ast$. In particular, $U(\rho \concat \tau_k) = \sigma_k$, so 
\begin{equation*}
\pfc(\sigma_k) \leq |\rho \concat \tau_k| = d_k + |\rho|.
\end{equation*}

\end{enumerate}
\end{proof}

Given $X \in h^\mathbb{N}$, it makes sense to consider how the prefix-free complexity of an initial segment of $X$ grows as a function of length. Within $\cantor$, $X$ is $f$-complex if there is $c \in \mathbb{N}$ such that $\pfc(X \restrict n) \geq f(n) - c$ for all $n \in \mathbb{N}$. It can be shown \cite[Theorem 2.6]{higuchi2014propagation} that $f$-randomness and $f$-complexity in $\cantor$ are equivalent, so a natural question is whether this continues to hold in $h^\mathbb{N}$ once we define `$f$-complexity' in $h^\mathbb{N}$. For the equivalence to go through an additional factor (depending on $h$) must be introduced.

\begin{definition}[$f$-complexity in $h^\mathbb{N}$]
$X \in h^\mathbb{N}$ is \textdef{$f$-complex (in $h^\mathbb{N}$)} if there exists $c \in \mathbb{N}$ such that, for all $n \in \mathbb{N}$,
\begin{equation*}
\pfc(X \restrict n) \geq (\log_{1/2}\gamma(X \restrict n)) \cdot f(X \restrict n) - c.
\end{equation*}

\end{definition}

Adapting \cite[Theorem 2.6]{higuchi2014propagation} yields the equivalence between $f$-randomness and $f$-complexity in $h^\mathbb{N}$. 

\begin{thm} \label{equivalence of f-randomness and f-complexity}
For all $X \in h^\mathbb{N}$, $X$ is $f$-random in $h^\mathbb{N}$ if and only if it is $f$-complex in $h^\mathbb{N}$.
\end{thm}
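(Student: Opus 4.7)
The plan is to reduce the theorem to the familiar equivalence between prefix-free complexity bounds and ML-style randomness, by rewriting everything through the auxiliary function $F \colon h^\ast \to \co{0,\infty}$ defined by $F(\sigma) \coloneq (\log_{1/2}\gamma(\sigma)) \cdot f(\sigma)$ (with $F(\langle\rangle) \coloneq 0$). Since $\gamma(\sigma)^{f(\sigma)} = 2^{-F(\sigma)}$, an $f$-ML test is exactly a uniformly r.e.\ sequence $\langle S_k\rangle_{k\in\mathbb{N}}$ with $\sum_{\sigma \in S_k}{2^{-F(\sigma)}} \leq 2^{-k}$, and $f$-complexity becomes the requirement that $\pfc(X \restrict n) \geq F(X \restrict n) - c$ for some constant $c$. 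Both directions are then standard Kraft/KC arguments carried out in $h^\ast$ using the machinery of \cref{universal prefix-free machine exists} and \cref{kraft's inequality}.

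For the direction \emph{not $f$-random $\Rightarrow$ not $f$-complex}, suppose an $f$-ML test $\langle S_k\rangle_{k \in \mathbb{N}}$ covers $X$. I would thin the test by setting $T_k \coloneq S_{2k}$ (still covering $X$, with $\dwt_f(T_k) \leq 2^{-2k}$), and then feed the recursive sequence of pairs $\langle \lceil F(\sigma) \rceil - k + 2, \sigma\rangle$ indexed by the enumeration of $\{\langle k,\sigma\rangle \mid \sigma \in T_k\}$ into the KC Theorem (\cref{kc theorem}(c)). The Kraft-weight check reads
\begin{equation*}
\sum_{k=0}^\infty{\sum_{\sigma \in T_k}{2^{-(\lceil F(\sigma)\rceil - k + 2)}}} \leq \sum_{k=0}^\infty{2^{k-2}\sum_{\sigma \in T_k}{2^{-F(\sigma)}}} \leq \sum_{k=0}^\infty{2^{k-2}\cdot 2^{-2k}} = \tfrac{1}{2},
\end{equation*}
so the KC Theorem yields a constant $c_0$ with $\pfc(\sigma) \leq \lceil F(\sigma)\rceil - k + 2 + c_0$ for every $\sigma \in T_k$. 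Because $X \in \bigcap_k{\bbracket{T_k}}$, for each $k$ there is an initial segment $X \restrict n_k \in T_k$, whence $\pfc(X \restrict n_k) - F(X \restrict n_k) \to -\infty$ as $k \to \infty$.

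For the direction \emph{not $f$-complex $\Rightarrow$ not $f$-random}, I would define $S_k \coloneq \{\sigma \in h^\ast \mid \pfc(\sigma) < F(\sigma) - k\}$, which is r.e.\ uniformly in $k$ because $\pfc$ is upper semicomputable and $F$ is recursive. The Kraft inequality (\cref{kraft's inequality}(b)) then gives
\begin{equation*}
\dwt_f(S_k) = \sum_{\sigma \in S_k}{2^{-F(\sigma)}} \leq 2^{-k}\sum_{\sigma \in S_k}{2^{-\pfc(\sigma)}} \leq 2^{-k},
\end{equation*}
so $\langle S_k\rangle_{k \in \mathbb{N}}$ is an $f$-ML test. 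The failure of $f$-complexity says that for every $c$ there is some $n$ with $X \restrict n \in S_c$, hence $X \in \bigcap_{c}{\bbracket{S_c}}$, so the test covers $X$.

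The main obstacle is purely bookkeeping around the substitution: verifying that the single identity $\gamma(\sigma)^{f(\sigma)} = 2^{-F(\sigma)}$ genuinely translates both definitions, picking the geometric shift $T_k = S_{2k}$ together with the additive $+2$ so the KC sum converges, handling the harmless edge case $\sigma = \langle\rangle$ via $F(\langle\rangle) \coloneq 0$, and observing that using $\lceil F(\sigma) \rceil$ circumvents any issue with $F$ being irrational-valued. Once those constants are set, the argument is an exact transcription of the $\{0,1\}^\ast$ proof into $h^\ast$, whose prefix-free machinery was itself arranged in \cref{kraft's inequality} to parallel the Cantor-space case.
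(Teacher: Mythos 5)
Your proposal is correct and follows essentially the same route as the paper: one direction builds the test from the deficiency sets $\{\sigma \in h^\ast \mid \pfc(\sigma) < (\log_{1/2}\gamma(\sigma))\cdot f(\sigma) - k\}$ and bounds their $f$-weight by Kraft's inequality, and the other thins a covering test to its even-indexed levels and applies the KC Theorem to get the complexity deficiency along $X$. The only differences are cosmetic (stating both directions contrapositively, and slightly different constant/ceiling bookkeeping in the KC request lengths).
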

\begin{proof}
For $i \in \mathbb{N}$, let $S_i = \{ \sigma \in h^\ast \mid \pfc(\sigma) < (\log_{1/2}\gamma(X \restrict n)) \cdot f(\sigma) - i\}$. We claim that $\langle S_i \rangle_{i \in \mathbb{N}}$ forms an $f$-ML test. Indeed, for each $i \in \mathbb{N}$ we have
\begin{equation*}
\dwt_f(S_i) = \sum_{\sigma \in S_i}{\gamma(\sigma)^{f(\sigma)}} < \sum_{\sigma \in S_i}{\gamma(\sigma)^{(\pfc(\sigma) + i) \cdot \log_{\gamma(\sigma)}(1/2)}} = \sum_{\sigma \in S_i}{2^{-\pfc(\sigma)} \cdot 2^{-i}} \leq 2^{-i}.
\end{equation*}
where the final inequality follows from \cref{kraft's inequality}(b). If $X$ is $f$-random, then $\langle S_i \rangle_{i \in \mathbb{N}}$ does not cover $X$, meaning there is an $i \in \mathbb{N}$ such that $\pfc(X \restrict n) \geq (\log_{1/2}\gamma(X \restrict n)) \cdot f(X \restrict n) - i$ for all $n \in \mathbb{N}$ and hence $X$ is $f$-complex.

Conversely, suppose $X$ is not $f$-random, and let $\langle S_i \rangle_{i \in \mathbb{N}}$ be an $f$-ML test covering $X$. Then
\begin{equation*}
\sum_{i = 0}^\infty{\sum_{\sigma \in S_{2i}}{\exp_2\bigl(-\bigl( (\log_{1/2} \gamma(\sigma)) \cdot f(\sigma) - (i+1)\bigl)\bigr)}} = \sum_{i = 0}^\infty{2^i\gamma(\sigma)^{f(\sigma)}} \leq \sum_{i=0}^\infty{2^{i+1} \cdot 2^{-2i}} = \sum_{i=0}^\infty{2^{-i-1}} = 1.
\end{equation*}
Suppose $g_i \colon \mathbb{N} \to S_{2i}$ is a recursive surjection for each $i \in \mathbb{N}$ and let $g \colon \mathbb{N} \to \bigcup_{i \in \mathbb{N}}{S_{2i}}$ be defined by $g(\pi^{(2)}(i,j)) \coloneq g_i(j)$. Write $\sigma_k = g(k)$ and $d_k = \lceil (\log_{1/2} \gamma(\sigma_k)) \cdot f(\sigma_k) - (i+1) \rceil$. Then \cref{kc theorem}(c) implies there exists $c \in \mathbb{N}$ such that 
\begin{equation*}
\pfc(\sigma_k) \leq \lceil (\log_{1/2} \gamma(\sigma_k)) \cdot f(\sigma_k) - (i+1) \rceil + c \leq (\log_{1/2} \gamma(\sigma_k)) \cdot f(\sigma_k) - i + c
\end{equation*}
for all $k \in \mathbb{N}$. Because $\langle S_i \rangle_{i \in \mathbb{N}}$ covers $X$, $\langle S_{2i} \rangle_{i \in \mathbb{N}}$ does as well. Thus, for every $i \in \mathbb{N}$, there exists an $n \in \mathbb{N}$ such that $\pfc(X \restrict n) \leq (\log_{1/2} \gamma(X \restrict n)) \cdot f(X \restrict n) - i + c$, so $X$ is not $f$-complex.
\end{proof}

\begin{cor}
There exists a \textdef{universal $f$-ML test}, i.e., an $f$-ML test $\langle S_i \rangle_{i \in \mathbb{N}}$ such that $X \in h^\mathbb{N}$ is $f$-random if and only if $X$ is not covered by $\langle S_i \rangle_{i \in \mathbb{N}}$.
\end{cor}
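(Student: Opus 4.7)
The plan is to observe that the $f$-ML test constructed in the forward direction of the proof of \cref{equivalence of f-randomness and f-complexity} is itself universal, so essentially no new construction is required. Specifically, I would fix a universal prefix-free machine $U \colonsub \{0,1\}^\ast \to h^\ast$ (whose existence is given by \cref{universal prefix-free machine exists}), let $\pfc = \pfc_U^h$, and define
\begin{equation*}
S_i \coloneq \bigl\{ \sigma \in h^\ast \bigm| \pfc(\sigma) < (\log_{1/2}\gamma(\sigma)) \cdot f(\sigma) - i \bigr\}
\end{equation*}
for each $i \in \mathbb{N}$. I claim $\langle S_i \rangle_{i \in \mathbb{N}}$ is a universal $f$-ML test.

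First I would verify that $\langle S_i \rangle_{i \in \mathbb{N}}$ is actually an $f$-ML test. Uniform recursive enumerability is routine: $\pfc$ is upper semicomputable (from the partial recursiveness of $U$), while $f$ and $\gamma$ are computable, so $\{\langle \sigma,i\rangle \mid \sigma \in S_i\}$ is r.e. For the weight bound, the computation in the forward direction of the proof of \cref{equivalence of f-randomness and f-complexity} already establishes
\begin{equation*}
\dwt_f(S_i) = \sum_{\sigma \in S_i}{\gamma(\sigma)^{f(\sigma)}} < \sum_{\sigma \in S_i}{2^{-\pfc(\sigma)} \cdot 2^{-i}} \leq 2^{-i},
\end{equation*}
where the final step invokes Kraft's Inequality (\cref{kraft's inequality}(b)).

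Next I would verify universality. If $X \in h^\mathbb{N}$ is $f$-random, then by \cref{equivalence of f-randomness and f-complexity} it is $f$-complex, so there is some $c \in \mathbb{N}$ witnessing the complexity bound for all $n$, which translates directly to $X \restrict n \notin S_c$ for every $n$, hence $X \notin \bigcap_i \bbracket{S_i}$. Conversely, if $X$ is not covered by $\langle S_i\rangle_{i\in\mathbb{N}}$, pick $c$ with $X \notin \bbracket{S_c}$; then $X \restrict n \notin S_c$ for every $n$, which is precisely the statement that $X$ is $f$-complex with constant $c$, and another application of \cref{equivalence of f-randomness and f-complexity} gives $f$-randomness.

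There is no serious obstacle here: the whole content of the corollary is repackaging what was already proved in the theorem, since the test used in the forward direction is already universal. The only point that requires any care is confirming that the particular set defining $S_i$ is uniformly r.e. rather than merely uniformly $\Sigma^0_2$, and this follows from the upper semicomputability of $\pfc$ together with the computability of $f$ and $\gamma$.
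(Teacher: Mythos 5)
Your proposal is correct and is essentially the paper's own proof: the paper also takes the levels $S_i = \{\sigma \in h^\ast \mid \pfc(\sigma) < (\log_{1/2}\gamma(\sigma))\cdot f(\sigma) - i\}$ from the forward direction of \cref{equivalence of f-randomness and f-complexity} and notes that this test is universal, with the weight bound via Kraft's Inequality and universality via the randomness/complexity equivalence exactly as you spell out.
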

\begin{proof}
The proof of \cref{equivalence of f-randomness and f-complexity} shows that setting $S_i = \{ \sigma \in h^\ast \mid \pfc(\sigma) < (\log_{1/2}\gamma(X \restrict n)) \cdot f(\sigma) - i\}$ yields a universal $f$-ML test.
\end{proof}

\begin{cor}
Suppose $f(\sigma) = \tilde{f}(\sigma)$ for almost all $\sigma$. Then $f$-randomness in $h^\mathbb{N}$ is equivalent to $\tilde{f}$-randomness in $h^\mathbb{N}$.
\end{cor}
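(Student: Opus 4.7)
The plan is to reduce the claim to the equivalent statement in terms of $f$-complexity given by \cref{equivalence of f-randomness and f-complexity}. Interpreting ``$f(\sigma) = \tilde{f}(\sigma)$ for almost all $\sigma$'' to mean that the set $F \coloneq \{\sigma \in h^\ast \mid f(\sigma) \neq \tilde{f}(\sigma)\}$ is finite, it suffices to show that for every $X \in h^\mathbb{N}$, $X$ is $f$-complex in $h^\mathbb{N}$ if and only if $X$ is $\tilde{f}$-complex in $h^\mathbb{N}$.

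The key observation is that the initial segments $X \restrict 0, X \restrict 1, X \restrict 2, \ldots$ have pairwise distinct lengths, and hence are pairwise distinct as members of $h^\ast$. Consequently the set $\{n \in \mathbb{N} \mid X \restrict n \in F\}$ is finite, bounded by some $N = N(X)$. For every $n \geq N$ we then have $f(X \restrict n) = \tilde{f}(X \restrict n)$, so $(\log_{1/2}\gamma(X \restrict n)) \cdot f(X \restrict n)$ and $(\log_{1/2}\gamma(X \restrict n)) \cdot \tilde{f}(X \restrict n)$ coincide. On the finite initial block of indices $n < N$, the two quantities differ by at most some constant $K = K(X)$, since the maximum of a finite collection of finite numbers is finite.

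Thus, if $X$ is $f$-complex with constant $c$, then the inequality $\pfc(X \restrict n) \geq (\log_{1/2}\gamma(X \restrict n)) \cdot \tilde{f}(X \restrict n) - (c + K)$ holds for every $n$, witnessing $\tilde{f}$-complexity of $X$. The converse direction is identical by symmetry in $f$ and $\tilde{f}$. Applying \cref{equivalence of f-randomness and f-complexity} in both directions then yields the equivalence of $f$-randomness and $\tilde{f}$-randomness in $h^\mathbb{N}$. No serious obstacle is anticipated; the only subtlety is that the ``correction'' $K$ may depend on $X$, which is precisely the flexibility permitted by the complexity definition, where the constant $c$ is allowed to depend on $X$.
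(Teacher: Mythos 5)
Your proposal is correct and follows essentially the same route as the paper: pass to $f$-complexity via \cref{equivalence of f-randomness and f-complexity}, observe that only finitely many initial segments of any $X$ can lie in the exceptional set, and absorb the resulting finite discrepancy into the additive constant. The only (immaterial) difference is that you read ``almost all $\sigma$'' as finitely many exceptional strings while the paper reads it as exceptions confined to lengths $\leq N$; both interpretations support the same argument.
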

\begin{proof}
Suppose $f(\sigma) = \tilde{f}(\sigma)$ for all $\sigma \in h^\ast$ for which $|\sigma| > N$. Let $d = \max_{\sigma \in h^\ast, |\sigma| \leq N}{\pfc(\sigma)}$. Then $\pfc(\sigma) \geq (\log_{1/2} \gamma(\sigma)) \cdot f(\sigma) - c$ if and only if $\pfc(\sigma) \geq (\log_{1/2} \gamma(\sigma)) \cdot \tilde{f}(\sigma) - c$ for all $\sigma \in h^\ast$. It follows that $f$-complexity and $\tilde{f}$-complexity are equivalent, and so \cref{equivalence of f-randomness and f-complexity} shows $f$-randomness and $\tilde{f}$-randomness are equivalent.
\end{proof}

Prior to defining $f$-randomness in $h^\mathbb{N}$, an alternate definition was suggested in which the definition of direct $f$-weight was unmodified when passing from $\{0,1\}^\ast$ to $h^\ast$ aside from changing the domain of $f$. Likewise, an alternative definition of $f$-complexity can be given in which the factor $\log_{1/2}{\gamma(\sigma)}$ is removed, more closely resembling $f$-complexity in $\cantor$. 

Passing between these alternative definitions can be done in a uniform manner:

\begin{prop}
Suppose $f,g \colon h^\ast \to \mathbb{R}$ are computable and $g(\sigma) = (\log_{1/2} \gamma(\sigma)) \cdot f(\sigma)$ for all $\sigma \in h^\ast$. 
\begin{enumerate}[(a)]
\item $X \in h^\mathbb{N}$ is $f$-random if and only if there exists no uniformly r.e.\ sequence $\langle S_k \rangle_{k \in \mathbb{N}}$ such that $\sum_{\sigma \in S_k}{2^{-g(\sigma)}} \leq 1/2^k$ for each $k \in \mathbb{N}$ and $X \in \bigcap_{k \in \mathbb{N}}{\bbracket{S_k}}$.

\item $X \in h^\mathbb{N}$ is $f$-complex if and only if there exists a $c \in \mathbb{N}$ such that $\pfc(X \restrict n) \geq g(X \restrict n) - c$ for all $n \in \mathbb{N}$.
\end{enumerate}
\end{prop}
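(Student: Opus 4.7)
The plan is to observe that the entire proposition is a routine unfolding of the definitions that pivots on a single algebraic identity, namely
\[
\gamma(\sigma)^{f(\sigma)} = 2^{-g(\sigma)}.
\]
This identity is immediate from the fact that $\gamma(\sigma) \in (0,1]$ (so $\log_2 \gamma(\sigma) = -\log_{1/2}\gamma(\sigma)$) combined with the hypothesis $g(\sigma) = (\log_{1/2}\gamma(\sigma))\cdot f(\sigma)$; explicitly,
\[
\gamma(\sigma)^{f(\sigma)} = \exp_2\bigl((\log_2 \gamma(\sigma))\cdot f(\sigma)\bigr) = \exp_2\bigl(-(\log_{1/2}\gamma(\sigma))\cdot f(\sigma)\bigr) = 2^{-g(\sigma)}.
\]
With this in hand, neither direction of either biconditional requires additional work; the point is that one definition has been written in terms of $\gamma(\sigma)^{f(\sigma)}$ while the equivalent formulation is written in terms of $2^{-g(\sigma)}$.

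For part (a), I would substitute the identity into the definition of direct $f$-weight to obtain, for every $S \subseteq h^\ast$,
\[
\dwt_f(S) = \sum_{\sigma \in S}\gamma(\sigma)^{f(\sigma)} = \sum_{\sigma \in S}2^{-g(\sigma)}.
\]
Thus a uniformly r.e.\ sequence $\langle S_k\rangle_{k \in \mathbb{N}}$ of subsets of $h^\ast$ satisfies $\dwt_f(S_k) \leq 2^{-k}$ for all $k$ if and only if $\sum_{\sigma \in S_k}2^{-g(\sigma)} \leq 2^{-k}$ for all $k$; that is, the class of $f$-ML-tests in $h^\mathbb{N}$ coincides exactly with the class of uniformly r.e.\ sequences described in the statement. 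The equivalence of the two covering conditions follows immediately, proving (a).

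For part (b), I would simply apply the identity inside the definition of $f$-complexity in $h^\mathbb{N}$: the condition
\[
\pfc(X \restrict n) \geq (\log_{1/2}\gamma(X\restrict n))\cdot f(X \restrict n) - c
\]
is, by the definition of $g$, the same inequality as $\pfc(X \restrict n) \geq g(X \restrict n) - c$, and the quantification over $c \in \mathbb{N}$ matches on both sides. No obstacle arises here --- the only thing to be careful about is the convention $\gamma(\langle\rangle) = 1$, for which $\log_{1/2}\gamma(\langle\rangle) = 0$ and so $g(\langle\rangle) = 0$; both conditions are then trivially satisfied at $n = 0$ and cause no issue. In short, the main ``difficulty'' is purely notational bookkeeping, and there is no genuine obstacle.
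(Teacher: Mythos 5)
Your proof is correct, and it is exactly the routine verification the paper has in mind — the paper's own proof is simply the remark ``Straight-forward,'' and your identity $\gamma(\sigma)^{f(\sigma)} = 2^{-g(\sigma)}$ is the one fact that makes both parts immediate. Nothing is missing; the handling of $\gamma(\langle\rangle)=1$ is the only boundary case and you address it correctly.
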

\begin{proof} 
Straight-forward.
\end{proof}

\subsection{Strong $f$-Randomness}

A related variant of partial randomness can be defined similarly. First, from the measure-theoretic paradigm:

\begin{definition}[prefix-free $f$-weight in $h^\ast$]
The \textdef{prefix-free $f$-weight} of a set of strings $S \subseteq h^\ast$ is defined by
\begin{equation*}
\pwt_f(S) \coloneq \sup\{ \dwt_f(A) \mid \text{prefix-free $A \subseteq S$} \}.
\end{equation*}
\end{definition}

\begin{definition}[strong $f$-randomness in $h^\mathbb{N}$]
A \textdef{weak $f$-ML-test (in $h^\mathbb{N}$)} is a uniformly r.e.\ sequence $\langle A_k \rangle_{k \in \mathbb{N}}$ of subsetes $A_k \subseteq h^\ast$ such that $\pwt_f(A_k) \leq 2^{-k}$ for all $k \in \mathbb{N}$. 

A weak $f$-ML-test $\langle A_k\rangle_{k \in \mathbb{N}}$ \textdef{covers} $X \in h^\mathbb{N}$ if $X \in \bigcap_{k \in \mathbb{N}}{\bbracket{A_k}}$. If $X$ is covered by no weak $f$-ML-test, then $X$ is \textdef{strongly $f$-random (in $h^\mathbb{N}$)}.
\end{definition}

Like in $\cantor$, strong $f$-randomness in $h^\mathbb{N}$ is associated with an analog of a priori complexity in $h^\ast$.

\begin{definition}[continuous semimeasure on $h^\ast$]
A \textdef{continuous semimeasure on $h^\ast$} is a function $\nu \colon h^\ast \to [0,1]$ such that $\nu(\langle\rangle)=1$ and for every $\sigma \in h^\ast$,
\begin{equation*}
\nu(\sigma) \geq \sum_{i \in < h(|\sigma|)}{\nu(\sigma\concat\langle i\rangle)}.
\end{equation*}

A continuous semimeasure $\nu$ is \textdef{left recursively enumerable}, or \textdef{left r.e.}, if it is left r.e.\ as a function $h^\ast \to \mathbb{R}$. A left r.e.\ continuous semimeasure $\nu$ is \textdef{universal} if for every left r.e.\ continuous semimeasure $\xi$ on $h^\ast$ there exists $c \in \mathbb{N}$ such that $\xi(\sigma) \leq c\cdot \nu(\sigma)$ for all $\sigma \in h^\ast$.
\end{definition}

\begin{prop}
There exists a universal left r.e.\ semimeasure $\mathbf{M}$ on $h^\ast$.
\end{prop}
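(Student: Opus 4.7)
The plan is to mimic the construction of a universal left r.e.\ continuous semimeasure on $\{0,1\}^\ast$ (as in \cite[Theorem 3.16.2]{downey2010algorithmic}) and on $\mathbb{N}^\ast$: first produce a uniformly left r.e.\ enumeration of \emph{all} left r.e.\ continuous semimeasures on $h^\ast$, then average them against a recursive weight sequence.

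For the enumeration, the cleanest route is via a functional characterization analogous to \cref{levin and zvonkin}. First I would prove: a function $\nu \colon h^\ast \to [0,1]$ is a left r.e.\ continuous semimeasure if and only if there is a partial recursive functional $\Psi \colonsub \cantor \to h^\mathbb{N}$ with $\nu(\sigma) = \lambda(\{ Z \in \cantor \mid \Psi^Z \supseteq \sigma\})$. The forward direction is routine: $\nu(\langle\rangle) = 1$ trivially, the semimeasure inequality falls out of the pairwise disjointness of the cylinders $\{ Z \mid \Psi^Z \supseteq \sigma \concat \langle i \rangle\}$ for $i < h(|\sigma|)$, and left r.e.-ness follows from the standard approximation of $\lambda(\Psi^{-1}(\sigma))$ from below by clopen sets determined by finite stages of the computation of $\Psi$. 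The reverse direction is the main technical obstacle: given an arbitrary left r.e.\ $\nu$, one must construct a partial recursive functional $\Psi$ realizing it, via the monotone-machine-style construction of \cite[Theorem 3.16.2]{downey2010algorithmic} adapted so that outputs live in $h^\ast$ rather than $\{0,1\}^\ast$, respecting the branching bound $h(n)$ at level $n$.

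Using this equivalence, let $\Psi_0, \Psi_1, \ldots$ be an effective enumeration of the partial recursive functionals ${\subseteq}\cantor \to h^\mathbb{N}$ (obtained by restricting an enumeration of functionals ${\subseteq}\cantor \to \baire$ and replacing outputs that leave $h^\mathbb{N}$ with divergence) and set $\nu_e(\sigma) \coloneq \lambda(\{ Z \in \cantor \mid \Psi_e^Z \supseteq \sigma\})$. Then $\langle \nu_e \rangle_{e \in \mathbb{N}}$ is a uniformly left r.e.\ enumeration of all left r.e.\ continuous semimeasures on $h^\ast$. Define
\begin{equation*}
\mathbf{M}(\sigma) \coloneq \sum_{e=0}^\infty{2^{-(e+1)} \nu_e(\sigma)}.
\end{equation*}
Left r.e.-ness of $\mathbf{M}$ is inherited from the uniform left r.e.-ness of the $\nu_e$; the root value $\mathbf{M}(\langle\rangle) = \sum_{e=0}^\infty{2^{-(e+1)}} = 1$ is immediate; and the continuous semimeasure inequality passes through the sum termwise.

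Universality is then automatic: given any left r.e.\ continuous semimeasure $\xi$ on $h^\ast$, pick $e_0$ with $\xi = \nu_{e_0}$ and observe $\xi(\sigma) = \nu_{e_0}(\sigma) \leq 2^{e_0+1}\, \mathbf{M}(\sigma)$ for every $\sigma \in h^\ast$, so $c \coloneq 2^{e_0+1}$ witnesses the universality condition. Thus the only genuine labor is in adapting the Levin--Zvonkin representation theorem to the bounded-branching setting $h^\ast$, after which the weighted-average construction proceeds uncritically.
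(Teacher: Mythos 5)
Your proposal is correct and takes essentially the paper's approach: the paper's proof just observes that the construction of \cite[Theorem 3.16.2]{downey2010algorithmic} generalizes to $h^\ast$, which is exactly the enumerate-and-weighted-sum argument you carry out (with the enumeration realized through machine/functional-induced semimeasures). The Levin--Zvonkin step you single out as the main obstacle does go through — for instance, it already follows from \cref{levin and zvonkin}(b) by extending a semimeasure on $h^\ast$ by zero to $\mathbb{N}^\ast$ and truncating the resulting functional's outputs to their longest prefix in $h^\ast$, which leaves $\lambda(\Psi^{-1}(\sigma))$ unchanged for $\sigma \in h^\ast$.
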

\begin{proof}
The proof given in \cite[Theorem 3.16.2]{downey2010algorithmic} easily generalizes to $h^\ast$.
\end{proof}

\begin{definition}[a priori complexity in $h^\ast$]
Fix a universal left r.e.\ semimeasure $\mathbf{M}$. The \textdef{a priori complexity} of a string $\sigma \in h^\ast$ is defined by
\begin{equation*}
\apc(\sigma) = \apc_\mathbf{M}(\sigma) \coloneq -\log_2 \mathbf{M}(\sigma).
\end{equation*}
\end{definition}

Akin to the well-definedness of prefix-free complexity, if $\mathbf{N}$ were another universal left r.e.\ semimeasure, then $\apc_\mathbf{M}$ and $\apc_\mathbf{N}$ differ by at most a constant.

\begin{definition}[strong $f$-complexity in $h^\mathbb{N}$]
$X \in h^\mathbb{N}$ is \textdef{strongly $f$-complex (in $h^\mathbb{N}$)} if there exists a $c \in \mathbb{N}$ such that, for all $n \in \mathbb{N}$, 
\begin{equation*}
\apc(X \restrict n) \geq (\log_{1/2} \gamma(X \restrict n)) \cdot f(X \restrict n) - c.
\end{equation*}
\end{definition}

We will show that strong $f$-complexity is equivalent to strong $f$-randomness. Before doing so, we introduce a third approach to defining strong $f$-randomness/complexity, this time in terms of supermartingales as in the unpredictability paradigm. 

\begin{definition}[supermartingale]
A \textdef{supermartingale (over $h^\ast$)} is a function $d \colon h^\ast \to \co{0,\infty}$ such that
\begin{equation*}
\sum_{i < h(|\sigma|)}{d(\sigma \concat \langle i \rangle)} \leq h(|\sigma|)d(\sigma)
\end{equation*}
for all $\sigma \in h^\ast$. 

A supermartinagle $d$ is \textdef{left recursively enumerable}, or \textdef{left r.e.}, if it is left r.e.\ as a function $h^\ast \to \co{0,\infty}$.
\end{definition}

\begin{definition}[$f$-success]
Suppose $d$ is a left r.e.\ supermartingale and $X \in h^\mathbb{N}$. $d$ is said to \textdef{$f$-succeed on $X$} if
\begin{equation*}
\limsup_n{\left( d(X \restrict n) \cdot \gamma(X \restrict n)^{n - f(X \restrict n)} \right)} = \infty.
\end{equation*}
\end{definition}

The following lemma reveals the close connection between continuous semimeasures $\nu$ and supermartingales $d$ such that $d(\langle\rangle) = 1$.

\begin{lem} \label{relationship between supermartingales and continuous semimeasures}
Given $\nu \colon h^\ast \to [0,1]$, let $d_\nu \colon h^\ast \to \co{0,\infty}$ be defined by $d_\nu(\sigma) \coloneq |h^{|\sigma|}| \cdot \nu(\sigma)$ for $\sigma \in h^\ast$.
\begin{enumerate}[(a)]
\item $\nu$ is left r.e.\ if and only if $d_\nu$ is left r.e.
\item $\nu$ is a continuous semimeasure if and only if $d_\nu$ is a supermartingale.
\item $\nu$ is a universal left r.e.\ continuous semimeasure if and only if $d_\nu$ is a \textdef{universal left r.e.\ supermartinagle}, in the sense that if $d$ were another left r.e.\ supermartingale then there is a $c \in \mathbb{N}$ such that $d(\sigma) \leq c \cdot d_\nu(\sigma)$ for all $\sigma \in h^\ast$.
\end{enumerate}
\end{lem}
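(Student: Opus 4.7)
My plan is to exploit the simple identity $|h^{|\sigma|+1}| = h(|\sigma|) \cdot |h^{|\sigma|}|$ together with the fact that $\sigma \mapsto |h^{|\sigma|}|$ depends only on $|\sigma|$ and is a positive recursive rational-valued function (since $h$ is recursive). All three parts will follow by transporting inequalities and r.e.\ approximations through multiplication or division by this factor. I will work through (a) and (b) by direct calculation, and then use part (b) as a tool in both directions of (c).

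For part (a), multiplication (respectively division) of $\nu(\sigma)$ by the recursive positive rational $|h^{|\sigma|}|$ sends a left r.e.\ function to a left r.e.\ function and vice versa, giving the equivalence immediately. For part (b), I will substitute $d_\nu(\sigma) = |h^{|\sigma|}|\nu(\sigma)$ into the supermartingale inequality $\sum_{i < h(|\sigma|)} d_\nu(\sigma\concat\langle i\rangle) \leq h(|\sigma|)\, d_\nu(\sigma)$; using $|h^{|\sigma|+1}| = h(|\sigma|)|h^{|\sigma|}|$, the factor $h(|\sigma|)|h^{|\sigma|}|$ cancels on both sides, leaving exactly the semimeasure inequality $\sum_i \nu(\sigma\concat\langle i\rangle) \leq \nu(\sigma)$. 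The identity $d_\nu(\langle\rangle) = \nu(\langle\rangle)$ handles the normalization at the root simultaneously.

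For part (c), the reverse direction is the easy half: given a left r.e.\ continuous semimeasure $\xi$, part (b) together with part (a) shows that $d_\xi$ is a left r.e.\ supermartingale, so universality of $d_\nu$ yields $c$ with $d_\xi(\sigma) \leq c\cdot d_\nu(\sigma)$, and dividing by $|h^{|\sigma|}|$ gives $\xi(\sigma) \leq c\,\nu(\sigma)$. For the forward direction, suppose $\nu$ is a universal left r.e.\ continuous semimeasure and let $d$ be any left r.e.\ supermartingale. By induction on the supermartingale inequality one gets $d(\sigma) \leq d(\langle\rangle)\cdot |h^{|\sigma|}|$ for all $\sigma$; choose $M \in \mathbb{N}$ with $d(\langle\rangle) \leq M$ and define
\begin{equation*}
\xi(\sigma) \coloneq \begin{cases} 1 & \text{if } \sigma = \langle\rangle, \\ d(\sigma)/(M\cdot|h^{|\sigma|}|) & \text{otherwise.} \end{cases}
\end{equation*}
A short check (using $\sum_i d(\langle i\rangle) \leq h(0)d(\langle\rangle) \leq h(0)M$ at the root and part (b) elsewhere) shows that $\xi$ is a left r.e.\ continuous semimeasure. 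Universality of $\nu$ yields $c' \in \mathbb{N}$ with $\xi \leq c'\nu$, and multiplying by $M\cdot|h^{|\sigma|}|$ gives $d(\sigma) \leq c'M\cdot d_\nu(\sigma)$ for $\sigma \neq \langle\rangle$; enlarging the constant once more handles $\sigma = \langle\rangle$.

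The only genuinely delicate point is this normalization step in the forward direction of (c), where I need to convert the supermartingale $d$ into a legitimate continuous semimeasure without destroying its left r.e.\ approximations. The trick is that while $1/d(\langle\rangle)$ is right r.e.\ rather than left r.e.\ (so naive scaling fails), it suffices to scale by a \emph{fixed} integer upper bound $M$ and then \emph{redefine} the value at $\langle\rangle$ to be exactly $1$; the supermartingale inequality at the empty string then automatically gives the semimeasure inequality at $\langle\rangle$, so no effectivity is lost.
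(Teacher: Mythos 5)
Your argument is correct and takes essentially the same route as the paper: parts (a) and (b) are the same direct transport of the r.e.\ approximations and of the inequality through the recursive factor $|h^{|\sigma|}|$, and for part (c) --- which the paper simply labels ``straight-forward'' --- your normalization of an arbitrary left r.e.\ supermartingale $d$ into a left r.e.\ semimeasure $\xi$, via the bound $d(\sigma) \leq d(\langle\rangle)\cdot|h^{|\sigma|}|$, a fixed integer bound $M \geq d(\langle\rangle)$, and resetting the root value to $1$, is exactly the standard way to fill in that step, and both directions of (c) check out. No gaps.
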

\begin{proof} \mbox{}
\begin{enumerate}[(a)]
\item This follows from the fact that $h$ is recursive.
\item Given $\sigma \in h^\ast$, we have
\begin{equation*}
\sum_{i < h(|\sigma|)}{d_\nu(\sigma \concat \langle i \rangle)} = \sum_{i < h(|\sigma|)}{|h^{|\sigma\concat \langle i \rangle|}| \cdot \nu(\sigma \concat \langle i \rangle)} = |h^{|\sigma|+1}| \cdot \sum_{i < h(|\sigma|)}{\nu(\sigma \concat \langle i \rangle)} = h(|\sigma|) \cdot \left( |h^{|\sigma|}| \cdot \sum_{i < h(|\sigma|)}{\nu(\sigma \concat \langle i \rangle)}\right).
\end{equation*}
By comparing the first and last expressions with the definitions of what it means for $d_\nu$ to be a supermartingale or for $\nu$ to be a continuous semimeasure shows that $d_\nu$ is a supermartinagle if and only if $\nu$ is a continuous semimeasure.
\item Straight-forward.
\end{enumerate}
\end{proof}

\begin{lem} \label{pwt inequalities}
Suppose $S$ and $T$ are subsets of $h^\ast$.
\begin{enumerate}[(a)]
\item If $S \subseteq T$, then $\dwt_f(S) \leq \dwt_f(T)$ and $\pwt_f(S) \leq \pwt_f(T)$.
\item $\dwt_f(S \cup T) = \dwt_f(S) + \dwt_f(T) - \dwt_f(S \cap T)$.
\item $\pwt_f(S \cup T) \leq \pwt_f(S) + \pwt_f(T)$, with equality if the strings in $S$ and $T$ are pairwise incompatible.
\end{enumerate}
\end{lem}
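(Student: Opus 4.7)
The plan is to handle each of the three parts in turn, since all three follow directly from set-theoretic bookkeeping together with the non-negativity of the summands $\gamma(\sigma)^{f(\sigma)}$.

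For (a), I would simply note that $\dwt_f$ is a sum of the non-negative quantities $\gamma(\sigma)^{f(\sigma)}$ indexed by $\sigma$, so enlarging the index set only enlarges the sum, giving $\dwt_f(S) \leq \dwt_f(T)$. For $\pwt_f$, the observation is that any prefix-free subset of $S$ is in particular a prefix-free subset of $T$; hence the supremum defining $\pwt_f(T)$ is taken over a collection containing the collection defining $\pwt_f(S)$, so the former is at least the latter. For (b), I would decompose $S \cup T$ into the three disjoint pieces $S \setminus T$, $T \setminus S$, and $S \cap T$, evaluate $\dwt_f$ on each piece from the definition, and then combine via the elementary identity $\dwt_f(S) + \dwt_f(T) = \dwt_f(S \cup T) + \dwt_f(S \cap T)$, which is just the usual inclusion-exclusion rearrangement for sums of non-negative terms.

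The content of the lemma is essentially (c). For the inequality, given any prefix-free $A \subseteq S \cup T$, I would partition $A$ as $A_S \sqcup A_T$ with $A_S \coloneq A \cap S$ and $A_T \coloneq A \setminus S \subseteq T$. Both $A_S$ and $A_T$ inherit prefix-freeness as subsets of the prefix-free set $A$, so using (b) (with $A_S \cap A_T = \emptyset$) we get $\dwt_f(A) = \dwt_f(A_S) + \dwt_f(A_T) \leq \pwt_f(S) + \pwt_f(T)$; taking the supremum over $A$ yields $\pwt_f(S \cup T) \leq \pwt_f(S) + \pwt_f(T)$. Conversely, suppose strings in $S$ and $T$ are pairwise incompatible, and let $A_S \subseteq S$ and $A_T \subseteq T$ be prefix-free. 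Then $A_S \cup A_T \subseteq S \cup T$ is itself prefix-free: any two elements are either both in $A_S$, both in $A_T$ (and thus incompatible by prefix-freeness of $A_S$ or $A_T$), or one in each (and thus incompatible by hypothesis on $S$ and $T$). Invoking (b) again gives $\dwt_f(A_S) + \dwt_f(A_T) = \dwt_f(A_S \cup A_T) \leq \pwt_f(S \cup T)$, and taking the supremum over $A_S$ and $A_T$ independently produces $\pwt_f(S) + \pwt_f(T) \leq \pwt_f(S \cup T)$, completing the equality case.

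There is no real obstacle here; the entire proof is bookkeeping around the definitions. The only point requiring a moment of care is recognizing in the equality case of (c) that the pairwise-incompatibility hypothesis between $S$ and $T$ is exactly what allows the gluing $A_S \cup A_T$ to remain prefix-free (the weaker hypothesis that $S$ and $T$ are each prefix-free on their own would not suffice), which is why the equality statement carries the stronger hypothesis.
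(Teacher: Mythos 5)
Your proof is correct and follows essentially the same route as the paper: parts (a) and (b) are the same direct bookkeeping, and for (c) the paper likewise bounds $\dwt_f$ of a prefix-free $P \subseteq S \cup T$ via $P \cap S$ and $P \cap T$ for the inequality, and glues prefix-free $A \subseteq S$, $B \subseteq T$ into a prefix-free subset of $S \cup T$ using the pairwise-incompatibility hypothesis for the equality case. The only cosmetic difference is that you split $A$ into the disjoint pieces $A \cap S$ and $A \setminus S$ (giving equality at the intermediate step) where the paper uses the possibly overlapping $P \cap S$ and $P \cap T$ (giving an inequality); both suffice.
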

\begin{proof} \mbox{}
\begin{enumerate}[(a)]
\item Straight-forward.
\item Straight-forward.
\item If $P \subseteq S \cup T$ is prefix-free, then $P \cap S$ and $P \cap T$ are prefix-free subsets of $S$ and $T$, respectively, so
\begin{equation*}
\dwt_f(P) \leq \dwt_f(P \cap S) + \dwt_f(P \cap T) \leq \pwt_f(S) + \pwt_f(T).
\end{equation*}
Taking the supremum among all prefix-free $P \subseteq S \cup T$ yields $\pwt_f(S \cup T) \leq \pwt_f(S) + \pwt_f(T)$.

If the strings in $S$ and $T$ are pairwise incompatible, then given prefix-free subsets $A \subseteq S$ and $B \subseteq T$, $A \cap B = \emptyset$ and $A \cup B$ is a prefix-free subset of $S \cup T$, so
\begin{equation*}
\dwt_f(A) + \dwt_f(B) = \dwt_f(A \cup B) \leq \pwt_f(S \cup T).
\end{equation*}
Taking the supremum among all prefix-free $A \subseteq S$ and $B \subseteq T$ yields $\pwt_f(S) + \pwt_f(T) \leq \pwt_f(S \cup T)$.
\end{enumerate}
\end{proof}

\begin{thm} \label{equivalent characterizations of strong f-randomness}
Suppose $X \in h^\mathbb{N}$. The following are equivalent.
\begin{enumerate}[(i)]
\item $X$ is strongly $f$-random.
\item $X$ is strongly $f$-complex.
\item $d^h$ does not $f$-succeed on $X$, where $d^h$ is the universal left r.e.\ supermartingale corresponding to $\mathbf{M}$ as in \cref{relationship between supermartingales and continuous semimeasures}.
\item No left r.e.\ supermartingale $f$-succeeds on $X$.
\end{enumerate}
\end{thm}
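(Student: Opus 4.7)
My plan is to prove the equivalences by establishing (iii) $\Leftrightarrow$ (iv), then (ii) $\Leftrightarrow$ (iii) by direct calculation, then (i) $\Leftrightarrow$ (iv) as the main work. The equivalence (iii) $\Leftrightarrow$ (iv) is immediate from \cref{relationship between supermartingales and continuous semimeasures}(c): on one hand $d^h$ is itself a left r.e.\ supermartingale, and on the other, any left r.e.\ supermartingale $d$ satisfies $d(\sigma) \leq c \cdot d^h(\sigma)$ for some $c \in \mathbb{N}$, so $d$ $f$-succeeds on $X$ only if $d^h$ does as well.

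For (ii) $\Leftrightarrow$ (iii), the key identity is $|h^{|\sigma|}| = \gamma(\sigma)^{-|\sigma|}$ (for $\sigma \neq \langle\rangle$), which follows from $\gamma(\sigma)^{|\sigma|} = \mu(\sigma) = 1/|h^{|\sigma|}|$. Combining this with $d^h(\sigma) = |h^{|\sigma|}| \cdot \mathbf{M}(\sigma)$ and $\mathbf{M}(\sigma) = 2^{-\apc(\sigma)}$, I will compute
\begin{equation*}
d^h(\sigma) \cdot \gamma(\sigma)^{|\sigma| - f(\sigma)} = 2^{-\apc(\sigma)} \cdot \gamma(\sigma)^{-f(\sigma)},
\end{equation*}
whose $\log_2$ equals $f(\sigma)\log_{1/2}\gamma(\sigma) - \apc(\sigma)$. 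Since $\sup_n a_n < \infty$ iff $\limsup_n a_n < \infty$ (the initial terms being finite), $d^h$ does not $f$-succeed on $X$ iff $\sup_n \bigl(f(X\restrict n)\log_{1/2}\gamma(X\restrict n) - \apc(X\restrict n)\bigr) < \infty$, which is exactly strong $f$-complexity.

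The substantive step is (i) $\Leftrightarrow$ (iv). For (iv) $\Rightarrow$ (i): given a left r.e.\ supermartingale $d$ that $f$-succeeds on $X$, define $A_k \coloneq \{\sigma \in h^\ast \mid d(\sigma)\gamma(\sigma)^{|\sigma|-f(\sigma)} \geq 2^k\}$, which is uniformly r.e. Covering is immediate from $f$-success. To bound $\pwt_f(A_k)$, note that for any prefix-free $P \subseteq A_k$,
\begin{equation*}
\dwt_f(P) = \sum_{\sigma \in P} \gamma(\sigma)^{f(\sigma)} \leq 2^{-k} \sum_{\sigma \in P} \gamma(\sigma)^{|\sigma|} d(\sigma) = 2^{-k}\sum_{\sigma \in P}\mu(\sigma) d(\sigma) \leq 2^{-k} d(\langle\rangle),
\end{equation*}
where the final inequality is the standard Kolmogorov-type bound for supermartingales on prefix-free antichains (provable by induction down from $\langle\rangle$ using the defining inequality $\sum_{i<h(|\sigma|)} d(\sigma\concat\langle i\rangle) \leq h(|\sigma|) d(\sigma)$ together with $\mu(\sigma\concat\langle i\rangle) = \mu(\sigma)/h(|\sigma|)$). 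Rescaling $d$ by $d(\langle\rangle)$ if necessary produces a weak $f$-ML test covering $X$.

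For (i) $\Rightarrow$ (iv): given a weak $f$-ML test $\langle A_k\rangle_{k\in\mathbb{N}}$ covering $X$, replace each $A_k$ by its set of $\subseteq$-minimal elements to assume each is prefix-free with $\dwt_f(A_k) \leq 2^{-k}$. Define
\begin{equation*}
\nu_k(\sigma) \coloneq \sum_{\tau \in A_k,\ \sigma \subseteq \tau} \gamma(\tau)^{f(\tau)},
\end{equation*}
which is left r.e.\ and satisfies $\nu_k(\langle\rangle) \leq 2^{-k}$ and $\sum_{i<h(|\sigma|)} \nu_k(\sigma\concat\langle i\rangle) \leq \nu_k(\sigma)$ since $A_k$ is prefix-free. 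Aggregate $\nu \coloneq \sum_{k=0}^\infty 2^{k/2}\nu_k$; this is a left r.e.\ subsemimeasure with $\nu(\langle\rangle) \leq \sum_k 2^{-k/2} < \infty$, which after rescaling yields a left r.e.\ continuous semimeasure, and via \cref{relationship between supermartingales and continuous semimeasures} a left r.e.\ supermartingale $d$. For each $k$ the test provides some $\tau_k \subset X$ with $\tau_k \in A_k$, and $\nu(\tau_k) \geq 2^{k/2}\gamma(\tau_k)^{f(\tau_k)}$, so
\begin{equation*}
d(\tau_k) \cdot \gamma(\tau_k)^{|\tau_k|-f(\tau_k)} \geq (\text{const})\cdot 2^{k/2} \to \infty,
\end{equation*}
witnessing $f$-success on $X$. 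The main obstacle is making this aggregation genuinely yield a supermartingale (as opposed to a subsemimeasure) while preserving left r.e.-ness and the growth lower bound on $X$; this is handled by the uniform factor $2^{k/2}$, chosen so that $\sum 2^{k/2}\cdot 2^{-k}$ converges while $2^{k/2} \to \infty$.
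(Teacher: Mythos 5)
Your overall architecture is essentially the paper's: (iii) $\Leftrightarrow$ (iv) by universality, (ii) $\Leftrightarrow$ (iii) by the computation $d^h(\sigma)\cdot\gamma(\sigma)^{|\sigma|-f(\sigma)} = \exp_2\bigl((\log_{1/2}\gamma(\sigma))\cdot f(\sigma) - \apc(\sigma)\bigr)$, and then a test/martingale translation for the remaining link (the paper routes it through (i) $\Leftrightarrow$ (ii) rather than (i) $\Leftrightarrow$ (iv), but this is cosmetic). Your martingale-to-test direction is correct: the bound $\sum_{\sigma\in P}\mu(\sigma)d(\sigma)\leq d(\langle\rangle)$ for prefix-free $P$ is exactly the semimeasure bound the paper applies to $\mathbf{M}$, and the index shift absorbs $d(\langle\rangle)$. (Minor point there: define $A_k$ with strict inequality so that left r.e.-ness of $d$ makes $\langle A_k\rangle_k$ uniformly r.e.)

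There is, however, a genuine gap in your (i) $\Rightarrow$ (iv) direction: you ``replace each $A_k$ by its set of $\subseteq$-minimal elements to assume each is prefix-free.'' The set of minimal elements of an r.e.\ set of strings need not be r.e.\ -- it is the intersection of an r.e.\ set with the co-r.e.\ condition that no proper initial segment lies in $A_k$. For instance, if $W$ is r.e.\ and nonrecursive and $A = \{\langle n\rangle \mid n\in W\}\cup\{\langle n\rangle\concat\langle 0\rangle \mid n\in\mathbb{N}\}$, then the minimal elements of length $2$ enumerate the complement of $W$. Since your $\nu_k(\sigma)$ is a sum over those minimal elements, its left r.e.-ness -- which you need for $\nu$, and hence $d$, to be left r.e.\ -- does not follow. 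The repair is what the paper does: keep $A_k$ as given and set $\nu_k(\sigma)\coloneq\pwt_f(\{\tau\in A_k\mid\tau\supseteq\sigma\})$. This is left r.e.\ because $\pwt_f$ is a supremum of direct weights of finite prefix-free subsets of an r.e.\ set; it satisfies $\nu_k(\langle\rangle)=\pwt_f(A_k)\leq 2^{-k}$ by hypothesis and $\sum_{i<h(|\sigma|)}\nu_k(\sigma\concat\langle i\rangle)\leq\nu_k(\sigma)$ by \cref{pwt inequalities}(c); and it still satisfies $\nu_k(\tau)\geq\dwt_f(\{\tau\})=\gamma(\tau)^{f(\tau)}$ for $\tau\in A_k$, so your lower bound along $X$ and the $2^{k/2}$ aggregation survive unchanged.
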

\begin{proof} \mbox{}
\begin{description}
\item[$(i) \iff (ii)$] Suppose $X$ is strongly $f$-random. Let $S_i = \{ \sigma \in h^\ast \mid \apc(\sigma) < (\log_{1/2}\gamma(\sigma)) \cdot f(\sigma) - i\}$. If $P \subseteq S_i$ is prefix-free, then
\begin{equation*}
\dwt_f(P) = \sum_{\sigma \in P}{\gamma(\sigma)^{f(\sigma)}} \leq \sum_{\sigma \in P}{\gamma(\sigma)^{(\apc(\sigma) + i) \cdot (\log_{\gamma(\sigma)}1/2)}} \leq \frac{1}{2^i} \sum_{\sigma \in P}{\mathbf{M}(\sigma)} \leq \frac{1}{2^i} \mathbf{M}(\langle\rangle) \leq \frac{1}{2^i}.
\end{equation*}
Thus, $\langle S_i \rangle_{i \in \mathbb{N}}$ forms a weak $f$-ML test. Because $X$ is strongly $f$-random, $X$ is not covered by $\langle S_i \rangle_{i \in \mathbb{N}}$ and so there is an $i \in \mathbb{N}$ such that $X \notin \bbracket{S_i}$, i.e., for every $n \in \mathbb{N}$ we have $\apc(X \restrict n) \geq (\log_{1/2} \gamma(X \restrict n)) \cdot f(X \restrict n) - i$, so $X$ is strongly $f$-complex.

If $X$ is not strongly $f$-random, then there is a weak $f$-ML test $\langle S_i \rangle_{i \in \mathbb{N}}$ which covers $X$. Uniformly in $i \in \mathbb{N}$, we let $\nu_i$ be defined by $\nu_i(\sigma) = \pwt_f(\{ \tau \in S_i \mid \tau \supseteq \sigma\})$. $\nu_i$ is a continuous semimeasure; using \cref{pwt inequalities} we have
\begin{align*}
\nu_i(\sigma) & = \pwt_f(\{ \tau \in S_i \mid \tau \supseteq \sigma\}) \\
& \geq \pwt_f(\{ \tau \in S_i \mid \tau \supset \sigma\}) \\
& = \pwt_f\left(\bigcup_{j < h(|\sigma|)}{\{ \tau \in S_i \mid \tau \supseteq \sigma\concat\langle j \rangle\}}\right) \\
& = \sum_{j < h(|\sigma|)}{\pwt_f(\{ \tau \in S_i \mid \tau \supseteq \sigma\concat\langle j\rangle\})} \\
& = \sum_{j < h(|\sigma|)}{\nu_i(\sigma \concat \langle j\rangle)}.
\end{align*}
That $\nu_i$ is left r.e.\ follows from the fact that $S_i$ is r.e.
Observe that for $\tau \in S_i$ we have $\dwt_f(\tau) \leq \nu_i(\tau)$. Because $\nu_i(\langle\rangle) = \pwt_f(S_i) \leq 2^{-i}$ for each $i$, the map $\overline{\nu} \colon h^\ast \to [0,1]$ defined by
\begin{equation*}
\overline{\nu}(\sigma) \coloneq \sum_{i=0}^\infty{2^i \nu_{2i}(\sigma)}
\end{equation*}
for $\sigma \in h^\ast$ is a left r.e.\ semimeasure, and hence there is a $c \in \mathbb{N}$ such that $\overline{\nu}(\sigma) < c \cdot \mathbf{M}(\sigma)$ for all $\sigma \in h^\ast$. Then for $\sigma \in S_{2i}$, we have
\begin{equation*}
\exp_2(i - (\log_{1/2} \gamma(\sigma)) \cdot f(\sigma)) = 2^i \dwt_f(\sigma) \leq 2^i \nu_{2i}(\sigma) \leq \overline{\nu}(\sigma) < c \cdot \mathbf{M}(\sigma) = \exp_2(-( \apc(\sigma) + \log_{1/2} c))
\end{equation*}
and hence
\begin{equation*}
\apc(\sigma) + i + \log_{1/2} c < (\log_{1/2} \gamma(\sigma)) \cdot f(\sigma).
\end{equation*}
Being covered by $\langle S_i \rangle_{i \in \mathbb{N}}$ and hence by $\langle S_{2i} \rangle_{i \in \mathbb{N}}$ as well, $X$ is not strongly $f$-complex.

\item[$(ii) \iff (iii)$] Let $d^h$ be the universal left r.e.\ supermartingale corresponding to $\mathbf{M}$, as in \cref{relationship between supermartingales and continuous semimeasures}. Now observe that for any $X \in h^\mathbb{N}$ and $n \in \mathbb{N}$,
\begin{align*}
d^h(X \restrict n) \cdot \mu(X \restrict n)^{1 - f(X \restrict n)/n} & = \mathbf{M}(X \restrict n) \cdot \mu(X \restrict n)^{-1} \cdot \mu(X \restrict n)^{1-f(X \restrict n)/n} \\
& = \exp_2(-( \apc(X\restrict n) - (\log_{1/2} \gamma(X \restrict n)) \cdot f(X \restrict n))).
\end{align*}
Thus,
\begin{equation*}
\limsup_n{d_0(X \restrict n) \cdot \mu(X \restrict n)^{1-f(X \restrict n)/n}} = \infty \iff \forall c \exists n \qspace (\apc(X \restrict n) < (\log_{1/2} \gamma(X \restrict n)) \cdot f(X \restrict n) - c.
\end{equation*} 
In other words, $d^h$ $f$-succeeds on $X$ if and only if $X$ is not strongly $f$-complex.

\item[$(iii) \iff (iv)$] If no left r.e.\ supermartingale $f$-succeeds on $X$, then in particular $d^h$ does not $f$-succeed on $X$. Conversely, if $d^h$ does not $f$-succeed on $X$, then the universality of $d^h$ shows that no left r.e.\ supermartingale $f$-succeeds on $X$. 
\end{description}
\end{proof}

\begin{cor}
There exists a \textdef{universal weak $f$-ML test}, i.e., a weak $f$-ML test $\langle S_i \rangle_{i \in \mathbb{N}}$ such that $X \in h^\mathbb{N}$ is strongly $f$-random in $h^\mathbb{N}$ if and only if $X$ is not covered by $\langle S_i \rangle_{i \in \mathbb{N}}$.
\end{cor}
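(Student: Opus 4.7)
The plan is to simply exhibit the test already constructed implicitly in the $(i) \iff (ii)$ half of \cref{equivalent characterizations of strong f-randomness}, namely
\begin{equation*}
S_i \coloneq \left\{ \sigma \in h^\ast \mid \apc(\sigma) < (\log_{1/2}\gamma(\sigma)) \cdot f(\sigma) - i\right\},
\end{equation*}
and verify that $\langle S_i \rangle_{i \in \mathbb{N}}$ does the job. This is the natural analog of the universal $f$-ML test furnished in the proof of \cref{equivalence of f-randomness and f-complexity}, with prefix-free complexity $\pfc$ replaced by a priori complexity $\apc$.

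First I would check that the sequence is uniformly r.e.: since $\mathbf{M}$ is left r.e., the condition $\apc(\sigma) < c$ is equivalent to the r.e.\ condition $\mathbf{M}(\sigma) > 2^{-c}$, so the predicate defining $S_i$ is r.e.\ uniformly in $i$ (recall $f$ and $\gamma$ are computable). Next I would verify the prefix-free-weight bound: given a prefix-free $P \subseteq S_i$, exactly the chain of inequalities appearing in the first paragraph of the proof of \cref{equivalent characterizations of strong f-randomness} gives
\begin{equation*}
\dwt_f(P) = \sum_{\sigma \in P}{\gamma(\sigma)^{f(\sigma)}} \leq \sum_{\sigma \in P}{\gamma(\sigma)^{(\apc(\sigma)+i)\cdot (\log_{\gamma(\sigma)} 1/2)}} = 2^{-i}\sum_{\sigma \in P}{\mathbf{M}(\sigma)} \leq 2^{-i},
\end{equation*}
where the last inequality uses that $\mathbf{M}$ is a continuous semimeasure on $h^\ast$ and $P$ is prefix-free. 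Taking the supremum over such $P$ yields $\pwt_f(S_i) \leq 2^{-i}$, so $\langle S_i \rangle_{i \in \mathbb{N}}$ is a weak $f$-ML test.

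Finally, for universality I would observe that by the equivalence $(i) \iff (ii)$ of \cref{equivalent characterizations of strong f-randomness}, $X \in h^\mathbb{N}$ fails to be strongly $f$-random iff it fails to be strongly $f$-complex, i.e., iff for every $c \in \mathbb{N}$ there exists $n$ with $\apc(X \restrict n) < (\log_{1/2}\gamma(X \restrict n)) \cdot f(X \restrict n) - c$, which is precisely the assertion $X \in \bigcap_{i \in \mathbb{N}}\bbracket{S_i}$. There is no real obstacle here; the content is entirely in \cref{equivalent characterizations of strong f-randomness}, and this corollary is a clean repackaging of it.
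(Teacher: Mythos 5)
Your proposal is correct and matches the paper's proof exactly: the paper likewise defines $S_i = \{ \sigma \in h^\ast \mid \apc(\sigma) < (\log_{1/2}\gamma(\sigma)) \cdot f(\sigma) - i\}$ and appeals to the proof of \cref{equivalent characterizations of strong f-randomness}, of which your write-up is simply a more explicit unpacking.
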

\begin{proof}
The proof of \cref{equivalent characterizations of strong f-randomness} shows that letting $S_i = \{ \sigma \in h^\ast \mid \apc(\sigma) < (\log_{1/2}\gamma(\sigma)) \cdot f(\sigma) - i\}$ yields a universal weak $f$-ML test.
\end{proof}

\begin{cor}
Suppose $f(\sigma) = \tilde{f}(\sigma)$ for almost all $\sigma$. Then strong $f$-randomness is equivalent to strong $\tilde{f}$-randomness.
\end{cor}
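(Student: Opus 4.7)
The plan is to deduce this immediately from the equivalence \textnormal{(i)}$\iff$\textnormal{(ii)} of \cref{equivalent characterizations of strong f-randomness}, which reduces the claim to showing that strong $f$-complexity and strong $\tilde{f}$-complexity of a given $X \in h^\mathbb{N}$ are equivalent. This is the same reduction pattern used for the analogous $f$-randomness corollary preceding this one, with a priori complexity in place of prefix-free complexity.

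First I would fix $N \in \mathbb{N}$ such that $f(\sigma) = \tilde{f}(\sigma)$ for all $\sigma \in h^\ast$ with $|\sigma| > N$. The key finiteness observation is that $h^{\leq N} = \bigcup_{n \leq N} h^n$ is a finite subset of $h^\ast$ (since $h$ is total and each $h^n$ is finite). Consequently, the quantity
\begin{equation*}
d \;\coloneq\; \max_{\sigma \in h^{\leq N}}\, \bigl|(\log_{1/2}\gamma(\sigma))\cdot f(\sigma) - (\log_{1/2}\gamma(\sigma))\cdot \tilde{f}(\sigma)\bigr|
\end{equation*}
is a well-defined finite nonnegative real, and similarly $\apc$ is bounded on $h^{\leq N}$.

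Now suppose $X \in h^\mathbb{N}$ is strongly $f$-complex, so there is $c \in \mathbb{N}$ with $\apc(X \restrict n) \geq (\log_{1/2}\gamma(X\restrict n))\cdot f(X\restrict n) - c$ for all $n$. For $n > N$ the inequality with $\tilde{f}$ in place of $f$ is identical; for $n \leq N$ the right-hand side differs by at most $d$, so the inequality holds with constant $c + \lceil d \rceil$. Hence $X$ is strongly $\tilde{f}$-complex, and by symmetry the converse holds as well. Appealing again to \cref{equivalent characterizations of strong f-randomness}, strong $f$-randomness and strong $\tilde{f}$-randomness coincide. There is no real obstacle here; the only subtlety is bookkeeping the additive constant so that it absorbs both the finite disagreement of $f$ and $\tilde f$ on short strings and the (trivially finite) variation of $\apc$ on that same finite set.
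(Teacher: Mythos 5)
Your proof is correct and follows essentially the same route as the paper: both reduce the statement to the equivalence of strong $f$-complexity and strong $\tilde{f}$-complexity via \cref{equivalent characterizations of strong f-randomness}, and absorb the finitely many strings of length $\leq N$ into the additive constant (the paper bounds $\apc$ on short strings, you bound the difference of the right-hand sides, which amounts to the same bookkeeping).
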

\begin{proof}
Suppose $f(\sigma) = \tilde{f}(\sigma)$ for all $\sigma \in h^\ast$ for which $|\sigma| > N$. Let $c = \max_{\sigma \in h^\ast, |\sigma| \leq N}{\apc(\sigma)}$. Then for every $i \in \mathbb{N}$, $\apc(\sigma) \geq (\log_{1/2} \gamma(\sigma)) \cdot f(\sigma) - c - i$ if and only if $\apc(\sigma) \geq (\log_{1/2} \gamma(\sigma)) \cdot \tilde{f}(\sigma) - c - i$ for all $\sigma \in h^\ast$. It follows that strong $f$-complexity and strong $\tilde{f}$-complexity are equivalent, and so \cref{equivalent characterizations of strong f-randomness} shows strong $f$-randomness and strong $\tilde{f}$-randomness are equivalent.
\end{proof}

\begin{remark}
All of the above results hold with $\mu$ replaced by any computable measure on $h^\mathbb{N}$ for which $\mu(\sigma) > 0$ for all $\sigma \in h^\ast$, with one adjustment -- a supermartingale $d$ $f$-succeeds on $X$ with respect to $\mu$ if and only if
\begin{equation*}
\limsup_n{\left( d(X \restrict n) \cdot \gamma(X \restrict n)^{-f(X \restrict n)} \cdot |h^n|^{-1} \right)} = \infty.
\end{equation*} 
\end{remark}

\subsection{Relationships between randomness notions}

\cite[Proposition 2.5]{greenberg2011diagonally} and \cite[Theorem 3.5]{higuchi2014propagation} show that (in $\cantor$) if $g$ grows sufficiently faster than $f$, then $g$-randomness implies strong $f$-randomness. This prompts the following question:

\begin{question}
Suppose $h$ is an order function and $f \colon h^\ast \to \co{0,\infty}$ is a nondecreasing computable function such that $\lim_{x \to \infty}{(x - f(x))} = \infty$. For what nondecreasing computable functions $g \colon h^\ast \to \co{0,\infty}$ such that $g$-randomness implies strong $f$-randomness?
\end{question}

Although we will not make use of it, an analog of \cite[Proposition 2.5]{greenberg2011diagonally} and \cite[Theorem 3.5]{higuchi2014propagation} holds for $h^\mathbb{N}$ with the growth rate of $h$ factoring heavily into how much faster $g$ must grow than $f$.

A simplifying assumption we will make is that $f$ is of the form $f \colon h^\ast \to \co{0,\infty}$. 

\begin{prop}
Suppose $f \colon h^\ast \to \mathbb{R}$ is a recursive function. Then there exists a recursive function $\hat{f} \colon h^\ast \to \co{0,\infty}$ such that (strong) $f$-randomness is equivalent to (strong) $\hat{f}$-randomness.
\end{prop}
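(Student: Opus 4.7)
The plan is to take $\hat f(\sigma) \coloneq \max\{f(\sigma),0\}$. Since $f$ is recursive and the map $x \mapsto \max\{x,0\}$ is a computable real function, $\hat f \colon h^\ast \to \co{0,\infty}$ is a recursive, nonnegative function. The goal is then to show that for every $X \in h^\mathbb{N}$, $X$ is $f$-random (resp.\ strongly $f$-random) iff $X$ is $\hat f$-random (resp.\ strongly $\hat f$-random).

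The key observation is the following: for any $k \geq 1$, if $\langle S_k \rangle_{k \in \mathbb{N}}$ is either an $f$-ML test or an $\hat f$-ML test, then every $\sigma \in S_k$ must satisfy $\sigma \neq \langle\rangle$ and $f(\sigma) > 0$. Indeed, if $\sigma = \langle\rangle$ then $\gamma(\sigma) = 1$ so $\gamma(\sigma)^{f(\sigma)} = \gamma(\sigma)^{\hat f(\sigma)} = 1$; and if $\sigma \neq \langle\rangle$, recalling that $\gamma(\sigma) \leq 1/2 < 1$ (since $h(i) \geq 2$), we have $f(\sigma) \leq 0 \implies \gamma(\sigma)^{f(\sigma)} \geq 1$ and similarly $\hat f(\sigma) = 0 \implies \gamma(\sigma)^{\hat f(\sigma)} = 1$. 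In either case a single such element contributes at least $1$ to the direct weight, contradicting $\dwt \leq 2^{-k} < 1$. The same reasoning applies to weak tests: for a weak $f$-ML or $\hat f$-ML test $\langle A_k\rangle$ with $k \geq 1$ and $\sigma \in A_k$, the singleton $\{\sigma\}$ is prefix-free in $A_k$, so $\gamma(\sigma)^{f(\sigma)} \leq \pwt_f(A_k) \leq 2^{-k} < 1$, forcing again $\sigma \neq \langle\rangle$ and $f(\sigma) > 0$.

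Now fix any $k \geq 1$ and any $S \subseteq h^\ast$ consisting only of strings $\sigma$ with $f(\sigma) > 0$. On such $\sigma$ we have $\hat f(\sigma) = f(\sigma)$, hence $\dwt_f(S) = \dwt_{\hat f}(S)$ and $\pwt_f(S) = \pwt_{\hat f}(S)$. Combined with the previous observation, this means that for any uniformly r.e.\ sequence $\langle S_k \rangle_{k \in \mathbb{N}}$, the shifted sequence $\langle S_{k+1}\rangle_{k\in\mathbb{N}}$ is an $f$-ML (resp.\ weak $f$-ML) test iff it is an $\hat f$-ML (resp.\ weak $\hat f$-ML) test (and in both cases $\dwt$ or $\pwt$ is bounded by $2^{-(k+1)} \leq 2^{-k}$). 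Since covering is a property preserved by shifting -- $X \in \bigcap_k \bbracket{S_k}$ implies $X \in \bigcap_k \bbracket{S_{k+1}}$ -- it follows that the family of tests covering $X$ is (modulo the shift) the same in both settings. Therefore $X$ is $f$-random iff $X$ is $\hat f$-random, and likewise for strong randomness.

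No genuine obstacle is expected; the only subtlety is the careful handling of the first level of the test and the empty string, both of which are eliminated by the single observation that $\gamma(\sigma)^{f(\sigma)} \geq 1$ whenever $f(\sigma) \leq 0$ or $\sigma = \langle\rangle$. Recursiveness of $\hat f$ is immediate, and the effectiveness of the shift-by-one construction makes the equivalence uniform.
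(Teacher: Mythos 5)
Your proof is correct, but it takes a different route from the paper's. The paper uses the same $\hat f(\sigma) = \max\{0,f(\sigma)\}$ but argues at the level of the complexity characterizations: since $\pfc$ and $\apc$ are nonnegative, for $K \in \{\pfc,\apc\}$ the inequality $K(\sigma) \geq (\log_{1/2}\gamma(\sigma))\cdot f(\sigma) - c$ holds if and only if $K(\sigma) \geq (\log_{1/2}\gamma(\sigma))\cdot \hat f(\sigma) - c$, so $f$-complexity and $\hat f$-complexity coincide (with the same constant), and the previously established equivalences between (strong) $f$-randomness and (strong) $f$-complexity finish the proof in two lines. You instead work directly from the test definitions: any $f$- or $\hat f$-ML test (direct or weak) can, beyond level $0$, only contain nonempty strings with $f(\sigma) > 0$ -- where $f$ and $\hat f$ agree -- so discarding the first level converts a test for one function into a test for the other covering the same reals. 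This is a perfectly good, self-contained argument that does not lean on the randomness--complexity equivalence theorems; what it costs you is slightly more bookkeeping (the empty string, level $0$, and the singleton trick for $\pwt$), which the paper's nonnegativity trick absorbs silently. One small caution: your intermediate claim that for an \emph{arbitrary} uniformly r.e.\ sequence the shifted sequence is an $f$-ML test iff it is an $\hat f$-ML test is not literally true (a level-$0$ set containing a single nonempty string with $f(\sigma) < 0$ has $\hat f$-weight $1$ but $f$-weight exceeding $1$); however, the direction you actually use -- shifting a genuine test of one kind to obtain a test of the other kind, with the bound $2^{-(k+1)} \leq 2^{-k}$ -- is exactly right, so the proof stands.
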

\begin{proof}
Let $\hat{f}(\sigma) \coloneq \max\{0,f(\sigma)\}$. For $K$ representing either $\pfc$ or $\apc$, $K(\sigma) \geq 0$ for all $\sigma \in h^\ast$, so $K(\sigma) \geq (\log_{1/2} \gamma(\sigma)) \cdot f(\sigma) - c$ implies
\begin{equation*}
K(\sigma) \geq \max\{0, (\log_{1/2} \gamma(\sigma)) \cdot f(\sigma) - c\} \geq (\log_{1/2} \gamma(\sigma)) \cdot \max\{0,f(\sigma)\} - c = (\log_{1/2} \gamma(\sigma)) \cdot \hat{f}(\sigma) - c.
\end{equation*}
Conversely, if $K(\sigma) \geq (\log_{1/2} \gamma(\sigma)) \cdot \hat{f}(\sigma) - c$, then
\begin{equation*}
K(\sigma) \geq (\log_{1/2} \gamma(\sigma)) \cdot \hat{f}(\sigma) - c \geq (\log_{1/2} \gamma(\sigma)) \cdot f(\sigma) - c.
\end{equation*}
This suffices to show that (strong) $f$-randomness is equivalent to (strong) $\hat{f}$-randomness.
\end{proof}

\begin{notation}
Let $\gamma_0 = \gamma(\langle\rangle) = 1$ and $\gamma_n = \gamma(0^n) = |h^n|^{-1/n}$ for $n \geq 1$. 
\end{notation}

\begin{convention}
$f$ will denote a computable, unbounded function of the form $f \colon h^\ast \to \co{0,\infty}$ such that for every $X \in h^\mathbb{N}$ the sequence $\langle \gamma(X \restrict n)^{f(X \restrict n)} \rangle_{n \in \mathbb{N}}$ is eventually decreasing. $g$ and variations of $f$ and $g$ will similarly denote such functions unless otherwise specified.
\end{convention}

\begin{prop} \label{g-randomness to strong f-randomness}
Suppose $f,g \colon h^\ast \to \co{0,\infty}$ are recursive functions and there exists a nondecreasing function $j \colon \mathbb{N} \to \co{0,\infty}$ such that $g(\sigma) \geq f(\sigma) + j(|\sigma|)$ for all $\sigma \in h^\ast$ and for which $\sum_{n=0}^\infty{\gamma_n^{j(n)}} < \infty$. Then $g$-randomness implies strong $f$-randomness.
\end{prop}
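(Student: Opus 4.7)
The plan is to prove the contrapositive: if $X \in h^\mathbb{N}$ fails to be strongly $f$-random, then $X$ fails to be $g$-random. Fix a weak $f$-ML test $\langle V_k \rangle_{k \in \mathbb{N}}$ covering $X$, so $\pwt_f(V_k) \leq 2^{-k}$ for every $k$ and $X \in \bigcap_{k \in \mathbb{N}} \bbracket{V_k}$. I will argue that, after an appropriate shift of index, the very same sequence $\langle V_k \rangle$ is a $g$-ML test covering $X$, which yields the desired conclusion.

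The heart of the argument is the estimate $\dwt_g(V_k) \leq C \cdot 2^{-k}$, where $C \coloneq \sum_{n=0}^\infty \gamma_n^{j(n)}$ is finite by hypothesis. Since $\gamma(\sigma) = \gamma_{|\sigma|} \leq 1$ for every $\sigma \in h^\ast$, the map $x \mapsto \gamma(\sigma)^x$ is nonincreasing on $\co{0,\infty}$, so the hypothesis $g(\sigma) \geq f(\sigma) + j(|\sigma|)$ gives $\gamma(\sigma)^{g(\sigma)} \leq \gamma(\sigma)^{f(\sigma)} \cdot \gamma_{|\sigma|}^{j(|\sigma|)}$. Grouping the defining sum of $\dwt_g(V_k)$ by string length then yields
\begin{equation*}
\dwt_g(V_k) \;\leq\; \sum_{n=0}^\infty \gamma_n^{j(n)} \cdot \dwt_f(V_k \cap h^n).
\end{equation*}

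The key observation — the one that makes the argument go through cleanly — is that each length-slice $V_k \cap h^n$ consists of strings of a single common length and is therefore automatically prefix-free. By \cref{pwt inequalities}(a) applied to the prefix-free set $V_k \cap h^n$, we obtain $\dwt_f(V_k \cap h^n) = \pwt_f(V_k \cap h^n) \leq \pwt_f(V_k) \leq 2^{-k}$. Substituting into the display above gives $\dwt_g(V_k) \leq C \cdot 2^{-k}$.

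To finish, pick any $m \in \mathbb{N}$ with $C \leq 2^m$ (such an $m$ exists since $C$ is finite) and set $U_k \coloneq V_{k+m}$. Then $\langle U_k \rangle_{k \in \mathbb{N}}$ is uniformly r.e., covers $X$, and satisfies $\dwt_g(U_k) \leq C \cdot 2^{-(k+m)} \leq 2^{-k}$, so $\langle U_k \rangle$ is a $g$-ML test covering $X$ and hence $X$ is not $g$-random. The main point worth flagging is the slicing trick: rather than attempting to thin $V_k$ into a uniformly r.e. prefix-free antichain (which in general is only $\Delta^0_2$), I exploit the fact that each length-slice $V_k \cap h^n$ is already prefix-free for free, and let the convergent tail $\sum_n \gamma_n^{j(n)}$ absorb the summation over those slices. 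Given this observation there is no serious obstacle.
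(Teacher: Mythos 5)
Your proof is correct and follows essentially the same route as the paper's: both rest on the estimate $\dwt_g(S) \leq \bigl(\sum_{n=0}^\infty{\gamma_n^{j(n)}}\bigr)\cdot \pwt_f(S)$, obtained by slicing by length and noting each slice $S \cap h^n$ is a prefix-free subset of $S$, and both then convert a weak $f$-ML test into a $g$-ML test by shifting the index. The only difference is presentational (you phrase it contrapositively and apply the bound directly to the covering test rather than stating the general inequality first), which changes nothing of substance.
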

\begin{proof}
We start by showing that there is a $c > 0$ such that $\dwt_g(S) \leq c \cdot \pwt_f(A)$ for all $A \subseteq h^\ast$. This allows us to convert a weak $f$-ML test $\langle S_i \rangle_{i \in \mathbb{N}}$ into a $g$-ML test by taking a tail of $\langle S_i \rangle_{i \in \mathbb{N}}$. Noting that $S \cap h^n$ is a prefix-free subset of $S$ for each $n \in \mathbb{N}$, we have
\begin{align*}
\dwt_g(S) = \sum_{\sigma \in S}{\gamma(\sigma)^{g(\sigma)}}
& \leq \sum_{n=0}^\infty{\sum_{\sigma \in S \cap h^n}{\gamma(\sigma)^{f(\sigma) + j(n)}}} \\
& = \sum_{n=0}^\infty{\gamma_n^{j(n)}} \cdot \sum_{\sigma \in S \cap h^n}{\gamma(\sigma)^{f(\sigma)}} \\
& \leq \sum_{n=0}^\infty{\gamma_n^{j(n)}} \cdot \pwt_f(S) \\
& = \left( \sum_{n=0}^\infty{\gamma_n^{j(n)}} \right) \cdot \pwt_f(S).
\end{align*}
Thus, we may let $c = \sum_{n=0}^\infty{\gamma_n^{j(n)}}$, which is finite by hypothesis.
\end{proof}

\begin{cor}
Fix a rational $\epsilon>0$, a $k \in \mathbb{N}_{\geq 1}$, and a computable $f \colon \{0,1\}^\ast \to \co{0,\infty}$. Then for any computable $g \colon \{0,1\}^\ast \to \co{0,\infty}$ satisfying
\begin{equation*}
f(\sigma) + (\log_{\gamma_{|\sigma|}} 1/2) \cdot \bigl( \log_2 |\sigma| + \log_2^2|\sigma| + \cdots + \log_2^{k-1}|\sigma| + (1+\epsilon)\log_2^k|\sigma| \bigr) \leq g(\sigma)
\end{equation*}
for all $\sigma \in h^\ast$, if $X \in h^\mathbb{N}$ is $g$-random, then $X$ is strongly $f$-random.
\end{cor}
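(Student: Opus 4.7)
The plan is to reduce to \cref{g-randomness to strong f-randomness} by exhibiting the function $j$ it requires. Set
\begin{equation*}
j(n) \coloneq (\log_{\gamma_n} 1/2) \cdot \bigl(\log_2 n + \log_2^2 n + \cdots + \log_2^{k-1} n + (1+\epsilon)\log_2^k n\bigr)
\end{equation*}
for $n \geq {}^k 2$, and $j(n) \coloneq 0$ otherwise. The hypothesis on $g$ then reads exactly $g(\sigma) \geq f(\sigma) + j(|\sigma|)$ for all $\sigma \in h^\ast$ of sufficient length, and the finitely many short lengths can be handled by absorbing the needed slack into the constant bounding $f$ from $0$ (or, equivalently, by noting that altering $j$ on a finite set changes neither the convergence of $\sum_n \gamma_n^{j(n)}$ nor the strong $f$-randomness conclusion).

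The key calculation is the identity $\gamma_n^{\log_{\gamma_n}(1/2)} = 1/2$, whence for $n \geq {}^k 2$,
\begin{equation*}
\gamma_n^{j(n)} = \Bigl(\tfrac{1}{2}\Bigr)^{\log_2 n + \log_2^2 n + \cdots + \log_2^{k-1} n + (1+\epsilon)\log_2^k n} = \frac{1}{n \cdot \log_2 n \cdot \log_2^2 n \mdots \log_2^{k-2} n \cdot (\log_2^{k-1} n)^{1+\epsilon}},
\end{equation*}
using $2^{-\log_2^m n} = 1/\log_2^{m-1} n$ termwise. By \cref{examples of recursive sums}, applied with $k$ replaced by $k-1$ and $\alpha = 1+\epsilon > 1$, the tail series $\sum_{n={}^k 2}^\infty \gamma_n^{j(n)}$ converges (and the finitely many initial terms we added are harmless). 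An appeal to \cref{g-randomness to strong f-randomness} then shows that every $g$-random $X \in h^\mathbb{N}$ is strongly $f$-random.

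The one point that could give pause is whether $j$ is nondecreasing, as demanded in the statement of \cref{g-randomness to strong f-randomness}: the factor $\log_{\gamma_n}(1/2) = (\log 2)/(-\log \gamma_n)$ is typically nonincreasing in $n$ (since $\gamma_n = |h^n|^{-1/n}$ is nonincreasing when $h$ is), while the iterated-log combination is increasing, so the product need not be monotone. However, inspection of the proof of \cref{g-randomness to strong f-randomness} reveals that monotonicity of $j$ is never actually used---only the convergence of $\sum_n \gamma_n^{j(n)}$ enters the estimate---so the argument goes through unchanged. Thus the corollary follows with no real obstacle beyond the explicit computation above.
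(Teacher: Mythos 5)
Your proposal is correct and is essentially the intended argument: the corollary is just \cref{g-randomness to strong f-randomness} applied with $j(n) = (\log_{\gamma_n}1/2)\cdot(\log_2 n + \cdots + (1+\epsilon)\log_2^k n)$, using $\gamma_n^{j(n)} = \bigl(n\cdot\log_2 n \mdots \log_2^{k-2}n\cdot(\log_2^{k-1}n)^{1+\epsilon}\bigr)^{-1}$ and \cref{examples of recursive sums} for convergence, with the finitely many short lengths dismissed since strong $f$-randomness is unaffected by changing $f$ on finitely many strings. Your observation that this $j$ need not be nondecreasing but that the proof of \cref{g-randomness to strong f-randomness} never uses monotonicity (only $\sum_n\gamma_n^{j(n)}<\infty$) is accurate and correctly closes the only gap between the proposition's stated hypotheses and this application.
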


\begin{remark}
\cref{g-randomness to strong f-randomness} can be generalized to the case of an arbitrary computable measure $\mu$ for which $\mu(\sigma) > 0$ for all $\sigma \in h^\ast$ by requiring $j$ to satisfy $\sum_{n=0}^\infty{\gamma_n^{h(n)}} < \infty$, where $\gamma_n = \max_{\sigma \in h^n}{\gamma(\sigma)}$. 
\end{remark}

\section{Randomness in \texorpdfstring{$h^\mathbb{N}$}{hN} versus \texorpdfstring{$\cantor$}{\{0,1\}N}} \label{randomness in hN to cantor space}

Algorithmic randomness and complexity is traditionally done within $\cantor$, and our use of partial randomness in $h^\mathbb{N}$ for an order function $h$ is ultimately a tool in proving facts about partial randomness in $\cantor$. To facilitate that, we want to translate randomness in $h^\mathbb{N}$ to randomness in $\cantor$. In general, $h(n)$ is not necessarily a power of two for each $n \in \mathbb{N}$. For that reason, if we wish to relate randomness in $h^\mathbb{N}$ with randomness in $\{0,1\}^\ast$, it is more convenient to pass through $[0,1]$ on the way to $\cantor$ where we may associate a string $\sigma \in h^\ast$ with a closed subinterval of $[0,1]$ having rational endpoints.

\subsection{Randomness in \texorpdfstring{$[0,1]$}{[0,1]} versus \texorpdfstring{$\cantor$}{\{0,1\}N}}

For the translation to and from $[0,1]$ and $\cantor$, the `obvious' approach works. 

\begin{definition}
Suppose $x \in [0,1]$. $\bin(x)$ is the unique infinite binary sequence such that $x = \sum_{i=0}^\infty{\bin(x)(i) \cdot 2^{-i-1}}$ which does not end in an infinite sequence of $1$'s except for in the case where $x=1$.

Suppose $X \in \cantor$. $0.X$ denotes the real number $\sum_{i=0}^\infty{X(i)\cdot 2^{-i-1}}$. Given $\sigma \in \{0,1\}^\ast$, $0.\sigma$ denotes the real number $\sum_{i=0}^{|\sigma|-1}{X(i)\cdot 2^{-i-1}}$.
\end{definition}

The map $X \mapsto 0.X$ is a surjection but not an injection, with $x=0.X = 0.Y$ for distinct $X$ and $Y$ if and only if $x$ is a dyadic rational in $(0,1)$ and $X$ and $Y$ are the binary representations of $x$, one ending in an infinite sequence of $0$'s and the other in an infinite sequence of $1$'s. 

\begin{definition}
Suppose $I \subseteq [0,1]$ is a closed interval with rational endpoints. The \textdef{norm} of $I$ is defined by
\begin{equation*}
|I| \coloneq -\log_2(\max I - \min I) = -\log_2 \lambda(I).
\end{equation*}

$\mathcal{J}$ denotes the set of closed subintervals $I \subseteq [0,1]$ with rational endpoints for which $|I| \in \mathbb{N}$ (equivalently, $\max I - \min I$ is a nonnegative power of $1/2$). Given $I \in \mathcal{J}$, a \textdef{code} for $I$ is a $4$-tuple $\langle a,b,c,d\rangle \in \mathbb{N} \times \mathbb{N}_{\geq 1} \times \mathbb{N} \times \mathbb{N}_{\geq 1}$ where $[a/b,c/d] = I$. 

Given $S \subseteq \mathcal{J}$, write $\bbracket{S} \coloneq \bigcup{S}$. $S \subseteq \mathcal{J}$ is \textdef{recursively enumerable}, or \textdef{r.e.}, if the set of codes of elements of $S$ is recursively enumerable. A sequence $\langle S_i \rangle_{i \in \mathbb{N}}$ of subsets of $\mathcal{J}$ is \textdef{uniformly r.e.} if the set of all $5$-tuples $\langle a,b,c,d,i\rangle$ where $i \in \mathbb{N}$ and $\langle a,b,c,d\rangle$ is a code for an element of $S_i$ is r.e.
\end{definition}

To make sense of direct $f$-weight of an interval $I \in \mathcal{J}$, we will require that $f$ be \textdef{length invariant}, i.e., $|\sigma| = |\tau|$ implies $f(\sigma) = f(\tau)$ for $\sigma,\tau \in h^\ast$. Thus, the map $n \mapsto f(0^n)$ completely characterizes $f$. Considering only the length invariant case allows us to use $f$ regardless of whether we are working within $\cantor$, $h^\mathbb{N}$, or $[0,1]$. 

\begin{convention}
Unless otherwise specified, from this point forward we will assume that the `$f$' in (strong) $f$-randomness is of the form $f \colon \mathbb{N} \to \co{0,\infty}$. 

\end{convention}

\begin{definition}[direct $f$-weight \& $f$-randomness in {$[0,1]$}]
Given $S \subseteq \mathcal{J}$, its \textdef{direct $f$-weight} is defined by 
\begin{equation*}
\dwt_f(S) \coloneq \sum_{I \in S}{2^{-f(|I|)}} = \sum_{I \in S}{(\lambda(I)^{|I|})^{f(|I|)}}.
\end{equation*}

An \textdef{$f$-ML test (in $[0,1]$)} is a uniformly r.e.\ sequence $\langle S_i \rangle_{i \in \mathbb{N}}$ of subsets of $\mathcal{J}$ such that $\dwt_f(S_i) \leq 1/2^i$ for each $i \in \mathbb{N}$. Such an $f$-ML test \textdef{covers} $x \in [0,1]$ if $x \in \bigcap_{i \in \mathbb{N}}{\bbracket{S_i}}$. $x \in [0,1]$ is \textdef{$f$-random (in $[0,1]$)} if no $f$-ML test in $[0,1]$ covers $x$.
\end{definition}

%
%

\begin{lem} \label{unit interval to cantor space lemma}
\textnormal{\cite[Lemma 6.1]{hudelson2014mass}}
There is a partial recursive function $\psi \colonsub \mathbb{N}^4 \to (\{0,1\}^\ast)^2$ such that if $\langle a,b,c,d\rangle$ is a code for $I \in \mathcal{J}$, then $\psi(a,b,c,d) \converge = \langle \sigma,\tau\rangle$ where $I \subseteq \{0.X \mid \sigma \subset X \vee \tau \subset X\}$ and $|\sigma| = |\tau| = |I|$.
\end{lem}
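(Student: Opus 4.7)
The plan is to exploit the fact that any $I \in \mathcal{J}$ with $|I| = n$ is covered by at most two adjacent dyadic intervals of length $2^{-n}$, and use the natural binary codes of those intervals as $\sigma$ and $\tau$.

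First, given a code $\langle a,b,c,d\rangle$ for $I = [a/b,c/d] \in \mathcal{J}$, I would effectively compute the unique $n \in \mathbb{N}$ satisfying $c/d - a/b = 2^{-n}$ and then set $k := \lfloor 2^n \cdot (a/b) \rfloor$, so that $k/2^n \le a/b \le (k+1)/2^n$. Since $\lambda(I) = 2^{-n}$, this forces $I \subseteq [k/2^n, (k+2)/2^n]$. Let $\sigma \in \{0,1\}^n$ be the length-$n$ binary expansion of $k$ (with leading zeros, so that $0.\sigma = k/2^n$); if $k+1 < 2^n$, let $\tau \in \{0,1\}^n$ be the length-$n$ binary expansion of $k+1$, and otherwise set $\tau := \sigma$. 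The map $\langle a,b,c,d\rangle \mapsto \langle \sigma,\tau\rangle$ is then manifestly given by a partial recursive function with the required domain, and by construction $|\sigma| = |\tau| = n = |I|$.

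For the containment $I \subseteq \{0.X \mid \sigma \subset X \vee \tau \subset X\}$, I would argue by cases on $x \in I$: every such $x$ lies in $[k/2^n,(k+1)/2^n]$ or in $[(k+1)/2^n,(k+2)/2^n]$. In the first interval, $\bin(x)$ begins with $\sigma$ whenever $x < (k+1)/2^n$, and the witness $X := \sigma \concat \langle 1,1,1,\ldots\rangle$ handles $x = (k+1)/2^n$ (since $0.X = k/2^n + 2^{-n}$ and $\sigma \subset X$). In the second interval, we automatically have $k+1 < 2^n$, because $a/b > k/2^n$ (strictly) forces $c/d > (k+1)/2^n$, and $c/d \le 1$; so $\tau$ really is the length-$n$ binary code of $k+1$, and $\bin(x)$ begins with $\tau$ (using the $0$-ending expansion at the left endpoint $(k+1)/2^n$).

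The only mildly delicate point is the edge case $a/b = k/2^n$ and $c/d = (k+1)/2^n = 1$, in which $\tau := \sigma$ is the string of $n$ ones: here the witness for $x = 1$ is $\bin(1) = \langle 1,1,1,\ldots\rangle$, which trivially begins with $\sigma$. Thus there is no substantive obstacle; the whole proof reduces to picking the right integer $k$ and doing a careful bookkeeping at the dyadic rationals that lie on the boundary of $I$, with the degenerate ``$I$ abutting $1$'' case absorbed by the fallback $\tau = \sigma$.
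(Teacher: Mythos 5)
Your proof is correct. The paper does not actually prove this lemma (it is quoted directly from Hudelson), but your argument is the standard one: an interval of length $2^{-n}$ straddles at most two adjacent dyadic intervals $[k/2^n,(k+1)/2^n]$ and $[(k+1)/2^n,(k+2)/2^n]$ with $k=\lfloor 2^n(a/b)\rfloor$, whose length-$n$ binary codes serve as $\sigma$ and $\tau$, and the boundary case $k+1=2^n$ is correctly absorbed by the fallback $\tau=\sigma$, since there $I$ is forced to equal $[(2^n-1)/2^n,1]$. The only cosmetic slip is writing $k/2^n \le a/b \le (k+1)/2^n$ where the right-hand inequality is in fact strict by the definition of the floor; nothing downstream depends on it.
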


\begin{prop} \label{randomness in cantor space vs unit interval}
\textnormal{\cite[Lemma 6.2, essentially]{hudelson2014mass}}
Suppose $x \in [0,1]$. Then $x$ is $f$-random in $[0,1]$ if and only if $\bin(x)$ is $f$-random in $\cantor$.
\end{prop}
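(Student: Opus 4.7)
The plan is to translate $f$-ML tests between $[0,1]$ and $\cantor$ along the map $X \mapsto 0.X$, leveraging the length-invariance of $f$ so that direct $f$-weights transfer cleanly under the correspondence.

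For the $(\Leftarrow)$ direction, given an $f$-ML test $\langle T_i\rangle_{i \in \mathbb{N}}$ of subsets of $\{0,1\}^\ast$ covering $\bin(x)$, I would set $S_i \coloneq \{[0.\sigma,\, 0.\sigma + 2^{-|\sigma|}] : \sigma \in T_i\}$. Each such interval lies in $\mathcal{J}$ with norm $|\sigma|$, and a code for it can be produced recursively from $\sigma$, so $\langle S_i\rangle$ is uniformly r.e. Length-invariance of $f$ gives $\dwt_f(S_i) = \dwt_f(T_i) \leq 2^{-i}$, and any $\sigma \subset \bin(x)$ immediately yields $x = 0.\bin(x) \in [0.\sigma,\, 0.\sigma + 2^{-|\sigma|}]$, so $\langle S_i\rangle$ covers $x$.

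For the $(\Rightarrow)$ direction I would invoke \cref{unit interval to cantor space lemma}: an $f$-ML test $\langle S_i\rangle_{i \in \mathbb{N}}$ in $[0,1]$ covering $x$ yields, for each $I \in S_i$, binary strings $\sigma_I,\tau_I$ of length $|I|$ with $I \subseteq \{0.X : \sigma_I \subset X \vee \tau_I \subset X\}$. Setting $T_i \coloneq \{\sigma_I,\tau_I : I \in S_i\}$ produces a uniformly r.e.\ sequence with $\dwt_f(T_i) \leq 2\,\dwt_f(S_i) \leq 2^{1-i}$, so $\langle T_{i+1}\rangle_{i \in \mathbb{N}}$ is an $f$-ML test in $\cantor$. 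When $x$ is not a dyadic rational, $\bin(x)$ is the unique sequence with $0.\bin(x) = x$, so the witnessing $X$ supplied by the lemma must equal $\bin(x)$, giving $\bin(x) \in \bbracket{T_i}_2$ for every $i$.

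The main obstacle is the degenerate case when $x$ is a dyadic rational, where $\bin(x)$ (ending in $0$'s) may fail to be the representation witnessed by the lemma. I plan to dispatch this separately by observing that the equivalence then holds for independent reasons: if $f$ is bounded by some $c$, every $f$-ML test has $S_i = \emptyset$ for $i > c$, so both notions of $f$-randomness hold vacuously; and if $f$ is unbounded then $\bin(x)$ is eventually $0$ and hence recursive, so the singleton sets $\{\bin(x)\restrict n_i\}$ with $n_i$ chosen recursively so that $f(n_i) \geq i$ form a uniformly r.e.\ $f$-ML test in $\cantor$ covering $\bin(x)$, and the $(\Leftarrow)$ construction applied to this test then yields an $f$-ML test in $[0,1]$ covering $x$ as well. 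Hence both directions of the equivalence are trivial at dyadic points, completing the proof.
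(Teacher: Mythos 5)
Your proof is correct and takes essentially the same route as the paper's: strings are sent to the dyadic intervals $[0.\sigma,\,0.\sigma+2^{-|\sigma|}]$ with direct $f$-weight preserved, and intervals are sent via \cref{unit interval to cantor space lemma} to pairs of strings with weight at most doubled (your labels for the two directions are swapped relative to the statement as written, but both implications are in fact established). Your separate treatment of dyadic rationals is a genuine improvement rather than a detour: for dyadic $x$ the strings supplied by the lemma need not be initial segments of the canonical representative $\bin(x)$ (e.g.\ $I=[1/4,1/2]$ and $x=1/2$, where $\bin(x)=\langle 1,0,0,\ldots\rangle$ extends neither string describing $I$), a case the paper's proof silently elides, and your observation that $\bin(x)$ is then recursive and hence non-$f$-random for unbounded $f$ disposes of it cleanly.
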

\begin{proof}
Let $\psi$ be as in \cref{unit interval to cantor space lemma}. 

Suppose $I \in \mathcal{J}$ is given, and let $\langle a,b,c,d\rangle$ be a code for $I$. Define $\bin(I) \coloneq \{\sigma,\tau\}$, where $\psi(a,b,c,d) \converge = \langle \sigma,\tau\rangle$, and observe that for any $x \in I$ we have $\bin(x) \in \bin(I)$ and that
\begin{equation*}
\dwt_f(\{\sigma,\tau\}) \leq 2^{-f(|\sigma|)} + 2^{-f(|\tau|)} = 2 \cdot 2^{-f(|I|)} = 2 \dwt_f(\{I\}).
\end{equation*}
Given $S \subseteq \mathcal{J}$, we define $\bin(S) \coloneq \bigcup\{ \bin(I) \mid I \in S\}$. Then $\dwt_f(\bin(S)) \leq 2 \cdot \dwt_f(S)$; the uniformity of the assignment $S \mapsto \bin(S)$ implies that if $S$ is r.e.\ then $\bin(S)$ is r.e., and if $\langle S_i \rangle_{i \in \mathbb{N}}$ is uniformly r.e.\ then $\langle \bin(S_i) \rangle_{i \in \mathbb{N}}$ is uniformly r.e.\ Moreover, if $x \in \bbracket{S}$ then $\bin(x) \in \bbracket{\bin(S)}$, so if $x$ is covered by an $f$-ML test then $\bin(x)$ is covered by an $f$-ML test. Thus, if $\bin(x)$ is $f$-random in $\cantor$ then $x$ is $f$-random in $[0,1]$.

Conversely, given $\sigma \in \{0,1\}^\ast$, let $I_\sigma = \{ 0.X \mid \sigma \subset X\}$. If $\sigma = \langle\rangle$ then $I_\sigma = [0,1]$. Otherwise, $I_\sigma = [0.\sigma\concat \langle 0,0,\ldots\rangle, 0.\sigma\concat \langle 1,1,\ldots\rangle]$. Observe that $|\sigma| = |I_\sigma|$, so $2^{-f(|\sigma|)} = 2^{-f(|I|)}$. Additionally, $I_\sigma = I_\tau$ if and only if $\sigma = \tau$. Given $S \subseteq \{0,1\}^\ast$, let $0.S = \{ I_\sigma \mid \sigma \in S\}$.  Then $\dwt_f(S) = \dwt_f(0.S)$; the uniformity in the assignment $S \mapsto 0.S$ implies that if $S$ is r.e.\ then $0.S$ is r.e., and if $\langle S_i \rangle_{i \in \mathbb{N}}$ is uniformly r.e.\ then $\langle 0.S_i \rangle_{i \in \mathbb{N}}$ is uniformly r.e.\ Moreover, if $X \in \bbracket{S}$ then $0.X \in \bbracket{0.S}$, so if $X$ is covered by an $f$-ML test in $\cantor$, then $0.X$ is covered by an $f$-ML test in $[0,1]$. Thus, if $x$ is $f$-random in $[0,1]$ then $\bin(x)$ is $f$-random in $\cantor$.
\end{proof}

\subsection{Randomness in \texorpdfstring{$h^\mathbb{N}$}{hN} versus \texorpdfstring{$[0,1]$}{[0,1]}}

The map $X \mapsto 0.X$ from $\cantor$ to $[0,1]$ can be described in a different way. With $I_\sigma$ as in the proof of \cref{randomness in cantor space vs unit interval}, $0.X$ is the unique element of $\bigcap_{n \in \mathbb{N}}{I_{X \restrict n}}$. Said another way, $[0,1]$ is split into two intervals of length $1/2$ corresponding to $\langle 0 \rangle$ and $\langle 1\rangle$, each of those intervals are split into two intervals of length $1/4$ corresponding to $\langle 0,0\rangle$, $\langle 0,1\rangle$, $\langle 1,0\rangle$, and $\langle 1,1\rangle$, and so on, then we take the intersection of the intervals corresponding to the initial segments of $X$ to get $0.X$. 

Repeating this methodology for $h^\ast$ produces closed subintervals of $[0,1]$ with rational endpoints, but not necessarily elements of $\mathcal{J}$. For that reason, we let $\mathcal{I}$ be the set of \emph{all} closed intervals $I \subseteq [0,1]$ with rational endpoints. 

\begin{definition}
Given $\sigma \in h^\ast$, define $\pi^h(\sigma) \coloneq [k/|h^{|\sigma|}|,(k+1)/|^{|\sigma|}|]$, where $k = \sum_{i=0}^{|\sigma|-1}{\sigma(i) \cdot |h^i|}$; in other words, $\pi^h(\langle\rangle) = [0,1]$ and, for $\sigma \in h^\ast$ and $0 \leq i < h(|\sigma|)$, $\pi^h(\sigma \concat \langle i \rangle)$ is the $i$-th subinterval of $\pi^h(\sigma)$ after splitting $\pi^h(\sigma)$ into $h(|\sigma|)$-many consecutive closed subintervals of equal length $1/|h^{|\sigma|+1}|$. 

The map $\pi^h \colon h^\mathbb{N} \to [0,1]$ is then defined by setting, for $X \in h^\mathbb{N}$,
\begin{equation*}
\pi^h(X) \coloneq \text{unique element of $\bigcap_{n \in \mathbb{N}}{\pi^h(X \restrict n)}$}.
\end{equation*}
\end{definition}

\begin{lem}
$\pi^h$ is a measure-preserving surjection of $h^\mathbb{N}$ onto $[0,1]$. However, $\pi^h$ is not injective, and for distinct $X,Y \in h^\mathbb{N}$, $\pi^h(X) = \pi^h(Y)$ if and only if there is $\sigma \in h^\ast$ such that $\{X,Y\} = \{\sigma \concat \langle 0, 0, \ldots\rangle, \sigma \concat \langle h(|\sigma|)-1,h(|\sigma|+1)-1,\ldots \rangle\}$. 
\end{lem}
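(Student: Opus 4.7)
The plan is to verify the three assertions in turn: surjectivity, measure-preservation, and the characterization of the fibers of $\pi^h$. Throughout, I rely on the fact that each $\pi^h(\sigma)$ is a closed interval of length $1/|h^{|\sigma|}|$, that the intervals $\pi^h(\sigma\concat\langle i\rangle)$ for $0\le i<h(|\sigma|)$ partition $\pi^h(\sigma)$ into consecutive subintervals of equal length (meeting only at shared endpoints), and that for any $X\in h^\mathbb{N}$ the sequence $\langle \pi^h(X\restrict n)\rangle_{n\in\mathbb{N}}$ is a nested decreasing sequence of closed intervals whose lengths tend to $0$, so its intersection is a singleton making $\pi^h(X)$ well-defined.

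For \emph{surjectivity}, given $x\in[0,1]$ I would build $X\in h^\mathbb{N}$ by recursion: at stage $n$, choose $X(n)$ to be any $i<h(n)$ with $x\in \pi^h((X\restrict n)\concat\langle i\rangle)$, which exists because the $h(n)$ subintervals cover $\pi^h(X\restrict n)$ and $x$ lies in the latter by construction. Then $x\in\bigcap_n\pi^h(X\restrict n)=\{\pi^h(X)\}$.

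For \emph{measure-preservation}, it suffices to check that $\mu_h\bigl((\pi^h)^{-1}[\pi^h(\sigma)]\bigr)=\lambda(\pi^h(\sigma))$ for each $\sigma\in h^\ast$, since the intervals $\pi^h(\sigma)$ generate the Borel $\sigma$-algebra on $[0,1]$. I would argue that $(\pi^h)^{-1}[\pi^h(\sigma)]=\bbracket{\sigma}_h\cup N_\sigma$, where $N_\sigma$ is contained in the countable union over $\tau\in h^\ast$ of the (at most two-element) preimages of the endpoints of $\pi^h(\tau)$. This exceptional set is countable and hence $\mu_h$-null, and $\mu_h(\bbracket{\sigma}_h)=1/|h^{|\sigma|}|=\lambda(\pi^h(\sigma))$, giving the required equality on generators.

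For the \emph{characterization of collisions}, suppose $X\ne Y$ in $h^\mathbb{N}$ and $\pi^h(X)=\pi^h(Y)$. Let $n$ be the least index where $X(n)\ne Y(n)$ and set $\sigma=X\restrict n=Y\restrict n$; without loss of generality $X(n)<Y(n)$. Then $\pi^h(X)\in\pi^h(\sigma\concat\langle X(n)\rangle)$ and $\pi^h(Y)\in\pi^h(\sigma\concat\langle Y(n)\rangle)$, and these consecutive subintervals of $\pi^h(\sigma)$ are disjoint except possibly at a single shared endpoint, which occurs precisely when $Y(n)=X(n)+1$. In that case $\pi^h(X)$ must coincide with the right endpoint of $\pi^h(\sigma\concat\langle X(n)\rangle)$, which by an easy induction on the nested interval lengths forces $X(n+k)=h(n+k)-1$ for every $k\ge 1$; symmetrically $Y(n+k)=0$ for every $k\ge 1$. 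This exhibits $\{X,Y\}$ in the stated form (with prefix $\sigma\concat\langle X(n)\rangle$ versus $\sigma\concat\langle X(n)+1\rangle$), and the converse — that any pair so described collapses to the common boundary point — is immediate from the definition of $\pi^h$. I expect the main bookkeeping hurdle to be precisely this endpoint induction, since one must track which endpoint of each nested interval is hit; the other parts of the argument are routine.
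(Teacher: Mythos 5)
Your argument is correct, and since the paper dismisses this lemma with ``straight-forward,'' there is no competing proof to measure it against: the digit-by-digit construction for surjectivity, the reduction of measure-preservation to the generating intervals $\pi^h(\sigma)$ with a countable (hence $\mu_h$-null) exceptional set, and the least-disagreement analysis of collisions are exactly the intended routine verifications.

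One point you should not gloss over: what you actually prove in the collision analysis is that a nontrivial fiber has the form $\{\rho \concat \langle i \rangle \concat \langle h(n+1)-1, h(n+2)-1, \ldots\rangle,\ \rho \concat \langle i+1 \rangle \concat \langle 0,0,\ldots\rangle\}$ with $n = |\rho|$ and $i+1 < h(n)$, i.e.\ the two sequences extend \emph{adjacent} one-step extensions of a common prefix. This is the correct statement, but it is not literally ``the stated form'': as printed, the lemma asserts the pair is $\{\sigma \concat \langle 0,0,\ldots\rangle,\ \sigma \concat \langle h(|\sigma|)-1, h(|\sigma|+1)-1,\ldots\rangle\}$ for a single $\sigma$, and those two sequences map to the two \emph{distinct} endpoints of $\pi^h(\sigma)$ (they differ by $1/|h^{|\sigma|}|$), so the ``if'' direction of the printed equivalence is false. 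You are proving the corrected version and should say so explicitly rather than claim agreement. Two smaller technical remarks: the family $\{\pi^h(\sigma)\}$ is not closed under intersection (two such intervals can meet in a single endpoint), so to pass from agreement on these sets to agreement on all Borel sets you should either invoke Dynkin's theorem after enlarging to a $\pi$-system (e.g.\ the sets $[0,k/|h^n|]$, whose preimages are finite unions of cylinders up to one exceptional point) or work with the algebra of finite unions of level-$n$ intervals; and your claim that the endpoint fibers are ``at most two-element'' presupposes the collision characterization, so it is cleaner to note directly that any $X \notin \bbracket{\sigma}_h$ with $\pi^h(X)$ an endpoint of $\pi^h(\sigma)$ must be eventually-all-zero or eventually-all-maximal past level $|\sigma|$, which already makes the exceptional set countable and hence null.
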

\begin{proof}
Straight-forward.
\end{proof}

For an interval $I \in \mathcal{I}$, we wish to consider $f(|I|)$, although $|I| \in \mathbb{N}$ only for $I \in \mathcal{J}$, requiring the following convention:

\begin{convention}
Given $f \colon \mathbb{N} \to \co{0,\infty}$, we implicitly extend $f$ to a function $\co{0,\infty} \to \co{0,\infty}$ by letting $f(x) = (f(\lfloor x \rfloor+1) - f(\lfloor x \rfloor))(x - \lfloor x \rfloor) + f(\lfloor x \rfloor)$. 
\end{convention}

We extend the definition of $\dwt_f$ to $\mathcal{I}$ and $\mathcal{P}(\mathcal{I})$ in the obvious manner, and the definitions of $f$-ML tests and by extension $f$-randomness can likewise be extended. We will term these extended definitions by adding the prefix `extended', as in, ``\emph{$x \in [0,1]$ is extended $f$-random in $[0,1]$ if no extended $f$-ML test covers $x$.}''

An additional assumption we must make regards $f$ and the sequence $\langle f(n)/n\rangle_{n \in \mathbb{N}_{\geq 1}}$. Later we will strengthen this assumption further.

\begin{convention}
Given $f$, we assume $\langle f(n)/n\rangle_{n \in \mathbb{N}_{\geq 1}}$ is nondecreasing. As such, the function $x \in (0,\infty) \mapsto f(x)/x \in \co{0,\infty}$ is nondecreasing as well.
\end{convention}

\begin{notation}
Let $s \colon \mathbb{N} \to \co{0,\infty}$ be the unique nondecreasing computable function such that $s(0) = 1$ and for which $|h^n| = 2^{n\cdot s(n)}$ for all $n \in \mathbb{N}$. Consequently, $|\pi^h(\sigma)| = n\cdot s(n)$ for all $\sigma \in h^n$. 
\end{notation}

\begin{prop} \label{solovay f-random in unit interval implies solovay f-random in hN}
For any $S \subseteq h^\ast$, the set $\pi^h[S] = \{ \pi^h(\sigma) \mid \sigma \in S\}$ satisfies $\dwt_f(\pi^h[S]) \leq \dwt_f(S)$. Moreover, if $S$ is r.e.\ then $\pi^h[S]$ is r.e.
\end{prop}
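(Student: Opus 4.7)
The plan is to establish both claims by direct computation on individual strings, then sum across $S$. First, I would note the basic structure of $\pi^h$: for any $\sigma \in h^\ast$ of length $n$, the interval $\pi^h(\sigma)$ has length exactly $|h^n|^{-1} = 2^{-n \cdot s(n)}$, so it lies in $\mathcal{I}$ with norm $|\pi^h(\sigma)| = n \cdot s(n)$. Its endpoints are rationals uniformly computable from $\sigma$, so the assignment $\sigma \mapsto \pi^h(\sigma)$ is recursive in the sense that a code $\langle a,b,c,d \rangle$ can be produced from $\sigma$. Consequently, if $S$ is r.e.\ then $\pi^h[S]$ is r.e., handling the second claim immediately.

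Next, I would verify that $\pi^h$ is injective on $h^\ast$: since $h$ takes values in $\mathbb{N}_{\geq 2}$, the map $n \mapsto n \cdot s(n) = \log_2|h^n|$ is strictly increasing, so strings of different lengths produce intervals of different norms; and within a fixed length $n$, the intervals $\pi^h(\sigma)$ tile $[0,1]$ with disjoint interiors, so distinct $\sigma$'s give distinct intervals. Thus each $\sigma \in S$ contributes exactly once to the sum defining $\dwt_f(\pi^h[S])$, namely the term $2^{-f(n \cdot s(n))}$ where $n = |\sigma|$.

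For the weight inequality, the core estimate is comparing, for each $\sigma$ of length $n \geq 1$, the quantity $2^{-f(n\cdot s(n))}$ contributed to $\dwt_f(\pi^h[S])$ against $\gamma(\sigma)^{f(n)} = (2^{-s(n)})^{f(n)} = 2^{-s(n)\cdot f(n)}$ contributed to $\dwt_f(S)$. So it suffices to show $f(n \cdot s(n)) \geq s(n) \cdot f(n)$, i.e.,
\begin{equation*}
\frac{f(n \cdot s(n))}{n \cdot s(n)} \geq \frac{f(n)}{n}.
\end{equation*}
This follows from the standing convention that $x \mapsto f(x)/x$ is nondecreasing on $(0,\infty)$ together with the fact that $s(n) \geq 1$ (which in turn uses $h(i) \geq 2$), so $n \cdot s(n) \geq n$. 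The empty-string case is trivial: $\gamma(\langle\rangle)^{f(0)} = 1 \geq 2^{-f(0)}$ since $f \geq 0$. Summing over $\sigma \in S$ and using injectivity then gives $\dwt_f(\pi^h[S]) \leq \dwt_f(S)$.

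The only step requiring real care is the exponent comparison, and it hinges entirely on the two standing conventions (that $h \geq 2$ and that $f(x)/x$ is nondecreasing); once those are invoked, there is no further obstacle. I would flag at the start of the write-up that the length-invariance of $f$ is what allows us to write $f(|\sigma|)$ uniformly, and that the extension of $f$ to $\co{0,\infty}$ is what makes $f(n \cdot s(n))$ meaningful even when $n \cdot s(n) \notin \mathbb{N}$.
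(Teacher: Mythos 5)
Your proof is correct and follows essentially the same route as the paper's: the whole argument reduces to the exponent comparison $f(n\cdot s(n)) \geq s(n)\cdot f(n)$, which both you and the paper obtain from the standing convention that $f(x)/x$ is nondecreasing together with $n \leq n\cdot s(n)$. Your extra remarks on injectivity and the explicit treatment of the r.e.\ claim and empty string are harmless additions the paper leaves implicit (and injectivity is not even needed for the inequality, since coalescing terms would only decrease the image's weight).
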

\begin{proof}

Because $\pi^h$ is measure preserving, $\mu_h(\sigma) = \lambda(\pi^h(\sigma))$. Given $\sigma \in h^n$, $n \leq n \cdot s(n)$ implies $f(|\sigma|)/|\sigma| \leq f(|\pi^h(\sigma)|)/|\pi^h(\sigma)|$, and consequently $s(n)\cdot f(n) \leq f(n\cdot s(n))$. In particular,
\begin{equation*}
2^{- f(|\pi^h(\sigma)|)} = 2^{- f(n\cdot s(n))} \leq 2^{-s(n)\cdot f(n)} = (2^{-n \cdot s(n)})^{f(n)/n} = \gamma(\sigma)^{f(|\sigma|)}.
\end{equation*}
Thus, $\dwt_f(\pi^h[S]) \leq \dwt_f(S)$. That $\pi^h[S]$ is r.e.\ if $S$ is r.e.\ is immediate.
\end{proof}

\begin{cor}
If $\pi^h(X) \in [0,1]$ is extended $f$-random in $[0,1]$, then $X \in h^\mathbb{N}$ is $f$-random in $h^\mathbb{N}$.
\end{cor}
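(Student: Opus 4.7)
The plan is to argue by contrapositive using Proposition \ref{solovay f-random in unit interval implies solovay f-random in hN} as the workhorse. Assume $X \in h^\mathbb{N}$ is not $f$-random in $h^\mathbb{N}$, so there is an $f$-ML test $\langle S_k\rangle_{k \in \mathbb{N}}$ in $h^\mathbb{N}$ that covers $X$. I would set $T_k \coloneq \pi^h[S_k] \subseteq \mathcal{I}$ and claim $\langle T_k\rangle_{k\in\mathbb{N}}$ is an extended $f$-ML test in $[0,1]$ that covers $\pi^h(X)$.

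First I would verify that $\langle T_k\rangle_{k \in \mathbb{N}}$ is uniformly r.e., which follows from the uniform r.e.-ness of $\langle S_k\rangle_{k\in\mathbb{N}}$ together with the recursiveness of the map $\sigma \mapsto \pi^h(\sigma)$ (a code for $\pi^h(\sigma)$ is computable from $\sigma$ because $h$ is recursive). Next I would apply Proposition \ref{solovay f-random in unit interval implies solovay f-random in hN} to obtain
\begin{equation*}
\dwt_f(T_k) = \dwt_f(\pi^h[S_k]) \leq \dwt_f(S_k) \leq 2^{-k}
\end{equation*}
for each $k \in \mathbb{N}$, so $\langle T_k\rangle_{k \in \mathbb{N}}$ is indeed an extended $f$-ML test.

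Finally I would verify that $\pi^h(X) \in \bigcap_{k \in \mathbb{N}} \bbracket{T_k}$. Given $k$, since $X \in \bbracket{S_k}_h$ there is some $\sigma \in S_k$ with $\sigma \subset X$; by the definition of $\pi^h$ we have $\pi^h(X) \in \pi^h(\sigma) \in T_k$, hence $\pi^h(X) \in \bbracket{T_k}$. Thus $\pi^h(X)$ is covered by an extended $f$-ML test and so is not extended $f$-random in $[0,1]$. There is no real obstacle here — the corollary is essentially a repackaging of Proposition \ref{solovay f-random in unit interval implies solovay f-random in hN} — the only minor subtlety is that the images $\pi^h(\sigma)$ may fail to lie in $\mathcal{J}$, which is precisely why we must invoke the \emph{extended} notion of $f$-ML test rather than the original one restricted to $\mathcal{J}$.
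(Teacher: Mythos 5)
Your proof is correct and follows essentially the same route as the paper: push an $f$-ML test covering $X$ forward through $\pi^h$, using Proposition \ref{solovay f-random in unit interval implies solovay f-random in hN} for the weight bound and the uniformity of $S \mapsto \pi^h[S]$ for uniform r.e.-ness, so that the image test covers $\pi^h(X)$. Your explicit verification of coverage and the remark about needing the extended notion (since $\pi^h(\sigma)$ need not lie in $\mathcal{J}$) are exactly the points the paper relies on.
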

\begin{proof}
The uniformity of the assignment $S \mapsto \pi^h[S]$ implies that if $\langle S_i \rangle_{i \in \mathbb{N}}$ is a uniformly r.e.\ sequence of subsets of $h^\ast$, then $\langle \pi^h[S_i]\rangle_{i \in \mathbb{N}}$ is a uniformly r.e.\ sequence of subsets of $[0,1]$. With \cref{solovay f-random in unit interval implies solovay f-random in hN}, it follows that if $\langle S_i \rangle_{i \in \mathbb{N}}$ is an $f$-ML test in $h^\mathbb{N}$ then $\langle \pi^h[S_i] \rangle_{i \in \mathbb{N}}$ is an extended $f$-ML test in $[0,1]$.
\end{proof}

The proof of \cref{solovay f-random in unit interval implies solovay f-random in hN} suggests that if wish to convert a extended $f$-ML test in $[0,1]$ into an $f$-ML test in $h^\mathbb{N}$ then we want to pull intervals in $\mathcal{I}$ back into strings in $h^\ast$. However, the map $\pi^h \colon h^\ast \to \mathcal{I}$ is not surjective, so given $I \in \mathcal{I}$ we must instead cover $I$ with intervals of the form $\pi^h(\sigma)$ for $\sigma \in h^\ast$. This procedure must be sufficiently regular for an extended $f$-ML test in $[0,1]$ to be pulled back to a $g$-ML test in $h^\mathbb{N}$ for some appropriate $g$.

\begin{definition}
Given $f \domleq g$, then we say that the regularity condition \textdef{$(\ast)(g,f)$ holds for $h$} if
\begin{equation}
\sup_{n \in \mathbb{N}}{\frac{\exp_{h(n-1)}(1-f(n\cdot s(n)) \cdot (n \cdot s(n))^{-1})}{\exp_2(s(n) \cdot g(n) - f(n \cdot s(n)))}} < \infty. \tag*{$(\ast)(g,f)$}
\end{equation}
\end{definition}

\begin{remark}
In \cite{greenberg2011diagonally}, the regularity condition $(\ast)(g,f)$ is simplified by the fact that $f$ is linear, and hence $\frac{f(n\cdot s(n))}{n\cdot s(n)}$ simplifies into an expression independent of $n$ or $s(n)$.
\end{remark}

\begin{prop} \label{pulling intervals back}
Suppose $(\ast)(g,f)$ holds for $h$ and let 
\begin{equation*}
\alpha = 3 \cdot \sup_{n \in \mathbb{N}}{\frac{\exp_{h(n-1)}(1-f(n\cdot s(n)) \cdot (n \cdot s(n))^{-1})}{\exp_2(s(n) \cdot g(n) - f(n \cdot s(n)))}}.
\end{equation*}
Then for each $I \in \mathcal{I}$ there exists $\tilde{I} \subseteq h^\ast$ such that $I \subseteq \bigcup{\pi^h[\tilde{I}]}$ and $\dwt_g(\tilde{I}) \leq \alpha \cdot 2^{-f(|I|)}$. Moreover, $\tilde{I}$ can be uniformly computed from a code for $I$.
\end{prop}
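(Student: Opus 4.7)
The plan is to cover $I$ by a block of intervals $\pi^h(\sigma)$ at a single, carefully chosen depth. Given $I \in \mathcal{I}$, let $n = n(I)$ be the least positive integer satisfying $n\cdot s(n)\geq|I|$; this is recursive in a code for $I$ and exists since $|h^n|\geq 2^n$. At this level each $\pi^h(\sigma)$ with $\sigma\in h^n$ has length $2^{-n\cdot s(n)}\leq\lambda(I)$, so I define
\[
\tilde{I}\coloneq\{\sigma\in h^n\mid \pi^h(\sigma)\cap I\neq\emptyset\},
\]
a finite recursive subset of $h^n$ with $I\subseteq\bigcup\pi^h[\tilde{I}]$, and a count of intersecting level-$n$ subintervals gives $|\tilde{I}|\leq 2^{n\cdot s(n)-|I|}+2$.

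The core of the argument is the weight estimate $\dwt_g(\tilde{I})\cdot 2^{f(|I|)}\leq\alpha$, for which I would split
\[
\dwt_g(\tilde{I})\cdot 2^{f(|I|)}\;\leq\;2^{n\cdot s(n)-|I|+f(|I|)-s(n)g(n)}+2\cdot 2^{f(|I|)-s(n)g(n)}
\]
and bound each summand by $h(n-1)^{1-\theta}/2^{s(n)g(n)-f(n\cdot s(n))}$, where $\theta\coloneq f(n\cdot s(n))/(n\cdot s(n))$. The first summand uses two inequalities: the monotonicity of $x\mapsto f(x)/x$ together with $|I|\leq n\cdot s(n)$ gives $f(|I|)\leq\theta|I|$; and the minimality of $n$ gives $|I|>(n-1)s(n-1)=n\cdot s(n)-\log_2 h(n-1)$. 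Substituting these makes the exponent collapse to $f(n\cdot s(n))-s(n)g(n)+(1-\theta)\log_2 h(n-1)$, turning the combinatorial factor $2^{n\cdot s(n)-|I|}$ into $h(n-1)^{1-\theta}$. The second summand is handled using only the nondecrease of $f$ ($f(|I|)\leq f(n\cdot s(n))$) and the harmless inequality $h(n-1)^{1-\theta}\geq 1$, absorbing it into twice the same bound. Summing and invoking $(\ast)(g,f)$ delivers $\alpha=3\sup_n(\cdot)$.

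Uniformity of $I\mapsto\tilde{I}$ is immediate rational arithmetic from a code $\langle a,b,c,d\rangle$ for $I$: $n(I)$ is located by the integer comparisons $|h^n|(bc-ad)\geq bd$, and membership $\pi^h(\sigma)\cap I\neq\emptyset$ is decided by dyadic-rational comparisons on $\sigma\in h^n$. The main obstacle is therefore the algebraic reshuffling described above — deploying the wedge $(n-1)s(n-1)<|I|\leq n\cdot s(n)$ in tandem with the monotonicity of $f(x)/x$ to convert $2^{n\cdot s(n)-|I|}$ into exactly the factor $h(n-1)^{1-\theta}$ appearing in the hypothesis; the remaining details (counting, recursiveness, and the edge case $I=[0,1]$ handled by the constraint $n\geq 1$) are routine.
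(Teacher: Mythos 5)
Your proof is correct and takes essentially the same route as the paper: a single-level cover of $I$ by the grid intervals $\pi^h(\sigma)$, $\sigma \in h^n$, at the depth where the grid length $2^{-n\cdot s(n)}$ first falls to $\lambda(I)$, a count of at most $2^{n\cdot s(n)-|I|}+2$ such intervals (at most $h(n-1)$ plus the overhang), and the same combination of the monotonicity of $x\mapsto f(x)/x$ with the condition $(\ast)(g,f)$, the factor $3$ absorbing the additive constant. The only cosmetic differences are your off-by-one choice of the level $n$ and bounding the ``$+2$'' as a separate summand, where the paper instead sets $k=\lfloor\lambda(I)\cdot|h^n|\rfloor$ and uses $(k+2)\leq 3k$ together with $k\leq h(n-1)$.
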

\begin{proof}
We start by setting notation. For each $I \in \mathcal{I}$, let $n_I$ be the unique $n \geq 1$ such that $|h^n|^{-1} < \lambda(I) \leq |h^{n-1}|^{-1}$ and $k_I$ be the greatest integer $k$ such that $k/|h^{n_I}| \leq \lambda(I)$. Then $k_I < h(n_I-1)$ and there is a set $\hat{I} \subseteq \pi^h[h^{n_I}]$ computable from a code of $I$ of size $\leq k_I +2$ such that $I \subseteq \bigcup{\hat{I}}$, namely the intervals in $\pi^h[h^{n_I}]$ intersecting $I$ nontrivially (i.e., intervals which intersect $I$ at more than just an endpoint).

Suppose $I \in \mathcal{I}$ is given, and let $n = n_I$ and $k = k_I$. Because $k/|h^n| \leq \lambda(I)$, we have $|h^n|^{-1}/\lambda(I) \leq k$. Given $J \in \hat{I}$, $\lambda(J) \leq \lambda(I)$ and so $|I| \leq |J| = n \cdot s(n)$. Then
\begin{align*}
\dwt_g(\tilde{I}) = \sum_{\sigma \in \tilde{I}}{\gamma_n^{-g(n)}} & \leq (k+2) 2^{-s(n)\cdot g(n)} \\
& \leq  3k \cdot \exp_2(-\bigl( s(n) \cdot g(n) - f(n \cdot s(n)) \bigr)) \cdot (\lambda(I)/k)^{f(n \cdot s(n)) \cdot (n\cdot s(n))^{-1}} \\
& \leq 3 \cdot \frac{\exp_k(1-f(n\cdot s(n)) \cdot (n \cdot s(n))^{-1})}{\exp_2(s(n)\cdot g(n) - f(n \cdot s(n)))} \cdot \lambda(I)^{f(n\cdot s(n)) \cdot (n \cdot s(n))^{-1}} \\
& \leq 3 \cdot \frac{\exp_{h(n-1)}(1-f(n\cdot s(n)) \cdot (n \cdot s(n))^{-1})}{\exp_2(s(n)\cdot g(n) - f(n \cdot s(n)))} \cdot \lambda(I)^{f(|I|)/|I|} \\
& \leq \alpha \cdot 2^{-f(|I|)}.
\end{align*}
\end{proof}

\begin{cor} \label{g-random in hN to f-random in unit interval}
Suppose $(\ast)(g,f)$ holds for $h$ and let $X \in h^\mathbb{N}$. If $X$ is $g$-random in $h^\mathbb{N}$ then $\pi^h(X)$ is generalized $f$-random in $[0,1]$.
\end{cor}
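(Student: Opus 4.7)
The plan is to prove the contrapositive: I will assume $\pi^h(X)$ is not extended $f$-random in $[0,1]$, witnessed by an extended $f$-ML test $\langle S_i \rangle_{i \in \mathbb{N}}$ covering $\pi^h(X)$, and from it manufacture a $g$-ML test in $h^\mathbb{N}$ covering $X$. The engine is \cref{pulling intervals back}, which uniformly hands me, for each $I \in \mathcal{I}$, a subset $\tilde{I} \subseteq h^\ast$ whose image under $\pi^h$ covers $I$ and whose direct $g$-weight is bounded by $\alpha \cdot 2^{-f(|I|)}$ for a fixed constant $\alpha$.

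First I would set $T_i \coloneq \bigcup_{I \in S_i}{\tilde{I}}$; uniform computability from codes for elements of $\mathcal{I}$ ensures each $T_i$ is r.e.\ uniformly in $i$. Countable subadditivity, together with the bound from \cref{pulling intervals back}, gives
\begin{equation*}
\dwt_g(T_i) \leq \sum_{I \in S_i}{\dwt_g(\tilde{I})} \leq \alpha \cdot \dwt_f(S_i) \leq \alpha \cdot 2^{-i}.
\end{equation*}
Fixing $m \in \mathbb{N}$ with $\alpha \leq 2^m$, the shifted sequence $\langle T_{i+m}\rangle_{i\in\mathbb{N}}$ is a genuine $g$-ML test in $h^\mathbb{N}$.

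Next I must confirm that $X$ itself lies in $\bbracket{T_i}$ for every $i$. Given $i$, pick $I \in S_i$ with $\pi^h(X) \in I$. Recall from the proof of \cref{pulling intervals back} that $\tilde{I}$ consists of those $\sigma \in h^{n_I}$ for which $\pi^h(\sigma) \cap I$ is not merely a single endpoint. Provided $\pi^h(X)$ sits in the interior of $\pi^h(X \restrict n_I)$, an honest one-sided neighborhood of $\pi^h(X)$ lies in both $\pi^h(X \restrict n_I)$ and $I$, so $X \restrict n_I \in \tilde{I} \subseteq T_i$ and therefore $X \in \bbracket{T_i}$.

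The one obstacle, and essentially the only subtle point, is that $\pi^h(X)$ could land on some dyadic boundary of the form $k/|h^n|$, which would make $\pi^h(X)$ an endpoint of $\pi^h(X \restrict n_I)$ for some $n_I$ and break the interior assumption above. A routine check shows this forces $X$ to be eventually constantly $0$ or eventually constantly $h(n){-}1$ past some level, in which case $X$ is recursive. Since $g$ is unbounded and $s \geq 1$, any recursive $X$ is trivially covered by the $g$-ML test $\langle \{X \restrict n_i\}\rangle_{i\in\mathbb{N}}$ where $n_i$ is chosen so that $s(n_i)\cdot g(n_i) \geq i$. Thus the boundary case also contradicts the $g$-randomness of $X$, completing the contrapositive.
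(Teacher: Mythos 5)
Your proof is correct and follows essentially the same route as the paper's: push each interval $I$ of the extended $f$-ML test back to $\tilde{I} \subseteq h^\ast$ via \cref{pulling intervals back}, bound $\dwt_g$ by $\alpha \cdot \dwt_f$, and shift the test index by $m$ with $\alpha \leq 2^m$. Your explicit verification that $X \in \bbracket{T_i}$ — in particular the boundary case where $\pi^h(X)$ lands on a subdivision point, forcing $X$ to be recursive and hence trivially non-$g$-random — is a detail the paper's proof leaves implicit, and it is handled correctly.
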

\begin{proof} 
Let $\alpha$ be as in the statement of \cref{pulling intervals back}. Given $I \in \mathcal{I}$, let $\tilde{I}$ and $\hat{I}$ be as in the statement and proof of \cref{pulling intervals back}. Given $S \subseteq \mathcal{I}$ r.e., let $\hat{S} = \bigcup\{\hat{I} \mid I \in S\}$ and $\tilde{S} = \bigcup\{ \tilde{I} \mid I \in S\}$. Then $\tilde{S}$ is r.e.\ and $\dwt_g(\tilde{S}) = \dwt_g(\hat{S}) \leq \alpha \cdot \dwt_f(S)$.

Suppose for the sake of a contradiction that $\pi^h(X)$ is not generalized $f$-random in $[0,1]$, and so let $\langle S_i \rangle_{i \in \mathbb{N}}$ be a generalized $f$-ML test covering $\pi^h(X)$. Let $m \in \mathbb{N}$ satisfy $\alpha \leq 2^m$. That $\tilde{I}$ can be computed uniformly from a code for $I$ implies $\langle \tilde{S}_{i+m}\rangle_{i \in \mathbb{N}}$ is uniformly r.e.\ Then $\langle \tilde{S}_{i+m} \rangle_{i \in \mathbb{N}}$ is a $g$-ML test covering $X$, contradicting the hypothesis that $X$ is $g$-random in $h^\mathbb{N}$. 

\end{proof}

\begin{cor}
Suppose $x \in [0,1]$. Then $x$ is $f$-random in $[0,1]$ if and only if $x$ is generalized $f$-random in $[0,1]$.
\end{cor}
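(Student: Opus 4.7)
The forward direction is immediate from the definitions: every $f$-ML test in $[0,1]$ uses only intervals in $\mathcal{J}\subseteq\mathcal{I}$ and so is automatically an extended $f$-ML test. Hence if $x$ is generalized $f$-random then it is a fortiori $f$-random. All the work is in the converse.

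For the converse, my plan is to combine the two reduction results already proven in this section, applied in the special case $h \equiv 2$ so that $h^\mathbb{N} = \cantor$. First, \cref{randomness in cantor space vs unit interval} states that $x$ is $f$-random in $[0,1]$ if and only if $\bin(x)$ is $f$-random in $\cantor$. Second, \cref{g-random in hN to f-random in unit interval} states that if $X \in h^\mathbb{N}$ is $g$-random in $h^\mathbb{N}$ and the regularity condition $(\ast)(g,f)$ holds for $h$, then $\pi^h(X)$ is generalized $f$-random in $[0,1]$. Taking $h \equiv 2$ one has $s \equiv 1$, the notion of ``$g$-randomness in $h^\mathbb{N}$'' matches the ordinary notion of $g$-randomness in $\cantor$, and one checks from the recursive description of $\pi^h$ that $\pi^h \circ \bin$ is the identity on $[0,1]$.

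It then remains only to verify $(\ast)(f,f)$ for $h\equiv 2$, and with $g=f$ and $s\equiv 1$ this specializes to
\begin{equation*}
\sup_{n\in\mathbb{N}}\frac{\exp_2(1-f(n)/n)}{\exp_2(f(n)-f(n))} = \sup_{n\in\mathbb{N}}\exp_2(1-f(n)/n) \leq 2,
\end{equation*}
which is clear because $f$ takes values in $\co{0,\infty}$. Chaining the two reductions with this verification: if $x$ is $f$-random in $[0,1]$, then $\bin(x)$ is $f$-random in $\cantor$, whence $x = \pi^h(\bin(x))$ is generalized $f$-random in $[0,1]$.

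There is no genuine obstacle beyond the bookkeeping above: the combinatorial heart of the argument, namely covering an arbitrary $I\in\mathcal{I}$ by $O(1)$-many dyadic intervals with only a uniform constant loss in $f$-weight, has already been extracted in \cref{pulling intervals back} and packaged in \cref{g-random in hN to f-random in unit interval}. The only things I would take slight care with are (i) the edge case when $x$ is a dyadic rational, where $\bin(x)$ is chosen to be the terminating expansion so that $\pi^h(\bin(x)) = x$ still holds, and (ii) confirming that the standing convention that $\langle f(n)/n\rangle_{n\geq 1}$ is nondecreasing is available here so that the extended definition of $f(|I|)$ for $I\in\mathcal{I}$ is consistent with what is used in \cref{pulling intervals back}.
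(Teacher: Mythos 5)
Your proposal is correct and follows essentially the same route as the paper: the easy inclusion of $\mathcal{J}$-tests into $\mathcal{I}$-tests, then \cref{randomness in cantor space vs unit interval} followed by \cref{g-random in hN to f-random in unit interval} with $h\equiv 2$, $s\equiv 1$, and the verification that $(\ast)(f,f)$ holds since $\exp_2(1-f(n)/n)\leq 2$. The only difference is cosmetic — you spell out the bound on $(\ast)(f,f)$ and the dyadic-rational edge case slightly more explicitly than the paper does.
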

\begin{proof}
Being generalized $f$-random in $[0,1]$ clearly implies being $f$-random in $[0,1]$.

In the opposite direction, suppose $x$ is $f$-random in $[0,1]$, so that $\bin(x)$ is $f$-random in $\cantor$ by \cref{randomness in cantor space vs unit interval}. With $h(n) \coloneq 2$ for all $n \in \mathbb{N}$ we have $s(n) = 1$, so $|\pi^h(\sigma)| = |\sigma|$ for all $\sigma \in h^\ast = \{0,1\}^\ast$. The condition $(\ast)(g,f)$ for $h$ is then the statement that
\begin{equation*}
\sup_{n \in \mathbb{N}}{\frac{\exp_2(1-f(n)/n)}{\exp_2(g(n)-f(n))}} < \infty.
\end{equation*}
Then we may observe that $(\ast)(f,f)$ holds for $h$, and so \cref{g-random in hN to f-random in unit interval} implies $\pi^h(\bin(x)) = x$ is generalized $f$-random in $[0,1]$.
\end{proof}

\begin{cor} \label{translation of randomness from hN to cantor space}
Suppose $\lim_{n \to \infty}{\frac{s(n-1)}{s(n)}} = 1$ and $\epsilon>0$. If $f(n) = n - j(n)$ and $g(n) = n - (1-\epsilon) \frac{j(n\cdot s(n))}{s(n)}$ then $(\ast)(g,f)$ holds for $h$. Consequently, if $X$ is $g$-random in $h^\mathbb{N}$ then $\pi^h(X)$ is $f$-random in $[0,1]$.
\end{cor}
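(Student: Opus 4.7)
The plan is to verify the regularity condition $(\ast)(g,f)$ by direct computation and then invoke \cref{g-random in hN to f-random in unit interval} together with the corollary identifying extended (generalized) $f$-randomness in $[0,1]$ with $f$-randomness in $[0,1]$. The first step is to re-express $h(n-1)$ in terms of $s$. Since $|h^n| = 2^{n\cdot s(n)}$ and $|h^n| = h(n-1)\cdot |h^{n-1}|$, we obtain the telescoping identity
\begin{equation*}
h(n-1) = 2^{n\cdot s(n) - (n-1)\cdot s(n-1)}.
\end{equation*}

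Writing $j(m) = m - f(m) \geq 0$ (which is consistent with $f\colon \mathbb{N} \to \co{0,\infty}$ being sub-identical, as intended in applications), I would then compute the numerator of the ratio appearing in $(\ast)(g,f)$ as
\begin{equation*}
\exp_{h(n-1)}\bigl(j(n\cdot s(n))/(n\cdot s(n))\bigr) = 2^{\left(1 - \tfrac{(n-1)\cdot s(n-1)}{n\cdot s(n)}\right)\cdot j(n\cdot s(n))}.
\end{equation*}
The denominator simplifies because, with $g(n) = n - (1-\epsilon)\,j(n\cdot s(n))/s(n)$, the leading $n\cdot s(n)$-terms cancel and
\begin{equation*}
s(n)\cdot g(n) - f(n\cdot s(n)) = -(1-\epsilon)\,j(n\cdot s(n)) + j(n\cdot s(n)) = \epsilon\cdot j(n\cdot s(n)),
\end{equation*}
so the ratio in $(\ast)(g,f)$ equals
\begin{equation*}
2^{\left[1 - \tfrac{(n-1)\cdot s(n-1)}{n\cdot s(n)} - \epsilon\right]\cdot j(n\cdot s(n))}.
\end{equation*}

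To finish, note that $\tfrac{(n-1)\cdot s(n-1)}{n\cdot s(n)} = (1 - 1/n)\cdot\tfrac{s(n-1)}{s(n)} \to 1$ by hypothesis, so for all sufficiently large $n$ the bracketed quantity is at most $-\epsilon/2 < 0$. Since $j(n\cdot s(n)) \geq 0$, the exponent is eventually nonpositive and the ratio is bounded by $1$ on that tail; on the finitely many remaining indices the ratio is automatically finite. This establishes $(\ast)(g,f)$ for $h$, and then \cref{g-random in hN to f-random in unit interval} transfers $g$-randomness of $X$ in $h^\mathbb{N}$ to (extended) $f$-randomness of $\pi^h(X)$ in $[0,1]$, which coincides with ordinary $f$-randomness in $[0,1]$ by the corollary immediately preceding the statement. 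The main obstacle, such as it is, lies in correctly handling the telescoping identity for $h(n-1)$ and in using the hypothesis $s(n-1)/s(n)\to 1$ at exactly the right point to dominate the $\epsilon$-gap; once the algebraic reduction to a single power of $2$ is in hand the rest is immediate.
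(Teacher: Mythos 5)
Your proof is correct and follows essentially the same route as the paper: rewrite $h(n-1)=2^{n\cdot s(n)-(n-1)\cdot s(n-1)}$, compute that the logarithm of the ratio in $(\ast)(g,f)$ equals $\bigl((1-\epsilon)-\tfrac{n-1}{n}\tfrac{s(n-1)}{s(n)}\bigr)\cdot j(n\cdot s(n))$, use $s(n-1)/s(n)\to 1$ to make this eventually negative, and then transfer via \cref{g-random in hN to f-random in unit interval} and the equivalence of generalized and ordinary $f$-randomness in $[0,1]$. The only cosmetic difference is that you simplify numerator and denominator separately (and spell out the bounded-tail/finitely-many-terms conclusion), which the paper does in a single expanded computation.
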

\begin{proof}
If $|h^n| = n \cdot s(n)$, then $h(n-1) = |h^n|/|h^{n-1}| = 2^{n\cdot s(n) - (n-1)\cdot s(n-1)}$. Then
\begin{align*}
\log_2\left(\frac{\exp_{h(n-1)}(1-f(n\cdot s(n)) \cdot (n \cdot s(n))^{-1})}{\exp_2(s(n) \cdot g(n) - f(n \cdot s(n)))}\right) & = (n\cdot s(n) - (n-1)\cdot s(n-1)) \cdot \left(1-\frac{f(n \cdot s(n))}{n\cdot s(n)}\right) \\
& \quad - s(n)\cdot g(n) + f(n\cdot s(n)) \\
& = n\cdot s(n) - (n-1)\cdot s(n-1) - f(n \cdot s(n)) \\
& \quad + \frac{(n-1)\cdot s(n-1)}{n\cdot s(n)}\cdot (n\cdot s(n) - j(n\cdot s(n))) \\
& \quad - s(n)\cdot (n- (1-\epsilon) \frac{j(n\cdot s(n))}{s(n)}) + f(n\cdot s(n)) \\
& = \left( (1-\epsilon) - \frac{n-1}{n}\frac{s(n-1)}{s(n)}\right)\cdot j(n\cdot s(n)).
\end{align*}
Because $\lim_{n \to \infty}{\frac{s(n-1)}{s(n)}} = 1$ by hypothesis, for all sufficiently large $n$ we have $1-\epsilon < \frac{n-1}{n}\frac{s(n-1)}{s(n)}$ and hence $\left( (1-\epsilon) - \frac{n-1}{n}\frac{s(n-1)}{s(n)}\right)\cdot j(n\cdot s(n)) < 0$. It follows that $(\ast)(g,f)$ holds for $h$.
\end{proof}

\begin{remark}
The condition that $\lim_{n \to \infty}{\frac{s(n-1)}{s(n)}} = 1$ is equivalent to $\lim_{n \to \infty}{\frac{\log_2 |h^n|}{\log_2 |h^{n-1}|}} = 1$, which is equivalent to $\lim_{n \to \infty}{\frac{\log_2 h(n-1)}{\log_2 |h^n|}} = 0$.
\end{remark}

\subsection{Improving Greenberg \& Miller's Conclusion} \label{improving greenberg and miller's conclusion section}

The motivating theorem \cite[Theorem 4.9]{greenberg2011diagonally} proceeds by showing that for any sufficiently slow-growing $\dnr$ function $Z$, $Z$ computes an $X \in h^\mathbb{N}$ which is $(\lambda n.\delta n)$-random in $h^\mathbb{N}$ for each rational $\delta < 1$, where $h(n) = (n+1)\cdot 2^n$. By showing that $\lim_{n \to \infty}{\frac{\log_2 |h^n|}{\log_2 |h^{n-1}|}} = 1$ and noting that $\frac{\delta n \cdot s(n)}{s(n)} = \delta n$, \cref{translation of randomness from hN to cantor space} shows that $\pi^h(X)$ is $(\lambda n.\delta n)$-random in $[0,1]$ (and hence $\bin(\pi^h(X))$ is $(\lambda n.\qspace \delta n)$-random in $\cantor$) for each rational $\delta < 1$. To arrange for $X$ being $(\lambda n.\delta n)$-random in $h^\mathbb{N}$ for each rational $\delta < 1$, $X$ is constructed entry by entry so that $d^h(X \restrict n) \leq n!$ for all $n$, where $d^h$ is a fixed universal r.e.\ supermatingale $d^h \colon h^\ast \to \co{0,\infty}$. By carefully examining the relevant calculations and using the full power of \cref{translation of randomness from hN to cantor space}, it can be shown that $\pi^h(X)$ exhibits more partial randomness than just having effective Hausdorff dimension $1$.

\begin{thm} \label{improved greenberg and miller's randomness conclusion}
Let $h(n) \coloneq (n+1) \cdot 2^n$.
\begin{enumerate}[(a)]
\item $\lim_{n \to \infty}{\frac{\log_2 |h^n|}{\log_2 |h^{n-1}|}} = 1$.

\item Suppose $X \in h^\mathbb{N}$ satisfies $d^h(X \restrict n) \leq n!$ for all $n \in \mathbb{N}$. Then $X$ is $(\lambda n. n - \beta \log_2 n)$-random in $h^\mathbb{N}$ for all $\beta > 2$. Consequently, $\pi^h(X)$ is $(\lambda n. n-\alpha \sqrt{n}\log_2 n)$-random in $[0,1]$ for all $\alpha > 1$.

\end{enumerate}
\end{thm}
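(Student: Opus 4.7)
For part (a), the plan is to compute $|h^n|$ explicitly: since $h(i) = (i+1)\cdot 2^i$, telescoping gives $|h^n| = \prod_{i=0}^{n-1} (i+1)\cdot 2^i = n! \cdot 2^{n(n-1)/2}$, so $\log_2 |h^n| = \log_2(n!) + n(n-1)/2$. The quadratic term dominates the $n \log_2 n$-sized $\log_2(n!)$ piece (Stirling), so the ratio $\log_2 |h^n|/\log_2 |h^{n-1}|$ converges to $1$. A side benefit is the asymptotic $s(n) = \log_2|h^n|/n = (n-1)/2 + \log_2(n!)/n = n/2 + \log_2 n + O(1)$, which will be reused in the consequence.

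For the $h^\mathbb{N}$-randomness statement in part (b), I plan to use the supermartingale characterization of strong $f$-randomness from \cref{equivalent characterizations of strong f-randomness}, showing that $d^h$ does not $(\lambda n. n - \beta \log_2 n)$-succeed on $X$ whenever $\beta > 2$. Setting $f(n) = n - \beta \log_2 n$, we have $n - f(n) = \beta \log_2 n$, and since $\gamma(X \restrict n) = |h^n|^{-1/n} = 2^{-s(n)}$, the hypothesis $d^h(X \restrict n) \leq n!$ yields
\begin{equation*}
d^h(X \restrict n) \cdot \gamma(X \restrict n)^{n - f(X \restrict n)} \leq n! \cdot 2^{-\beta s(n) \log_2 n}.
\end{equation*}
Taking base-$2$ log and substituting $s(n) = n/2 + O(\log n)$ yields $(1 - \beta/2)\, n \log_2 n + O(n)$, which tends to $-\infty$ precisely when $\beta > 2$. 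Hence $X$ is strongly (and so in particular) $(\lambda n. n - \beta \log_2 n)$-random in $h^\mathbb{N}$.

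For the consequence, I plan to pass from $h^\mathbb{N}$-randomness to $[0,1]$-randomness via \cref{translation of randomness from hN to cantor space}, whose hypothesis $s(n-1)/s(n) \to 1$ follows from part (a). With $j(n) = \alpha \sqrt{n} \log_2 n$ and $s(n) \sim n/2$, direct computation gives $\sqrt{n\, s(n)} \sim n/\sqrt{2}$ and $\log_2(n\, s(n)) \sim 2\log_2 n$, so $j(n\, s(n))/s(n) \sim 2\sqrt{2}\, \alpha \log_2 n$. Hence $g(n) \coloneq n - (1-\epsilon)\, j(n\, s(n))/s(n) = n - (2\sqrt{2}(1-\epsilon)\alpha + o(1)) \log_2 n$. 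Given $\alpha > 1$, I will choose $\epsilon > 0$ small enough that $2\sqrt{2}(1-\epsilon)\alpha > 2$, then pick $\beta \in (2,\, 2\sqrt{2}(1-\epsilon)\alpha)$, so that $g(n) \leq n - \beta \log_2 n$ for almost all $n$. Monotonicity of $f$-randomness in $f$ together with part (b) then shows that $X$ is $g$-random in $h^\mathbb{N}$, and \cref{translation of randomness from hN to cantor space} delivers that $\pi^h(X)$ is $(\lambda n. n - \alpha \sqrt{n} \log_2 n)$-random in $[0,1]$.

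The main obstacle is the asymptotic bookkeeping linking the crude bound $d^h(X \restrict n) \leq n!$ (which on its own only certifies $X$ has strong $1$-complexity up to a $\log_2(n!)$ deficit in $h^\mathbb{N}$) to the finer partial-randomness scales $n - \beta \log_2 n$ in $h^\mathbb{N}$ and $n - \alpha \sqrt{n} \log_2 n$ in $[0,1]$. In particular, the factor $2\sqrt{2}$ arising from $\sqrt{n\, s(n)}/s(n) \sim \sqrt{2/n}$, which depends on the specific choice $h(n) = (n+1)\cdot 2^n$, is what forces the threshold to be $\alpha > 1$ rather than any $\alpha > 0$.
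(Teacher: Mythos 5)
Your proposal is correct and follows essentially the same route as the paper's proof: Stirling's approximation for (a), showing the universal supermartingale $d^h$ does not $(\lambda n.\, n-\beta\log_2 n)$-succeed on $X$ via the bound $n!\cdot 2^{-\beta s(n)\log_2 n}$ for (b), and passing to $[0,1]$ through \cref{translation of randomness from hN to cantor space} with a choice of $\beta \in (2,\,2\sqrt{2}(1-\epsilon)\alpha)$. One cosmetic remark: your sharper constant $2\sqrt{2}$ would in fact permit any $\alpha > 1/\sqrt{2}$, so it is not what forces the threshold $\alpha > 1$ (the paper's lossier lower bound involving $\sqrt{1/2}/\sqrt{1/2+\delta}$ is what lands exactly at $1$); this does not affect the correctness of your proof of the stated claim.
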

\begin{proof} \mbox{}
\begin{enumerate}[(a)]
\item To aid in finding the corresponding $s(n)$, recall Stirling's Approximation:

\begin{lem}[Stirling's Approximation] \mbox{}
\begin{equation*}
\lim_{n \to \infty}{\frac{n!}{\sqrt{2\pi n}(n/e)^n}} = \lim_{n \to \infty}{\frac{n!}{\exp_2(n\log_2 n + n\log_2 e + \frac{1}{2}\log_2 n + \log_2\sqrt{2\pi})}} = 1.
\end{equation*}
\end{lem}

Then $\log_2 |h^n| \approx n \cdot \left( \frac{n-1}{2} + \log_2 n + \log_2 e + \frac{1}{2}\log_2 n^{1/n} + \log_2 (2\pi)^{2/n}\right)$, so $\lim_{n \to \infty}{\frac{s(n-1)}{s(n)}} = 1$. 

\item We start by computing $d(X \restrict n) \mu_h(X \restrict n)^{1-g(n)/n}$:
\begin{align*}
d(X \restrict n) \mu_h(X \restrict n)^{1-g(n)/n} & \leq n! \cdot (n! 2^{n(n-1)/2})^{\frac{g(n)}{n}-1} \\
& = (n!)^{\frac{g(n)}{n}} \cdot 2^{\left(\frac{g(n)}{n}-1\right) \cdot \frac{n(n-1)}{2}} \\
& \approx \exp_2\bigg(\frac{g(n)}{n}\left(n\log_2 n + n\log_2 e + \frac{1}{2}\log_2 n + \log_2\sqrt{2\pi}\right) \\
& \qquad ~~~~~  + \left(\frac{g(n)}{n}-1\right) \cdot \frac{n(n-1)}{2}\bigg) \\
& = \exp_2\left(g(n)\left( \frac{n-1}{2} + \log_2\left( n^{1+1/n}\cdot e \cdot \sqrt[2n]{2\pi}\right)\right) - \frac{n(n-1)}{2}\right).
\end{align*}
We want this last expression to be bounded above, so there must be a $c \in \mathbb{N}$ for which 
\begin{equation*}
g(n) \left( n-1+2\log_2\left(n^{1+1/n}\cdot e \cdot \sqrt[2n]{2\pi}\right)\right) \leq n^2-n+c.
\end{equation*}
Writing $g(n) = n - \tilde{j}(n)$, we find that if
\begin{equation*}
\tilde{j}(n) \geq \frac{2\log_2\left(n^{1+1/n} \cdot e \cdot \sqrt[2n]{2\pi}\right)-\frac{c}{n}}{1-\frac{1}{n}+\frac{2}{n}\log_2\left(n^{1+1/n} \cdot e \cdot \sqrt[2n]{2\pi}\right)}
\end{equation*}
then $X$ is $g$-random. Hence, for any $\beta > 2$, $X$ is $g$-random for $g(n) = n - \beta \cdot \log_2 n$.

By \cref{translation of randomness from hN to cantor space}, if $X$ is $\left(\lambda n.n -  (1-\epsilon)\frac{j(n \cdot s(n))}{s(n)}\right)$-random in $h^\mathbb{N}$ then $\pi^h(X)$ is $\left(\lambda n.n-j(n)\right)$-random in $[0,1]$. To show that $\pi^h(X)$ is $\left(\lambda n.n - \alpha \sqrt{n} \log_2 n\right)$-random in $[0,1]$ for any $\alpha > 1$, it suffices to show that there is $\beta > 2$ and $\epsilon>0$ such that $n - \beta \log_2 n \geq n - (1-\epsilon) \frac{\alpha \sqrt{n\cdot s(n)} \log_2 (n \cdot s(n))}{s(n)}$ for all sufficiently large $n$, or equivalently that $\beta \log_2 n \leq (1-\epsilon) \frac{\alpha \sqrt{n\cdot s(n)} \log_2 (n \cdot s(n))}{s(n)}$ for all sufficiently large $n$. Using the approximations $\frac{1}{2}n \leq s(n) \leq (\frac{1}{2}+\delta) n$ for $\delta > 0$, we have
\begin{align*}
(1-\epsilon) \frac{\alpha\sqrt{n\cdot s(n)} \log_2 (n \cdot s(n))}{s(n)} & \geq (1-\epsilon) \frac{\alpha\sqrt{n \cdot \frac{1}{2} n} \log_2 (n \cdot \frac{1}{2} n)}{(\frac{1}{2}+\delta) n} \\
& \geq 2(1-\epsilon) \frac{\sqrt{1/2}}{\sqrt{1/2+\delta}} \alpha \log_2 n - (1-\epsilon)\frac{\sqrt{1/2}}{\sqrt{1/2+\delta}}\alpha.
\end{align*}
Thus, it suffices for there to be $\beta > 2$, $\epsilon>0$, and $\delta > 0$ such that $\beta < 2(1-\epsilon) \frac{\sqrt{1/2}}{\sqrt{1/2 + \delta}}\alpha$. This is possible whenever $\alpha > 1$.

\end{enumerate}
\end{proof}


\section{Quantifying the Reduction of Avoidance to Complexity -- Preliminary Case} \label{quantifying greenberg miller proof}

\cref{improved greenberg and miller's randomness conclusion} addresses the question of the degree of partial randomness we may extract within the proof of \cite[Theorem 4.9]{greenberg2011diagonally} and serves as a precursor to a more general result putting a lower bound on which order functions $f$ have slow-growing order functions $q$ for which $\complex(f) \weakleq \ldnr(q)$. Our precursor to addressing the growth rate of such $q$ is the following refinement of \cite[Theorem 4.9]{greenberg2011diagonally}:

\begin{thm} \label{greenberg miller theorem 4.9 improved}
For rationals $\alpha \in (1,\infty)$ and $\beta \in (0,1/2)$, we have
\begin{equation*}
\complex(\lambda n. n - \alpha \sqrt{n}\log_2 n) \weakleq \ldnr(\lambda n. (\log_2 n)^\beta).
\end{equation*}
More generally, if $q \colon \mathbb{N} \to \mathbb{N}$ is an order function such that $q(2^{(3/2+\epsilon)n^2}) \leq n+1$ for almost all $n$ and some $\epsilon>0$, then $\complex(\lambda n. n - \alpha \sqrt{n}\log_2 n) \weakleq \ldnr(q)$. 
\end{thm}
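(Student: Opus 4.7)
Plan. Given $Z \in \ldnr(q)$ with $Z \in \avoid^\psi(q)$ for a linearly universal partial recursive $\psi$, the strategy is to uniformly construct from $Z$ an element $X \in h^\mathbb{N}$, where $h(n) = (n+1)2^n$, satisfying $d^h(X \restrict n) \leq n!$ for all $n$, with $d^h$ the universal left r.e.\ supermartingale on $h^\ast$ from \cref{relationship between supermartingales and continuous semimeasures}. Once $X$ is obtained, \cref{improved greenberg and miller's randomness conclusion}(b) yields $\pi^h(X) \in [0,1]$ that is $(\lambda n.\, n-\alpha\sqrt{n}\log_2 n)$-random in $[0,1]$; \cref{randomness in cantor space vs unit interval} transfers this to $(\lambda n.\, n-\alpha\sqrt{n}\log_2 n)$-randomness of $\bin(\pi^h(X))$ in $\cantor$, which is equivalent to membership in $\complex(\lambda n.\, n-\alpha\sqrt{n}\log_2 n)$. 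The composite map $Z \mapsto \bin(\pi^h(X))$ is a recursive functional, so it establishes the asserted weak reduction.

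The inductive construction maintains $\sigma_n = X \restrict n$ with $d^h(\sigma_n) \leq n!$. The supermartingale property gives $\sum_{i<h(n)} d^h(\sigma_n\concat\langle i\rangle) \leq h(n)\cdot n! = (n+1)!\cdot 2^n$, so the set $B_n = \{i < h(n) : d^h(\sigma_n\concat\langle i\rangle) > (n+1)!\}$ is r.e.\ uniformly in $n$ and has $|B_n|<2^n$. The task at stage $n$ is to select $X(n) \in \{0,1,\ldots,h(n)-1\}\setminus B_n$ using $L_n = \lceil(n+\log_2(n+1))/\log_2 n\rceil$ queries to $Z$ at positions $P_{n,k} = a_0\pi^{(2)}(n,k) + b_0$ for $k<L_n$, where $a_0,b_0$ come from the \cref{recursion theorem for linearly universal partial recursive functions} applied to a carefully designed partial recursive $\theta(a,b,n,k)$. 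Since $\pi^{(2)}(n,k) = 2^n(2k+1)-1$ and $L_n \leq n+1$, one has $P_{n,k} = O(n \cdot 2^n)$, so $\log_2 P_{n,k} = n + O(\log n) \leq (3/2+\epsilon)n^2$ for all sufficiently large $n$. The hypothesis $q(2^{(3/2+\epsilon)n^2}) \leq n+1$ therefore guarantees $Z(P_{n,k}) \in \{0,\ldots,n\}$, and the LUA avoidance $Z(P_{n,k}) \neq \psi(P_{n,k}) \simeq \theta(a_0,b_0,n,k)$ excludes one designated value from this range at each query.

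The function $\theta(a,b,n,k)$ is defined to recursively invoke itself at lexicographically smaller pairs $(n',k')$ to reconstruct the values $Z(P_{n',k'})$ that would be produced under its own discipline, compute $\sigma_n$ inductively from these, enumerate $B_n$, and output a value $v_{n,k}\in\{0,\ldots,n\}$ dictated by a game-tree strategy that, together with the actually observed values $Z(P_{n,k})\neq v_{n,k}$, forces the final decoding $X(n)\notin B_n$. The strategy maintains a shrinking pool $C_k$ of viable candidates initialized to $C_0 = \{0,\ldots,h(n)-1\}$, partitions it into $n+1$ blocks indexed by $\{0,\ldots,n\}$ of roughly equal size, designates as forbidden (via $v_{n,k}$) the block that greedily absorbs the most currently-enumerated elements of $B_n$, and lets $C_{k+1}$ be the block selected by $Z(P_{n,k})$; a counting argument exploiting $n^{L_n}\geq h(n)$ and $|B_n|(n+1)<h(n)$ shows that after $L_n$ sub-queries $|C_{L_n}|=1$ with its unique element outside $B_n$. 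The \cref{recursion theorem for linearly universal partial recursive functions} supplies $a_0,b_0$ that close the self-reference of $\theta$. The main technical obstacle is verifying that the greedy partition strategy remains correct even though $B_n$ is merely r.e., so that $\theta$ may need to delay its output until enough of $B_n$ has been enumerated to make the greedy choice meaningful; once this delay-and-decode scheme is implemented, $X$ is a total recursive functional of $Z$ with $d^h(X\restrict n)\leq n!$, and the reduction follows from the first paragraph.
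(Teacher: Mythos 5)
Your outer shell is the same as the paper's: build $X \in h^\mathbb{N}$ with $h(n)=(n+1)2^n$ and $d^h(X\restrict n)\leq n!$, note via the supermartingale inequality that the bad set $B_n$ of immediate successors with $d^h$-value above $(n+1)!$ has size at most $2^n$, and then invoke \cref{improved greenberg and miller's randomness conclusion} and the $[0,1]$-to-$\cantor$ translation. The gap is in the only part that actually has to be new: the mechanism by which $Z \in \avoid^\psi(q)$ selects $X(n)\notin B_n$. First, your $\theta(a,b,n,k)$ is required to ``reconstruct the values $Z(P_{n',k'})$'' and hence $\sigma_n = X\restrict n$, but $\theta$ is a partial recursive function with no access to the oracle $Z$; the Recursion Theorem only closes the self-reference on $\theta$'s own index, it does not let $\psi$ see $Z$. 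Since $B_n$ depends on the actual path $\sigma_n$, and $\sigma_n$ depends on the oracle's answers, a designation scheme whose query positions depend only on $(n,k)$ cannot target the right bad set. This is precisely the difficulty the paper pays for: its $\psi$ reserves positions for \emph{every} possible $\sigma \in h^{\leq n}$ (the factor $m_n^\ast = 2^{(1/2+\epsilon)n^2}$ bounding $m_\sigma$ over $\sigma \in h^n$), and on top of that the per-$\sigma$ cost $2^n\binom{(n+1)2^n}{n+1}$ from \cref{greenberg miller reduction}; these two factors are exactly where the exponent $(3/2+\epsilon)n^2$ comes from. Your claimed position bound $P_{n,k}=O(n\cdot 2^n)$ would yield a theorem far stronger than the one stated (allowing $q$ of roughly logarithmic growth), which is a symptom of the missing path-indexing rather than an improvement.

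Second, even granting access to $\sigma_n$, the block-elimination game is not sound. Avoidance removes only the one designated block per round, so a legal $Z$ may steer the construction into the block containing \emph{all} remaining elements of $B_n$: the bad density starts at $1/(n+1)$ and can reach $1$ within two rounds, so after $L_n \approx n/\log_2 n$ rounds the surviving singleton can perfectly well lie in $B_n$; the inequality $|B_n|(n+1) < h(n)$ is an initial condition, not an invariant of your selection rule. Moreover, $B_n$ is only r.e.: elements enumerated after round $k$'s designation is fixed (or after your decoder halts) are never protected against, and ``delay until enough of $B_n$ has appeared'' is not implementable because the decoder must halt while ``enough'' is undecidable. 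The paper's mechanism avoids both problems: in \cref{greenberg miller reduction} each designated value is defined by \emph{waiting} for the specific bad value to appear (so avoidance protects against bad elements no matter when they are enumerated), and correctness is a counting argument over the sets $F_d(n)$ and $H(n)$ rather than a density argument over shrinking blocks, at the cost of the $\binom{(n+1)2^n}{n+1}$-sized reserved query sets $U_n(i)$. As written, your construction does not establish $d^h(X\restrict n)\leq n!$, so the reduction does not go through; repairing it along your lines would force you back to essentially the paper's bookkeeping and its $2^{(3/2+\epsilon)n^2}$ bound.
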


We start by fixing some notation and definitions. 

\begin{definition}
For $a,b,c \in \mathbb{N}$, the class $P_a^{b,c}$ is defined by
\begin{equation*}
P_a^{b,c} \coloneq \{ F\colon\mathbb{N} \to [a]^b \mid \forall n \forall j < c \qspace (j \in \dom \varphi_n \to \varphi_n(j) \notin F(n))\}
\end{equation*}
where $[a]^b \coloneq \{ S \subseteq \{0,1,2,\ldots,a-1\} \mid |S| = b\}$.

In particular, $P_a^{1,c} = \{ F\colon\mathbb{N} \to a \mid \forall n \forall j < c \qspace (j \in \dom \varphi_n \to \varphi_n(j) \neq F(n))\}$.
\end{definition}

Given $a \in \mathbb{N}$, $P^{1,1}_a = \{ X \in a^\mathbb{N} \mid \forall n \qspace (F(n) \nsimeq \varphi_n(0))\}$. By the Parametrization Theorem, there is a total recursive $f \colon \mathbb{N}^2 \to \mathbb{N}$ such that $\varphi_{f(e,x)}(y) \simeq \varphi_e(x)$ for all $e,x,y \in \mathbb{N}$. Then given $X \in P_a^{1,1}$, the sequence $Y \in a^\mathbb{N}$ defined by $Y(n) \coloneq X(f(n,n))$ is a member of $\dnr(a)$. Conversely, given $Y \in \dnr(a)$, the sequence $X \in a^\mathbb{N}$ defined by $X(n) \coloneq Y(f(n,0))$ is a member of $P_a^{1,1}$. It is also relevant to observe that in both directions, each entry of the output sequence depends on only a single entry of the input sequence. Moreover, this one-to-one correspondence is uniform in $a$.

\begin{convention}
$\dnr(a)$ will be identified with $P_a^{1,1}$.
\end{convention}

\cite[Corollary 4.6]{greenberg2011diagonally} shows that $P_{ca}^{1,c} \strongleq \dnr(a)$, uniformly in $a,c \in \mathbb{N}$. In order to analyze a related result of Khan \cite[Theorem 6.3]{khan2013shift} (see \cref{shift complexity and avoidance section}), in 2020 Simpson performed a detailed analysis of this strong reduction with an eye towards generalization and to put an explicit and uniform bound on the number of entries of an element of $\dnr(a)$ are needed to compute a given bit of the corresponding element of $P_{ca+b}^{b+1,c}$.

\begin{prop} \label{greenberg miller reduction}
Uniformly in $a,b,c \in \mathbb{N}$, there is a recursive functional $\Psi \colon \dnr(a) \to P_{ca+b}^{b+1,c}$ and a recursive function $U \colon \mathbb{N} \to \mathcal{P}_\fin(\mathbb{N})$ such that for every $X \in \dnr(a)$, $\Psi(X)(n)$ depends only on $X \restrict U(n)$. Moreover, $|U(n)| \leq c \binom{ca+b}{a}$ for all $n \in \mathbb{N}$.
\end{prop}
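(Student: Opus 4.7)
The plan is to build $U(n)$ by applying the Parametrization Theorem once for each pair $(A,j)$ where $A$ ranges over the $a$-subsets of $\{0,1,\ldots,ca+b-1\}$ and $j<c$. For each such pair, \cref{parametrization theorem} will furnish an index $m(n,A,j)$ such that $\varphi_{m(n,A,j)}(m(n,A,j))$ searches for $\varphi_n(j)$ to converge and, if the returned value lies in $A$, outputs its position within $A$ under the canonical increasing enumeration (and diverges otherwise). Setting $U(n)=\{m(n,A,j):A\in[ca+b]^{a},\ j<c\}$ then gives $|U(n)|\leq c\binom{ca+b}{a}$ immediately.

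With this setup, any $X\in\dnr(a)$ satisfies $X(m(n,A,j))\neq$ position of $\varphi_n(j)$ in $A$ whenever $\varphi_n(j)\in A$. Reading off the element $a_{A,j}$ of $A$ at position $X(m(n,A,j))$ therefore yields, for each pair $(A,j)$, an element $a_{A,j}\in A$ with $a_{A,j}\neq\varphi_n(j)$ whenever $\varphi_n(j)\in A$; when $\varphi_n(j)\notin A$, the element $a_{A,j}$ is automatically distinct from $\varphi_n(j)$ because it lies in $A$. This extraction is plainly computable from $X\restrict U(n)$ and uniform in $a,b,c,n$ by the uniformity of the Parametrization Theorem.

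The last step is to recursively convert the data $\{a_{A,j}\}_{A,j}$ into a $(b+1)$-subset $F(n)$ of $\{0,1,\ldots,ca+b-1\}$ disjoint from $\Phi_n=\{\varphi_n(j):j<c\}$. The combinatorial fuel is that $\binom{ca+b-|\Phi_n|}{a}\geq 1$, so some $a$-subset $A^{*}$ is entirely disjoint from $\Phi_n$; for such an $A^{*}$ every $a_{A^{*},j}$ automatically avoids $\Phi_n$, and (when $a\geq b+1$) any $(b+1)$ elements of $A^{*}$ form a valid $F(n)$. The plan is to enumerate candidate $(b+1)$-subsets $F\subseteq\{0,1,\ldots,ca+b-1\}$ in a fixed order and output the first one which is consistent with the observed data, in the sense that no $a_{A,j}$ witnesses an element of $F$ being forced into $\Phi_n$. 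For the degenerate case $a<b+1$ the argument reduces to the case $a\geq b+1$ by iterating the construction or by a further appeal to the Parametrization Theorem, bundling several coordinates together so that the effective alphabet is at least $b+1$.

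The main obstacle I anticipate is precisely this extraction step: verifying that the $a_{A,j}$'s carry enough information to pick out some $a$-subset disjoint from $\Phi_n$ (or at least a $(b+1)$-subset disjoint from $\Phi_n$) in a uniformly recursive way, and that the count $c\binom{ca+b}{a}$ is already enough to support this without needing to enlarge $U(n)$. This is essentially a careful case analysis / pigeonhole, relying on the fact that across all $a$-subsets $A$ the adversary's possible behaviours are tightly constrained by the $a_{A,j}\neq\varphi_n(j)$ identities; I expect the bookkeeping to be routine but tedious, and it is the only part of the argument that does not follow directly from the Parametrization Theorem and $\dnr$-avoidance.
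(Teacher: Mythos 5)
Your querying architecture is sound and genuinely different from the paper's: the paper proves the reduction adaptively, via the chain $P_{ca+b}^{b+1,c} \strongleq P_{a+d}^{d+1,1} \strongleq P_a^{1,1}$ with $d=(c-1)a+b$ (Lemmas \ref{Pabc lemma 1} and \ref{Pabc lemma 2}), building the avoiding set one element at a time and only listing all $\binom{ca+b}{a}$ possible indices in $U(n)$ because the adaptive queries cannot be predicted in advance, whereas you query all pairs $(A,j)$ non-adaptively up front. The gap is in your extraction step, which is where the proof actually lives, and as stated it fails: a datum $a_{A,j}$ can only ever certify \emph{exclusion} ($a_{A,j}\neq\varphi_n(j)$), never that anything lies in $\Phi_n$, so your test ``no $a_{A,j}$ witnesses an element of $F$ being forced into $\Phi_n$'' is vacuously satisfied by every candidate, and the procedure outputs the first $(b+1)$-set in your fixed order regardless of $X$ --- which need not avoid $\Phi_n$. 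More generally, ``consistent with the observed data'' is the wrong requirement: an element not contradicted by the data may still equal some $\varphi_n(j)$; you need elements whose avoidance of every $\varphi_n(j)$ is \emph{certified} by the data.

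The repair is a short counting argument that you should carry out explicitly. For each $j<c$ let $V_j$ be the set of $v<ca+b$ with $a_{A,j}\neq v$ for every $a$-subset $A$ (the values not excluded for coordinate $j$). If $|V_j|\geq a$, pick $A\subseteq V_j$ with $|A|=a$; then $a_{A,j}\in A\subseteq V_j$, contradicting the definition of $V_j$, so $|V_j|\leq a-1$. Hence $W\coloneq\{0,1,\ldots,ca+b-1\}\setminus\bigcup_{j<c}V_j$ has size at least $ca+b-c(a-1)=b+c\geq b+1$, is computable from $X\restrict U(n)$, and every $x\in W$ satisfies $x=a_{A,j}\neq\varphi_n(j)$ for some $A$, for every $j<c$; take $F(n)$ to be the first $b+1$ elements of $W$. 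Note that since $\varphi_n(j)\in\Phi_n$ forces $\varphi_n(j)\in V_j$, this $F(n)$ is genuinely disjoint from $\Phi_n$. This also makes your case split on $a<b+1$ (and the proposed ``bundling'' or iteration) unnecessary, and it confirms that $c\binom{ca+b}{a}$ queries already suffice. With that replacement your non-adaptive argument goes through and is arguably tidier than the paper's two-lemma construction.
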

\begin{proof}
With the identification of $\dnr(a)$ with $P_a^{1,1}$, the reduction $P_{ca+b}^{b+1,c} \strongleq \dnr(a)$ will result from a sequence of strong reductions
\begin{equation*}
P_{ca+b}^{b+1,c} \strongleq P_{a+d}^{d+1,1} \strongleq P_a^{1,1},
\end{equation*}
where $d = (c-1)a+b$. These reductions result from the following lemmas:

\begin{lem} \label{Pabc lemma 1}
$P_{a+d}^{d+1,c} \strongleq P_a^{1,c}$, uniformly in $a \geq 2$, $d \geq 0$, and $c \geq 1$.
\end{lem}
\begin{proof}
The partial function $\theta \colonsub \mathbb{N}^\ast \times \mathbb{N}^2 \to \mathbb{N}$ defined by
\begin{equation*}
\theta(\sigma,n,j) \simeq \min\{ i < |\sigma| \mid \varphi_n(j) \converge = \sigma(i) \}
\end{equation*}
for each $\sigma \in \mathbb{N}^\ast$ and $n,j \in \mathbb{N}$ 
is partial recursive, so there exists a total recursive function $f \colon \mathbb{N}^\ast \times \mathbb{N} \to \mathbb{N}$ such that $\varphi_{f(\sigma,n)}(j) \simeq \theta(\sigma,n,j)$ for all $\sigma \in \mathbb{N}^\ast$ and $n,j \in \mathbb{N}$. Given $S \subseteq \mathbb{N}$ with $|S| = a$, let $\sigma_S \in \mathbb{N}^a$ be the enumeration of $S$ in increasing order. 

Suppose $X \in P_a^{1,c}$. Recursively in $d$, we define $F_d \in P_{a+d}^{d+1,c}$. 
\begin{description}
\item[Base Case.] For $d=0$, $F_0 = X$.

\item[Induction Step.] Given $F_d \in P_{a+d}^{d+1,c}$ has been defined, let $F_{d+1}(n) = F_d(n) \cup \{ \sigma_S(X(f(\sigma_S,n)))\}$ for each $n \in \mathbb{N}$, where $S = (a+d+1) \setminus F_d(n)$ (note that $|S|=a$). Because $X \in P_a^{1,c}$, for all $j < c$
\begin{equation*}
X(f(\sigma_S,n)) \nsimeq \varphi_{f(\sigma_S,n)}(j) \simeq \theta(\sigma_S,n,j) \simeq \min\{ i < a \mid \varphi_n(j) \converge = \sigma_S(i)\}.
\end{equation*}
Thus, $\sigma_S(X(f(\sigma_S,n))) \nsimeq \varphi_n(j)$ for all $j < c$, and so $F_{d+1} \in P_{a+d+1}^{d+2,c}$.
\end{description}
\end{proof}

\begin{lem} \label{Pabc lemma 2}
Let $d = (c-1)a+b$. Then $P_{ca+b}^{c+b,c+e} \strongleq P_{a+d}^{d+1,e+1}$, uniformly in $a \geq 2$, $c \geq 1$, $b \geq 0$, and $e \geq 0$.
\end{lem}
\begin{proof}
Suppose $F \in P_{a+d}^{d+1,e+1}$. Because $a+d = ca+b$, for every $n \in \mathbb{N}$ and $j < e+1$ we have $\varphi_n(j) \notin F(n) \in [ca+b]^{d+1}$. The partial function $\theta \colonsub \mathbb{N}^3 \to \mathbb{N}$ defined by
\begin{equation*}
\theta(n,j,y) \simeq \varphi_n(j)
\end{equation*}
for each $n,j,y \in \mathbb{N}$ 
is partial recursive, so there exists a total recursive function $g \colon \mathbb{N}^2 \to \mathbb{N}$ such that $\varphi_{g(n,j)}(y) \simeq \varphi_n(j)$ for all $n,j,y \in \mathbb{N}$. In particular, $\varphi_n(j) \simeq \varphi_{g(n,j)}(0) \notin F(g(n,j))$ for all $n,j \in \mathbb{N}$. 

Define $H(n) \coloneq F(n) \cap \bigcap_{i < c-1}{F(g(n,e+i+1))}$, so that for all $j < e+1$ we have $\varphi_n(j) \notin F(n) \supseteq H(n)$, and for all $i < c-1$ we have $\varphi_n(e+i+1) \notin F(g(n,e+i+1)) \supseteq H(n)$. Thus, for every $j < c+e$ we have $\varphi_n(j) \notin H(n)$. The only obstacle to $H$ being a member of $P_{ca+b}^{c+b,c+e}$ is that $H$ need not be of size $c+b$. However, as long as $|H(n)| \geq c+b$ for every $n \in \mathbb{N}$ then we can let $G(n)$ consist of the first $c+b$ elements of $H(n)$. To that effect,
\begin{align*}
|(ca+b) \setminus H(n)| & = \left| \bigl[ (ca+b) \setminus F(n) \bigr] \cup \bigcup_{i < c-1}{\bigl[ (ca+b) \setminus F(g(n,e+i+1)) \bigr]} \right| \\
& \leq c \cdot \bigl( (ca+b) - (d+1) \bigr) \\
& = c(a-1)
\end{align*}
so $|H(n)| \geq (ca+b) - c(a-1) = b+c$. With $G(n)$ consisting of the first $c+b$ elements of $H(n)$, we have $G \in P_{ca+b}^{c+b,c+e}$.
\end{proof}

In the proof of \cref{Pabc lemma 1}, for each $n \in \mathbb{N}$ $F_d(n)$ depends only on the values $X(f(\sigma_S,n))$ for certain $S \in [a+d]^a$. In the proof of \cref{Pabc lemma 2}, $H(n)$ (and hence $G(n)$) depends on $F(n)$ and $F(g(n,e+i+1))$ for $i < c-1$. Thus, in the reduction $X \in P_a^{1,1} \mapsto F_d \in P_{a+d}^{d+1,1} \mapsto G \in P_{ca+b}^{b+1,c}$, $G(n)$ is determined by $X \restrict U(n)$, where
\begin{equation*}
U(n) = \{ f(\sigma_S,n) \mid S \in [a+d]^a \} \cup \{ f(\sigma_S,g(n,i+1)) \mid S \in [a+d]^a, i < c-1\}
\end{equation*}
and $|U(n)| \leq c \binom{a+d}{a} = c \binom{ca+b}{a}$.
\end{proof}

\begin{proof}[Proof of \cref{greenberg miller theorem 4.9 improved}.]
Let $h(n) = (n+1)2^n$ and let $d = d^h$ be a universal left r.e. supermartingale for $h^\mathbb{N}$. As $d$ is left r.e., uniformly in $\sigma \in h^\ast$ we can simultaneously and uniformly approximate $d(\sigma \concat \langle i\rangle)$ from below for all $i < h(n)$. Thus, there is a total recursive function $\sigma \mapsto m_\sigma$ such that for all $\sigma \in h^\ast$ and $x<2^{|\sigma|}$, $\varphi_{m_\sigma}(x)\downarrow = i$ if and only if $\sigma \concat \langle i\rangle$ is the $x$-th immediate successor $\tau$ of $\sigma$ found with respect to the aforementioned procedure with $d(\tau) > (n+1)!$. 

Let $\#\colon h^\ast \to \mathbb{N}$ be the inverse of the enumeration of $h^\ast$ according to the shortlex ordering. In particular, for $\sigma \in h^n$,
\begin{equation*}
\#(\sigma) \leq |h^0| + |h^1| + \cdots + |h^n| = \sum_{i=0}^n{i!\cdot 2^{i(i-1)/2}}.
\end{equation*}
By potentially modifying our enumeration $\varphi_0,\varphi_1,\varphi_2,\ldots$ of partial recursive functions, we can assume without loss of generality that $m_\sigma = 2\#(\sigma)$. Fix $\epsilon>0$ and let $m_n^\ast = 2^{(1/2+\epsilon)n^2}$, so that
\begin{equation*}
1+\sup\{m_\sigma \mid \sigma \in h^n\} \leq m_n^\ast,
\end{equation*}
which follows from the computations:
\begin{align*}
1 + 2 \cdot \#(\sigma) & \leq 1 + 2\left( |h^0| + |h^1| + \cdots + |h^n| \right) \\
& = 1 + 2\sum_{i=0}^n{\left(i! \cdot 2^{i(i-1)/2}\right)} \\
& \leq n\cdot \left(n! \cdot 2^{n(n-1)/2}\right) \\
& \approx \exp_2(n^2/2 + n\log_2 n + n(\log_2 e - 1/2) + (1/2)\log_2 n + \log_2\sqrt{2\pi} +1) \\
& < 2^{(1/2+\epsilon)n^2}.
\end{align*}

\cref{greenberg miller reduction} shows that, uniformly in $n$, there is a recursive functional $\Psi_n \colon \dnr(n+1) \to P_{h(n)}^{1,2^n}$ and recursive function $U_n \colon \mathbb{N} \to \mathcal{P}_\fin(\mathbb{N})$ such that for any $Z \in \dnr(n+1)$ and $i \in \mathbb{N}$, $Z \restrict U_n(i)$ determines $\Psi_n(Z)(i)$ and $|U_n(i)| \leq 2^n \binom{(n+1)2^n}{n+1}$. 

We are principally interested in initial segments $\rho$ of elements of $P_{h(n)}^{1,2^n}$ of length $m_n^\ast$ (in fact, we are only concerned with the values at the inputs $m_\sigma$ for $\sigma \in h^n$), so that:
\begin{enumerate}[(1)]
\item $\rho(m_\sigma) < h(n) = h(|\sigma|)$.
\item For all $x < 2^n$, if $\varphi_{m_\sigma}(x) \converge$, then $\rho(m_\sigma) \neq \varphi_{m_\sigma}(x)$.
\end{enumerate}
Define $U \colon \mathbb{N} \to \mathcal{P}_\fin(\mathbb{N})$ by $U(n) \coloneq \bigcup_{i < m_n^\ast}{U_n(i)}$ for each $n \in \mathbb{N}$ and subsequently define $\overline{u} \colon \mathbb{N} \to \mathbb{N}$ recursively by
\begin{align*}
\overline{u}(0) & \coloneq 0, \\
\overline{u}(n+1) & \coloneq \overline{u}(n) + |U(n)|.
\end{align*}
Finally, define $\psi \colonsub \mathbb{N} \to \mathbb{N}$ by letting
\begin{equation*}
\psi(\overline{u}(n) + j) \simeq \varphi_{\text{$j$-th element of $U(n)$}}(0)
\end{equation*}
for each $n \in \mathbb{N}$ and $j < |U(n)|$. 
By construction, for any $Z \in \avoid^\psi(n+1)$, $Z \restrict \overline{u}(n+1)$ can be used to compute an initial segment of an element of $P_{h(n)}^{1,2^n}$ of length $m_n^\ast$, and this is uniform in $n$.

If $p\colon\mathbb{N} \to \mathbb{N}$ is an order function satisfying
\begin{equation*}
p(\overline{u}(n+1)) \leq n+1
\end{equation*}
for all $n \in \mathbb{N}$, then uniformly in $n$ and $Z \in \avoid^\psi(p)$, $Z \restrict \overline{u}(n+1)$ can be used to compute an initial segment of an element of $P_{h(n)}^{1,2^n}$ of length $m_n^\ast$. Given $Z \in \avoid^\psi(p)$, define $G \in \baire$ by setting the value of $G(m_\sigma)$ according to this uniform process for each $\sigma \in h^\ast$; for $n$ not of the form $m_\sigma$ (which can be recursively checked) set $G(n) \coloneq 0$. Then define $X \in h^\mathbb{N}$ recursively by
\begin{align*}
X(0) & \coloneq G(m_{\langle\rangle}), \\
X(n+1) & \coloneq G(m_{\langle X(0),X(1),\ldots,X(n)\rangle}).
\end{align*}
In particular, for all $x < 2^n$, if $\varphi_{m_{X \restrict n}}(x) \downarrow = i$ (which is equivalent to $X \restrict n \concat \langle i\rangle$ being the $x$-th immediate successor of $X\restrict n$ found such that $d(X\restrict n \concat \langle i \rangle) \geq (n+1)!$) then $X(n+1) = G(m_{X\restrict n}) \neq i$. We make the following observation: if $d(\sigma) \leq n!$ for some $\sigma \in h^n$, then there at most $2^n$ many immediate successors $\tau$ of $\sigma$ such that $d(\tau) \geq (n+1)!$ since
\begin{equation*}
n! \geq d(\sigma) \geq \frac{1}{h(n)}\sum_{k < h(n)}{d(\sigma\concat\langle k\rangle)} = \frac{1}{(n+1)2^n}\sum_{k < h(n)}{d(\sigma\concat\langle k\rangle)}.
\end{equation*}
Thus, by induction on $n$ we find that $d(X \restrict n) \leq n!$ for all $n \in \mathbb{N}$. \cref{improved greenberg and miller's randomness conclusion} then implies that $Y = \bin(\pi^h(X)) \in \cantor$ is $f$-random.

It remains to finish the analysis of $q$. Suppose $q \colon \mathbb{N} \to \mathbb{N}$ is an order function such that for each $a,b \in \mathbb{N}$ we have $q(an+b) \leq p(n)$ for almost all $n$. Then $\avoid^\psi(p) \weakleq \ldnr(q)$. So if $q$ is such that for all $a,b \in \mathbb{N}$ we have $q(a\overline{u}(n+1)+b) \leq n+1$ for almost all $n \in \mathbb{N}$, then $\complex(f) \weakleq \ldnr(q)$. We show that if $q(2^{(3/2+\epsilon)n^2}) \leq n+1$ for almost all $n \in \mathbb{N}$, then $q$ satisfies this aforementioned condition.

First, we must find an upper bound of $\overline{u}$. Recall that $|U_n(i)| \leq 2^n\binom{(n+1)2^n}{n+1}$ for all $n,i \in \mathbb{N}$, so
\begin{align*}
|U(n)| & \leq \sum_{i < m_n^\ast}{|U_n(i)|} \\
& \leq \sum_{i < m_n^\ast}{2^n \cdot \binom{(n+1)2^n}{n+1}} \\
& = m_n^\ast \cdot 2^n \cdot \binom{(n+1)2^n}{n+1} \\
& \leq 2^{(1/2+\epsilon)n^2}\cdot \binom{(n+1) \cdot 2^n}{n+1} \\
& \leq 2^{(1/2+\epsilon)n^2}\cdot \exp_2(n^2+(1+\log_2 e)n + \log_2 e) \\
& = \exp_2((3/2+\epsilon)n^2+(1+\log_2 e)n+\log_2 e).
\end{align*}
By slightly increasing $\epsilon$, we find $|U(n)| \leq 2^{(3/2+\epsilon)n^2}$. 
Next, $\overline{u}(n+1) = \sum_{m \leq n}{|U(m)|} \leq (n+1)|U(n)|$, so $a \cdot \overline{u}(n+1)+b \leq a (n+1)|U(n)| + b \leq 3a \cdot n \cdot |U(n)|$ for all sufficiently large $n$. As before, a slight increase in $\epsilon$ allows us to absorb the $3an$ term, so that 
\begin{equation*}
a\overline{u}(n+1) + b \leq 2^{(3/2+\epsilon)n^2}.
\end{equation*}

Thus, if $q(2^{(3/2+\epsilon)n^2}) \leq n+1$, then 
\begin{equation*}
q(a\overline{u}(n+1)+b) \leq q(2^{(3/2+\epsilon)n^2}) \leq n+1.
\end{equation*}

If $\alpha$ is a positive rational less than $1/2$ then for almost all $n$ we have $(\log_2(2^{(3/2+\epsilon)n^2}))^\alpha \leq n+1$, proving the `in particular' statement.
\end{proof}

\begin{remark}
We assumed without loss of generality that $m_\sigma = 2 \cdot \#(\sigma)$ by choosing an appropriate enumeration of the partial recursive functions. In general, if $\theta \colon \mathbb{N} \to \mathbb{N}$ is a total recursive, injective function with recursive coinfinite image (as in the case of $n \mapsto 2n$), then for any total recursive function $g \colon \mathbb{N} \to \mathbb{N}$ there is an admissible enumeration $\tilde{\varphi}_0,\tilde{\varphi}_1,\ldots$ such that $\tilde{\varphi}_{\theta(e)} \simeq \varphi_{g(e)}$ for all $e \in \mathbb{N}$.

\end{remark}

\section{Quantifying the Reduction of Avoidance to Complexity -- General Case} \label{quantifying greenberg miller proof general}

Within the proof of \cref{greenberg miller theorem 4.9 improved}, how much does our result depend on the particular choice of $h$? 
\begin{description}
\item[Use I.] For $\sigma \in h^\ast$, we defined $m_\sigma = 2\#(\sigma)$ (where $\#(\sigma)$ is the enumeration of $h^\ast$ according to the shortlex ordering) and assumed without loss of genearlity that $\varphi_{m_\sigma}(x)\downarrow = k$ if and only if $\sigma\concat \langle k\rangle$ is the $x$-th immediate successor $\tau$ of $\sigma$ such that $d(\tau) \geq (n+1)!$.

\item[Use II.] The reduction $P_{h(n)}^{1,2^n} \strongleq \dnr(n+1)$, uniform in $n$, is used to define the recursive function $\overline{u}\colon\mathbb{N} \to \mathbb{N}$ and the partial recursive function $\psi\colon\mathbb{N} \to \mathbb{N}$ such that, uniformly in $n$, $Z \restrict \overline{u}(n+1)$ can be used to compute an initial segment of an element of $P_{h(n)}^{1,2^n}$ of length $m_n^\ast$ given any $Z \in \avoid^\psi(n+1)$.

\item[Use III.] For $p\colon\mathbb{N} \to (1,\infty)$ a recursive order function satisfying $p(\overline{u}(n+1)) \leq n+1$, a $Z \in \avoid^\psi(p)$ computes an $X \in h^\mathbb{N}$ such that $d(X\restrict n) \leq n!$ for all $n$, based on the observation that if $d(\sigma) \leq n!$ for some $\sigma \in h^n$, then there are at most $2^n$ many immediate successors $\tau$ of $\sigma$ such that $d(\tau) \geq (n+1)!$. 

\item[Use IV.] Using the fact that $d(X\restrict n) \leq n!$ for all $n$, show that $d(X\restrict n)\mu^h(X\restrict n)^{1-g(n)/n}$ is bounded above for any $g$ of the form $g(n) = n - \beta\log_2 n$ for $\beta > 2$. Hence, $X \in h^\mathbb{N}$ is (strongly) $g$-random in $h^\mathbb{N}$.

\item[Use V.] Because $\lim_{n \to \infty}{\frac{\log_2 |h^n|}{\log_2 |h^{n-1}|}} = 1$ and for each $\alpha > 1$ there are $\beta > 2$ and $\epsilon > 0$ such that $\beta \log_2 n \leq (1-\epsilon) \left(\alpha\sqrt{n \cdot s(n)} \log_2 (n \cdot s(n))\right)\cdot s(n)^{-1}$ for almost all $n$ (where $2^{n\cdot s(n)} = |h^n|$ for all $n \in \mathbb{N}$), it follows that $Y=\bin(\pi^h(X))$ is $(\lambda n.n-\alpha\sqrt{n} \log_2 n)$-random in $\cantor$.
\end{description}

By analyzing the necessary and sufficient conditions for an $h$ of the form $h(n) = k(n)\cdot \ell(n)$ to satisfy the properties corresponding to each Use and subsequently choosing $k$ and $\ell$ satisfying those conditions with specific $f$ and $g$ in mind, we can prove the following technical result:

\begin{thm} \label{complex below ldnr}
Suppose $j \colon \mathbb{N} \to (1,\infty)$ is an order function such that $\lim_{n \to \infty}{j(n)/n} = 0$ and for which the function $f \colon \mathbb{N} \to (1,\infty)$ defined by $f(n) \coloneq n-j(n)$ for $n \in \mathbb{N}$ is an order function. Given $s \colon \mathbb{N} \to \co{1,\infty}$ and a rational $\epsilon > 0$, define 
\begin{align*}
\tilde{j}(n) & \coloneq (1-\epsilon)\frac{j(s(n)\cdot n)}{s(n)}, \\
\ell(n) & \coloneq \exp_2(j(s(n+1)\cdot (n+1))-j(s(n)\cdot n))^{1-\epsilon}, \\
h(n) & \coloneq \exp_2(s(n+1)\cdot (n+1) - s(n)\cdot n).
\end{align*} 
If there are $s\colon \mathbb{N} \to [1,\infty)$ and rational $\epsilon > 0$ such that \begin{enumerate*}[(i)] \item $\im h \subseteq \mathbb{N}$, \item $\lim_{n \to \infty}{s(n+1)/s(n)} = 1$, and \item $\tilde{j}$ is an order function, \end{enumerate*} then $\complex(f) \weakleq \ldnr(q)$ for any order function $q \colon \mathbb{N} \to \co{0,\infty}$ such that, for almost all $n \in \mathbb{N}$,
\begin{equation*}
q\left( \exp_2((1-\epsilon)^{-1} \cdot [s(n+1) \cdot (n+1) - j(s(n+1) \cdot (n+1))] \cdot \ell(n)) \right) \leq \ell(n).
\end{equation*}
\end{thm}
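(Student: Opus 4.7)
The plan is to mirror the five-stage strategy used in the proof of \cref{greenberg miller theorem 4.9 improved}, replacing the specific choice $h(n) = (n+1)\cdot 2^n$ with the general $h(n) = \exp_2(s(n+1)(n+1) - s(n)\cdot n)$, so that $|h^n| = 2^{s(n)\cdot n}$. The roles previously played by $2^n$ and by the supermartingale threshold $(n+1)!$ are now taken by $\ell(n)$ and by $T(n) \coloneq 2^{(1-\epsilon)\,j(s(n)\cdot n)}$, which by construction satisfies $T(n+1)/T(n) = \ell(n)$.

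First I fix a universal left r.e.\ supermartingale $d = d^h$ on $h^\ast$ and, exactly as in \cref{greenberg miller theorem 4.9 improved}, re-index $\varphi_\bullet$ so that for each $\sigma \in h^n$ the index $m_\sigma = 2\#(\sigma)$ enumerates those immediate successors $\tau$ of $\sigma$ with $d(\tau) \geq T(n+1)$. The supermartingale inequality gives $|\{\tau \succ \sigma : d(\tau) \geq T(n+1)\}| \leq h(n)\,T(n)/T(n+1) = h(n)/\ell(n)$ whenever $d(\sigma) \leq T(n)$, so at each stage we must dodge at most $\lceil h(n)/\ell(n)\rceil$ bad children out of $h(n)$. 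Invoking \cref{greenberg miller reduction} with $a = \lfloor\ell(n)\rfloor$, $b = 0$, $c = \lceil h(n)/\ell(n)\rceil$ yields uniformly in $n$ a reduction $P_{h(n)}^{1,\lceil h(n)/\ell(n)\rceil} \strongleq \dnr(\lfloor\ell(n)\rfloor)$, and packaging these as before produces a partial recursive $\psi$ and a recursive function $\overline u$ such that any $Z \in \avoid^\psi(q)$ with $q(\overline u(n+1)) \leq \ell(n)$ computes an $X \in h^\mathbb{N}$ with $d(X \restrict n) \leq T(n)$ for every $n$.

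The randomness check reduces to the one-line computation
\[
d(X \restrict n) \cdot \mu(X \restrict n)^{\tilde{j}(n)/n} \leq T(n) \cdot 2^{-s(n)\tilde{j}(n)} = 2^{(1-\epsilon)\,j(s(n)\cdot n) - (1-\epsilon)\,j(s(n)\cdot n)} = 1,
\]
so by \cref{equivalent characterizations of strong f-randomness} $X$ is strongly $g$-random in $h^\mathbb{N}$ for $g(n) = n - \tilde{j}(n)$. Conditions (i)--(iii) are precisely what is needed for \cref{translation of randomness from hN to cantor space} to apply with its ``$g$'', ``$f$'', and ``$s$'' equal to ours, yielding that $\pi^h(X)$ is $f$-random in $[0,1]$; \cref{randomness in cantor space vs unit interval} then transports the conclusion to $\cantor$, giving $\bin(\pi^h(X)) \in \complex(f)$.

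The main obstacle will be the quantitative bookkeeping matching $\overline u(n+1)$ against the hypothesis on $q$. Using $|U(n)| \leq (n+1)\cdot m_n^\ast \cdot (h(n)/\ell(n))\binom{h(n)}{\ell(n)}$ together with $\binom{h(n)}{\ell(n)} \leq (e\,h(n)/\ell(n))^{\ell(n)}$ and $\log_2 m_n^\ast \leq s(n)\cdot n + O(\log n)$, the dominant term of $\log_2 \overline u(n+1)$ is
\[
\ell(n)\log_2(h(n)/\ell(n)) = \ell(n)\bigl(s(n+1)(n+1) - s(n)\cdot n - (1-\epsilon)\bigl(j(s(n+1)(n+1)) - j(s(n)\cdot n)\bigr)\bigr),
\]
and because $j(m)/m \to 0$ the factor $(1-\epsilon)^{-1}$ in the hypothesis bound $(1-\epsilon)^{-1}(s(n+1)(n+1) - j(s(n+1)(n+1)))\,\ell(n)$ contributes a linear-in-$s(n+1)(n+1)$ surplus that absorbs both the $O(s(n+1)(n+1))$ lower-order corrections and the $O(\ell(n))$ Stirling error for almost all $n$, delivering $q(\overline u(n+1)) \leq \ell(n)$.
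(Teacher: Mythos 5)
Your proposal is correct and follows essentially the same route as the paper: the same generalization of the $h(n)=(n+1)2^n$ argument with $|h^n|=2^{s(n)\cdot n}$, the same supermartingale thresholds ($T(n)=2^{(1-\epsilon)j(s(n)\cdot n)}$ is exactly the paper's $L(n)$, and your bad-child count $h(n)/\ell(n)$ is its $k(n)$), the same invocation of \cref{greenberg miller reduction}, the same randomness verification via \cref{equivalent characterizations of strong f-randomness} and transfer via \cref{translation of randomness from hN to cantor space}, and the same dominant-term estimate $\ell(n)\log_2 k(n)$ absorbed by the $(1-\epsilon)^{-1}$ surplus using $j(m)/m\to 0$. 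The only detail worth making explicit is that $\ell$ dominates an order function (so a recursive order function $p$ with $p(\overline u(n+1))\leq\ell(n)$ exists, and the passage from $\avoid^\psi(p)$ to $\ldnr(q)$ costs an extra linear reparametrization $a\overline u(n+1)+b$, which your absorbed lower-order corrections do cover).
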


For the remainder of this subsection, we use the following notation:

\begin{notation}
$k$ and $\ell$ denote recursive functions $\mathbb{N} \to \co{1,\infty}$. $h$, $K$, $L$, and $H$ are defined by, for $n \in \mathbb{N}$, $h(n) \coloneq k(n) \cdot \ell(n)$ and
\begin{equation*}
H(n) \coloneq h(0) \cdot h(1) \mdots h(n-1), \quad K(n) \coloneq k(0) \cdot k(1) \mdots k(n-1), \quad L(n) \coloneq \ell(0) \cdot \ell(1) \mdots \ell(n-1).
\end{equation*}
\end{notation}

It is most convenient for $h$ to take image in $\mathbb{N}$ so that $|h^n| = H(n)$, so we make the following conventions:

\begin{convention}
$k$ and $\ell$ will always be such that $h$ is an order function of the form $\mathbb{N} \to \mathbb{N}$. As a result, $|h^n| = H(n) = K(n) \cdot L(n)$.

Additionally, if $\alpha \in (1,\infty)$, then $\dnr(\alpha) = \{ X \in \baire \mid \forall n \qspace (X(n) < \alpha \wedge X(n) \nsimeq \varphi_n(n))\} = \dnr(\lceil \alpha \rceil)$. 
\end{convention}

\begin{notation}
$d^h$ is a fixed universal left r.e. supermartingale on $h^\mathbb{N}$.
\end{notation}

Among the explicit uses of $h$ in the proof of \cref{greenberg miller theorem 4.9 improved}, Uses I, II, and III only depended on writing $h$ in the form $h(n) = k(n) \cdot \ell(n) = 2^n \cdot (n+1)$ and observing that if $d^h(\sigma) \leq n!$ for some $\sigma \in h^n$, then there are at most $2^n$ many immediate successors $\tau$ of $\sigma$ such that $d^h(\tau) \geq (n+1)!$. This last observation holds in general:

\begin{prop} \label{general h bound on simple immediate extensions}
For all $n \in \mathbb{N}$ and all $\sigma \in h^n$ such that $d^h(\sigma) \leq L(n)$, there are at most $k(n)$-many immediate extensions $\tau$ of $\sigma$ such that $d^h(\tau) > L(n+1)$.
\end{prop}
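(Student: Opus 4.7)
The plan is to deduce the bound directly from the supermartingale inequality, which for $\sigma \in h^n$ reads
\begin{equation*}
\sum_{i < h(n)}{d^h(\sigma \concat \langle i \rangle)} \;\leq\; h(n) \cdot d^h(\sigma).
\end{equation*}
First I would chain this with the hypothesis $d^h(\sigma) \leq L(n)$ to get
\begin{equation*}
\sum_{i < h(n)}{d^h(\sigma \concat \langle i \rangle)} \;\leq\; h(n) \cdot L(n) \;=\; k(n) \cdot \ell(n) \cdot L(n) \;=\; k(n) \cdot L(n+1),
\end{equation*}
using the definitions $h(n) = k(n) \cdot \ell(n)$ and $L(n+1) = L(n) \cdot \ell(n)$.

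Next I would argue by contradiction: suppose there were at least $k(n) + 1$ immediate extensions $\tau$ of $\sigma$ with $d^h(\tau) > L(n+1)$. Since each summand $d^h(\sigma \concat \langle i \rangle)$ is nonnegative and the supermartingale takes values in $[0,\infty)$, discarding all but these distinguished extensions gives
\begin{equation*}
\sum_{i < h(n)}{d^h(\sigma \concat \langle i \rangle)} \;>\; (k(n) + 1) \cdot L(n+1) \;>\; k(n) \cdot L(n+1),
\end{equation*}
contradicting the previous display. Hence the number of such immediate extensions is at most $k(n)$.

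There is essentially no obstacle; the only care needed is in verifying the arithmetic identity $h(n) \cdot L(n) = k(n) \cdot L(n+1)$ and ensuring the supermartingale inequality from the definition in the paper is applied with the correct normalization (the factor $h(|\sigma|)$ on the right-hand side). Both are immediate from the definitions in Section on supermartingales over $h^\ast$, so the proof reduces to the two short displays above.
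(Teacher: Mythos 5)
Your proof is correct and is essentially the paper's own argument: assume too many heavy immediate extensions, sum them, and contradict the supermartingale inequality $\sum_{i<h(n)}d^h(\sigma\concat\langle i\rangle)\leq h(n)\,d^h(\sigma)$ via the identity $h(n)\cdot L(n)=k(n)\cdot\ell(n)\cdot L(n)=k(n)\cdot L(n+1)$ and the hypothesis $d^h(\sigma)\leq L(n)$. One cosmetic caveat: since $k(n)$ need not be an integer in the general setting, the contradiction hypothesis should be that there are $m>k(n)$ such extensions (rather than at least $k(n)+1$), which still yields $\sum_{i<h(n)}d^h(\sigma\concat\langle i\rangle)>m\cdot L(n+1)>k(n)\cdot L(n+1)$ because $L(n+1)\geq 1$.
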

\begin{proof}
Suppose for the sake of a contradiction that there are more than $k(n)$ immediate extensions $\tau$ of $\sigma$ such that $d^h(\tau) > L(n+1)$. Thus,
\begin{equation*}
\sum_{i < h(n)}{d^h(\sigma\concat \langle i\rangle)} > k(n) \cdot L(n+1) = h(n) \cdot L(n) \geq h(n) \cdot d^h(\sigma)
\end{equation*}
which contradicts the fact that $d^h$ is a supermartingale.
\end{proof}

Now we turn our attention to Use IV.

\begin{prop} \label{general h randomness criterion}
Let $g\colon \mathbb{N} \to \co{0,\infty}$ be an order function. Then $X \in h^\mathbb{N}$ is $g$-random in $h^\mathbb{N}$ if the following two conditions hold:
\begin{enumerate}[(i)]
\item $L(n) \leq K(n)^{\frac{n-g(n)}{g(n)}}$ for almost all $n$.
\item $d^h(X \restrict n) \leq L(n)$ for almost all $n$.
\end{enumerate}
\end{prop}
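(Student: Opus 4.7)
The plan is to deduce $g$-randomness of $X$ from the (stronger) fact that $d^h$ does not $g$-succeed on $X$. Since every $f$-ML test is a weak $f$-ML test (as $\pwt_f \leq \dwt_f$), strong $g$-randomness in $h^{\mathbb{N}}$ implies $g$-randomness; and by the equivalence $(i) \iff (iii)$ in \cref{equivalent characterizations of strong f-randomness}, strong $g$-randomness of $X$ is equivalent to $d^h$ not $g$-succeeding on $X$, i.e.
\[
\limsup_{n \to \infty}\bigl( d^h(X \restrict n) \cdot \gamma(X \restrict n)^{n - g(n)}\bigr) < \infty.
\]
So everything reduces to estimating this expression under hypotheses (i) and (ii).

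The main step will be a direct algebraic manipulation using the fact that $\gamma(X \restrict n) = \mu_h(X \restrict n)^{1/n} = H(n)^{-1/n} = (K(n)L(n))^{-1/n}$. Applying hypothesis (ii), for almost all $n$ I get
\[
d^h(X \restrict n) \cdot \gamma(X \restrict n)^{n - g(n)} \;\leq\; L(n) \cdot (K(n) L(n))^{-(n-g(n))/n} \;=\; L(n)^{g(n)/n} \cdot K(n)^{-(n-g(n))/n}.
\]
Raising hypothesis (i), $L(n) \leq K(n)^{(n-g(n))/g(n)}$, to the positive power $g(n)/n$ gives $L(n)^{g(n)/n} \leq K(n)^{(n-g(n))/n}$, so the product above is bounded by $1$ for almost all $n$. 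In particular the limsup is finite.

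There is no real obstacle here beyond bookkeeping: the two hypotheses were engineered so that the two exponents on $K(n)$ cancel exactly. The only minor caveat is that (i) should be read as a condition relating sizes whether or not the fraction $(n-g(n))/g(n)$ is large or small; since $g$ is an order function the inequality is meaningful, and the chain of implications above is valid for all sufficiently large $n$ (where both (i) and (ii) hold). Once the inequality above is established, the passage strong $g$-random $\Rightarrow$ $g$-random via \cref{equivalent characterizations of strong f-randomness}(iii) and the inclusion of $f$-ML tests into weak $f$-ML tests completes the proof.
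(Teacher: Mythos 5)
Your proof is correct and follows essentially the same route as the paper: reduce to showing $d^h$ does not $g$-succeed on $X$, then observe that hypothesis (i) raised to the power $g(n)/n$ exactly cancels the bound from hypothesis (ii) in the expression $d^h(X\restrict n)\cdot|h^n|^{g(n)/n-1}$. The only difference is cosmetic — you make explicit the passage from ``$d^h$ does not $g$-succeed'' to ``$g$-random'' via strong $g$-randomness and the inclusion of $f$-ML tests among weak $f$-ML tests, which the paper leaves implicit.
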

\begin{proof}
It suffices to show that $d^h$ does not $g$-succeed on $X$, i.e., that $\limsup_n{d^h(X\restrict n)\cdot |h^n|^{\frac{g(n)}{n}-1}} < \infty$. 
For all sufficiently large $n$,
\begin{align*}
L(n) \leq K(n)^{\frac{n}{g(n)}-1} & \iff L(n) \leq K(n)^{\frac{1-g(n)/n}{g(n)/n}} \\
& \iff L(n)^{g(n)/n} \leq K(n)^{1-g(n)/n} \\
& \iff \frac{L(n)^{g(n)/n}}{K(n)^{1-g(n)/n}} \leq 1 \\
& \iff \frac{L(n)^{\frac{g(n)}{n}-1+1}}{K(n)^{-(\frac{g(n)}{n}-1)}} \leq 1 \\
& \iff L(n) (K(n)L(n))^{\frac{g(n)}{n}-1} \leq 1 \\
& \iff L(n) |h^n|^{\frac{g(n)}{n}-1} \leq 1 \\
& \implies d^h(X\restrict n) \cdot |h^n|^{\frac{g(n)}{n}-1} \leq 1.
\end{align*}
Thus, $\limsup_n{d^h(X\restrict n)\cdot |h^n|^{\frac{g(n)}{n}-1}} < \infty$, as desired. 
\end{proof}

Given $f(n) = n - j(n)$, \cref{translation of randomness from hN to cantor space} suggests we find a $g$ of the form $g(n) = n - (1-\epsilon) \frac{j(s(n) \cdot n)}{s(n)}$ for some $\epsilon>0$, where $|h^n| = H(n) = 2^{s(n) \cdot n}$. \cref{general h randomness criterion} suggests that if we wish for the $X \in h^\mathbb{N}$ we construct to be (strongly) $g$-random, then we might as well start with $K$ and define $L$ by
\begin{equation*}
L(n) \coloneq K(n)^{\frac{n-g(n)}{g(n)}}.
\end{equation*}
$k$ and $\ell$ are then defined by $k(n) \coloneq K(n+1)/K(n)$ and $\ell(n) \coloneq L(n+1)/L(n)$ for $n \in \mathbb{N}$. Note that $H(n) = K(n)L(n) = K(n)^{n/g(n)}$ for all $n \in \mathbb{N}$. 




\begin{lem} \label{difference with n is unbounded}
If $t \colon \mathbb{N} \to \co{0,\infty}$ is an order function, then the function $\colon \mathbb{N} \to \co{0,\infty}$ defined by $r(n) \coloneq (n+1) \cdot t(n+1) - n \cdot t(n)$ for $n \in \mathbb{N}$ is a recursive function which dominates an order function.
\end{lem}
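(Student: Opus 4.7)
The plan is to verify the two assertions separately. Recursiveness of $r$ is immediate: $t$ is recursive by hypothesis (being an order function), and the collection of recursive real-valued functions $\mathbb{N}\to\mathbb{R}$ is closed under pointwise sum, difference, and multiplication by recursive functions (as recorded in the basic facts about computable real-valued functions of a discrete variable). In particular, $\lambda n.(n+1)t(n+1) - n\cdot t(n)$ is recursive.

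For the domination claim, the plan is to find a transparent algebraic rewriting of $r$ that exposes a trivial lower bound by a shift of $t$. Specifically, I would begin by observing the identity
\begin{equation*}
r(n) = (n+1)t(n+1) - n\cdot t(n) = n\bigl(t(n+1) - t(n)\bigr) + t(n+1).
\end{equation*}
Since $t$ is nondecreasing, the factor $t(n+1)-t(n)$ is nonnegative, and of course $n\geq 0$, so the first summand is nonnegative and we obtain
\begin{equation*}
r(n) \;\geq\; t(n+1) \qquad \text{for every } n\in\mathbb{N}.
\end{equation*}
This simultaneously confirms that $r$ actually takes values in $\co{0,\infty}$ (since $t(n+1)\geq 0$).

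Next I would set $u \coloneq \lambda n.\, t(n+1)$. This is a recursive function, it is nondecreasing because $t$ is, and it is unbounded because $t$ is; hence $u$ is an order function. The pointwise inequality $u(n) \leq r(n)$ for all $n\in\mathbb{N}$ gives $u \domleq r$, so $r$ dominates the order function $u$, completing the proof. I do not anticipate any real obstacle: the only point of substance is recognizing the rewriting $r(n) = n(t(n+1)-t(n)) + t(n+1)$, after which monotonicity of $t$ delivers the result immediately.
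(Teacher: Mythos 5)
Your proof is correct and is essentially identical to the paper's: both use the rewriting $r(n) = n\bigl(t(n+1)-t(n)\bigr) + t(n+1)$, invoke monotonicity of $t$ to get $r(n) \geq t(n+1)$, and conclude that the shifted order function $\lambda n.\,t(n+1)$ is dominated by $r$. No issues.
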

\begin{proof}
That $r$ is recursive is immediate. For all $n \in \mathbb{N}$,
\begin{equation*}
r(n) = (n+1) \cdot t(n+1) - n \cdot t(n) = n \cdot (t(n+1) - t(n)) + t(n+1).
\end{equation*}
Since $t$ is nondecreasing, $n \cdot (t(n+1) - t(n)) \geq 0$, and so $t(n+1) \leq r(n)$ for all $n \in \mathbb{N}$. The function $\tilde{t} \colon \mathbb{N} \to \co{0,\infty}$ defined by $\tilde{t}(n) \coloneq t(n+1)$ for $n \in \mathbb{N}$ is an order function such that $\tilde{t} \domleq r$.
\end{proof}


\begin{prop} \label{construction of h given j}
Let $j$, $f$, $s$, $\epsilon$, and $\tilde{j}$ satisfy the conditions of \cref{complex below ldnr}. Let $g(n) \coloneq n - \tilde{j}(n)$ and $K(n) = 2^{s(n) \cdot g(n)}$.
\begin{enumerate}[(a)]
\item For all $n \in \mathbb{N}$, $k(n), \ell(n), h(n) \geq 1$.
\item $\ell$ and $h$ dominate order functions.
\item If $X \in h^\mathbb{N}$ is such that $d^h(X \restrict n) \leq L(n)$ for almost all $n$, then $X$ is strongly $g$-random in $h^\mathbb{N}$.
\item If $X \in h^\mathbb{N}$ is $g$-random in $h^\mathbb{N}$, then $\pi^h(X) \in [0,1]$ is $f$-random in $[0,1]$.
\end{enumerate}
\end{prop}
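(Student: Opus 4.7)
The plan is to verify the four parts in turn, starting with the two that are essentially unpackings of definitions. For (a), hypothesis (i) together with the formula $h(n) = 2^{s(n+1)(n+1) - s(n) n}$ forces $s(n+1)(n+1) - s(n) n$ to be a nonnegative integer, so $h(n) \geq 1$ and the sequence $u(n) \coloneq s(n) n$ is nondecreasing. Since $j$ is nondecreasing, I immediately get $\ell(n) = 2^{(1-\epsilon)(j(u(n+1)) - j(u(n)))} \geq 1$. For $k(n) = h(n)/\ell(n)$, I would note that $f(n) = n - j(n)$ being an order function forces $j$ to have unit-step increments bounded by $1$, so its piecewise-linear extension to $[0,\infty)$ is $1$-Lipschitz and $j(u(n+1)) - j(u(n)) \leq u(n+1) - u(n)$, giving $\ell(n) \leq h(n)$ and hence $k(n) \geq 1$. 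For (d), I would simply invoke \cref{translation of randomness from hN to cantor space}: with $g(n) = n - (1-\epsilon) j(n s(n))/s(n)$ and $f(n) = n - j(n)$ matching the corollary's assumptions verbatim, and hypothesis (ii) providing the required limiting condition on $s$, the condition $(\ast)(g,f)$ holds and $g$-randomness of $X$ in $h^\mathbb{N}$ transfers to $f$-randomness of $\pi^h(X)$ in $[0,1]$.

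The substantive calculation lies in (c). I would reduce strong $g$-randomness to the non-$g$-success of the universal supermartingale $d^h$ via \cref{equivalent characterizations of strong f-randomness}, and then directly estimate $d^h(X\restrict n) \cdot \gamma(X \restrict n)^{n - g(n)}$. Noting $\gamma(X\restrict n)^n = H(n)^{-1}$, and combining $\log_2 K(n) = s(n) g(n) - s(0) g(0)$ with $\log_2 L(n) = s(n) \tilde{j}(n) - s(0) \tilde{j}(0)$ (the latter using $J(n) \coloneq j(u(n)) = s(n) \tilde{j}(n)/(1-\epsilon)$), I would obtain the clean identity $\log_2 H(n) = s(n) n$. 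Then the hypothesis $d^h(X\restrict n) \leq L(n)$ together with $H(n)^{\tilde{j}(n)/n} = 2^{s(n)\tilde{j}(n)}$ yields
\begin{equation*}
d^h(X\restrict n) \cdot \gamma(X\restrict n)^{n-g(n)} \leq L(n) \cdot H(n)^{-\tilde{j}(n)/n} = 2^{-s(0)\tilde{j}(0)},
\end{equation*}
a constant independent of $n$, whence $\limsup < \infty$ and $X$ is strongly $g$-random.

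The hard part will be (b): the claim that $\ell$ and $h$ dominate order functions is delicate because neither $s$ nor $u$ is directly assumed to be an order function. From hypothesis (iii), $\tilde{j} = (1-\epsilon)(j\circ u)/s$ being unbounded together with $s \geq 1$ forces $j \circ u$, and hence $u$ itself, to be unbounded; combined with $\log_2 h(n) = u(n+1) - u(n)$ taking values in $\log_2 \mathbb{N}_{\geq 1}$, I would apply a \cref{difference with n is unbounded}-style telescoping argument to the nondecreasing unbounded sequence $n \mapsto s(n) \tilde{j}(n) = (1-\epsilon) J(n)$ to extract an order-function lower bound on $\log_2 \ell(n) = (1-\epsilon)(J(n+1)-J(n))$, and analogously to get one for $\log_2 h(n)$. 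Making this rigorous will require leveraging condition (ii) to rule out pathological oscillations of $s$ that would keep $h$ or $\ell$ eventually bounded.
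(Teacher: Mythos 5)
Parts (a), (c), and (d) of your proposal are fine and essentially follow the paper's route: your (d) is the same one-line appeal to \cref{translation of randomness from hN to cantor space} via condition (ii); your (c) just redoes by hand the computation that the paper delegates to \cref{general h randomness criterion} (here $L(n)=K(n)^{(n-g(n))/g(n)}$ holds exactly, so $d^h(X\restrict n)\gamma(X\restrict n)^{n-g(n)}$ is bounded) and then invokes \cref{equivalent characterizations of strong f-randomness}; and your (a), which extracts the monotonicity of $u(n)=s(n)\cdot n$ from condition (i) and gets $k(n)\ge 1$ from the $1$-Lipschitzness of the extension of $j$ (i.e.\ from $f$ being an order function), is if anything more carefully justified than the paper's bare appeal to monotonicity of $s$, $g$, $\tilde j$.

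The genuine gap is (b), which you leave as a sketch with an acknowledged missing step, and the step you propose to fill it with would fail. Monotonicity and unboundedness of $J(n)=j(u(n))$ give no control on its increments: successive differences of a nondecreasing unbounded sequence can have $\liminf$ equal to $0$ (think $\sqrt{n}$), so no order-function lower bound on $\log_2\ell(n)=(1-\epsilon)(J(n+1)-J(n))$ can come from that alone. Condition (ii) is also the wrong lever: a constant function such as $s\equiv 1$ satisfies (i), (ii), and (iii) (then $\tilde j=(1-\epsilon)j$ is an order function), yet it makes $h\equiv 2$ and gives $\liminf_n \ell(n)=1$, since $\lim_n j(n)/n=0$ forces $\liminf_n (j(n+1)-j(n))=0$; so no amount of "ruling out oscillations via (ii)" produces the claimed order functions. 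What the paper actually does is apply \cref{difference with n is unbounded}: write $\log_2 h(n)=(n+1)s(n+1)-n\,s(n)$ and, for $n\ge 1$, $\log_2\ell(n)=(n+1)t(n+1)-n\,t(n)$ with $t(n)=s(n)\tilde j(n)/n$, so that the lemma yields $\log_2 h(n)\ge s(n+1)$ and $\log_2\ell(n)\ge t(n+1)$. The entire mechanism is the factor $n$ multiplying an order function, not telescoping of a monotone unbounded sequence, and it requires $s$ and $t$ to be order functions — which is exactly what holds in the intended applications (in \cref{sqrt complex from ldnr}, $s=\id_\mathbb{N}$ and $t(n)=(1-\epsilon)\Delta(n^2)$) but, as the example $s\equiv 1$ shows, is not literally forced by (i)--(iii); the paper's one-line citation is tacitly leaning on these properties of the chosen $s$. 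So the missing ingredient in your write-up is the identification of the correct functions $t$ to feed into \cref{difference with n is unbounded} together with the (implicit) hypothesis that they are order functions; pursuing condition (ii) will not close part (b).
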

\begin{proof} 
The functions $K,L,H,k,\ell,h$ can all be written as powers of two whose exponents involve $s$, $g$, and $\tilde{j}$:
\begin{align*}
K(n) & = 2^{s(n) \cdot g(n)}, & k(n) & = \exp_2(s(n+1) \cdot g(n+1) - s(n) \cdot g(n)),\\ 
L(n) & = 2^{s(n) \cdot \tilde{j}(n)}, & \ell(n) & = \exp_2(s(n+1) \cdot \tilde{j}(n+1) - s(n) \cdot \tilde{j}(n)), \\
H(n) & = 2^{s(n) \cdot n}, & h(n) & = \exp_2(s(n+1) \cdot (n+1) - s(n) \cdot n).
\end{align*}
Condition (iii) of \cref{complex below ldnr} implies $\tilde{j}$ is an order function. The hypothesis that $j(n) \leq n$ for all $n \in \mathbb{N}$ implies $\tilde{j}(n) \leq n$ for all $n \in \mathbb{N}$ as well, so $g$ is a nondecreasing function such that $g(n) \leq n$ for all $n \in \mathbb{N}$. Condition (i) implies $\im h \subseteq \mathbb{N}$.

\begin{enumerate}[(a)]
\item That $s$, $g$, and $\tilde{j}$ are all nondecreasing implies that $k$, $\ell$, and $h$ are bounded below by $1$.

\item \cref{difference with n is unbounded} shows that $h$ and $\ell$ each dominate order functions.

\item This is simply \cref{general h randomness criterion}.

\item Using Condition (ii) of \cref{complex below ldnr}, this is simply \cref{translation of randomness from hN to cantor space}.

\end{enumerate}
\end{proof}

It remains to generalize the construction of $X$ in \cref{greenberg miller theorem 4.9 improved} and establish our bounds on $q$ as a function of $j$.

\begin{proof}[Proof of \cref{complex below ldnr}.]
Fix a rational $\epsilon>0$ and observe that fulfillment of the conditions on $s$, $j$, and $\epsilon$ do not depend the value of $\epsilon$.

$d^h$ is left r.e., so uniformly in $\sigma \in h^\ast$ we can simultaneously and uniformly approximate $d(\sigma \concat \langle i\rangle)$ from below for all $i < h(n)$. 
Uniformly in $\sigma \in h^n$, let $m_\sigma$ be such that for all $x<k(n)$, $\varphi_{m_\sigma}(x)\downarrow = i$ if and only if $\sigma \concat \langle i\rangle$ is the $x$-th immediate successor $\tau$ of $\sigma$ found with respect to the aforementioned procedure such that $d(\tau) > L(n)$. 

Let $\#\colon h^\ast \to \mathbb{N}$ be the inverse of the enumeration of $h^\ast$ according to the shortlex ordering. In particular, for almost all $n \in \mathbb{N}$ and all $\sigma \in h^n$,
\begin{equation*}
\#(\sigma) \leq |h^0| + |h^1| + \cdots + |h^n| = \sum_{i=0}^n{2^{s(i) \cdot i}} < (n+1) \cdot 2^{s(n) \cdot n}.
\end{equation*}
By potentially modifying our enumeration $\varphi_0,\varphi_1,\varphi_2,\ldots$ of partial recursive functions, we can assume without loss of generality that $m_\sigma = 2\#(\sigma)$. Let $m_n^\ast = (n+1) \cdot 2^{s(n) \cdot n+1}$, so that $1+\sup\{m_\sigma \mid \sigma \in h^n\} \leq m_n^\ast$ for almost all $n$.

Let $\Psi_n\colon \dnr(\ell(n)) \to P_{h(n)}^{1,k(n)}$ be uniformly recursive functionals realizing the reductions $P_{h(n)}^{1,k(n)} \strongleq \dnr(\ell(n))$ from \cref{greenberg miller reduction}, and let $U_n\colon\mathbb{N} \to \mathcal{P}_\mathrm{fin}(\mathbb{N})$ be the associated recursive functions (so that $|U_n(i)| \leq k(n)\binom{h(n)}{\ell(n)}$). We are principally interested in initial segments $\rho$ of elements of length $m_n^\ast$ (in fact, we are only concerned with the values at the inputs $m_\sigma$ for $\sigma \in h^n$), so that:
\begin{enumerate}[(1)]
\item $\rho(m_\sigma) < h(n) = h(|\sigma|)$.
\item For all $x < k(n)$, if $\varphi_{m_\sigma}(x)\converge$, then $\rho(m_\sigma) \neq \varphi_{m_\sigma}(x)$.
\end{enumerate}
Define $U\colon\mathbb{N} \to \mathcal{P}_\mathrm{fin}(\mathbb{N})$ by setting $U(n) \coloneq \bigcup_{i < m_n^\ast}{U_n(i)}$ for $n \in \mathbb{N}$ and subsequently define $\overline{u} \colon \mathbb{N} \to \mathbb{N}$ by
\begin{align*}
\overline{u}(0) & \coloneq 0, \\
\overline{u}(n+1) & \coloneq \overline{u}(n) + |U(n)|.
\end{align*}
Finally, define $\psi\colonsub\mathbb{N} \to \mathbb{N}$ by letting, for $n \in \mathbb{N}$ and $j < |U(n)|$,
\begin{equation*}
\psi(\overline{u}(n)+j) \simeq \varphi_\text{$j$-th element of $U(n)$}(0).
\end{equation*}
By construction, for any $Z \in \avoid^\psi(\ell(n))$, $Z\restrict \overline{u}(n+1)$ can be used to compute an initial segment of an element of $P_{h(n)}^{1,k(n)}$ of length $m_n^\ast$, and this is uniform in $n$. 

If $p\colon\mathbb{N} \to \mathbb{N}$ is a recursive order function satisfying
\begin{equation*}
p(\overline{u}(n+1)) \leq \ell(n)
\end{equation*}
for almost all $n$, then uniformly in $n$ and $Z \in \avoid^\psi(p)$, $Z\restrict \overline{u}(n+1)$ can be used to compute an initial segment of $P_{h(n)}^{1,k(n)}$ of length $m_n^\ast$. Given $Z \in \avoid^\psi(p)$, define $G\colon\mathbb{N} \to \mathbb{N}$ by setting the value of $G(m_\sigma)$ according to this uniform process for each $\sigma \in h^\ast$; for $n$ not of the form $m_\sigma$ (which can be recursively checked), set $G(n) \coloneq 0$. Then define $X \in h^\mathbb{N}$ recursively by
\begin{align*}
X(0) & \coloneq G(m_{\langle\rangle}), \\
X(n+1) & \coloneq G(m_{\langle X(0), X(1),\ldots X(n)\rangle}).
\end{align*}

We claim that $d^h$ does not $g$-succeed on $X$. We start by showing that $d^h(X\restrict n) \leq L(n)$ by induction. For $n=0$, this follows since $d^h(X\restrict n) = d^h(\langle\rangle) =1$. Now suppose for our induction hypothesis that $d(X\restrict n) \leq L(n)$. By construction, for $x < k(n)$, if $\varphi_{m_{X\restrict n}}(x) \converge = i$ then $X(n+1) = G(m_{X\restrict n}) \neq i$; in combination with the induction hypothesis and \cref{general h bound on simple immediate extensions}, it follows that $d(X\restrict (n+1)) \leq L(n+1)$. By our definition of $L$ and \cref{general h randomness criterion}, $d^h$ does not $g$-succeed on $X$. Equivalently, $X$ is (strongly) $g$-random in $h^\mathbb{N}$.

\cref{construction of h given j} shows that $\pi^h(X) = Y \in [0,1]$ is $f$-random. In other words, $\complex(f) \weakleq \avoid^\psi(p)$. By \cref{construction of h given j}, $\ell$ dominates an order function, and hence there exists a recursive order function $p$ satisfying $p(\overline{u}(n+1)) \leq \ell(n)$. If $q \colon \mathbb{N} \to \mathbb{N}$ is a recursive order function such that for all $a,b \in \mathbb{N}$ we have $q(an+b) \leq p(n)$ for almost all $n$, then 
\begin{equation*}
\complex(f) \weakleq \avoid^\psi(p) \weakleq \ldnr(q).
\end{equation*}

To get a more explicit condition on $q$, we find an upper bound for $a\overline{u}(n+1)+b$. For all $n$ and $i$, 
\begin{equation*}
|U_n(i)| \leq k(n)\binom{h(n)}{\ell(n)} \leq k(n)\left(\frac{h(n)e}{\ell(n)}\right)^{\ell(n)}  = e^{\ell(n)}k(n)^{\ell(n)+1} = \exp_2((\log_2k(n) + \log_2 e)\ell(n) + \log_2k(n)).
\end{equation*}
Thus, for all $n$,
\begin{align*}
|U(n)| & \leq \sum_{i < m_n^\ast}{|U_n(i)|} \\
& \leq m_n^\ast \cdot \max_i{U_n(i)} \\
& \leq (n+1) \cdot 2^{s(n) \cdot n+1} \cdot \exp_2((\log_2k(n) + \log_2 e)\ell(n) + \log_2k(n)) \\
& \leq \exp_2\left(\log_2 H(n)+(\log_2k(n)+\log_2e)\ell(n) + \log_2k(n)+\log_2(n+1) + 1\right).
\end{align*}
For any $a$ and $b$ and almost all $n$,
\begin{align*}
\log_2 \bigl( a\overline{u}(n+1)+b \bigr) & = \log_2 \bigl( a\sum_{m \leq n}{|U(m)|}+b \bigr) \\
& \leq \log_2 \bigl( a(n+1) \cdot |U(n)| + b \bigr) \\
& \leq \log_2 \bigl( 3an \cdot |U(n)| \bigr) \\
& \leq \log_2 H(n)+(\log_2k(n)+\log_2e)\ell(n) + \log_2k(n)+2\log_2(n+1) +\log_2(3a)+1.
\end{align*}
Substituting $\log_2 H(n)$ and $\log_2 k(n)$ with expressions in terms of $s$, $g$, and $\tilde{j}$ gives
\begin{align*}
\log_2\bigl( a\overline{u}(n+1) + b \bigr) & = s(n) \cdot n + \bigl( s(n+1) \cdot g(n+1) - s(n) \cdot g(n) +\log_2 e \bigr)\cdot \ell(n) + s(n+1) \cdot g(n+1) \\
& \quad - s(n) \cdot g(n) + 2 \log_2 (n+1) + \log_2(3a)+1 \\
& \leq 2s(n) \cdot g(n+1) + \bigl( s(n+1) \cdot g(n+1) - s(n) \cdot g(n) +\log_2 e \bigr)\cdot \ell(n) + s(n+1) \cdot g(n+1) \\
& \quad - s(n) \cdot g(n) + s(n) \cdot g(n+1) + \log_2(3a)+1 \\
& \leq s(n+1) \cdot g(n+1) \cdot (\ell(n)+4) \\
& = s(n+1) \cdot \left( n+1 - (1-\epsilon) \frac{j(s(n+1)\cdot (n+1))}{s(n+1)}\right) \cdot (\ell(n)+4) \\
& \leq \left( s(n+1) \cdot (n+1) - (1-\epsilon)j(s(n+1)\cdot (n+1))\right) \cdot (\ell(n)+4) \\
& \leq \frac{1}{1-\epsilon}\left( s(n+1) \cdot (n+1) - j(s(n+1) \cdot (n+1))\right) \cdot \ell(n) \\
& = \frac{1}{1-\epsilon} f(s(n+1)\cdot(n+1)) \cdot \ell(n)
\end{align*}
for almost all $n$, where the final line follows from the fact that $\lim_{n \to \infty}{j(n)/n} = 0$. Thus, if $q$ satisfies
\begin{equation*}
q\left(\exp_2((1-\epsilon)^{-1} \cdot f(s(n+1) \cdot (n+1)) \cdot \ell(n))\right) \leq \ell(n)
\end{equation*}
then $\complex(f) \weakleq \ldnr(q)$.
\end{proof}

\begin{example} \label{greenberg miller comparison example}
Suppose $j(n) = \sqrt{n} \log_2 n$ and let $s(n) = n$. Simplifying $\ell$ gives
\begin{align*}
\ell(n) & = \exp_2\left(2(1-\epsilon) \left(\sqrt{(n+1)^2} \log_2 (n+1) - \sqrt{n^2} \log_2 n\right)\right) \\
& = \exp_2\left(2(1-\epsilon) \log_2 \left( (n+1) \cdot \left(1+\frac{1}{n}\right)^n\right)\right) \\
& = \left( (n+1) \cdot \left(1+\frac{1}{n}\right)^n\right)^{2(1-\epsilon)}.
\end{align*}
This provides the bounds 
\begin{equation*}
(n+1)^{2(1-\epsilon)} \leq \ell(n) \leq (n+1)^{2(1-\epsilon)} \cdot e^{2(1-\epsilon)}
\end{equation*}
and consequently
\begin{align*}
(1-\epsilon)^{-1} \cdot f( (n+1)^2) \cdot \ell(n) & \leq (1-\epsilon)^{-1} \cdot [(n+1)^2 - 2(n+1) \log_2 (n+1)] \cdot (n+1)^{2(1-\epsilon)} \cdot e^{2(1-\epsilon)} \\
& \leq \frac{e^{2(1-\epsilon)}}{1-\epsilon} \cdot (n+1)^{2(2 - \epsilon)} \\
& \leq (n+1)^{4 - 2\epsilon}.
\end{align*}
Thus, \cref{complex below ldnr} implies $\complex(\lambda n. n - \sqrt{n} \log_2 n) \weakleq \ldnr(q)$ whenever 
\begin{equation*}
q\left(\exp_2((n+1)^{2(2 - \epsilon)})\right) \leq (n+1)^{2(1-\epsilon)}.
\end{equation*}
In particular, we may take $q(n) \coloneq \left(\log_2 n\right)^\beta$ for any $\beta < 1/2$. In other words, whereas \cref{greenberg miller theorem 4.9 improved} shows that $\complex(\lambda n. n - \alpha \sqrt{n} \log_2 n) \weakleq \ldnr(\lambda n. (\log_2 n)^\beta)$ for any $\alpha > 1$ and $\beta < 1/2$, \cref{complex below ldnr} shows that for the same order functions $q$, we in fact are able to compute $(\lambda n. n - \sqrt{n}\log_2 n)$-complex sequences. 
\end{example}

\cref{greenberg miller comparison example} can be generalized further to address functions of the form $f(n) = n - \sqrt{n} \cdot \Delta(n)$:

\begin{thm} \label{sqrt complex from ldnr}
Given an order function $\Delta \colon \mathbb{N} \to [0,\infty)$ such that $\lim_{n \to \infty}{\Delta(n)/\sqrt{n}} = 0$ and any rational $\epsilon \in (0,1)$, 
\begin{equation*}
\complex\bigl(\lambda n. n-\sqrt{n}\cdot \Delta(n)\bigr) \weakleq \ldnr\bigl(\lambda n. \exp_2\bigl((1-\epsilon)\Delta(\log_2\log_2 n)\bigr)\bigr).
\end{equation*}
More generally, $\complex(\lambda n. n - \sqrt{n}\cdot \Delta(n)) \weakleq \ldnr(q)$ for any order function $q$ satisfying
\begin{equation*}
q\left( \exp_2((1-\epsilon)^{-1} \cdot [(n+1)^2 - (n+1) \cdot \Delta((n+1)^2)] \cdot \ell(n)) \right) \leq \ell(n)
\end{equation*}
for almost all $n \in \mathbb{N}$, where $\ell(n) = \exp_2\left((1-\epsilon)[(n+1) \cdot \Delta((n+1)^2) - n \cdot \Delta(n^2)]\right)$. 
\end{thm}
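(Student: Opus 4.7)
The plan is to obtain \cref{sqrt complex from ldnr} as a direct application of \cref{complex below ldnr}, taking $j(n) \coloneq \sqrt{n}\cdot\Delta(n)$ (modified, if necessary, on a finite initial segment so that $\im j \subseteq (1,\infty)$; this does not affect the class $\complex(f)$) and the scaling $s(n) \coloneq \max\{n,1\}$. Then $f(n) = n - j(n) = n - \sqrt{n}\cdot\Delta(n)$ and $s(n)\cdot n = n^2$ for $n \geq 1$, so the hypothesis $\Delta(n)/\sqrt{n} \to 0$ translates to $j(n)/n \to 0$. One computes $\tilde{j}(n) = (1-\epsilon)\Delta(n^2)$, which is an order function because $\Delta$ is; $h(n) = \exp_2((n+1)^2 - n^2) = 2^{2n+1} \in \mathbb{N}$; and $s(n+1)/s(n) = (n+1)/n \to 1$. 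Thus conditions (i)-(iii) of \cref{complex below ldnr} are met, and the $\ell$ produced there simplifies to the $\ell$ named in the statement. This delivers the second, ``more general'' assertion of \cref{sqrt complex from ldnr} verbatim.

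For the explicit $q(n) \coloneq \exp_2((1-\epsilon)\Delta(\log_2\log_2 n))$, I would invoke the general statement just established not with $\epsilon$ but with $\epsilon' \coloneq \epsilon/2$, producing an $\ell'(n) = \exp_2((1-\epsilon')[(n+1)\Delta((n+1)^2) - n\Delta(n^2)])$ and a condition
\begin{equation*}
q\bigl(\exp_2(M(n))\bigr) \leq \ell'(n), \qquad M(n) \coloneq (1-\epsilon')^{-1}\bigl[(n+1)^2 - (n+1)\Delta((n+1)^2)\bigr]\ell'(n).
\end{equation*}
Taking logarithms, this reduces to showing $(1-\epsilon)\Delta(\log_2 M(n)) \leq (1-\epsilon')D(n)$, where $D(n) \coloneq (n+1)\Delta((n+1)^2) - n\Delta(n^2)$. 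Since $\log_2 M(n) \leq 2\log_2(n+1) + O(1) + (1-\epsilon')(n+1)\Delta((n+1)^2)$ and the hypothesis $\Delta(n) = o(\sqrt{n})$ gives $(n+1)\Delta((n+1)^2) = o((n+1)^2)$, we obtain $\log_2 M(n) \leq (n+1)^2$ for almost all $n$. Monotonicity of $\Delta$ then yields $\Delta(\log_2 M(n)) \leq \Delta((n+1)^2)$, and since $D(n) \geq \Delta((n+1)^2)$ trivially, the desired inequality reduces to $(1-\epsilon)\Delta((n+1)^2) \leq (1-\epsilon/2)\Delta((n+1)^2)$, which is clear.

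The only real subtlety is the epsilon-bookkeeping in the second step: the logarithmic slack $2\log_2(n+1) + O(1)$ appearing in $\log_2 M(n)$ through the $(n+1)^2$-factor must be absorbed, and the strict gap $1-\epsilon < 1-\epsilon/2$ is precisely what absorbs it. The hypothesis $\Delta(n) = o(\sqrt{n})$ is used in exactly one place -- to force $(1-\epsilon')(n+1)\Delta((n+1)^2) \leq (n+1)^2$ for almost all $n$ so that the monotonicity of $\Delta$ can be applied at the argument $(n+1)^2$ rather than some larger quantity. No new ideas are needed beyond \cref{complex below ldnr}; the proof is essentially a specialization plus routine verification that the explicit $q$ satisfies the general bound.
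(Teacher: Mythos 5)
Your proposal is correct and takes essentially the same route as the paper: both specialize \cref{complex below ldnr} with $s(n)=n$ and $j(n)=\sqrt{n}\cdot\Delta(n)$, verify conditions (i)--(iii) exactly as you do, and then check the explicit $q$ against the resulting bound. The only (harmless) difference is your $\epsilon/2$ bookkeeping: the paper keeps the same $\epsilon$ throughout, bounding the argument of $q$ above by $\exp_2(\exp_2(n^2))$ and $\ell(n)$ below by $\exp_2((1-\epsilon)\Delta(n^2))$, so the monotonicity of $\Delta$ alone absorbs the polynomial slack and no gap in $\epsilon$ is actually needed.
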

\begin{proof}
Let $s(n) \coloneq n$ and $j(n) \coloneq \sqrt{n} \cdot \Delta(n)$ for all $n \in \mathbb{N}$. We show that the conditions of \cref{complex below ldnr} are fulfilled with these choices of $s$ and $j$:
\begin{enumerate}[(i)]
\item $(n+1)^2 - n^2 = 2n+1$ and $2^{2n+1} \in \mathbb{N}$ for all $n \in \mathbb{N}$. 
\item Immediate.
\item $\frac{j(n^2)}{n} = \frac{\sqrt{n^2} \cdot \Delta(n^2)}{n} = \Delta(n^2)$ shows that the function $n \mapsto j(s(n) \cdot n)/s(n)$ is an order function. 
\end{enumerate}

The condition given by \cref{complex below ldnr} requires that for some $\epsilon > 0$ and almost all $n$
\begin{equation*}
q\left( \exp_2((1-\epsilon)^{-1} \cdot [(n+1)^2 - (n+1)\cdot \Delta( (n+1)^2)] \cdot \ell(n)) \right) \leq \ell(n),
\end{equation*}
where $\ell(n) = \exp_2((1-\epsilon)((n+1) \cdot \Delta((n+1)^2) - n \cdot \Delta(n^2)))$. Rearranging the exponent of $\ell(n)$ gives a simple lower bound:
\begin{equation*}
\log_2 \ell(n) = (1-\epsilon) \cdot \left( \Delta((n+1)^2) + n \cdot (\Delta((n+1)^2) - \Delta(n^2))\right) \geq (1-\epsilon) \cdot \Delta((n+1)^2) \geq (1-\epsilon) \Delta(n^2).
\end{equation*}
Concerning the exponent of $q$'s argument, we have the following: for almost all $n$,
\begin{equation*}
(1-\epsilon) \cdot [(n+1)^2 - (n+1) \cdot \Delta((n+1)^2)] \cdot \ell(n) \leq (1-\epsilon) \cdot (n+1)^2 \cdot \exp_2\left((1-\epsilon) \cdot (n+1) \cdot \Delta((n+1)^2)\right) \leq 2^{n^2}.
\end{equation*}
Thus, $\complex(\lambda n. n - \sqrt{n} \cdot \Delta(n)) \weakleq \ldnr(q)$ if $q(\exp_2 \exp_2 n^2) \leq \exp_2((1-\epsilon) \Delta(n^2))$ and hence if the following stronger condition holds:
\begin{equation*}
q(n) \leq \exp((1-\epsilon) \Delta(\log_2 \log_2 n)).
\end{equation*}
\end{proof}

\begin{remark}
Setting $q(n) \coloneq \exp_2((1-\epsilon)\Delta(\log_2\log_2 n))$ can be very inefficient; when $\Delta(n) = \log_2 n$, this gives $q(n) = \left(\log_2 \log_2 n \right)^{1-\epsilon}$, significantly slower than the lower bound on $q$ established in \cref{greenberg miller comparison example}. 

A better bound can be given for well-behaved $\Delta$ which are dominated by $\log_2$. Suppose $\Delta \domleq \log_2$ and that 
\begin{equation*}
c \coloneq \lim_{n \to \infty}{n[ \Delta((n+1)^2) - \Delta(n^2)]} < \infty.
\end{equation*}
Then we may give the following lower and upper bounds for $\ell$: for almost all $n$,
\begin{equation*}
\exp_2((1-\epsilon) \Delta((n+1)^2)) \leq \ell(n) \leq \exp_2((1-\epsilon)\Delta((n+1)^2) + c) \leq 2^c \cdot (n+1)^{2(1-\epsilon)}.
\end{equation*}
Consequently, $\complex(\lambda n. n- \sqrt{n}\cdot \Delta(n)) \weakleq \ldnr(q)$ whenever $q$ is such that, for almost all $n$,
\begin{equation*}
q(n) \leq \exp_2\left((1-\epsilon)\Delta\left(\left[ (1-\epsilon) \cdot 2^{-c} \cdot \log_2 n\right]^{1/(2-\epsilon)}\right)\right).
\end{equation*}
\end{remark}

\section{Open Questions} \label{complexity and slow-growing ldnr question section}

\cref{complex below ldnr,,sqrt complex from ldnr} only provide partial answers to \cref{question ldnr above complex}, leaving that question open in general. 

It is unclear what the full extent of the coverage of \cref{complex below ldnr} is, suggesting the following question:

\begin{question} \label{what does complex below ldnr apply to}
For what sub-identical order functions $f$ is there a computable function $s \colon \mathbb{N} \to [1,\infty)$ satisfying the conditions of \cref{complex below ldnr}? 

E.g., does such an $s$ exist when $f = \lambda n.n - \log_2 n$, or even when $f = \lambda n. n - \sqrt[3]{n}$?
\end{question}

Questions about the optimality of $q$ in \cref{complex below ldnr,,sqrt complex from ldnr} also remain:

\begin{question} \label{quantify upper bound on q}
For a given sub-identical order function $f$, can we provide an upper bound on how slowly $q$ must grow for $\complex(f) \weakleq \ldnr(q)$ to hold?
\end{question}

An affirmative answer to \cref{upward ldnr above complex} would put a strong bound on how slow-growing $q$ must be.

\begin{question} \label{quantify upper bound on q fast-growing}
Is there a fast-growing order function $q$ such that $\complex(\lambda n. n-\sqrt{n}) \weakleq \ldnr(q)$?
\end{question}

Currently, there is little in the direction of answering \cref{upward ldnr above complex} or \cref{quantify upper bound on q fast-growing}. In fact, the following specific -- seemingly tame -- instance of \cref{upward ldnr above complex} remains open:

\begin{question} \label{fast-growing ldnr above half-random}
Does there exist a fast-growing order function $p$ for which $\complex(1/2) \weakleq \ldnr(p)$?
\end{question}

\clearpage
\chapter{Generalized Shift Complexity}
\label{generalized shift complexity chapter}

$\complex(f)$ is non-negligible for every order function $f$ such that $\limsup_n{(f(n)-n)} < \infty$. One notion of complexity with corresponding weak degrees that lies in the deep region of $\mathcal{E}_\weak$ is that of \emph{shift} complexity.

\begin{definition}[shift complexity]
$X \in \cantor$ is\ldots
\begin{description}
\item[\ldots] \textdef{$\langle\delta,c\rangle$-shift complex} (where $\delta \in (0,1)$ and $c \in \mathbb{N}$) if $\pfc(\tau) \geq \delta |\tau| - c$ for every substring $\tau$ of $X$. The set of all $\langle\delta,c\rangle$-shift complex sequences is denoted by $\shiftcomplex(\delta,c)$.
\item[\ldots] \textdef{$\delta$-shift complex} if $X$ is $\langle\delta,c\rangle$-shift complex for some $c \in \mathbb{N}$. The set of all $\delta$-shift complex sequences is denoted by $\shiftcomplex(\delta)$.
\item[\ldots] \textdef{shift complex} if $X$ is $\delta$-shift complex for some $\delta \in (0,1)$. The set of all shift complex sequences is denoted by $\shiftcomplex$.
\end{description}
\end{definition}

For $f$ an order function satisfying $\limsup_n{(f(n)-n)} < \infty$, we know that $\complex(f) \neq \emptyset$ since $\lambda(\complex(f)) = 1$. In contrast, it is nontrivial that $\shiftcomplex(\delta) \neq \emptyset$. The existence of a $1/3$-shift complex sequence was first shown by Durand, Levin, \& Shen in \cite[Lemma 1]{durand2001complex} in an investigation about complex tilings. Later, in a paper of the same name, Durand, Levin, \& Shen proved more generally the existence of a $\delta$-shift complex sequence for each $\delta \in (0,1)$ (\cite[Lemma 1]{durand2008complex}) by explicitly constructing such a $\delta$-shift complex real segment by segment, using prefix-free symmetry of information at each stage of the construction. In \cite[Proposition 2]{rumyantsev2006forbidden}, Rumyantsev \& Ushakov gave a probabilistic approach to the existence of $\delta$-shift complex sequences, using the Lovasz Local Lemma to prove that arbitrarily long finite $\delta$-shift complex strings (defined in the obvious way) exist and then appealing to compactness to show that a $\delta$-shift complex sequence exists. Yet another approach by Miller \& Khan (\cite[Corollary 2.4]{miller2012two}, \cite[Corollary 3.3]{khan2013shift}) proves existence from the perspective of subshifts. 

In \cite{rumyantsev2011everywhere}, Rumyantsev observes that the proof in \cite[Lemma 1]{durand2008complex} actually gives a stronger existence result:

\begin{thm*} \label{existence of strongly shift complex reals}
\textnormal{\cite[Lemma 1, essentially]{durand2001complex} \cite[Lemma 2]{rumyantsev2011everywhere}}
For each $\delta \in (0,1)$, there exists $X \in \cantor$ and $c \in \mathbb{N}$ such that 
\begin{equation*}
\pfc(\langle X(k),X(k+1),\ldots,X(k+n-1)\rangle \mid k,n) \geq \delta n - c
\end{equation*}
for all $k,n \in \mathbb{N}$.
\end{thm*}

In light of the above theorem and the results \cref{rumyantsev nonnegligible} and \cref{rumyantsev negligible}, we propose the following definition:

\begin{notation}
Suppose $X \in \cantor$ and $i\leq j$ are natural numbers. Define $X(\co{i,j})$ to be the string $\langle X(i), X(i+1),\ldots, X(j-1)\rangle$ of length $j-i$.
\end{notation}

\begin{definition}[strong shift complexity]
$X \in \cantor$ is\ldots
\begin{description}
\item[\ldots] \textdef{strongly $\langle\delta,c\rangle$-shift complex} (where $\delta \in (0,1)$ and $c \in \mathbb{N}$) if $\pfc(X(\co{k,k+n}) \mid k,n) \geq \delta n - c$ for all $k, n \in \mathbb{N}$. The set of all strongly $\langle\delta,c\rangle$-shift complex sequences is denoted by $\stronglyshiftcomplex(\delta,c)$.
\item[\ldots] \textdef{strongly $\delta$-shift complex} if $X$ is strongly $\langle\delta,c\rangle$-shift complex for some $c \in \mathbb{N}$. The set of all strongly $\delta$-shift complex sequences is denoted by $\stronglyshiftcomplex(\delta)$.
\item[\ldots] \textdef{strongly shift complex} if $X$ is strongly $\delta$-shift complex for some $\delta \in (0,1)$. The set of all strongly shift complex sequences is denoted by $\stronglyshiftcomplex$.
\end{description}
\end{definition}

In \cref{shift complexity as a mass problem section}, we examine the mass problems $\shiftcomplex(\delta)$, showing that $\weakdeg(\shiftcomplex(\delta)) \in \mathcal{E}_\weak$ and examining where in $\mathcal{E}_\weak$ the weak degrees $\weakdeg(\shiftcomplex(\delta))$ lie. To that end, we prove:

\begin{repthm}{delta-c-shift complex reals form deep pi01 class}
\textnormal{\cite[Theorem 3, essentially]{rumyantsev2011everywhere}}
$\shiftcomplex(\delta,c)$ is a deep $\Pi^0_1$ class for all rational $\delta \in (0,1)$ and $c \in \mathbb{N}$.
\end{repthm}

Our proof of \cref{delta-c-shift complex reals form deep pi01 class} follows the proof of \cite[Theorem 3]{rumyantsev2011everywhere} and the remark of Bievenu \& Porter that Rumyantsev's proof exhibits the uniformity necessary to prove depth instead of just negligibility, but providing further detail. Another known fact about $\weakdeg(\shiftcomplex(\delta))$ is the following:

\begin{thm*}
\textnormal{\cite[Theorem 6.3]{khan2013shift}}
For each rational $\delta \in (0,1)$ there is an order function $h$ such that $\shiftcomplex(\delta) \weakleq \dnr(h)$.
\end{thm*}

We improve \cite[Theorem 6.3]{khan2013shift} by replacing $\dnr$ with $\ldnr$ and providing explicit bounds:

\begin{repthm}{quantified Khan shift complex theorem}
Given rational numbers $0 < \delta < \alpha < 1$, define $\pi \colon \mathbb{N} \to (0,\infty)$ by $\pi(n) \coloneq 2^{(\alpha - \delta) n}$. Then $\shiftcomplex(\delta) \weakleq \ldnr(q)$ for any order function $q$ such that $q(2^{(n+1)\cdot \pi(n)}) \leq \pi(n)$ for almost all $n \in \mathbb{N}$.
\end{repthm}

\begin{repcor}{quantified Khan shift complex theorem explicit example bound}
Fix a rational $\epsilon > 0$. For all rational $\delta \in (0,1)$ we have $\shiftcomplex(\delta) \weakleq \ldnr(\lambda n. (\log_2 n)^{1-\epsilon})$.
\end{repcor}

In \cref{generazlied shift complexity section}, we provide generalizations of the notions of $\delta$-shift complexity and strong $\delta$-shift complexity in analogy with that of $\complex(f)$, giving rise to the class $\shiftcomplex(f)$ and $\stronglyshiftcomplex(f)$ for $f$ a sub-identical order function. 

In \cref{generalized shift complexity depth section}, we give a detailed proof of a result of Rumyantsev (\cite[Theorem 4]{rumyantsev2011everywhere}) that shows that for certain sufficiently slow-growing order functions $f$, $\shiftcomplex(f)$ is non-negligible. This is strengthened to build a relationship between $\shiftcomplex(f)$ and $\complex(g)$ for another sub-identical order function $g$. 

\begin{repthm}{strong rumyantsev theorem 4}
Suppose $f$ and $g$ are sub-identical order functions such that $\sum_{m=0}^\infty{f(2^m)/2^m} < \infty$ and there is a recursive sequence $\langle \epsilon_m \rangle_{m \in \mathbb{N}}$ of positive rationals for which $\sum_{m=0}^\infty{\epsilon_m} < \infty$ and such that 
\begin{equation*}
\liminf_m{\frac{g(2^m\epsilon_m) - f(2 \cdot 2^m)}{m}} > 1.
\end{equation*}
Then $\shiftcomplex(f) \strongleq \complex(g)$.
\end{repthm}

We also show that for sufficiently slow-growing order functions $f$ there is an order function $g$ such that $\shiftcomplex(f) \strongleq \complex(g)$ and which is \textdef{sublinear} (i.e., $\lim_{n \to \infty}{g(n)/n} = 0$):

\begin{repthm}{sub-identical complex strongly computes shift complex}
Suppose $f$ is a sub-identical order function such that $\sum_{m=0}^\infty{\frac{f(2^m)}{2^m}}$ converges to a recursive real. Then there is a sublinear order function $g$ such that $\shiftcomplex(f) \strongleq \complex(g)$.
\end{repthm}

In contrast, Rumyantsev has shown (\cite[Theorem 5]{rumyantsev2011everywhere}) that for every order function $f$, $\stronglyshiftcomplex(f)$ is negligible. We give a careful and detailed presentation of Rumyantsev's proof to give a slightly stronger result:

\begin{repthm}{rumyantsev negligible}
$\stronglyshiftcomplex(f,c)$ is a deep $\Pi^0_1$ class for any order function $f$ satisfying $\limsup_n{\frac{f(n)}{n}} < 1$ and any $c \in \mathbb{N}$.
\end{repthm}

\section{\texorpdfstring{$\delta$}{delta}-Shift Complexity as a Mass Problem}
\label{shift complexity as a mass problem section}

The weak degrees of $\shiftcomplex(\delta,c)$, $\shiftcomplex(\delta)$, and $\shiftcomplex$ all lie in $\mathcal{E}_\weak$.

\begin{prop} \label{shift complex classes are pi01}
Suppose $\delta \in (0,1)$ is rational and $c \in \mathbb{N}$. Then $\shiftcomplex(\delta,c)$ is $\Pi^0_1$, uniformly in $\delta$, $c$. Consequently, $\weakdeg(\shiftcomplex(\delta,c))$ (for $c$ sufficiently large), $\weakdeg(\shiftcomplex(\delta))$, and $\weakdeg(\shiftcomplex)$ all lie in $\mathcal{E}_\weak$.
\end{prop}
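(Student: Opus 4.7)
The plan is to rewrite the defining condition of $\shiftcomplex(\delta,c)$ as a $\Pi^0_1$ statement about $X$, with parameters $\delta$ and $c$ appearing only in a single arithmetic comparison. The starting point is the observation that every substring of $X$ is of the form $X(\co{k,k+n})$ for some $k,n\in\mathbb{N}$, so
\begin{equation*}
X \in \shiftcomplex(\delta,c) \iff \forall k \forall n \qspace \pfc\bigl(X(\co{k,k+n})\bigr) \geq \delta n - c.
\end{equation*}
Fixing a universal prefix-free machine $U$ with associated approximation $U_s$, the inner complexity bound $\pfc(\sigma) \geq m$ is equivalent to the co-r.e.\ statement $\forall s \forall \tau \qspace (|\tau| < m \wedge U_s(\tau)\converge \to U_s(\tau) \neq \sigma)$. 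Substituting, I get
\begin{equation*}
X \in \shiftcomplex(\delta,c) \iff \forall k \forall n \forall s \forall \tau \qspace \bigl( |\tau| < \delta n - c \wedge U_s(\tau) \converge \to U_s(\tau) \neq X(\co{k,k+n})\bigr),
\end{equation*}
which is manifestly a $\Pi^0_1$ predicate in $X$. Since the only place where $\delta$ and $c$ enter is in the rational comparison $|\tau| < \delta n - c$ (which is itself recursive uniformly in $\delta \in \mathbb{Q}$ and $c \in \mathbb{N}$), the $\Pi^0_1$ definition is uniform in $\delta$ and $c$.

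For the ``consequently'' clause, I would write $\shiftcomplex(\delta) = \bigcup_{c \in \mathbb{N}}{\shiftcomplex(\delta,c)}$ and $\shiftcomplex = \bigcup_{\delta,c}{\shiftcomplex(\delta,c)}$ (the second union ranging over rational $\delta \in (0,1)$ and $c \in \mathbb{N}$), noting that both are countable unions of uniformly $\Pi^0_1$ classes and hence $\Sigma^0_2$ subsets of $\cantor \subseteq \baire$. The existence result of Durand, Levin, \& Shen quoted at the start of the chapter (\cite[Lemma 1]{durand2008complex}) guarantees that for each rational $\delta \in (0,1)$ there exists $X \in \cantor$ and $c \in \mathbb{N}$ with $X \in \shiftcomplex(\delta,c)$, so for this $c$ (and all larger $c$) the class $\shiftcomplex(\delta,c)$ is a nonempty $\Pi^0_1$ subset of $\cantor$, placing $\weakdeg(\shiftcomplex(\delta,c))$ in $\mathcal{E}_\weak$. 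Finally, since $\shiftcomplex(\delta)$ and $\shiftcomplex$ contain such a nonempty $\Pi^0_1$ class while themselves being $\Sigma^0_2$ (hence certainly $\Sigma^0_3$), the Embedding Lemma (\cref{embedding lemma}) yields $\Pi^0_1$ classes weakly equivalent to each, so $\weakdeg(\shiftcomplex(\delta)), \weakdeg(\shiftcomplex) \in \mathcal{E}_\weak$.

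I do not anticipate any real obstacle: the only delicate point is bookkeeping the uniformity of the $\Pi^0_1$ representation in $\delta$ and $c$, which is handled by recording the defining formula once and observing that $\delta$, $c$ appear only through the recursive predicate $|\tau| < \delta n - c$. Invoking the Embedding Lemma is the standard device already used elsewhere in the chapter (e.g.\ for $\complex(f)$ and $\ldnr(p)$) to push from a $\Sigma^0_3$ representation back into $\mathcal{E}_\weak$.
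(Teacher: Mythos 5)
Your proposal is correct and follows essentially the same route as the paper: express the complexity lower bound on substrings as a universal (co-r.e.) condition via a stage-wise approximation of the universal prefix-free machine, observe the uniformity in $\delta$ and $c$, and then handle $\shiftcomplex(\delta)$ and $\shiftcomplex$ as $\Sigma^0_2$ unions containing a nonempty $\Pi^0_1$ class so that the Embedding Lemma applies. The only differences are cosmetic (the paper routes $U$ through an index for $\varphi_\bullet$ and asserts nonemptiness for large $c$ without citation, whereas you cite the Durand--Levin--Shen existence result), so nothing further is needed.
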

\begin{proof}
Let $U$ be the universal prefix-free machine for which $\pfc = \pfc_U$ and then let $e \in \mathbb{N}$ be such that $\varphi_e(\str^{-1} \sigma) \simeq \str^{-1} U(\sigma)$ for all $\sigma \in \{0,1\}^\ast$. Then 
\begin{equation*}
\shiftcomplex(\delta,c) = \{ X \in \cantor \mid \forall n \forall k \forall \sigma \forall s \qspace ( \varphi_{e,s}(\str^{-1} \sigma) \converge = X \restrict n \to |\sigma| > \delta n - c)\}
\end{equation*}
shows $\shiftcomplex(\delta,c)$ is $\Pi^0_1$. For all sufficiently large $c$, $\shiftcomplex(\delta,c)$ is nonempty, so for such $c$ we have $\weakdeg(\shiftcomplex(\delta,c)) \in \mathcal{E}_\weak$. 

$\shiftcomplex(\delta,c)$ being $\Pi^0_1$ (uniformly in $\delta$ and $c$) implies $\shiftcomplex(\delta) = \bigcup_{c \in \mathbb{N}}{\shiftcomplex(\delta,c)}$ and $\shiftcomplex = \bigcup_{n,c \in \mathbb{N}}{\shiftcomplex(2^{-n},c)}$ are both $\Sigma^0_2$, so  the \nameref{embedding lemma} implies $\weakdeg(\shiftcomplex(\delta))$ and $\weakdeg(\shiftcomplex)$ both lie in $\mathcal{E}_\weak$.
\end{proof}

The same is true of $\stronglyshiftcomplex(\delta,c)$, $\stronglyshiftcomplex(\delta)$, and $\stronglyshiftcomplex$.

\begin{prop} \label{strongly shift complex classes are pi01}
Suppose $\delta \in (0,1)$ is rational and $c \in \mathbb{N}$. Then $\stronglyshiftcomplex(\delta,c)$ is $\Pi^0_1$, uniformly in $\delta$, $c$. Consequently, $\weakdeg(\stronglyshiftcomplex(\delta,c))$ (for $c$ sufficiently large), $\weakdeg(\stronglyshiftcomplex(\delta))$, and $\weakdeg(\stronglyshiftcomplex)$ all lie in $\mathcal{E}_\weak$.
\end{prop}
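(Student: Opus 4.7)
The plan is to mimic the proof of \cref{shift complex classes are pi01} essentially verbatim, with the small adjustment that conditional prefix-free complexity $\pfc({-}\mid k,n)$ replaces plain prefix-free complexity. The main technical point is just to verify that the conditional complexity predicate is r.e.\ uniformly in $k$ and $n$, after which the $\Pi^0_1$ form and the $\Sigma^0_2$ form of the unions drop out, and the Embedding Lemma does the rest.

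More concretely, fix a universal oracle prefix-free machine $U$ for which $\pfc({-}\mid{-}) = \pfc_U({-}\mid{-})$, and let $\overline{\langle k,n\rangle}$ denote the padded code of $\langle k,n\rangle$ used in the definition of conditional complexity. The partial functional $\langle \rho,k,n,s\rangle \mapsto U_s^{\overline{\langle k,n\rangle}}(\rho)$ is partial recursive, so choosing an index $e$ for the corresponding partial recursive function on G\"odel numbers gives
\begin{equation*}
\stronglyshiftcomplex(\delta,c) = \{ X \in \cantor \mid \forall k \forall n \forall \rho \forall s \qspace \bigl( \varphi_{e,s}(\str^{-1}\rho, k, n) \converge = \str^{-1}X(\co{k,k+n}) \to |\rho| \geq \delta n - c\bigr)\}.
\end{equation*}
This is manifestly $\Pi^0_1$, and the definition is uniform in $\delta \in \mathbb{Q} \cap (0,1)$ and $c \in \mathbb{N}$.

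Next, for nonemptiness I would invoke the existence result \cite[Lemma 1]{durand2001complex}/\cite[Lemma 2]{rumyantsev2011everywhere} stated in the introduction to the chapter: for every rational $\delta \in (0,1)$ there exists $X$ and some $c_0 \in \mathbb{N}$ with $X \in \stronglyshiftcomplex(\delta,c_0)$, so $\stronglyshiftcomplex(\delta,c) \neq \emptyset$ for all $c \geq c_0$. For such $c$, being a nonempty $\Pi^0_1$ subset of $\cantor$ puts $\weakdeg(\stronglyshiftcomplex(\delta,c)) \in \mathcal{E}_\weak$ directly.

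Finally, to place $\weakdeg(\stronglyshiftcomplex(\delta))$ and $\weakdeg(\stronglyshiftcomplex)$ in $\mathcal{E}_\weak$, observe that the uniformity above gives
\begin{equation*}
\stronglyshiftcomplex(\delta) = \bigcup_{c \in \mathbb{N}} \stronglyshiftcomplex(\delta,c), \qquad \stronglyshiftcomplex = \bigcup_{n \in \mathbb{N}, c \in \mathbb{N}} \stronglyshiftcomplex(2^{-n},c),
\end{equation*}
each expressed as a countable union of uniformly $\Pi^0_1$ classes, hence each is $\Sigma^0_2$ (and in particular $\Sigma^0_3$). Since each of these sets contains a nonempty $\Pi^0_1$ subclass by the argument above, the Embedding Lemma (\cref{embedding lemma}) produces a nonempty $\Pi^0_1$ class of $\cantor$ weakly equivalent to it, giving the desired membership in $\mathcal{E}_\weak$. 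I do not anticipate any real obstacle; the only thing to be careful about is that the existence theorem invoked in the chapter opener already supplies the strongly $\delta$-shift complex $X$ needed to guarantee nonemptiness, so no separate construction is required here.
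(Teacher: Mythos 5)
Your proof is correct and follows essentially the same route as the paper's: express $\stronglyshiftcomplex(\delta,c)$ as a $\Pi^0_1$ predicate via a universal oracle prefix-free machine (uniformly in $\delta$, $c$), get nonemptiness for large $c$, write the unions as $\Sigma^0_2$ sets, and apply the Embedding Lemma. If anything you are slightly more careful than the paper, which glosses over nonemptiness and whose displayed formula writes $X \restrict n$ where the substring $X(\co{k,k+n})$ is meant, exactly as you have it.
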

\begin{proof}
Let $U$ be the universal oracle prefix-free machine for which conditional prefix-free complexity is defined with respect to. Let $e \in \mathbb{N}$ be such that $\varphi_e^\tau(\str^{-1} \sigma) \simeq \str^{-1} U^{\overline{\tau}}(\sigma)$ for all $\sigma \in \{0,1\}^\ast$. Then 
\begin{equation*}
\stronglyshiftcomplex(\delta,c) = \{ X \in \cantor \mid \forall n \forall k \forall \sigma \forall s \qspace ( \varphi_{e,s}^{\str \pi^{(2)}(n,k)}(\str^{-1} \sigma) \converge = X \restrict n \to |\sigma| > \delta n - c)\}
\end{equation*}
shows $\stronglyshiftcomplex(\delta,c)$ is $\Pi^0_1$. For all sufficiently large $c$, $\stronglyshiftcomplex(\delta,c)$ is nonempty, so for such $c$ we have $\weakdeg(\stronglyshiftcomplex(\delta,c)) \in \mathcal{E}_\weak$. 

$\stronglyshiftcomplex(\delta,c)$ being $\Pi^0_1$ (uniformly in $\delta$ and $c$) implies $\stronglyshiftcomplex(\delta) = \bigcup_{c \in \mathbb{N}}{\stronglyshiftcomplex(\delta,c)}$ and $\stronglyshiftcomplex = \bigcup_{n,c \in \mathbb{N}}{\stronglyshiftcomplex(2^{-n},c)}$ are both $\Sigma^0_2$, so  the \nameref{embedding lemma} implies $\weakdeg(\stronglyshiftcomplex(\delta))$ and $\weakdeg(\stronglyshiftcomplex)$ both lie in $\mathcal{E}_\weak$.
\end{proof}

\subsection{Shift Complexity and Depth} \label{shift complexity and depth subsection}

Unlike $\complex(\delta,c)$, $\shiftcomplex(\delta,c)$ is a deep $\Pi^0_1$ class for every computable $\delta \in (0,1)$ and $c \in \mathbb{N}$. Our proof is essentially a more detailed presentation of the proof given by Rumyantsev in \cite[Theorem 3, essentially]{rumyantsev2011everywhere} plus the uniformity observation by Bienvenu \& Porter given in \cite{bienvenu2016deep}.

\begin{thm} \label{delta-c-shift complex reals form deep pi01 class}
\textnormal{\cite[Theorem 3, essentially]{rumyantsev2011everywhere}}
$\shiftcomplex(\delta,c)$ is a deep $\Pi^0_1$ class for all rational $\delta \in (0,1)$ and $c \in \mathbb{N}$.
\end{thm}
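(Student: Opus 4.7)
The class $\shiftcomplex(\delta, c)$ is already known to be $\Pi^0_1$ by \cref{shift complex classes are pi01}, so only deepness must be established. Fix a universal left r.e.\ continuous semimeasure $\mathbf{M}$ on $\{0, 1\}^\ast$ and write $p_N = \mathbf{M}(\shiftcomplex(\delta, c) \restrict N)$. The plan is to bound $p_N$ effectively and then read off an order function $r \colon \mathbb{N} \to \mathbb{N}$ with $p_{r(n)} \leq 2^{-n}$, which will be the required modulus of depth.

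Following the proof of \cite[Theorem 3]{rumyantsev2011everywhere}, I exploit the fact that each $\sigma \in \shiftcomplex(\delta, c) \restrict N$ carries $N - n + 1$ length-$n$ substrings, each forced by the shift-complexity hypothesis to satisfy $\pfc(\tau) \geq \delta n - c$. For each pair $(n, N)$ with $n \leq N$ I define a discrete semimeasure on $\{0,1\}^n$ by
\[
\nu_{n, N}(\tau) = \frac{1}{N - n + 1} \sum_{i = 0}^{N - n} \sum_{\sigma \in \shiftcomplex(\delta, c) \restrict N,\ \sigma[i:i+n] = \tau} \mathbf{M}(\sigma),
\]
which is uniformly left r.e.\ in $(n, N)$, has total mass $p_N$, and is supported on $\{\tau \in \{0,1\}^n : \pfc(\tau) \geq \delta n - c\}$. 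Aggregating the $\nu_{n, N}$'s with appropriate computable weights produces a single left r.e.\ discrete semimeasure $\nu$ on $\{0, 1\}^\ast$ of total mass at most $1$; universality of $\mathbf{M}$ then supplies a constant $C$, independent of $n$ and $N$, such that $\nu(\tau) \leq C \cdot \mathbf{M}(\tau)$ for every $\tau$.

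The hard step is converting this pointwise inequality into genuine decay for $p_N$. The support restriction $\pfc(\tau) \geq \delta n - c$ by itself is far too weak, since as many as $2^n$ strings can lie in the support and $\sum_\tau \mathbf{M}(\tau) \leq 1$ provides no useful savings. The extra leverage comes from two features: that all $N - n + 1$ positional substrings of each single $\sigma$ must simultaneously lie in the support, and that the constants arising throughout Rumyantsev's argument are, as observed by Bienvenu and Porter in \cite{bienvenu2016deep}, uniform in both $n$ and $N$. By balancing the substring length $n$ against the segment length $N$ (taking $n$ to grow linearly in $N$ with a small rational slope), the aggregated universality inequality forces $p_N \leq 2^{-\Omega(N)}$, and inverting this relation yields the desired order function $r$. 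Tracking this uniformity precisely is the main technical obstacle, and occupies the bulk of the proof.
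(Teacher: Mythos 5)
Your reduction to the averaged semimeasure $\nu_{n,N}$ has two problems, one technical and one fatal. Technically, $\nu_{n,N}$ is not uniformly left r.e.\ as claimed: the index set $\shiftcomplex(\delta,c) \restrict N$ is only co-r.e.\ (it shrinks under any effective approximation while $\mathbf{M}_s(\sigma)$ grows), so the sum defining $\nu_{n,N}$ is not lower semicomputable, and the domination step you invoke requires lower semicomputability; also, a discrete semimeasure should be compared with $2^{-\pfc}$ via the coding theorem rather than with the continuous $\mathbf{M}$. If you repair this by dropping the restriction to shift-complex prefixes (summing over all $\sigma \in \{0,1\}^N$), you regain left r.e.-ness but lose the support condition, and since $\mathbf{M}$ dominates the uniform measure the resulting bound on $p_N$ is $\approx 1$. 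More seriously, the decisive step --- ``the aggregated universality inequality forces $p_N \leq 2^{-\Omega(N)}$'' --- is precisely where the proof has to happen, and nothing in your sketch supplies it. The pointwise bound $\nu(\tau) \leq C \cdot 2^{-\pfc(\tau)}$ summed over a support of up to $2^n$ strings with $\pfc(\tau) \geq \delta n - c$ gives only $p_N \lesssim 2^{c}2^{(1-\delta)n}$, which you yourself observe is vacuous; and averaging over positions produces a marginal object that is structurally incapable of exploiting the fact that \emph{all} $N-n+1$ substrings of a single $\sigma$ lie in the support simultaneously. ``Balancing $n$ against $N$'' does not create the missing inequality, and the exponential-in-$N$ decay you assert is both unproved and stronger than what is needed or delivered (depth only requires decay along some computable, possibly very fast-growing, modulus $r(m)$).

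What actually carries the paper's proof (following Rumyantsev, with the uniformity observation of Bienvenu--Porter) is a combinatorial ingredient absent from your proposal: the probabilistic forbidden-set construction of \cref{rumyantsev lemma 6}. For each $m$ one effectively finds lengths $n < N$ and sets $A_i \subseteq \{0,1\}^i$ of size at most $2^{\delta i/3}$ such that, for the measure pulled back through the functional $\Psi$ generating $\mathbf{M}$ (\cref{levin and zvonkin}), all but $2^{-m}$ of the mass of length-$N$ outputs contains some element of $\bigcup_i A_i$ as a substring; the multi-scale sparsity argument (strings with few distinct substrings are compressible) is what makes these sets simultaneously small and, after a Fubini argument selecting a concrete tuple computable from $m$, $n_0$, and an index $i_0 \leq 3 \cdot 2^m$, of complexity at most $\tfrac{2}{3}\delta i + O(\log i) + 2m + O(1) < \delta i - c$. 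Shift-complex sequences must therefore avoid them, giving $\mathbf{M}(\shiftcomplex(\delta,c) \restrict r(m)) < 2^{-m}$ for a recursive $r$. Your semimeasure-averaging framework replaces none of this, so as it stands the proposal does not prove depth.
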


To prove \cref{delta-c-shift complex reals form deep pi01 class}, we make use of the following probabilistic lemma:

\begin{lem} \label{rumyantsev lemma 6}
\textnormal{\cite[Lemma 6]{rumyantsev2011everywhere}}
Suppose $\delta \in (0,1)$ is rational. For every rational $\epsilon>0$ and $n_0 \in \mathbb{N}$ there exist natural numbers $n<N$ and random variables $\mathcal{A}_n, \mathcal{A}_{n+1}, \ldots, \mathcal{A}_N$ such that
\begin{enumerate}[(i)]
\item $\mathcal{A}_i$ is a subset of $\{0,1\}^i$ of size at most $2^{\delta i}$,
\item for every string $\sigma \in \{0,1\}^N$, the probability that $\sigma$ has no substring in $\bigcup_{i=n}^N{\mathcal{A}_i}$ is less than $\epsilon$, and
\item $n\geq n_0$.
\end{enumerate}
Moreover, the natural numbers $n,N \in \mathbb{N}$ and the (probability distributions of the) random variables $\mathcal{A}_n, \mathcal{A}_{n+1}, \\ \ldots, \mathcal{A}_N$ can be found effectively as functions of $\epsilon$ and $n_0$.
\end{lem}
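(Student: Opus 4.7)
The plan is to construct each $\mathcal{A}_i$ as a hybrid $\mathcal{A}_i = \mathcal{B}_i \cup \mathcal{R}_i$, where $\mathcal{B}_i$ is a recursively-generated deterministic set intended to cover ``low-complexity'' substrings, and $\mathcal{R}_i$ is a uniformly random subset of $\{0,1\}^i \setminus \mathcal{B}_i$ of size exactly $\lfloor 2^{\delta i} \rfloor - |\mathcal{B}_i|$, with the $\mathcal{R}_i$ chosen independently across $i \in [n,N]$. Concretely I would let $\mathcal{B}_i = \{ \tau \in \{0,1\}^i \mid \pfc_{t(i)}(\tau) \leq \delta i - c'\}$ for a computable stage $t(i)$ and constant $c'$ depending on $\epsilon, n_0$, ensuring $|\mathcal{B}_i| \leq 2^{\delta i - c'}$ by Kraft's inequality so the overall size bound (i) holds. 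The parameters $n \geq n_0$ and $N > n$ will be chosen at the end, depending on $\epsilon$ and $n_0$.

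The central calculation comes from independence of the $\mathcal{R}_i$ plus elementary counting: for fixed $\sigma \in \{0,1\}^N$, if $s_i(\sigma)$ is the number of distinct length-$i$ substrings of $\sigma$ and $S_i$ is the set of such substrings \emph{not} in $\mathcal{B}_i$, then
\[
\prob(\mathcal{R}_i \cap S_i = \emptyset) = \frac{\binom{2^i - |\mathcal{B}_i| - |S_i|}{|\mathcal{R}_i|}}{\binom{2^i - |\mathcal{B}_i|}{|\mathcal{R}_i|}} \leq \exp(-|S_i|\cdot 2^{-(1-\delta)i}),
\]
so $\prob(\sigma \text{ bad}) \leq \exp(-\sum_{i=n}^N |S_i| \cdot 2^{-(1-\delta)i})$, provided $\sigma$ has no substring in any $\mathcal{B}_i$ (otherwise this probability is zero). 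The hybrid structure then gives a clean case split: if $\sigma$ has some length-$i$ substring in $\mathcal{B}_i$ the bound is trivial, and otherwise one must show that $|S_i| = s_i(\sigma)$ is forced to be large enough that the exponential bound is below $\epsilon$.

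The main obstacle is this latter implication. A naive approach via Morse-Hedlund only yields $s_n(\sigma) \geq n+1$ for aperiodic $\sigma$, which is far smaller than the threshold $2^{(1-\delta)n}\ln(1/\epsilon)$ required to make the exponential bound small. The resolution I would pursue is a compression argument: if $\sigma$ has few distinct length-$n$ substrings, then $\sigma$ (and hence its prefixes) admits a short description by listing those substrings and recording which substring appears at each position in a de Bruijn-style graph walk, thereby forcing a prefix of $\sigma$ to lie in some $\mathcal{B}_i$ and contradicting the assumption. One must enlarge $\mathcal{B}_i$ beyond merely the strings of small absolute $\pfc$ to also include strings that arise as factors of sequences of low subword-complexity (to rule out Sturmian-like counterexamples), while keeping $|\mathcal{B}_i| \leq 2^{\delta i}/2$ so that the random budget remains nonempty. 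The numerical choice of $n, N, c'$, and $t(i)$ to make all these estimates fit simultaneously is the most delicate point; once they are pinned down as explicit computable functions of $\epsilon$ and $n_0$, the final effectivity claim is automatic, since $\mathcal{B}_i$ is r.e.\ uniformly in $i$ and the distribution of $\mathcal{R}_i$ (uniform over a recursively enumerated finite family) is computable.
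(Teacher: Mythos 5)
There is a genuine gap, and it sits exactly where you flag ``the main obstacle.'' Your estimate $\prob(\sigma\ \text{bad}) \le \exp\bigl(-\sum_i |S_i|\,2^{-(1-\delta)i}\bigr)$ forces you to show that any $\sigma$ with no substring in the deterministic part has on the order of $2^{(1-\delta)i}\ln(1/\epsilon)$ distinct length-$i$ factors. The compression argument you propose cannot deliver this when $\delta \le 1/2$: if $\sigma$ has fewer than $2^{(1-\delta)n}$ distinct length-$n$ factors, the block-listing encoding compresses $\sigma$ (or a suitable prefix) only to roughly $(1-\delta)|\sigma|$ bits plus lower-order terms, which falls below the threshold $\delta i - c'$ defining $\mathcal{B}_i$ only when $1-\delta \le \delta$. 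Your suggested repair --- enlarging $\mathcal{B}_i$ to contain all length-$i$ factors of low-subword-complexity strings --- is impossible within the budget $2^{\delta i}/2$: every $\tau \in \{0,1\}^i$ is a factor of $\tau \concat 0^{N-i}$, which has only $O(i)$ distinct length-$i$ factors, so that enlargement is all of $\{0,1\}^i$. Hence the single-level hybrid can at best be made to work for $\delta > 1/2$, not for arbitrary rational $\delta \in (0,1)$.

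The paper closes this gap by iterating the sparsification and, crucially, by shrinking the pool from which the \emph{random} sets are drawn instead of enlarging the deterministic part. Fix $m$ with $\delta > 1/m$ and lengths $n_1 < \cdots < n_m = N$ with $n_k \mid n_{k+1}$, and call a string $k$-sparse if it has fewer than $2^{\frac{m-k}{m}n_k}$ distinct length-$n_k$ factors. At level $k$ the set $\mathcal{A}_{n_k}$ of size $2^{\delta n_k}$ is chosen uniformly among subsets of the $(k-1)$-sparse strings of length $n_k$ --- a pool of size only about $2^{\frac{m-k+1}{m}n_k}$ by the block-listing encoding --- so a $\sigma$ that is $(k-1)$-sparse but not $k$-sparse is hit with per-draw probability about $2^{-n_k/m}$, giving a miss probability of roughly $\exp(-2^{(\delta-1/m)n_k})$ without ever needing $\sigma$ to have $2^{(1-\delta)n_k}$ factors. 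After $m-1$ levels, the $(m-1)$-sparse strings of length $N$ number fewer than $2^{N/m+c_m} < 2^{\delta N}$ once $N$ is large enough, and they are all placed deterministically into $\mathcal{A}_N$ --- your $\mathcal{B}$-idea, but applied only at length $N$ and only after enough sparsification to meet the size bound. To salvage your write-up, either restrict to $\delta > 1/2$ or adopt this multi-level structure; the effectivity claim then comes for free, since $k$-sparsity is decidable and all pools are finite and computable from $\epsilon$ and $n_0$.
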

\begin{proof}
Let $m\geq 2$ be such that $\delta > \frac{1}{m}$. We define natural numbers 
\begin{equation*}
n=n_1 < n_2 < \cdots < n_m=N
\end{equation*} 
satisfying the following properties:
\begin{enumerate}[(i)]
\item $n_k$ divides $n_{k+1}$ for all $k \in \{1,\ldots,m-1\}$. 
\item $\delta n_k \in \mathbb{N}$ for all $k \in \{1,\ldots,m\}$. (Assuming (i) holds, it suffices for $\delta n$ to be a natural number.)
\end{enumerate}
When $i$ is not of the form $n_k$ for $k \in \{1,\ldots,m\}$, we will define $\mathcal{A}_i$ to take the constant value $\emptyset$, leaving only $\mathcal{A}_{n_k}$ to define. Roughly speaking, we will define the random variables $\mathcal{A}_{n_k}$ to address strings $\sigma$ which exhibit relatively few substrings of increasingly greater lengths.

Suppose $\sigma$ is a string of length $p$, $q$ divides $p$, and $\alpha \in (0,1)$ satisfies $\alpha q \in \mathbb{N}$. If $\sigma$ has less than $2^{\alpha q}$ substrings of length $q$, then we may encode $\sigma$ in the following way. Writing $\sigma = \sigma_1\concat \sigma_2 \concat \cdots \concat \sigma_{p/q}$ where $|\sigma_k| = q$, our hypothesis implies $|\{\sigma_1,\sigma_2,\ldots,\sigma_{p/q}\}| < 2^{\alpha q}$ (our hypothesis is much stronger than this, but is the natural hypothesis for the remaining arguments in the proof). Let $\rho$ be a string encoding, in lexicographical order, the distinct elements of $\{\sigma_1,\sigma_2,\ldots,\sigma_{p/q}\}$; for exactness, we may encode a finite sequence $\nu_1,\ldots,\nu_n$ of strings by
\begin{equation*}
\nu_1^0 \concat \langle 1,1 \rangle \concat \nu_2^0 \concat \langle 1,1\rangle \concat \cdots \concat \nu_n^0 \concat \langle 1,1\rangle
\end{equation*}
where $\nu_i^0$ denotes the string $\langle \nu_i(0), 0, \nu_i(1), 0, \nu_i(2), 0, \ldots,\nu_i(|\nu_i|-1), 0\rangle$. Then we may encode $\sigma$ by
\begin{equation*}
\tau = \tau_1 \concat \tau_2 \concat \cdots \concat \tau_{p/q} \concat \rho
\end{equation*}
where $\tau_i$ is the binary representation of the index of $\sigma_i$ in $\rho$ (regarding $\rho$ as a finite sequence of strings). It is convenient to ensure $\tau$ has a length depending only on $p$ and $q$, so we appropriately pad each $\tau_i$ by $0$'s to ensure that $|\tau_i|= \alpha q$ ($\rho$ encodes at most $2^{\alpha q}-1$ strings and hence requires at most $\alpha q$ bits to describe) and appropriately pad $\rho$ by $1$'s to ensure that $|\rho| = 2(q+1)(2^{\alpha q}-1)$ (the length of $\rho$ if it encodes the maximum number of $2^{\alpha q}-1$ strings). Thus, to each such string $\sigma$ of length $p$ we associate with it a unique string $\tau$ of length $\alpha p + 2(q+1)(2^{\alpha q}-1)$. 

Supposing $n_k$ has already been defined and $n_k$ divides $i$, we say that a string $\sigma$ of length $i$ is \emph{$k$-sparse} if it has less than $2^{\frac{m-k}{m}n_k}$ substrings of length $n_k$. As observed above, we may encode a $k$-sparse $\sigma$ with a string of length $\frac{m-k}{m}|\sigma|+2(n_k+1)(2^{\frac{m-k}{m}n_k}-1)$. For simplicity, we write $c_{k+1} \coloneq 2(n_k+1)(2^{\frac{m-k}{m}n_k}-1)$. 

We may now define $n_k$ and the associated random variable $\mathcal{A}_{n_k}$ for $k \in \{1,\ldots,m\}$. 
\begin{description}
\item[$k=1$.] Let $n_1=n$ be the least natural number greater than $n_0$ such that $\delta n \in \mathbb{N}$ and
\begin{equation*}
\left(1-\frac{1}{2^{n/m}}\right)^{2^{\delta n}} < \epsilon.
\end{equation*}
Note that such an $n$ exists as
\begin{equation*}
\left(1-\frac{1}{2^{n/m}}\right)^{2^{\delta n}} = \left(1-\frac{1}{2^{n/m}}\right)^{2^{n/m}\cdot 2^{(\delta - 1/m)n}} = \left(\left(1-\frac{1}{2^{n/m}}\right)^{2^{n/m}}\right)^{2^{(\delta-1/m)n}} \approx \left(\frac{1}{e}\right)^{2^{(\delta-1/m)n}}
\end{equation*}
with both the approximation getting tighter and the final expression tending toward $0$ as $n \to \infty$. 

Then $\mathcal{A}_{n_1}=\mathcal{A}_n$ is defined to be randomly chosen uniformly among all subsets of $\{0,1\}^n$ of size $2^{\delta n}$ (i.e., each such subset of $\{0,1\}^n$ of size $2^{\delta n}$ has an equal probability $\binom{2^n}{2^{\delta n}}$ of being the value of $\mathcal{A}_n$). 

\item[$1<k<m$.] Suppose $n_{k-1}$ has been defined. Then $n_k$ is the least multiple of $n_{k-1}$ such that
\begin{equation*}
\left(1-\frac{1}{2^{n_k/m+c_k}}\right)^{2^{\delta n_k}} = \left(\left(1-\frac{1}{2^{n_k/m+c_k}}\right)^{2^{n_k/m+c_k}}\right)^{2^{(\delta -1/m)n_k-c_k}} < \epsilon.
\end{equation*}
(That such an $n_k$ exists is analogous to the case where $k=1$.)

Then $\mathcal{A}_{n_k}$ is defined to be randomly chosen uniformly among all subsets of $\{0,1\}^{n_k}$ of size $2^{\delta n_k}$ which consist only of $(k-1)$-sparse strings. 

\item[$k=m$.] Let $n_k=n_m=N$ to be the least multiple of $n_{m-1}$ such that $\frac{1}{m}N+c_m < \delta N$ (by hypothesis, $1/m<\delta$ and $c_m$ is constant with respect to $N$, so there is such an $N$). 

Then $\mathcal{A}_N$ is defined to be constantly equal to the set of all $(m-1)$-sparse strings of length $N$ (note that an $(m-1)$-sparse string is described uniquely by a string of length $\frac{m-(m-1)}{m}N+c_m=\frac{1}{m}N+c_m$, so $|\mathcal{A}_N| \leq 2^{N/m+c_m} < 2^{\delta N}$).

\end{description}

Finally, we show that for every $\sigma \in \{0,1\}^N$,
\begin{equation*}
\Prob(\textsf{$\mathcal{A}_i$ has no substring of $\sigma$ for all $i \in \{n,\ldots,N\}$}) < \epsilon.
\end{equation*}
It suffices to show that $\Prob(\textsf{$\mathcal{A}_i$ has no substring of $\sigma$}) < \epsilon$ for at least one $i \in \{n,\ldots,N\}$. 
\begin{description}
\item[Case 1:] Suppose $\sigma$ is not $1$-sparse, so that $\sigma$ has at least $2^{\frac{m-1}{m} n}$ substrings of length $n$. Because of the definition of the output distribution of $\mathcal{A}_n$, the probability that $\sigma$ has no substring in $\mathcal{A}_n$ is at most the probability that $\sigma$ has no substring among $2^{\delta n}$ strings chosen uniformly and indepedently at random (the latter probability may be higher because we allow duplicates). The independence and uniformity of those $2^{\delta n}$ random choices means that the latter probability is equal to $\Prob(\textsf{$\tau \in \{0,1\}^n$ is not a substring of $\sigma$})^{2^{\delta n}}$. $\Prob(\textsf{$\tau \in \{0,1\}^n$ is not a substring of $\sigma$})$ is at most the probability that a random $\tau \in \{0,1\}^n$ (chosen uniformly) is not in a set of size $2^{\frac{m-1}{m} n}$. Thus,
\begin{align*}
& \Prob(\textsf{$\mathcal{A}_n$ has no substring of $\sigma$}) \\
& \quad \leq \Prob(\textsf{$2^{\delta n}$ random strings of length $n$ are not substrings of $\sigma$}) \\
& \quad \leq \Prob(\textsf{random string of length $n$ is not substring of $\sigma$})^{2^{\delta n}} \\
& \quad \leq \left( 1-\frac{2^{\frac{m-1}{m}n}}{2^n}\right)^{2^{\delta n}} \\
& \quad = \left( 1- \frac{1}{2^{n/m}}\right)^{2^{\delta n}} \\
& \quad < \epsilon.
\end{align*}

\item[Case 2:] Suppose $\sigma$ is $1$-sparse but is not $k$-sparse for some $k\in \{2,\ldots,m-1\}$. Assume $k$ is minimal with that property, so $\sigma$ is not $k$-sparse (and hence has at least $2^{\frac{m-k}{m}n_k}$ substrings of length $n_k$) but is $(k-1)$-sparse. As in Case 1, we have 
\begin{align*}
& \Prob(\textsf{$\mathcal{A}_{n_k}$ has no substring of $\sigma$}) \\
& \quad \leq \Prob(\textsf{$2^{\delta n_k}$ random $(k-1)$-sparse strings of length $n_k$ are not substrings of $\sigma$}) \\
& \quad \leq \Prob(\textsf{random $(k-1)$-sparse string of length $n_k$ is not substring of $\sigma$})^{2^{\delta n_k}}.
\intertext{The probability $\Prob(\textsf{random $(k-1)$-sparse string of length $n_k$ is not substring of $\sigma$})$ is of the form \newline $\Prob(E\setminus F)$, where $E = \{ \tau \in \{0,1\}^{n_k} \mid \text{$\tau$ is $(k-1)$-sparse}\}$ and $F = \{ \tau \in \{0,1\}^{n_k} \mid \text{$\tau$ is a substring of $\sigma$}\}$. Observe that $F \subseteq E$: if a substring of $\sigma$ is not $(k-1)$-sparse, then it contains at least $2^{\frac{m-(k-1)}{m}n_{k-1}}$ substrings of length $n_{k-1}$, and hence $\sigma$ does as well, contrary to the hypothesis that $\sigma$ is $(k-1)$-sparse. Because $E$ is finite, we have $\Prob(E \setminus F) = 1-\frac{|F|}{|E|}$. To get an upper bound on $\Prob(E\setminus F)$, it suffices to have an upper bound on $|E|$ and a lower bound on $|F|$. Thus,}
& \Prob(\textsf{random $(k-1)$-sparse string of length $n_k$ is not substring of $\sigma$})^{2^{\delta n_k}} \\
& \quad \leq \left( 1- \frac{2^{\frac{m-k}{m}n_k}}{2^{\frac{m-k+1}{m}n_k+c_k}}\right)^{2^{\delta n_k}} \\
& \quad = \left( 1- \frac{1}{2^{n_k/m+c_k}}\right)^{2^{\delta n_k}} \\
& \quad < \epsilon.
\end{align*}

\item[Case 3:] Suppose $\sigma$ is $k$-sparse for all $k \in \{1,\ldots,m-1\}$. In particular, $\sigma$ is $(m-1)$-sparse and so an element of $\mathcal{A}_N$. Thus, $\Prob(\textsf{$\mathcal{A}_N$ has no substring of $\sigma$}) = 0 < \epsilon$.

\end{description}
\end{proof}

\begin{proof}[Proof of \cref{delta-c-shift complex reals form deep pi01 class}.]

With $\mathbf{M}$ a universal left r.e.\ continuous semimeasure on $\{0,1\}^\ast$, by \cref{levin and zvonkin} there is a partial recursive functional $\Psi$ such that for every $\sigma \in \{0,1\}^\ast$, 
\begin{equation*}
\mathbf{M}(\sigma) = \lambda(\Psi^{-1}(\sigma)) = \lambda(\{ X \in \cantor \mid \Psi^X \supseteq \sigma\}).
\end{equation*}

Say that $Y \in \{0,1\}^\ast \cup \cantor$ \emph{avoids} a $k$-tuple of finite sets of strings $\langle A_1,A_2,\ldots,A_k \rangle$ if $|Y| \geq \max\{ |\sigma| \mid \sigma \in \bigcup_{i=1}^k{A_i}\}$ and no substring of $Y$ is an element of $\bigcup_{i=1}^k{A_i}$. Finite sets of strings in $\{0,1\}^\ast$ are implicitly \godel\ numbered by some recursive bijection $\mathcal{P}_\fin(\{0,1\}^\ast) \to \mathbb{N}$.

Our approach, roughly, involves us finding natural numbers $n<N$ and sets $A_n,A_{n+1},\ldots,A_N$ satisfying the following conditions:
\begin{enumerate}[(i)]
\item $A_i$ is a subset of $\{0,1\}^i$ of size at most $2^{\delta i}$ for each $i \in \{n,n+1,\ldots,N\}$ and
\item $\lambda(\{ X \in \cantor \mid \text{$\Psi(X)$ avoids $A_n,A_{n+1},\ldots,A_N$}\}) < \epsilon$.
\end{enumerate}
The claim is then that each element $\tau$ of $\bigcup_{i=n}^N{A_i}$ satisfies $\pfc(\tau) < \delta |\tau|-c$, so $\Psi(X)$ being $\langle\delta,c\rangle$-shift complex implies that $\Psi(X)$ avoids the sets $A_n,A_{n+1},\ldots,A_N$. Then 
\begin{equation*}
\lambda(\{ X \in \cantor \mid \Psi(X) \in \shiftcomplex(\delta,c)\}) \leq \lambda(\{ X \in \cantor \mid \text{$\Psi(X)$ avoids $A_n,A_{n+1},\ldots,A_N$} \}) < \epsilon.
\end{equation*} 

There are two issues that we must work around, the first being that $\Psi(X)$ need not be an element of $\cantor$, and the second of which is that an element $\tau$ of $\bigcup_{j=n}^N{A_j}$ need not necessarily satisfy $\pfc(\tau) < \delta |\tau| - c$.

We start with addressing the first issue. Our use of avoidance only requires that $|\Psi^X| \geq N$. \cref{rumyantsev lemma 6} shows that as a recursive function of $\langle n_0,m\rangle \in \mathbb{N}^2$, we can find natural numbers $n < N$ and random variables $\mathcal{A}_n,\mathcal{A}_{n+1},\ldots,\mathcal{A}_N$ such that:
\begin{enumerate}[(i)]
\item $n \geq n_0$,
\item $\mathcal{A}_k$ is a subset of $\{0,1\}^k$ of size at most $2^{\delta k /3}$ (the use of $\delta/3$ instead of $\delta$ will become apparent when dealing with the second issue) for each $k \in \{n,n+1,\ldots,N\}$, and
\item for every string $\sigma \in \{0,1\}^N$, the probability that $\sigma$ has no substring in $\bigcup_{i=n}^N{\mathcal{A}_i}$ is less than $2^{-m}$.
\end{enumerate}
Fix $n_0$ and $m$ and let $n<N$ and $\mathcal{A}_n,\mathcal{A}_{n+1},\ldots,\mathcal{A}_N$ be as above. 

Let $S \coloneq \{ X \in \cantor \mid |\Psi^X| \geq N\}$.
$S$ is $\Sigma^0_1$, so there exists a recursive sequence $\langle \sigma_n\rangle_{n\in\mathbb{N}}$ of pairwise incompatible strings such that $S = \bigcup_{n\in\mathbb{N}}{\bbracket{\sigma_n}_2}$. Let $\alpha \coloneq \lambda(S)$, so that $\langle \lambda(\bigcup_{k \leq n}{\bbracket{\sigma_k}_2}) \rangle_{n \in \mathbb{N}}$ is a recursive sequence converging monotonically to $\alpha$ from below. For $i \in \mathbb{N}$, let $\alpha_i \coloneq i \cdot 2^{-m}/3$, and let $i_0$ be the largest $i$ for which $\alpha_i < \alpha$, so that $\alpha - \alpha_{i_0} < 2^{-(m+1)}$. Finally, define
\begin{equation*}
\tilde{S} \coloneq \bigcup_{k \leq p}{\bbracket{\rho_k}_2}
\end{equation*}
where $p$ is the smallest natural number for which $\lambda(\tilde{S}) \geq \alpha_{i_0}$. $\tilde{S}$ is a recursive subset of $S$ and $\lambda(S \setminus \tilde{S}) < 2^{-(m+1)}$. By virtue of being a subset of $S$, $|\Psi^X| \geq N$ for all $X \in \tilde{S}$. $\tilde{S}$ is recursive and an index for $\tilde{S}$ can be computed from $n_0$, $m$, and $i_0$. Define a probability measure $\mu$ on $\{0,1\}^N$ by setting
\begin{equation*}
\mu(\{\sigma\}) \coloneq \lambda(\tilde{S})^{-1} \cdot \lambda(\{X\in \cantor \mid \text{$X \in \tilde{S}$ and $\Psi^X \supseteq \sigma$}\})
\end{equation*}
for each $\sigma \in \{0,1\}^N$. $\mu$ is a computable measure and an index for $\mu$ can be computed from $n_0$, $m$, and $i_0$.

Let $\nu$ be the probability measure on $\mathcal{P}(\{0,1\}^n) \times \mathcal{P}(\{0,1\}^{n+1}) \times \cdots \times \mathcal{P}(\{0,1\}^N)$ defined by
\begin{equation*}
\nu(\{ \langle A_n,A_{n+1},\ldots,A_N\rangle \}) \coloneq \Prob(\textsf{$\mathcal{A}_i = A_i$ for each $i\in\{n,n+1,\ldots,N\}$})
\end{equation*}
(i.e., the joint probability distribution made up of the output distributions of the random variables $\mathcal{A}_n,\mathcal{A}_{n+1}, \\ \ldots,\mathcal{A}_N$). Write
\begin{equation*}
E \coloneq \{ \langle X, \langle A_n,\ldots,A_N \rangle \rangle \in \cantor \times \prod_{i=n}^N{\mathcal{P}(\{0,1\}^i)} \mid \text{$X \in \tilde{S}$ and $\Phi^X$ avoids $A_n,\ldots,A_N$}\}.
\end{equation*}
By Fubini's Theorem,
\begin{align*}
\int{\left(\int{\chi_E(X,(A_n,\ldots,A_N)) \dd \lambda}\right)\dd \nu} & = (\lambda \times \nu)(E) \\
& = \int{\left(\int{\chi_E(X,(A_n,\ldots,A_N))\dd \nu}\right)\dd \lambda} \\
& \leq \int{2^{-(m+1)} \dd \lambda} \\
& = 2^{-(m+1)}.
\end{align*}
If $\int{\chi_E(X,(A_n,\ldots,A_N))\dd \lambda} \geq 2^{-(m+1)}$ for every $\langle A_n,\ldots,A_N\rangle \in \prod_{j=n}^N{\mathcal{P}(\{0,1\}^j)}$, we reach a contradiction. Thus, there is a least one tuple $\langle A_n,\ldots,A_N\rangle \in \prod_{i=n}^N{\mathcal{P}(\{0,1\}^n)}$ with the desired property, i.e., that
\begin{equation*}
\lambda(\{ X \in \cantor \mid \text{$X \in \tilde{S}$ and $\Psi^X$ avoids $A_n,\ldots,A_N$}\}) < 2^{-(m+1)},
\end{equation*}
and hence
\begin{equation*}
\lambda(\{ X \in \cantor \mid \text{$X \in S$ and $\Psi^X$ avoids $A_n,\ldots,A_N$}\}) < 2^{-(m+1)} + 2^{-(m+1)} = 2^{-m}.
\end{equation*}

Let $c_1 \in \mathbb{N}$ be such that $\pfc(n) \leq 2\log_2 n + c_1$ for all $n \in \mathbb{N}$. The recursiveness of $\tilde{S}$ allows us to effectively find such a tuple $\langle A_n,\ldots,A_N\rangle$ for which $\lambda(\{ X \in \cantor \mid \text{$X \in \tilde{S}$ and $\Psi^X$ avoids $\langle A_n,\ldots,A_N \rangle$}\}) < 2^{-(m+1)}$. As noted before, an index for $\tilde{S}$ can be found effectively from $n_0$, $m$, and $i$. As such, there is $c_1'$ such that 
\begin{equation*}
\pfc(A_n,\ldots,A_N) \leq \pfc(n_0) + \pfc(m) + \pfc(i) + c_2'.
\end{equation*}
Although $i$ cannot be found recursively from $n_0$ and $m$ in general, we regardless have the bound have the bound $i \leq (2^{-m}/3)^{-1} = 3 \cdot 2^m$. Thus,
\begin{equation*}
\pfc(i) \leq \max_{0 \leq k \leq \lfloor 3 \cdot 2^m\rfloor}{\pfc(k)} \leq \max_{0\leq k \leq \lfloor 3/\epsilon\rfloor}{(2\log_2 k+c_1)} \leq 2m + 2\log_2 3 + c_1.
\end{equation*}
Let $c_2 = 2\log_2 3 + c_1 + c_2'$, so that for every $\langle n_0,m\rangle$ we have
\begin{equation*}
\pfc(A_n,\ldots,A_N) \leq \pfc(n_0) + \pfc(m) + 2m + c_2.
\end{equation*}

Now we address the second issue. Let $c_3 \in \mathbb{N}$ be such that, where $\tau_i$ is the $i$-th element of $A_{|\tau_i|}$ ordered lexicographically, 
\begin{equation*}
\pfc(\tau_i) \leq \pfc(|\tau_i|) + \pfc(i) + \pfc(A_n,A_{n+1},\ldots,A_N) + c_3.
\end{equation*}
Then for any $\tau \in \bigcup_{j=n}^N{A_j}$ we have
\begin{align*}
\pfc(\tau) & \leq \pfc(|\tau|) + \pfc(\text{index of $\tau$ in $A_{|\tau|}$}) + \pfc(A_n,\ldots,A_N) + c_2 \\
& \leq (2\log_2 |\tau| + c_1) + (2\log_2 (\text{index of $\tau$ in $A_{|\tau|}$}) + c_1) + (\pfc(n_0) + \pfc(m) + 2m + c_2) + c_3 \\
& \leq (2\log_2 |\tau| + c_1)  + ((2/3)\delta |\tau| + c_1) + (2\log_2 n_0 + 2 \log_2 m + 2m + 2c_1 + c_2) + c_3 \\
& = \frac{2}{3}\delta |\tau| + 2\log_2 |\tau| + 2\log_2 n_0 + 2\log_2 m + 2m + (4c_1 + c_2 + c_3).
\end{align*}
Note that $d \coloneq 4c_1 + c_2 + c_3$ is independent of $\langle n_0,m\rangle$. For $|\tau|$ sufficiently large, $\frac{2}{3}\delta |\tau| + 2\log_2 |\tau| + 2\log_2 n_0 + 2\log_2 m + 2m + d < \delta |\tau| - c$. Thus, define $n_0 = n_0(m)$ to be the least $n$ such that $\frac{2}{3}\delta n + 4\log_2 n + 2\log_2 m + 2m + d < \delta n - c$. Finally, define $r\colon \mathbb{N} \to \mathbb{N}$ by $r(m) \coloneq N(n_0(m),m)$. 

If $\Psi(X)$ is $\langle\delta,c\rangle$-shift complex, then $\Psi(X) = \Psi^X \in \cantor$ (therefore $X \in S$) and $\pfc(\tau) \geq \delta |\tau| - c$ for every substring $\tau$ of $\Psi(X)$. 
Then
\begin{align*}
\mathbf{M}(\shiftcomplex(\delta,c) \restrict r(m)) & = \lambda(\{X\in\cantor \mid \text{$|\Psi^X| \geq r(m)$ and $\Psi^X\restrict r(m)$ is $\langle\delta,c\rangle$-shift complex}\}) \\
& \leq \lambda(\{ X\in\cantor \mid \text{$|\Psi^X| \geq r(m)$ and $\Psi^X\restrict r(m)$ avoids $A_n,\ldots,A_N$}\}) \\
& < \frac{1}{2^m}.
\end{align*}
Hence, $\shiftcomplex(\delta,c)$ is deep.
\end{proof}

\begin{cor} \label{difference randoms do not compute shift complex sequences}
No difference random computes a shift complex sequence. Consequently, $\shiftcomplex \weaknleq \mlr$.
\end{cor}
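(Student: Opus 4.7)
The plan is to deduce this corollary almost immediately by combining the deepness result \cref{delta-c-shift complex reals form deep pi01 class} with the prior theorem \cref{difference randoms cannot compute members of deep pi01 classes} that says no difference random computes a member of a deep $\Pi^0_1$ class. For the first assertion, I would argue by contradiction: suppose some difference random $X$ computes a shift complex sequence $Y$. By unpacking the definition of $\shiftcomplex$, $Y$ lies in $\shiftcomplex(\delta,c)$ for some rational $\delta \in (0,1)$ and some $c \in \mathbb{N}$. \cref{delta-c-shift complex reals form deep pi01 class} asserts that $\shiftcomplex(\delta,c)$ is a deep $\Pi^0_1$ class, so \cref{difference randoms cannot compute members of deep pi01 classes} forbids any difference random from computing an element of it, yielding the desired contradiction.

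For the ``consequently'' part, I would show $\shiftcomplex \weaknleq \mlr$ by producing a single Martin-\Lof\ random sequence that computes no member of $\shiftcomplex$. A difference random sequence exists because, as noted after \cref{difference random}, a theorem of Sacks gives $\lambda(\{0'\}^{\turingleq}) = 0$ while $\lambda(\mlr) = 1$, so $\mlr$ contains incomplete (hence difference random) reals. Fixing such an $X \in \mlr$, if $\shiftcomplex \weakleq \mlr$ held then $X$ would have to compute some $Y \in \shiftcomplex$, contradicting the first assertion.

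I do not anticipate a real obstacle: both halves are essentially bookkeeping on top of the deep‐class machinery already developed. The only minor point worth being careful about is to cite the right pairing -- one uses that every element of $\shiftcomplex$ is witnessed in some $\Pi^0_1$ layer $\shiftcomplex(\delta,c)$, so \cref{difference randoms cannot compute members of deep pi01 classes} (rather than the more general $\mathcal{E}_\weak$ version \cref{difference randoms cannot compute member of sets of deep degree}) is all that is needed.
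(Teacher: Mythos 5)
Your proposal is correct and is exactly the intended argument: the paper states this corollary without proof precisely because it follows immediately from \cref{delta-c-shift complex reals form deep pi01 class} together with \cref{difference randoms cannot compute members of deep pi01 classes}, plus the existence of difference randoms in $\mlr$ via Sacks' theorem. The only point worth noting is that the definition of $\shiftcomplex$ allows arbitrary real $\delta \in (0,1)$, so one should pass to a smaller rational $\delta'$ (as the paper does implicitly via $\shiftcomplex = \bigcup_{n,c}\shiftcomplex(2^{-n},c)$) before invoking the depth theorem, but this is immediate by monotonicity.
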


\begin{remark}
$X \in \cantor$ is \emph{Kurtz random} if $X \notin P$ for any $\Pi^0_1$ class $P$ with $\lambda(P) = 0$. \cite[Theorem 6.7]{khan2013shift} shows that for every $\delta \in (0,1)$ there is a $Y \in \shiftcomplex(\delta)$ such that $Y$ computes no Kurtz random. Every Martin-\Lof\ random sequence is Kurtz random, so this shows $\mlr \weaknleq \shiftcomplex(\delta)$ for every $\delta \in (0,1)$.
\end{remark}

\subsection{Shift Complexity and Avoidance} \label{shift complexity and avoidance section}

\cite[Theorem 6.3]{khan2013shift} shows that for each rational $\delta \in (0,1)$ there is an order function $h$ such that $\shiftcomplex(\delta) \weakleq \dnr(h)$. Using the same techniques as we employed in order to quantify the growth rate of $q$ in \cref{greenberg miller theorem 4.9 improved}, we can similarly strengthen \cite[Theorem 6.3]{khan2013shift} by giving explicit bounds.

\begin{thm} \label{quantified Khan shift complex theorem}
Given rational numbers $0 < \delta < \alpha < 1$, define $\pi \colon \mathbb{N} \to (0,\infty)$ by $\pi(n) \coloneq \exp_2((\alpha - \delta) n)$. Then $\shiftcomplex(\delta) \weakleq \ldnr(q)$ for any order function $q$ such that $q(\exp_2((n+1)\cdot \pi(n))) \leq \pi(n)$ for almost all $n \in \mathbb{N}$.
\end{thm}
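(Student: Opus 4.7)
The plan is to adapt the block-construction used in the proof of \cref{greenberg miller theorem 4.9 improved}, replacing its supermartingale-driven avoidance on a bounded tree with the combinatorics of $\delta$-shift complex extensions. To begin, I would fix a rational $\beta$ with $\delta < \beta < 1/(1+\alpha-\delta)$ (which is possible because $\delta(1+\alpha-\delta)$ attains its maximum $(1+\alpha)^2/4<1$ whenever $\delta,\alpha\in(0,1)$) and set $\ell_n\coloneq \lceil(\alpha-\delta)n/(1-\beta)\rceil$, so that $2^{(1-\beta)\ell_n}$ is comparable to $\pi(n)$.

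The key combinatorial claim is that for $c$ sufficiently large and any $\langle\delta,c\rangle$-shift complex $\sigma$, at most $2^{\beta\ell_n}$ of the blocks $\tau\in\{0,1\}^{\ell_n}$ cause $\sigma\concat\tau$ to fail to be $\langle\delta,c\rangle$-shift complex. By Kraft's inequality the set of bad substrings $\rho$ with $\pfc(\rho)<\delta|\rho|-c$ has total weight $\sum_{m}{2^{\delta m-c+1}}$. Bad substrings of length $m\leq\ell_n$ that lie entirely inside $\tau$ force $m$ bits of $\tau$ and can occur at $O(\ell_n)$ positions, contributing $O(\ell_n\cdot 2^{\ell_n-(1-\delta)m})$ summed over $m$; bad substrings of length $m>\ell_n$ must cross the $\sigma$--$\tau$ boundary, and for such a crossing to occur at a given position the relevant suffix of $\sigma$ must match the prefix of $\rho$, which the shift-complexity of $\sigma$ makes sufficiently rare to keep the total below $2^{\beta\ell_n}$.

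With this estimate in hand I would apply \cref{greenberg miller reduction} with parameters $a=\pi(n)$, $b=0$, $c=2^{\beta\ell_n}$ to obtain uniformly recursive functionals $\Psi_n\colon \dnr(\pi(n))\to P^{1,2^{\beta\ell_n}}_{2^{\ell_n}}$ with use bound $|U_n(i)|\leq 2^{\beta\ell_n}\binom{2^{\ell_n}}{\pi(n)}$. As in \cref{greenberg miller theorem 4.9 improved}, the Recursion Theorem for linearly universal partial recursive functions (\cref{recursion theorem for linearly universal partial recursive functions}) packages the stage-by-stage calls into a single linearly universal $\psi$ and cumulative use function $\overline{u}$, so that any $Z\in\avoid^\psi(p)$, for a recursive order function $p$ with $p(\overline{u}(n+1))\leq\pi(n)$, computes a $\langle\delta,c\rangle$-shift complex concatenation $Y=\tau_0\concat\tau_1\concat\cdots$. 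The quantitative check uses $\binom{2^{\ell_n}}{\pi(n)}\leq(e\cdot 2^{\beta\ell_n})^{\pi(n)}$ to give $\log_2|U(n)|\leq \beta\ell_n+\pi(n)(\beta\ell_n+\log_2 e)$; the choice $\beta<1/(1+\alpha-\delta)$ then yields $\log_2\overline{u}(n+1)\leq(n+1)\pi(n)$ for almost all $n$, and \cref{ldnr basic facts} converts the hypothesis $q(2^{(n+1)\pi(n)})\leq\pi(n)$ into the desired reduction $\shiftcomplex(\delta)\weakleq\avoid^\psi(p)\weakleq\ldnr(q)$.

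The main obstacle is the combinatorial estimate in the second paragraph, specifically the contribution of bad substrings of length $m>\ell_n$ that span the boundary between $\sigma$ and $\tau$. A naive count overestimates the number of such configurations by ignoring the constraint that the $\sigma$-side of such a $\rho$ must actually occur as a suffix of $\sigma$; extracting the sharper bound requires a self-bootstrapping argument that exploits the shift-complexity of $\sigma$ itself to control how many bad $\rho$'s can align with suffixes of $\sigma$ of each given length.
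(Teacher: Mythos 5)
Your proposal founders on its central combinatorial claim, and the failure is not in the boundary-crossing case you flag as the ``main obstacle'' but already in the case of substrings lying entirely inside the new block. Fix $c$ and $\delta$; since $\pfc(0^m) \leq 2\log_2 m + O(1)$, there is a \emph{constant} $m_1 = m_1(\delta,c)$ such that $\pfc(0^{m_1}) < \delta m_1 - c$, so $0^{m_1}$ is a forbidden substring. A block $\tau \in \{0,1\}^{\ell_n}$ that contains $0^{m_1}$ anywhere already causes $\sigma\concat\tau$ to fail to be $\langle\delta,c\rangle$-shift complex, and the number of $\tau$ of length $\ell_n$ \emph{avoiding} $0^{m_1}$ is at most $2^{\ell_n}(1-2^{-m_1})^{\lfloor \ell_n/m_1\rfloor} = 2^{(1-\epsilon_0)\ell_n}$ for some constant $\epsilon_0>0$. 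Hence the set of failure-causing blocks has cardinality $2^{\ell_n}(1-o(1))$, not $\leq 2^{\beta\ell_n}$: almost every block is bad. Your own internal estimate $O(\ell_n \cdot 2^{\ell_n-(1-\delta)m})$, summed from the constant $m_1$ upward, already gives $\Omega(\ell_n \cdot 2^{\ell_n})$. Consequently the appeal to \cref{greenberg miller reduction} with forbidden sets of size $2^{\beta\ell_n}$ inside a value space of size $2^{\ell_n}$ cannot be made; no bootstrapping off the shift-complexity of $\sigma$ repairs this, because $\sigma$ plays no role in the internal count. This is exactly why direct extension arguments for shift complexity (Durand--Levin--Shen) only ever show that a proportion of extensions bounded away from $1$ is bad, never an exponentially small set, and why they need symmetry of information even for that.

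The paper's proof sidesteps extension combinatorics entirely. The object that genuinely has the small cardinality you need is $B_n \coloneq \{\sigma \in \{0,1\}^n \mid \pfc(\sigma) < \delta n\}$, which is uniformly r.e.\ of size at most $2^{\delta n}$. One applies \cref{greenberg miller reduction} so that $Z \restrict \overline{u}(n+1)$ computes, for each $n$, a set $S_n \subseteq \{0,1\}^n$ with $B_n \subseteq S_n$ and $|S_n| \leq 2^{\alpha n}$ (by avoiding the $\leq 2^{\delta n}$ enumerated elements of $B_n$ inside a $b$-element subset of $\{0,1\}^n$ and taking the complement), and then invokes \cite[Corollary 6.9]{khan2013shift}, which states $\shiftcomplex(\delta) \weakleq \prod_{n}\mathcal{S}_n$ where $\mathcal{S}_n$ is the collection of such $S_n$. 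That corollary is the nontrivial combinatorial input replacing your false lemma. Your quantitative bookkeeping at the end (the use bounds $|U_n(i)|$, the cumulative $\overline{u}$, the estimate $a\overline{u}(n+1)+b \leq 2^{(n+1)\pi(n)}$, and the passage from $\avoid^\psi(p)$ to $\ldnr(q)$) does mirror the paper's, but it cannot be rescued without replacing the block-extension step by something like the $\prod_n \mathcal{S}_n$ reduction.
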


We may also find an order function $q$ which works for all $\delta \in (0,1)$:

\begin{cor} \label{quantified Khan shift complex theorem explicit example bound}
Fix a rational $\epsilon > 0$. For all rational $\delta \in (0,1)$ we have $\shiftcomplex(\delta) \weakleq \ldnr(\lambda n. (\log_2 n)^{1-\epsilon})$.
\end{cor}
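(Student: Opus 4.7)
The plan is to apply Theorem \ref{quantified Khan shift complex theorem} directly, once an appropriate auxiliary constant $\alpha$ is chosen. Fix a rational $\delta \in (0,1)$ and the given rational $\epsilon > 0$. I would then select any rational $\alpha \in (\delta,1)$ (for concreteness, $\alpha \coloneq (1+\delta)/2$), and set $\pi(n) \coloneq \exp_2((\alpha-\delta)n)$ as in the theorem. Let $q \colon \mathbb{N} \to (1,\infty)$ be the order function $q(n) \coloneq (\log_2 n)^{1-\epsilon}$, with $q$ adjusted on the finitely many $n$ where $(\log_2 n)^{1-\epsilon} \leq 1$ via the standard convention (following the convention for $\ldnr(p)$ already established earlier in the thesis, since this does not change the weak degree). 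Since $\epsilon \in (0,1)$, $q$ is nondecreasing, unbounded, and computable.

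The only thing to verify is the hypothesis of Theorem \ref{quantified Khan shift complex theorem}, namely that $q\bigl(\exp_2((n+1)\pi(n))\bigr) \leq \pi(n)$ for almost all $n \in \mathbb{N}$. Unpacking the left side gives
\begin{equation*}
q\bigl(\exp_2((n+1)\pi(n))\bigr) = \bigl((n+1)\pi(n)\bigr)^{1-\epsilon},
\end{equation*}
so the required inequality becomes $(n+1)^{1-\epsilon} \pi(n)^{1-\epsilon} \leq \pi(n)$, or equivalently
\begin{equation*}
(1-\epsilon)\log_2(n+1) \leq \epsilon (\alpha - \delta)\, n.
\end{equation*}
Since $\alpha > \delta$ and $\epsilon > 0$, the right side grows linearly in $n$ while the left side grows only logarithmically, so the inequality holds for all sufficiently large $n$.

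With the hypothesis verified, Theorem \ref{quantified Khan shift complex theorem} yields $\shiftcomplex(\delta) \weakleq \ldnr(q) = \ldnr(\lambda n.(\log_2 n)^{1-\epsilon})$, as desired. There is no real obstacle here — the argument is an essentially mechanical verification that the $q$ prescribed by the corollary dominates the requirement imposed by the theorem, exploiting the crucial fact that $\log_2(n+1) = o(n)$, which forces any positive power of $\log_2$ to eventually fall below any exponential in $n$ with positive base-$2$ logarithm. The key conceptual point is that the choice of $q$ is \emph{uniform} in $\delta$: a single order function $q$ works simultaneously for every rational $\delta \in (0,1)$ because the required slack $\alpha - \delta$ can be chosen independently of how small $q$ grows relative to any exponential.
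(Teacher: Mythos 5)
Your proof is correct and matches the paper's own argument essentially line for line: choose a rational $\alpha \in (\delta,1)$, take $q(n) = (\log_2 n)^{1-\epsilon}$ (fixed at finitely many small $n$), and verify the hypothesis of Theorem \ref{quantified Khan shift complex theorem} by the computation $q(\exp_2((n+1)\pi(n))) = ((n+1)\pi(n))^{1-\epsilon} \leq \pi(n)$ for almost all $n$, which is exactly the paper's inequality $(n+1)^{1-\epsilon}\cdot 2^{(\alpha-\delta)(1-\epsilon)n} < 2^{(\alpha-\delta)n}$ written in logarithmic form. No gaps; the only implicit point (shared with the paper) is that the formula for $q$ is only sensible for $\epsilon < 1$, as recorded in the summary table.
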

\begin{proof}
Define $q$ by $q(n) \coloneq (\log_2 n)^{1-\epsilon}$ for $n \geq 2$ and $q(0)=q(1)=1$. 
Let $\alpha$ be any rational such that $\delta < \alpha < 1$. Then 
\begin{equation*}
(\log_2 2^{(n+1)\cdot \pi(n)})^{1-\epsilon} = \bigl( (n+1)\cdot 2^{(\alpha - \delta) \cdot n}\bigr)^{1-\epsilon} = (n+1)^{1-\epsilon} \cdot 2^{(\alpha - \delta) \cdot (1-\epsilon) \cdot n} < 2^{(\alpha - \delta) \cdot n}
\end{equation*}
for almost all $n$, so $\shiftcomplex(\delta) \weakleq \ldnr(q)$ by \cref{quantified Khan shift complex theorem}.
\end{proof}

\begin{proof}[Proof of \cref{quantified Khan shift complex theorem}.]
The main idea of the proof follows that of Khan \& Miller in their proof of \cite[Theorem 6.3]{khan2013shift}.

Let $B_n \coloneq \{ \sigma \in \{0,1\}^n \mid \pfc(\sigma) < \delta n\}$ and $\mathcal{S}_n \coloneq \{ S_n \subseteq \{0,1\}^n \mid B_n \subseteq S_n \wedge |S_n| \leq 2^{\alpha \cdot n}\}$. Fix an admissible enumeration $\varphi_\bullet$ and recall that we previously defined
\begin{equation*}
P_a^{b,c} \coloneq \{ F\colon\mathbb{N} \to [a]^b \mid \forall n \forall j < c \qspace (j \in \dom \varphi_n \to \varphi_n(j) \notin F(n))\}
\end{equation*}
where $[a]^b \coloneq \{ S \subseteq \{0,1,2,\ldots,a-1\} \mid |S| = b\}$. 
As in \cref{quantifying greenberg miller proof}, we identify $\dnr(a)$ with $P_a^{1,1}$.

\cref{greenberg miller reduction} shows that, uniformly in $n$, there is a recursive functional $\Psi_n \colon \dnr(\pi(n)) \to \\ P_{2^n}^{2^n-2^{\alpha\cdot n}+1, \pi(n)}$ and recursive function $U_n \colon \mathbb{N} \to \mathcal{P}_\fin(\mathbb{N})$ such that for any $X \in \dnr(\pi(n))$ and $i \in \mathbb{N}$, $X \restrict U_n(i)$ determines $\Psi_n(X)(i)$ and $|U_n(i)| \leq 2^{\delta n}\cdot \binom{2^n}{\pi(n)}$. Given $X \in \dnr(\pi(n))$, $G \coloneq \Psi_n(X)$ is a function $G \colon \mathbb{N} \to [2^n]^{2^n-2^{\alpha \cdot n}+1}$ such that $\varphi_m(j) \notin G(m)$ for all $j < 2^{\delta n}$ and all $m \in \mathbb{N}$. $B_n$ is of cardinality $|B_n| \leq 2^{\delta n}$ and uniformly recursively enumerable, so let $s \colon \mathbb{N} \to \mathbb{N}$ be a primitive recursive function for which $B_n = \{ \varphi_{s(n)}(j) \mid j < 2^{\delta n}\}$ for all $n \in \mathbb{N}$. Then $B_n \cap G(s(n)) = \emptyset$, so letting $S_n \coloneq \{0,1\}^n \setminus G(s(n)) \in \mathcal{S}_n$. Moreover, this process is uniform in $n$ and depends only on $X \restrict U_n(s(n))$.

Define $U \colon \mathbb{N} \to \mathcal{P}_\fin(\mathbb{N})$ by $U(n) \coloneq U_n(s(n))$ for each $n \in \mathbb{N}$ and subsequently define $\overline{u} \colon \mathbb{N} \to \mathbb{N}$ recursively by
\begin{align*}
\overline{u}(0) & \coloneq 0, \\
\overline{u}(n+1) & \coloneq \overline{u}(n) + |U(n)|.
\end{align*}
Finally, define $\psi \colonsub \mathbb{N} \to \mathbb{N}$ by letting
\begin{equation*}
\psi(\overline{u}(n)+j) \simeq \varphi_{\text{$j$-th element of $U(n)$}}(0)
\end{equation*}
for each $n \in \mathbb{N}$ and $j < |U(n)|$.
By construction, uniformly in $n \in \mathbb{N}$ and $X \in \avoid^\psi(\pi(n))$, $X \restrict \overline{u}(n+1)$ can be used to compute an element of $S_n \in \mathcal{S}_n$.

If $p \colon \mathbb{N} \to \mathbb{N}$ is an order function satisfying
\begin{equation*}
p(\overline{u}(n+1)) \leq \pi(n)
\end{equation*}
for all $n \in \mathbb{N}$, then uniformly in $n$ and $X \in \avoid^\psi(p)$, $X \restrict \overline{u}(n+1)$ can be used to compute an element of $S_n \in \mathcal{S}_n$. In other words, for such order functions $p$, we have $\prod_{n \in \mathbb{N}}{\mathcal{S}_n} \weakleq \avoid^\psi(p)$. \cite[Corollary 6.9]{khan2013shift} shows that $\shiftcomplex(\delta) \weakleq \prod_{n \in \mathbb{N}}{\mathcal{S}_n}$, so $\shiftcomplex(\delta) \weakleq \avoid^\psi(p)$.

Suppose $q \colon \mathbb{N} \to \mathbb{N}$ is an order function such that for each $a,b \in \mathbb{N}$ we have $q(an+b) \leq p(n)$ for almost all $n \in \mathbb{N}$.Then $\avoid^\psi(p) \strongleq \ldnr(q)$. In other words, if $q$ is such that for all $a,b \in \mathbb{N}$ we have $q(a\overline{u}(n+1)+b) \leq \pi(n)$ for almost all $n \in \mathbb{N}$, then $\shiftcomplex(\delta) \weakleq \ldnr(q)$. We show that if $q(\exp_2((n+1) \cdot \pi(n))) \leq \pi(n)$ for almost all $n$, then $q$ satisfies this aforementioned condition.

First, we find an upper bound of $\overline{u}$. Recall that $|U_n(i)| \leq 2^{\delta n}\cdot \binom{2^n}{\pi(n)}$ for all $n,i \in \mathbb{N}$, so $|U(n)| = |U_n(s(n))| \leq 2^{\delta n}\cdot \binom{2^n}{\pi(n)}$. Next, 
$\overline{u}(n+1) = \sum_{m \leq n}{|U(m)|} \leq (n+1)\cdot |U(n)|$, so for almost all $n \in \mathbb{N}$,
\begin{equation*}
a\cdot \overline{u}(n+1)+b \leq a(n+1) \cdot |U(n)| + b \leq 3a \cdot n \cdot 2^{\delta n}\cdot \binom{2^n}{\pi(n)} \leq 3a \cdot n \cdot 2^{\delta n}\cdot (2^n)^{\pi(n)} \leq 2^{(n+1) \cdot \pi(n)}.
\end{equation*}
Thus, if $q(2^{(n+1)\cdot \pi(n)}) \leq \pi(n)$ for almost all $n$, then $\shiftcomplex(\delta) \weakleq \ldnr(q)$.
\end{proof}

Because $\shiftcomplex(\delta,c)$ is deep for each rational $\delta \in (0,1)$ and $c \in \mathbb{N}$, if $\shiftcomplex(\delta) \weakleq \ldnr(q)$ then $q$ must be slow-growing. In \cref{SC not weakly below ldnr_slow}, we shall show that $\shiftcomplex \weaknleq \ldnr_\slow$, showing that there are is a slow-growing order function $q$ and an $X \in \ldnr(q)$ such that $X$ computes no shift complex sequence.

\section{Generalized Shift Complexity}
\label{generazlied shift complexity section}

Just as $\complex(f)$ for $f$ a sub-identical order function generalizes the case of $\complex(\delta)$ for $\delta \in \oc{0,1}$, we can replace the map $\tau \mapsto \delta |\tau|$ with any sub-identical order function $\tau \mapsto f(|\tau|)$. Sequences $X$ satisfying $\pfc(\tau) \geq f(|\tau|) - c$ for all substrings $\tau$ of $X$ or $\pfc(X (\co{k,k+n}) \mid k,n) \geq f(|\tau|) - c$ for all $k,n\in\mathbb{N}$ have been considered by Rumyantsev \cite{rumyantsev2011everywhere}, but not give any explicit name to those properties, nor does there seem to be any existing terminology in the literature in that direction. Thus, we propose the following definitions:  

Let $f \colon \mathbb{N} \to \co{0,\infty}$ be an order function.

\begin{definition}[$f$-shift complexity]
$X \in \cantor$ is \ldots
\begin{description}
\item[\ldots] \textdef{$\langle f,c\rangle$-shift complex} if $\pfc(\tau) \geq f(|\tau|) - c$ for every substring $\tau$ of $X$. The set of all $\langle f,c\rangle$-shift complex sequences is denoted by $\shiftcomplex(f,c)$.
\item[\ldots] \textdef{$f$-shift complex} if $X$ is $\langle f,c\rangle$-shift complex for some $c \in \mathbb{N}$. The set of all $f$-shift complex sequences is denoted by $\shiftcomplex(f)$.
\item[\ldots] \textdef{generalized shift complex} if $X$ is $f$-shift complex for some order function $f$.
\end{description}
\end{definition}

\begin{definition}[strong $f$-shift complexity]
$X \in \cantor$ is\ldots
\begin{description}
\item[\ldots] \textdef{strongly $\langle f,c\rangle$-shift complex} if $\pfc(X(\co{k,k+n}) \mid k,n) \geq f(n) - c$ for all $k, n \in \mathbb{N}$. The set of all strongly $\langle f,c\rangle$-shift complex sequences is denoted by $\stronglyshiftcomplex(f,c)$.
\item[\ldots] \textdef{strongly $f$-shift complex} if $X$ is strongly $\langle f,c\rangle$-shift complex for some $c \in \mathbb{N}$. The set of all strongly $f$-shift complex sequences is denoted by $\stronglyshiftcomplex(f)$.
\item[\ldots] \textdef{generalized strongly shift complex} if $X$ is strongly $f$-shift complex for some order function $f$.
\end{description}
\end{definition}

We start by addressing the existence or nonexistence of (strongly) $f$-shift complex sequences. If $\delta \coloneq \limsup_n{\frac{f(n)}{n}} < 1$, then $\stronglyshiftcomplex(f) \neq \emptyset$ since every strongly $\delta$-shift complex sequence is strongly $f$-shift complex. If $\delta \geq 1$, on the other hand, then we can show $\shiftcomplex(f) = \emptyset$.

\begin{prop}
Suppose $f \colon \mathbb{N} \to \co{0,\infty}$ is an order function. Let $\delta \coloneq \limsup_n{\frac{f(n)}{n}}$.
\begin{enumerate}[(a)]
\item If $\delta < 1$, then $\stronglyshiftcomplex(f) \neq \emptyset$.
\item If $\delta \geq 1$, then $\shiftcomplex(f) = \emptyset$.
\end{enumerate}
\end{prop}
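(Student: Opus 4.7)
For part (a), I would choose a rational $\delta'$ with $\delta < \delta' < 1$ and invoke the existence theorem for strongly shift complex reals stated earlier in the chapter: it furnishes $X \in \cantor$ and $c_0 \in \mathbb{N}$ with $\pfc(X(\co{k,k+n}) \mid k,n) \geq \delta' n - c_0$ for all $k, n \in \mathbb{N}$. Since $\limsup_n f(n)/n = \delta < \delta'$, we have $f(n) \leq \delta' n$ for all but finitely many $n$, and absorbing the bounded contribution of the exceptional $n$'s into a larger constant $c$ yields $X \in \stronglyshiftcomplex(f,c)$.

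For part (b), I would argue by contradiction, supposing $X \in \shiftcomplex(f,c)$. The key observation is elementary: each initial segment $X \restrict n$ is itself a length-$n$ substring of $X$, so the shift complexity hypothesis forces $\pfc(X \restrict n) \geq f(n) - c$ for every $n$. Combined with the universal upper bound $\pfc(X \restrict n) \leq n + 2 \log_2 n + O(1)$ from \cref{prefix-free complexity facts}(c), this yields $f(n) \leq n + 2\log_2 n + O(1) + c$, so $\limsup_n f(n)/n \leq 1$. This immediately contradicts $\delta > 1$.

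The main obstacle is the boundary case $\delta = 1$, where the inequality above is too weak to contradict directly. My plan is to leverage the full shift complexity hypothesis rather than only its consequence for initial segments. The shift complexity of $X$ passes to every element of the subshift $\Sigma_X = \overline{\{X_k : k \geq 0\}}$, since every substring of such a limit point is already a substring of $X$. In the typical situation where $f(n) \geq n - O(1)$ for all sufficiently large $n$, $X$ itself is already Martin-L\"of random, and the disjunctiveness of Martin-L\"of random sequences produces the substring $0^m$ for every $m$; since $\pfc(0^m) = O(\log m)$ eventually falls below $f(m) - c$, shift complexity is violated. The more delicate sub-case, in which $\delta = 1$ is attained only along a subsequence while $f$ dips further below the identity elsewhere, can be handled by a corresponding topological-entropy argument showing that $\Sigma_X$ must have entropy $1$ and therefore still contains arbitrarily long low-complexity substrings such as $0^m$, producing the same contradiction.
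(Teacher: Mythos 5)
Your part (a) is correct and is essentially the paper's own (one-line) argument: the paper notes, just before the proposition, that any strongly shift complex sequence for a suitable exponent witnesses $\stronglyshiftcomplex(f) \neq \emptyset$; your choice of a rational $\delta' \in (\delta,1)$ and absorption of the finitely many exceptional $n$ into the constant is, if anything, slightly more careful than the paper's phrasing. Your $\delta > 1$ argument for (b) via initial segments is also fine, though it turns out to be unnecessary once the hard case is handled.

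The genuine gap is the case $\delta = 1$, which is the real content of (b), and there your decisive step is asserted rather than proved: you claim $\Sigma_X$ ``must have entropy $1$,'' but nothing in the outline forces this. A Kolmogorov-complexity lower bound on the initial segments of $X$ does not by itself lower-bound the entropy of the orbit closure, because turning a small language into a short description of $X \restrict n$ requires that language (or a computable superset of it) to be effectively given. The implication you actually need is: if some word $w$ of length $L$ is \emph{not} a substring of $X$, then $X$ lies in the computable subshift of sequences avoiding $w$, whose admissible words of length $n$ number at most $(2^L-1)^{\lceil n/L \rceil} \leq 2^{hn+O(1)}$ with $h = \tfrac{1}{L}\log_2(2^L-1) < 1$; describing $X \restrict n$ by its index among these words gives $\pfc(X \restrict n) \leq hn + 2\log_2 n + O(1)$, which contradicts $\pfc(X \restrict n) \geq f(n) - c$ along a subsequence with $f(n)/n \to 1$. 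Hence $X$ is disjunctive, so $0^m$ occurs in $X$ for every $m$, while $\pfc(0^m) = O(\log m) < f(m) - c$ for infinitely many $m$ (using that $f(m) > m/2$ infinitely often), giving the contradiction. This missing lemma --- a sequence omitting some word has $\limsup_n \pfc(X \restrict n)/n < 1$ --- is exactly what the paper invokes as \cite[Proposition 3.6]{khan2013shift}, and it disposes of all $\delta \geq 1$ at once, making both your separate $\delta > 1$ argument and your Martin-L\"of sub-case superfluous. Note also that your inference ``entropy $1$, therefore $0^m$ occurs'' silently uses the same small fact (any forbidden word caps the entropy strictly below $1$), which should be stated if you keep the entropy formulation.
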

\begin{proof}
Suppose $\delta \geq 1$. If $X$ is $f$-shift complex, then $\limsup_n{\frac{X \restrict n}{n}} \geq 1$. But if $X$ is $f$-shift complex, then $0^n$ cannot be a substring of $X$ for infinitely many $n$ since there is $c \in \mathbb{N}$ such that $\pfc(0^n) \leq 2\log_2 n + c$ and $\limsup_n{\frac{f(n)}{n}} \geq 1$ implies that for infinitely many $n$ we have $f(n) > \frac{1}{2}n > 2 \log_2 n + c$. \cite[Proposition 3.6]{khan2013shift} shows that if $X$ does not contain every string as a substring, then $\limsup_n{\frac{X \restrict n}{n}} < 1$, giving a contradiction.
\end{proof}

Just as $\shiftcomplex(\delta,c)$ and $\stronglyshiftcomplex(\delta,c)$ for a recursive $\delta \in (0,1)$ and $c \in \mathbb{N}$ are $\Pi^0_1$, $\shiftcomplex(f,c)$ and $\stronglyshiftcomplex(f,c)$ are $\Pi^0_1$, and consequently $\weakdeg(\shiftcomplex(f,c))$, $\weakdeg(\shiftcomplex(f))$, $\weakdeg(\stronglyshiftcomplex(f,c))$, and $\weakdeg(\stronglyshiftcomplex(f))$ lie in  $\mathcal{E}_\weak$ for any order function satisfying $\limsup_n{\frac{f(n)}{n}} < 1$.

\begin{prop} \label{generalized shift complex classes are pi01}
Suppose $f$ is an order function and $c \in \mathbb{N}$. Then $\shiftcomplex(f,c)$ and $\stronglyshiftcomplex(f,c)$ are $\Pi^0_1$, both uniformly in $\delta$, $c$. Consequently, $\weakdeg(\shiftcomplex(f,c))$ (for $c$ sufficiently large), $\weakdeg(\shiftcomplex(f))$,  $\weakdeg(\stronglyshiftcomplex(f,c))$, and $\weakdeg(\stronglyshiftcomplex(f))$ all lie in $\mathcal{E}_\weak$ if $\limsup_n{\frac{f(n)}{n}} < 1$.
\end{prop}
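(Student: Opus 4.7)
The plan is to imitate the proofs of \cref{shift complex classes are pi01} and \cref{strongly shift complex classes are pi01} essentially verbatim, replacing the linear function $n \mapsto \delta n$ by the computable function $n \mapsto f(n)$. Since $f$ is an order function, it is computable, so the condition ``$|\sigma| \geq f(n) - c$'' remains a recursive predicate of $\sigma$, $n$, and $c$. I will then use the \nameref{embedding lemma} in the same way as before to pass from the $\Sigma^0_2$ unions to weak degrees in $\mathcal{E}_\weak$, using the hypothesis $\limsup_n f(n)/n < 1$ only to guarantee nonemptiness.

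First, I would fix the universal prefix-free machine $U$ with $\pfc = \pfc_U$ and an index $e$ such that $\varphi_e(\str^{-1}\sigma) \simeq \str^{-1} U(\sigma)$. Then
\begin{equation*}
X \in \shiftcomplex(f,c) \iff \forall n \forall k \forall \sigma \forall s \qspace \bigl(\varphi_{e,s}(\str^{-1}\sigma) \converge = X(\co{k,k+n}) \to |\sigma| \geq f(n) - c\bigr),
\end{equation*}
and the matrix is recursive uniformly in $\langle n, k, \sigma, s, c \rangle$ because $f$ is recursive. Hence $\shiftcomplex(f,c)$ is $\Pi^0_1$ uniformly in $c$. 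For the strong version I would invoke the universal oracle prefix-free machine from the definition of conditional prefix-free complexity, pick an index $e$ with $\varphi_e^\tau(\str^{-1}\sigma) \simeq \str^{-1} U^{\overline\tau}(\sigma)$, and write
\begin{equation*}
X \in \stronglyshiftcomplex(f,c) \iff \forall n \forall k \forall \sigma \forall s \qspace \bigl(\varphi_{e,s}^{\str \pi^{(2)}(n,k)}(\str^{-1}\sigma) \converge = X(\co{k,k+n}) \to |\sigma| \geq f(n) - c\bigr),
\end{equation*}
again a $\Pi^0_1$ predicate uniformly in $c$. Taking unions over $c$ shows $\shiftcomplex(f)$ and $\stronglyshiftcomplex(f)$ are $\Sigma^0_2$.

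For the $\mathcal{E}_\weak$ conclusion I need nonemptiness. Assuming $\limsup_n f(n)/n < 1$, fix a rational $\delta$ with $\limsup_n f(n)/n < \delta < 1$, so that $f(n) \leq \delta n$ for all $n \geq N$ for some $N$, and let $M = \max_{n < N} f(n)$. For any $X \in \shiftcomplex(\delta, c)$ and any substring $\tau$ of $X$, either $|\tau| \geq N$, in which case $\pfc(\tau) \geq \delta |\tau| - c \geq f(|\tau|) - c$, or $|\tau| < N$, in which case $\pfc(\tau) \geq 0 \geq f(|\tau|) - M$. Thus $\shiftcomplex(\delta, c) \subseteq \shiftcomplex(f, \max\{c, M\})$, and the same argument works with $\apc$ in place of $\pfc$ to give $\stronglyshiftcomplex(\delta, c) \subseteq \stronglyshiftcomplex(f, \max\{c, M\})$. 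Since $\shiftcomplex(\delta, c)$ and $\stronglyshiftcomplex(\delta, c)$ are nonempty for all sufficiently large $c$, the same holds for $\shiftcomplex(f, c')$ and $\stronglyshiftcomplex(f, c')$ for all sufficiently large $c'$. So each of $\shiftcomplex(f,c')$ and $\stronglyshiftcomplex(f,c')$ is a nonempty $\Pi^0_1$ subset of $\cantor$, giving its weak degree directly in $\mathcal{E}_\weak$, and applying the \nameref{embedding lemma} to the $\Sigma^0_2$ unions places $\weakdeg(\shiftcomplex(f))$ and $\weakdeg(\stronglyshiftcomplex(f))$ in $\mathcal{E}_\weak$ as well.

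There is no real obstacle here; the only subtlety is being careful that ``sub-identical'' is not assumed in the hypothesis, so the nonemptiness argument genuinely needs the stronger assumption $\limsup_n f(n)/n < 1$, and that the additive correction $\max\{c, M\}$ is needed to absorb the finitely many short substrings on which $f$ may exceed $\delta |\tau|$.
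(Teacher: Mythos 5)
Your proposal is correct and is essentially the paper's own argument: the paper disposes of this proposition by declaring it analogous to the proofs for $\shiftcomplex(\delta,c)$ and $\stronglyshiftcomplex(\delta,c)$, which is exactly the substitution $\delta n \mapsto f(n)$ (recursive since $f$ is an order function) that you carry out, with nonemptiness under $\limsup_n f(n)/n < 1$ handled by the same containment $\shiftcomplex(\delta,c) \subseteq \shiftcomplex(f,\max\{c,M\})$ that the paper uses in the preceding existence proposition. One small slip: for the strong version the relevant quantity is the conditional prefix-free complexity $\pfc(X(\co{k,k+n}) \mid k,n)$, not $\apc$, though your containment argument goes through verbatim with that correction.
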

\begin{proof}
Analogous to \cref{shift complex classes are pi01} and \cref{strongly shift complex classes are pi01}.
\end{proof}

\section{Generalized Shift Complexity and Depth}
\label{generalized shift complexity depth section}

The depth of $\shiftcomplex(\delta,c)$ for any recursive $\delta \in (0,1)$ suggests the following question.

\begin{question} \label{depth of f-shift complex question}
Suppose $f\colon \mathbb{N} \to \co{0,\infty}$ is an order function. Under what conditions on $f$ is $\shiftcomplex(f,c)$ deep?
\end{question}

\begin{question} \label{depth of strong f-shift complex question}
Suppose $f\colon \mathbb{N} \to \co{0,\infty}$ is an order function. Under what conditions on $f$ is $\stronglyshiftcomplex(f,c)$ deep?
\end{question}

Partial answers to \cref{depth of f-shift complex question} exist, showing non-negligibility for sufficiently slow-growing, well-behaved order functions $f$. On the other hand, \cref{depth of strong f-shift complex question} can be completely answered: $\stronglyshiftcomplex(f,c)$ is always deep.

\begin{thm}  \label{rumyantsev nonnegligible}
\textnormal{\cite[Theorem 4]{rumyantsev2011everywhere}}
Suppose $\langle a_m\rangle_{m\in\mathbb{N}}$ is a recursive sequence of nonnegative rational numbers such that $\sum_{m=0}^\infty{a_m} < \infty$. Define $f\colon\mathbb{N} \to \co{0,\infty}$ by $f(0) \coloneq 0$ and $f(n) \coloneq a_{\lfloor \log_2 n \rfloor} n$ for $n \in \mathbb{N}_{>0}$. Then $\shiftcomplex(f)$ is non-negligible.
\end{thm}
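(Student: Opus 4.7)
The plan is to prove non-negligibility by a constructive version of the Lovász Local Lemma (LLL), relativized to a random oracle. First I set up the bad events: for each string $\sigma \in \{0,1\}^n$ with $\pfc(\sigma) < f(n) - c$ and each starting position $k \in \mathbb{N}$, let $B_{\sigma,k}$ denote the event that $X(\co{k,k+n}) = \sigma$. Then $X \in \shiftcomplex(f,c)$ if and only if none of the $B_{\sigma,k}$ occur. Two events $B_{\sigma,k}$ and $B_{\tau,j}$ are probabilistically dependent exactly when the position intervals $[k,k+|\sigma|)$ and $[j,j+|\tau|)$ overlap. Under the fair-coin measure, $\Pr(B_{\sigma,k}) = 2^{-n}$, and by Kraft's inequality (\cref{kraft's inequality}) the number of strings of length $n$ with $\pfc(\sigma) < f(n) - c$ is at most $2^{f(n) - c} = 2^{a_m n - c}$ where $m = \lfloor \log_2 n \rfloor$.

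Next I verify the weighted LLL condition. Assign to each bad event $B_{\sigma,k}$ a weight of the form $\xi(B_{\sigma,k}) = 2^{-(1-\beta)n}$ for a suitably small $\beta$ depending on $c$. The verification of LLL reduces to showing that for each event $B_{\sigma,k}$, the sum
\[
\sum_{(\tau,j) \sim (\sigma,k)} \xi(B_{\tau,j}) \;\leq\; \sum_{n'} (n' + |\sigma|) \cdot 2^{a_{m'} n' - c} \cdot 2^{-(1-\beta)n'}
\]
is bounded by a constant less than $1$. Splitting the sum over $n'$ into dyadic scales $n' \in [2^{m'}, 2^{m'+1})$, the inner geometric sum in $n'$ at each scale is bounded by $O(2^{m'} \cdot 2^{(a_{m'} - (1-\beta))2^{m'}})$, which for $c$ large and $\beta$ small is dominated by $a_{m'} \cdot$(constant). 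Thus the total bound is $O(\sum_{m'} a_{m'})$, finite by hypothesis. Choosing $c$ large enough lets this fall below the LLL threshold.

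Finally, to convert existence into non-negligibility I invoke the Moser-Tardos resampling algorithm, applied to a random oracle $Y \in \cantor$. Viewing the bits of $Y$ as the source of all random coin flips used to initialize $X$ and to resample positions inside detected bad events, the algorithm becomes a partial recursive functional $\Psi$. Under the verified LLL condition the expected number of resamplings per bit of $X$ is finite, so $\Psi$ converges on a measurable set of oracles $Y$ of full measure, and on that domain $\Psi(Y) \in \shiftcomplex(f,c)$. Hence $\lambda(\shiftcomplex(f)^{\turingleq}) \geq \lambda(\dom\Psi) > 0$, which is exactly the non-negligibility of $\shiftcomplex(f)$.

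The main obstacle is making the Moser-Tardos construction work for an \emph{infinite} sequence $X$ with \emph{infinitely many} bad events, where the usual finite-case analysis does not literally apply. The key point is to show, via a prefix-compactness argument and the convergence of $\sum a_m$, that the resampling process stabilizes on each finite prefix of $X$ with probability one over $Y$, so the output $\Psi(Y)$ is well-defined on a set of positive measure. The summability hypothesis enters in exactly this place: without it the local sums controlling the dependency weights diverge and the resampling tree does not terminate with positive probability.
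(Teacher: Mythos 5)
Your approach (LLL plus Moser--Tardos driven by a random oracle) is genuinely different from the paper's proof, but as written it has a real gap at precisely the step you defer to a ``prefix-compactness argument'': turning the infinite resampling process into a partial recursive functional $\Psi$. Two obstacles arise. First, the bad events are only recursively \emph{enumerable}, not decidable: $\pfc(\sigma) < f(|\sigma|) - c$ is a $\Sigma^0_1$ condition, so the algorithm can never certify that a window is currently good, and new forbidden strings of every length keep appearing throughout the run; this is exactly the feature that makes shift complex sequences noncomputable, and it is also why the known effective versions of LLL (Rumyantsev--Shen's computable Moser--Tardos) require the forbidden set of factors to be decidable. Second, even granting that each bit of the process stabilizes almost surely over the oracle $Y$, a Turing functional must commit to output bits at finite stages and never retract them; computing the limit of an a.e.\ stabilizing process from $Y$ requires a computable modulus of stabilization, and without one the limit is in general only $\Delta^0_2(Y)$, which does not give $\Psi(Y) \turingleq Y$ and hence does not give non-negligibility. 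Since all the measure-theoretic content of the theorem lives in this effectivization step, the argument as proposed does not establish the result. (A smaller, fixable issue: your scale-by-scale bound ``$O(2^{m'}\cdot 2^{(a_{m'}-(1-\beta))2^{m'}})$ dominated by $a_{m'}\cdot$const'' fails at the finitely many scales with $a_{m'} \geq 1-\beta$, where the count of bad strings saturates at $2^{n'}$; those scales must be handled separately by taking $c$ large.)

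For comparison, the paper avoids randomized constructions entirely: it builds a single \emph{total} recursive functional $\Psi$ that copies the bits of an arbitrary input $X$ into $\Psi(X)$ along arithmetic progressions, allotting density $b_m$ (with $\sum_m b_m < \infty$, $b_m = 2a_m + m^2/2^m$) to progressions of modulus $2^{m_0+s}$. Then every window $\Psi(X)(\co{k,k+2^m})$, together with $k \bmod 2^m$ and $m$, recursively recovers $X \restrict b_m 2^m$, so if $X$ is Martin-L\"of random the incompressibility of $X$ forces $\pfc$ of the window to be at least $b_m 2^m - O(\log)$, which after absorbing the $\pfc(k \bmod 2^m) + \pfc(m)$ overhead (this is what the extra $m^2/2^m$ in $b_m$ pays for) yields $f$-shift complexity. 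This gives the reduction on \emph{every} ML-random, hence a measure-one Turing upward closure, which is stronger than the positive-measure conclusion your route would deliver even if the effectivization were repaired.
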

\begin{proof}
We may assume without loss of generality that $a_m2^m \in\mathbb{N}$ for all $m$: Given $m \in \mathbb{N}$, consider the sequence $\langle\frac{\lceil a_m2^m \rceil}{2^m}\rangle_{m \in \mathbb{N}}$. As $0 \leq \lceil a_m2^m\rceil - a_m2^m \leq 1$, it follows that $0 \leq \frac{\lceil a_m2^m \rceil}{2^m} - a_m \leq \frac{1}{2^m}$. In particular, $\sum_{m=0}^\infty{a_m}$ converges if and only if $\sum_{m=0}^\infty{\frac{\lceil a_m2^m \rceil}{2^m}}$ converges. Letting $g\colon\mathbb{N} \to \mathbb{R}$ be defined by $g(n)\coloneq \frac{\lceil a_{\lfloor log_2 n\rfloor}2^{\lfloor \log_2 n\rfloor} \rceil}{2^{\lfloor \log_2 n \rfloor}} n$, we have $\shiftcomplex(g) \subseteq \shiftcomplex(f)$. Thus, non-negligibility of $\shiftcomplex(g)$ implies non-negligibility of $\shiftcomplex(f)$, so by potentially replacing $\langle a_m\rangle_{m \in \mathbb{N}}$ with $\langle \frac{\lceil a_m2^m \rceil}{2^m}\rangle_{m\in\mathbb{N}}$, we may assume that $a_m2^m \in \mathbb{N}$ for each $m \in \mathbb{N}$. 

Define $\langle b_m \rangle_{m \in \mathbb{N}}$ by setting $b_m \coloneq 2a_m + \frac{m^2}{2^m}$ for $m \in \mathbb{N}$. Note that $\sum_{m=0}^\infty{b_m}$ converges if and only if $\sum_{m=0}^\infty{a_m}$ converges, and that $2^mb_m \in \mathbb{N}$ for each $m \in \mathbb{N}$. The role of $\langle b_m\rangle_{m \in \mathbb{N}}$ will be to account for several subtleties arising later.

We define a real $\Psi(X)$ in stages.
\begin{description}
\item[Stage $s=0$.] Let $m_0$ be the least natural number $m$ for which $\sum_{k=m}^\infty{b_k} \leq 1$. Because $\sum_{m=0}^\infty{\frac{m^2}{2^m}} = 6$, $m_0 > 0$. Split $\mathbb{N}$ into arithmetic progressions with constant difference $2^{m_0}$, i.e., consider the $2^{m_0}$ sequences $\langle i+k2^{m_0}\rangle_{k \in \mathbb{N}}$ for $0 \leq i < 2^{m_0}$. 

For $0 \leq i < b_{m_0}2^{m_0}$ and $k \in \mathbb{N}$, define 
\begin{equation*}
\Psi(X)(i+k2^{m_0}) \coloneq X(i).
\end{equation*}

Note that $2^{m_0} - b_{m_0}2^{m_0} = (1-b_{m_0})2^{m_0} > 0$ of the $2^{m_0}$ arithmetic progressions with constant difference $2^{m_0}$ remain. 

\item[Stage $s>0$.] Suppose $(1-(b_{m_0}+b_{m_0+1}+ \cdots + b_{m_0+s-1})) \cdot 2^{m_0+s-1}$ arithmetic progressions with constant difference $2^{m_0+s-1}$ remain. These yield $N=(1-(b_{m_0}+b_{m_0+1}+\cdots+b_{m_0+s-1})) \cdot 2^{m_0+s}$ arithmetic progressions with constant difference $2^{m_0+s}$ (an arithmetic progression $\langle a+bk \rangle_{k\in\mathbb{N}}$ with constant difference $b$ can be split into two arithmetic progressions $\langle a+2bk \rangle_{k\in\mathbb{N}}$ and $\langle a+b+2bk \rangle_{k\in\mathbb{N}}$ with constant difference $2b$). Let $0 \leq i_0 < i_1 < \cdots < i_{N-1} < 2^{m_0+s}$ be such that $\langle i_j+k2^{m_0+s} \rangle_{k \in \mathbb{N}}$ enumerates those $N$ arithmetic progressions with constant difference $2^{m_0+s}$ which remain as $j$ ranges over $\{0,1,2,\ldots, N-1\}$. 

$\sum_{m=m_0}^\infty{b_m} \leq 1$ implies $N-b_{m_0+s} \cdot 2^{m_0+s} = (1-(b_{m_0}+b_{m_0+1}+\cdots + b_{m_0+s-1}+b_{m_0+s})) \cdot 2^{m_0+s} > 0$. Thus, for $0 \leq j < b_{m_0+s} \cdot 2^{m_0+s}$ and $k \in \mathbb{N}$, define
\begin{equation*}
\Psi(X)(i_j+k2^{m_0+s}) \coloneq X(j).
\end{equation*}

Note that $N - b_{m_0+s} \cdot 2^{m_0+s} = (1-(b_{m_0}+b_{m_0+1}+\cdots + b_{m_0+s-1}+b_{m_0+s}))\cdot 2^{m_0+s}$ arithmetic progressions with constant difference $2^{m_0+s}$ remain.
\end{description}

$\Psi(X)$ is an element of $\cantor$, as at Stage $s$, $\Psi(X)$ is defined at (among other indices) the least index at which $\Psi(X)$ was undefined previously. The above procedure is uniformly recursive in $X$, so $\Psi$ is a total recursive functional.

We must now show that $\pfc(\Psi(X)(\co{k,k+n})) \geq a_{\lfloor \log_2 n\rfloor}n - c$ for some $c \in \mathbb{N}$ and all $k,n \in \mathbb{N}$.

Suppose $k,m \in \mathbb{N}$ and $m_0 \leq m$, and consider the substring $\Psi(X)(\co{k,k+2^m})$. By the construction above, from $\Psi(X)(\co{k,k+2^m})$, $k \bmod 2^m$, and $m$, one can recursively recover $X \restrict b_m2^m$, so there is a $c_1 \in \mathbb{N}$ such that 
\begin{equation*}
\pfc(\Psi(X)(\co{k,k+2^m})) + \pfc(k\bmod 2^m) + \pfc(m) \geq^+ \pfc(X \restrict b_m2^m)
\end{equation*}
for all $k,m \in \mathbb{N}$. 
If $X$ is Martin-\Lof\ random, then there is a $c_2 \in \mathbb{N}$ such that $\pfc(X \restrict b_m2^m) \geq b_m2^m - c_2$ for all $m \in \mathbb{N}$, and consequently
\begin{equation*}
\pfc(\Psi(X)(\co{k,k+2^m})) + \pfc(k\bmod 2^m) + \pfc(m) \geq b_m2^m - (c_1+c_2)
\end{equation*}
for all $k,m \in \mathbb{N}$.

Now we use the definition of $\langle b_m \rangle_{m\in\mathbb{N}}$ to make our desired conclusion about $\langle a_m \rangle_{m\in\mathbb{N}}$. Noting that
\begin{align*}
\pfc(k \bmod 2^m) & \leq \max_{0 \leq k < 2^m}{\pfc(k)} \leq 2m + c_3, \\
\pfc(m) & \leq 2\log_2 m + c_4,
\end{align*}
for some $c_3,c_4 \in \mathbb{N}$ and all $k,m \in \mathbb{N}$, then for $m \geq m_0$ and $k \in \mathbb{N}$ we have
\begin{align*}
\pfc(\Psi(X)(\co{k,k+2^m})) & \geq b_m2^m - \pfc(k\bmod 2^m) - \pfc(m) - (c_1+c_2) \\
& \geq (2a_m+m^2/2^m)2^m - 2m - 2\log_2 m - (c_1+c_2+c_3+c_4) \\
& = a_m2^{m+1} + m^2 - 2m - 2\log_2 m - (c_1+c_2+c_3+c_4).
\end{align*}
$m^2-2m-2\log_2 m \geq -3$ for all $m \in \mathbb{N}$, so with $c = c_1+c_2+c_3+c_4+3$, for all $k$ and all $m \in \mathbb{N}$ we have
\begin{equation*}
\pfc(\Psi(X)(\co{k,k+2^m})) \geq a_m2^{m+1} - c \geq a_m2^m - c.
\end{equation*}
In other words, if $\tau$ is a substring of $\Psi(X)$ whose length $n = |\tau|$ is a power of $2$, then
\begin{equation} \label{rumyantsev power of two equation}
\pfc(\tau) \geq 2a_{\log_2 n}n - c \geq a_{\log_2 n} n - c. \tag*{($\ast$)}
\end{equation}

Now suppose $\tau$ is an arbitrary substring of $\Psi(X)$. Let $\tau'$ be the longest initial segment of $\tau$ whose length is a power of $2$, say $2^m$. Let $n = |\tau|$. Note that $\lfloor \log_2 n \rfloor = m$ and $n \leq 2^{m+1}$. Because $\tau'$ can be found recursively from $\tau$ and using \cref{rumyantsev power of two equation}, there is a $c_5 \in \mathbb{N}$ independent of $\tau$ such that
\begin{equation*}
\pfc(\tau) \geq \pfc(\tau') - c_5 \geq a_m2^{m+1} - (c+c_5) \geq a_{\lfloor \log_2 n \rfloor}n - (c+c_5).
\end{equation*}
Since the term $c+c_5$ is independent of the choice of $\tau$, we find that $\Psi(X) \in \shiftcomplex(f)$.

As $X$ was an arbitrary Martin-\Lof\ random real, it follows that the Turing upward closure of $\shiftcomplex(f)$ has measure $1$, hence $\shiftcomplex(f)$ is non-negligible.
\end{proof}

We shall continue to use the total recursive functional $\Psi$ defined in the proof of \cref{rumyantsev nonnegligible}, so we give an illustrative example:

\begin{example}
$\shiftcomplex(\lambda n.\sqrt{n})$ is non-negligible, applying \cref{rumyantsev nonnegligible} with $a_m \approx \frac{1}{2^{m/2}}$ (technically, $a_m = \frac{1}{2^{\lfloor n/2 \rfloor - 2}}$, so that $a_{\lfloor \log(n) \rfloor}n \geq \sqrt{n}$ for $n > 0$). 

We walk through the proof of \cref{rumyantsev nonnegligible} in this case. Let $a_m = \frac{\lceil \sqrt{2^m}\rceil}{2^m}$. For simplicity, we ignore some of the technical adjustments indicated in the proof. In this case, $m_0=4$, i.e., $\sum_{m=4}^\infty{a_m} \leq 1$. Given $X \in \cantor$, the construction of $\Psi(X)$ proceeds as follows: 

At Stage $0$, we split $\mathbb{N}$ into arithmetic progressions of constant difference $2^4=16$, of which there are $2^4=16$ such arithmetic progressions. We then take the first $\frac{1}{2^{4/2}}2^4 = 2^2 = 4$ such arithmetic progressions (i.e., $\langle i+16k\rangle_{k\in\mathbb{N}}$ for $i\in \{0,1,2,3\}$) and for $k \in \mathbb{N}$ set 
\begin{align*}
\Psi(X)(0 + 16k) & = X(0), \\
\Psi(X)(1+16k) & = X(1), \\
\Psi(X)(2+16k) & = X(2), \\
\Psi(X)(3+16k) & = X(3).
\end{align*}
In particular, at Stage $0$ we define $\Psi(X)$ at $\frac{1}{4} = \frac{1}{2^{4/2}}$ of its inputs. 

There are $16-4 = 8$ arithmetic progressions of constant difference $16$ remaining, namely $\langle i+16k \rangle_{k\in\mathbb{N}}$ for $i \in \{4,5,\ldots,15\}$. This yields $2\cdot 8 = 16$ arithmetic progressions of constant difference $2\cdot 16 = 32$, namely $\langle i+32k \rangle_{k\in\mathbb{N}}$ for $i \in \{4,5,\ldots, 15\} \cup \{20,21,\ldots,31\}$. At Stage $1$, we now take the first $\lceil \frac{1}{2^{5/2}}2^5\rceil = \lceil 2^{5/2} \rceil = 6$ of these arithmetic progressions (i.e., $\langle i+32k \rangle_{k\in\mathbb{N}}$ for $i \in \{4,5,6,7,8,9\}$) and for $k \in \mathbb{N}$ set 
\begin{align*}
\Psi(X)(4+32k) & = X(0), \\
\Psi(X)(5+32k) & = X(1), \\
& ~\vdots \\
\Psi(X)(9+32k) & = X(5).
\end{align*}
In particular, at Stage $1$, we define $\Psi(X)$ at $\frac{6}{32} = \frac{3}{16}$ of its inputs, so that $\Psi(X)$ has been defined at $\frac{1}{4}+\frac{3}{16} = \frac{7}{16}$ of its inputs in total up to this point.

There are $16-6 = 10$ arithmetic progressions of constant difference $32$ remaining, namely $\langle i+32k \rangle_{k\in\mathbb{N}}$ for $i \in \{10,11,\ldots,15\} \cup \{20,21,\ldots,31\}$. This yields $2\cdot 10 = 20$ arithmetic progressions of constant difference $2\cdot 32 = 64$, namely $\langle i+64k \rangle_{k\in\mathbb{N}}$ for $i \in \{10,11,\ldots,15\} \cup \{20,21,\ldots,31\} \cup \{42,43,\ldots,47\} \cup \{52,53,\ldots,63\}$. At Stage $2$, we take the first $\frac{1}{2^{6/2}}2^6 = 2^{6/2} = 8$ of these arithmetic progressions (i.e., $\langle i+64k \rangle_{k\in\mathbb{N}}$ for $i \in \{10,11,12,13,14,15,20,21\}$) and for $k \in \mathbb{N}$ set 
\begin{align*}
\Psi(X)(10+64k) & = X(0), \\
\Psi(X)(11+64k) & = X(1), \\
& ~\vdots \\
\Psi(X)(21+64k) & = X(7)
\end{align*}
In particular, at Stage $2$, we define $\Psi(X)$ at $\frac{8}{64}=\frac{1}{8}$ of its inputs, so that $\Psi(X)$ has been defined at $\frac{1}{4}+\frac{3}{16}+\frac{1}{8}$ of its inputs in total up to this point.

The definition of $\Psi(X)$ continues in this way forever. At Stage $s$, we define $\Psi(X)$ at $\frac{\lceil \sqrt{2^{4+s}}\rceil}{2^{4+s}} \approx \frac{1}{\sqrt{2^{4+s}}}$ of its inputs. Worth noting is that although $\sum_{m=4}^\infty{\frac{\lceil \sqrt{2^m}\rceil}{2^m}} \leq \sum_{m=4}^\infty{\frac{1}{\sqrt{2}^m}} + \sum_{m=4}^\infty{\frac{1}{2^m}} = \frac{1/4}{1-1/\sqrt{2}} + \frac{1}{8} \approx 0.98 < 1$, $\Psi(X)(i)$ \emph{is} defined for every $i \in \mathbb{N}$, since at each Stage $s$ we define $\Psi(X)$ at (among other indices) the least index at which $\Psi(X)$ was undefined previously. 
\end{example}

\begin{remark}
The negligibility of $\shiftcomplex$ implies that there are $X$ which are $\lambda n.\sqrt{n}$-shift complex which are not $\delta$-shift complex for any $\delta \in (0,1)$.
\end{remark}

\begin{remark}
In the proof of \cref{rumyantsev nonnegligible}, at Stage $s+1$ we encode $X\restrict b_{s+1}2^{m_0+s+1}$ even though $X\restrict b_s2^{m_0+s}$ has already been encoded, so that in a certain sense we continually retread old ground. A more conservative approach would be to encode $X \restrict [b_02^{m_0}+b_12^{m_0+1}+\cdots+b_s2^{m_0+s},b_02^{m_0}+b_12^{m_0+1}+\cdots+b_s2^{m_0+s}+b_{s+1}2^{m_0+s+1})$ at Stage $s+1$, so that for some $c \in \mathbb{N}$ and all $k,m \in \mathbb{N}$,
\begin{align*}
\pfc(\Psi(X)(\co{k,k+2^m})) & \geq a_02^{m_0} + a_12^{m_0+1} + \cdots + a_m2^m - c, \\
& \geq \left(\frac{a_0}{2^{m-m_0}} + \frac{a_1}{2^{m-m_0-1}} + \cdots + \frac{a_m}{2^0}\right)2^m - c.
\end{align*}
Let $c_m = \left(\frac{a_0}{2^{m-m_0}} + \frac{a_1}{2^{m-m_0-1}} + \cdots + \frac{a_m}{2^0}\right)$, defined for $m \geq m_0$. Then 
\begin{align*}
\sum_{k=0}^{m-m_0}{c_{m_0+k}} & = c_{m_0} + c_{m_0+1} + \cdots + c_m \\
& = a_0 + \left(\frac{a_0}{2} + a_1\right) + \left(\frac{a_0}{4} + \frac{a_1}{2} + a_2\right) + \cdots + \left(\frac{a_0}{2^{m-m_0}} + \frac{a_1}{2^{m-m_0-1}} + \cdots + a_m\right) \\
& = \sum_{k=0}^m{\left(2-\frac{1}{2^{m-m_0+1-k}}\right)a_k}
\end{align*}
It follows that $\sum_{k=0}^\infty{c_{m_0+k}}$ converges. In particular, this `conservative' approach does not produce any stronger general conclusion. 
\end{remark}

\subsection{Relating Generalized Shift Complexity and Complexity} \label{relating generalized shift complexity and complexity subsection}

The total recursive functional $\Psi$ defined in the proof of \cref{rumyantsev nonnegligible} can be used to provide stronger results about the location of $\weakdeg(\shiftcomplex(f))$ in $\mathcal{E}_\weak$ for order functions $f$ satisfying $\sum_{m=0}^\infty{\frac{f(2^m)}{2^m}} < \infty$.

\begin{thm} \label{strong rumyantsev theorem 4}
Suppose $f,g \colon \mathbb{N} \to \co{0,\infty}$ are sub-identical order functions such that $\sum_{m=0}^\infty{f(2^m)/2^m} < \infty$ and for which there is a recursive sequence $\langle \epsilon_m \rangle_{m \in \mathbb{N}}$ of positive rationals such that $\sum_{m=0}^\infty{\epsilon_m} < \infty$ and
\begin{equation*}
\liminf_m{\frac{g(2^m\epsilon_m) - f(2 \cdot 2^m)}{m}} > 1.
\end{equation*}
Then $\shiftcomplex(f) \strongleq \complex(g)$.
\end{thm}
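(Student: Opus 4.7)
The plan is to adapt the total recursive functional $\Psi$ used in the proof of \cref{rumyantsev nonnegligible}, replacing the sequence $\langle b_m \rangle_{m \in \mathbb{N}}$ by a rounded version of the given sequence $\langle \epsilon_m \rangle_{m \in \mathbb{N}}$. Set $\beta_m \coloneq \lceil \epsilon_m 2^m \rceil / 2^m$, so that $\beta_m 2^m \in \mathbb{N}_{\geq 1}$, $\epsilon_m \leq \beta_m \leq \epsilon_m + 2^{-m}$, and $\sum_{m=0}^\infty \beta_m < \infty$; choose $m_0$ so large that $\sum_{m \geq m_0} \beta_m \leq 1$. The stages proceed exactly as in Rumyantsev's proof: at stage $s$ we set $m \coloneq m_0 + s$, split each surviving arithmetic progression of common difference $2^{m-1}$ into two progressions of common difference $2^m$, take the $\beta_m 2^m$ progressions with least starting index (which always includes the smallest yet-undefined position), and encode $X \restrict \beta_m 2^m$ into them. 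The condition on $m_0$ ensures that enough progressions remain at each stage, and the minimality in the choice of progressions ensures $\Psi(X) \in \cantor$. The procedure is recursive in $X$, $f$, $g$, and $\langle \epsilon_m \rangle$, so $\Psi$ is a total recursive functional.

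The crux is the complexity estimate on initial windows of $\Psi(X)$ of length $2^m$. Given $\Psi(X)(\co{k,k+2^m})$, $k \bmod 2^m$, and $m$, one can recursively reconstruct $X \restrict \beta_m 2^m$: $m$ (together with the recursiveness of $\beta_{\bullet}$) determines the entire history of stages, while $k \bmod 2^m$ specifies which residue class modulo $2^m$ is being read in the window. Hence
\begin{equation*}
\pfc(X \restrict \beta_m 2^m) \leq^+ \pfc\bigl(\Psi(X)(\co{k,k+2^m})\bigr) + \pfc(k \bmod 2^m) + \pfc(m).
\end{equation*}
If $X \in \complex(g, c_0)$ then $\pfc(X \restrict \beta_m 2^m) \geq g(\beta_m 2^m) - c_0$, and \cref{prefix-free complexity facts}(c) gives $\pfc(k \bmod 2^m) + \pfc(m) \leq m + O(\log m)$. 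Combining,
\begin{equation*}
\pfc\bigl(\Psi(X)(\co{k,k+2^m})\bigr) \geq g(\beta_m 2^m) - m - O(\log m)
\end{equation*}
with a constant absorbing $c_0$ and the machine overhead.

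Now the hypothesis enters. Since $\beta_m \geq \epsilon_m$ and $g$ is nondecreasing, $g(\beta_m 2^m) - f(2^{m+1}) \geq g(\epsilon_m 2^m) - f(2 \cdot 2^m)$, and the assumption $\liminf_m (g(\epsilon_m 2^m) - f(2 \cdot 2^m))/m > 1$ supplies a $\delta > 0$ with $g(\beta_m 2^m) - f(2^{m+1}) > (1+\delta)m$ for all sufficiently large $m$. Substituting gives $\pfc(\Psi(X)(\co{k,k+2^m})) \geq f(2^{m+1}) + \delta m - O(\log m) \geq f(2^{m+1}) - c_1$ for a fixed constant $c_1$ and every $m$ (absorbing finitely many small-$m$ exceptions). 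For an arbitrary substring $\tau$ of $\Psi(X)$ of length $n \geq 1$, set $m \coloneq \lfloor \log_2 n \rfloor$; then $\tau \restrict 2^m$ is uniformly computable from $\tau$, so
\begin{equation*}
\pfc(\tau) \geq \pfc(\tau \restrict 2^m) - O(1) \geq f(2^{m+1}) - O(1) \geq f(n) - O(1),
\end{equation*}
the last inequality because $n \leq 2^{m+1}$ and $f$ is nondecreasing. Hence $\Psi(X) \in \shiftcomplex(f)$ whenever $X \in \complex(g)$, giving $\shiftcomplex(f) \strongleq \complex(g)$ via $\Psi$.

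The main technical obstacle will be checking the reconstruction step uniformly: namely, that from $m$ alone one can effectively enumerate the arithmetic progressions used at each earlier stage (so that the ``free'' slots at stage $s$ used to encode $X \restrict \beta_m 2^m$ can be located from within any window of length $2^m$), and that no previously written bit is overwritten. Once this bookkeeping is nailed down, the hypothesis is tailored exactly to cover the $m + O(\log m)$ slack appearing in the complexity bound.
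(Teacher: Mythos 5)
Your proposal is correct and follows essentially the same route as the paper: it reuses the arithmetic-progression functional $\Psi$ from \cref{rumyantsev nonnegligible} with the encoding density set by $\langle \epsilon_m \rangle$, recovers $X \restrict \epsilon_m 2^m$ from a window of length $2^m$ together with $k \bmod 2^m$ and $m$, and lets the hypothesis $\liminf_m (g(2^m\epsilon_m) - f(2\cdot 2^m))/m > 1$ absorb the $m + O(\log m)$ description cost of $k \bmod 2^m$ and $m$, finishing with the same power-of-two truncation for arbitrary substrings. The only differences are cosmetic (your explicit rounding $\beta_m = \lceil \epsilon_m 2^m\rceil/2^m$ versus the paper's WLOG adjustment, and your $m + O(\log m)$ bound versus the paper's $\tfrac{1+\alpha}{2}m$ bound), and the bookkeeping you flag as the remaining obstacle is exactly the uniformity already established in the proof of \cref{rumyantsev nonnegligible}.
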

\begin{proof}
Suppose $\langle \epsilon_m \rangle_{m\in\mathbb{N}}$ satisfies the hypotheses of the theorem. Let $\alpha$ be a rational number such that $1 < \alpha < \liminf_m{\frac{g(2^m\epsilon_m) - f(2\cdot 2^m)}{m}}$, so that $g(2^m\epsilon_m) - \alpha m \geq f(2\cdot 2^m)$ for almost all $m \in \mathbb{N}$.

Now define $\Psi$ as in the proof of \cref{rumyantsev nonnegligible}, using $\langle a_m \rangle_{m\in\mathbb{N}} = \langle \epsilon_m \rangle_{m\in\mathbb{N}}$. As in that proof, there is a $c_1$ such that for all $k \in \mathbb{N}$ and $m \geq m_0$, we have
\begin{equation*}
\pfc(\Psi(X)(\co{k,k+2^m})) + \pfc(k \bmod 2^m) + \pfc(m) \geq \pfc(X \restrict 2^m\epsilon_m) - c_1.
\end{equation*}
If $X \in \complex(g)$, then there is a $c_2$ such that $\pfc(X \restrict 2^m\epsilon_m) \geq g(2^m\epsilon_m) - c_2$ for all $m \in \mathbb{N}$. Consequently, for all $k,m \in \mathbb{N}$ we have
\begin{equation*}
\pfc(\Psi(X)(\co{k,k+2^m})) + \pfc(k \bmod 2^m) + \pfc(m) \geq g(2^m\epsilon_m) - (c_1+c_2).
\end{equation*}
Additionally, noting that $1 < \frac{1+\alpha}{2}$, there are $c_3,c_4 \in \mathbb{N}$ such that
\begin{align*}
\pfc(k \bmod 2^m) & \leq \max_{0 \leq k < 2^m}{\pfc(k)} \leq (\tfrac{1+\alpha}{2})m + c_3, \\
\pfc(m) & \leq 2\log_2 m + c_4,
\end{align*}
for all $m \in \mathbb{N}$, so that for all $k,m \in \mathbb{N}$ we have
\begin{equation*}
\pfc(\Psi(X)(\co{k,k+2^m})) \geq g(2^m\epsilon_m) - \left( (\tfrac{1+\alpha}{2}) \cdot m - 2\log_2 m\right) - (c_1+c_2+c_3+c_4).
\end{equation*}
For sufficiently large $m$, $(\frac{1+\alpha}{2}) \cdot m + 2\log_2 m \leq \alpha m$, and hence $\alpha m - \left( (\frac{1+\alpha}{2})m - 2\log_2 m\right)$ is bounded from below, say by $c_5$. Let $c = c_1+c_2+c_3+c_4+c_5$. Then for all $k$ and all $m \in \mathbb{N}$,
\begin{equation*}
\pfc(\Psi(X)(\co{k,k+2^m})) \geq f(2\cdot 2^m) - c \geq f(2^m) - c.
\end{equation*}
In other words, if $\tau$ is a substring of $\Psi(X)$ whose length $|\tau|$ is a power of $2$, then 
\begin{equation} \label{general rumyantsev theorem 4 equation}
\pfc(\tau) \geq f(2\cdot |\tau|) - c \geq f(|\tau|) - c. \tag*{($\ast$)}
\end{equation}

Now suppose $\tau$ is an arbitrary substring of $\Psi(X)$. Let $\tau'$ be the longest initial segment of $\tau$ whose length is a power of $2$, say $2^m$. Let $n=|\tau|$. Note that $\lfloor \log_2 n\rfloor = m$ and $n \leq 2^{m+1}$. Because $\tau'$ can be found recursively from $\tau$, that $f$ is monotonic, and using \ref{general rumyantsev theorem 4 equation}, there is a $c_6 \in \mathbb{N}$ independent of $\tau$ such that 
\begin{equation*}
\pfc(\tau) \geq \pfc(\tau') - c_6 \geq f(2\cdot |\tau'|) - (c+c_6) = f(2^{m+1}) - (c+c_6) \geq f(n) - (c+c_6).
\end{equation*}
\end{proof}

\begin{remark}
The requirement that $\sum_{m=0}^\infty{\frac{f(2^m)}{2^m}}$ converges is necessary for \cref{strong rumyantsev theorem 4} to yield any useful information -- the existence of a convergent series $\sum_{m=0}^\infty{\epsilon_m}$ such that $2^m\epsilon_m - f(2^{m+1}) \geq 0$ (so $g$ is the identity function) implies $\sum_{m=0}^\infty{\frac{f(2^{m+1})}{2^m}}$ converges, which is equivalent to the convergence of $\sum_{m=0}^\infty{\frac{f(2^m)}{2^m}} = f(1)+ \frac{1}{2}\sum_{m=0}^\infty{\frac{f(2^{m+1})}{2^m}}$.
\end{remark}

Despite the technicality of the condition in \cref{strong rumyantsev theorem 4}, we can deduce several nice relationships:

\begin{cor} \label{partial randomness and shift complexity}
Suppose $f\colon \mathbb{N} \to \co{0,\infty}$ is a sub-identical order function such that $\sum_{m=0}^\infty{\frac{f(2^m)}{2^m}}$ converges. Then for every rational $\delta \in (0,1]$, $\shiftcomplex(f) \strongleq \complex(\delta)$.
\end{cor}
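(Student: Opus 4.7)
The plan is to deduce the corollary from Theorem~\ref{strong rumyantsev theorem 4} applied with $g = \lambda n.\, \delta n$. For $\delta \in (0,1)$, $g$ is a sub-identical order function since $n - \delta n = (1-\delta) n \to \infty$, so the task reduces to exhibiting a recursive sequence $\langle \epsilon_m \rangle_{m \in \mathbb{N}}$ of positive rationals with $\sum_{m=0}^\infty \epsilon_m < \infty$ such that
\[ \liminf_m \frac{\delta \cdot 2^m \epsilon_m - f(2^{m+1})}{m} > 1. \]
The boundary case $\delta = 1$ will be handled separately at the end.

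The construction I propose is direct. Fix any rational $\alpha > 1$ (e.g., $\alpha = 2$) and, using the computability of $f$, uniformly choose rationals $q_m$ with $f(2^{m+1}) \leq q_m \leq f(2^{m+1}) + 1$. Define
\[ \epsilon_m \coloneq \frac{q_m + \alpha m}{\delta \cdot 2^m}, \]
so that $\langle \epsilon_m \rangle$ is a recursive sequence of positive rationals. By construction, $\delta \cdot 2^m \epsilon_m - f(2^{m+1}) = q_m + \alpha m - f(2^{m+1}) \geq \alpha m$ for every $m$, yielding the required bound $\liminf_m (\delta \cdot 2^m \epsilon_m - f(2^{m+1}))/m \geq \alpha > 1$. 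For summability, $\epsilon_m \leq (f(2^{m+1}) + 1 + \alpha m)/(\delta \cdot 2^m)$, and
\[ \sum_{m=0}^\infty \frac{f(2^{m+1}) + 1 + \alpha m}{\delta \cdot 2^m} = \frac{2}{\delta} \sum_{m=0}^\infty \frac{f(2^{m+1})}{2^{m+1}} + \frac{1}{\delta} \sum_{m=0}^\infty \frac{\alpha m + 1}{2^m} \]
is finite: the first sum converges by the hypothesis $\sum_m f(2^m)/2^m < \infty$ and the second by elementary estimates. Theorem~\ref{strong rumyantsev theorem 4} then delivers $\shiftcomplex(f) \strongleq \complex(\delta)$.

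For the boundary case $\delta = 1$, the function $g = \id_\mathbb{N}$ is not sub-identical, so Theorem~\ref{strong rumyantsev theorem 4} does not apply directly. However, since $\pfc(X \restrict n) \geq n - c$ entails $\pfc(X \restrict n) \geq n/2 - c$, we have $\complex(1) \subseteq \complex(1/2)$, so the identity functional witnesses $\complex(1/2) \strongleq \complex(1)$. Composing with the reduction $\shiftcomplex(f) \strongleq \complex(1/2)$ already established, transitivity of strong reducibility yields $\shiftcomplex(f) \strongleq \complex(1)$. The argument involves no substantive obstacle: the only modest points are ensuring $\epsilon_m$ is a recursive rational despite $f(2^{m+1})$ possibly being irrational (handled by the effective overapproximation $q_m$) and accommodating $\delta = 1$ (handled by reducing to $\delta' = 1/2$).
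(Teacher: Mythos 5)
Your proof is correct and follows essentially the same route as the paper's: apply \cref{strong rumyantsev theorem 4} with $g = \lambda n.\,\delta n$ and an explicit sequence $\epsilon_m$ of the form $\bigl(f(2^{m+1}) + (\textrm{superlinear-in-}m)\bigr)/(\delta\cdot 2^m)$, the paper taking $m^2$ where you take $\alpha m$. Your two refinements --- replacing $f(2^{m+1})$ by a rational overapproximation $q_m$ so that $\langle \epsilon_m\rangle$ is genuinely a recursive sequence of rationals, and reducing the $\delta=1$ case to $\delta=1/2$ since $\lambda n.\,n$ is not sub-identical --- are details the paper's proof glosses over, and you handle them correctly.
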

\begin{proof}
Define $g$ by setting $g(n) \coloneq \delta n$ for $n \in \mathbb{N}$ and define $\langle \epsilon_m \rangle_{m \in \mathbb{N}}$ by setting $\epsilon_m \coloneq \frac{1}{\delta}\left(\frac{f(2^{m+1})}{2^m} + \frac{m^2}{2^m}\right)$ for $m \in \mathbb{N}$. Then
\begin{align*}
\lim_{m \to \infty}{\frac{g(2^m\epsilon_m) - f(2^{m+1})}{m}} & = \lim_{m \to \infty}{\frac{f(2^{m+1}) + m^2 - f(2^{m+1})}{m}} \\
& = \lim_{m \to \infty}{m} \\
& = \infty \\
& > 1,
\end{align*}
so \cref{strong rumyantsev theorem 4} shows $\complex(\alpha) = \complex(g) \stronggeq \shiftcomplex(f)$.
\end{proof}

\begin{cor} \label{complex and shift-complex example 1}
Suppose $0< \alpha < \beta \leq 1$ are rational numbers. Then $\shiftcomplex(\lambda n. n^\alpha) \strongleq \complex(\lambda n. n^\beta)$.
\end{cor}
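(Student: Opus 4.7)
The plan is to apply Theorem \ref{strong rumyantsev theorem 4} directly with $f(n) = n^\alpha$, $g(n) = n^\beta$, and a suitably chosen recursive sequence $\langle \epsilon_m \rangle_{m \in \mathbb{N}}$ of positive rationals. Both $f$ and $g$ are clearly order functions, and when $\beta<1$ both are sub-identical since $n - n^\gamma \to \infty$ for any $\gamma \in (0,1)$; the boundary case $\beta = 1$ is already covered by Corollary \ref{partial randomness and shift complexity} (applied with $\delta = 1$), so I may assume $\beta < 1$.

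Next I would verify the summability hypothesis on $f$: since $\alpha < 1$,
\begin{equation*}
\sum_{m=0}^\infty \frac{f(2^m)}{2^m} = \sum_{m=0}^\infty 2^{m(\alpha - 1)} < \infty.
\end{equation*}
For the recursive sequence of rationals, the natural choice is $\epsilon_m \coloneq 1/(m+1)^2$, which is clearly recursive, takes positive rational values, and satisfies $\sum_{m=0}^\infty \epsilon_m < \infty$.

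The key computation is to verify the liminf condition. Substituting in,
\begin{equation*}
\frac{g(2^m\epsilon_m) - f(2 \cdot 2^m)}{m} = \frac{2^{m\beta}/(m+1)^{2\beta} - 2^\alpha \cdot 2^{m\alpha}}{m}.
\end{equation*}
Because $\beta > \alpha$, the ratio $2^{m\beta}/((m+1)^{2\beta} \cdot 2^{m\alpha})$ equals $2^{m(\beta-\alpha)}/(m+1)^{2\beta}$, which tends to $\infty$; hence $2^{m\beta}/(m+1)^{2\beta}$ dominates $2^\alpha\cdot 2^{m\alpha}$. Moreover, $2^{m\beta}/((m+1)^{2\beta} \cdot m) \to \infty$ as $m \to \infty$. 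Therefore the liminf of the above expression is $\infty$, which is certainly greater than $1$.

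With all three hypotheses of Theorem \ref{strong rumyantsev theorem 4} verified, the conclusion $\shiftcomplex(\lambda n. n^\alpha) \strongleq \complex(\lambda n. n^\beta)$ follows immediately. There is no real obstacle in this argument — the only point requiring a moment's care is picking an $\epsilon_m$ decaying fast enough to be summable yet slow enough that $2^m \epsilon_m$ grows fast enough to make $(2^m \epsilon_m)^\beta$ dwarf $(2^{m+1})^\alpha$; polynomial decay $\epsilon_m = 1/(m+1)^2$ comfortably achieves both because of the strict gap $\beta - \alpha > 0$ between the exponents.
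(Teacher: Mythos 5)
Your proof is correct and takes essentially the same route as the paper: both apply \cref{strong rumyantsev theorem 4} with $f(n)=n^\alpha$ and $g(n)=n^\beta$, differing only in the choice of summable sequence (you take $\epsilon_m = 1/(m+1)^2$, the paper takes $\epsilon_m = m^{-1/\beta}$) and in the computation verifying the $\liminf$ condition. Your separate handling of the boundary case $\beta = 1$ via \cref{partial randomness and shift complexity} is a sensible extra precaution, since $g(n)=n$ is not sub-identical and the paper glosses over this point.
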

\begin{proof}
Define $f$, $g$, and $\langle \epsilon_m \rangle_{m \in \mathbb{N}}$ by setting $f(n) \coloneq n^\alpha$, $g(n) \coloneq n^\beta$, and $\epsilon_m \coloneq m^{-1/\beta}$ for each $n, m \in \mathbb{N}$. Then $\sum_{m=0}^\infty{\frac{f(2^m)}{2^m}} = \sum_{m=0}^\infty{\frac{1}{m^\beta}} < \infty$ and
\begin{align*}
\lim_{m \to \infty}{\frac{g(2^m\epsilon_m) - f(2^{m+1})}{m}} & = \lim_{m \to \infty}{2^{\beta \cdot m}/m - 2^{\alpha \cdot m}}{m} \\
& = \lim_{m \to \infty}{\frac{2^{\beta \cdot m} - 2^{\alpha \cdot m}m^2}{m^2}} \\
& = \infty \\
& > 1,
\end{align*}
so \cref{strong rumyantsev theorem 4} shows $\shiftcomplex(\lambda n. n^\alpha) \strongleq \complex(\lambda n. n^\beta)$.
\end{proof}

\begin{cor} \label{complex and shift-complex example 2}
Suppose $0 < \alpha + 1 < \beta$ and rational numbers. Then 
\begin{equation*}
\shiftcomplex(\lambda n. n/(\log_2 n)^\beta) \strongleq \complex(\lambda n. n/(\log_2 n)^\alpha).
\end{equation*}
\end{cor}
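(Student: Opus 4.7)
The plan is to apply \cref{strong rumyantsev theorem 4} with $f(n) = n/(\log_2 n)^\beta$ and $g(n) = n/(\log_2 n)^\alpha$, which are sub-identical order functions (up to the usual adjustment on finitely many values, since $n/(\log_2 n)^\beta$ becomes monotone and unbounded past some threshold, and $n - n/(\log_2 n)^\beta = n(1 - (\log_2 n)^{-\beta}) \to \infty$). First I would verify the summability hypothesis: $\sum_{m=0}^\infty f(2^m)/2^m = \sum_{m \geq 1} 1/m^\beta$, which converges because $\beta > \alpha + 1 > 1$.

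Next I would exhibit a suitable recursive sequence $\langle \epsilon_m \rangle_{m \in \mathbb{N}}$. Since $\beta - \alpha > 1$, one can pick a rational $\gamma$ with $1 < \gamma < \beta - \alpha$ and set $\epsilon_m \coloneq 2/m^\gamma$ for $m \geq 1$ (and, say, $\epsilon_0 \coloneq 1$). The $p$-series comparison immediately gives $\sum_{m=0}^\infty \epsilon_m < \infty$. It remains to check the key asymptotic inequality
\[
\liminf_m \frac{g(2^m \epsilon_m) - f(2\cdot 2^m)}{m} > 1.
\]
Here $2^m \epsilon_m = 2^{m+1}/m^\gamma$, so $\log_2(2^m \epsilon_m) = m + 1 - \gamma \log_2 m$, which is asymptotic to $m$. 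Hence
\[
g(2^m\epsilon_m) = \frac{2^{m+1}/m^\gamma}{(m+1 - \gamma \log_2 m)^\alpha} \sim \frac{2^{m+1}}{m^{\gamma + \alpha}},
\qquad f(2^{m+1}) = \frac{2^{m+1}}{(m+1)^\beta} \sim \frac{2^{m+1}}{m^\beta}.
\]
Since $\gamma + \alpha < \beta$, the first term dominates the second, and their difference is asymptotically $2^{m+1}/m^{\gamma + \alpha}$, so after dividing by $m$ we get something of the order $2^{m+1}/m^{\gamma + \alpha + 1}$, which tends to infinity and is certainly eventually greater than $1$.

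With both hypotheses of \cref{strong rumyantsev theorem 4} verified, the theorem yields $\shiftcomplex(f) \strongleq \complex(g)$, which is the desired conclusion. I do not expect a real obstacle here; the only points requiring care are the standard book-keeping that $f$ and $g$ fit the convention for order functions (so that modification on finitely many inputs does not alter $\complex(g)$ or $\shiftcomplex(f)$), and the verification of the asymptotics of $g(2^m\epsilon_m)$, which reduces to controlling $\log_2(2^m \epsilon_m) = m(1 + o(1))$. The proof should mirror \cref{complex and shift-complex example 1} almost exactly, differing only in the polynomial bookkeeping forced by the logarithmic denominators.
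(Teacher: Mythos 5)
Your proposal is correct and follows essentially the same route as the paper: both apply \cref{strong rumyantsev theorem 4} to $f(n) = n/(\log_2 n)^\beta$ and $g(n) = n/(\log_2 n)^\alpha$, verify $\sum_m f(2^m)/2^m = \sum_m m^{-\beta} < \infty$, and check the $\liminf$ condition for an explicit summable sequence, the only difference being your choice $\epsilon_m = 2/m^\gamma$ with $1 < \gamma < \beta - \alpha$ in place of the paper's $\epsilon_m = 1/(m(\log_2 m)^2)$, and both choices make the quotient tend to infinity precisely because $\beta > \alpha + 1$. (Strictly speaking your $\epsilon_m$ should be replaced by a rational approximation such as $2/\lceil m^\gamma\rceil$ to match the hypothesis of the theorem literally, but this does not affect the asymptotics, and the paper's own choice has the same cosmetic issue.)
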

\begin{proof}
Define $f$, $g$, and $\langle \epsilon_m \rangle_{m \in \mathbb{N}}$ by setting $f(n) \coloneq n/(\log_2 n)^\beta$, $g(n) \coloneq n/(\log_2 n)^\alpha$, and $\epsilon_m \coloneq 1/m(\log_2 m)^2$ for each $m,n \in \mathbb{N}$. Then
\begin{align*}
\lim_{m \to \infty}{\frac{g(2^m\epsilon_m) - f(2^{m+1})}{m}} & = \lim_{m \to \infty}{\frac{1}{m}\left(\frac{2^m m^{-1} (\log_2 m)^{-2}}{(\log_2 (2^m m^{-1} (\log_2 m)^{-2}))^\alpha} - \frac{2^{m+1}}{m^\beta}\right)} \\
& = \lim_{m \to \infty}{2^m\left( \frac{1}{m^{\alpha+2}(1-(\log_2 m)/m - 2(\log_2 \log_2 m)/m)^\alpha(\log_2 m)^2} - \frac{2}{m^{\beta+1}}\right)} \\
& = \infty \\
& > 1,
\end{align*} 
so \cref{strong rumyantsev theorem 4} shows $\shiftcomplex(\lambda n. n/(\log_2 n)^\beta) \strongleq \complex(\lambda n. n/(\log_2 n)^\alpha)$.
\end{proof}

\subsection{Extracting Generalized Shift Complexity from Sublinear Complexity} \label{extracting generalized shift complexity from sublinear complexity subsection}

\cref{complex and shift-complex example 1,,complex and shift-complex example 2} show for certain sufficiently slow-growing and well-behaved order functions $f$ there is another order function $g$ such that $\complex(g) \stronggeq \shiftcomplex(f)$ which is sublinear. We can show that this holds more generally given that $\sum_{m=0}^\infty{\frac{f(2^m)}{2^m}}$ not only converges, but converges to a recursive real.

\begin{thm} \label{sub-identical complex strongly computes shift complex}
Suppose $f\colon \mathbb{N} \to \co{0,\infty}$ is a sub-identical order function such that $\sum_{m=0}^\infty{\frac{f(2^m)}{2^m}}$ converges to a recursive real. Then there is a sublinear order function $g\colon \mathbb{N} \to \co{0,\infty}$ such that $\shiftcomplex(f) \strongleq \complex(g)$.
\end{thm}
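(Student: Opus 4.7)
The plan is to invoke \cref{strong rumyantsev theorem 4} with a carefully chosen recursive sequence $\langle \epsilon_m \rangle$ and sub-identical order function $g$, arranging matters so that the resulting $g$ in fact satisfies $g(n)/n \to 0$. Writing $a_m \coloneq f(2^{m+1})/2^m$, the hypothesis implies $\sum_m a_m$ is a recursive real (it differs from $2\sum_m f(2^m)/2^m$ by the recursive quantity $2f(1)$), so I can recursively find natural numbers $M_0 < M_1 < M_2 < \cdots$ with $M_k \geq 2k$ and $\sum_{m \geq M_k} a_m \leq 4^{-k}$, by effectively searching for the least $M$ at which a recursive upper approximation of the tail $\sum_{m \geq M} a_m$ drops below $4^{-k}$.

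I would then set $\epsilon_m \coloneq 2^{k}(a_m + 2^{-m})$ for $M_k \leq m < M_{k+1}$, giving a recursive sequence of positive rationals. The bound
\begin{equation*}
\sum_{m=0}^\infty \epsilon_m \leq \sum_{k=0}^\infty 2^k\bigl(4^{-k} + 2^{1-M_k}\bigr) \leq \sum_{k=0}^\infty 2^{-k} + \sum_{k=0}^\infty 2^{1-k} < \infty
\end{equation*}
uses $M_k \geq 2k$, while $\epsilon_m / a_m \geq 2^{k(m)} \to \infty$, where $k(m)$ is the unique $k$ with $M_k \leq m < M_{k+1}$. To also secure a fast lower bound on $2^m \epsilon_m$, I replace $\epsilon_m$ by $\epsilon_m^* \coloneq \max\bigl(\epsilon_m,\, m^3/2^m\bigr)$; this remains recursive with $\sum \epsilon_m^* < \infty$ (since $\sum m^3/2^m < \infty$), preserves $\epsilon_m^*/a_m \to \infty$, and gives $n_m \coloneq \lfloor 2^m \epsilon_m^* \rfloor \geq m^3 - 1$.

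Next, I would define
\begin{equation*}
g(n) \coloneq \max\Bigl(\{0\} \cup \bigl\{ f(2^{m+1}) + 2m : m \in \mathbb{N},\; n_m \leq n\bigr\}\Bigr).
\end{equation*}
The inequality $n_m \geq m^3 - 1$ means that for each $n$ the maximum is over a recursively enumerable set of size at most $(n+1)^{1/3}$, so $g$ is a recursive, nondecreasing, unbounded function. Sublinearity of $g$ follows from the estimate
\begin{equation*}
\frac{g(n)}{n} \leq \frac{f(2^{m(n)+1}) + 2m(n)}{2^{m(n)} \epsilon_{m(n)}^*} = \frac{a_{m(n)}}{\epsilon_{m(n)}^*} + \frac{2m(n)}{2^{m(n)} \epsilon_{m(n)}^*},
\end{equation*}
where $m(n)$ is the largest $m$ with $n_m \leq n$; both summands tend to $0$ as $m(n) \to \infty$ (equivalently $n \to \infty$) by the construction of $\epsilon_m^*$. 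In particular $g$ is sub-identical. Since $2^m \epsilon_m^* \geq n_m$ and $g$ is nondecreasing, $g(2^m\epsilon_m^*) \geq g(n_m) \geq f(2^{m+1}) + 2m$, which yields $\liminf_m (g(2^m \epsilon_m^*) - f(2\cdot 2^m))/m \geq 2 > 1$. Applying \cref{strong rumyantsev theorem 4} with $(f,g)$ and $\langle \epsilon_m^* \rangle$ then gives $\shiftcomplex(f) \strongleq \complex(g)$.

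The main technical subtlety will be the recursive extraction of the cutoffs $\langle M_k \rangle$ from $f$: this is precisely where the full hypothesis that $\sum f(2^m)/2^m$ is a \emph{recursive} real (and not merely finite) enters, since a left r.e.\ sum would not let one certify that a particular tail has fallen below $4^{-k}$. Once the cutoffs are in hand, everything else reduces to routine geometric-series estimates and the bookkeeping above.
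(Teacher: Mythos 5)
Your proposal is correct and follows essentially the same route as the paper: both arguments feed \cref{strong rumyantsev theorem 4} a sequence $\epsilon_m$ obtained by inflating $f(2^{m+1})/2^m$ by an unbounded factor that preserves summability (your block multipliers $2^{k}$, extracted from the recursiveness of the sum via the tail cutoffs $M_k$, play exactly the role of the multipliers $\gamma_m$ the paper obtains from \cref{convergent series gap}), together with a sublinear order function $g$ arranged so that $g(2^m\epsilon_m) \geq f(2^{m+1}) + 2m$ (your running-maximum $g$ versus the paper's piecewise-linear interpolation through the points $\langle 2^m\epsilon_m, 2^m\epsilon_m\gamma_m^{-1}\rangle$). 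The one point to tidy is that $f(2^{m+1})$ is only a computable real, so as written $\epsilon_m$ (hence $\epsilon_m^*$) need not be rational and $n_m = \lfloor 2^m\epsilon_m^*\rfloor$ need not be exactly computable, which matters both for the hypothesis of \cref{strong rumyantsev theorem 4} and for the recursiveness of your $g$ (a maximum over a merely r.e.\ index set is not enough); replacing $a_m$ by a computably chosen rational in $[a_m, a_m + 2^{-m}]$ before defining $\epsilon_m$ makes the set $\{ m \mid n_m \leq n\}$ decidable and leaves all of your estimates intact.
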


To prove \cref{sub-identical complex strongly computes shift complex}, we start by making several observations concerning recursive series of positive rational numbers.

\begin{lem} \label{convergent series gap}
Suppose $\langle \epsilon_m \rangle_{m\in\mathbb{N}}$ is a recursive sequence of positive rational numbers such that $\sum_{m=0}^\infty{\epsilon_m}$ converges to a recursive real. Then there exists a nondecreasing, recursive sequence $\langle \gamma_m \rangle_{m\in\mathbb{N}}$ of positive integers such that $\sum_{m=0}^\infty{\epsilon_m\gamma_m}$ converges to a recursive real and $\lim_{m \to \infty}{\gamma_m} = \infty$.
\end{lem}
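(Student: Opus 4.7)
The plan is to exploit the recursiveness of $\alpha := \sum_{m=0}^\infty \epsilon_m$ to extract a recursive sequence of ``tail cutoffs'' $S_1 < S_2 < \cdots$ such that the tail beyond $S_k$ is at most $2^{-k}$, and then to let $\gamma_m$ count how many $S_k$ lie below $m$. This is the standard device for producing a divergent weight sequence from a convergent series; the main work is checking that the resulting sum is not merely convergent but \emph{recursive}.

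First I would construct the $S_k$. Since $\alpha$ is recursive and the partial sums $\sigma_N := \sum_{m < N} \epsilon_m$ are uniformly recursive rationals with $\sigma_N \nearrow \alpha$, the tail $R_N := \alpha - \sigma_N$ is uniformly recursive in $N$, and the predicate ``$R_N \leq 2^{-k}$'' is decidable. I would then define $S_k$ recursively by letting $S_0 := 0$ and letting $S_{k+1}$ be the least $N > S_k$ for which $R_N \leq 2^{-(k+1)}$; the convergence of $\sum \epsilon_m$ guarantees that such an $N$ exists, and the construction is recursive in $k$.

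Next I would define, for $m \in \mathbb{N}$,
\begin{equation*}
\gamma_m := 1 + \lvert\{ k \geq 1 \mid S_k \leq m\}\rvert.
\end{equation*}
Since the $S_k$'s are strictly increasing natural numbers, this set is finite and $\gamma_m$ is a well-defined positive integer, recursive in $m$. The sequence is nondecreasing because increasing $m$ can only add indices to the counted set, and $\gamma_m \to \infty$ because every $S_k$ is eventually surpassed by $m$.

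The sum analysis uses the Fubini-style swap
\begin{equation*}
\sum_{m=0}^\infty \epsilon_m \gamma_m = \sum_{m=0}^\infty \epsilon_m + \sum_{k=1}^\infty \sum_{m \geq S_k} \epsilon_m = \alpha + \sum_{k=1}^\infty R_{S_k},
\end{equation*}
which is at most $\alpha + \sum_{k=1}^\infty 2^{-k} = \alpha + 1$, so the series converges. For recursiveness of the sum, note that the partial sums $T_K := \sum_{k=1}^K R_{S_k}$ are recursive reals (each $R_{S_k}$ is a recursive real, uniformly in $k$), they increase monotonically to $T := \sum_{k=1}^\infty R_{S_k}$, and the error satisfies $T - T_K = \sum_{k > K} R_{S_k} \leq \sum_{k > K} 2^{-k} = 2^{-K}$. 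Hence $T$ is recursive, and therefore so is $\sum_{m=0}^\infty \epsilon_m \gamma_m = \alpha + T$. The only potential subtlety — and the step worth double-checking — is the interchange of summation, but since all terms are nonnegative this is justified by Tonelli's theorem (or by comparing partial sums directly).
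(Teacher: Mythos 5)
Your construction is essentially the paper's: both pick a recursive increasing sequence of cutoff indices at which the remaining tail of $\sum\epsilon_m$ has dropped below $2^{-k}$ (the paper's $n_k$, your $S_k$) and let $\gamma_m$ increase by one at each cutoff. Your verification that the weighted sum is recursive is actually a bit cleaner: the Tonelli swap $\sum_m \epsilon_m\gamma_m = \alpha + \sum_{k\geq 1} R_{S_k}$ together with the bound $T - T_K \leq 2^{-K}$ gives recursiveness directly, whereas the paper introduces an auxiliary majorant $\delta_m$ tuned so that its weighted sum equals $2\epsilon$ exactly and then invokes the domination lemma (\cref{recursive sum implies lower bounds have recursive sums}).

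One step needs repair: the predicate ``$R_N \leq 2^{-k}$'' is \emph{not} decidable. $R_N = \alpha - \sigma_N$ is a recursive real, and comparing a recursive real against a rational is only semi-decidable on each side (the problem is the case of equality, and you cannot rule it out uniformly in $N$ and $k$). Consequently ``the least $N > S_k$ with $R_N \leq 2^{-(k+1)}$'' is not obviously computable. The fix is standard and costs nothing: either search for the first $N > S_k$ at which you can \emph{verify} the strict inequality $R_N < 2^{-(k+1)}$ (dovetailing over $N$ and precision; such an $N$ exists since $R_N \to 0$, and you do not need the least one), or compare the rational $\sigma_N$ against a fixed rational approximation of $\alpha$ with a safety margin, which is a genuine comparison of rationals. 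Everything downstream — strict monotonicity of $S_k$, the bound $R_{S_k} \leq 2^{-k}$, recursiveness of $\gamma_m$ (using $S_k \geq k$ to bound the search), and the tail estimate — is unaffected.
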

\begin{proof}
Let $\epsilon \coloneq \sum_{m=0}^\infty{\epsilon_m}$, which is a recursive real by hypothesis. 
Recursively define a strictly increasing sequence $\langle n_k \rangle_{k\in\mathbb{N}}$ by setting $n_0 \coloneq 0$ and, given $n_k$ has been defined, let $n_{k+1}$ be the largest $n > n_k$ such that $\sum_{m=n_k+1}^n{\epsilon_m} \leq \frac{\epsilon}{2^{k+1}}$. Note, then, that $\sum_{m=n_k+1}^n{(k+1)\epsilon_m} \leq \frac{(k+1)\epsilon}{2^{k+1}}$.

We define sequences of positive integers $\langle \gamma_m \rangle_{m\in\mathbb{N}}$ and $\langle \delta_m \rangle_{m\in\mathbb{N}}$ recursively. Start by setting $\gamma_m \coloneq \delta_m =1$ for $0 = n_0 \leq m < n_1$, let $\gamma_{n_1} \coloneq 1$, and let $\delta_{n_1}$ be the largest integer such that $\sum_{m=n_0}^{n_1}{\epsilon_m\delta_m} \leq \epsilon/2$ -- because $\sum_{m=n_0}^{n_1}{\epsilon_m} \leq \epsilon/2$ by our definition of $n_1$, we know $\delta_{n_1} \geq 1$. Analogously, given $\gamma_m$ and $\delta_m$ have been defined for $m \leq n_k$, let $\gamma_m \coloneq \delta_m \coloneq k+1$ for $n_k +1 \leq m < n_{k+1}$, let $\gamma_{n_{k+1}} = k+1$, and let $\delta_{n_{k+1}}$ be the largest integer such that $\sum_{m=0}^{n_{k+1}}{\epsilon_m\delta_m} \leq \sum_{j=1}^{k+1}{\frac{j}{2^j}}$ -- because $\sum_{m=n_k+1}^n{(k+1)\epsilon_m} \leq \frac{(k+1)\epsilon}{2^{k+1}}$ by our definition of $n_{k+1}$, it follows that $\delta_{n_{k+1}} \geq k+1$.

By construction, $\sum_{m=0}^\infty{\epsilon_m\delta_m}$ converges to the recursive real $\sum_{k=1}^\infty{\frac{k\epsilon}{2^k}} = 2\epsilon$ (here, we use the fact that $\lim_{k \to \infty}{\epsilon_{n_k}} = 0$ so that the difference $\sum_{j=1}^{k+1}{\frac{j}{2^j}} - \sum_{m=0}^{n_{k+1}}{\epsilon_m\delta_m}$ is made arbitrarily small as $k \to \infty$). Because $\epsilon$ is a recursive real, the above construction is also recursive, so the sequences $\langle \gamma_m \rangle_{m\in\mathbb{N}}$ and $\langle \delta_m \rangle_{m\in\mathbb{N}}$ are recursive. As observed above, $\gamma_m, \delta_m \geq k+1$ for $n_k+1 \leq m \leq n_{k+1}$, so $\lim_{m \to \infty}{\delta_m} = \infty$.

By construction, $0 < \epsilon_m\gamma_m \leq \epsilon_m\delta_m$ for all $m \in \mathbb{N}$, so the convergence of $\sum_{m=0}^\infty{\epsilon_m\delta_m}$ to the recursive real $2\epsilon$ implies that $\sum_{m=0}^\infty{\epsilon_m\gamma_m}$ also converges to a recursive real by \cref{recursive sum implies lower bounds have recursive sums}. Finally, $\langle \gamma_m \rangle_{m\in\mathbb{N}}$ is nondecreasing by construction.
\end{proof}

\begin{proof}[Proof of \cref{sub-identical complex strongly computes shift complex}.]
First, $\sum_{m=0}^\infty{\frac{f(2^m)}{2^m}}$ converging to a recursive real implies that $\sum_{m=0}^\infty{\frac{f(2^{m+1})}{2^m}} = 2\left( \sum_{m=0}^\infty{\frac{f(2^{m+1})}{2^{m+1}}} - f(1)\right)$ converges to a recursive real. Then $\sum_{m=0}^\infty{\frac{f(2^{m+1})+1}{2^m}} = \sum_{m=0}^\infty{\frac{f(2^{m+1})}{2^m}} + 2$ converges to a recursive real, so \cref{recursive sum implies lower bounds have recursive sums} implies that $\sum_{m=0}^\infty{\frac{\lceil f(2^{m+1})\rceil}{2^m}}$ converges to a recursive real. 

Applying \cref{convergent series gap} to $\langle \lceil f(2\cdot 2^m)\rceil/2^m \rangle_{m\in\mathbb{N}}$ yields a nondecreasing, recursive sequence $\langle \gamma_m \rangle_{m\in\mathbb{N}}$ of positive integers tending towards infinity such that $\sum_{m=0}^\infty{\frac{\lceil f(2^{m+1}) \rceil}{2^m}\gamma_m}$ converges. Note that the proof of \cref{convergent series gap} shows that we may assume without loss of generality that $\gamma_m \leq m+1$ for all $m \in \mathbb{N}$.

Define $\epsilon_m = \frac{\lceil f(2^{m+1}) \rceil}{2^m}\gamma_m + \frac{2m^2}{2^m}\gamma_m$. By the definition of $\langle \gamma_m \rangle_{m\in\mathbb{N}}$, $\sum_{m=0}^\infty{\frac{\lceil f(2^{m+1}) \rceil}{2^m}\gamma_m}$ converges. Because $\gamma_m \leq m+1$, $\sum_{m=0}^\infty{\frac{2m^2}{2^m}\gamma_m}$ also converges. Thus, $\sum_{m=0}^\infty{\epsilon_m}$ converges. Additionally, both $\langle 2^m\epsilon_m \rangle_{m\in\mathbb{N}}$ and $\langle 2^m\epsilon_m\gamma_m^{-1} \rangle_{m\in\mathbb{N}}$ are strictly increasing, recursive sequences of natural numbers. 

Define $g\colon\mathbb{N} \to \co{0,\infty}$ by setting
\begin{equation*}
g(n) \coloneq \frac{2^{m+1}\epsilon_{m+1}\gamma_{m+1}^{-1} - 2^m\epsilon_m\gamma_m^{-1}}{2^{m+1}\epsilon_{m+1} - 2^m\epsilon_m}(n - 2^m\epsilon_m) + 2^m\epsilon_m\gamma_m^{-1}
\end{equation*}
for $2^m\epsilon_m \leq n < 2^{m+1}\epsilon_{m+1}$. For $0 \leq n < \epsilon_0$, let $g(n) \coloneq \gamma_0^{-1}n$. In other words, we set $g(0) \coloneq 0$ and $g(2^m\epsilon_m) \coloneq 2^m\epsilon_m\gamma_m^{-1}$ for $m \in \mathbb{N}$, then take $g$ to be defined linearly between consecutive points in the sequence $\langle 0,0 \rangle, \langle \epsilon_0,\epsilon_0\gamma_0^{-1} \rangle, \langle 2\epsilon_1,2\epsilon_2\gamma_1^{-1} \rangle,\ldots$. We make the following observations:
\begin{itemize}
\item By the definition of $g$ above, $g$ is recursive.

\item Because the sequences $\langle 2^m\epsilon_m \rangle_{m\in\mathbb{N}}$ and $\langle 2^m\epsilon_m\gamma_m^{-1} \rangle_{m\in\mathbb{N}}$ are both strictly increasing, $g$ is nondecreasing. 

\item To show that $\lim_{n \to \infty}{\frac{g(n)}{n}} = 0$, the piecewise linearity of $g$ means that it suffices to show that $\lim_{m \to \infty}{\frac{g(2^m\epsilon_m)}{2^m\epsilon_m}} = 0$. Indeed,
\begin{equation*}
\lim_{m \to \infty}{\frac{g(2^m\epsilon_m)}{2^m\epsilon_m}} = \lim_{m \to \infty}{\frac{2^m\epsilon_m\gamma_m^{-1}}{2^m\epsilon_m}} = \lim_{m \to \infty}{\gamma_m^{-1}} = 0.
\end{equation*}

\item $g(2^m\epsilon_m) \geq f(2^{m+1}) + 2m$:
\begin{align*}
g(2^m\epsilon_m) & = 2^m\epsilon_m\gamma_m^{-1} \\
& = \lceil f(2^{m+1}) \rceil \gamma_m \gamma_m^{-1} + 2m^2\gamma_m \gamma_m^{-1} \\
& \geq \lceil f(2^{m+1}) \rceil + 2m \\
& \geq f(2^{m+1}) + 2m.
\end{align*}
Thus, $g$ is an order function and 
\begin{equation*}
\liminf_m{\frac{g(2^m\epsilon_m) - f(2\cdot 2^m)}{m}} \geq 2 > 1.
\end{equation*}
\cref{strong rumyantsev theorem 4} then implies $\complex(g) \stronggeq \shiftcomplex(f)$.
\end{itemize}
\end{proof}

\subsection{Strong Shift Complexity and Depth} \label{strong shift complexity and depth subsection}

Although there exist order functions $f$ for which $\shiftcomplex(f)$ is negligible, the situation for generalized strong shift complexity is more favorable with respect to depth, completely answering \cref{depth of strong f-shift complex question}.

\begin{thm} \label{rumyantsev negligible}
\textnormal{\cite[Theorem 5, essentially]{rumyantsev2011everywhere}}
$\stronglyshiftcomplex(f,c)$ is a deep $\Pi^0_1$ class for any order function $f$ satisfying $\limsup_n{\frac{f(n)}{n}} < 1$ and any $c \in \mathbb{N}$.
\end{thm}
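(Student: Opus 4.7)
The plan is to adapt the proof of \cref{delta-c-shift complex reals form deep pi01 class} to a general order function $f$ with $\delta_0 \coloneq \limsup_n f(n)/n < 1$, exploiting the fact that in the strong shift complexity condition we condition on both the position $k$ and the length $n$. This conditioning eliminates precisely the overhead terms that, in the unconditional case, forced Rumyantsev to work with $\delta/3$ rather than $\delta$.

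First, $\stronglyshiftcomplex(f,c)$ is $\Pi^0_1$ by \cref{strongly shift complex classes are pi01}. Fix a universal left r.e.\ continuous semimeasure $\mathbf{M}$ on $\{0,1\}^\ast$ and, via \cref{levin and zvonkin}, a partial recursive functional $\Psi\colonsub \cantor \to \cantor$ with $\mathbf{M}(\sigma) = \lambda(\Psi^{-1}(\sigma))$. The goal is to exhibit an order function $r\colon \mathbb{N} \to \mathbb{N}$ such that $\mathbf{M}(\stronglyshiftcomplex(f,c) \restrict r(m)) \leq 2^{-m}$ for each $m$. I would establish an analog of \cref{rumyantsev lemma 6}: for every rational $\epsilon > 0$ and $n_0 \in \mathbb{N}$, effectively find $n_0 \leq n < N$ and random variables $\mathcal{A}_n, \ldots, \mathcal{A}_N$ with $\mathcal{A}_i \subseteq \{0,1\}^i$, $|\mathcal{A}_i| \leq 2^{f(i) - c - d}$ for a constant $d$ to be determined, and $\Prob(\sigma \text{ has no substring in } \bigcup_i \mathcal{A}_i) < \epsilon$ for every $\sigma \in \{0,1\}^N$. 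The construction follows Rumyantsev's iterative sparsity scheme: fix a rational $\delta$ with $\delta_0 < \delta < 1$ so that $f(n) < \delta n$ for large $n$, build scales $n = n_1' < n_2' < \cdots < n_{m^\ast}' = N$, and let $\mathcal{A}_{n_k'}$ be a random subset of the $(k-1)$-sparse strings of length $n_k'$ of size $\leq 2^{f(n_k')-c-d}$. Because $f(n)/n$ may tend to zero, the number of levels $m^\ast$ is allowed to depend on $n_0$, chosen so that $1/m^\ast < f(n_k')/n_k'$ on the relevant range.

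For any realization $(A_n, \ldots, A_N)$ of the tuple, conditioning on $n$ gives
\begin{equation*}
\pfc(\tau \mid k, n) \leq \log_2 |A_i| + 2\log_2\log_2 |A_i| + \pfc(A_n, \ldots, A_N) + O(1)
\end{equation*}
for $\tau \in A_i$, with the $O(1)$ independent of $k$ since the tuple does not depend on $k$. Since $\pfc(A_n, \ldots, A_N) \leq^+ \pfc(n_0) + \pfc(m)$ plus an $O(m)$ overhead from the Fubini selection, we take $d = d(m) = O(m)$ and a recursive $n_0 = n_0(m)$ with $f(n_0(m))$ dominating this overhead (which exists since $f$ is unbounded). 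The bound then becomes $\pfc(\tau \mid k, n) < f(n) - c$, so any strongly $\langle f, c\rangle$-shift complex sequence must avoid every $A_i$ as substrings. A Fubini argument as in \cref{delta-c-shift complex reals form deep pi01 class}, applied to a recursive refinement of $\{X \in \cantor : |\Psi^X| \geq N\}$ of sufficiently large Lebesgue measure, then selects a specific tuple with $\lambda(\{X : \Psi^X \supseteq \sigma \text{ for some } \sigma \in \stronglyshiftcomplex(f,c) \restrict N\}) \leq 2^{-m}$, yielding $\mathbf{M}(\stronglyshiftcomplex(f,c) \restrict r(m)) \leq 2^{-m}$ with $r(m) = N(n_0(m), m)$.

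The main obstacle is the adaptation of Rumyantsev's probabilistic lemma to the size bound $2^{f(i)-c-d}$ in place of $2^{\delta i}$. Rumyantsev's argument uses a single fixed $\delta$ and a fixed number of levels $m$ with $1/m < \delta$, but when $f(n)/n$ tends to zero the number of levels must grow with $n_0$, and one must verify that the accumulated overhead constants $c_k$ controlling descriptions of $(k-1)$-sparse strings remain negligible compared to $f(n_k')$ throughout the iteration.
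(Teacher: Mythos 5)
You have identified the right place where your plan is incomplete, but the gap is fatal rather than technical: the generalized version of \cref{rumyantsev lemma 6} with position-independent sets of size $2^{f(i)-c-d}$, which your whole argument rests on, cannot be obtained by the iterative sparsity scheme you propose once $f$ is sublinear — and the sublinear case is precisely the new content of \cref{rumyantsev negligible}. In Rumyantsev's scheme the top-level set $\mathcal{A}_N$ must absorb all maximally sparse strings of length $N$, of which there are about $2^{N/m^\ast + c_{m^\ast}}$, so your size bound forces the number of levels to satisfy $m^\ast \gtrsim N/f(N)$; on the other hand the probability estimate at level $k+1$ needs $f(n'_{k+1}) \geq c_{k+1} \approx 2^{\frac{m^\ast-k}{m^\ast}n'_k}$, hence $n'_{k+1} \geq 2^{n'_k/2}$ for at least the first $m^\ast/2$ levels, so $N$ is bounded below by an exponential tower of height about $m^\ast/2$. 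For $f(n)=\sqrt n$ (let alone $f(n)=\log_2\log_2 n$) the two requirements are incompatible for large $N$, and $N$ must be large since $n_0(m)\to\infty$. This is not an artifact of the construction: by \cref{rumyantsev nonnegligible}, $\shiftcomplex(\lambda n.\sqrt n)$ is non-negligible, so one should not expect an argument in the style of \cref{delta-c-shift complex reals form deep pi01 class}, built on position-independent forbidden sets of size roughly $2^{f(i)}$ whose members are $\pfc$-cheap, to survive into this regime; the depth of $\stronglyshiftcomplex(f,c)$ has to come from a different mechanism.

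That mechanism is exactly what conditioning on $k$ provides and what your proposal does not use: the forbidden string may depend on the position. The paper's proof forbids a tuple $\vec{\tau}=\langle\tau_0,\ldots,\tau_{N-1}\rangle$ of length-$n$ strings, one per position, chosen independently and uniformly at random; a fixed $Y$ with $|Y|\geq N+n-1$ satisfies $Y(\co{k,k+n})\neq\tau_k$ for all $k$ with probability $(1-2^{-n})^N$, which is made smaller than $2^{-(m+1)}$ simply by taking $N$ large — no sparsity hierarchy, no constraint on the growth of $f$. After the same Fubini selection against the measure induced by $\Psi$ on a recursive subclass $\tilde{S}$, the chosen tuple is computable from $(n,m,i_0)$, so $\pfc(\tau_k\mid k,n)\leq 2m+2\log_2 m+O(1)$: since $k$ is given, there is no index-within-$\mathcal{A}_i$ cost at all, and one needs only that $f$ is unbounded to choose $n(m)$ with $f(n(m))-c$ exceeding this bound. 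Relatedly, your claim that conditioning ``eliminates precisely the overhead terms'' behind the $\delta/3$ is inaccurate: that factor comes from the cost of specifying the index of $\tau$ inside $A_{|\tau|}$, which conditioning on $(k,n)$ does not remove as long as the forbidden sets are position-independent — your own complexity estimate still pays $\log_2|A_i|$ for it, which is why you are forced to shrink the sets and thereby run into the combinatorial obstruction above.
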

\begin{proof}
With $\mathbf{M}$ a universal left r.e.\ continuous semimeasure on $\{0,1\}^\ast$, by \cref{levin and zvonkin} there is a partial recursive functional $\Psi$ such that for every $\sigma \in \{0,1\}^\ast$, 
\begin{equation*}
\mathbf{M}(\sigma) = \lambda(\Psi^{-1}(\sigma)) = \lambda(\{ X \in \cantor \mid \Psi^X \supseteq \sigma\}).
\end{equation*}

For a fixed $n \in \mathbb{N}$, say $Y \in \{0,1\}^\ast \cup \cantor$ \emph{avoids} a $\vec{\tau} = \langle \tau_0,\tau_1,\ldots,\tau_{N-1}\rangle$ if $|Y| \geq N+n-1$ and $Y(\co{k,k+n}) \neq \tau_k$ for all $k \in \{0,1,2,\ldots,N-1\}$. 

Our approach, roughly, involves us finding, for a fixed length $n$, a sequence $\vec{\tau}$ of `forbidden' strings $\tau_0,\tau_1,\ldots,\tau_{N-1}$ (where $N=N(n,m)$) of length $n$ such that 
\begin{equation*}
\lambda(\{ X \in \cantor \mid \text{$\Psi(X)$ avoids $\vec{\tau}$}\}) < 2^{-m}.
\end{equation*}
By showing that such a $\vec{\tau}$ can be chosen so that $\pfc(\tau_k \mid k,n)$ is bounded for all $k,n$, it will follow that it is exceeded by $f(n)-c$ for all sufficiently large $n$. Hence, if an output $Y$ of $\Psi$ is strongly $f$-shift complex, then it avoids $\vec{\tau}$, and $\lambda(\Psi^{-1}[\stronglyshiftcomplex(f,c)]) < 2^{-m}$.

As in the proof of \cref{delta-c-shift complex reals form deep pi01 class}, an arbitrary output of $\Psi$ need not be in $\cantor$. However, also like in \cref{delta-c-shift complex reals form deep pi01 class}, we only need $|\Psi^X| \geq N+n-1$ for the notion of avoiding $\vec{\sigma} \in (\{0,1\}^n)^N$ to be well-defined. As such, define
\begin{equation*}
S \coloneq \{ X \in \cantor \mid |\Psi^X| \geq N+n-1\}
\end{equation*}
where $N=N(n,m)$ is the smallest natural number for which
\begin{equation*}
\left(1-\frac{1}{2^n}\right)^N < 2^{-(m+1)}.
\end{equation*}
$S$ is $\Sigma^0_1$, so there exists a recursive sequence $\langle \sigma_n \rangle_{n\in\mathbb{N}}$ of pairwise incompatible strings such that $S = \bigcup_{n \in \mathbb{N}}{\bbracket{\sigma_n}}$.  Let $\alpha = \lambda(S)$, so that $\langle \lambda\left(\bigcup_{k\leq n}{\bbracket{\rho_k}}\right) \rangle_{n \in \mathbb{N}}$ converges monotonically to $\alpha$ from below. Let $i_0$ be the largest natural number $i$ such that $\alpha_i \coloneq i \cdot 2^{-m}/3 < \alpha$, so that $\alpha - \alpha_{i_0} < 2^{-(m+1)}$. Then let $p$ be the smallest natural number such that $\lambda\left(\bigcup_{k\leq p}{\bbracket{\sigma_k}}\right) \geq \alpha_{i_0}$ and finally define
\begin{equation*}
\tilde{S} \coloneq \bigcup_{k \leq p}{\bbracket{\sigma_k}}.
\end{equation*}

$\tilde{S}$ is a recursive subset of $S$ and $\lambda(S\setminus \tilde{S}) < 2^{-(m+1)}$. By virtue of being a subset of $S$, $|\Psi^X| \geq N+n-1$ for all $X \in S^\ast$. Define $\mu$ on $\{0,1\}^{N+n-1}$ by
\begin{equation*}
\mu(\tau) \coloneq \lambda(\tilde{S})^{-1} \cdot \lambda(\{ X \in \cantor \mid \text{$X \in \tilde{S}$ and $\Psi^X \supseteq \tau$}\}).
\end{equation*}
$\mu$ is a probability measure on $\{0,1\}^{N+n-1}$, and the recursiveness of $\tilde{S}$ implies $\mu$ is computable. 

For a fixed $Y \in \{0,1\}^{\geq N+n-1}$, the probability that $\vec{\tau} \in (\{0,1\}^n)^N$ is avoided by $Y$ (taken with respect to the uniform probability measure $\nu$ on $(\{0,1\}^n)^N$, or equivalently the $N$-fold product of the uniform probability measures on $\{0,1\}^n$) is equal to
\begin{equation*}
\left( 1- \frac{1}{2^n}\right)^N < \frac{\epsilon}{2}.
\end{equation*}
Write
\begin{equation*}
E \coloneq \{ \langle X,\vec{\tau} \rangle \in \cantor \times (\{0,1\}^n)^N \mid \text{$X \in \tilde{S}$ and $\Phi^X$ avoids $\vec{\tau}$}\}.
\end{equation*}
By Fubini's Theorem,
\begin{align*}
\int{\left(\int{\chi_E(X,\vec{\tau})\dd \lambda}\right)\dd \nu} & = (\lambda \times \nu)(E) \\
& = \int{\left(\int{\chi_E(X,\vec{\sigma})\dd \nu}\right)\dd \lambda} \\
& = \int{(1-2^{-n})^N\dd \lambda} \\
& = (1-2^{-n})^N \\
& < 2^{-(m+1)}.
\end{align*}
If $\int{\chi_E(X,\vec{\tau})\dd \lambda} \geq 2^{-(m+1)}$ for every $\vec{\tau} \in (\{0,1\}^n)^N$, we reach a contradiction. Thus, there is a least one sequence $\vec{\tau} \in (\{0,1\}^n)^N$ with the desired property, i.e., for which
\begin{equation*}
\lambda(\{ X \in \cantor \mid \text{$X \in \tilde{S}$ and $\Psi^X$ avoids $\vec{\tau}$}\}) < 2^{-(m+1)},
\end{equation*}
and hence 
\begin{equation*}
\lambda(\{ X \in \cantor \mid \text{$X \in S$ and $\Psi^X$ avoids $\vec{\tau}$}\}) < 2^{-(m+1)} + 2^{-(m+1)} = 2^{-m}.
\end{equation*}

Let $c_1 \in \mathbb{N}$ be such that $\pfc(\sigma) \leq 2 \log_2 |\sigma| + c_1$ for all $\sigma \in \{0,1\}^\ast$. An index for $\tilde{S}$ can be found effectively as a function of $n$, $m$, and $i_0$, and such an index can be used to compute a $\vec{\tau} = \langle \tau_0,\tau_1,\ldots,\tau_{N-1} \rangle$ for which $\lambda(\{X \in \cantor \mid \text{$X \in \tilde{S}$ and $\Psi^X$ avoids $\vec{\tau}$}\}) < 2^{-(m+1)}$. As such, there is a constant $c_2 \in \mathbb{N}$ (independent of $k$, $n$, $m$, and $i_0$) such that $\pfc(\tau_k \mid k,n,i_0) \leq \pfc(m) + c_2$ for all $k$, $n$, and $i_0$. Although $i_0$ cannot in general be found recursively as a function of $\langle n,m\rangle$, we have the bound $i_0 \leq (2^{-m}/3)^{-1} = 3 \cdot 2^m$. Thus,
\begin{equation*}
\pfc(i_0) \max_{0\leq i \leq 3 \cdot 2^m}{\pfc(i)} \leq \max_{0\leq i \leq 3 \cdot 2^m}{(2\log_2 i+c_1)} \leq 2m + 2\log_2 3 + c_1.
\end{equation*}
Finally, there is $c_3 \in \mathbb{N}$ such that for all $\sigma, \rho_1,\rho_2,\rho_3 \in \{0,1\}^\ast$ we have $\pfc(\sigma \mid \rho_1,\rho_2) \leq \pfc(\sigma \mid \rho_1,\rho_2,\rho_3) + \pfc(\rho_3) + c_3$. Thus,
\begin{align*}
\pfc(\tau_k \mid k,n) & \leq \pfc(\tau_k \mid k,n,i_0) + \pfc(i_0) + c_3 \\
& \leq (\pfc(m) + c_2) + 2m + 2\log_2 3 + c_1 + c_3 \\
& \leq (2\log_2 m + c_1 + c_2) + 2m + 2\log_2 3 + c_1 + c_3 \\
& = 2m + 2\log_2 m + (2\log_2 3 + 2c_1 + c_2 + c_3).
\end{align*}
Let $d = 2\log_2 3 + 2c_1 + c_2 + c_3$ and let $n = n(m)$ be the least such that 
\begin{equation*}
f(n) - c > 2m + 2\log_2 m + d.
\end{equation*}

Suppose $\Psi(X)$ is strongly $(f,c)$-shift complex, so that $\Psi^X = \Psi(X) \in \cantor$ (therefore $X \in S$) and $\pfc(\Psi(X)(\co{k,k+n}) \mid k,n) \geq f(n)-c$ for all $n,k$. For a sufficiently large $n$, 
\begin{equation*}
\pfc(\Psi(X)([k,k+n)) \mid k,n) \geq f(n)-c > \pfc(\sigma_k \mid k,n)
\end{equation*}
so $\Psi(X)([k,k+n)) \neq \sigma_k$, hence $\Psi(X)$ avoids $\vec{\sigma}$. 
Define $r \colon \mathbb{N} \to \mathbb{N}$ by $r(m) \coloneq N(n(m),m) + n(m) - 1$, which is a recursive function. Then 
\begin{align*}
\mathbf{M}(\stronglyshiftcomplex(f,c) \restrict r(m)) & \leq \lambda(\{ X \in \cantor \mid \text{$|\Psi^X| \geq r(m)$ and $\Psi^X \restrict r(m)$ is strongly $(f,c)$-shift complex}\}) \\
& \leq \lambda(\{ X \in \cantor \mid \text{$X \in S$ and $\Psi^X$ avoids $\vec{\tau}$}\}) \\
& < 2^{-m}.
\end{align*}
Hence, $\stronglyshiftcomplex(f,c)$ is deep.
\end{proof}

\section{Open Questions}

There are a great many open questions concerning shift complexity and generalized shift complexity. Just focusing on the classes $\shiftcomplex(\delta)$ and $\stronglyshiftcomplex(\delta)$, basic questions about their weak degrees remain open, such as separating $\shiftcomplex(\alpha)$ and $\shiftcomplex(\beta)$ when $\alpha \neq \beta$:

\begin{question} \label{separating delta-shift complex degrees}
Are there rational numbers $0 < \alpha < \beta < 1$ such that $\shiftcomplex(\alpha) \weakeq \shiftcomplex(\beta)$? $\shiftcomplex(\alpha) \weakle \shiftcomplex(\beta)$?
\end{question}

\begin{question} \label{separating strongly delta-shift complex degrees}
Are there rational numbers $0 < \alpha < \beta < 1$ such that $\stronglyshiftcomplex(\alpha) \weakeq \stronglyshiftcomplex(\beta)$? $\stronglyshiftcomplex(\alpha) \weakle \stronglyshiftcomplex(\beta)$?
\end{question}

The relationship between the classes $\shiftcomplex(\alpha)$ and $\stronglyshiftcomplex(\beta)$ is especially unclear. We know that $\shiftcomplex(\alpha) \strongleq \stronglyshiftcomplex(\alpha)$, but not much more than that is known.

\begin{question} \label{separating shift complexity from strong shift complexity}
Is there a rational $\delta \in (0,1)$ such that $\shiftcomplex(\delta) \weakeq \stronglyshiftcomplex(\delta)$?
\end{question}

If \cref{separating shift complexity from strong shift complexity} is answered in the negative (so $\shiftcomplex(\delta) \weakle \stronglyshiftcomplex(\delta)$ for all $\delta \in (0,1)$), we might turn to comparing $\stronglyshiftcomplex(\alpha)$ and $\shiftcomplex(\beta)$ for some $ 0 < \alpha < \beta < 1$.

\begin{question} \label{relationship between shift complex and strongly shift complex}
Given a rational $0 < \beta < 1$, does $\stronglyshiftcomplex(\alpha) \weakleq \shiftcomplex(\beta)$ hold for all rationals $0 < \alpha < \beta$? If not, for \emph{some} $\alpha < \beta$? 
\end{question}

Although $\shiftcomplex(\delta,c)$ is deep for every rational $\delta \in (0,1)$ and $c \in \mathbb{N}$, it is unclear if $\shiftcomplex(\delta)$ is of deep degree -- while $\complex(\alpha,c) \weakleq \complex(\beta)$ for any $0 < \alpha < \beta$ such that $\complex(\alpha,c) \neq \emptyset$, it is unclear if this holds when complexity is replaced with shift complexity. 

\begin{question} \label{are unions of delta-shift complex classes deep}
Are $\shiftcomplex(\delta)$ or $\stronglyshiftcomplex(\delta)$ of deep degree in $\mathcal{E}_\weak$ for every rational $\delta \in (0,1)$?
\end{question}

More generally, we may ask similar questions with generalized shift complexity:

\begin{question} \label{separating generalized shift complexity}
Given order functions $f\colon \mathbb{N} \to \co{0,\infty}$ and $g\colon \mathbb{N} \to \co{0,\infty}$, when do we have $\shiftcomplex(f) \weakle \shiftcomplex(g)$? 
\end{question}

Partial answers to \cref{separating generalized shift complexity} exist. For example, \cref{complex and shift-complex example 1} allows us to separate the weak degrees $\weakdeg(\shiftcomplex(\lambda n. n^\alpha))$:

\begin{prop}
If $0 < \alpha < \beta < 1$ are rational numbers, then $\shiftcomplex(\lambda n. n^\alpha) \weakle \shiftcomplex(\lambda n. n^\beta)$.
\end{prop}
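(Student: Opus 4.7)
The plan is to prove the two reducibilities separately, with the nontrivial content concentrated in the strict direction.

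For the easy direction $\shiftcomplex(\lambda n. n^\alpha) \weakleq \shiftcomplex(\lambda n. n^\beta)$, I would simply observe that since $\alpha<\beta$, we have $n^\alpha \leq n^\beta$ for all $n\geq 1$, so any $\langle \lambda n. n^\beta, c\rangle$-shift complex sequence is automatically $\langle \lambda n. n^\alpha, c\rangle$-shift complex. This gives the set inclusion $\shiftcomplex(\lambda n. n^\beta) \subseteq \shiftcomplex(\lambda n. n^\alpha)$, which by \cref{properties of weak and strong reducibility}(b) yields $\shiftcomplex(\lambda n. n^\alpha) \strongleq \shiftcomplex(\lambda n. n^\beta)$, and a fortiori $\weakleq$.

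For the strict direction $\shiftcomplex(\lambda n. n^\beta) \weaknleq \shiftcomplex(\lambda n. n^\alpha)$, I would argue by contradiction and sandwich both shift-complex classes around two ordinary-complexity classes. Choose a rational $\gamma$ with $\alpha<\gamma<\beta$. Applying \cref{complex and shift-complex example 1} with the pair $(\alpha,\gamma)$ in the roles of $(\alpha,\beta)$ (valid since $0<\alpha<\gamma\leq 1$) gives
\begin{equation*}
\shiftcomplex(\lambda n. n^\alpha) \strongleq \complex(\lambda n. n^\gamma).
\end{equation*}
On the other hand, every $\langle \lambda n. n^\beta, c\rangle$-shift complex sequence satisfies $\pfc(Y\restrict n) \geq n^\beta - c$ by taking the initial-segment substring, so $\shiftcomplex(\lambda n. n^\beta,c) \subseteq \complex(\lambda n. n^\beta,c)$ and hence $\complex(\lambda n. n^\beta) \strongleq \shiftcomplex(\lambda n. n^\beta)$. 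If the assumed reduction $\shiftcomplex(\lambda n. n^\beta) \weakleq \shiftcomplex(\lambda n. n^\alpha)$ held, chaining these three reducibilities would force
\begin{equation*}
\complex(\lambda n. n^\beta) \weakleq \shiftcomplex(\lambda n. n^\beta) \weakleq \shiftcomplex(\lambda n. n^\alpha) \weakleq \complex(\lambda n. n^\gamma).
\end{equation*}

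The main obstacle is then to rule out this last inequality, i.e., to establish the strictness of the complexity hierarchy at distinct polynomial rates: $\complex(\lambda n. n^\beta) \weaknleq \complex(\lambda n. n^\gamma)$ whenever $0<\gamma<\beta<1$. This is the genuinely nontrivial ingredient; it expresses the fact that $\complex(\cdot)$ strictly refines with faster growth among sub-identical order functions whose difference tends to infinity, and it can be extracted from the results of Hudelson \cite{hudelson2014mass} characterizing when $\complex(g) \weakle \mlr$, or alternatively by routing through the $\ldnr$ correspondence of \cref{main downward theorem}: the strict separation of appropriate fast-growing $\ldnr$-classes transfers to a strict separation of the $\complex(\lambda n.n^\gamma)$ and $\complex(\lambda n.n^\beta)$ weak degrees. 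Granting this strictness yields the desired contradiction and completes the proof.
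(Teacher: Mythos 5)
Your proposal is correct and is essentially the paper's argument: the paper proves the result via the chain $\shiftcomplex(\lambda n. n^\alpha) \weakleq \complex(\lambda n. n^{(\alpha+\beta)/2}) \weakle \complex(\lambda n. n^\beta) \weakleq \shiftcomplex(\lambda n. n^\beta)$, which is your sandwich with $\gamma = (\alpha+\beta)/2$ phrased directly rather than by contradiction. The one ingredient you rightly flag as nontrivial --- the strict separation $\complex(\lambda n. n^\gamma) \weakle \complex(\lambda n. n^\beta)$ --- is asserted without further justification in the paper's chain as well, so your treatment is, if anything, more explicit about where the real content lies.
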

\begin{proof}
\cref{complex and shift-complex example 1} shows
\begin{equation*}
\shiftcomplex(\lambda n. n^\alpha) \weakleq \complex(\lambda n. n^{(\alpha+\beta)/2}) \weakle \complex(\lambda n. n^\beta) \weakleq \shiftcomplex(\lambda n. n^\beta).
\end{equation*}
\end{proof}

A variant of \cref{separating generalized shift complexity} can be phrased as an existence question:

\begin{question}
Given an order function $f\colon \mathbb{N} \to \co{0,\infty}$ for which $\limsup_n{\frac{f(n)}{n}} < 1$, for which order functions $g\colon \mathbb{N} \to \co{0,\infty}$ can we guarantee that $\shiftcomplex(f) \setminus \shiftcomplex(g) \neq \emptyset$?
\end{question}

The depth and/or negligibility of $\shiftcomplex(f,c)$ for $f$ satisfying $\limsup_n{\frac{f(n)}{n}} = 0$ is only partially addressed by \cref{sub-identical complex strongly computes shift complex}.

\begin{question} \label{depth of generalized shift complex classes}
Does there exist an order function $f\colon \mathbb{N} \to \co{0,\infty}$ such that $\limsup_n{\frac{f(n)}{n}} = 0$ but $\shiftcomplex(f,c)$ is deep for all $c \in \mathbb{N}$? Negligible?
\end{question}

A particular instance of \cref{depth of generalized shift complex classes} asked by Rumyantsev is for $f$ defined by $f(n) \coloneq n/\log_2 n$, where $\sum_{m=0}^\infty{\frac{f(2^m)}{2^m}} = \sum_{m=0}^\infty{ \frac{2^m/m}{2^m}} = \sum_{m=0}^\infty{\frac{1}{m}} = \infty$ and hence not addressed by \cref{sub-identical complex strongly computes shift complex}. 

There are still open questions about the relationship between (generalized) shift complexity and (generalized) strong shift complexity with the slow-growing $\ldnr$ hierarchy. 

\begin{question} \label{comparing delta-shift complex to slow-growing ldnr}
Does there exist any rational $\delta \in (0,1)$ and $c \in \mathbb{N}$ such that $\ldnr_\slow \weakleq \shiftcomplex(\delta,c)$?
\end{question}

Likewise, there are still many open questions about the relationship between generalized shift complexity and complexity.

\begin{question} \label{comparing generalized shift complex to complex}
For what order functions $f\colon \mathbb{N} \to \co{0,\infty}$ is there a sub-identical order function $g \colon \mathbb{N} \to \co{0,\infty}$ such that $\shiftcomplex(f) \weakleq \complex(g)$?
\end{question}

\clearpage
\chapter{Avoidance -- Slow-Growing versus Fast-Growing}
\label{bushy tree chapter}

A result of Khan \& Miller using bushy tree forcing shows that the classes $\dnr(p)$ do not cleanly stack one atop the other as $p$ ranges through the order functions:

\begin{thm*}
\textnormal{\cite[Theorem 3.11]{khan2017forcing}}
Given any order function $p\colon \mathbb{N} \to (1,\infty)$, there is an order function $q\colon \mathbb{N} \to (1,\infty)$ such that $\dnr(p)$ and $\dnr(q)$ are weakly incomparable.
\end{thm*}

Our aim in this chapter is to lift this result to the $\ldnr$ hierarchy with further generality and adding the guarantee that $q$ can be chosen to be slow-growing:

\begin{repthm}{ldnr incomparable}
For all order functions $p_1\colon \mathbb{N} \to (1,\infty)$ and $p_2\colon \mathbb{N} \to (1,\infty)$, there exists a slow-growing order function $q\colon \mathbb{N} \to (1,\infty)$ such that $\ldnr(p_1) \nweakleq \ldnr(q) \nweakleq \ldnr(p_2)$. In particular, for any order function $p \colon \mathbb{N} \to (1,\infty)$, there exists a slow-growing order function $q\colon \mathbb{N} \to (1,\infty)$ such that $\ldnr(p)$ and $\ldnr(q)$ are weakly incomparable.
\end{repthm}

In \cref{bushy trees section} we cover the necessary combinatorial tools, those of $k$-bushy trees and the associated notions of being $k$-big or $k$-small above a string. 

In \cref{avoidance of individual universal partial recursive functions section}, we prove the following variant of \cref{ldnr incomparable} where avoidance is taken with respect to individual partial recursive functions. 

\begin{repthm}{slow-growing q incomparable to given p}
Suppose $p_1\colon \mathbb{N} \to (1,\infty)$ and $p_2\colon (1,\infty)$ are order functions, $u\colon \mathbb{N} \to \mathbb{N}$ is a strictly increasing order function, $\psi_1$ and $\psi_2$ are universal partial recursive functions, and $\psi_3$ and $\psi_4$ are partial recursive functions. Then there exists a order function $q\colon \mathbb{N} \to (1,\infty)$ such that $q \circ u$ is slow-growing and
\begin{equation*}
\avoid^{\psi_1}(p_1) \nweakleq \avoid^{\psi_3}(q) \quad \text{and} \quad \avoid^{\psi_2}(q \circ u) \nweakleq \avoid^{\psi_4}(p_2).
\end{equation*} 
\end{repthm}

In \cref{avoidance of well-behaved families of universal partial recursive functions section}, we deduce from \cref{slow-growing q incomparable to given p} a generalization in which $\avoid^{\psi_1}(p_1)$ and $\avoid^{\psi_2}(q \circ u)$ are replaced with $\avoid^{\mathcal{C}_1}(p_1)$ and $\avoid^{\mathcal{C}_2}(q \circ u)$, respectively, where $\mathcal{C}_1$ and $\mathcal{C}_2$ may be are families of universal partial recursive functions satisfying a condition we term ``translationally bounded from above'', of which the family of linearly universal partial recursive functions is included. 

\begin{repthm}{stronger main theorem for classes}
Suppose $p_1\colon \mathbb{N} \to (1,\infty)$ and $p_2\colon \mathbb{N} \to (1,\infty)$ are order functions, $u \colon \mathbb{N} \to \mathbb{N}$ is a strictly increasing order function, $\mathcal{C}_1$ and $\mathcal{C}_2$ are nonempty families of universal partial recursive functions which are each translationally bounded from above, and $\psi_1$ and $\psi_2$ are partial recursive functions. Then there exists an order function $q\colon \mathbb{N} \to (1,\infty)$ such that $q \circ u$ is slow-growing and for which
\begin{equation*}
\avoid^{\mathcal{C}_1}(p_1) \nweakleq \avoid^{\psi_1}(q) \quad \text{and} \quad \avoid^{\mathcal{C}_2}(q \circ u) \nweakleq \avoid^{\psi_2}(p_2).
\end{equation*}
\end{repthm}

\cref{ldnr incomparable} is then an easy consequence of \cref{stronger main theorem for classes}. 

In \cref{implications for ldnr-slow section}, we use \cref{ldnr incomparable} to deduce the following implications concerning $\ldnr_\slow$.

\begin{repthm}{ldnr-slow not deep}
$\ldnr_\slow$ is not of deep degree.
\end{repthm}

\begin{repthm}{slow-growing hierarchy has no minimum}
There is no order function $q\colon \mathbb{N} \to (1,\infty)$ such that $\ldnr_\slow \weakeq \ldnr(q)$.
\end{repthm}

\begin{repthm}{SC not weakly below ldnr_slow}
$\shiftcomplex \weaknleq \ldnr_\slow$.
\end{repthm}

In \cref{replacing slow-growing with depth section}, we prove the following variant of \cref{stronger main theorem} where $q$ being slow-growing is strengthened to $\avoid^{\psi_2}(q \circ u)$ being of deep degree.

\begin{repthm}{stronger main theorem deep}
Suppose $p_1\colon \mathbb{N} \to (1,\infty)$ and $p_2 \colon \mathbb{N} \to (1,\infty)$ are order functions, $u\colon \mathbb{N} \to \mathbb{N}$ is a strictly increasing order function, $\psi_1$ and $\psi_2$ are universal partial recursive functions, and $\psi_3$ and $\psi_4$ are partial recursive functions. Then there exists a order function $q\colon \mathbb{N} \to (1,\infty)$ such that $\avoid^{\psi_2}(q \circ u)$ is of deep degree and
\begin{equation*}
\avoid^{\psi_1}(p_1) \nweakleq \avoid^{\psi_3}(q) \quad \text{and} \quad \avoid^{\psi_2}(q \circ u) \nweakleq \avoid^{\psi_4}(p_2).
\end{equation*} 
\end{repthm}

\section{Bushy Trees}
\label{bushy trees section}

The main tool we use in the proof of \cref{ldnr incomparable} and \cref{stronger main theorem} is bushy tree forcing, which was developed by Kumabe in 1993 to answer affirmatively a question of Sacks which asked if there were $\dnr$ reals of minimal Turing degree. The proof was never published, but a draft was in private circulation in 1996. The technique in its current form was introduced in a 2009 publication by Kumabe and Lewis to give a simplified version of Kumabe's original proof.

Bushy tree forcing has since become a standard tool when working with $\dnr$ and has been described \cite{bienvenu2016diagonally} as, ``the canonical forcing notion used in the study of $\dnr$ functions.''

\begin{definition}[$n$-bushy above $\sigma$]
Suppose $\sigma \in \mathbb{N}^\ast$. A tree $T \subseteq \mathbb{N}^\ast$ is \textdef{$n$-bushy above $\sigma$} if every element of $T$ is compatible with $\sigma$ and every $\tau \in T$ which extends $\sigma$ is either a leaf or else has at least $n$ immediate extensions in $T$. $\sigma$ is known as the \textdef{stem} of $T$.
\end{definition}


Subsets $A$ of $\mathbb{N}^\ast$ for which we have many ways of extending $\sigma$ to an element of $A$ are `big'. Otherwise, they are `small'.

\begin{definition}[$n$-big, $n$-small above $\sigma$]
Suppose $\sigma \in \mathbb{N}^\ast$. A set $B \subseteq \mathbb{N}^\ast$ is \emph{$n$-big above $\sigma$} if there is a finite tree $T$ which is $n$-bushy tree above $\sigma$ and for which its leaves lie in $B$. 

If $B$ is not $n$-big above $\sigma$, then $B$ is said to be \emph{$n$-small above $\sigma$}.
\end{definition}

Our arguments are based on the idea that there are `bad' sets of strings which we wish to avoid. If we can ensure that those `bad' sets of strings are sufficiently small, then we can construct a real $X \in \baire$ none of whose initial segments lie in those `bad' sets.

In \cite{khan2017forcing} several fundamental combinatorial lemmas are identified which are reproduced below, sometimes with minor modifications suited to our needs. 

\begin{lem}[Concatenation Property] \label{concatenation property}
\textnormal{\cite[Lemma 2.6]{khan2017forcing}}
Suppose $A \subseteq \mathbb{N}^\ast$ is $n$-big above $\sigma$ and $\langle A_\tau\rangle_{\tau \in A}$ is a family of subsets of $\mathbb{N}^\ast$ indexed by $A$. If $A_\tau$ is $n$-big above $\tau$ for every $\tau \in A$, then $\bigcup_{\tau \in T}{A_\tau}$ is $n$-big above $\sigma$.
\end{lem}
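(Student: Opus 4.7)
The plan is to glue together the witnessing bushy trees. Unpacking the hypothesis that $A$ is $n$-big above $\sigma$, I obtain a finite tree $T \subseteq \mathbb{N}^\ast$ that is $n$-bushy above $\sigma$ whose set of leaves $L(T)$ is contained in $A$. For each leaf $\tau \in L(T)$, the hypothesis that $A_\tau$ is $n$-big above $\tau$ yields a finite tree $T_\tau \subseteq \mathbb{N}^\ast$ that is $n$-bushy above $\tau$ with $L(T_\tau) \subseteq A_\tau$. Note that every string in $T_\tau$ is compatible with $\tau$, and hence with $\sigma$, so these auxiliary trees all hang below the leaves of $T$.

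Next I would form the union $T^\ast \coloneq T \cup \bigcup_{\tau \in L(T)} T_\tau$ and verify that $T^\ast$ is the desired witnessing tree. First, $T^\ast$ is a finite set of strings each compatible with $\sigma$, and it is downward-closed under initial segments (i.e., a tree) because both $T$ and each $T_\tau$ are, and any proper initial segment of an element of $T_\tau$ that extends $\sigma$ but is shorter than $\tau$ already lies in $T$. Second, I need to check that every element of $T^\ast$ extending $\sigma$ is either a leaf of $T^\ast$ or has at least $n$ immediate extensions in $T^\ast$. For a node $\rho \in T$ that is not a leaf of $T$, its $n$ (or more) immediate extensions in $T$ remain in $T^\ast$. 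For a former leaf $\tau \in L(T)$, it is no longer a leaf in $T^\ast$ because $T_\tau$ is $n$-bushy above $\tau$ and either $\tau$ is itself a leaf of $T_\tau$ (in which case $\tau$ remains a leaf of $T^\ast$, and since $\tau \in A \subseteq \bigcup_{\tau' \in A} A_{\tau'}$ as long as $\tau \in A_\tau$; otherwise we simply use the immediate extensions of $\tau$ inside $T_\tau$) or has $n$ immediate extensions inside $T_\tau \subseteq T^\ast$. For nodes strictly inside some $T_\tau$, the $n$-bushiness of $T_\tau$ above $\tau$ gives the required branching. Finally, the leaves of $T^\ast$ are exactly the leaves of the various $T_\tau$, each of which lies in $A_\tau \subseteq \bigcup_{\tau' \in A} A_{\tau'}$.

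The only mild subtlety is the edge case in which some $\tau \in L(T)$ is already a leaf of $T_\tau$: then $\tau$ remains a leaf of $T^\ast$, but since $\tau$ is a leaf of $T_\tau$ we have $\tau \in A_\tau$ by definition, so $\tau \in \bigcup_{\tau' \in A} A_{\tau'}$, which is exactly what we need. So no obstacle arises. The argument is essentially just a bookkeeping verification of the tree definitions, and I expect no serious difficulty; the result is then that $T^\ast$ is an $n$-bushy tree above $\sigma$ whose leaves lie in $\bigcup_{\tau \in A} A_\tau$, witnessing that this union is $n$-big above $\sigma$.
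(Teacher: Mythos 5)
Your proof is correct and follows essentially the same route as the paper: glue the witnessing $n$-bushy trees $T_\tau$ onto the leaves of the tree $T$ witnessing that $A$ is $n$-big above $\sigma$, and check bushiness by the same three-way case split (interior nodes of $T$, former leaves of $T$, nodes inside some $T_\tau$), including the degenerate case where $\tau$ is itself a leaf of $T_\tau$.
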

\begin{proof}
For each $\tau \in A$, let $T_\tau$ be a finite $n$-bushy tree above $\tau$ all of whose leaves lie in $A_\tau$. Let $T$ be a finite $n$-bushy tree above $\sigma$ all of whose leaves lie in $A$. Define $\hat{T}$ to be the tree obtained by taking the union of $T$ with the trees $T_\tau$ where $\tau$ is a leaf of $T$ in $A$. We claim that $\hat{T}$ is $n$-bushy above $\sigma$. Because every string in $\hat{T}$ extends an element of $T$, it follows that every string in $\hat{T}$ extends $\sigma$. Now suppose $\rho$ is a string in $\hat{T}$ extending $\sigma$ and which is not a leaf. We consider three cases:
\begin{description}
\item[Case 1:] If $\rho$ is a member of $T$ and not a leaf of $T$, then the fact that $T$ is $n$-bushy above $\sigma$ implies it has at least $n$ immediate extensions in $T \subseteq \hat{T}$. 
\item[Case 2:] If $\rho$ is a leaf of $T$, then $\rho = \tau$ for some $\tau \in A$; $\tau$ extends itself (improperly), so $T_\tau$ being $n$-bushy above $\tau$ implies it is either a leaf of $T_\tau$ (and hence of $\hat{T}$) or it has at least $n$ immediate extensions in $T_\tau \subseteq \hat{T}$.
\item[Case 3:] If $\rho$ is not a member of $T$, then $\rho \in T_\tau$ for some $\tau \in A$. The argument then follows exactly as in Case 2.
\end{description}
It only remains to show that the leaves of $\hat{T}$ lie in $\bigcup_{\tau \in A}{A_\tau}$. Indeed, the leaves of $\hat{T}$ are exactly the leaves of $A_\tau$ for all $\tau \in A$, which each lie in $A_\tau$, respectively, and hence in $\bigcup_{\tau \in A}{A_\tau}$.
\end{proof}

\begin{lem}[Smallness Preservation Property] \label{smallness preservation property}
\textnormal{\cite[Lemma 2.7]{khan2017forcing}}
Suppose that $B, C \subseteq \mathbb{N}^\ast$, $m, n \in \mathbb{N}$, and $\sigma \in \mathbb{N}^\ast$. If $B$ is $m$-small above $\sigma$ and $C$ is $n$-small above $\sigma$, then $B \cup C$ is $(n+m-1)$-small above $\sigma$.
\end{lem}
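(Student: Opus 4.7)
The plan is to argue the contrapositive: if $B \cup C$ is $(n+m-1)$-big above $\sigma$, then either $B$ is $m$-big above $\sigma$ or $C$ is $n$-big above $\sigma$. The proof will proceed by induction on the (finite) height of a witnessing tree $T$ which is $(n+m-1)$-bushy above $\sigma$ and all of whose leaves lie in $B \cup C$. The structure of this induction matches the recursive definition of $n$-bushy trees, which makes it the natural vehicle.

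For the base case, suppose $\sigma$ itself is a leaf of $T$. Then $\sigma \in B \cup C$, so $\sigma$ lies in at least one of $B$ or $C$. The singleton tree $\{\sigma\}$ is $k$-bushy above $\sigma$ for every $k \in \mathbb{N}$ (vacuously, since its only node $\sigma$ is a leaf), so $B$ is $m$-big above $\sigma$ or $C$ is $n$-big above $\sigma$, as required.

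For the induction step, $\sigma$ has at least $n+m-1$ immediate extensions $\sigma \concat \langle i \rangle$ in $T$. For each such $i$, the subtree $T_i \subseteq T$ consisting of strings extending $\sigma \concat \langle i \rangle$ is itself $(n+m-1)$-bushy above $\sigma \concat \langle i \rangle$, with its leaves still in $B \cup C$. By the induction hypothesis, for each such $i$ either $B$ is $m$-big above $\sigma \concat \langle i \rangle$ or $C$ is $n$-big above $\sigma \concat \langle i \rangle$. Here comes the key combinatorial step, which I expect to be the only subtle point: by the pigeonhole principle applied to the $n+m-1$ children, either at least $m$ of them witness that $B$ is $m$-big above $\sigma \concat \langle i \rangle$, or at least $n$ of them witness that $C$ is $n$-big above $\sigma \concat \langle i \rangle$ (otherwise the two counts would sum to at most $(m-1)+(n-1) = n+m-2$, a contradiction).

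In the first case, let $I$ be a set of $m$ such indices, and for each $i \in I$ let $S_i$ be a finite $m$-bushy tree above $\sigma \concat \langle i \rangle$ with leaves in $B$. The tree $\{\sigma\} \cup \{\sigma \concat \langle i \rangle : i \in I\} \cup \bigcup_{i \in I} S_i$ is then $m$-bushy above $\sigma$ with leaves in $B$, so $B$ is $m$-big above $\sigma$. The second case is symmetric, yielding that $C$ is $n$-big above $\sigma$. This completes the induction and hence the contrapositive. The main work is purely the pigeonhole counting; the tree-gluing at the end is essentially the same construction used in the proof of the Concatenation Property (\cref{concatenation property}), so no new ideas are needed there.
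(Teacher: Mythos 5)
Your proof is correct and is essentially the paper's argument in different clothing: the paper performs a bottom-up labeling of the witnessing $(n+m-1)$-bushy tree (each node labeled ``B'' or ``C'' by the same pigeonhole count on its children) and then extracts the monochromatic subtree, which is exactly your induction on tree height with the gluing step made implicit. The pigeonhole step you flag as the key point is indeed the heart of both versions, so no further comment is needed.
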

\begin{proof}
Suppose for the sake of a contradiction that $B \cup C$ is not $(n+m-1)$-small above $\sigma$, i.e., $(n+m-1)$-big above $\sigma$. Then there exists a finite tree $T$ which is $(n+m-1)$-bushy above $\sigma$ all of whose leaves lie in $B \cup C$. We will label each element of $T$ by either a `B' or a `C', starting with leaves and working our way towards the stem $\sigma$. Label a leaf of $T$ `B' if it lies in $B$ and `C' otherwise. If an extension $\tau$ of $\sigma$ in $T$ has not been labeled but all of its proper extensions have been, then label $\tau$ `B' if at least $m$ of its immediate successors have `B' label, and `C' otherwise (by the pigeonhole principle, there must be at least $n$ immediate successors labeled `C'). Continue in this way until $\sigma$ itself has been labeled.

If $\sigma$ has been labeled `B', then the set $T_\mathrm{B}$ of all extensions of $\sigma$ (along with the initial segments of $\sigma$) labeled `B' is a finite tree which is $m$-bushy above $\sigma$. Otherwise, the set $T_\mathrm{C}$ of all extensions of $\sigma$ (along with the initial segments of $\sigma$) labeled `C' is $n$-bushy above $\sigma$. In either case, we reach a contradiction.
\end{proof}

\begin{lem}[Small Set Closure Property] \label{small set closure property}
\textnormal{\cite[Lemma 2.8, essentially]{khan2017forcing}}
Suppose $B \subseteq \mathbb{N}^\ast$ is $k$-small above $\sigma$. Let $C = \{ \tau \in \mathbb{N}^\ast \mid \text{$B$ is $k$-big above $\tau$}\}$. Then $C$ is $k$-small above $\sigma$ and is \textdef{$k$-closed}, i.e., if $C$ is $k$-big above a string $\rho$, then $\rho \in C$.

Moreover, the upward closure of $C$ is $k$-small above $\sigma$ and $k$-closed.
\end{lem}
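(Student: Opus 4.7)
My plan centers on the Concatenation Property (\cref{concatenation property}), which will do essentially all of the real work. To establish that $C$ is $k$-small above $\sigma$, I argue by contradiction: assume $C$ is $k$-big above $\sigma$, witnessed by a finite tree $T$ that is $k$-bushy above $\sigma$ with leaves in $C$. For each leaf $\tau$ of $T$, the defining property of $C$ says $B$ is $k$-big above $\tau$, so applying the Concatenation Property with $A$ the set of leaves of $T$ and $A_\tau = B$ for each such $\tau$ yields that $B$ is $k$-big above $\sigma$, contradicting the hypothesis on $B$. The $k$-closedness of $C$ follows from exactly the same argument applied at an arbitrary stem: if $C$ is $k$-big above some $\rho$, the Concatenation Property gives that $B$ is $k$-big above $\rho$, so $\rho \in C$ by definition.

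For the moreover clause, let $\widehat{C}$ denote the upward closure of $C$ (the set of extensions of elements of $C$, read within the set of extensions of $\sigma$). To see that $\widehat{C}$ is $k$-small above $\sigma$, I plan a truncation argument: given any finite $k$-bushy tree $T$ above $\sigma$ with leaves in $\widehat{C}$, for each leaf $\rho$ of $T$ pick the shortest $\tau_\rho \in C$ with $\sigma \subseteq \tau_\rho \subseteq \rho$, and truncate $T$ at each such $\tau_\rho$. The resulting subtree $T'$ has $\sigma$ as its stem, its leaves lie in $C$, and it remains $k$-bushy above $\sigma$: no internal node of $T'$ can lose any of its immediate $T$-successors, because a missing $T$-successor would necessarily be a strict extension of some $\tau_\rho$, which would force its parent (an internal node of $T'$) also to strictly extend $\tau_\rho$, contradicting the fact that this parent lies in $T'$. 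The existence of $T'$ contradicts the $k$-smallness of $C$ above $\sigma$ established in the first part, and the $k$-closedness of $\widehat{C}$ follows from the same Concatenation Property argument used for $C$.

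The main obstacle I foresee is the truncation step for the moreover clause, and specifically verifying that the truncation preserves $k$-bushiness, i.e., that every internal node of $T'$ retains all of its $T$-immediate-successors so that its out-degree remains at least $k$. This is a small but fiddly bookkeeping point; once it is handled cleanly, everything else falls out of the Concatenation Property together with the definition of $C$.
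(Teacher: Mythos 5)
Your proposal is correct and follows essentially the same route as the paper: the smallness and $k$-closedness of $C$ both fall out of the Concatenation Property exactly as you describe, and the moreover clause is handled by the same truncation idea (the paper packages it as a standalone sub-lemma stating that a set is $k$-big above $\sigma$ iff its upward closure is, with the nontrivial direction given by cutting each branch at its first entry into the upward closure). The bushiness-preservation point you flag is real but resolves just as you sketch, since the truncation points are the minimal elements of $C$ along each branch and minimality rules out any immediate successor being dropped.
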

\begin{proof}
Suppose for the sake of a contradiction that $C$ is $k$-big above $\sigma$. Then, by \cref{concatenation property}, $B$ is $k$-big above $\sigma$, yielding a contradiction. 

The same reasoning can be applied to show that if $C$ is $k$-big above a string $\rho$, then $B$ is $k$-big above $\rho$ and hence $\rho \in C$.

Now consider the upward closure $C^\uparrow \coloneq \{ \rho \in \{0,1\}^\ast \mid \exists \tau \in C \qspace (\tau \subseteq \rho)\}$ of $C$. The following lemma shows that $C^\uparrow$ is similarly $k$-small above $\sigma$ and $k$-closed.	

\begin{lem} 
Suppose $B \subseteq \mathbb{N}^\ast$ and $\sigma \in \mathbb{N}^\ast$. Then $B$ is $k$-big above $\sigma$ if and only if its upward closure $B^\uparrow$ is $k$-big above $\sigma$.
\end{lem}
\begin{proof}
If $B$ is $k$-big above $\sigma$, then any finite $k$-bushy tree $T$ above $\sigma$ realizing this also shows that $B^\uparrow$ is $k$-big above $\sigma$.

Conversely, suppose $B^\uparrow$ is $k$-big above $\sigma$, and let $T$ be a finite $k$-bushy tree above $\sigma$ whose leaves are within $B^\uparrow$. Let
\begin{equation*}
\tilde{T} = \{\tau \in T \mid \tau \in B^\uparrow \wedge \tau \restrict (|\tau|-1) \notin B^\uparrow\}.
\end{equation*}
$\tilde{T}$ is a tree, as if $\tau \in T$ and $\sigma \subsetneq \tau$, then $\sigma$ is an element of $T \setminus B^\uparrow$, and $B^\uparrow$ being upward closed implies no initial segment of $\sigma$ is in $B^\uparrow$, so $\sigma \in \tilde{T}$.  
\end{proof}

\end{proof}

\begin{definition}[$k$-Closure]
If $B \subseteq \mathbb{N}^\ast$ is $k$-small above $\sigma$, then its \textdef{$k$-closure} is the upward closure of the set $\{ \tau \in \mathbb{N}^\ast \mid \text{$B$ is $k$-big above $\tau$}\}$.
\end{definition}

In addition to the above lemmas, we also collect a series of facts which either follow quickly from those above lemmas or else follow immediately from the definitions.

\begin{lem} \label{basic bushy lemmas}
Suppose $\sigma \in \mathbb{N}^\ast$ and $B,C \subseteq \mathbb{N}^\ast$ are given.
\begin{enumerate}[(a)]
\item If $B$ is $\ell$-big above $\sigma$ and $k < \ell$, then $B$ is $k$-small above $\sigma$.
\item If $B$ is $k$-small above $\sigma$ and $k < \ell$, then $B$ is $\ell$-small above $\sigma$.
\item If $B = B_1 \cup B_2 \cup \cdots \cup B_n$ is $n\cdot k$-big above $\sigma$ and $n,k > 0$, then there exists $i \in \{1,2,\ldots,n\}$ such that $B_i$ is $k$-big above $\sigma$.
\item If $B \subseteq C$ and $B$ is $k$-big above $\sigma$, then $C$ is $k$-big above $\sigma$.
\item If $B \subseteq C$ and $C$ is $k$-small above $\sigma$, then $B$ is $k$-small above $\sigma$.
\item If $B$ is $k$-small above $\sigma$ and $k$-closed and $C$ is $k$-big above $\sigma$, then there exists a $\tau \in C \setminus B$ which extends $\sigma$.
\end{enumerate}
\end{lem}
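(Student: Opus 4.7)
My plan is to treat each of the six claims as a short direct consequence of the definitions, together with the Concatenation Property (\cref{concatenation property}) and Smallness Preservation Property (\cref{smallness preservation property}) already established. All of the work is bookkeeping around the parameters $k$ and $\ell$, so I would not expect any single item to require more than a sentence or two of argument.

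For parts (a) and (b) I would use the obvious monotonicity of the bushiness parameter: a tree that has at least $\ell$ immediate extensions at each non-leaf node automatically has at least $k$ immediate extensions at each non-leaf node whenever $k \leq \ell$, so any $\ell$-bushy tree above $\sigma$ is also $k$-bushy above $\sigma$. Part (a) then follows by taking a witness tree for $\ell$-bigness and reading it as a witness for $k$-bigness. (I am reading the conclusion of (a) as ``$k$-big above $\sigma$''; as written it says ``$k$-small'', which is inconsistent with (b) and appears to be a typographical slip.) Part (b) is then the contrapositive of (a): if no $k$-bushy witness for $B$ exists above $\sigma$, then in particular no $\ell$-bushy witness does, since any such witness would also be $k$-bushy.

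For part (c) I would argue by contraposition. Suppose every $B_i$ is $k$-small above $\sigma$. A straightforward induction on $n$ using \cref{smallness preservation property} shows that $B_1 \cup B_2 \cup \cdots \cup B_n$ is $(n(k-1)+1)$-small above $\sigma$. Since $n(k-1)+1 \leq nk$ whenever $n \geq 1$, part (b) upgrades this to $nk$-smallness of $B$, contradicting the hypothesis that $B$ is $nk$-big. Parts (d) and (e) are immediate from the definitions: any finite tree witnessing $k$-bigness of $B$ above $\sigma$ with leaves in $B$ automatically has leaves in any superset $C$, so (d) is direct, and (e) is just its contrapositive.

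For part (f) I would let $T$ be a finite $k$-bushy tree above $\sigma$ whose leaves lie in $C$, which exists by the $k$-bigness of $C$. If every leaf of $T$ lay in $B$, then $T$ would also witness $k$-bigness of $B$ above $\sigma$, contradicting the $k$-smallness of $B$. Hence some leaf $\tau$ of $T$ lies in $C \setminus B$, and $\tau$ extends $\sigma$ by virtue of being a node of $T$. The $k$-closure hypothesis on $B$ is not actually used in this argument; I expect it appears in the statement because it will be the typical situation in which (f) is applied later. No step looks like a genuine obstacle; the only mildly delicate moment is the arithmetic in (c), where one must keep track of the bound $n(k-1)+1$ produced by iterating Smallness Preservation and then slide it up to $nk$ using (b).
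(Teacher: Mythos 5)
Your proof is correct and follows essentially the same route as the paper's: (a) by reinterpreting an $\ell$-bushy witness as $k$-bushy, (b), (e) as contrapositives, (c) by iterating the Smallness Preservation Property to get $(n(k-1)+1)$-smallness, and (f) by noting that a $k$-bushy witness for $C$ must have a leaf outside $B$. Your reading of (a) as concluding "$k$-big" matches the paper's own proof of that item, and the paper's argument for (f) likewise never invokes the $k$-closure hypothesis.
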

\begin{proof} \mbox{}
\begin{enumerate}[(a)]
\item If $T$ is a finite tree which is $\ell$-bushy above $\sigma$ and all of whose leaves lie in $B$, then $T$ is $k$-bushy above $\sigma$. Thus, $B$ is $k$-big above $\sigma$.

\item If $B$ is $\ell$-big above $\sigma$, then (a) above shows that $B$ is $k$-big above $\sigma$, a contradiction.

\item Suppose for the sake of a contradiction that $B_i$ is $k$-small above $\sigma$ for every $i \in \{1,2,\ldots,n\}$. By repeated applications of \cref{smallness preservation property} we find that $B$ is $n \cdot k - (n-1) = (n\cdot (k-1) + 1)$-small above $\sigma$. $n\cdot (k-1) + 1 < n\cdot k$, so (a) above gives a contradiction.

\item A finite $k$-bushy tree above $\sigma$ whose leaves are in $B$ is a finite $k$-bushy tree above $\sigma$ whose leaves are in $C \supseteq B$. 

\item If $B$ is $k$-big above $\sigma$, then (d) above implies $C$ is $k$-big above $\sigma$, a contradiction.

\item Suppose for the sake of a contradiction that there is no $\tau \in C \setminus B$ extending $\sigma$. Because $C$ is $k$-big above $\sigma$, there exists a $k$-bushy tree $T$ above $\sigma$ all of whose leaves lie in $C$. But every leaf of $T$ is an extension of $\sigma$ in $C$, which by hypothesis implies it lies in $B$, so $T$ is a $k$-bushy tree above $\sigma$ all of whose leaves lie in $B$, contradicting the hypothesis that $B$ is $k$-small above $\sigma$.

\end{enumerate}
\end{proof}

\section{Avoidance of Individual Universal Partial Recursive Functions}
\label{avoidance of individual universal partial recursive functions}
\label{avoidance of individual universal partial recursive functions section}

We wish to use what Khan \& Miller term `basic' bushy tree forcing \cite{khan2017forcing} -- in which we approximate our generic real with finite strings -- to prove the following result.

\begin{thm} \label{ldnr incomparable}
For all order functions $p_1\colon \mathbb{N} \to (1,\infty)$ and $p_2\colon \mathbb{N} \to (1,\infty)$, there exists a slow-growing order function $q\colon \mathbb{N} \to (1,\infty)$ such that $\ldnr(p_1) \nweakleq \ldnr(q) \nweakleq \ldnr(p_2)$. In particular, for any order function $p\colon \mathbb{N} \to (1,\infty)$, there exists a slow-growing order function $q \colon \mathbb{N} \to (1,\infty)$ such that $\ldnr(p)$ and $\ldnr(q)$ are weakly incomparable.
\end{thm}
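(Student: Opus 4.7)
The plan is to deduce \cref{ldnr incomparable} from a two-sided bushy tree forcing theorem, first proved for individual universal partial recursive functions (the paper's \cref{slow-growing q incomparable to given p}) and then lifted to classes of such functions (\cref{stronger main theorem for classes}); \cref{ldnr incomparable} is the special case with $\mathcal{C}_1 = \mathcal{C}_2 = \mathcal{LU}$ and $u = \id_\mathbb{N}$. The lift is possible because $\mathcal{LU}$ is ``translationally bounded from above'': via \cref{parametrization theorem for linearly universal partial recursive functions} and \cref{diagonal of linearly universal is linearly universal}, a single master linearly universal $\psi_0$ effectively encodes every $\psi \in \mathcal{LU}$ up to a linear shift, so diagonalizing membership in $\avoid^{\psi_0}(p_1)$ simultaneously rules out membership in $\avoid^{\psi}(p_1)$ for any linearly universal $\psi$.

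Concretely, I would construct by a simultaneous finite-extension argument a slow-growing order function $q$, a stem $\sigma_s \in \mathbb{N}^{\ast}$ converging to $Y \in \ldnr(q)$ that witnesses $\ldnr(p_1) \nweakleq \ldnr(q)$, and a stem $\tau_s \in \mathbb{N}^{\ast}$ converging to $Z \in \ldnr(p_2)$ that witnesses $\ldnr(q) \nweakleq \ldnr(p_2)$. Requirements are enumerated: $R^Y_e$ demands $\Phi_e^Y \notin \ldnr(p_1)$, $R^Z_e$ demands $\Phi_e^Z \notin \ldnr(q)$. To satisfy $R^Y_e$ at stage $s$, consider
\[
A_e = \{\, \rho \supseteq \sigma_s \mid \exists n,\; \Phi_e^\rho(k) \converge \text{ for all } k < n, \text{ and } \Phi_e^\rho(k) \geq p_1(k) \text{ or } \Phi_e^\rho(k) \simeq \psi_0(k) \text{ for some } k < n \,\}.
\]
Apply the bushy dichotomy with threshold $k_s$: either $A_e$ is $k_s$-big above $\sigma_s$, in which case we extend $\sigma_s$ into $A_e$ and satisfy the requirement directly; or $A_e$ is $k_s$-small above $\sigma_s$, in which case by \cref{small set closure property} its $k_s$-closure is $k_s$-small and $k_s$-closed, and we permanently adjoin it to the cumulative forbidden set, preserving smallness under unions via \cref{smallness preservation property}. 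In the small branch any total $\Phi_e^Y$ is $p_1$-bounded and disjoint from $\psi_0$, so $\Phi_e^Y \in \avoid^{\psi_0}(p_1) \subseteq \ldnr(p_1)$; to exclude this we exploit the recursion theorem for linearly universal functions (\cref{recursion theorem for linearly universal partial recursive functions}) to code $\psi_0$ so that it itself meets any such $\Phi_e^Y$ at some index, forcing a contradiction with avoidance. The requirements $R^Z_e$ are handled symmetrically using the partial linearly universal function witnessing $Z \in \ldnr(p_2)$.

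The main obstacle is reconciling the slow-growing demand on $q$ with the growing bushy thresholds $k_s$: we need $q(n)$ large enough at each position $n$ reached by $\sigma_s$ so that sufficiently many extensions avoiding both the cumulative forbidden set and the forbidden value $\psi_Y(n)$ remain, yet small enough to keep $\sum_n q(n)^{-1} = \infty$. The resolution is that $k_s$ grows only finitely often (once per requirement), so $q$ can be kept small on a recursive cofinal set of ``relief'' positions; the strictly increasing parameter $u$ in \cref{slow-growing q incomparable to given p} parametrizes exactly this set, specialized to $u = \id_\mathbb{N}$ when deducing \cref{ldnr incomparable}. Once the individual-function theorem is in place, the class version \cref{stronger main theorem for classes}, and hence \cref{ldnr incomparable}, follows by effectively uniformizing against the enumeration of linearly universal functions (\cref{effective enumeration of the linearly universal partial recursive functions}) through the translational-boundedness reduction to the single master $\psi_0$.
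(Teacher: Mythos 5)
Your top-level architecture is exactly the paper's: \cref{ldnr incomparable} is obtained from \cref{stronger main theorem for classes} with $\mathcal{C}_1=\mathcal{C}_2=\mathcal{LU}$, $u=\id_\mathbb{N}$, and a single linearly universal $\psi$, and that theorem reduces via translational boundedness to the individual-function statement \cref{main technical result}, proved by bushy tree forcing. The gap is in the core diagonalization. You dichotomize on the \emph{failure} set $A_e$ (conditions already forcing $\Phi_e^\rho$ to exceed $p_1$ or meet $\psi_0$) and, in the small case, forbid it. But forbidding $A_e$ forces $\Phi_e^Y$, if total, to be $p_1$-bounded and disjoint from $\psi_0$ --- i.e.\ it forces $\Phi_e^Y \in \avoid^{\psi_0}(p_1)$, the opposite of the requirement. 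Your proposed repair, ``code $\psi_0$ so that it meets any such $\Phi_e^Y$,'' is not executable: $\psi_0$ is fixed before the construction, and $\Phi_e^Y$ is not partial recursive (the generic $Y$ is not recursive), so it cannot be coded into $\psi_0$ after the fact; and if $\psi_0$ \emph{did} meet $\Phi_e^Y$, some initial segment of $Y$ would lie in the very set $A_e$ you forbade. The correct mechanism (the paper's Step 3) dichotomizes instead on the \emph{convergence-below-bound} set $\{\tau \mid \Gamma_e^\tau(d)\converge < p_1(d)\}$ at a designated input $d=\theta_1(\sigma,e)$ manufactured by the Parametrization Theorem so that $\varphi_d$ searches for a bushy tree on whose leaves $\Gamma_e^\tau(d)$ takes a common value $<p_1(d)$ and outputs that value; then bigness implies $\psi(d)=\varphi_d(d)$ converges to a value realized on a big set, and one extends into that set to force $\Gamma_e^Y(d)=\psi(d)$, while smallness lets one forbid convergence below the bound, forcing $\Gamma_e^Y(d)$ to diverge or exceed $p_1(d)$. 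The self-reference has to be built into the designated input \emph{before} the case split; it cannot rescue the wrong branch afterwards.

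A second, related omission: you construct $q$ ``simultaneously'' with the stems, but the big/small decisions are not effective (bigness is $\Sigma^0_1$, smallness $\Pi^0_1$), so a $q$ whose values depend on which case occurs need not be recursive. The paper sidesteps this by defining $q$ \emph{in advance} via the Recursion Theorem (the index $e$ with $\varphi_e = Q(e,\cdot)$), so that $q$'s jumps $q(x)=p_1(\overline{\theta}_1(e,x))\cdot q(x-1)+q(x-1)+2$ are large enough for either branch of the dichotomy, and so that the designated inputs $\theta_1(\sigma,i,e)$ --- which must reference an index for $q$ --- are well-defined. Finally, the long constant stretches of $q$ are needed not only to make $q$ slow-growing but to make the $p_2$-side forcing close: the union of the small value-classes is only $p_2(n_i)\cdot((q\circ u)(\theta_2(\rho,i))+1)$-small, and this must be dominated by $p_2(k)$ for a reachable $k$, which requires $(q\circ u)$ to be constant from $n_i$ out to $\theta_2(\rho,i)$. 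Your sketch treats the constancy purely as a summability device and would not, as written, keep the second forcing alive.
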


While our aim is to prove \cref{ldnr incomparable} -- a statement about $\ldnr$, i.e., avoidance of the \emph{family} of linearly universal partial recursive functions -- we first establish the case of avoidance of individual universal partial recursive functions. 

\begin{thm} \label{slow-growing q incomparable to given p} \label{stronger main theorem}
Suppose $p_1\colon \mathbb{N} \to (1,\infty)$ and $p_2\colon \mathbb{N} \to (1,\infty)$ are order functions, $u\colon \mathbb{N} \to \mathbb{N}$ is a strictly increasing order function, $\psi_1$ and $\psi_2$ are universal partial recursive functions, and $\psi_3$ and $\psi_4$ are partial recursive functions. Then there exists a order function $q\colon \mathbb{N} \to (1,\infty)$ such that $q \circ u$ is slow-growing and
\begin{equation*}
\avoid^{\psi_1}(p_1) \nweakleq \avoid^{\psi_3}(q) \quad \text{and} \quad \avoid^{\psi_2}(q \circ u) \nweakleq \avoid^{\psi_4}(p_2).
\end{equation*} 
\end{thm}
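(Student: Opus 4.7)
The plan is a simultaneous bushy-tree forcing construction adapting the Kumabe--Lewis technique as presented in \cite[Theorem~3.11]{khan2017forcing}, in which we build in stages an order function $q$, a sequence $Z \in \avoid^{\psi_3}(q)$, and a sequence $W \in \avoid^{\psi_4}(p_2)$ such that $Z$ computes no element of $\avoid^{\psi_1}(p_1)$ and $W$ computes no element of $\avoid^{\psi_2}(q \circ u)$. Forcing conditions are triples $(\sigma_s, \tau_s, q_s)$ consisting of a finite stem $\sigma_s$ of $Z$ (respecting the partial $q_s$ and avoiding $\psi_3$), a finite stem $\tau_s$ of $W$ (respecting $p_2$ and avoiding $\psi_4$), and a finite partial definition $q_s$ of $q$. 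Enumerating all partial recursive functionals $\Phi_0, \Phi_1, \ldots$, we alternate between stems to meet requirements
\[
\text{(R1$_e$):}\quad \Phi_e^Z \notin \avoid^{\psi_1}(p_1) \qquad \text{and} \qquad \text{(R2$_e$):}\quad \Phi_e^W \notin \avoid^{\psi_2}(q \circ u).
\]

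Each requirement is handled by the standard bushy-tree dichotomy. For (R1$_e$), let $B_e = \{\rho \supseteq \sigma_s \mid \Phi_e^\rho \text{ agrees nowhere with } \psi_1 \text{ below } p_1\}$ and let $k$ be the branching available above $\sigma_s$ under the planned extension of $q$. One of the following holds: either $B_e$ is $k$-small above $\sigma_s$, and by \cref{small set closure property} we extend $\sigma_s$ outside the $k$-closure of $B_e$, ensuring any $Z$ extending the new stem has $\Phi_e^Z$ agreeing with $\psi_1$ below $p_1$ somewhere; or $B_e$ is $k$-big above $\sigma_s$, and the universality of $\psi_1$ lets us find a total recursive $f$ with $\psi_1 \circ f = \theta$, where $\theta$ is a partial recursive function tracing through a witnessing $k$-bushy tree and computing the relevant values $\Phi_e^\rho(n)$, so that extending $\sigma_s$ through this tree forces $\Phi_e^Z$ to hit $\psi_1$ at some input. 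Requirement (R2$_e$) is handled identically with $W, p_2, \psi_2, q\circ u$ in place of $Z, q, \psi_1, p_1$; the branching available for $\tau_s$-extensions is the ample $p_2(|\tau_s|)$, and slow growth of $q \circ u$ only shrinks $B_e$, easing the diagonalization.

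The main obstacle is reconciling the slow growth of $q \circ u$ with the branching needs of the (R1$_e$) diagonalizations. We impose the budget $q(u(n)) \leq n + 2$ throughout, ensuring $\sum_n q(u(n))^{-1} = \infty$; by monotonicity $q(m) \leq n + 3$ for all $m \leq u(n+1)$, so the branching available for (R1$_e$) on positions $\leq u(n+1)$ is capped at $n + 3$. Since the Kumabe--Lewis threshold $k_e$ for $\Phi_e$ depends only on $e$ and previously handled requirements, there is always a finite $n$ with $n + 3 \geq k_e$; at that point we extend $\sigma_s$ and the definition of $q$ up to $u(n+1)$ (setting $q(u(n+1)) = n + 3$) and apply the dichotomy. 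This delayed interleaving is the main technical step; routine bookkeeping then verifies that the limits $Z = \bigcup_s \sigma_s$, $W = \bigcup_s \tau_s$, and $q = \bigcup_s q_s$ are well-defined, that $q$ is an order function with $q \circ u$ slow-growing, and that the three objects together satisfy both non-reductions.
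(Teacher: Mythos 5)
Your proposal has the right general flavor (bushy-tree forcing with a big/small dichotomy, handling both non-reductions), but it has a central gap: the theorem requires $q$ to be an order function, hence \emph{computable}, while your construction interleaves the definition of $q$ with forcing decisions -- whether $B_e$ is $k$-big or $k$-small above the current stem -- which are $\Sigma^0_1$/$\Pi^0_1$ questions that cannot be decided effectively, so the limit $q=\bigcup_s q_s$ you describe is not recursive. Relatedly, your ``big case'' for (R1$_e$) needs much more than the bare universality of $\psi_1$: to force $\Phi_e^Z$ to hit $\psi_1$, the input at which the hit occurs must be one whose $\psi_1$-value is, by construction, exactly the common value forced on the leaves of a bushy tree; producing such an input is a Recursion-Theorem-style fixed point (the index of the tree-search procedure is the input), and the search procedure must be genuinely partial recursive, which means it must refer to a completed, recursive $q$ (its trees live in $q^\ast$ with branching bounded by $q$). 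The paper resolves both problems simultaneously: it first transfers $\psi_1,\psi_2$ to the diagonal $\psi$ of an admissible enumeration via monotonized translations (absorbing them into $p_1$ and $u$), and then defines $q$ entirely effectively using the Recursion Theorem, so that the auxiliary functions $\theta_1,\theta_2$ can mention $q$'s own index and their values/indices serve as the diagonalization inputs; the big/small dichotomies are consulted only in the \emph{verification} (Steps 3 and 4 of the proof of \cref{main technical result}), never in the construction of $q$ itself. Your proposal, which diagonalizes against an arbitrary universal $\psi_1$ directly and builds $q$ alongside generic sequences $Z$ and $W$, has no mechanism supplying this self-reference or this effectivity.

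The growth bookkeeping is also wrong. Your cap $q(u(n))\leq n+2$ (which by monotonicity bounds $q$ pointwise by roughly the inverse of $u$) conflicts with the (R1$_e$) requirements: the branching needed at the diagonalization position is about $p_1(x)\cdot q(|\sigma|-1)$, where $x$ is the self-referentially chosen input attached to the current stem $\sigma$, so the threshold is not ``a $k_e$ depending only on $e$ and previously handled requirements'' -- it depends on $p_1$ evaluated at an input that grows with the stem and on $q$ itself, and waiting until $n+3$ exceeds it is a moving target, since extending the stem changes the relevant input. The paper instead lets $q$ jump as high as needed (to $p_1(\overline{\theta}_1(x))\cdot q(x-1)+q(x-1)+2$) and obtains slow growth of $q\circ u$ not from any pointwise bound but by following each jump with a constant stretch long enough both to cover $u(\theta_2(\cdot))$ for the $p_2$-side tree searches (so that $(q\circ u)$ is already fixed at the inputs those searches interrogate) and to contribute at least $1$ to $\sum_n (q\circ u)(n)^{-1}$. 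Without this self-referentially sized jump-then-plateau pattern -- or some replacement for it -- the two halves of your construction cannot be reconciled, and the claimed ``routine bookkeeping'' does not go through.
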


In order to make use of tools like the Parametrization and Recursion Theorems, we start by proving the following technical result, in which the instances of $\avoid^{\psi_1}(p_1)$ and $\avoid^{\psi_2}(q \circ u)$ are replaced with $\dnr(p_1)$ and $\dnr(q \circ u)$, respectively, where $\dnr$ may be defined with respect to any admissible enumeration. 

\begin{thm} \label{main technical result}
Suppose $p_1\colon \mathbb{N} \to (1,\infty)$ and $p_2\colon \mathbb{N} \to (1,\infty)$ are order functions, $u\colon \mathbb{N} \to \mathbb{N}$ is a strictly increasing order function, and $\psi_1$, $\psi_2$ are partial recursive functions. Then there exists an order function $q\colon \mathbb{N} \to (1,\infty)$ such that $q \circ u$ is slow-growing and
\begin{equation*}
\dnr(p_1) \nweakleq \avoid^{\psi_1}(q) \quad \text{and} \quad \dnr(q \circ u) \nweakleq \avoid^{\psi_2}(p_2).
\end{equation*}
\end{thm}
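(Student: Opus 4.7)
The plan is to construct $q$, $Y \in \avoid^{\psi_1}(q)$, and $X \in \avoid^{\psi_2}(p_2)$ simultaneously by basic bushy tree forcing, so that $Y$ witnesses $\dnr(p_1) \nweakleq \avoid^{\psi_1}(q)$ and $X$ witnesses $\dnr(q\circ u) \nweakleq \avoid^{\psi_2}(p_2)$. Fix an admissible enumeration $\varphi_\bullet$ defining $\dnr$ and an enumeration $\Phi_0,\Phi_1,\ldots$ of recursive functionals. Conditions will have the form $\langle \sigma, \tau, q_s, k_s \rangle$, where $\sigma \sqsubset Y$ and $\tau \sqsubset X$ are current stems, $q_s$ is a partial specification of $q$ on an initial segment $[0, N_s)$ compatible with $\sigma$ (so $\sigma \in q_s^\ast$), and $k_s$ is a bushiness threshold which tends to infinity. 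At stage $s$ I would interleave the satisfaction of requirements
\begin{align*}
R^{(1)}_e &: \ \Phi_e^Y \notin \dnr(p_1), \\
R^{(2)}_e &: \ \Phi_e^X \notin \dnr(q\circ u),
\end{align*}
each of the form ``either $\Phi_e$ is partial on the generic, or its output is not $p$-bounded, or it hits $\varphi_n(n)$ for some $n$.''

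For $R^{(1)}_e$, I would apply the standard bushy tree diagonalization in the spirit of Khan--Miller. Either there is a $k_s$-bushy extension of $\sigma$ (within $\avoid^{\psi_1}(q_s)$, extended suitably) on which some $\Phi_e^{\sigma'}(n) \ge p_1(n)$, in which case I extend to such a $\sigma'$ and the bound is broken; or for every bushy extension and every $n$ we have $\Phi_e^{\sigma'}(n) < p_1(n)$. In the latter case, for each $n$ the set of extensions is partitioned into at most $p_1(n)$ value classes, so by \cref{basic bushy lemmas}(c) applied with the threshold $p_1(n)\cdot k_s$, some class is $k_s$-big above $\sigma$; call its value $v_n$. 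The partial recursive function $\theta(n) = v_n$ (searching through extensions effectively) lets me invoke the Recursion Theorem for $\varphi_\bullet$ to find $n^*$ with $\varphi_{n^*}(n^*) = \theta(n^*) = v_{n^*}$. Extending $\sigma$ within the $v_{n^*}$-class forces $\Phi_e^Y(n^*) = \varphi_{n^*}(n^*)$, so $\Phi_e^Y \notin \dnr(p_1)$. The requirement $R^{(2)}_e$ is handled symmetrically, with $(\tau, p_2, \psi_2)$ playing the role of $(\sigma, q, \psi_1)$ and with $q\circ u$ taking the role of the bound; crucially, since $q(u(n^*))$ has not yet been committed at the moment we diagonalize, we retain enough freedom in case (i) to declare $q(u(n^*))$ to be any finite value no larger than $\Phi_e^{\tau'}(n^*)$, forcing the bound to fail, and in case (ii) the diagonal hit does not depend on $q$ at all. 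Throughout, $\psi_1$-avoidance (resp. $\psi_2$-avoidance) is preserved by discarding, at each stage, the $1$-small-above-$\sigma$ set of strings that witness $\sigma'(m) = \psi_1(m)$; their union with the finitely many earlier discarded $k_s$-small sets remains $k_{s+1}$-small via \cref{smallness preservation property}, so \cref{basic bushy lemmas}(f) guarantees that the required bushy extension exists.

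Finally, I must ensure $q$ is an order function with $q\circ u$ slow-growing. After each stage I would extend $q_s$ to $q_{s+1}$ on $[N_s, N_{s+1})$, where $N_{s+1}$ is chosen so large that, setting $q$ constant on $[N_s, N_{s+1})$ at its current value $c_s$ before any subsequent forced increase, the partial sum $\sum_{n < u^{-1}(N_{s+1})} 1/(q\circ u)(n)$ exceeds $s$; this is possible because $u$ is strictly increasing and $c_s$ is a fixed finite value, so there are arbitrarily many $n$ with $u(n) \in [N_s, N_{s+1})$, each contributing $1/c_s$ to the sum. Only at the diagonalization indices themselves is $q$ permitted to jump upward (to accommodate the new values of $\sigma$ for $Y \in q^\mathbb{N}$, or to realize the $R^{(2)}_e$ bound when required). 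The main obstacle will be this coordination step: the $\avoid^{\psi_1}(q)$ bushy tree diagonalization for $Y$ may demand that $q$ take large values infinitely often, while $q\circ u$ being slow-growing restricts how often and how quickly $q(u(n))$ may grow; but since we are free to insert arbitrarily long plateaus in $q\circ u$ between the rare stages at which growth is forced, the divergence of $\sum 1/(q\circ u)(n)$ is preserved stage-by-stage, yielding in the limit the desired $q$.
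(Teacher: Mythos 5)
Your ingredients are the right ones (bushy-tree value-class partitions, smallness bookkeeping, a Recursion-Theorem diagonalization), but the overall organization has a genuine gap: you build $q$ \emph{during} the forcing, extending $q_s$ to $q_{s+1}$ in a way that depends on which branch of the big/small dichotomy holds at stage $s$ ("only at the diagonalization indices is $q$ permitted to jump\ldots when required"). Those dichotomies are not decidable, and the generic filter is not recursive, so the limit function $q=\bigcup_s q_s$ need not be computable --- but the theorem requires $q$ to be an \emph{order function}. The paper resolves exactly this tension by inverting the order of construction: it first defines total recursive functions $\theta_1(\sigma,i,e)$ and $\theta_2(\sigma,i,e,N)$ (indices of self-referential searches, obtained from the Parametrization Theorem), then defines $q=\varphi_e$ outright by the Recursion Theorem using only recursive data --- at "jump" stages $q(x) \coloneq p_1(\overline{\theta}_1(e,x))\cdot q(x-1)+q(x-1)+2$, and at "plateau" stages $q$ is held constant past $\max_{\sigma\in p_2^k} u(\theta_2(\sigma,i,e,N_i))$ and long enough for $\sum (q\circ u)^{-1}$ to accumulate --- and only \emph{afterwards} runs the two forcing arguments against this fixed recursive $q$. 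Your construction needs this decoupling; without it the statement "$q$ is an order function" is unsupported.

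Two further points would need repair even granting a recursive $q$. First, your handling of $R^{(2)}_e$ by "declaring $q(u(n^*))$ to be any finite value no larger than $\Phi_e^{\tau'}(n^*)$" goes the wrong way: $q$ is nondecreasing with already-committed values, and the output could be $0$, so you cannot shrink the bound below the output. The correct move is the one you sketch for $R^{(1)}_e$: with $q\circ u$ held constant at $c$ on the relevant interval, partition extensions of $\tau$ in $p_2^\ast$ into $c$ value classes at the self-referential input $\theta_2(\rho,i)$; either some class is $p_2(n_i)$-big (hit the diagonal via the Recursion Theorem) or all are small and their union can be avoided (forcing divergence or a value $\geq c = (q\circ u)(\theta_2(\rho,i))$). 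Second, the smallness arithmetic in that second case only closes if $(q\circ u)(n_i)=(q\circ u)(\theta_2(\rho,i))$ and $p_2(k)\geq p_2(n_i)\cdot((q\circ u)(n_i)+1)$, i.e.\ the plateau of $q$ must be long enough, \emph{as a recursive function of the stage}, to cover $u(\theta_2(\rho,i))$ for every candidate $\rho\in p_2^k$; this is precisely the coordination you defer to "arbitrarily long plateaus" without specifying how its endpoint is computed.
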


\cref{main technical result} shows that we can construct our desired order function $q$ in such a way that we address the (potentially faster-growing) function $q \circ u$ concurrently, no matter how fast-growing $u$ may be. Within the context of deriving \cref{stronger main theorem}, $u$ encapsulates the `translation' from one universal function to another. This intuition will be helpful later when we examine the situation of (sufficiently well-behaved) families universal partial recursive functions in the place of $\psi_1$ and $\psi_2$ within \cref{stronger main theorem}, including in particular the family of linearly universal partial recursive functions.


\begin{proof}[Proof of \cref{stronger main theorem}.]
Let $\varphi_\bullet$ be the admissible enumeration with which $\dnr$ is defined with respect to, and let $\psi$ be its diagonal.

Because $\psi_1$ is universal, there exists a total recursive function $u_1\colon \mathbb{N} \to \mathbb{N}$ such that $\psi_1 \circ u_1 = \psi$. $u_1$ is unbounded (as $\psi = \psi_1 \circ u_1$ is universal) but not necessarily nondecreasing, so let $\tilde{u}_1$ be defined by $\tilde{u}_1(x) \coloneq \max_{i \leq x}{u_1(x)}$. Then $p_1 \circ \tilde{u}_1$ is an order function such that $p_1(x) \leq p_1(\tilde{u}_1(x))$ for all $x \in \mathbb{N}$, so 
\begin{equation*}
\avoid^{\psi}(p_1 \circ \tilde{u}_1) \strongleq \avoid^{\psi}(p_1 \circ u_1) \strongleq \avoid^{\psi_1}(p_1).
\end{equation*}
If $q$ is found such that $\avoid^\psi(p_1 \circ \tilde{u}_1) \nweakleq \avoid^{\psi_3}(q)$, then $\avoid^{\psi_1}(p_1) \nweakleq \avoid^{\psi_3}(q)$. 

Likewise, $\psi_2$ being universal implies there is a total recursive function $u_2\colon \mathbb{N} \to \mathbb{N}$ such that $\psi_2 \circ u_2 = \psi$. With $\tilde{u}_2(x) \coloneq \max_{i \leq x}{u_2(x)}$, for any order function $q$ we have 
\begin{equation*}
\avoid^\psi(q \circ u \circ \tilde{u}_2) \strongleq \avoid^\psi(q \circ u \circ u_2) \strongleq \avoid^{\psi_2}(q \circ u).
\end{equation*}
If $q\colon \mathbb{N} \to (1,\infty)$ is found such that $\avoid^\psi(q \circ u \circ \tilde{u}) \nweakleq \avoid^{\psi_4}(p_2)$, then $\avoid^{\psi_2}(q \circ u) \nweakleq \avoid^{\psi_4}(p_2)$. By potentially replacing $\tilde{u}_2$ with $\tilde{u}_2 + \id_\mathbb{N}$, we may assume without loss of generality that $\tilde{u}_2$ is strictly increasing, so that $u \circ \tilde{u}_2$ is also strictly increasing.

Applying \cref{main technical result} to the order functions $p_1 \circ \tilde{u}_1$, $p_2$, and $u \circ \tilde{u}_2$ and partial recursive functions $\psi_3$, $\psi_4$ yields an order function $q\colon \mathbb{N} \to (1,\infty)$ such that $\avoid^\psi(p_1 \circ \tilde{u}_1) \nweakleq \avoid^{\psi_3}(q)$ and $\avoid^\psi(q \circ u \circ \tilde{u}) \nweakleq \avoid^{\psi_4}(p_2)$, which by our above observation is enough to complete the proof. 
\end{proof}


Consider the special case of \cref{main technical result} where $p_1 = p_2 = p$ and $\psi_1 = \psi_2 = \psi$ is the diagonal of the admissible enumeration $\varphi_\bullet$ with respect to which $\dnr$ is defined. $q$ must be a slow-growing function which cannot dominate $p$ and also cannot be dominated by $p$. This suggests an approach to defining $q$ in which $q$ alternates between two phases, one in which $q$ grows slowly in comparison to $p$ and one in which $q$ grows fastly in comparison to $p$. To ensure $q \circ u$ is slow-growing, whenever we enter a slow-growing phase we stay within that phase long enough to work towards $\sum_{n=0}^\infty{q(n)^{-1}}$ diverging. 

In fact, our construction of $q$ will alternate between two actions. The first action involves keeping $q$ constant for a sufficiently long time so that the bushy tree forcing arguments within $p_2^\ast$ go through. In the second action, $q$ makes a sudden jump so that $q$ passes certain watermarks infinitely often, allowing the bushy tree forcing arguments within $q^\ast$ to go through. By staying constant sufficiently long, we can ensure that $q \circ u$ is slow-growing.

\begin{proof}[Proof of \cref{main technical result}.]
Suppose $p_1$ and $p_2$ are order functions and $\psi_1$, $\psi_2$ are partial recursive functions. Let $\varphi_\bullet$ be an admissible enumeration and let $\psi$ be its diagonal. 
Let $\langle \Gamma_i \rangle_{i\in \mathbb{N}}$ be an effective enumeration of all partial recursive functions $\Gamma_i \colonsub \mathbb{N}^\ast \times \mathbb{N} \to \mathbb{N}$ as in \cref{characterizations of partial recursive functional}(i), and assume without loss of generality that if $\Gamma_i^\tau(n) \converge$ then it does so within $|\tau|$-many steps, so that the predicate $R(i,\tau,n) \equiv \Gamma_i^\tau(n) \converge$ is recursive. Given $X \in \baire$, $\Gamma_i^X$ is the partial function defined by $\Gamma_i^X(n) \simeq m$ if and only if there exists $s \in \mathbb{N}$ such that $\Gamma_i^{X \restrict s}(n) \converge = m$.

\paragraph*{Step 1: Defining auxiliary functions.}

We will define total recursive functions $\theta_1 \colon \mathbb{N}^\ast \times \mathbb{N}^2 \to \mathbb{N}$ and $\theta_2 \colon \mathbb{N}^\ast \times \mathbb{N}^3 \to \mathbb{N}$ which will be used in the definition of $q$. In anticipation of using the Recursion Theorem, one of the arguments of $\theta_1$ and $\theta_2$ will be reserved for an index $e$ which will eventually be an index of $q$.

Define the partial recursive function $\chi_1 \colonsub \mathbb{N}^\ast \times \mathbb{N}^3 \to \mathbb{N}$ so that on input $\langle \sigma, i, e, x\rangle$, $\chi_1$ searches for a finite tree $T$ such that:
\begin{enumerate}[(i)]
\item For every $\tau \in T$ and every $j \leq |\tau|$, $\varphi_e(j) \converge$ and $\tau(j) < \varphi_e(j)$ when $j < |\tau|$.
\item $T$ is $k$-bushy above $\sigma$ for some $k < \varphi_e(|\sigma|)$.
\item $\Gamma_{i-1}^\tau(x)$ converges to a common value $j < p_1(x)$ for every leaf $\tau$ of $T$. (For the case of $i=0$, set $\Gamma_{-1} = \Gamma_0$.)
\end{enumerate}
$\chi_1(\sigma,i,e,x)$ is equal to that common value of $\Gamma_i^\tau(x)$ for the first such tree $T$ found, whenever such a tree exists. $\chi_1$ is partial recursive, so by the Parametrization Theorem, there exists a total recursive function $\theta_1 \colon \mathbb{N}^\ast \times \mathbb{N}^2 \to \mathbb{N}$ such that $\varphi_{\theta_1(\sigma,i,e)}(x) \simeq \chi_1(\sigma,i,e,x)$ for all $\sigma \in \mathbb{N}^\ast$ and $i,e,x \in \mathbb{N}$. 

Similarly, the partial recursive function $\chi_2 \colonsub \mathbb{N}^\ast \times \mathbb{N}^4 \to \mathbb{N}$ is defined so that on input $\langle \sigma, i, e, x, N\rangle$, $\chi_2$ first attempts to compute $\varphi_e(N)$, followed by verifying that $N \in \im {u}$ (denoting by $n$ the unique element of ${u}^{-1}[\{N\}]$), then finds the least $k > n$ such that $p_2(k) \geq (\varphi_e(N)+1)\cdot p_2(n)$, and finally searches for a finite tree $T$ such that:
\begin{enumerate}[(i)]
\item For every $\tau \in T$ and every $j < |\tau|$, $\tau(j) < p_2(j)$.
\item $T$ is $p_2(n)$-bushy above $\sigma$.
\item $\Gamma_i^\tau(x)$ converges to a common value $j < \varphi_e(N)$ for every leaf $\tau$ of $T$.
\end{enumerate}
$\chi_2(\sigma,i,e,x,N)$ is equal to that common value of $\Gamma_i^\tau(x)$ for the first such tree $T$ found, whenever such a tree exists. $\theta_2 \colon \mathbb{N}^\ast \times \mathbb{N}^3 \to \mathbb{N}$ is then a total recursive function for which $\varphi_{\theta_2(\sigma,i,e,N)}(x) \simeq \chi_2(\sigma,i,e,x,N)$ for all $\sigma \in \mathbb{N}^\ast$ and $i,e,x,N \in \mathbb{N}$. We may assume without loss of generality that $\theta_2(\sigma,i,e,N) \geq |\sigma|$ for all $\sigma \in \mathbb{N}^\ast$ and $i,e,N \in \mathbb{N}$.

\paragraph*{Step 2: Defining $q$.} 

Having defined $\theta_1$ and $\theta_2$, we are in a position to define $q$ by way of a partial recursive function $Q \colonsub \mathbb{N}^2 \to \mathbb{N}$. To aid in its construction, we simultaneously define three other partial recursive functions $s, i, N \colonsub \mathbb{N}^2 \to \mathbb{N}$ --- $s$ takes values in $\{0,1\}$, indicating which type of action we perform next, while $i$ and $N$ keep track of our progress for one of those two actions. Write $\overline{\theta}_1(e,x) \coloneq \max_{j < x, \sigma \in (\varphi_e)^x}{\theta_1(\sigma,j,e)}$, assuming $\varphi_e(j) \converge$ for all $j < x$.
\begin{description}
\item[$Q(e,0)$.] Define $Q(e,0) \coloneq 3$ and $s(e,0) = i(e,0) = N(e,0) \coloneq 0$.

\item[$Q(e,x)$ ($x>1$).] On input $\langle e,x \rangle$, $Q$ attempts to compute $\varphi_e(j)$, $Q(e,j)$, $s(e,j)$, $i(e,j)$, and $N(e,i(e,j))$ for each $j < x$. If and when it has done so successfully, the computation proceeds in one of two ways depending on the value of $s(e,x-1)$.
\begin{description}
\item[Case 1: $s(e,x-1)=0$.] Compute $\overline{\theta}_1(e,x) \coloneq \max_{j < x, \sigma \in (\varphi_e)^x}{\theta_1(\sigma,j,e)}$, and then set 
\begin{equation*}
Q(e,x) \coloneq p_1(\overline{\theta}_1(e,x))\cdot Q(e,x-1) + Q(e,x-1) + 2.
\end{equation*}
Additionally set $s(e,x) \coloneq 1$ and $i(e,x) \coloneq i(e,x-1)$.

\item[Case 2: $s(e,x-1)=1$.] Write $i \coloneq i(e,x-1)$ and $N_i \coloneq N(e,i)$. If $N_i \notin \im u$, then $Q(e,x)$, $s(e,x)$, and $i(e,x)$ all diverge. Otherwise, let $n_i \coloneq {u}^{-1}(N_i)$. As in the definition of $\chi_2$, let $k > n_i$ be the least natural number such that $p_2(k) \geq (\varphi_e(N_i) + 1) \cdot p_2(n_i)$ ($k$ is defined because $\varphi_e(N_i) \converge$ and $N_i \in \im {u}$). Let $M$ be the least natural number such that 
\begin{enumerate*}[(I)] 
\item $M > \max_{\sigma \in p_2^k}{{u}(\theta_2(\sigma,i,e,N_i))}$, 
\item $M \in \im({u})$,
\item and $\sum_{N_i < {u}(j) < M}{Q(e,N_i)^{-1}} \geq 1$.
\end{enumerate*}

If the inequality $N_i < x < M$ fails, then $Q(e,x)$, $s(e,x)$, and $i(e,x)$ all diverge. Otherwise:
	\begin{description}
	\item[Subcase 1.] If $x < M-1$, then set $Q(e,x) \coloneq Q(e,x-1)$, $s(e,x) \coloneq s(e,x-1)$, and $i(e,x) \coloneq i(e,x-1)$. 
	
	\item[Subcase 2.] If $x = M-1$, then set $Q(e,x) \coloneq Q(e,x-1)$, $s(e,x) \coloneq 0$, $i(e,x) \coloneq i(e,x-1)+1$, and $N(e,i(e,x)) \coloneq M$.
	
	\end{description}

\end{description}
\end{description}

By the Recursion Theorem, there exists an $e \in \mathbb{N}$ such that $Q(e,x) \simeq \varphi_e(x)$ for all $x \in \mathbb{N}$. From the construction of $Q$, if $Q(e,j)$ is defined for all $j < x$ then $Q(e,x)$ is defined; along with the fact that $Q(e,0)$ is defined, it follows that $\varphi_e$ is a total recursive function. The construction of $N$ also ensures that $N(e,i) \in \im u$ for all $i \in \mathbb{N}$. We will write
\begin{equation*}
\begin{array}{c}
	\begin{array}{c c c c c}
		q \coloneq \varphi_e, & s(x) \coloneq s(e,x), & i(x) \coloneq i(e,x), & N_i \coloneq N(e,i), & n_i \coloneq u^{-1}(N_i),
	\end{array} \vspace{2pt} \\
	\begin{array}{c c c}
		\theta_1(\sigma,i) \coloneq \theta_1(\sigma,i,e), & \overline{\theta}_1(x) \coloneq \max\limits_{j<x, \sigma \in q^x}{\theta_1(\sigma,j,e)}, & \theta_2(\sigma,i) \coloneq \theta_2(\sigma,i,e,N_i).
	\end{array}
\end{array}
\end{equation*}
Each of these are total recursive functions.

Both $s(x) = 0$ and $s(x) = 1$ occur for infinitely many $x \in \mathbb{N}$. That Case 1 ($s(x-1) = 0$) occurs infinitely often implies that $q$ is unbounded. That Case 2 ($s(x-1)=1$) occurs infinitely often implies that $q \circ {u}$ is slow-growing. In both Case 1 and Case 2, the definition of $Q$ enforces that $q(x) \leq q(x+1)$ for all $x \in \mathbb{N}$, so $q$ and $q \circ {u}$ are slow-growing order functions.

\paragraph*{Step 3: Showing $\avoid^\psi(p_1) \nweakleq \avoid^{\psi_1}(q)$.} 

We use basic bushy tree forcing. Let $\mathbb{P}$ be the set of all pairs $\langle \sigma,B \rangle \in q^\ast \times \mathcal{P}(q^\ast)$ where $\sigma \neq \langle\rangle$ and $B$ is $k$-small above $\sigma$, $k$-closed, and upward closed for some $k \leq q(|\sigma|-1)$. $\langle \tau, C \rangle$ \emph{extends} $\langle \sigma, B \rangle$ if $\sigma \subseteq \tau$ and $B \subseteq C$. 
For $i \in \mathbb{N}$, let $\mathcal{D}_i$ denote the set of $\langle \sigma,B \rangle \in \mathbb{P}$ such that for all $X \in \bbracket{\sigma}_q \setminus \bbracket{B}_q$, $\Gamma_i^X \notin \avoid^\psi(p_1)$. 

\begin{description}
\item[Claim 1.] For each $m$, $\mathcal{T}_m = \{ \langle \sigma,B \rangle \in \mathbb{P} \mid |\sigma| \geq m\}$ is dense open in $\mathbb{P}$.

\begin{proof}
$\mathcal{T}_m$ is clearly open in $\mathbb{P}$. To show that $\mathcal{T}_m$ is dense in $\mathbb{P}$, let $C = \{ \tau \in q^\ast \mid |\tau| \geq m\}$. For any string $\sigma$, $C$ is $k$-big above $\sigma$ if and only if $k \leq q(|\sigma|)$. In particular, $C$ is $k$-big above $\sigma$ for all $k \leq q(|\sigma|-1)$.

Suppose $\langle \sigma, B \rangle \in \mathbb{P}$; let $k \leq q(|\sigma|-1)$ be such that $B$ is $k$-small above $\sigma$ and $k$-closed. If $|\sigma| \geq m$, then we are done. Otherwise, let $\tau$ be any string in $C \setminus B$ extending $\sigma$. Because $B$ is $k$-closed and $\tau \notin B$, $B$ is $k$-small above $\tau$. Then $\langle \tau, B \rangle$ is an extension of $\langle \sigma,B \rangle$ in $\mathcal{T}_m$. 
\end{proof}

\item[Claim 2.] If $\mathcal{G}$ is any filter on $\mathbb{P}$, then for all $\langle \sigma, B \rangle \in \mathcal{G}$, $X_\mathcal{G} \coloneq \bigcup\{ \tau \in q^\ast \mid \exists C \subseteq q^\ast (\langle \tau,C \rangle  \in \mathcal{G})\}$ has no initial segment in $B$.

\begin{proof}
Suppose otherwise, so that there is $\tau \in B$ which is an initial segment of $X_\mathcal{G}$. By the definition of $X_\mathcal{G}$, there must be a $\langle \rho',C' \rangle \in \mathcal{G}$ such that $\rho'$ extends $\tau$. Let $\langle \rho, C \rangle$ be a common extension of $\langle \rho',C' \rangle$ and $\langle \sigma,B \rangle$. Because $B$ is upward-closed, $\rho \in B$. But $B \subseteq C$, so $\rho \in C$ and hence $C$ is $k$-big above $\rho$ for every $k$, a contradiction.
\end{proof}

\item[Claim 3.] For all $i \in \mathbb{N}$, $\mathcal{D}_i$ is dense open in $\mathbb{P}$. 

\begin{proof}
$\mathcal{D}_i$ is clearly open in $\mathbb{P}$, so it must remains to show that $\mathcal{D}_i$ is dense in $\mathbb{P}$.

Suppose $\langle \sigma,B \rangle \in \mathbb{P}$; let $k \leq q(|\sigma|-1)$ be such that $B$ is $k$-small above $\sigma$. By potentially extending to an element of $\mathcal{T}_m$ for an appropriate $m$, we can assume that $|\sigma| > i$ and that in the computation of $q(|\sigma|)$, Case 2 occurs. Let $x = |\sigma|$ and define
\begin{equation*}
A = \{ \tau \in q^\ast \mid \Gamma_i^\tau(\theta_1(\sigma,i)) \converge < p_1(\theta_1(\sigma,i)) \}.
\end{equation*}

There are two cases:
\begin{description}
\item[Case I.] Suppose $A$ is $p_1(\theta_1(\sigma,i))\cdot q(x-1)$-small above $\sigma$. Let $c = p_1(\theta_1(\sigma,i))\cdot q(x-1) + k - 1$, so $A \cup B$ is $c$-small above $\sigma$ by \cref{smallness preservation property}. Let $C$ be the $c$-closure of $A \cup B$ (note that although this $c$-closure is taken in $\mathbb{N}^\ast$, $A \cup B$ being $c$-big above $\tau$ implies $\tau \in q^\ast$, so $C \subseteq q^\ast$). By definition, $C$ is $c$-small above $\sigma$, $c$-closed, and upward closed. 

Since $q(x) = p(\theta_1(\sigma,i))\cdot q(x-1) + q(x-1) + 2 > c$ and $q$ is nondecreasing, $\{ \tau \in q^{x+1} \mid \sigma \subseteq \tau\}$ is $c$-big above $\sigma$. Thus, there is a string $\tau$ in $q^{x+1} \setminus C$ extending $\sigma$. Because $C$ is $c$-closed, it is $c$-small above $\tau$, and hence $\langle \tau,C \rangle \in \mathbb{P}$ (here we are also using the fact that $C$ is upward closed). Because $A \subseteq C$, $\langle \tau,C \rangle \in \mathcal{D}_i^\psi$ by virtue of $\Gamma^g_i$ being partial for any $g \in \bbracket{\tau}_q \setminus \bbracket{C}_q$. Because $B \subseteq C$ and $\sigma \subseteq \tau$, $\langle \tau,C \rangle$ extends $\langle \sigma,B \rangle$.

\item[Case II.] If $A$ is $p_1(\theta_1(\sigma,i)) \cdot q(x-1)$-big above $\sigma$, then \cref{basic bushy lemmas}(c) implies $\{ \tau \in q^\ast \mid \Gamma^\tau_i(\theta_1(\sigma,i)) \converge = k\}$ is $q(x-1)$-big above $\sigma$ for some $k < p_1(\theta_1(\sigma,i))$. This implies $\varphi_{\theta_1(\sigma,i)}(\theta_1(\sigma,i))$ is defined. Let $\tau$ be an extension of $\sigma$ in $q^\ast \setminus B$ such that $\Gamma_i^\tau(\theta_1(\sigma,i)) = \varphi_{\theta_1(\sigma,i)}(\theta_1(\sigma,i))$. Then $\langle \tau,B \rangle$ is an element of $\mathcal{D}_i$ extending $\langle \sigma,B \rangle$.

\end{description}

\end{proof}

\end{description}

Let $B_{\avoid^{\psi_1}(q)}$ be the set of all strings in $q^\ast$ which cannot be extended to an element of $\avoid^{\psi_1}(q)$. Let $\sigma_0$ be a string of length $1$ such that $\sigma_0(0) \nsimeq \psi_1(0)$, if $\psi_1(0) \converge$. $B_{\avoid^{\psi_1}(q)}$ is $2$-small above $\sigma_0$, upward closed, and $2$-closed, so $\langle \sigma_0,B_{\avoid^{\psi_1}(q)} \rangle \in \mathbb{P}$. Let $\mathcal{G}$ be a filter containing $\langle \sigma_0,B_{\avoid^{\psi_1}(q)} \rangle$ which meets $\mathcal{T}_m$ and $\mathcal{D}_i$ for all $m,i \in \mathbb{N}$. Claim 1 shows that $X_\mathcal{G} \in \prod{q}$, Claim 2 shows that $X_\mathcal{G} \in \avoid^{\psi_1}(q)$, and Claim 3 shows that $X_\mathcal{G}$ computes no element of $\avoid^\psi(p_1)$. In other words, 
\begin{equation*}
\avoid^\psi(p_1) \nweakleq \avoid^{\psi_1}(q).
\end{equation*}

\paragraph*{Step 4: Showing $\avoid^\psi(q \circ {u}) \nweakleq \avoid^{\psi_2}(p_2)$.} 

We define a sequence $\langle \sigma_i,B_i \rangle_{i \in \mathbb{N}}$ such that:
\begin{enumerate}[(i)]
\item $\sigma_i \in p_2^{n_i} \setminus B_i$.
\item $B_i \subseteq p_2^\ast$ is $p_2(n_i)$-small above $\sigma_i$.
\item For all $i \in \mathbb{N}$, $\sigma_i \subseteq \sigma_{i+1}$ and $B_i \subseteq B_{i+1}$.
\end{enumerate}
Let $B_1 = B_{\avoid^{\psi_2}(p_2)}$ and $\sigma_1$ an arbitrary element of $p_2^{n_1} \setminus B_1$. (Note that $p_2(1) \geq 2$ and $B_1$ is $2$-small above $\sigma_1$, so in particular $p_2(1)$-small above $\sigma_1$.) Suppose $\sigma_i, B_i$ have been constructed. Let $k$ be as in Case 2 of the construction of $Q$ 
and let $\rho$ be an extension of $\sigma_i$ in $p_2^k \setminus B_i$ ($k \geq n_i$, so $p_2^k$ is $p_2(n_i)$-big above $\sigma_i$). For $j < (q \circ {u})(\theta_2(\rho,i))$, let
\begin{equation*}
A_j = \{ \tau \in p_2^\ast \mid \Gamma_{i-1}^\tau(\theta_2(\rho,i)) \converge = j\}.
\end{equation*}
We have two cases, depending upon whether $A_j$ is $p_2(n_i)$-big above $\rho$ for some $j$ or not.
\begin{description}
\item[Case 1.] If $A_j$ is $p_2(n_i)$-big above $\rho$ for some $j$, then $\varphi_{\theta_2(\rho,i)}(\theta_2(\rho,i))$ is defined. In that case, let $j' = \varphi_{\theta_2(\rho,i)}(\theta_2(\rho,i))$, so there is a $\tau \in A_{j'} \setminus B_i$ extending $\rho$ such that 
\begin{equation*}
\Gamma_{i-1}^\tau(\theta_2(\rho,i)) = j' = \varphi_{\theta_2(\rho,i)}(\theta_2(\rho,i)).
\end{equation*}
Let $B_{i+1} = B_i$ and let $\sigma_{i+1}$ be any extension of $\tau$ in $p_2^{n_{i+1}} \setminus B_{i+1}$.

\item[Case 2.] If $A_j$ is $p_2(n_i)$-small above $\rho$ for all $j$, then $\bigcup_j{A_j}$ is $(p_2(n_i) \cdot (q\circ {u})(\theta_2(\rho,i)) - (q \circ {u})(\theta_2(\rho,i)) + 1)$-small above $\rho$. Let 
\begin{equation*}
c = p_2(n_i) \cdot ( (q \circ {u})(\theta_2(\rho,i)) + 1) - (q \circ {u})(\theta_2(\rho,i))
\end{equation*}
Then $C = \bigcup_j{A_j} \cup B_i$ is $c$-small above $\rho$. 

We claim that $(q \circ {u})(n_i) = (q \circ {u})(\theta_2(\rho,i))$. First we note the inequality $k = |\rho| \leq \theta_2(\rho,i)$ by the definition of $\theta_2$, so that $N_i < u(k) = u(|\rho|) \leq \theta_2(\rho,i)$. By the definition of $N_{i+1}$, we additionally have $\theta_2(\rho,i) < N_{i+1}$. The definition of $q$ then gives $q(N_i) = q({u}(\theta_2(\rho,i)))$, or equivalently $(q \circ {u})(n_i) = (q \circ {u})(\theta_2(\rho,i))$. 

As a result, 
\begin{align*}
p_2(k) & \geq p_2(n_i) \cdot ((q \circ {u})(n_i)+1) \\
& = p_2(n_i) \cdot ((q \circ {u})(\theta_2(\rho,i))+1) \\
& \geq c,
\end{align*}
so that $C$ is $p_2(k)$-small above $\rho$. Because $k \leq n_{i+1}$, $C$ is also $p_2(n_{i+1})$-small above $\rho$. Finally, let $B_{i+1} = C$ and let $\sigma_{i+1}$ be any extension of $\rho$ in $p_2^{n_{i+1}} \setminus B_{i+1}$.
\end{description}

This completes the construction of the sequence $\langle \sigma_i,B_i \rangle_{i\in\mathbb{N}}$. Let $Y \coloneq \bigcup_{i \in \mathbb{N}}{\sigma_i}$. By construction, $Y \in \avoid^{\psi_2}(p_2)$.  Observe that within the above construction, falling into Case 1 at stage $i$ implies $\Gamma_i^Y$ is not a member of $\avoid^\psi(q)$. In other words,
\begin{equation*}
\avoid^\psi(q \circ {u}) \nweakleq \avoid^{\psi_2}(p_2).
\end{equation*}

This completes the proof.

\end{proof}

\section{Avoidance of Well-Behaved Families of Universal Partial Recursive Functions}
\label{avoidance of well-behaved families of universal partial recursive functions section}

Now we turn to families of universal partial recursive functions, with the prototypical example being the family of linearly universal partial recursive functions.

\begin{definition}[translation]
If $\psi_1$ and $\psi_2$ are universal partial recursive functions, then a \emph{translation} from $\psi_1$ to $\psi_2$ is a total recursive function $u$ such that $\psi_1 \circ u = \psi_2$. 
\end{definition}

\begin{definition}[translationally bounded]
Fix a universal partial recursive function $\psi_0$. We say that a family $\mathcal{C} \subseteq \baire$ of universal partial recursive functions is \emph{translationally bounded} if there exists an order function $U \colon \mathbb{N} \to \mathbb{N}$ such that for every $\psi \in \mathcal{C}$ there is a translation from $\psi$ to $\psi_0$ which is dominated by $U$. 
\end{definition}

\begin{remark}
Being translationally bounded from above does not depend on the choice of universal partial recursive function $\psi_0$: Suppose $\mathcal{C}$ is translationally bounded with respect to $\psi_0$, witnessed by $U$, and $\psi_1$ is another universal partial recursive function. Let $v$ be a translation from $\psi_1$ to $\psi_0$, and let $\overline{v}$ be the order function defined by $\overline{v}(x) \coloneq \max_{i \leq x}{v(i)}$. Then $U \circ \overline{v}$ witnesses $\mathcal{C}$ being translationally bounded with respect to $\psi_1$.
\end{remark}

\begin{example}
The family $\mathcal{LU}$ of linearly universal partial recursive functions is translationally bounded, witnessed by the order function $U = \lambda n. n^2$. The same function also shows the family of the diagonals of linear admissible enumerations of the partial recursive functions is also translationally bounded.
\end{example}

\begin{example}
Suppose $\mathcal{F}$ is a set of total recursive functions such that there is a recursive function $U$ dominating every member of $\mathcal{F}$. Temporarily say that a universal partial recursive function $\psi$ is \emph{$\mathcal{F}$-universal} if for every partial recursive function $\theta \colonsub \mathbb{N} \to \mathbb{N}$ there exists a $u \in \mathcal{F}$ such that $\psi \circ u = \theta$. Then the family $\mathcal{C}$ of all $\mathcal{F}$-universal partial recursive functions is translationally bounded, witnessed by $U$. 

Particular examples include the set of linear functions, the set of recursive linearly bounded functions, or the set of primitive recursive functions.
\end{example}

\begin{lem} \label{lower bound of dnr wrt class}
Suppose $\mathcal{C}$ is a nonempty family of universal partial recursive functions which is translationally bounded, witnessed by the order function $U\colon\mathbb{N} \to \mathbb{N}$. Then for all order functions $p\colon \mathbb{N} \to (1,\infty)$ and all universal partial recursive functions $\psi_0$, $\avoid^{\psi_0}(p \circ U) \weakleq \avoid^\mathcal{C}(p)$.
\end{lem}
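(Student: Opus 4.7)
The plan is to combine two of the basic reductions from \cref{avoidance basic reductions}, applied nonuniformly in the choice of $\psi \in \mathcal{C}$. Suppose $X \in \avoid^\mathcal{C}(p)$, so that there exists some $\psi \in \mathcal{C}$ with $X \in \avoid^\psi(p)$. By hypothesis on $\mathcal{C}$, there is a translation $u$ from $\psi$ to $\psi_0$ (i.e., a total recursive $u$ with $\psi \circ u = \psi_0$) such that $u \domleq U$. Fixing such a $\psi$ and $u$, the first step is to apply \cref{avoidance basic reductions}(b) to conclude that $X \circ u \in \avoid^{\psi \circ u}(p \circ u) = \avoid^{\psi_0}(p \circ u)$. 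Since $u$ is total recursive, $X \circ u$ is computable from $X$.

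Next, because $u \domleq U$ and $p$ is nondecreasing, $p \circ u$ is dominated by $p \circ U$, so \cref{avoidance basic reductions}(a) supplies a recursive functional giving a strong reduction $\avoid^{\psi_0}(p \circ U) \strongleq \avoid^{\psi_0}(p \circ u)$. Applying this functional to $X \circ u$ yields some $Y \in \avoid^{\psi_0}(p \circ U)$ with $Y \turingleq X \circ u \turingleq X$. Hence $X$ computes a member of $\avoid^{\psi_0}(p \circ U)$, which is precisely what is needed for $\avoid^{\psi_0}(p \circ U) \weakleq \avoid^\mathcal{C}(p)$.

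The main (mild) subtlety is that the translation $u$ depends on which $\psi \in \mathcal{C}$ witnesses $X \in \avoid^\mathcal{C}(p)$, so the overall reduction from $\avoid^\mathcal{C}(p)$ to $\avoid^{\psi_0}(p \circ U)$ is not uniform in $X$ and hence only yields a weak reduction rather than a strong one; this is fine since the statement only claims $\weakleq$. The only other thing to keep in mind is that \cref{avoidance basic reductions}(a) is phrased for an order function $q$ dominating $p$, so we implicitly use that $p \circ u$ and $p \circ U$ are order functions (which follows from $p$ being an order function and $u$, $U$ being total recursive with $U$ unbounded and nondecreasing), and that domination up to almost-all can be absorbed by changing finitely many values.
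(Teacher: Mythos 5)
Your proof is correct and is essentially identical to the paper's: the paper likewise picks $\psi \in \mathcal{C}$ witnessing $X \in \avoid^\psi(p)$, takes the translation $u \domleq U$ with $\psi \circ u = \psi_0$, and chains the two strong reductions $\avoid^{\psi_0}(p \circ U) \strongleq \avoid^{\psi_0}(p \circ u) \strongleq \avoid^\psi(p)$ from \cref{avoidance basic reductions}, with the nonuniformity in $\psi$ being exactly why only a weak reduction results.
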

\begin{proof}
Let $X$ be an element of $\avoid^\mathcal{C}(p)$, so that there is a $\psi \in \mathcal{C}$ such that $X \in \avoid^\psi(p)$. By hypothesis, there exists a total recursive function $u\colon \mathbb{N} \to \mathbb{N}$ such that $\psi \circ u = \psi_0$ and which is dominated by $U$. Then $\avoid^{\psi_0}(p \circ U) \strongleq \avoid^{\psi_0}(p \circ u) \strongleq \avoid^\psi(p)$, so $X$ computes a member of $\avoid^{\psi_0}(p \circ U)$.
\end{proof}

\begin{thm} \label{stronger main theorem for classes}
Suppose $p_1\colon \mathbb{N} \to (1,\infty)$ and $p_2\colon \mathbb{N} \to (1,\infty)$ are order functions, $u\colon \mathbb{N} \to \mathbb{N}$ is a strictly increasing order function, $\mathcal{C}_1$ and $\mathcal{C}_2$ are nonempty families of universal partial recursive functions which are each translationally bounded, and $\psi_1$ and $\psi_2$ are partial recursive functions. Then there exists an order function $q\colon \mathbb{N} \to (1,\infty)$ such that $q \circ u$ is slow-growing and for which
\begin{equation*}
\avoid^{\mathcal{C}_1}(p_1) \nweakleq \avoid^{\psi_1}(q) \quad \text{and} \quad \avoid^{\mathcal{C}_2}(q \circ u) \nweakleq \avoid^{\psi_2}(p_2).
\end{equation*}
\end{thm}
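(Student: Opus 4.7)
The plan is to reduce the family-level statement to Theorem \ref{stronger main theorem} (the single-function case) by sandwiching each family-avoidance class between two single-function avoidance classes via \cref{lower bound of dnr wrt class}. Translational boundedness of $\mathcal{C}_i$ says essentially that ``$\avoid^{\mathcal{C}_i}(p)$ lives between $\avoid^{\psi_0}(p)$ and $\avoid^{\psi_0}(p \circ U_i)$'' for a fixed universal partial recursive function $\psi_0$ and some order function $U_i$, so it suffices to apply Theorem \ref{stronger main theorem} after pre-composing the bounding functions by the appropriate $U_i$.

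First, fix a universal partial recursive function $\psi_0$, and let $U_1, U_2 \colon \mathbb{N} \to \mathbb{N}$ be order functions witnessing that $\mathcal{C}_1$ and $\mathcal{C}_2$ are translationally bounded with respect to $\psi_0$. By replacing $U_2$ with $\lambda n.\, U_2(n) + n + 1$ if necessary, I may assume $U_2$ is strictly increasing and satisfies $U_2(n) \geq n$ for all $n$; this preserves the witnessing property since we only ever need $U_i$ to \emph{dominate} translations. Set $\tilde{u} \coloneq u \circ U_2$, which is a strictly increasing order function since $u$ and $U_2$ are. Now apply Theorem \ref{stronger main theorem} to the order functions $p_1 \circ U_1$ and $p_2$, the strictly increasing order function $\tilde{u}$, the universal partial recursive functions $\psi_0$ (playing the role of both $\psi_1$ and $\psi_2$ there), and the partial recursive functions $\psi_1, \psi_2$ (playing the role of $\psi_3, \psi_4$). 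This yields an order function $q \colon \mathbb{N} \to (1,\infty)$ such that $q \circ \tilde{u}$ is slow-growing and
\begin{align*}
\avoid^{\psi_0}(p_1 \circ U_1) &\nweakleq \avoid^{\psi_1}(q), \\
\avoid^{\psi_0}(q \circ \tilde{u}) &\nweakleq \avoid^{\psi_2}(p_2).
\end{align*}

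Since $U_2(n) \geq n$ and $q \circ u$ is nondecreasing, $(q \circ u)(n) \leq (q \circ u)(U_2(n)) = (q \circ \tilde{u})(n)$, so the divergence of $\sum_n 1/(q \circ \tilde{u})(n)$ forces divergence of $\sum_n 1/(q \circ u)(n)$, giving that $q \circ u$ is slow-growing as required. To conclude the two nonreductions, \cref{lower bound of dnr wrt class} yields $\avoid^{\psi_0}(p_1 \circ U_1) \weakleq \avoid^{\mathcal{C}_1}(p_1)$ and $\avoid^{\psi_0}((q \circ u) \circ U_2) \weakleq \avoid^{\mathcal{C}_2}(q \circ u)$. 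If $\avoid^{\mathcal{C}_1}(p_1) \weakleq \avoid^{\psi_1}(q)$, transitivity of $\weakleq$ would give $\avoid^{\psi_0}(p_1 \circ U_1) \weakleq \avoid^{\psi_1}(q)$, contradicting the first display; similarly, since $(q \circ u) \circ U_2 = q \circ \tilde{u}$, the second display rules out $\avoid^{\mathcal{C}_2}(q \circ u) \weakleq \avoid^{\psi_2}(p_2)$.

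The main obstacle here is definitional bookkeeping rather than any new combinatorial content, since the entire bushy-tree construction is already packaged inside Theorem \ref{stronger main theorem}. The one subtlety worth highlighting is that the conclusion I need is about $q \circ u$ being slow-growing, whereas Theorem \ref{stronger main theorem} delivers slow-growth of $q \circ \tilde{u}$; this is precisely why I normalize $U_2$ so that $\tilde{u}$ dominates $u$ pointwise, making the divergence of the harmonic-type sum descend from $\tilde{u}$ to $u$.
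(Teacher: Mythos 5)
Your proof is correct and essentially the paper's own argument: the paper applies the technical result (\cref{main technical result}) directly to $p_1 \circ U_1$, $p_2$, and $u \circ U_2$ with the diagonal of an admissible enumeration, whereas you invoke \cref{stronger main theorem} with a universal $\psi_0$ in both universal slots, and both proofs then conclude via \cref{lower bound of dnr wrt class} exactly as you do. Your explicit normalization of $U_2$ to be strictly increasing with $U_2(n) \geq n$ is a welcome extra bit of care, since the strict-increase hypothesis on the cited theorem's ``$u$'' and the descent of slow-growth from $q \circ u \circ U_2$ to $q \circ u$ both require it (the paper asserts the latter step without comment, and it can fail without such a normalization).
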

\begin{proof}
Let $\psi$ be the diagonal of an acceptable enumeration of the partial recursive functions. By hypothesis, there are order functions $U_1\colon \mathbb{N} \to \mathbb{N}$ and $U_2\colon \mathbb{N} \to \mathbb{N}$ witnessing the fact that $\mathcal{C}_1$ and $\mathcal{C}_2$ are translationally bounded, respectively. Applying \cref{main technical result} to  $p_1 \circ U_1$, $p_2$, and $u \circ U_2$ yields an order function $q\colon \mathbb{N} \to (1,\infty)$ such that $q \circ u \circ U_2$ is slow-growing (and hence $q \circ u$ as well), $\avoid^\psi(p_1 \circ U_1) \nweakleq \avoid^{\psi_1}(q)$, and $\avoid^\psi(q \circ u \circ U_2) \nweakleq \avoid^{\psi_2}(p_2)$. By \cref{lower bound of dnr wrt class}, $\avoid^\psi(p_1 \circ U_1) \weakleq \avoid^{\mathcal{C}_1}(p_1)$ and $\avoid^\psi(q \circ u \circ U_2) \weakleq \avoid^{\mathcal{C}_2}(q \circ u)$, so $\avoid^\psi(p_1 \circ U_1) \nweakleq \avoid^{\psi_1}(q)$ implies $\avoid^{\mathcal{C}_1}(p_1) \nweakleq \avoid^{\psi_1}(q)$ and $\avoid^\psi(q \circ u \circ  U_2) \nweakleq \avoid^{\psi_2}(p_2)$ implies $\avoid^{\mathcal{C}_2}(q \circ u) \nweakleq \avoid^{\psi_2}(p_2)$.
\end{proof}

\cref{ldnr incomparable} is then an easy consequence:

\begin{proof}[Proof of \cref{ldnr incomparable}.]
Let $p_1$ and $p_2$ be order functions, $u \coloneq \id_\mathbb{N}$, $\mathcal{C}_1 = \mathcal{C}_2 \coloneq \mathcal{LU}$, and $\psi_1 = \psi_2 \coloneq \psi$ any linearly universal partial recursive function. By \cref{stronger main theorem for classes}, there is a slow-growing order function $q\colon \mathbb{N} \to (1,\infty)$ such that $\ldnr(p_1) \nweakleq \avoid^\psi(q)$ and $\ldnr(q) \nweakleq \avoid^\psi(p_2)$. Since $\avoid^\psi(q) \subseteq \ldnr(q)$ and $\avoid^\psi(p_2) \subseteq \ldnr(p_2)$, we find that $\ldnr(p_1) \nweakleq \ldnr(q) \nweakleq \ldnr(p_2)$.
\end{proof}

\section{Implications for \texorpdfstring{$\ldnr_\slow$}{LUA_slow}}
\label{implications for ldnr-slow section}

\cref{ldnr incomparable} allows us to make several deductions concerning $\ldnr_\slow$. The first is that $\ldnr_\slow$ is not of deep degree:

\begin{thm} \label{ldnr-slow not deep}
$\ldnr_\slow$ is not of deep degree. 
\end{thm}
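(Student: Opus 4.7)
The plan is to suppose for contradiction that $\ldnr_\slow$ is of deep degree, so that $\ldnr_\slow \weakeq P$ for some nonempty deep $\Pi^0_1$ class $P \subseteq \cantor$. My goal would be to extract a single slow-growing order function $q_0$ together with a single linearly universal partial recursive function $\psi_0$ such that $\avoid^{\psi_0}(q_0) \weakleq P$. Once this is in hand, \cref{ldnr incomparable} furnishes a quick contradiction: from $\avoid^{\psi_0}(q_0) \subseteq \ldnr(q_0)$ we get $\ldnr(q_0) \weakleq \avoid^{\psi_0}(q_0) \weakleq P \weakeq \ldnr_\slow$, and from $\ldnr(q') \subseteq \ldnr_\slow$ for every slow-growing $q'$ we get $\ldnr_\slow \weakleq \ldnr(q')$, so $\ldnr(q_0) \weakleq \ldnr(q')$ holds for \emph{every} slow-growing $q'$. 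However, \cref{ldnr incomparable} applied with $p_1 = p_2 = q_0$ produces a slow-growing $q'$ with $\ldnr(q_0) \nweakleq \ldnr(q')$, the desired contradiction.

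To extract such a pair $(q_0, \psi_0)$, I would use a Baire category argument on $P$. Since $\ldnr_\slow \weakleq P$, to each $X \in P$ corresponds at least one triple $(e, q, \psi)$ --- where $e$ is an index of a recursive functional, $q$ is a slow-growing recursive order function, and $\psi$ is a linearly universal partial recursive function --- such that $\Phi_e(X) \in \avoid^\psi(q)$. The collection of such triples is countable, and the sets $A_{e, q, \psi} \coloneq \{X \in P \mid \Phi_e(X) \in \avoid^\psi(q)\}$ are $\Pi^0_2$ in $P$ and cover $P$. Because $P$, as a closed subspace of the compact Polish space $\cantor$, is a Baire space, some $A_{e_0, q_0, \psi_0}$ must be non-meager in $P$, and from this non-meagerness one hopes to read off a uniform recursive functional witnessing $\avoid^{\psi_0}(q_0) \weakleq P$.

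The main obstacle is precisely this last inference. Non-meagerness of the $\Pi^0_2$ set $A_{e_0, q_0, \psi_0}$ gives only comeagerness inside some relatively open $P \cap \bbracket{\sigma}_2$, not that $\Phi_{e_0}$ is defined and produces an element of $\avoid^{\psi_0}(q_0)$ for \emph{every} $X \in P$ --- a meager exceptional set could still defeat the reduction. Bridging this gap likely requires leveraging the deepness of $P$ (via the negligibility and depth-filter machinery of \cref{depth section} and \cref{deep degrees form filter}) to argue that such a meager exceptional set cannot carry the full weak degree of $P$. An alternative route, if the Baire-category approach proves too coarse, is to refine the decomposition and use the recursive-boundedness extension \cref{extending recursive functionals on recursively bounded pi01 classes} to obtain a uniform growth-rate bound on $\Phi_e$'s image over $P$, from which one could hope to construct the required slow-growing bounding function $q_0$ directly.
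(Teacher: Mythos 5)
Your endgame is sound: if you could produce a single slow-growing $q_0$ with $\ldnr(q_0) \weakleq P \weakeq \ldnr_\slow$, then combining $\ldnr_\slow \weakleq \ldnr(q')$ for every slow-growing $q'$ with the instance of \cref{ldnr incomparable} at $p_1 = p_2 = q_0$ does give a contradiction. The fatal problem is the extraction step, and it is not merely a technical loose end. Your Baire category argument uses only the hypothesis $\ldnr_\slow \weakleq P$ for a $\Pi^0_1$ class $P$; deepness never enters until you gesture at it in the final sentence. But the extraction claim is \emph{false} at that level of generality: by the Embedding Lemma there is a (necessarily non-deep) $\Pi^0_1$ class $Q \subseteq \cantor$ with $Q \weakeq \ldnr_\slow$, and if your argument produced $q_0$ and $\psi_0$ with $\avoid^{\psi_0}(q_0) \weakleq Q$, you would conclude $\ldnr_\slow \weakeq \ldnr(q_0)$, contradicting \cref{slow-growing hierarchy has no minimum} (which is a genuine theorem of this chapter, proved from \cref{ldnr incomparable} independently of the present result). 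So no refinement of the category argument alone can close the gap: deepness must be used in an essential, quantitative way, and the proposal does not show how. Your own diagnosis --- that a meager exceptional set could defeat the reduction --- understates the issue; even the weaker claim that a single $q_0$ works for all of $P$ cannot hold without deepness, because the $q_X$ witnessing $\Phi_{e_X}(X) \in \avoid^{\psi_X}(q_X)$ for varying $X$ need not admit a common slow-growing bound (cf.\ \cref{max of slow-growing example}).

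For contrast, the paper never attempts such an extraction. It uses depth \emph{positively}: the inverse of a modulus of depth for $P$ is an order function $r$ with $\mathbf{M}(P \restrict n) \leq 2^{-r(n)}$, so every $X \in P$ satisfies $\apc(X \restrict n) \geq r(n)$, whence $P \subseteq \complex(r)$ and $\complex(r) \strongleq P$ by mere set inclusion --- no functionals need to be found. Then \cref{complex reals compute ldnr functions specific} yields a \emph{fast-growing} $p$ with $\ldnr(p) \strongleq \complex(r) \strongleq P \weakeq \ldnr_\slow \strongleq \ldnr(q)$ for every slow-growing $q$, and \cref{ldnr incomparable} supplies a slow-growing $q$ incomparable with $\ldnr(p)$, giving the contradiction. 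If you want to salvage your outline, the lesson is that the object to extract below $P$ is not an avoidance class but a complexity class, and depth hands you that for free.
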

\begin{proof}
Suppose for the sake of a contradiction that $\ldnr_\slow$ is weakly equivalent to a deep r.b.\ $\Pi^0_1$ class $P$. We may assume without loss of generality that $P \subseteq \cantor$. Because $P$ is deep, taking the inverse of a modulus of depth for $P$ gives us an order function $r\colon \mathbb{N} \to \mathbb{N}$ such that $\mathbf{M}(P \restrict n) \leq 2^{-r(n)}$ for all $n \in \mathbb{N}$. In particular, $\mathbf{M}(X \restrict n) \leq 2^{-r(n)}$ for every $X \in P$ and $n \in \mathbb{N}$, or equivalently $\apc(X \restrict n) \geq r(n)$ for every $X \in P$ and $n \in \mathbb{N}$, i.e., every $X \in P$ is strongly $r$-random. Thus, every $X \in P$ is $r$-random, so $P \subseteq \complex(r)$ and hence $\complex(r) \strongleq P$. \cref{complex reals compute ldnr functions specific} shows there exists a fast-growing order function $p\colon \mathbb{N} \to (1,\infty)$ such that $\ldnr(p) \strongleq \complex(r)$. Applying \cref{ldnr incomparable} yields a slow-growing order function $q\colon \mathbb{N} \to (1,\infty)$ such that $\ldnr(p)$ and $\ldnr(q)$ are weakly incomparable. But $\ldnr(q) \subseteq \ldnr_\slow$ and hence
\begin{equation*}
\ldnr(p) \strongleq \complex(r) \strongleq P \weakeq \ldnr_\slow \strongleq \ldnr(q)
\end{equation*}
giving a contradiction.
\end{proof}

Likewise, we can show that $\ldnr_\slow \nweakeq \ldnr(q)$ for every slow-growing order function $q$:

\begin{thm} \label{slow-growing hierarchy has no minimum}
There is no order function $q\colon \mathbb{N} \to (1,\infty)$ such that $\ldnr_\slow \weakeq \ldnr(q)$.
\end{thm}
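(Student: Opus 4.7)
The plan is to derive this as a quick corollary of Theorem \ref{ldnr incomparable}. Suppose for contradiction that there were an order function $q \colon \mathbb{N} \to (1,\infty)$ with $\ldnr_\slow \weakeq \ldnr(q)$. First I would invoke Theorem \ref{ldnr incomparable} applied to this $q$ (in the role of the single order function in the ``In particular'' clause) to obtain a slow-growing order function $q' \colon \mathbb{N} \to (1,\infty)$ such that $\ldnr(q)$ and $\ldnr(q')$ are weakly incomparable; in particular, $\ldnr(q) \nweakleq \ldnr(q')$.

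Next I would observe that since $q'$ is slow-growing, $\ldnr(q') \subseteq \ldnr_\slow$ by the very definition of $\ldnr_\slow = \bigcup\{\ldnr(r) \mid r \text{ slow-growing}\}$, so Proposition \ref{properties of weak and strong reducibility}(b) (containment implies strong, hence weak, reducibility) gives $\ldnr_\slow \weakleq \ldnr(q')$. Chaining this with the assumed equivalence yields
\begin{equation*}
\ldnr(q) \weakeq \ldnr_\slow \weakleq \ldnr(q'),
\end{equation*}
contradicting $\ldnr(q) \nweakleq \ldnr(q')$. There is no real obstacle here: the entire content has already been extracted in Theorem \ref{ldnr incomparable}, and this proof is simply the observation that the existence of an order function $q'$ slow-growing and $\weakleq$-incomparable with $\ldnr(q)$ precludes $\ldnr(q)$ from serving as a minimum (or even a lower bound) for the slow-growing hierarchy.
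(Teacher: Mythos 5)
Your proposal is correct and is essentially identical to the paper's own proof: both invoke Theorem \ref{ldnr incomparable} to produce a slow-growing order function whose $\ldnr$ class is weakly incomparable with $\ldnr(q)$, then use the containment $\ldnr(q') \subseteq \ldnr_\slow$ to get $\ldnr_\slow \weakleq \ldnr(q')$ and derive the contradiction with the assumed equivalence. No issues.
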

\begin{proof}
Suppose for the sake of a contradiction that $\ldnr_\slow \weakeq \ldnr(q)$. By \cref{ldnr incomparable}, there exists a slow-growing order function $p \colon \mathbb{N} \to (1,\infty)$ such that $\ldnr(p)$ and $\ldnr(q)$ are weakly incomparable. But $p$ being slow-growing implies $\ldnr(p) \stronggeq \ldnr_\slow \weakeq \ldnr(q)$, a contradiction.
\end{proof}

Similarly, we can show that $\shiftcomplex \weaknleq \ldnr_\slow$.

\begin{thm} \label{SC not weakly below ldnr_slow}
$\shiftcomplex \weaknleq \ldnr_\slow$.
\end{thm}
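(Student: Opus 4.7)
The plan is to derive \cref{SC not weakly below ldnr_slow} as a direct corollary of two facts already in this chapter: that $\shiftcomplex$ is of deep degree in $\mathcal{E}_\weak$, and that $\ldnr_\slow$ is not of deep degree (\cref{ldnr-slow not deep}). Since the collection of deep degrees forms a filter in $\langle \mathcal{E}_\weak,\leq\rangle$ by \cref{deep degrees form filter}, any mass problem sitting weakly below a set of deep degree must itself be of deep degree. Assuming $\shiftcomplex \weakleq \ldnr_\slow$ would therefore force $\ldnr_\slow$ to be of deep degree, contradicting \cref{ldnr-slow not deep}.

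Thus the only real step to execute is verifying that $\shiftcomplex$ is of deep degree. For this I would fix any rational $\delta \in (0,1)$ and any $c \in \mathbb{N}$ large enough that $\shiftcomplex(\delta,c)$ is nonempty. By \cref{delta-c-shift complex reals form deep pi01 class}, $\shiftcomplex(\delta,c)$ is a nonempty deep $\Pi^0_1$ class, so $\weakdeg(\shiftcomplex(\delta,c))$ is a deep degree in $\mathcal{E}_\weak$. The chain of containments $\shiftcomplex(\delta,c) \subseteq \shiftcomplex(\delta) \subseteq \shiftcomplex$ combined with \cref{properties of weak and strong reducibility}(b) yields $\shiftcomplex \strongleq \shiftcomplex(\delta) \strongleq \shiftcomplex(\delta,c)$, and hence $\shiftcomplex \weakleq \shiftcomplex(\delta,c)$. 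Applying the upward-closure part of \cref{deep degrees form filter} once gives that $\weakdeg(\shiftcomplex)$ is a deep degree.

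Combining the two, if we had $\shiftcomplex \weakleq \ldnr_\slow$, a second application of \cref{deep degrees form filter} would show $\weakdeg(\ldnr_\slow)$ is a deep degree, contradicting \cref{ldnr-slow not deep}. There is no genuine obstacle once the two earlier results are in hand; the substantive technical work lies in \cref{delta-c-shift complex reals form deep pi01 class} (depth of $\shiftcomplex(\delta,c)$, essentially via Rumyantsev's probabilistic construction) and in \cref{ldnr-slow not deep} (which itself relied on \cref{ldnr incomparable} and \cref{complex reals compute ldnr functions specific}).
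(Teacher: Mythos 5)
There is a genuine gap: your claim that $\shiftcomplex$ is of deep degree does not follow from the facts you cite, and it is in fact open. \cref{deep degrees form filter} says the deep degrees form a filter in $\langle\mathcal{E}_\weak,\leq\rangle$, i.e., they are closed \emph{upward} under $\leq$ and under finite infima. From $\shiftcomplex(\delta,c)\subseteq\shiftcomplex$ you correctly get $\shiftcomplex \weakleq \shiftcomplex(\delta,c)$, but this places $\weakdeg(\shiftcomplex)$ \emph{below} the deep degree $\weakdeg(\shiftcomplex(\delta,c))$, and upward closure gives nothing in that direction (your opening sentence, that anything weakly below a set of deep degree is itself of deep degree, asserts downward closure, which filters do not have). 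What the decomposition $\shiftcomplex=\bigcup_{\delta,c}{\shiftcomplex(\delta,c)}$ into deep $\Pi^0_1$ classes actually yields is that $\weakdeg(\shiftcomplex)$ is an infimum of deep degrees, i.e., pseudo-deep in the sense of \cref{structure of filter of deep degrees}; whether $\shiftcomplex(\delta)$ is of deep degree is explicitly posed as an open problem (\cref{are unions of delta-shift complex classes deep}). Pseudo-deepness cannot rescue the argument either, since $\ldnr_\slow$ is itself the union of the classes $\ldnr(q)$ for slow-growing $q$, each of deep degree by \cref{ldnr p deep degree if p slow-growing}, so $\ldnr_\slow$ is also of pseudo-deep degree and no contradiction arises at that level.

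The paper's proof instead exhibits something concrete weakly below $\shiftcomplex$ that cannot sit below every slow-growing $\ldnr(q)$. Every $\delta$-shift complex sequence is $\delta$-complex, and $\sqrt{n}\leq\delta n$ for almost all $n$, so $\complex(\lambda n.\sqrt{n})\weakleq\shiftcomplex(\delta)$ for every rational $\delta\in(0,1)$ and hence $\complex(\lambda n.\sqrt{n})\weakleq\shiftcomplex$. By \cref{complex reals compute ldnr functions recursive sum} there is a fast-growing order function $p$ with $\ldnr(p)\weakleq\complex(\lambda n.\sqrt{n})$, and \cref{ldnr incomparable} supplies a slow-growing $q$ with $\ldnr(p)\nweakleq\ldnr(q)$. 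If $\shiftcomplex\weakleq\ldnr_\slow$ held, then since $\ldnr(q)\subseteq\ldnr_\slow$ gives $\ldnr_\slow\weakleq\ldnr(q)$, we would obtain $\ldnr(p)\weakleq\complex(\lambda n.\sqrt{n})\weakleq\shiftcomplex\weakleq\ldnr_\slow\weakleq\ldnr(q)$, a contradiction. Note this is essentially the same device used to prove \cref{ldnr-slow not deep}, which your proposal takes as a black box; you can repair your argument by replacing the unsupported depth claim for $\shiftcomplex$ with the reduction $\complex(\lambda n.\sqrt{n})\weakleq\shiftcomplex$ and invoking \cref{ldnr incomparable} directly.
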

\begin{proof}
For each rational $\delta \in (0,1)$, $\complex(\delta) \strongleq \shiftcomplex(\delta)$. Since $\sqrt{n} \leq \delta \cdot n$ for almost all $n$, it follows that $\complex(\lambda n.\sqrt{n}) \weakleq \shiftcomplex(\delta)$ for all rational $\delta \in (0,1)$, so $\complex(\lambda n.\sqrt{n}) \weakleq \shiftcomplex$. By \cref{complex reals compute ldnr functions recursive sum} there exists a fast-growing order function $p\colon \mathbb{N} \to (1,\infty)$ such that $\ldnr(p) \weakleq \complex(\lambda n.\sqrt{n})$. \cref{ldnr incomparable} implies there is a slow-growing order function $q\colon \mathbb{N} \to (1,\infty)$ such that $\ldnr(p)$ and $\ldnr(q)$ are weakly incomparable. Thus, if $\shiftcomplex \weakleq \ldnr_\slow$, then we would have
\begin{equation*}
\ldnr(p) \weakleq \complex(\lambda n. \sqrt{n}) \weakleq \shiftcomplex \weakleq \ldnr_\slow \weakleq \ldnr(q),
\end{equation*}
yielding a contradiction.
\end{proof}

\begin{cor} \label{SC not weakly below ldnr_slow corollary}
There exists a slow-growing order function $q\colon \mathbb{N} \to (1,\infty)$ such that $\shiftcomplex \weaknleq \ldnr(q)$.
\end{cor}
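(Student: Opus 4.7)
The plan is to deduce the corollary from \cref{SC not weakly below ldnr_slow} by a straightforward contrapositive argument exploiting the fact that $\ldnr_\slow$ is literally the union of the classes $\ldnr(q)$ as $q$ ranges over slow-growing order functions. I would begin by suppressing the corollary's statement for contradiction, assuming that for every slow-growing order function $q\colon \mathbb{N} \to (1,\infty)$ we have $\shiftcomplex \weakleq \ldnr(q)$, and then derive that $\shiftcomplex \weakleq \ldnr_\slow$.

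Next I would invoke the basic infimum property from \cref{properties of weak and strong reducibility}(d), applied to the family
\begin{equation*}
\mathcal{Q} \coloneq \{ \ldnr(q) \mid \text{$q\colon \mathbb{N} \to (1,\infty)$ a slow-growing order function}\},
\end{equation*}
which gives
\begin{equation*}
\inf\{\weakdeg(\ldnr(q)) \mid \text{$q$ slow-growing}\} = \weakdeg\!\left(\bigcup \mathcal{Q}\right) = \weakdeg(\ldnr_\slow),
\end{equation*}
by the very definition of $\ldnr_\slow$. Under the assumption $\shiftcomplex \weakleq \ldnr(q)$ for every slow-growing $q$, the weak degree $\weakdeg(\shiftcomplex)$ is a lower bound for $\{\weakdeg(\ldnr(q))\}$, and hence $\weakdeg(\shiftcomplex) \leq \weakdeg(\ldnr_\slow)$, i.e.\ $\shiftcomplex \weakleq \ldnr_\slow$.

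This directly contradicts \cref{SC not weakly below ldnr_slow}, completing the proof. There is no genuine obstacle here: the only point to verify is that the family $\mathcal{Q}$ is precisely what unions to $\ldnr_\slow$ and that the infimum identity applies; both are immediate from the definitions already in place. The substantive content of the corollary is entirely contained in \cref{SC not weakly below ldnr_slow}, which was proved in turn by combining \cref{complex reals compute ldnr functions recursive sum} with the incomparability result \cref{ldnr incomparable}.
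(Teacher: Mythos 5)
Your proof is correct and is exactly the intended derivation: the paper states this as an immediate corollary of \cref{SC not weakly below ldnr_slow}, since $\ldnr_\slow = \bigcup\{\ldnr(q) \mid q \text{ slow-growing}\}$ and the infimum identity $\weakdeg(\bigcup\mathcal{Q}) = \inf\{\weakdeg(P) \mid P \in \mathcal{Q}\}$ forces $\shiftcomplex \weakleq \ldnr_\slow$ if $\shiftcomplex \weakleq \ldnr(q)$ held for every slow-growing $q$. No gaps.
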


\section{Replacing Slow-Growing with Depth}
\label{replacing slow-growing with depth section}

The proof of \cref{weak degree of dnr_p is ill-defined} produced admissible enumerations $\varphi_\bullet$ and $\tilde{\varphi}_\bullet$ such that $\dnr^{(1)}_{\lambda n.2^n} \strongeq \dnr^{(2)}_{\lambda x.x}$, where $\dnr^{(1)}$ is $\dnr$ defined with respect to $\varphi_\bullet$ and $\dnr^{(2)}$ is $\dnr$ defined with respect to $\tilde{\varphi}_\bullet$. For this reason, the implications of $q$ being `slow-growing' on the weak degree of $\avoid^\psi(q)$ are dependent on the choice of $\psi$. In the case where $\psi$ is linearly universal partial recursive, $q$ is slow-growing if and only if $\avoid^\psi(q)$ is a deep r.b.\ $\Pi^0_1$ class. With that motivation in mind, we strengthen \cref{stronger main theorem}, with the depth of $\avoid^{\psi_2}(q \circ u)$ replacing slow-growing.

\begin{thm} \label{stronger main theorem deep}
Suppose $p_1\colon \mathbb{N} \to (1,\infty)$ and $p_2\colon \mathbb{N} \to (1,\infty)$ are order functions, $u\colon \mathbb{N} \to \mathbb{N}$ is a strictly increasing order function, $\psi_1$ and $\psi_2$ are universal partial recursive functions, and $\psi_3$ and $\psi_4$ are partial recursive functions. Then there exists a order function $q\colon \mathbb{N} \to (1,\infty)$ such that $\avoid^{\psi_2}(q \circ u)$ is of deep degree and
\begin{equation*}
\avoid^{\psi_1}(p_1) \nweakleq \avoid^{\psi_3}(q) \quad \text{and} \quad \avoid^{\psi_2}(q \circ u) \nweakleq \avoid^{\psi_4}(p_2).
\end{equation*} 
\end{thm}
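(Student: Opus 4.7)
The plan is to mirror the proof of Theorem \ref{stronger main theorem}, but to make a deliberate choice of admissible enumeration inside the invocation of Theorem \ref{main technical result} so that its diagonal is itself linearly universal. This way, the slow-growing conclusion on a suitable translate of $q \circ u$ lifts via Theorem \ref{depth of dnr_p^psi}(b) to depth of a r.b.\ $\Pi^0_1$ class sitting weakly below $\avoid^{\psi_2}(q \circ u)$, and the filter property of deep degrees (\cref{deep degrees form filter}) then transfers deep-degree status to $\avoid^{\psi_2}(q \circ u)$ itself.

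Concretely, fix a linearly universal partial recursive function $\psi_0$. By \cref{linearly universal partial recursive function enumeration is admissible} the enumeration $\varphi_e(n) \coloneq \psi_0((e)_0 n + (e)_1)$ is admissible, and by \cref{diagonal of linearly universal is linearly universal} its diagonal $\tilde{\psi}_0$ is again linearly universal. Interpret $\dnr$ throughout with respect to this admissible enumeration, so that $\dnr(p) = \avoid^{\tilde{\psi}_0}(p)$ for every order function $p$. Since $\psi_1$ and $\psi_2$ are universal, there exist total recursive $u_1, u_2$ with $\psi_i \circ u_i = \tilde{\psi}_0$; pass to the strictly increasing recursive dominators $\tilde{u}_i(x) \coloneq x + \max_{j \leq x}{u_i(j)}$. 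Arguing exactly as in the opening step of the proof of Theorem \ref{stronger main theorem}, one obtains for every order function $q$ the strong reductions
\begin{equation*}
\avoid^{\tilde{\psi}_0}(p_1 \circ \tilde{u}_1) \strongleq \avoid^{\psi_1}(p_1), \qquad \avoid^{\tilde{\psi}_0}(q \circ u \circ \tilde{u}_2) \strongleq \avoid^{\psi_2}(q \circ u).
\end{equation*}

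Now apply Theorem \ref{main technical result} to the order functions $p_1 \circ \tilde{u}_1$, $p_2$, the strictly increasing order function $u \circ \tilde{u}_2$, and the partial recursive functions $\psi_3, \psi_4$. This produces an order function $q$ for which $q \circ u \circ \tilde{u}_2$ is slow-growing and for which both $\avoid^{\tilde{\psi}_0}(p_1 \circ \tilde{u}_1) \nweakleq \avoid^{\psi_3}(q)$ and $\avoid^{\tilde{\psi}_0}(q \circ u \circ \tilde{u}_2) \nweakleq \avoid^{\psi_4}(p_2)$ hold. Composing with the two strong reductions displayed above immediately yields the two nonreductions required by the theorem, namely $\avoid^{\psi_1}(p_1) \nweakleq \avoid^{\psi_3}(q)$ and $\avoid^{\psi_2}(q \circ u) \nweakleq \avoid^{\psi_4}(p_2)$.

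For the deep-degree conclusion, note that $\tilde{\psi}_0$ is linearly universal and $q \circ u \circ \tilde{u}_2$ is slow-growing, so \cref{depth of dnr_p^psi}(b) gives that the r.b.\ $\Pi^0_1$ class $\avoid^{\tilde{\psi}_0}(q \circ u \circ \tilde{u}_2)$ is deep; the recursive homeomorphism with a $\Pi^0_1$ subset of $\cantor$ (\cref{recursively bounded pi01 class recursively homeomorphic to pi01 class in cantor space}), combined with preservation of depth under recursive homeomorphism (\cref{depth and strong reducibility}), shows that $\weakdeg(\avoid^{\tilde{\psi}_0}(q \circ u \circ \tilde{u}_2))$ is a deep degree in $\mathcal{E}_\weak$. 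The strong reduction $\avoid^{\tilde{\psi}_0}(q \circ u \circ \tilde{u}_2) \strongleq \avoid^{\psi_2}(q \circ u)$ then places $\weakdeg(\avoid^{\tilde{\psi}_0}(q \circ u \circ \tilde{u}_2))$ below $\weakdeg(\avoid^{\psi_2}(q \circ u))$ in $\mathcal{E}_\weak$, and \cref{deep degrees form filter}(a) yields that $\avoid^{\psi_2}(q \circ u)$ is of deep degree. The essential subtlety -- and the main obstacle if one were to copy the proof of Theorem \ref{stronger main theorem} naively -- is recognizing that \cref{depth of dnr_p^psi}(b) demands linear universality of the diagonal, not merely universality, and that this is exactly the freedom available when choosing the admissible enumeration implicit in Theorem \ref{main technical result}; the rest is bookkeeping around translations and the filter of deep degrees.
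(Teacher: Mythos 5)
Your proof is correct and follows essentially the same route as the paper's: both choose the admissible enumeration so that its diagonal is linearly universal, push the given $u$ through a strictly increasing dominator of a translation from $\psi_2$, invoke the core technical construction to make the composite slow-growing, and then transfer depth upward along the resulting strong reduction via the filter property. The only cosmetic differences are that the paper applies \cref{stronger main theorem} as a black box (rather than unfolding to \cref{main technical result}) and concludes via the filter of deep r.b.\ $\Pi^0_1$ classes under $\strongleq$ rather than the filter of deep degrees in $\mathcal{E}_\weak$; both yield the stated conclusion.
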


Requiring that $\avoid^{\psi_2}(q \circ u)$ is deep is a stronger condition than simply requiring that $q \circ u$ be slow-growing.

\begin{prop} \label{deep implies slow-growing}
Suppose $\psi$ is a universal partial recursive function and $q\colon \mathbb{N} \to (1,\infty)$ is an order function. If $\avoid^\psi(q)$ is of deep degree, then $q$ is slow-growing.
\end{prop}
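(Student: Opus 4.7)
The plan is to prove the contrapositive: assuming $q$ is fast-growing, I will establish $\avoid^\psi(q) \weakleq \mlr$. Combined with \cref{difference randoms cannot compute member of sets of deep degree} and the consequence of Sacks's theorem that almost every Martin-\Lof\ random is difference random, this will imply that $\avoid^\psi(q)$ is not of deep degree, since some difference random would then have to compute a member of $\avoid^\psi(q)$, contradicting the proposition.

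The argument is measure-theoretic and uses no property of $\psi$ beyond being partial recursive. Let $\mu_q$ denote the computable product probability measure on $q^\mathbb{N}$ assigning weight $1/q(n)$ to each coordinate $n$. Each set $\{X \in q^\mathbb{N} \mid X(n) = \psi(n)\}$ is $\Sigma^0_1$ uniformly in $n$ and has $\mu_q$-measure at most $1/q(n)$. Since $q$ is fast-growing, $\sum_n 1/q(n) < \infty$, so there exists $N \in \mathbb{N}$ with $\sum_{n \geq N} 1/q(n) < 1/2$. For such $N$, the $\Pi^0_1$ class
\begin{equation*}
P_N \coloneq \{X \in q^\mathbb{N} \mid \forall n \geq N \qspace (X(n) \nsimeq \psi(n))\}
\end{equation*}
satisfies $\mu_q(P_N) > 1/2$.

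Because $(q^\mathbb{N}, \mu_q)$ and $(\cantor, \lambda)$ are both non-atomic computable probability spaces, they admit a computable measure-preserving isomorphism modulo null sets, under which $P_N$ corresponds to a $\lambda$-positive $\Pi^0_1$ subset of $\cantor$. The standard fact that every $\Pi^0_1$ class of positive Lebesgue measure in $\cantor$ is weakly below $\mlr$ then yields $P_N \weakleq \mlr$. Finally, fixing non-uniformly any $\tau \in q^N$ with $\tau(n) \nsimeq \psi(n)$ for all $n < N$---such $\tau$ exists since each $q(n) \geq 2$---the total recursive functional sending $X \in P_N$ to the sequence agreeing with $\tau$ on the first $N$ coordinates and with $X$ afterwards witnesses $\avoid^\psi(q) \strongleq P_N$. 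Chaining the two reductions gives $\avoid^\psi(q) \weakleq \mlr$, as required.

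The main obstacle is the invocation of the computable measure-preserving isomorphism between $(q^\mathbb{N}, \mu_q)$ and $(\cantor, \lambda)$; this is standard in algorithmic randomness (and implicit in the proof of \cref{depth of dnr_p^psi}(a) in Bienvenu-Porter), but some care is required to make it explicit, for instance via a computable interval-encoding of $q^\mathbb{N}$ into $[0,1]$ that pushes $\mu_q$ forward to $\lambda$. The remainder of the proof is routine measure-theoretic bookkeeping supported by the fast-growing hypothesis on $q$.
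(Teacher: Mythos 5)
Your proof is essentially correct, but it takes a genuinely different route from the paper's. The paper does not re-do any measure theory: it picks a linearly universal $\psi_0$, uses the universality of $\psi$ to get a strictly increasing \emph{linear} translation $u$ with $\psi_0 \circ u = \psi$, builds a rescaled order function $\tilde{q}$ with $\tilde{q} \circ u \domleq q$ so that $\avoid^\psi(q) \weakleq \avoid^{\psi_0}(\tilde{q})$, invokes \cref{deep degrees form filter} and \cref{depth of dnr_p^psi} to conclude $\tilde{q}$ is slow-growing, and then transfers slow-growingness back along the linear map via \cref{slow-growing composed with linear is slow-growing}. You instead prove the contrapositive directly by a Ku\v{c}era-style argument: the tail class $P_N$ has positive measure for the (near-)uniform computable measure on $q^\mathbb{N}$, hence is weakly below $\mlr$, hence $\avoid^\psi(q) \weakleq \mlr$, which is incompatible with deep degree by \cref{difference randoms cannot compute member of sets of deep degree} plus the existence of difference randoms. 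What your approach buys is generality and self-containment at the level of $\psi$: you never use universality of $\psi$, so you in fact prove the statement for an arbitrary partial recursive $\psi$, whereas the paper's translation argument needs $\psi$ universal (to force $a \neq 0$ in the linear translation). What it costs is that you are essentially re-deriving the content of \cref{depth of dnr_p^psi}(a) (the Bienvenu--Porter/Simpson fact that fast-growing avoidance classes lie weakly below $\mlr$) rather than citing it, and you carry the burden of the computable measure-isomorphism between $(q^\mathbb{N},\mu_q)$ and $(\cantor,\lambda)$, which you rightly flag. Two small repairs you should make explicit: since $q(n)$ is a computable real that need not be an integer, the exact number of admissible values at coordinate $n$ (and hence the exactly uniform product measure) need not be computable, so replace it by a computable integer lower bound $m(n)$ with $q(n)/2 \leq m(n)$ and work with the uniform measure on those values, which still gives $\mu_q(\{X \mid X(n) \simeq \psi(n)\}) \leq 2/q(n)$ and a convergent tail; and note that the paper's recursive homeomorphism of \cref{hN is recursively homeomorphic to cantor space} is \emph{not} measure-preserving, so the transfer must go through an interval coding in the style of $\pi^h$, whose inverse is computable off the (null, non-random) set of interval endpoints.
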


A benefit of working with linearly universal partial recursive functions rather than the diagonals of linear admissible enumerations is that if $\psi$ is linearly universal and $\tilde{\psi}$ is univeral, then we may take the translation $u$ from $\psi$ to $\tilde{\psi}$ to be strictly increasing. Moreover, \cref{slow-growing composed with linear is slow-growing} showed that composition with a linear map does not affect whether an order function is fast-growing or slow-growing.

\begin{proof}[Proof of \cref{deep implies slow-growing}.]
Suppose $\psi_0$ is a linearly universal partial recursive function and $\avoid^\psi(q)$ is of deep degree. There are $a,b \in \mathbb{N}$ such that $\psi_0(ax+b) \simeq \psi(x)$ for all $x \in \mathbb{N}$, so let $u\colon \mathbb{N} \to \mathbb{N}$ be defined by $u(x) \coloneq ax+b$ for $x \in \mathbb{N}$. Then $\psi_0 \circ u = \psi$. Because $\psi$ is universal, $a$ must be nonzero (otherwise, $\psi$ would either be constant or undefined everywhere depending on whether $\psi_0(b)$ converges or diverges, respectively). $u$ is hence a strictly increasing order function.

We extend $q$ and $u$ to real-valued functions $\overline{q}\colon \co{0,\infty} \to (1,\infty)$ and $\overline{u}\colon \co{0,\infty} \to \co{0,\infty}$, respectively, by letting $\overline{q}$ be linear between $\langle x,q(x)\rangle$ and $\langle x+1,q(x+1)\rangle$ for all $x \in \mathbb{N}$ and likewise with $\overline{u}$. To be exact, we define
\begin{align*}
\overline{q}(x) & \coloneq (q(\lfloor x \rfloor+1) - q(\lfloor x \rfloor))(x-\lfloor x \rfloor) + q(\lfloor x \rfloor), \\
\overline{u}(x) & \coloneq (u(\lfloor x \rfloor+1) - u(\lfloor x \rfloor))(x-\lfloor x \rfloor) + u(\lfloor x \rfloor).
\end{align*}
$\overline{u}$ is strictly increasing, so its inverse $\overline{u}^{-1} \colon [b,\infty) \to [0,\infty)$ is defined. Finally, define $\tilde{q}\colon \mathbb{N} \to (1,\infty)$ by 
\begin{equation*}
\tilde{q}(x) \coloneq \lfloor \overline{q}(\overline{u}^{-1}(x))\rfloor
\end{equation*}
for all $x \in \mathbb{N}$. $\tilde{q} \circ u$ is dominated by $q$, so $\avoid^\psi(q) \weakleq \avoid^{\psi_0}(\tilde{q})$. 

\cref{deep degrees form filter} implies $\avoid^{\psi_0}(\tilde{q})$ is of deep degree, so \cref{depth of dnr_p^psi} implies $\tilde{q}$ must be slow-growing. $\overline{u}^{-1}$ is a linear map, so $\tilde{q}$ being slow-growing implies $\tilde{q} \circ \overline{u}$ is also slow-growing by \cref{slow-growing composed with linear is slow-growing}. $q$ is dominated by $\tilde{q} \circ \overline{u} + 1$, so $q$ is slow-growing as well.
\end{proof}

\begin{proof}[Proof of \cref{stronger main theorem deep}.]
Let $\psi_0$ be a linearly universal partial recursive function, let $\varphi_\bullet$ be the admissible enumeration corresponding to $\psi_0$ as in \cref{properties of linearly univeral partial recursive functions section}, and let $\psi$ be the diagonal of $\varphi_\bullet$. Define $\dnr$ with respect to $\varphi_\bullet$ (i.e., so that $\dnr = \avoid^\psi$). Let $v\colon \mathbb{N} \to \mathbb{N}$ be a translation from $\psi_2$ to $\psi$ and let $\tilde{v}\colon \mathbb{N} \to \mathbb{N}$ be a strictly increasing order function dominating $v$ (e.g., $\tilde{v}(x) \coloneq \max-{i \leq x}{v(i)} + x$ for $x \in \mathbb{N}$). 

\cref{stronger main theorem} shows there is an order function $q\colon \mathbb{N} \to (1,\infty)$ such that $q \circ (u \circ \tilde{v})$ is slow-growing, $\avoid^{\psi_1}(p_1) \nweakleq \avoid^{\psi_3}(q)$, and $\avoid^\psi(q \circ (u \circ \tilde{v})) \nweakleq \avoid^{\psi_4}(p_2)$. Because $q \circ (u \circ \tilde{v})$ is slow-growing and $\psi$ is linearly universal, it follows that $\avoid^\psi(q \circ (u \circ \tilde{v}))$ is deep. Using \cref{deep pi01 classes form filter under strong reduction} and observing that
\begin{equation*}
\avoid^\psi(q \circ (u \circ \tilde{v})) \strongleq \avoid^\psi(q \circ (u \circ v)) = \avoid^{\psi_2 \circ v}((q \circ u) \circ v) \strongleq \avoid^{\psi_2}(q \circ u)
\end{equation*}
shows $\avoid^{\psi_2}(q \circ u)$ is deep. $\avoid^\psi(q \circ (u \circ \tilde{v})) \nweakleq \avoid^{\psi_4}(p_2)$ implies $\avoid^{\psi_2}(q \circ u) \nweakleq \avoid^{\psi_4}(p_2)$, so we have found the desired $q$.
\end{proof}

\section{Open Problems}

Although \cref{ldnr incomparable} significantly expands our understanding of the relationships between the fast and slow-growing $\ldnr$ hierarchies as well as the structure of the slow-growing $\ldnr$ hierarchy itself, there remain many open problems concerning these two subjects.

For example, \cref{ldnr incomparable} shows that to each order function $p$ there is a $q$ such that $\ldnr(p)$ and $\ldnr(q)$ are weakly incomparable, and hence we must have $p \domnleq q$ and $q \domnleq p$, but what more can be said?

\begin{question} \label{better understanding incomparable order functions}
Given an order function $p\colon \mathbb{N} \to (1,\infty)$, what can be said about how the growth rates of the slow-growing order functions $q\colon \mathbb{N} \to (1,\infty)$ for which $\ldnr(p)$ and $\ldnr(q)$ are weakly incomparable  compare to the growth rate of $p$? In particular, for the $q$ defined in the proof of \cref{ldnr incomparable} alternates between staying constant and making large jumps, but can we quantify the lengths of those constant periods or the size of those jumps? 
\end{question}

Although \cref{slow-growing hierarchy has no minimum} shows that there is no slow-growing order function $q$ for which $\ldnr_\slow \weakeq \ldnr(q)$, it does not eliminate the possibility that there exist slow-growing order functions $q$ for which $\weakdeg(\ldnr(q))$ is minimal among the weak degrees of the slow-growing $\ldnr$ hierarchy. 

\begin{question} \label{minimal slow-growing ldnr question}
Given a slow-growing order function $q\colon \mathbb{N} \to (1,\infty)$, does there exist a slow-growing order function $q^+\colon \mathbb{N} \to (1,\infty)$ such that $\ldnr(q^+) \weakle \ldnr(q)$? If so, can we take $q^+$ so that $q \domleq q^+$, and if that is true, can we quantify how much faster-growing $q^+$ must be than $q$ for $\ldnr(q^+) \weakle \ldnr(q)$ to hold?
\end{question}

A related question which would address \cref{minimal slow-growing ldnr question} if answered affirmative is the following:

\begin{question} \label{slow-growing ldnr downwards-directed question}
Given slow-growing order functions $p_1\colon \mathbb{N} \to (1,\infty)$ and $p_2\colon \mathbb{N} \to (1,\infty)$, is there a slow-growing order function $q\colon \mathbb{N} \to (1,\infty)$ such that $\ldnr(q) \weakleq \ldnr(p_1) \cup \ldnr(p_2)$?
\end{question}

An affirmative answer to \cref{slow-growing ldnr downwards-directed question} would provide an affirmative answer to the first half of \cref{minimal slow-growing ldnr question} thanks to \cref{ldnr incomparable}. In \cref{slow-growing ldnr downwards-directed question}, we cannot add the requirement that $q$ dominate both $p_1$ and $p_2$, as there exist slow-growing order functions $p_1$ and $p_2$ such that $\max\{p_1,p_2\}$ is fast-growing. 

A positive answer to \cref{slow-growing ldnr downwards-directed question} for $p_1 = \id_\mathbb{N}$:

\begin{prop} \label{max with identity is slow-growing}
Suppose $p$ is a slow-growing order function. Then There exists a slow-growing order function $q$ such that $\ldnr(q) \strongleq \ldnr(p) \cup \ldnr(\lambda n.n)$.
\end{prop}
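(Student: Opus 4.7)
The natural candidate for $q$ is the pointwise maximum $q(n) = \max\{p(n), n\}$, which is computable, nondecreasing, and unbounded, hence an order function, and dominates both $p$ and $\lambda n. n$. The first step will be to observe that this choice makes the strong reduction essentially immediate: since $q$ pointwise dominates both $p$ and $\lambda n. n$, any element of $\ldnr(p) \cup \ldnr(\lambda n. n)$ is automatically an element of $\ldnr(q)$ (the linearly universal function being avoided is preserved, while the upper bound on entries only loosens), so the identity functional $X \mapsto X$ serves as a single recursive functional witnessing $\ldnr(q) \strongleq \ldnr(p) \cup \ldnr(\lambda n. n)$.

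The actual content of the proof lies in showing that $q$ is slow-growing. My plan is to apply the Cauchy Condensation Test to $q$: because $q$ is positive and nondecreasing, $\sum_{n} q(n)^{-1}$ diverges if and only if $\sum_{n} 2^n / q(2^n)$ diverges. Using $q(2^n) = \max\{p(2^n), 2^n\}$, this converts the condensed sum into $\sum_{n} \min\{2^n / p(2^n),\, 1\}$. I would then compare against $\sum_{n} 2^n / p(2^n)$, which diverges by Cauchy Condensation applied to the slow-growing $p$. The key observation is that clipping $a \mapsto \min\{a, 1\}$ can only reduce each term, but either infinitely many of the terms $2^n / p(2^n)$ exceed $1$ (in which case $\min\{2^n/p(2^n), 1\} = 1$ for those $n$, giving divergence outright), or only finitely many do (in which case the tail of the clipped series equals the tail of a divergent series). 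Either way, divergence of $\sum_{n} q(n)^{-1}$ follows.

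The main obstacle I anticipate is just framing the slow-growing verification cleanly, since the combinatorial intuition that ``taking a pointwise max of two order functions with divergent reciprocal series can still yield a divergent reciprocal sum'' is correct but not entirely automatic --- the general identity $1/\max\{a,b\} = \min\{1/a, 1/b\}$ does not by itself imply the sum behaves well. The Cauchy Condensation reduction is what makes the argument transparent, translating the question into a comparison between $p(2^n)$ and $2^n$ on the binary scale, where the slow-growing hypothesis on $p$ deploys directly. In contrast to the incomparability argument of \cref{ldnr incomparable} or the open \cref{minimal slow-growing ldnr question} and \cref{slow-growing ldnr downwards-directed question}, no bushy tree forcing is required here, because the $\subseteq$-containment of both $\ldnr(p)$ and $\ldnr(\lambda n.n)$ into $\ldnr(q)$ sidesteps any need to locate a common lower bound on two potentially incomparable weak degrees.
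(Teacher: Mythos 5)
Your proposal is correct and follows essentially the same route as the paper: take $q = \max\{p,\id_{\mathbb{N}}\}$, note the strong reduction is immediate from the containment $\ldnr(p)\cup\ldnr(\lambda n.n)\subseteq\ldnr(q)$, and verify slow-growth via the Cauchy Condensation Test with a case split on whether $p$ dips below the identity infinitely often. The only (cosmetic) difference is that you carry out both cases at the condensed scale, whereas the paper disposes of the ``$p$ eventually dominates the identity'' case directly before condensing.
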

\begin{proof}
It suffices to show that $q \coloneq \max\{p,\id_\mathbb{N}\}$ is slow-growing. Define $A \coloneq \{ n \in \mathbb{N} \mid p(n) \leq n\}$. If $A$ is finite, then $\id_\mathbb{N} \domleq p$, so $\max\{p(n),n\} = p(n)$ for almost all $n$, hence $q$ is slow-growing. So suppose $A$ is infinite. Given $n \in A$, let $m$ be maximal such that $2^m \leq n$, so that $p(2^m) \leq p(n) \leq n \leq 2^{m+1}$. Thus, $2^m \cdot \frac{1}{p(2^m)} \geq \frac{1}{2}$. It follows that $\sum_{m=0}^\infty{2^m \cdot \frac{1}{\max\{p(2^m),2^m\}}} = \infty$. By the Cauchy Condensation Test, this implies $\sum_{n=0}^\infty{q(n)^{-1}} = \infty$.
\end{proof}

\begin{cor}
There exists a slow-growing order function $q$ such that $\ldnr(q) \weakle \ldnr(\lambda n.n)$.
\end{cor}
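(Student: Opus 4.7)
The plan is to combine Theorem \ref{ldnr incomparable} with the preceding proposition. First I would apply Theorem \ref{ldnr incomparable} with $p_1 = p_2 = \lambda n. n$ to obtain a slow-growing order function $p \colon \mathbb{N} \to (1,\infty)$ such that $\ldnr(\lambda n. n)$ and $\ldnr(p)$ are weakly incomparable; in particular $\ldnr(\lambda n. n) \nweakleq \ldnr(p)$.

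Next, set $q \coloneq \max\{p, \id_\mathbb{N}\}$. By \cref{max with identity is slow-growing}, $q$ is a slow-growing order function and $\ldnr(q) \strongleq \ldnr(p) \cup \ldnr(\lambda n. n)$. Since $\ldnr(\lambda n. n) \subseteq \ldnr(p) \cup \ldnr(\lambda n. n)$, \cref{properties of weak and strong reducibility}(b) gives $\ldnr(p) \cup \ldnr(\lambda n. n) \strongleq \ldnr(\lambda n. n)$, and chaining yields $\ldnr(q) \weakleq \ldnr(\lambda n. n)$.

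For the strictness, observe that $p \domleq q$ by the definition of $q$, so \cref{ldnr basic facts}(c) gives $\ldnr(q) \strongleq \ldnr(p)$, hence $\ldnr(q) \weakleq \ldnr(p)$. If $\ldnr(\lambda n. n) \weakleq \ldnr(q)$ held, then transitivity of $\weakleq$ would yield $\ldnr(\lambda n. n) \weakleq \ldnr(p)$, contradicting the incomparability secured in the first step. Therefore $\ldnr(q) \weakle \ldnr(\lambda n. n)$, as desired.

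There is no real obstacle here; the corollary is essentially an immediate packaging of \cref{ldnr incomparable} (to get strict failure of the upward reduction) together with \cref{max with identity is slow-growing} (to upgrade $p$ into a slow-growing $q$ that still sits below $\ldnr(\lambda n. n)$ while remaining above $\ldnr(p)$ in the strong sense).
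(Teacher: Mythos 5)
Your proof is correct and takes essentially the same route as the paper's: apply \cref{ldnr incomparable} to get a slow-growing $p$ with $\ldnr(p)$ weakly incomparable to $\ldnr(\lambda n.n)$, then pass to $q = \max\{p,\id_\mathbb{N}\}$ via \cref{max with identity is slow-growing}. The only difference is that you explicitly supply the strictness step (using $p \domleq q$ and \cref{ldnr basic facts}(c) to get $\ldnr(q) \weakleq \ldnr(p)$), which the paper leaves implicit.
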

\begin{proof}
By either \cref{ldnr incomparable} or combining \cite[Theorem 3.11]{khan2017forcing} and \cref{fast-growing dominates identity}, there exists a slow-growing order function $p$ such that $\ldnr(p)$ is weakly incomparable with $\ldnr(\lambda n.n)$. By \cref{max with identity is slow-growing}, there is a slow-growing $q$ such that $\ldnr(q) \weakleq \ldnr(p) \cup \ldnr(\lambda n.n)$, hence $\ldnr(q) \weakle \ldnr(\id_\mathbb{N})$.
\end{proof}

\cref{SC not weakly below ldnr_slow} suggests the following question:

\begin{question} \label{quantify which slow-growing order functions do not compute shift complexity}
Can we give a natural and specific slow-growing function $q\colon \mathbb{N} \to (1,\infty)$ such that $\shiftcomplex \weaknleq \ldnr(q)$?
\end{question}

\clearpage
\chapter{Structure of the Deep Region of \texorpdfstring{$\mathcal{E}_\weak$}{Ew}}
\label{structure of filter of deep degrees}

Two results stated in \cref{depth section} that gave some idea of the structure of the collection of deep degrees in $\mathcal{E}_\weak$ were \cref{deep degrees form filter}, which showed that the collection forms a filter in $\langle \mathcal{E}_\weak\rangle$, and \cref{difference randoms cannot compute member of sets of deep degree}, which shows that no difference random computes a member of any representative of a deep degree. The goal of this chapter is to examine the structure of the filter of deep degrees further. Our main goal is to prove the following main theorem.

\begin{thm} \label{proper nesting of deep-related filters}
Define
\begin{align*}
\mathscr{F}_\mathrm{deep} & \coloneq \{ \mathbf{p} \in \mathcal{E}_\weak \mid \text{$\mathbf{p}$ a deep degree}\}. \\
\mathscr{F}_\mathrm{pseudo} & \coloneq \{ \mathbf{p} \in \mathcal{E}_\weak \mid \text{$\mathbf{p} = \inf \mathcal{C}$ for some $\mathcal{C} \subseteq \mathscr{F}_\mathrm{deep}$}\}. \\
\mathscr{F}_\mathrm{diff} & \coloneq \{ \mathbf{p} \in \mathcal{E}_\weak \mid \forall P \in \mathbf{p} \forall X \in \mlr \qspace ( \exists Y \in P \qspace (Y \turingleq X) \to (0' \turingleq 0'))\}.
\end{align*}
Then $\mathscr{F}_\mathrm{pseudo}$ is a principal filter while $\mathscr{F}_\mathrm{deep}$ and $\mathscr{F}_\mathrm{diff}$ are nonprincipal filters. Consequently, $\mathscr{F}_\mathrm{deep} \subsetneq \mathscr{F}_\mathrm{pseudo} \subsetneq \mathscr{F}_\mathrm{diff}$.
\end{thm}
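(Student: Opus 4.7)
The plan is to prove the theorem in three stages: verify each of $\mathscr{F}_\mathrm{deep}$, $\mathscr{F}_\mathrm{pseudo}$, $\mathscr{F}_\mathrm{diff}$ is a filter in $\langle\mathcal{E}_\weak,\leq\rangle$; establish the principality or nonprincipality of each; and derive the strict chain from those together with the evident inclusions $\mathscr{F}_\mathrm{deep} \subseteq \mathscr{F}_\mathrm{pseudo} \subseteq \mathscr{F}_\mathrm{diff}$. The filter property for $\mathscr{F}_\mathrm{deep}$ is \cref{deep degrees form filter}. For $\mathscr{F}_\mathrm{pseudo}$, only upward closure is nontrivial: if $\mathbf{p} = \inf\mathcal{C}$ with $\mathcal{C} \subseteq \mathscr{F}_\mathrm{deep}$ and $\mathbf{p} \leq \mathbf{q}$, complete distributivity of $\langle\mathcal{E}_\weak,\leq\rangle$ (Proposition \ref{properties of weak and strong reducibility}(f)) lets me write $\mathbf{q} = \inf\{\mathbf{q}\vee \mathbf{r} : \mathbf{r} \in \mathcal{C}\}$, with each $\mathbf{q}\vee\mathbf{r} \in \mathscr{F}_\mathrm{deep}$ by upward closure of $\mathscr{F}_\mathrm{deep}$; closure under binary meets reduces to unions of indexing families. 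For $\mathscr{F}_\mathrm{diff}$, I would first note that the defining condition is invariant under weak equivalence (if $P \weakeq P'$ and a difference random $X$ computes $Y' \in P'$, then $Y'$ computes some $Y \in P$ via $P \weakleq P'$, so $X$ computes $Y$, contradicting the hypothesis for $P$); upward closure and closure under binary meets then follow by similar relativization of the computing witness.

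For principality of $\mathscr{F}_\mathrm{pseudo}$, I would observe that $\mathcal{E}_\weak$ is countable and closed under countable infima (via the Embedding Lemma applied to the $\Sigma^0_2$ union $\bigcup_i P_i$ of $\Pi^0_1$ representatives), so $\mathbf{m} \coloneq \inf \mathscr{F}_\mathrm{deep}$ exists in $\mathcal{E}_\weak$, lies in $\mathscr{F}_\mathrm{pseudo}$ by construction, and is the minimum since every other element of $\mathscr{F}_\mathrm{pseudo}$ is itself an infimum over a subfamily of $\mathscr{F}_\mathrm{deep}$. For the nonprincipality of $\mathscr{F}_\mathrm{deep}$, I would argue by contradiction: if $\mathbf{d} = \weakdeg(P)$ were the minimum with $P$ a deep $\Pi^0_1$ class, choose a modulus of depth $r$ for $P$ growing sufficiently fast (replacing $r$ by a faster order function preserves the modulus property) so that $r^{-1}$ is sub-identical; monotonicity of $\apc$ then gives $\apc(X\restrict m) \geq r^{-1}(m) - O(1)$ for all $X \in P$ and $m$, hence $P \subseteq \complex(r^{-1})$ up to constants and $\complex(r^{-1}) \strongleq P$. \cref{complex reals compute ldnr functions specific} supplies a fast-growing $p_0$ with $\ldnr(p_0) \strongleq \complex(r^{-1}) \strongleq P$, and \cref{ldnr incomparable} applied to $p_0$ yields a slow-growing $q$ for which $\ldnr(p_0)$ and $\ldnr(q)$ are weakly incomparable. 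Since $\weakdeg(\ldnr(q)) \in \mathscr{F}_\mathrm{deep}$, the minimality of $\mathbf{d}$ forces $P \weakleq \ldnr(q)$, hence $\ldnr(p_0) \weakleq \ldnr(q)$, contradicting the incomparability.

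The strict containment $\mathscr{F}_\mathrm{deep} \subsetneq \mathscr{F}_\mathrm{pseudo}$ is then automatic (a principal filter cannot equal a nonprincipal one sharing the same ambient order), witnessed explicitly by $\mathbf{d}_\slow = \weakdeg(\ldnr_\slow) \in \mathscr{F}_\mathrm{pseudo} \setminus \mathscr{F}_\mathrm{deep}$ via \cref{ldnr-slow not deep}. The hard part will be the nonprincipality of $\mathscr{F}_\mathrm{diff}$, from which $\mathscr{F}_\mathrm{pseudo} \subsetneq \mathscr{F}_\mathrm{diff}$ follows. The naive attempt to copy the $\mathscr{F}_\mathrm{pseudo}$ argument — uniting $\Pi^0_1$ representatives of the countably many elements of $\mathscr{F}_\mathrm{diff}$ — appears to produce a minimum, so the delicate work is to exhibit, for every candidate minimum-representative $P_0$, a difference random $X$ that computes a member of $P_0$. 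I expect this to proceed by combining the Low Basis Theorem (supplying low, hence difference random, Martin-L\"of randoms) with a forcing construction along the lines of the bushy-tree argument for \cref{ldnr incomparable}, building a Martin-L\"of random that penetrates $P_0$ while enforcing noncomputation of $0'$. This construction is the principal technical obstacle and will constitute the bulk of the work for the theorem.
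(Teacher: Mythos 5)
Your treatment of $\mathscr{F}_\mathrm{deep}$ and $\mathscr{F}_\mathrm{pseudo}$ tracks the paper: the argument against a hypothetical minimum deep degree $\weakdeg(P)$ via $\complex(r^\inverse) \strongleq P$, \cref{complex reals compute ldnr functions specific}, and \cref{ldnr incomparable} is exactly the proof of \cref{ldnr-slow not deep}/\cref{L not of deep degree}, and identifying $\mathscr{F}_\mathrm{pseudo}$ as the principal filter generated by $\inf\mathscr{F}_\mathrm{deep} = \weakdeg(L)$ is \cref{pseudo-deep degrees forms principal filter}. One correction there: $\mathcal{E}_\weak$ is \emph{not} closed under arbitrary countable infima, and the union $L$ of all deep $\Pi^0_1$ classes is $\Sigma^0_3$, not $\Sigma^0_2$ --- the point of \cref{union of deep pi01 classes is sigma03} is that the index set $\{e \mid \text{$P_e$ deep}\}$ is $\Sigma^0_3$-definable, which is what lets the \nameref{embedding lemma} apply. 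Your appeal to ``closure under countable infima'' would not survive for an arbitrary subfamily of $\mathcal{E}_\weak$.

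The genuine gap is the nonprincipality of $\mathscr{F}_\mathrm{diff}$, which you correctly identify as the crux but leave entirely unexecuted. The construction you propose --- forcing an incomplete Martin-\Lof\ random to compute a member of the union $P_0$ of representatives of all elements of $\mathscr{F}_\mathrm{diff}$ --- faces a serious obstacle before any combinatorics begins: $P_0$ is not known to be arithmetical (the defining condition of $\mathscr{F}_\mathrm{diff}$ quantifies over all reals), so you have no handle on what you are forcing into, and you must simultaneously control randomness, incompleteness, and penetration of $P_0$. The paper's proof of \cref{no difference random computation but not pseudo-deep} sidesteps all of this with a density argument requiring no forcing: given any $\mathbf{q} = \weakdeg(Q) \in \mathscr{F}_\mathrm{diff}$, the splitting lemma of Binns et al.\ yields r.e.\ sets $A, B$ with $A \cup B = 0'$, $A \cap B = \emptyset$, $0 \turingle A, B \turingle 0'$, and neither computing any member of $Q$. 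Since $A \oplus B \turingeq 0'$ and the $\pfc$-trivials are closed under join, at least one of them --- say $A$ --- is not $\pfc$-trivial. Then $Q \cup \{A\}$ is weakly equivalent to a $\Pi^0_1$ class (via \cref{reducible to halting problem implies pi02 singleton} and the \nameref{embedding lemma}), lies strictly below $\mathbf{q}$ because $A$ computes no member of $Q$, and remains in $\mathscr{F}_\mathrm{diff}$: any difference random computing the $\Delta^0_2$ set $A$ would force $A$ to be $\pfc$-trivial by the Hirschfeldt--Nies--Stephan theorem. Thus every element of $\mathscr{F}_\mathrm{diff}$ has a strict predecessor in $\mathscr{F}_\mathrm{diff}$, and the filter is nonprincipal. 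Without this (or some completed substitute for your forcing sketch), your proof of the second strict inclusion is incomplete.
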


In \cref{infimum of deep degrees section}, we show that the infimum of the collection of deep degrees $\weakdeg(L)$ lies in $\mathcal{E}_\weak$ but is not a deep degree itself, showing the filter of deep degrees is nonprincipal. 

\begin{repprop}{union of deep pi01 classes is sigma03}
The union $L$ of all deep $\Pi^0_1$ classes is $\Sigma^0_3$. Consequently, $\weakdeg(L) \in \mathcal{E}_\weak$.
\end{repprop}

\begin{repthm}{L not of deep degree}
$\weakdeg(L)$ is not a deep degree in $\mathcal{E}_\weak$.
\end{repthm}

In \cref{filter of pseudo-deep degrees section}, we define the collection of \emph{pseudo}-deep degrees in $\mathcal{E}_\weak$ and characterize it as the principal filter generated by $\weakdeg(L)$.

\begin{repthm}{pseudo-deep degrees forms principal filter}
$\{ \mathbf{p} \in \mathcal{E}_\weak \mid \text{$\mathbf{p}$ pseudo-deep}\}$ is equal to the principal filter generated by $\weakdeg(L)$ in $\langle\mathcal{E}_\weak,\leq\rangle$.
\end{repthm}

In \cref{filter of deep degrees and difference randoms section}, we show that the filters of deep degrees and pseudo-deep degrees cannot be characterized by the property that no difference random computes a member of any representative of those degrees.

\begin{repthm}{f-pseudo not f-diff}
There exists a $\Pi^0_1$ class $P$ which is not of pseudo-deep degree but for which no difference random computes an element of $P$.
\end{repthm}

\section{The infimum of all deep degrees}
\label{infimum of deep degrees section}

An important observation about the collection of deep degrees is that its infimum is in $\mathcal{E}_\weak$. 

\begin{prop} \label{union of deep pi01 classes is sigma03}
The union $L$ of all deep $\Pi^0_1$ classes is $\Sigma^0_3$. Consequently, $\weakdeg(L) \in \mathcal{E}_\weak$.
\end{prop}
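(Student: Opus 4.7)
My approach is to estimate the arithmetical complexity of the predicate ``$P$ is deep'' (as a predicate in an index for $P$), showing it is $\Sigma^0_3$, and conclude that $L$ itself is $\Sigma^0_3$; then I would invoke the Embedding Lemma (\cref{embedding lemma}) together with any nonempty deep $\Pi^0_1$ class to produce a nonempty $\Pi^0_1$ subset $Q \subseteq \cantor$ with $Q \weakeq L$, yielding $\weakdeg(L) \in \mathcal{E}_\weak$.

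To arithmetize depth, I would fix a standard enumeration $\langle P_e \rangle_{e \in \mathbb{N}}$ of $\Pi^0_1$ subsets of $\cantor$, uniformly recursive sets $A_k^s(e) \subseteq \{0,1\}^k$ decreasing in $s$ with $P_e \restrict k = \bigcap_s A_k^s(e)$, and uniformly recursive rational approximations $\mathbf{M}_t(\sigma) \nearrow \mathbf{M}(\sigma)$ for a fixed universal left r.e.\ continuous semimeasure $\mathbf{M}$. The key observation is the strict-inequality characterization
\begin{equation*}
\mathbf{M}(P_e \restrict k) < c \iff \exists s \exists \epsilon \in \mathbb{Q}_{>0} \forall t \qspace \sum_{\sigma \in A_k^s(e)} \mathbf{M}_t(\sigma) \leq c - \epsilon,
\end{equation*}
which follows because $\mathbf{M}(P_e \restrict k)$ is the limit (decreasing in $s$) of the sums $\sup_t \sum_{\sigma \in A_k^s(e)} \mathbf{M}_t(\sigma)$, so a strict inequality is eventually witnessed with a rational margin.

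Next I would combine a shift trick with Skolemization. The shift: $P_e$ is deep iff there is an order function $r$ with $\mathbf{M}(P_e \restrict r(n)) < 2^{-n+1}$ for all $n$ (the reverse direction uses $r' = \lambda n.\, r(n+1)$). I would then Skolemize the $\exists s$ and $\exists \epsilon$ quantifiers as functions of $n$ and encode the triple into a single total recursive function $\varphi_{\hat{e}}(n) = \langle r(n), s(n), \epsilon(n) \rangle$. Depth then becomes: there exists $\hat e$ such that $\varphi_{\hat e}$ is total, its $r$-coordinate is nondecreasing and unbounded, its $\epsilon$-coordinate is always positive, and $\sum_{\sigma \in A_{r(n)}^{s(n)}(e)} \mathbf{M}_t(\sigma) \leq 2^{-n+1} - \epsilon(n)$ for all $n$ and $t$. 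Totality and unboundedness are $\Pi^0_2$, while the remaining clauses are $\Pi^0_1$ once totality is granted, so the body is $\Pi^0_2$, making ``$P_e$ is deep'' a $\Sigma^0_3$ predicate in $e$.

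Combining, $X \in L$ iff $\exists e \qspace (X \in P_e \wedge P_e \text{ is deep})$, where the first conjunct is $\Pi^0_1$ and the second is $\Sigma^0_3$, so the whole is $\Sigma^0_3$. For the ``consequently'' clause, \cref{depth of dnr_p^psi}(b) together with \cref{recursively bounded pi01 class recursively homeomorphic to pi01 class in cantor space} supplies a nonempty deep $\Pi^0_1$ subset $P \subseteq \cantor$ (for instance, a homeomorphic copy of $\avoid^\psi(q)$ for any linearly universal partial recursive $\psi$ and any slow-growing order function $q$), so $L = L \cup P$; applying \cref{embedding lemma} produces a nonempty $\Pi^0_1$ $Q \subseteq \cantor$ weakly equivalent to $L$, whence $\weakdeg(L) \in \mathcal{E}_\weak$. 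The main obstacle I expect is the complexity bookkeeping: a naive formulation using the non-strict inequality $\mathbf{M}(P_e \restrict r(n)) \leq 2^{-n}$ leaves the inner condition at $\Pi^0_3$ and gives only $\Sigma^0_4$ overall, so both the strict-inequality shift and the packaging of $s$ and $\epsilon$ into a single Skolem function are essential to land at $\Sigma^0_3$.
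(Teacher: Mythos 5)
Your reduction of $\weakdeg(L)\in\mathcal{E}_\weak$ to the Embedding Lemma matches the paper, and your strict-inequality equivalence for $\mathbf{M}(P_e\restrict k)<c$ is correct pointwise. The genuine gap is the Skolemization step. From ``for every $n$ there exist $s,\epsilon$ with $\forall t\,\sum_{\sigma\in A^{s}_{r(n)}(e)}\mathbf{M}_t(\sigma)\leq 2^{-n+1}-\epsilon$'' you pass to ``there is a single \emph{total recursive} $\varphi_{\hat e}(n)=\langle r(n),s(n),\epsilon(n)\rangle$ witnessing this for all $n$.'' The direction you need (deep $\Rightarrow$ Skolemized) is exactly the nontrivial one: for each $n$ the set of valid pairs $\langle s,\epsilon\rangle$ is only $\Pi^0_1$-definable (it essentially asks that the stage-$s$ clopen approximation already has small $\mathbf{M}$-mass, which happens only from the stabilization stage onward, and stabilization stages of a $\Pi^0_1$ class are not recursively bounded in general), so there is no reason a recursive selector $n\mapsto\langle s(n),\epsilon(n)\rangle$ exists; Skolem functions for true arithmetical statements need not be computable, and here computability of the Skolem function is precisely what your $\Sigma^0_3$ formula asserts. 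As written, your formula is implied by, but not known to imply, depth of $P_e$, so the union you compute could omit some deep classes; to rescue the argument you would have to prove that every deep class admits such ``stage-effective'' witnesses (possibly after speeding up $r$), which is a substantive claim you neither state nor prove.

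Moreover, the obstacle that motivated this maneuver is illusory. The paper's Claim 1 shows that the \emph{non-strict} inequality $\mathbf{M}(P_e\restrict m)\leq q$ is already $\Pi^0_2$: since $P_{e,s}\restrict m$ stabilizes and $\mathbf{M}_s$ increases, the sequence $\mathbf{M}_s(P_{e,s}\restrict m)$ is eventually nondecreasing with limit $\mathbf{M}(P_e\restrict m)$, so the inequality holds iff it holds at arbitrarily large stages, i.e.\ $\forall t\,\exists s>t\;(\mathbf{M}_s(P_{e,s}\restrict m)\leq q)$. Hence $\forall n\,(\mathbf{M}(P_e\restrict\varphi_i(n))\leq 2^{-n})$ stays $\Pi^0_2$, and ``$P_e$ is deep'' is $\exists i\,(\varphi_i\text{ total}\wedge\forall n\,\mathbf{M}(P_e\restrict\varphi_i(n))\leq 2^{-n})$, which is $\Sigma^0_3$ with only the modulus index quantified existentially — no effectivization of stages or margins is needed. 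I recommend replacing your Skolemization step with this argument (or proving the effective-witness claim separately); the remainder of your write-up, including the use of \cref{embedding lemma}, is fine.
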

\begin{proof}
Let $\mathbf{M}$ be a fixed universal left r.e.\ continuous semimeasure on $\mathbb{N}^\ast$, and let the map $\langle s,\sigma\rangle \mapsto \mathbf{M}_s(\sigma)$ realize the left recursive enumerability of $\mathbf{M}$, i.e., it is a recursive function $\mathbb{N} \times \mathbb{N}^\ast \to \mathbb{Q}$ such that $\langle \mathbf{M}_s(\sigma)\rangle_{s \in \mathbb{N}}$ converges monotonically to $\mathbf{M}(\sigma)$ from below for each $\sigma \in \mathbb{N}$. Given $e,s \in \mathbb{N}$, let $P_{e,s} \coloneq \{ X \in \cantor \mid \varphi_{e,s}^{X \restrict s}(0) \diverge\}$; $\langle P_{e,s}\rangle_{s \in \mathbb{N}}$ is a sequence of uniformly recursive subsets whose intersection is $P_e$.

\begin{description}
\item[Claim 1.] The predicate $\{ \langle e,m,q \rangle \mid \mathbf{M}(P_e \restrict m) \leq q\}$ is a $\Pi^0_2$ subset of $\mathbb{N}^2 \times \mathbb{Q}_{\geq 0}$.

\begin{proof}
The predicate $\{ \langle e,m,q,s\rangle \mid \mathbf{M}_s(P_{e,s} \restrict m) \leq q\}$ is recursive, so it suffices to show that
\begin{equation*}
\mathbf{M}(P_e \restrict m) \leq q \iff \forall t \exists s \qspace (t < s \wedge \mathbf{M}_s(P_{e,s} \restrict m) \leq q).
\end{equation*}
This follows essentially from the observation that the sequence $\langle \mathbf{M}_s(P_{e,s} \restrict m) \rangle_{s \in \mathbb{N}}$ is eventually nondecreasing for all $e, m \in \mathbb{N}$.

In the forward direction, suppose $\mathbf{M}(P_e \restrict m) \leq q$. $P_e \restrict m$ is a finite set equal to $\bigcap_{s \in \mathbb{N}}{P_{e,s}}$, so there exists an $t \in \mathbb{N}$ such that $P_{e,s} \restrict m = P_e \restrict m$ for all $s > t$. Thus, there are arbitrarily large $s$ such that $\mathbf{M}_s(P_{e,s} \restrict m) \leq \mathbf{M}_s(P_e \restrict m) \leq q$. 

In the opposite direction, assume $\forall t \exists s \qspace (t < s \wedge \mathbf{M}_s(P_{e,s} \restrict m) \leq q)$ and suppose for the sake of a contradiction that $\mathbf{M}(P_e \restrict m) > q$. Let $t$ be large enough so that $P_{e,s} \restrict m = P_e \restrict m$ and $\mathbf{M}_s(P_e \restrict m) > q$  for all $s > t$. But then for all $s > t$ we have $\mathbf{M}_s(P_{e,s} \restrict m) > q$, contradicting our assumption. Thus, $\mathbf{M}(P_e \restrict m) \leq q$ is a $\Pi^0_2$ subset of $\mathbb{N}^2 \times \mathbb{Q}$.
\end{proof}

\item[Claim 2.] The predicate $\{ i \in \mathbb{N} \mid \text{$\varphi_i$ is total}\}$ is a $\Pi^0_2$ subset of $\mathbb{N}$.

\begin{proof}
$\varphi_i$ being total is equivalent to $\forall n \exists s \qspace \varphi_{i,s}(n) \converge$.
\end{proof}

\item[Claim 3.] The predicate $\{ e \in \mathbb{N} \mid \text{$P_e$ is deep}\}$ is a $\Sigma^0_3$ subset of $\mathbb{N}$.

\begin{proof}
$P_e$ is deep if and only if $\exists i ( (\text{$\varphi_i$ is total}) \wedge \forall n (\mathbf{M}(P_e \restrict \varphi_i(n)) \leq 2^{-n}))$. By Claim 1, this is $\Sigma^0_3$.
\end{proof}

\end{description}

Finally,
\begin{align*}
L & = \bigcup\{ P_e \mid \text{$P_e$ is deep}\} \\
& = \{ X \in \cantor \mid \exists e ((\text{$P_e$ is deep}) \wedge X \in P_e)\}
\end{align*}
shows $L$ is $\Sigma^0_3$. Since $L$ contains a nonempty $\Pi^0_1$ class, the \nameref{embedding lemma} implies $\weakdeg(L) \in \mathcal{E}_\weak$.
\end{proof}

That $\ldnr_\slow$ (\cref{ldnr-slow not deep}) is not of deep degree shows that $L$ is not of deep degree, further clarifying the structure of the filter of deep degrees in $\mathcal{E}_\weak$:

\begin{thm} \label{L not of deep degree}
$\weakdeg(L)$ is not a deep degree in $\mathcal{E}_\weak$.
\end{thm}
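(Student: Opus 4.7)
The plan is to argue by contradiction, using $\weakdeg(\ldnr_\slow)$ as a witness to the non-principality of the filter of deep degrees. The main point is that $\weakdeg(L)$ lies weakly below every deep degree, and in particular below $\weakdeg(\ldnr_\slow)$; since $\mathscr{F}_\mathrm{deep}$ is upward closed, if $\weakdeg(L)$ were itself deep then $\weakdeg(\ldnr_\slow)$ would be forced to be deep, contradicting Theorem \ref{ldnr-slow not deep}.

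First I would verify that $\weakdeg(L) \leq \weakdeg(\ldnr(q))$ for every slow-growing order function $q$. Corollary \ref{ldnr p deep degree if p slow-growing} says that $\ldnr(q)$ is of deep degree, so there is a deep $\Pi^0_1$ class $P$ with $\ldnr(q) \weakeq P$; since $P \subseteq L$ by the very definition of $L$, Proposition \ref{properties of weak and strong reducibility}(b) gives $L \weakleq P \weakeq \ldnr(q)$. Taking the infimum over all slow-growing $q$ and applying Proposition \ref{properties of weak and strong reducibility}(d) in the form
\begin{equation*}
\weakdeg(\ldnr_\slow) = \weakdeg\bigl(\textstyle\bigcup\{\ldnr(q) \mid q \text{ slow-growing}\}\bigr) = \inf_q \weakdeg(\ldnr(q))
\end{equation*}
then yields $\weakdeg(L) \leq \weakdeg(\ldnr_\slow)$.

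Now suppose towards contradiction that $\weakdeg(L) \in \mathscr{F}_\mathrm{deep}$. By Proposition \ref{deep degrees form filter}, $\mathscr{F}_\mathrm{deep}$ is a filter in $\langle \mathcal{E}_\weak,\leq\rangle$ and hence upward closed. Combined with $\weakdeg(L) \leq \weakdeg(\ldnr_\slow)$, this forces $\weakdeg(\ldnr_\slow) \in \mathscr{F}_\mathrm{deep}$, i.e., $\ldnr_\slow$ is of deep degree, directly contradicting Theorem \ref{ldnr-slow not deep}.

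There is no real obstacle beyond chaining together these earlier results: the argument is purely formal, and all of its substantive content is absorbed by Theorem \ref{ldnr-slow not deep}, which furnishes precisely the non-deep weak degree dominating $\weakdeg(L)$ that is needed to witness non-principality of $\mathscr{F}_\mathrm{deep}$. If one wished to streamline the presentation, one could alternatively first observe the identification $\weakdeg(L) = \inf \mathscr{F}_\mathrm{deep}$ (which follows from $\mathscr{F}_\mathrm{deep} = \{\weakdeg(P) \mid P \text{ a deep } \Pi^0_1 \text{ class}\}$ together with Proposition \ref{properties of weak and strong reducibility}(d)) and then phrase the contradiction as: ``$\weakdeg(L) \in \mathscr{F}_\mathrm{deep}$ would make $\mathscr{F}_\mathrm{deep}$ a principal filter, whereas $\weakdeg(\ldnr_\slow)$ lies above $\weakdeg(L)$ and outside $\mathscr{F}_\mathrm{deep}$''.
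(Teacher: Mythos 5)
Your argument is correct and follows essentially the same route as the paper's proof: both establish $\weakdeg(L) \leq \weakdeg(\ldnr_\slow)$ (the paper via $\weakdeg(L) = \inf\{\weakdeg(P) \mid P \text{ deep}\}$ together with \cref{ldnr p deep degree if p slow-growing}, you via the same corollary applied to each slow-growing $q$ and then \cref{properties of weak and strong reducibility}(d)), and then invoke the upward closure of $\mathscr{F}_\mathrm{deep}$ from \cref{deep degrees form filter} to contradict \cref{ldnr-slow not deep}. No gaps; the only difference is presentational.
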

\begin{proof}
Suppose for the sake of a contradiction that $\weakdeg(L)$ is a deep degree in $\mathcal{E}_\weak$. Then
\begin{align*}
\weakdeg(L) & = \inf \{ \weakdeg(P) \mid \text{$P \subseteq \cantor$ is nonempty, deep}\} \\
& \leq \inf\{ \weakdeg(\ldnr(p)) \mid \text{$p$ slow-growing order function}\} \\
& = \weakdeg(\ldnr_\slow).
\end{align*}
Because $\weakdeg(L)$ is a deep degree in $\mathcal{E}_\weak$, \cref{deep degrees form filter} shows that $\weakdeg(\ldnr_\slow)$ is a deep degree in $\mathcal{E}_\weak$, contradicting \cref{ldnr-slow not deep}.
\end{proof}

\begin{cor} \label{filter of deep degrees is nonprincipal}
The filter of deep degrees in $\mathcal{E}_\weak$ is nonprincipal.
\end{cor}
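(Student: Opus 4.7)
The plan is to derive nonprincipality from the preceding Theorem~\ref{L not of deep degree} together with the description of infima in $\mathcal{E}_\weak$. Recall that by Corollary~\ref{recursively bounded pi01 class recursively homeomorphic to pi01 class in cantor space} and the invariance of depth under recursive homeomorphism, every deep r.b.\ $\Pi^0_1$ class is strongly (hence weakly) equivalent to a nonempty deep $\Pi^0_1$ subset of $\cantor$. Thus $\mathscr{F}_\mathrm{deep}$ is represented by the family $\mathcal{Q}$ of all nonempty deep $\Pi^0_1$ subsets of $\cantor$, and by Proposition~\ref{properties of weak and strong reducibility}(d) we obtain
\begin{equation*}
\inf \mathscr{F}_\mathrm{deep} \;=\; \inf\{\weakdeg(P) \mid P \in \mathcal{Q}\} \;=\; \weakdeg\!\left(\bigcup \mathcal{Q}\right) \;=\; \weakdeg(L).
\end{equation*}

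Next I would argue by contradiction. Suppose $\mathscr{F}_\mathrm{deep}$ were principal, generated by some $\mathbf{p} \in \mathcal{E}_\weak$. Then $\mathbf{p}$ is the minimum element of $\mathscr{F}_\mathrm{deep}$, and in particular $\mathbf{p} = \inf \mathscr{F}_\mathrm{deep} = \weakdeg(L)$ by the computation above. Since the generator of a principal filter must lie in the filter itself, we would conclude $\weakdeg(L) \in \mathscr{F}_\mathrm{deep}$, i.e., $\weakdeg(L)$ is a deep degree.

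This directly contradicts Theorem~\ref{L not of deep degree}, which asserts that $\weakdeg(L)$ is \emph{not} a deep degree in $\mathcal{E}_\weak$. Hence the assumption of principality fails, and $\mathscr{F}_\mathrm{deep}$ is nonprincipal, as claimed. There is no real obstacle here: the argument is a one-line consequence of the preceding theorem once one observes that $\weakdeg(L)$ is precisely $\inf \mathscr{F}_\mathrm{deep}$ via the union-as-infimum identity. The only minor care needed is to confirm that restricting to deep $\Pi^0_1$ subsets of $\cantor$ (as opposed to arbitrary deep r.b.\ $\Pi^0_1$ classes) does not change the infimum, which is guaranteed by the recursive-homeomorphism reduction to Cantor space recorded earlier.
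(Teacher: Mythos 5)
Your argument is correct and is exactly the intended one: since a principal filter in $\langle\mathcal{E}_\weak,\leq\rangle$ must contain its generator, which is its minimum and hence equals $\inf\mathscr{F}_\mathrm{deep} = \weakdeg(L)$ by the union-as-infimum identity, principality would force $\weakdeg(L)$ to be a deep degree, contradicting Theorem~\ref{L not of deep degree}. The paper treats this as immediate from that theorem, so your proposal matches its approach.
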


\begin{cor}
For any deep degree $\mathbf{p} \in \mathcal{E}_\weak$, there exists a deep degree $\mathbf{q} \in \mathcal{E}_\weak$ such that $\mathbf{q} < \mathbf{p}$.
\end{cor}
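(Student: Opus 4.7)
The plan is to derive the corollary from the nonprincipality of the filter $\mathscr{F}_\mathrm{deep}$ of deep degrees, established in the preceding Corollary \ref{filter of deep degrees is nonprincipal}, together with the fact that $\mathscr{F}_\mathrm{deep}$ is closed under finite infima via Proposition \ref{deep degrees form filter}(b). The argument is by contradiction: assume there is a deep degree $\mathbf{p} \in \mathcal{E}_\weak$ for which no deep degree $\mathbf{q}$ strictly below $\mathbf{p}$ exists. I will show that this assumption forces $\mathbf{p}$ to be the minimum of $\mathscr{F}_\mathrm{deep}$, contradicting the nonprincipality.

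More concretely, suppose $\mathbf{p} \in \mathscr{F}_\mathrm{deep}$ satisfies $\mathbf{q} \nless \mathbf{p}$ for every $\mathbf{q} \in \mathscr{F}_\mathrm{deep}$. Fix any $\mathbf{r} \in \mathscr{F}_\mathrm{deep}$ and consider $\mathbf{s} \coloneq \inf\{\mathbf{p}, \mathbf{r}\}$. By Proposition \ref{deep degrees form filter}(b), $\mathbf{s}$ is again a deep degree. By definition of infimum, $\mathbf{s} \leq \mathbf{p}$; our assumption on $\mathbf{p}$ then forces $\mathbf{s} = \mathbf{p}$, and hence $\mathbf{p} \leq \mathbf{r}$. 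Since $\mathbf{r}$ was an arbitrary deep degree, $\mathbf{p}$ is a minimum of $\mathscr{F}_\mathrm{deep}$, making this filter principal — contradicting Corollary \ref{filter of deep degrees is nonprincipal}.

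No step here seems to present a real obstacle, since the two inputs (closure under binary infima and nonprincipality) are already available in the excerpt. The only subtlety worth flagging is notational: one should record at the outset that a filter in a poset is principal precisely when it contains a minimum element, so that the conclusion of the contradiction (namely ``$\mathbf{p}$ is a minimum of $\mathscr{F}_\mathrm{deep}$'') is immediately recognized as asserting principality. With that remark in place, the proof reduces to the two-line semilattice argument above.
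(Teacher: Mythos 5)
Your proof is correct and is essentially the argument the paper intends: the corollary is stated without proof as an immediate consequence of Corollary \ref{filter of deep degrees is nonprincipal} together with Proposition \ref{deep degrees form filter}, and your two-line semilattice argument (a minimal deep degree would be the minimum by closure under binary infima, hence a generator, contradicting nonprincipality) is exactly that derivation. Your remark that ``principal'' for an upward-closed family amounts to possessing a minimum element is consistent with how the paper uses the term, so nothing further is needed.
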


\section{The Filter of Pseudo-Deep Degrees}
\label{filter of pseudo-deep degrees section}

Motivated by $L$, $\ldnr_\slow$, and $\shiftcomplex$, we define:

\begin{definition}[pseudo-deep degree in $\mathcal{E}_\weak$]
A weak degree $\mathbf{p} \in \mathcal{E}_\weak$ is a \textdef{pseudo-deep degree (in $\mathcal{E}_\weak$)} if $\mathbf{p}$ is an infimum of deep degrees in $\mathcal{E}_\weak$, or equivalently that there is a collection $\mathcal{C}$ of deep $\Pi^0_1$ classes such that $\mathbf{p} = \weakdeg(\bigcup{\mathcal{C}})$.

$P \subseteq \baire$ is \textdef{of pseudo-deep degree} if $\weakdeg(P)$ is a pseudo-deep degree in $\mathcal{E}_\weak$.
\end{definition}

\cref{difference randoms cannot compute member of sets of deep degree} continues to hold for $P \subseteq \baire$ of pseudo-deep degree.

\begin{prop} \label{difference randoms cannot compute member of sets of pseudo-deep degree}
Suppose $P \subseteq \baire$ is of pseudo-deep degree. If $X \in \cantor$ is difference random, then $X$ computes no member of $P$.
\end{prop}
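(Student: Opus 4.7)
The plan is to adapt the proof of Proposition~\ref{difference randoms cannot compute member of sets of deep degree} in the obvious way, picking up one additional layer of indirection to pass through the infimum that defines pseudo-deepness. By definition, $P$ being of pseudo-deep degree means there is a collection $\mathcal{C}$ of deep $\Pi^0_1$ subsets of $\cantor$ such that $\weakdeg(P) = \weakdeg\bigl(\bigcup \mathcal{C}\bigr)$; equivalently, $P \weakeq \bigcup \mathcal{C}$.

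First I would suppose toward contradiction that $X \in \cantor$ is difference random and that there exists $Y \in P$ with $Y \turingleq X$. Applying the direction $\bigcup \mathcal{C} \weakleq P$ to the specific solution $Y \in P$ yields some $Z \in \bigcup \mathcal{C}$ with $Z \turingleq Y$. Then $Z$ lies in $Q$ for some $Q \in \mathcal{C}$, and by construction $Q$ is a deep $\Pi^0_1$ class.

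Finally I would invoke transitivity of Turing reducibility to conclude $Z \turingleq X$. Since $Q$ is a deep $\Pi^0_1$ class and $Z \in Q$, this directly contradicts Theorem~\ref{difference randoms cannot compute members of deep pi01 classes}, which asserts that no difference random computes a member of any deep $\Pi^0_1$ class. There is no substantive obstacle here: the only nontrivial content is unpacking the definition of pseudo-deep degree and noting that weak reducibility $\bigcup \mathcal{C} \weakleq P$ produces witnesses pointwise, which is precisely what lets us reduce to the already-known deep case.
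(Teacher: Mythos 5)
Your proposal is correct and is essentially the paper's own argument: unpack pseudo-deepness as $P \weakeq \bigcup\mathcal{C}$ for a collection $\mathcal{C}$ of deep $\Pi^0_1$ classes, use the direction $\bigcup\mathcal{C} \weakleq P$ to pass from $Y \in P$ with $Y \turingleq X$ to some $Z$ in a deep class $Q \in \mathcal{C}$ with $Z \turingleq X$, and contradict the theorem that no difference random computes a member of a deep $\Pi^0_1$ class. No gaps.
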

\begin{proof}
Because $P$ is of pseudo-deep degree, there is a collection $\mathcal{C}$ of deep $\Pi^0_1$ classes such that $P \weakeq \bigcup{\mathcal{C}}$. If $Y \turingleq X$ for some $Y \in P$, then the fact that $P \weakeq \bigcup{\mathcal{C}}$ implies there is a $Q \in \mathcal{C}$ and a $Z \in Q$ such that $Z \turingleq Y \turingleq X$, contradicting \cref{difference randoms cannot compute members of deep pi01 classes}.
\end{proof}

\begin{notation}
Let \vspace{-1.5em}
\begin{align*}
\mathscr{F}_\mathrm{deep} & \coloneq \{ \mathbf{p} \in \mathcal{E}_\weak \mid \text{$\mathbf{p}$ is a deep degree}\}, \\
\mathscr{F}_\mathrm{pseudo} & \coloneq \{ \mathbf{p} \in \mathcal{E}_\weak \mid \text{$\mathbf{p}$ is a pseudo-deep degree}\}.
\end{align*}
\end{notation}

Just as $\mathscr{F}_\mathrm{deep}$ is a filter, $\mathscr{F}_\mathrm{pseudo}$ also forms a filter -- in fact, it is the principal filter generated by $\weakdeg(L)$.

\begin{thm} \label{pseudo-deep degrees forms principal filter}
$\mathscr{F}_\mathrm{pseudo}$ is equal to the principal filter generated by $\weakdeg(L)$ in $\langle\mathcal{E}_\weak,\leq\rangle$.
\end{thm}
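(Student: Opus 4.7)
The plan is to establish the two containments between $\mathscr{F}_\mathrm{pseudo}$ and the principal filter $\{\mathbf{p} \in \mathcal{E}_\weak \mid \weakdeg(L) \leq \mathbf{p}\}$ separately, relying on the key identity $\weakdeg(L) = \inf \mathscr{F}_\mathrm{deep}$. This identity follows at once from \cref{properties of weak and strong reducibility}(d) applied to the family of all deep $\Pi^0_1$ subsets of $\cantor$, combined with \cref{union of deep pi01 classes is sigma03} to see that $\weakdeg(L)$ does lie in $\mathcal{E}_\weak$.

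For the forward containment, take $\mathbf{p} \in \mathscr{F}_\mathrm{pseudo}$ and write $\mathbf{p} = \inf \mathcal{C}$ with $\mathcal{C} \subseteq \mathscr{F}_\mathrm{deep}$. Each $\mathbf{q} \in \mathcal{C}$ has a deep $\Pi^0_1$ representative $Q$; since $Q \subseteq L$ by the very definition of $L$, \cref{properties of weak and strong reducibility}(b) yields $\weakdeg(L) \leq \weakdeg(Q) = \mathbf{q}$. Taking the infimum over $\mathcal{C}$ preserves this common lower bound, so $\weakdeg(L) \leq \mathbf{p}$.

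For the reverse containment, suppose $\mathbf{p} \in \mathcal{E}_\weak$ satisfies $\weakdeg(L) \leq \mathbf{p}$, and consider
\begin{equation*}
\mathcal{C} \coloneq \{\mathbf{p} \vee \mathbf{q} \mid \mathbf{q} \in \mathscr{F}_\mathrm{deep}\}.
\end{equation*}
Each $\mathbf{p} \vee \mathbf{q}$ lies in $\mathcal{E}_\weak$ (the supremum of two $\Pi^0_1$ weak degrees is represented by the $\Pi^0_1$ class $P \times Q$ in $\cantor$, where $P, Q$ represent $\mathbf{p}, \mathbf{q}$ respectively) and dominates the deep degree $\mathbf{q}$, so $\mathbf{p} \vee \mathbf{q} \in \mathscr{F}_\mathrm{deep}$ by \cref{deep degrees form filter}(a). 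Thus $\mathcal{C} \subseteq \mathscr{F}_\mathrm{deep}$. Invoking the complete distributivity of $\langle \mathcal{D}_\weak, \leq\rangle$ from \cref{properties of weak and strong reducibility}(f),
\begin{equation*}
\inf \mathcal{C} \;=\; \mathbf{p} \vee \inf \mathscr{F}_\mathrm{deep} \;=\; \mathbf{p} \vee \weakdeg(L) \;=\; \mathbf{p},
\end{equation*}
where the last equality uses $\weakdeg(L) \leq \mathbf{p}$. This witnesses $\mathbf{p} \in \mathscr{F}_\mathrm{pseudo}$.

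The hard part, to the extent there is one, is largely formal: justifying the distributive identity $\mathbf{p} \vee \inf_i \mathbf{q}_i = \inf_i (\mathbf{p} \vee \mathbf{q}_i)$ deployed in the display above. Beyond citing complete distributivity, one can verify it by hand from the explicit descriptions of suprema and infima in \cref{properties of weak and strong reducibility}(d,e): if $P$ represents $\mathbf{p}$ and $Q_i$ represents $\mathbf{q}_i$, both sides are computed to be $\weakdeg\bigl(P^{\turingleq} \cap \bigcup_i Q_i^{\turingleq}\bigr)$.
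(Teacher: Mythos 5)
Your proof is correct and follows essentially the same route as the paper: the forward inclusion comes from $Q \subseteq L$ for each deep representative (equivalently $\bigcup\mathcal{C} \subseteq L$), and the reverse inclusion writes $\mathbf{p} = \sup\{\mathbf{p},\weakdeg(L)\} = \inf\{\sup\{\mathbf{p},\mathbf{q}\} \mid \mathbf{q} \in \mathscr{F}_\mathrm{deep}\}$ via complete distributivity, with each join a deep degree (represented by $P \times Q$). Your explicit appeal to \cref{deep degrees form filter}(a) and the hand verification of the distributive identity only make explicit what the paper leaves implicit.
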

\begin{proof}
Suppose $\mathbf{p}$ is a pseudo-deep degree in $\mathcal{E}_\weak$. Let $\mathcal{C}$ be a collection of deep $\Pi^0_1$ classes such that $\mathbf{p} = \weakdeg(\bigcup{\mathcal{C}})$. Then $L \supseteq \bigcup{\mathcal{C}}$, so $\weakdeg(L) \leq \mathbf{p}$. This shows that every pseudo-deep degree in $\mathcal{E}_\weak$ lies in the filter generated by $\weakdeg(L)$ in $\langle\mathcal{E}_\weak,\leq\rangle$.

Conversely, suppose $\weakdeg(L) \leq \mathbf{p} \in \mathcal{E}_\weak$. Let $P$ be a $\Pi^0_1$ class for which $\mathbf{p} = \weakdeg(P)$. $(\mathcal{D}_\weak,\leq)$ is completely distributive (\cref{properties of weak and strong reducibility}(f)), so
\begin{align*}
\weakdeg(P) & = \sup\{\weakdeg(P),\weakdeg(L)\} \\
& = \sup\{\weakdeg(P),\inf\{\weakdeg(P_e) \mid \text{$P_e$ is deep}\}\} \\
& = \inf\{\sup\{\weakdeg(P),\weakdeg(P_e)\} \mid \text{$P_e$ is deep}\} \\
& = \inf\{\weakdeg(P \times P_e) \mid \text{$P_e$ is deep}\}.
\end{align*}
Thus, $\mathbf{p}$ is pseudo-deep.
\end{proof}

\begin{cor} \label{no minimal pseudo-deep degree unequal to L}
There is no minimal element of $\mathscr{F}_\mathrm{pseudo} \setminus \{\weakdeg(L)\}$..
\end{cor}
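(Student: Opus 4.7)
The plan is to reduce the corollary to the density of $\langle\mathcal{E}_\weak,\leq\rangle$. By \cref{pseudo-deep degrees forms principal filter}, $\mathscr{F}_\mathrm{pseudo}$ is exactly the principal filter $\{\mathbf{q}\in\mathcal{E}_\weak \mid \mathbf{q}\geq \weakdeg(L)\}$. Under this identification the statement is equivalent to the claim that $\weakdeg(L)$ has no immediate successor in $\mathcal{E}_\weak$: any $\mathbf{q}\in\mathcal{E}_\weak$ strictly between $\weakdeg(L)$ and a candidate minimum $\mathbf{p}$ automatically lies in $\mathscr{F}_\mathrm{pseudo}\setminus\{\weakdeg(L)\}$ and witnesses non-minimality, while conversely any witness to non-minimality below $\mathbf{p}$ and above $\weakdeg(L)$ is in $\mathcal{E}_\weak$.

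Granting this reduction, I would fix $\mathbf{p}\in\mathscr{F}_\mathrm{pseudo}$ with $\mathbf{p}>\weakdeg(L)$, noting that both lie in $\mathcal{E}_\weak$ (the case of $\weakdeg(L)$ via \cref{union of deep pi01 classes is sigma03}), and appeal to the density theorem for $\mathcal{E}_\weak$ due to Binns and Simpson, which asserts that between any two strictly comparable elements of $\mathcal{E}_\weak$ there is a third. Applied to the pair $\weakdeg(L)<\mathbf{p}$, this produces $\mathbf{q}\in\mathcal{E}_\weak$ with $\weakdeg(L)<\mathbf{q}<\mathbf{p}$, and the characterization of $\mathscr{F}_\mathrm{pseudo}$ places $\mathbf{q}$ in $\mathscr{F}_\mathrm{pseudo}\setminus\{\weakdeg(L)\}$, showing $\mathbf{p}$ is not minimal.

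The main obstacle is that density of $\mathcal{E}_\weak$ is an external result not developed in this paper, so at this point one should either invoke it as a black box or work harder. A self-contained alternative would attempt to construct $\mathbf{q}$ directly. The naive attempt of setting $\mathbf{q} = \inf\{\mathbf{p},\weakdeg(Q)\} = \weakdeg(P\cup Q)$ for a representative $P$ of $\mathbf{p}$ and a deep $\Pi^0_1$ class $Q$ chosen so that some $Y\in Q$ fails to compute any element of $P$ (such $Q$ exists since $\mathbf{p}>\weakdeg(L)$) successfully places $\mathbf{q}$ strictly below $\mathbf{p}$, but collapses back to $\weakdeg(L)$ unless one can additionally arrange for some $Z\in L$ to compute neither an element of $P$ nor an element of $Q$ — since otherwise $P\cup Q\weakleq L$ forces $\mathbf{q}=\weakdeg(L)$. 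Producing such a $Z$ while keeping $Q$ deep would likely require combining the incomparability machinery of \cref{ldnr incomparable} with a bespoke bushy-tree construction, building a deep $\Pi^0_1$ class tailored to evade a fixed $Z\in L$ witnessing $P\nweakleq L$, rather than selecting one off the shelf. The density route sidesteps this entirely, which is why it is the natural first plan.
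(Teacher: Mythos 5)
Your proposal is correct and is essentially the paper's own proof: apply \cref{pseudo-deep degrees forms principal filter} to get $\weakdeg(L) \weakle \mathbf{p}$, invoke the Binns--Simpson Density Theorem for $\mathcal{E}_\weak$ to interpolate $\mathbf{q}$, and apply the characterization again to see $\mathbf{q} \in \mathscr{F}_\mathrm{pseudo} \setminus \{\weakdeg(L)\}$. Your hesitation about invoking density as an external black box is unnecessary — the paper does exactly that, citing it directly.
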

\begin{proof}
By \cref{pseudo-deep degrees forms principal filter}, if $\mathbf{p}$ is a pseudo-deep degree distinct from $\weakdeg(L)$ then $\weakdeg(L) \weakle \mathbf{p}$. The Density Theorem for $\mathcal{E}_\weak$ \cite[Theorem 2]{binns2016mass} shows that there exists $\mathbf{q} \in \mathcal{E}_\weak$ such that $\weakdeg(L) \weakle \mathbf{q} \weakle \mathbf{p}$. A second application of \cref{pseudo-deep degrees forms principal filter} shows $\mathbf{q}$ is a pseudo-deep degree, and hence $\mathbf{p}$ is not a minimal element of $\mathscr{F}_\mathrm{pseudo} \setminus \{ \weakdeg(L)\}$.
\end{proof}

\section{The Filter of Deep Degrees in \texorpdfstring{$\mathcal{E}_\weak$}{Ew} and Difference Randoms}
\label{filter of deep degrees and difference randoms section}

\cref{difference randoms cannot compute member of sets of pseudo-deep degree} shows that no difference random computes a member of any $P \subseteq \baire$ of pseudo-deep degree. However, we can show that this does not characterize the pseudo-deep degrees.

\begin{thm} \label{f-pseudo not f-diff}
There exists a $\Pi^0_1$ class $P$ which is not of pseudo-deep degree but for which no difference random computes an element of $P$.
\end{thm}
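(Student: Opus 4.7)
The plan is to combine a deep $\Pi^0_1$ class with the singleton of a carefully chosen low, noncomputable c.e.\ set $X_0$, and then apply the Embedding Lemma to realize the resulting weak degree by a $\Pi^0_1$ class in $\cantor$.

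First, I would establish the key lemma that $L^\turingleq \subseteq \dnr_\rec^\turingleq$: every element of every nonempty deep $\Pi^0_1$ class Turing-computes an element of $\dnr_\rec$. For a deep $\Pi^0_1$ class $Q$ with modulus $r$ and $X \in Q$, depth gives $\apc(X \restrict r(n)) \geq n$, and the semimeasure monotonicity of $\mathbf{M}$ on initial segments then yields $\apc(X \restrict m) \geq r^\inverse(m) - O(1)$ for all $m$. Using $\pfc \geq \apc - O(1)$, we obtain $X \in \complex(r^\inverse) \subseteq \complex$, and \cref{complex weakly equivalent to dnr rec} places $X$ in $\dnr_\rec^\turingleq$.

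Next, fix any low noncomputable c.e.\ set $X_0$ (such sets exist by classical priority constructions) and verify three points. First, by Demuth's theorem, every ML-random real Turing-above a noncomputable c.e.\ set is Turing-complete, so no difference random Turing-computes $X_0$. Second, by Arslanov's completeness criterion, an incomplete c.e.\ set computes no $\dnr$ function; since $X_0$ is low and hence incomplete, $X_0 \notin \dnr_\rec^\turingleq$, whence by the key lemma $X_0 \notin L^\turingleq$. Third, since $X_0 \leq_T 0'$, \cref{reducible to halting problem implies pi02 singleton} gives that $\{X_0\}$ is $\Pi^0_2$, hence $\Sigma^0_3$.

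Now fix a nonempty deep $\Pi^0_1$ subset $P_0 \subseteq \cantor$, obtained by applying \cref{depth of dnr_p^psi}(b) to $\avoid^\psi(q)$ for some linearly universal $\psi$ and some slow-growing order function $q$ and then transporting to $\cantor$ via \cref{recursively bounded pi01 class recursively homeomorphic to pi01 class in cantor space}. By \cref{difference randoms cannot compute members of deep pi01 classes}, no difference random Turing-computes any element of $P_0$. Applying the Embedding Lemma (\cref{embedding lemma}) to $P_0$ and $S = \{X_0\}$ gives a nonempty $\Pi^0_1$ subset $P \subseteq \cantor$ with $P \weakeq P_0 \cup \{X_0\}$. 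To finish, if a difference random $Z$ Turing-computed some $Y \in P$, the reduction $P \weakleq P_0 \cup \{X_0\}$ would give $W \in P_0 \cup \{X_0\}$ with $W \leq_T Z$, contradicting either \cref{difference randoms cannot compute members of deep pi01 classes} applied to $P_0$ or Demuth's theorem applied to $X_0$. If $\weakdeg(L) \leq \weakdeg(P) = \weakdeg(P_0 \cup \{X_0\})$, then every element of $P_0 \cup \{X_0\}$ would Turing-compute an element of $L$, and in particular $X_0$ would, contradicting $X_0 \notin L^\turingleq$. Hence $\weakdeg(P) \in \mathscr{F}_\mathrm{diff} \setminus \mathscr{F}_\mathrm{pseudo}$.

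The main obstacle is the key lemma that depth forces membership in $\dnr_\rec^\turingleq$. Once the chain depth $\to$ $\apc$-lower-bound $\to$ $\complex$-membership $\to$ $\dnr_\rec$-computation via \cref{complex weakly equivalent to dnr rec} is in place, the rest of the proof is bookkeeping; but the precise estimates linking $r^\inverse$ to $\apc$ and the universal constants separating $\apc$ from $\pfc$ (and strong from ordinary complexity) must be threaded carefully in order to land cleanly inside $\complex$.
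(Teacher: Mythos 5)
Your overall architecture is genuinely different from the paper's and is, modulo one error, viable. The paper derives the theorem abstractly: it shows $\mathscr{F}_\mathrm{pseudo}$ is the principal filter generated by $\weakdeg(L)$, shows $\mathscr{F}_\mathrm{diff}$ is nonprincipal (via a Sacks-type splitting of $0'$ into low halves, at least one of which is not $\pfc$-trivial), and concludes the inclusion $\mathscr{F}_\mathrm{pseudo} \subseteq \mathscr{F}_\mathrm{diff}$ is proper. You instead exhibit a witness $P \weakeq P_0 \cup \{X_0\}$ directly, and your route to ``not pseudo-deep'' --- depth forces membership in $\complex$, hence in $\dnr_\rec^{\turingleq}$, while Arslanov's completeness criterion keeps an incomplete c.e.\ set out of $\dnr_\rec^{\turingleq}$, hence out of $L^{\turingleq}$ --- is sound and is a nice, more concrete argument than the paper's. (The chain depth $\to$ a priori complexity lower bound $\to$ $\complex(r^\inverse)$ is exactly the estimate the paper itself runs in the proof that $\ldnr_\slow$ is not of deep degree, so that ``main obstacle'' is not an obstacle.)

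The genuine gap is in the other half. The statement you attribute to Demuth --- that every Martin-L\"of random Turing-above a noncomputable c.e.\ set is Turing-complete --- is false, and consequently an arbitrary \emph{low noncomputable c.e.}\ set $X_0$ does not do the job. By the Low Basis Theorem there is a low (hence incomplete, hence difference random) $Z \in \mlr$, and by Ku\v{c}era's theorem every $\Delta^0_2$ Martin-L\"of random computes a noncomputable c.e.\ set; so some difference random computes some noncomputable c.e.\ set. The correct dividing line is $\pfc$-triviality: the Hirschfeldt--Nies--Stephan theorem (the paper's citation \cite[Corollary 3.6]{hirschfeldt2007using}) says that any c.e.\ set computed by an incomplete Martin-L\"of random is $\pfc$-trivial, and there exist noncomputable c.e.\ $\pfc$-trivial sets, all of which are low. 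So ``low noncomputable c.e.'' is compatible with being computable from a difference random, and your third bullet can fail for your chosen $X_0$. The repair is to demand that $X_0$ be an incomplete c.e.\ set that is \emph{not} $\pfc$-trivial --- such sets exist, e.g.\ one half of a splitting of $0'$ into two incomplete c.e.\ sets, since the join of two $\pfc$-trivials is $\pfc$-trivial while $0'$ is not --- and then invoke Hirschfeldt--Nies--Stephan in place of ``Demuth.'' With that substitution (Arslanov still applies, since incompleteness is retained), the rest of your argument goes through as written.
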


\begin{notation}
Let $\mathscr{F}_\mathrm{diff}$ be the collection of all weak degrees $\weakdeg(P)$ in $\mathcal{E}_\weak$ such that no difference random computes a member of $P$.
\end{notation}

\begin{prop} \label{f-diff is a filter}
$\mathscr{F}_\mathrm{diff}$ is a filter.
\end{prop}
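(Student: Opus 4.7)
The plan is to verify the two defining properties of a filter in the lattice $\langle \mathcal{E}_\weak,\leq\rangle$, namely upward closure and closure under binary infima (noting that these, together with the observation that $\mathbf{1}=\weakdeg(\cpa)\in\mathscr{F}_\mathrm{diff}$ via \cref{difference randoms cannot compute members of deep pi01 classes}, suffice to establish that $\mathscr{F}_\mathrm{diff}$ is a nonempty filter). A preliminary remark to make is that the condition defining membership of $\mathbf{p}$ in $\mathscr{F}_\mathrm{diff}$ is really a condition on the Turing upward closure $P^{\turingleq}$ of any representative $P$, and by \cref{properties of weak and strong reducibility}(c) two representatives of $\mathbf{p}$ have the same Turing upward closure, so the $\forall P \in \mathbf{p}$ in the definition may be replaced by $\exists P \in \mathbf{p}$ without changing anything.

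For upward closure, suppose $\mathbf{p}\leq \mathbf{q}$ with $\mathbf{p}\in\mathscr{F}_\mathrm{diff}$, and fix representatives $P\in\mathbf{p}$ and $Q\in\mathbf{q}$ so that $P\weakleq Q$. Let $X\in\mlr$ and suppose $Y\in Q$ satisfies $Y\turingleq X$. Since $P\weakleq Q$, there is some $Z\in P$ with $Z\turingleq Y$, hence $Z\turingleq X$ by transitivity of Turing reducibility. Because $\mathbf{p}\in\mathscr{F}_\mathrm{diff}$, this forces $0'\turingleq X$, which is the required conclusion showing $\mathbf{q}\in\mathscr{F}_\mathrm{diff}$.

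For closure under binary infima, let $\mathbf{p},\mathbf{q}\in\mathscr{F}_\mathrm{diff}$ with $\Pi^0_1$ representatives $P,Q\subseteq\cantor$ (which exist because $\mathbf{p},\mathbf{q}\in\mathcal{E}_\weak$). The union $P\cup Q$ is again a $\Pi^0_1$ subset of $\cantor$, and by \cref{properties of weak and strong reducibility}(d) we have $\inf\{\mathbf{p},\mathbf{q}\}=\weakdeg(P\cup Q)\in\mathcal{E}_\weak$. Let $X\in\mlr$ and suppose $Y\in P\cup Q$ with $Y\turingleq X$. Then $Y\in P$ or $Y\in Q$, and applying the hypothesis that $\mathbf{p}\in\mathscr{F}_\mathrm{diff}$ or $\mathbf{q}\in\mathscr{F}_\mathrm{diff}$ respectively yields $0'\turingleq X$ in either case. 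Thus $\inf\{\mathbf{p},\mathbf{q}\}\in\mathscr{F}_\mathrm{diff}$.

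Neither step should pose any real obstacle: both amount to elementary manipulations that exploit (i) transitivity of $\turingleq$, (ii) the concrete description of the meet in $\langle\mathcal{E}_\weak,\leq\rangle$ as union of representatives, and (iii) the fact that the membership criterion for $\mathscr{F}_\mathrm{diff}$ only references existence of Turing reductions from difference randoms. The only conceptual subtlety — that the $\forall P \in \mathbf{p}$ in the definition is equivalent to $\exists P \in \mathbf{p}$ — is immediate from \cref{properties of weak and strong reducibility}(c), and is what makes $\mathscr{F}_\mathrm{diff}$ a well-defined subset of $\mathcal{E}_\weak$ to begin with.
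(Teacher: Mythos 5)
Your proof is correct and follows essentially the same route as the paper's: upward closure via transitivity of $\turingleq$ through a weak reduction, and closure under infima via the representation $\inf\{\mathbf{p},\mathbf{q\}}=\weakdeg(P\cup Q)$. The preliminary remarks on well-definedness and nonemptiness are harmless additions that the paper leaves implicit.
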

\begin{proof}
Given $\mathbf{p},\mathbf{q} \in \mathcal{E}_\weak$, let $P$ and $Q$ be $\Pi^0_1$ classes such that $\weakdeg(P) = \mathbf{p}$ and $\weakdeg(Q) = \mathbf{q}$.

Suppose $\mathbf{p} \in \mathscr{F}_\mathrm{diff}$ and $\mathbf{p} \leq \mathbf{q} \in \mathcal{E}_\weak$. If $Y \turingleq X$ for some $Y \in Q$, then $\mathbf{p} \leq \mathbf{q}$ implies $Z \turingleq Y$ for some $Z \in P$, from which we find $Z \turingleq X$, a contradiction. Thus, $\mathbf{q} \in \mathscr{F}_\mathrm{diff}$.

Now suppose $\mathbf{p},\mathbf{q} \in \mathscr{F}_\mathrm{diff}$. $\inf\{\mathbf{p},\mathbf{q}\} = \weakdeg(P \cup Q)$. As no difference random computes any member of $P$ or $Q$, no difference random computes any member of $P \cup Q$, i.e., $\inf\{\mathbf{p},\mathbf{q}\} \in \mathscr{F}_\mathrm{diff}$.
\end{proof}

We can show that $\mathscr{F}_\mathrm{pseudo} \subsetneq \mathscr{F}_\mathrm{diff}$ by showing that $\mathscr{F}_\mathrm{diff}$ is non-principal.

\begin{thm} \label{no difference random computation but not pseudo-deep}
$\mathscr{F}_\mathrm{diff}$ is non-principal.
\end{thm}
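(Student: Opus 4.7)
The plan is to argue by contradiction. Suppose $\mathscr{F}_\mathrm{diff}$ is principal with minimum element $\mathbf{m} = \weakdeg(M)$ for some $\Pi^0_1$ class $M \subseteq \cantor$; a contradiction will be produced by exhibiting a $\Pi^0_1$ class $Q$ with $\weakdeg(Q) \in \mathscr{F}_\mathrm{diff}$ but $\weakdeg(Q) \not\geq \mathbf{m}$.

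First I would locate $\mathbf{m}$ within the lattice. Since every deep degree lies in $\mathscr{F}_\mathrm{diff}$ by \cref{difference randoms cannot compute members of deep pi01 classes}, $\mathbf{m}$ is a lower bound for $\mathscr{F}_\mathrm{deep}$, so $\mathbf{m} \leq \inf\mathscr{F}_\mathrm{deep} = \weakdeg(L)$. Applying \cref{f-pseudo not f-diff} yields a $\Pi^0_1$ class $P^*$ with $\weakdeg(P^*) \in \mathscr{F}_\mathrm{diff}$ but $\weakdeg(P^*) \notin \mathscr{F}_\mathrm{pseudo}$, and by \cref{pseudo-deep degrees forms principal filter} this means $\weakdeg(P^*) \not\geq \weakdeg(L)$. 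The minimality of $\mathbf{m}$ then rules out the equality $\mathbf{m} = \weakdeg(L)$, since otherwise $\weakdeg(P^*) \geq \mathbf{m} = \weakdeg(L)$ would place $P^*$ in $\mathscr{F}_\mathrm{pseudo}$. Hence $\mathbf{m}$ lies strictly below $\weakdeg(L)$ and is itself not pseudo-deep.

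The decisive step is to construct the promised $\Pi^0_1$ class $Q$ by adapting the forcing argument behind \cref{f-pseudo not f-diff} to the parameter $M$. Via effective forcing along the lines of the bushy tree machinery in \cref{bushy trees section}, I would build a member $Y$ of some $\Pi^0_1$ class $Q \subseteq \cantor$ meeting two requirement schemes at once: for every Martin-\Lof\ test witnessing difference randomness, $Y$ escapes being Turing-below any element of that test (ensuring $\weakdeg(Q) \in \mathscr{F}_\mathrm{diff}$, exactly as in the proof of \cref{f-pseudo not f-diff}); and for every partial recursive functional $\Psi_e$, $\Psi_e^Y$ is not a path through the recursive tree defining $M$ (ensuring $M \not\weakleq Q$). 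This gives $\weakdeg(Q) \not\geq \mathbf{m}$, contradicting minimality of $\mathbf{m}$ in $\mathscr{F}_\mathrm{diff}$.

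The main obstacle is verifying that these two requirement schemes are simultaneously satisfiable in the forcing. The tension is that escaping the cone of difference randoms pushes $Y$ towards high Turing degree while avoiding $Y$-computations of $M$-members constrains $Y$ to stay out of an entire Turing upward cone of a $\Pi^0_1$-class worth of reals. The key point to exploit is that since $M \in \mathscr{F}_\mathrm{diff}$ by assumption, the members of $M$ are themselves not Turing-below any difference random; consequently the forbidden set for requirement (i) and the forbidden set for requirement (ii) do not cooperate in swallowing an entire bushy tree, so the combinatorial lemmas of \cref{bushy trees section} should leave enough bushiness at each stage to diagonalize against the next pair of requirements and carry the construction through.
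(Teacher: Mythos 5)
Your proposal has two genuine gaps, one logical and one mathematical.

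The logical gap is circularity: you invoke \cref{f-pseudo not f-diff} to conclude that the putative minimum $\mathbf{m}$ lies strictly below $\weakdeg(L)$. But \cref{f-pseudo not f-diff} is itself deduced from the non-principality of $\mathscr{F}_\mathrm{diff}$ --- exactly the statement you are proving --- so you cannot use it here without supplying an independent proof. This step only serves to ``locate'' $\mathbf{m}$ and could in principle be dropped, but it signals that all the real content has been deferred to your final step.

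The mathematical gap is that the ``decisive step'' is not a construction. Membership of $\weakdeg(Q)$ in $\mathscr{F}_\mathrm{diff}$ is a property of the entire class $Q$ --- no difference random may compute \emph{any} element of $Q$ --- so building a single generic $Y$ by forcing does not establish it; moreover, difference randomness is not witnessed by a single ML test, so the requirement scheme ``for every test witnessing difference randomness, $Y$ escapes being Turing-below any element of that test'' is not well-posed. The bushy tree lemmas of \cref{bushy trees section} concern $k$-smallness of sets of strings in recursively bounded spaces and are tailored to avoiding partial recursive functions; nothing in them addresses staying outside the Turing lower cone of the difference randoms while simultaneously not computing a path through $M$, and your assertion that the two forbidden sets ``do not cooperate in swallowing an entire bushy tree'' is precisely the claim that would need proof. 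The paper's argument avoids all of this and runs in the opposite direction: given \emph{any} $Q$ with $\weakdeg(Q) \in \mathscr{F}_\mathrm{diff}$, the splitting theorem yields incomplete r.e.\ sets $A,B$ with $A \oplus B \turingeq 0'$, neither of which computes a member of $Q$; since $\pfc$-triviality is closed under join, one of them, say $A$, is not $\pfc$-trivial, hence not computable from any difference random. Then $Q \cup \{A\}$ is weakly equivalent to a $\Pi^0_1$ class by \cref{embedding lemma}, lies strictly below $\weakdeg(Q)$, and remains in $\mathscr{F}_\mathrm{diff}$, so $\mathscr{F}_\mathrm{diff}$ has no minimum.
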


To prove \cref{no difference random computation but not pseudo-deep} we make use of the notion of $\pfc$-triviality.

\begin{definition}[$\pfc$-trivial]
$X \in \cantor$ is \textdef{$\pfc$-trivial} if there exists $c \in \mathbb{N}$ such that $\pfc(X \restrict n) \leq \pfc(n) + c$ for all $n \in \mathbb{N}$.
\end{definition}

\begin{proof}[Proof of \cref{no difference random computation but not pseudo-deep}.]
Suppose $\mathbf{q} \in \mathscr{F}_\mathrm{diff}$, and let $Q$ be a $\Pi^0_1$ class such that $\mathbf{q} = \weakdeg(Q)$. By \cite[Lemma 2]{binns2003splitting}, there exist r.e.\ sets $A, B \subseteq \mathbb{N}$ such that $0 \turingle A,B \turingle 0'$, $A \cap B = \emptyset$ and $A \cup B = 0'$, and for which neither $A$ nor $B$ compute any member of $Q$. Note that $A \oplus B \turingeq 0'$.

\cref{reducible to halting problem implies pi02 singleton} implies $\{A\}$ and $\{B\}$ are $\Pi^0_2$, so that $Q \cup \{A\}$ and $Q \cup \{B\}$ are each $\Pi^0_2$. Both sets contain a nonempty $\Pi^0_1$ class (namely, $Q$), so the \nameref{embedding lemma} implies there are $\Pi^0_1$ classes $P_A$ and $P_B$ such that $P_A \weakeq Q \cup \{A\}$ and $P_B \weakeq Q \cup \{B\}$. Because neither $A$ nor $B$ compute any member of $Q$, we have $P_A \weakeq Q \cup \{A\} \weakle Q$ and $P_B \weakeq Q \cup \{B\} \weakle Q$.

By \cite[Theorem 11.6.2]{downey2010algorithmic}, if $A$ and $B$ are both $\pfc$-trivial, then $A \oplus B \turingeq 0'$ is $\pfc$-trivial, which is a contradiction. Thus, at least one of $A$ and $B$ are not $\pfc$-trivial. Without loss of generality, say that $A$ is not $\pfc$-trivial, and let $P = P_A$.

Suppose $X$ is a difference random. \cref{difference randoms cannot compute member of sets of pseudo-deep degree} shows that $X$ computes no member of $Q$, and if $A \turingleq X$ then \cite[Corollary 3.6]{hirschfeldt2007using} implies $A$ is $\pfc$-trivial, contrary to hypothesis. Thus, $X$ computes no member of $Q \cup \{A\}$, and hence computes no member of $P$, showing $\weakdeg(P) \in \mathscr{F}_\mathrm{diff}$. As $\mathbf{q}$ was an arbitrary member of $\mathscr{F}_\mathrm{diff}$, it follows that $\mathscr{F}_\mathrm{diff}$ is non-principal.
\end{proof}

\begin{proof}[Proof of \cref{f-pseudo not f-diff}.]
\cref{f-diff is a filter} and \cref{no difference random computation but not pseudo-deep} show that $\mathscr{F}_\mathrm{diff}$ is a nonprincipal filter. By \cref{difference randoms cannot compute member of sets of pseudo-deep degree}, $\mathscr{F}_\mathrm{diff} \subseteq \mathscr{F}_\mathrm{pseudo}$, but $\mathscr{F}_\mathrm{pseudo}$ being principal means this inclusion must be proper.
\end{proof}

All these facts add up to imply \cref{proper nesting of deep-related filters}:

\begin{proof}[Proof of \cref{proper nesting of deep-related filters}.]
\cref{filter of deep degrees is nonprincipal} and \cref{no difference random computation but not pseudo-deep} shows that the outer two are nonprincipal, while \cref{pseudo-deep degrees forms principal filter} shows the middle is. This implies that the inclusions $\mathscr{F}_\mathrm{deep} \subseteq \mathscr{F}_\mathrm{pseudo} \subseteq \mathscr{F}_\mathrm{diff}$ must be proper. 
\end{proof}

\section{Open questions about the filter of pseudo-deep degrees}

\cref{pseudo-deep degrees forms principal filter} and \cref{no minimal pseudo-deep degree unequal to L} give important structural information about the filter of pseudo-deep degrees. However, there remain open questions about that structure, especially $\mathscr{F}_\mathrm{pseudo} \setminus \mathscr{F}_\mathrm{deep}$.

\begin{question} \label{size of gap between pseudo-deep and deep}
What is the cardinality of $\mathscr{F}_\mathrm{pseudo} \setminus \mathscr{F}_\mathrm{deep}$? I.e., how many pseudo-deep degrees are there which aren't deep degrees?
\end{question}

\cref{no minimal pseudo-deep degree unequal to L} puts constraints on $|\mathscr{F}_\mathrm{pseudo} \setminus \mathscr{F}_\mathrm{deep}|$.

\begin{prop}
$|\mathscr{F}_\mathrm{pseudo} \setminus \mathscr{F}_\mathrm{deep}| \in \{1,\aleph_0\}$.
\end{prop}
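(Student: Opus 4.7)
The plan is to exploit two structural facts in tandem: first, that $\mathcal{E}_\weak$ is countable (there are only countably many $\Pi^0_1$ subsets of $\cantor$), which immediately forces $|\mathscr{F}_\mathrm{pseudo}\setminus\mathscr{F}_\mathrm{deep}|\leq\aleph_0$; and second, that $\mathscr{F}_\mathrm{deep}$ is a \emph{filter} in $\langle\mathcal{E}_\weak,\leq\rangle$ by \cref{deep degrees form filter}, hence upward closed, so the complementary set $\mathcal{E}_\weak\setminus\mathscr{F}_\mathrm{deep}$ is downward closed. The lower bound $|\mathscr{F}_\mathrm{pseudo}\setminus\mathscr{F}_\mathrm{deep}|\geq 1$ is already in hand, since \cref{L not of deep degree} gives $\weakdeg(L)\in\mathscr{F}_\mathrm{pseudo}\setminus\mathscr{F}_\mathrm{deep}$.

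The heart of the argument is the following dichotomy. Suppose for contradiction that there exists some $\mathbf{p}\in\mathscr{F}_\mathrm{pseudo}\setminus\mathscr{F}_\mathrm{deep}$ with $\mathbf{p}\neq\weakdeg(L)$. By \cref{pseudo-deep degrees forms principal filter} we have $\weakdeg(L)<\mathbf{p}$. I would then apply the Density Theorem for $\mathcal{E}_\weak$ (Binns--Simpson, as cited in the proof of \cref{no minimal pseudo-deep degree unequal to L}) to produce $\mathbf{q}\in\mathcal{E}_\weak$ with $\weakdeg(L)<\mathbf{q}<\mathbf{p}$. A second appeal to \cref{pseudo-deep degrees forms principal filter} yields $\mathbf{q}\in\mathscr{F}_\mathrm{pseudo}$. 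The crucial observation is now that $\mathbf{q}\notin\mathscr{F}_\mathrm{deep}$: for if $\mathbf{q}$ were deep, upward closure of $\mathscr{F}_\mathrm{deep}$ together with $\mathbf{q}\leq\mathbf{p}$ would force $\mathbf{p}\in\mathscr{F}_\mathrm{deep}$, contradicting the choice of $\mathbf{p}$.

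Iterating this observation then produces the required infinite descending chain. Starting from $\mathbf{p}=:\mathbf{p}_0$, apply the construction to $\mathbf{p}_0$ to obtain $\mathbf{p}_1\in\mathscr{F}_\mathrm{pseudo}\setminus\mathscr{F}_\mathrm{deep}$ with $\weakdeg(L)<\mathbf{p}_1<\mathbf{p}_0$; applying it to $\mathbf{p}_1$ yields $\mathbf{p}_2\in\mathscr{F}_\mathrm{pseudo}\setminus\mathscr{F}_\mathrm{deep}$ with $\weakdeg(L)<\mathbf{p}_2<\mathbf{p}_1$, and so on. Recursively we obtain a strictly decreasing sequence $\mathbf{p}_0>\mathbf{p}_1>\mathbf{p}_2>\cdots$ of pairwise distinct members of $\mathscr{F}_\mathrm{pseudo}\setminus\mathscr{F}_\mathrm{deep}$, witnessing that this set is infinite and hence of cardinality $\aleph_0$.

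Combining the two cases gives the desired trichotomy-collapsing-to-dichotomy: either no such $\mathbf{p}$ exists and $\mathscr{F}_\mathrm{pseudo}\setminus\mathscr{F}_\mathrm{deep}=\{\weakdeg(L)\}$ has cardinality $1$, or some such $\mathbf{p}$ exists and the iteration forces cardinality $\aleph_0$. I do not expect any substantive obstacle here: every ingredient (countability of $\mathcal{E}_\weak$, upward closure of $\mathscr{F}_\mathrm{deep}$, principality of $\mathscr{F}_\mathrm{pseudo}$, Binns--Simpson density) is already available, and the only subtlety is recognizing that upward closure of $\mathscr{F}_\mathrm{deep}$ is precisely what prevents any pseudo-deep $\mathbf{q}<\mathbf{p}$ from being deep.
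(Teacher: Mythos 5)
Your proposal is correct and rests on exactly the same ingredients as the paper's proof: countability of $\mathcal{E}_\weak$, the witness $\weakdeg(L)$, principality of $\mathscr{F}_\mathrm{pseudo}$ (\cref{pseudo-deep degrees forms principal filter}), the Binns--Simpson Density Theorem, and upward closure of $\mathscr{F}_\mathrm{deep}$. The only difference is packaging: the paper assumes the cardinality is finite and $>1$, takes a minimal element of $\mathscr{F}_\mathrm{pseudo}\setminus(\mathscr{F}_\mathrm{deep}\cup\{\weakdeg(L)\})$, and contradicts \cref{no minimal slow-growing ldnr question} -- more precisely \cref{no minimal pseudo-deep degree unequal to L} -- whereas you unfold that corollary's density argument directly into an explicit infinite descending chain, which is essentially the same argument.
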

\begin{proof}
Because $\weakdeg(L) \in \mathscr{F}_\mathrm{pseudo} \setminus \mathscr{F}_\mathrm{deep}$, we know $1 \leq |\mathscr{F}_\mathrm{pseudo} \setminus \mathscr{F}_\mathrm{deep}|$. $\mathcal{E}_\weak$ is countable, so $|\mathscr{F}_\mathrm{pseudo} \setminus \mathscr{F}_\mathrm{deep}| \leq \aleph_0$. 

If $1 < |\mathscr{F}_\mathrm{pseudo} \setminus \mathscr{F}_\mathrm{deep}| < \aleph_0$, then $\mathscr{F}_\mathrm{pseudo} \setminus (\mathscr{F}_\mathrm{deep} \cup \{\weakdeg(L)\})$ has a minimal element, and such a minimal element is a minimal element of $\mathscr{F}_\mathrm{pseudo} \setminus \{\weakdeg(L)\}$ since $\mathscr{F}_\mathrm{deep}$ is upward-closed, contradicting \cref{no minimal pseudo-deep degree unequal to L}.
\end{proof}

Currently, the only two pseudo-deep degrees known to not be deep are $\weakdeg(L)$ and $\weakdeg(\ldnr_\slow)$, though they are not known to be distinct.

\begin{question} \label{L and ldnr_slow}
Are $L$ and $\ldnr_\slow$ weakly equivalent?
\end{question}

Something slightly stronger than asking whether $L \weakeq \ldnr_\slow$ is the following.

\begin{question} \label{L and ldnr_slow strong}
Given a deep $\Pi^0_1$ class $P$, does there exist a slow-growing order function $p\colon \mathbb{N} \to (1,\infty)$ such that $\ldnr(p) \weakleq P$?
\end{question}

\begin{prop}
\mbox{}
\begin{enumerate}[(a)]
\item An affirmative answer to \cref{L and ldnr_slow strong} gives an affirmative answer to \cref{L and ldnr_slow}.
\item An affirmative answer to \cref{L and ldnr_slow strong} gives an affirmative answer to \cref{slow-growing ldnr downwards-directed question}, i.e., for all slow-growing order functions $p\colon \mathbb{N} \to (1,\infty)$ and $q\colon \mathbb{N} \to (1,\infty)$ there exists a slow-growing order function $r\colon \mathbb{N} \to (1,\infty)$ such that $\ldnr(r) \weakleq \ldnr(p) \cup \ldnr(q)$.
\item An answer to \cref{size of gap between pseudo-deep and deep} of `$1$' gives an affirmative answer to \cref{L and ldnr_slow}.
\end{enumerate}
\end{prop}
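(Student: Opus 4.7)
The plan is to verify each of the three implications directly; none requires new forcing or combinatorial work, only a careful bookkeeping of what is already in the chapter. For (a), I would prove both directions of $L \weakeq \ldnr_\slow$ separately. For $L \weakleq \ldnr_\slow$, the key observation is that for every slow-growing order function $p$, $\ldnr(p)$ is of deep degree by \cref{ldnr p deep degree if p slow-growing}, so there is a deep $\Pi^0_1$ class $P'$ with $\weakdeg(P') = \weakdeg(\ldnr(p))$; then $P' \subseteq L$ gives $L \weakleq P' \weakeq \ldnr(p)$, and taking the infimum over all slow-growing $p$ yields $L \weakleq \ldnr_\slow$ (using the description of suprema/infima in $\mathcal{D}_\weak$ from \cref{properties of weak and strong reducibility}). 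For $\ldnr_\slow \weakleq L$, I would take an arbitrary $X \in L$, fix a deep $\Pi^0_1$ class $P$ containing $X$, and invoke the affirmative answer to Question~\ref{L and ldnr_slow strong} to obtain a slow-growing $p$ with $\ldnr(p) \weakleq P$; then any $X$ computes a member of $\ldnr(p) \subseteq \ldnr_\slow$.

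For (b), I would observe that if $p$ and $q$ are slow-growing, then $\weakdeg(\ldnr(p))$ and $\weakdeg(\ldnr(q))$ are both deep degrees by \cref{ldnr p deep degree if p slow-growing}. Since $\mathscr{F}_\mathrm{deep}$ is a filter by \cref{deep degrees form filter}, their infimum is again a deep degree, so there exists a deep $\Pi^0_1$ class $P'$ with
\begin{equation*}
\weakdeg(P') \;=\; \inf\{\weakdeg(\ldnr(p)),\weakdeg(\ldnr(q))\} \;=\; \weakdeg(\ldnr(p) \cup \ldnr(q)).
\end{equation*}
Applying Question~\ref{L and ldnr_slow strong} to $P'$ then yields a slow-growing order function $r$ such that $\ldnr(r) \weakleq P' \weakeq \ldnr(p) \cup \ldnr(q)$, which is the desired conclusion.

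For (c), I would argue that $\weakdeg(\ldnr_\slow)$ is pseudo-deep — it is the infimum $\inf\{\weakdeg(\ldnr(q)) \mid \text{$q$ a slow-growing order function}\}$ of deep degrees — but by \cref{ldnr-slow not deep} it is not itself a deep degree. Hence $\weakdeg(\ldnr_\slow) \in \mathscr{F}_\mathrm{pseudo} \setminus \mathscr{F}_\mathrm{deep}$, and since $\weakdeg(L)$ is also in this set, an answer of $1$ to Question~\ref{size of gap between pseudo-deep and deep} forces $\weakdeg(\ldnr_\slow) = \weakdeg(L)$, i.e., $L \weakeq \ldnr_\slow$.

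None of the three steps presents a genuine obstacle; all are short bookkeeping consequences of the filter structure developed in \cref{background chapter} and \cref{structure of filter of deep degrees}, together with \cref{ldnr p deep degree if p slow-growing} and \cref{ldnr-slow not deep}. The only subtle point, and the one I would want to state explicitly in the proof, is that the hypothesis of Question~\ref{L and ldnr_slow strong} concerns deep \emph{$\Pi^0_1$ classes} rather than arbitrary sets of deep degree; this matters in (b), where before invoking Question~\ref{L and ldnr_slow strong} one must first pass from the deep degree $\inf\{\weakdeg(\ldnr(p)),\weakdeg(\ldnr(q))\}$ to an actual deep $\Pi^0_1$ representative — which is precisely what \cref{deep degrees form filter} provides.
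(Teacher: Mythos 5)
Your proof is correct, and each of the three implications is verified exactly as intended; the paper itself states this proposition without proof, treating it as a routine consequence of \cref{ldnr p deep degree if p slow-growing}, \cref{deep degrees form filter}, \cref{ldnr-slow not deep}, and the definitions of $\mathscr{F}_\mathrm{deep}$ and $\mathscr{F}_\mathrm{pseudo}$. Your explicit remark about passing from a deep degree to an actual deep $\Pi^0_1$ representative before invoking Question~\ref{L and ldnr_slow strong} is the right point to make precise, and your unconditional argument for $L \weakleq \ldnr_\slow$ in part (a) is also correct.
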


An answer to \cref{size of gap between pseudo-deep and deep} of `$\aleph_0$' suggests further structural questions about antichains in $\mathscr{F}_\mathrm{pseudo} \setminus \mathscr{F}_\mathrm{deep}$ and related properties.

\begin{question} \mbox{}
\begin{enumerate}[(a)]
\item Do there exist weakly incomparable elements of $\mathscr{F}_\mathrm{pseudo} \setminus \mathscr{F}_\mathrm{deep}$? 
\item Do there exist infinitely many pairwise weakly incomparable elements of $\mathscr{F}_\mathrm{pseudo} \setminus \mathscr{F}_\mathrm{deep}$?
\item Is $\weakdeg(L)$ meet-irreducible? I.e., are there no pseudo-deep degrees $\mathbf{p},\mathbf{q}$ such that $\inf\{\mathbf{p},\mathbf{q}\} = \weakdeg(L)$?
\end{enumerate}
\end{question}

In contrast, if $|\mathscr{F}_\mathrm{pseudo} \setminus \mathscr{F}_\mathrm{deep}| = \aleph_0$, then $\mathscr{F}_\mathrm{pseudo} \setminus \mathscr{F}_\mathrm{deep}$ contains infinite chains.

One possible approach to answering \cref{size of gap between pseudo-deep and deep} would be by showing that $\shiftcomplex$ is not of deep degree, as it is of pseudo-deep degree and we know that $\shiftcomplex \weaknleq \ldnr_\slow$ by \cref{SC not weakly below ldnr_slow}. 

We observe that the known lattice theoretic properties available are not enough to determine the structure of $\mathscr{F}_\mathrm{pseudo} \setminus \mathscr{F}_\mathrm{deep}$. 

\begin{remark}
Some lattices $\langle P,\leq \rangle$ and filters $\mathscr{F}_1 \subsetneq \mathscr{F}_2 \subseteq P$ which give some idea of how the boundary between $\mathscr{F}_\mathrm{pseudo}$ and $\mathscr{F}_\mathrm{deep}$ might look include the following:
\begin{itemize}
\item Consider the lattice $\langle P,\leq \rangle \coloneq \langle \{ S \subseteq [0,1] \mid |S| \leq \aleph_0\} \cup \{[0,1]\},\supseteq \rangle$ and the filters $\mathscr{F}_1 \coloneq \{ S \subseteq [0,1] \mid |S| < \aleph_0\}$ and $\mathscr{F}_2 = P$. In this case, $|\mathscr{F}_2 \setminus \mathscr{F}_1| = \aleph_0$ and the minimum of $\mathscr{F}_2$, $[0,1]$, is meet-irreducible.

\item Consider the lattice $\langle P,\leq \rangle \coloneq \langle \{ S \subseteq \mathbb{N} \mid |S| < \aleph_0 \vee |\mathbb{N} \setminus S| < \aleph_0\}, \supseteq \rangle$ and the filters $\mathscr{F}_1 \coloneq \{ S \subseteq \mathbb{N} \mid |S| < \aleph_0\}$ and $\mathscr{F}_2 = P$. In this case, $|\mathscr{F}_2 \setminus \mathscr{F}_1| = \aleph_0$ and the minimum of $\mathscr{F}_2$, $\mathbb{N}$, is not meet-irreducible. 
\end{itemize}
\end{remark}

\clearpage
\bibliographystyle{amsplain}
\bibliography{biblio}

\end{document}